\documentclass[10pt,a4paper]{amsart}
\usepackage[english]{babel}

\usepackage{amsmath}
\usepackage{scalerel}
\usepackage{amsfonts}
\usepackage{amssymb}
\usepackage{amsthm}
\usepackage{graphicx}

\usepackage[T1]{fontenc}
\usepackage{newpxtext,newpxmath}

\usepackage[margin=1.0in]{geometry} 
\usepackage{stmaryrd}
\usepackage{rsfso}  

\usepackage{subcaption}

\usepackage{mathtools,thmtools} %
\usepackage{empheq}
\usepackage{enumitem}
\usepackage{esint}
\usepackage{upgreek}
\usepackage[mathscr]{eucal}
\usepackage{dsfont}

\usepackage[colorlinks=true]{hyperref}
\hypersetup{citecolor=blue}

\usepackage{xparse}

\newcommand*{\cal}[1]{{\mathcal{#1}}}

\newcommand*{\Bop}{\cal{L}}
\newcommand*{\Lop}{\cal{L}}

\newcommand*{\sympf}{\sigma}

\newcommand*{\bigcdot}{\bullet}

\DeclareMathOperator{\ad}{\mathrm{ad}}
\DeclareMathOperator{\Ad}{\mathrm{Ad}}
\newcommand*{\lieder}{{\mathcal{L}}}

\DeclareRobustCommand{\longleftmapsto}

\newcommand*{\aff}{{\mathrm{Aff}}}

\newcommand*{\bdd}{b}

\newcommand*{\pushf}{\ast}
\newcommand*{\pullb}{\ast}

\DeclareMathOperator{\diag}{\mathrm{diag}}
\newcommand*{\udl}[1]{\underline{#1}}

\newcommand*{\gain}{h}

\newcommand*{\Conf}{\mathrm{Conf}}

\newcommand*{\hsclass}{\cal{L}^2}

\DeclareMathOperator*{\Moy}{\scalerel*{\#}{\sum}}

\newcommand*{\hor}{\textrm{h}}
\newcommand*{\ver}{\textrm{v}}


\newenvironment{customthm}[1]
  {\innercustomthm}
  {\endinnercustomthm}

\newenvironment{customassum}[1]
  {\innercustomassum}
  {\endinnercustomassum}

\numberwithin{equation}{section}


\newtheorem{lemma}{Lemma}[section]

\newtheorem{proposition}[lemma]{Proposition}
\newtheorem{corollary}[lemma]{Corollary}

\theoremstyle{definition}
\newtheorem{example}[lemma]{Example}
\newtheorem{definition}[lemma]{Definition}
\newtheorem{remark}[lemma]{Remark}


\newcommand*{\N}{\mathbf{N}}

\newcommand*{\R}{\mathbf{R}}
\newcommand*{\CC}{\mathbf{C}}

\renewcommand*{\cal}[1]{{\mathcal{#1}}}

\renewcommand*{\frak}[1]{{\mathfrak{#1}}}
\renewcommand*{\rm}[1]{{\mathrm{#1}}}
\renewcommand*{\bf}[1]{{\mathbf{#1}}}
\renewcommand*{\sf}[1]{{\mathsf{#1}}}
\newcommand*{\ovl}[1]{\overline{#1}}

\newcommand*{\one}{\mathbf{1}}

\newcommand*{\mfd}{M}
\newcommand*{\e}{e}	
\newcommand*{\ii}{i} 	
\newcommand*{\dd}{\mathop{}\mathopen{} d}
\newcommand*{\vol}{\mathrm{Vol}}


\newcommand*{\cont}{C}

\newcommand*{\sch}{{\mathcal S}}
\DeclareMathOperator{\ft}{{\mathcal F}}


\DeclarePairedDelimiterX{\brak}[2]{\langle}{\rangle}{#1, #2}
\DeclarePairedDelimiterX{\inp}[2]{(}{)}{#1, #2}
\DeclarePairedDelimiterX{\comm}[2]{[}{]}{#1, #2}
\DeclarePairedDelimiterX{\poiss}[2]{\{}{\}}{#1, #2}
\DeclarePairedDelimiterX{\liebrak}[2]{[}{]}{#1, #2}
\DeclarePairedDelimiter{\parens}{(}{)}
\DeclarePairedDelimiter\abs{\lvert}{\rvert}
\DeclarePairedDelimiter\norm{\lVert}{\rVert}
\DeclarePairedDelimiter\jap{\langle}{\rangle}

\NewDocumentCommand{\bnorm}{sO{}m}{%
  {\IfBooleanTF{#1}
    {\bnormaux{\left|}{\right|}{#3}}
    {\bnormaux{#2|}{#2|}{#3}}}
}
\makeatletter
\newcommand{\bnormaux}[3]{\mathpalette\bnormaux@i{{#1}{#2}{#3}}}
\newcommand{\bnormaux@i}[2]{\bnormaux@ii#1#2}
\newcommand{\bnormaux@ii}[4]{%
  \sbox\z@{$\m@th#1#2#4#3$}%
  \sbox\tw@{$\m@th\|$}%
  \mathopen{\hbox to\wd\tw@{\hss\vrule height \ht\z@ depth \dp\z@ width .3\wd\tw@\hss}}%
  #4
  \mathclose{\hbox to\wd\tw@{\hss\vrule height \ht\z@ depth \dp\z@ width .3\wd\tw@\hss}}%
}
\makeatother


\newcommand*{\comp}{c}

\DeclareMathOperator{\dom}{dom}
\DeclareMathOperator{\tr}{tr}

\DeclareMathOperator{\supp}{supp}
\DeclareMathOperator{\dist}{dist} 
\DeclareMathOperator{\dvg}{div}

\DeclareMathOperator{\ran}{Ran}

\DeclareMathOperator{\quantization}{Op}
\newcommand*{\quantizationw}[1][]{{\quantization}_{#1}^{\mathrm{\scriptscriptstyle W}}\hspace{-0.075em}}
\newcommand*{\Op}[2][]{{\quantization}_{#1}\hspace{-0.075em}\parens*{#2}}

\newcommand*{\Opw}[2][]{{\quantization}_{#1}^{\mathrm{\scriptscriptstyle W}}\hspace{-0.075em}\parens*{#2}}

\DeclareMathOperator{\id}{Id}


\newcommand*{\strongto}[2][]{\xrightarrow[#2]{#1}}
\newcommand*{\weakto}[2][]{\xrightharpoonup[#2]{#1}}


\newcommand*{\set}[2]{\left\{#1:#2\right\}}
\newcommand*{\moyal}{\mathbin{\#}}


\setcounter{tocdepth}{2}
\title{{E}gorov's theorem in the {W}eyl--{H}{\"o}rmander calculus}
\author{Antoine Prouff}
\address{\normalsize Department of Mathematics, Purdue University, West Lafayette, IN, USA}
\email{aprouff@purdue.edu}
\date{\today}
\subjclass[2020]{Primary 35S30, 81Q20, 81S30; Secondary 35S05, 47D06.}
\keywords{Egorov's theorem, Microlocal analysis, Quantum-classical / wave-particle correspondence principle, Weyl--H{\"o}rmander calculus, Metrics on the phase space.}

\begin{document}

\begin{abstract}
We prove a general version of Egorov's theorem for evolution propagators in the Euclidean space, in the Weyl--Hörmander framework of metrics on the phase space. Mild assumptions on the Hamiltonian allow for a wide range of applications that we describe in the paper, including Schrödinger, wave and transport evolutions. We also quantify an Ehrenfest time and describe the full symbol of the conjugated operator. Our main result is a consequence of a stronger theorem on the propagation of quantum partitions of unity.
\end{abstract}

\maketitle

{
  \hypersetup{linkcolor=black}
  \tableofcontents
}

\Large
\section{Introduction and results}
\normalsize

One of the simplest instances of the correspondence principle is the description of the evolution of coherent states of the quantum harmonic oscillator by Schrödinger in 1926~\cite{Schrodinger:coherentstates}. One can check by an explicit computation that in $\R^d$, the wave function
\begin{equation*}
\varphi(t, x)
	= \dfrac{\e^{- \frac{\ii}{2} t d}}{\pi^{d/4}} \exp\left( - \dfrac{\abs{x - x_t}^2}{2} \right) \e^{- \frac{\ii}{2} \xi_t \cdot x_t + \ii \xi_t \cdot x} ,
		\qquad (t, x) \in \R \times \R^d ,
\end{equation*}
is a solution to the Schrödinger equation
\begin{equation*}
D_t \psi + \tfrac{1}{2} (- \Delta + \abs{x}^2) \psi = 0 ,
	\qquad D_t = \tfrac{1}{\ii} \partial_t ,
\end{equation*}
if (and only if) $t \mapsto (x_t, \xi_t)$ satisfy Newton's second law of classical mechanics with a force field $\vec F(x) = - x = - \nabla (\frac{1}{2} \abs{x}^2)$, i.e.\
\begin{equation*}
\left\{
\begin{aligned}
\dot x_t &= \xi_t \\
\dot \xi_t &= - x_t
\end{aligned}
\right. .
\end{equation*}
We can formulate this observation as saying that the wave packet centered at $x_0$ with momentum $\xi_0$ propagates under the quantum evolution along the classical trajectory of a point mass initially at position and momentum $(x_0, \xi_0)$. This situation is quite exceptional since the correspondence between classical and quantum mechanics appears regardless of any asymptotic regime. Actually this is a very particular instance of \emph{exact} Egorov's theorem due to the fact that the Hamiltonian is quadratic.

The modern treatment of the quantum-classical (or wave-particle) correspondence principle is carried out in the framework of microlocal analysis---see the books~\cite{Taylor:91,Hoermander:V3,Lerner:10,MartinezBook,DS:book,Zworski:book} for a comprehensive account of this theory. A landmark paper is the work of Egorov~\cite{Egorov}, who shows that local canonical transformation $\chi$ (also called symplectomorphism) can be quantized, namely there exists a (microlocally unitary Fourier integral) operator $U_\chi$ such that
\begin{equation} \label{eq:EgorovbyEgorov}
U_\chi \Op{a} U_\chi^\ast
	\approx \Op{a \circ \chi} .
\end{equation}
In this expression, $\Op{a}$ is a quantization procedure, that associates to a symbol $a$ (a smooth function on phase space $\R^d \times \R^d$) an operator $\Op{a}$ acting on $L^2(\R^d)$ (see the next section for more details). This equality, which is usually valid only up to a remainder term (except in very particular situations), states that a transformation at the symbolic level, i.e.\ on the classical-mechanical side, has a counterpart on the quantum-mechanical side. In this paper, we are interested in going the other way around (a family of statements also called Egorov's theorem in the literature): starting from a self-adjoint operator $P = \Op{p}$ and the associated unitary propagator $\e^{- \ii t P}$, we prove a statement of the form
\begin{equation} \label{eq:egorovintro}
\e^{\ii t P} \Op{a} \e^{- \ii t P}
	\approx \Op{a \circ \phi^t} ,
\end{equation}
where $(\phi^t)_{t \in \R}$ is the so-called Hamiltonian flow associated with the symbol $p$, for a general class of Hamiltonians $p$ and symbols $a$. A statement like~\eqref{eq:egorovintro} allows to study the action of the propagator from the knowledge of the classical flow $(\phi^t)_{t \in \R}$ on phase space. Some motivations are listed below.

Since Egorov's paper~\cite{Egorov}, many results of the form~\eqref{eq:EgorovbyEgorov} and~\eqref{eq:egorovintro} have been proved in different contexts, all referred to as ``Egorov theorems". We refer to Taylor's book~\cite[Chapter 6]{Taylor:91} for a microlocal version of~\eqref{eq:egorovintro}. See Zworski's book~\cite[Chapter 11]{Zworski:book} for a comprehensive presentation in the semiclassical setting.

Although the present paper is mainly motivated by applications in control theory, Egorov-type theorems have been applied in a wide range of mathematical situations. Let us mention a few of them (the list is not exhaustive).
\begin{itemize}
\item They have multiple consequences in spectral geometry. They allow for instance to relate singularities of the trace of the half-wave propagator with the length spectrum of compact Riemannian manifolds; results in this direction go back to Duistermaat--Guillemin~\cite{DG:75} and Chazarain~\cite{Chazarain:74}. We refer to the recent paper by Canzani and Galkowski~\cite{CG:23} for an application to improved remainders in the Weyl law.
\item Quantum chaos is perhaps the field where Egorov's theorem has been mostly used. It has proved very powerful to describe the concentration or delocalization properties of Laplace eigenfunctions on compact Riemannian manifolds with a chaotic underlying classical dynamics, through semiclassical defect measures. See for example the paper of Colin de Verdière~\cite{CdV85} on quantum ergodicity. We also refer for instance to the work of Anantharaman~\cite{An:08}, Anantharaman--Nonnenmacher~\cite{AnNon}, Rivière~\cite{Riviere:10duke,Riviere:10nonpos,Riviere:14}, Anantharaman--Rivière~\cite{AR:12} and Dyatlov--Jin--Nonnenmacher~\cite{DJN}. See also the results of Dyatlov--Guillarmou~\cite{DG:14} in non-compact manifolds.
\item In control theory, the idea of using Egorov's theorem to prove observability inequalities was introduced for the wave equation by Laurent and Léautaud~\cite{LL:16}, based on earlier works by Dehman and Lebeau~\cite{DL:09}. We recently applied a related approach to study the observability of the Schrödinger equation in the Euclidean space~\cite{P:23}.
\item In the theory of scattering resonances, examples of application of Egorov's theorem can be found for instance in papers by Nonnenmacher and Zworski~\cite{NonZwo09,NonZwo15}, Dyatlov~\cite{Dyatlov:15} and Dyatlov--Galkowski~\cite{DyatlovGalkowski:17}. See also~\cite[Section 7.1]{DZ:19} for an interesting application to lower bound resolvent estimates in the context of geometric scattering.
\item In the wider context of resolvent estimates, Arnaiz and Rivière~\cite{AR:20} discuss Egorov's theorem in the Gevrey category. For non-self-adjoint versions of Egorov's theorem, we refer e.g.\ to the works of Royer~\cite{Royer:these,Royer:10a,Royer:10b}, Rivière~\cite{Riviere:12} and Léautaud~\cite{Leautaud:23}.
\item Ideas related to Egorov's theorem were also used recently to study fractal Weyl laws for Anosov flows by Faure and Tsujii~\cite{FT:23,FT:arxiv}. We shall mention this again in Section~\ref{subsubsec:X}.
\end{itemize}

The goal of the present paper is to prove a version of Egorov's theorem in the Weyl--Hörmander calculus, to provide a unifying viewpoint of the aforementioned approaches. In a forthcoming work, we plan to give applications to control theory (stabilization, observability), where the Weyl--Hörmander calculus arises as a natural tool to study various wave or Schrödinger-type equations in the Euclidean space. In the context of the Weyl--Hörmander calculus, a study of the quantization of canonical transformation in the continuation of Egorov's work~\cite{Egorov} (results of the form~\eqref{eq:EgorovbyEgorov}) was proposed by Beals~\cite[Section 5]{Beals:74II} and later by Hörmander~\cite[Section 9]{Hor:79}. We also discuss in more detail in Section~\ref{subsec:Bony} the contributions of Bony.

An important question, originating in physics~\cite{Chirikov,Zaslavsky}, it to figure out the time range on which the approximation~\eqref{eq:egorovintro} is true, the so-called Ehrenfest time. In a semiclassical setting, it was proved by Bambusi, Graffi and Paul~\cite{BambusiGraffiPaul}, and Bouzouina--Robert~\cite{BouzouinaRobert} that this time behaves logarithmically with respect to the semiclassical parameter. The inverse Lyapunov exponent of the classical dynamics appears as a constant factor in the Ehrenfest time, which means that the semiclassical approximation is valid all the longer as the underlying classical dynamics is stable~\cite[Section 11.4]{Zworski:book}. We shall recover this in Theorem~\ref{thm:main} below in a more general setting. See~\cite[Section 5.2]{AnNon},~\cite[Theorem 7.1]{Riviere:10duke},~\cite[Section 3.3 and Appendix C]{DG:14} and~\cite[Section 2.2.2 and Appendix A]{DJN} for various occurences of Ehrenfest times in the literature, as well as the recent work~\cite{GHZ:24} on Lindblad evolutions.

The rest of the introduction is structured as follows. In Section~\ref{subsec:classicalandquantum}, we set the geometric framework of this paper, we introduce the Weyl quantization and we define precisely the classical and quantum dynamics studied in the paper. We give some additional notation and conventions in Section~\ref{subsec:notationconventions}. Next we state our main result in Section~\ref{subsec:mainresult}, and comment it in the subsequent Section~\ref{subsec:comments}. Section~\ref{subsec:asadysonseries} is a discussion on the Egorov asymptotic expansion in terms of a Dyson series. The reader can find three concrete examples of applications of the main theorem of this paper (Theorem~\ref{thm:main}) in Section~\ref{subsec:examples}. Then we introduce the basics of the theory of metrics on the phase space in Section~\ref{subsec:metricsonphasespace}, which is not necessary to understand the statement of the main result but is needed as a tool box for the proofs. Then we introduce a result on the propagation of quantum partitions of unity adapted to a metric $g$ in Section~\ref{subsec:partition}. This is in fact the core result of this article (our main result is a consequence of the latter). We finish in Sections~\ref{subsec:Bony} and~\ref{subsec:strategy} with a review of related works by Jean-Michel Bony and describe the strategy of the proof.

\subsection{The classical and quantum dynamics} \label{subsec:classicalandquantum}

\subsubsection{Geometric framework}

We work in the manifold $\mfd = \R^d$, viewed as a finite-dimensional real affine space. The reason we stick to the notation $\mfd$ instead of $\R^d$ is that we will strive to avoid any use of the vector space or Euclidean structure of $\R^d$. Classical mechanics on the configuration space $\mfd$ takes place naturally on the cotangent bundle $T^\star \mfd$. Typical points of $T^\star \mfd$ will be denoted by $\rho = (x, \xi)$. The manifold $T^\star \mfd$ is called the \emph{phase space} of classical mechanics. It is equipped canonically with a symplectic form $\sympf$, defined intrinsically as the exterior derivative of the Liouville {$1$-form}~\cite[Chapter 1]{DS:book}. In coordinates, it reads $\sympf = \dd \xi \wedge \dd x$.

The Hilbert space that we consider is the usual space $L^2(\mfd)$ of square-integrable functions with respect to the Lebesgue measure on $\mfd$.\footnote{Rigorously, we would rather work with the intrinsic $L^2$ space, that is the completion of the space of half-densities with respect to the $L^2$ norm~\cite[Section 9.1]{Zworski:book}. In the present case, this space identifies with the space of square-integrable functions through the (non-canonical) choice of a normalization of the Lebesgue measure.}

\begin{remark}
The analysis of a quantum mechanical system on $\mfd$, from the Weyl--Hörmander framework perspective, essentially consists in considering a good Riemannian metric $g$ on $T^\star \mfd$ which is relevant to the typical scales of the problem. For this reason, it would be quite confusing to rely on a Euclidean structure on $\mfd$, in addition to a Weyl--Hörmander metric. This is the motivation for writing $\mfd$ instead of $\R^d$.
\end{remark}

\subsubsection{Quantization}

The affine structure of $\mfd$ is sufficient to make sense of the Weyl quantization which we use throughout the article. The affine structure is given by the transitive and free action by translation of a finite-dimensional real vector space $V \simeq \R^d$:
\begin{equation} \label{eq:affinestructureM}
\begin{split}
V \times \mfd &\longrightarrow M \\
(v, x) &\longmapsto x + v .
\end{split}
\end{equation}
This induces a natural action on the phase space by
\begin{equation} \label{eq:affinestructurephasespace}
\begin{split}
W \times T^\star \mfd &\longrightarrow T^\star M \\
(\zeta, \rho) &\longmapsto \rho + \zeta ,
\end{split}
\end{equation}
where $W = V \oplus V^\star$. Notice that all the tangent planes $T_\rho (T^\star \mfd)$ are naturally identified with $W$ through this action.

Given $a \in \sch(T^\star \mfd)$, the Weyl quantization of $a$ is the operator $\Opw{a}$ defined by
\begin{equation*}
\left[\Opw{a} u\right](x)
	= \int_{T^\star \mfd} \e^{\ii \xi.(x - y)} a\left(\dfrac{x + y}{2}, \xi\right) u(y) \dd y \dd \xi ,
		\qquad x \in \mfd ,
\end{equation*}
where the measure $\dd y \dd \xi$ on $T^\star \mfd \simeq \R^{2d}$ is a suitable normalization of the Lebesgue measure, or equivalently the symplectic volume,\footnote{Recall that the symplectic volume is the {$d$-fold} exterior product  $\sympf^d = \sympf \wedge \sympf \wedge \cdots \wedge \sympf$, where $d = \dim \mfd$.} so that $\Opw{1} = \id$. This quantization procedure can be extended to tempered distributions $a \in \sch'(T^\star \mfd)$. The operator $\Opw{a} : \sch(\mfd) \to \sch'(\mfd)$ is defined by
\begin{equation} \label{eq:defWeylquantization}
\brak*{\Opw{a} u}{v}_{\sch', \sch(\mfd)}
	:= \brak*{a}{u \ovee \bar v}_{\sch', \sch(T^\star \mfd)} ,
		\qquad \forall u, v \in \sch(\mfd) ,
\end{equation}
where $u_1 \ovee u_2$ is the Wigner transform of $u_1, u_2 \in \sch(\mfd)$:
\begin{equation} \label{eq:defWigner}
(u_1 \ovee u_2)(x, \xi)
	= \int_V \bar u_2\left(x + \dfrac{v}{2}\right) u_1\left(x - \dfrac{v}{2}\right) \e^{\ii \xi.v} \dd v ,
		\qquad \forall (x, \xi) \in T^\star \mfd .
\end{equation}
Here we follow the presentation of~\cite[Section 2.1]{Lerner:10}. Recall that the Weyl quantization provides an unequivocal notion of \emph{full symbol} of an operator (see Proposition~\ref{prop:SchwartzWeylkernel}). Classical properties of the Weyl quantization are recalled in Appendix~\ref{app:Moyal}. When $a_1$ and $a_2$ belong to appropriate classes of functions (for instance $\sch(\mfd)$ or symbol classes), the symbol of the composition of $\Opw{a_1}$ and $\Opw{a_2}$ is the Moyal product $a_1 \moyal a_2$, given by
\begin{equation} \label{eq:formulaMoyalproduct}
\left(a_1 \moyal a_2\right)(\rho)
	= \int_{W \times W} a_1\left(\rho + \dfrac{\zeta_1}{\sqrt{2}}\right) a_2\left(\rho + \dfrac{\zeta_2}{\sqrt{2}}\right) \e^{- \ii \sympf(\zeta_1, \zeta_2)} \dd \zeta_1 \dd \zeta_2
\end{equation}
(possibly to be understood as an oscillatory integral). It satisfies
\begin{equation} \label{eq:Moyalproddef}
\Opw{a_1 \moyal a_2}
	= \Opw{a_1} \Opw{a_2} .
\end{equation}
We discuss further composition of such operators in Section~\ref{subsec:pseudocalcWH}.

\subsubsection{The classical dynamics}

Consider a smooth real-valued function $p$ on $T^\star \mfd$ with temperate growth. The Hamiltonian vector field $H_p$ associated with $p$ is the unique vector field satisfying
\begin{equation} \label{eq:defHp}
\sympf\left(H_p, X\right)
	= - \dd p. X ,
\end{equation}
for all smooth vector field $X$ on $T^\star \mfd$.
The so-called Hamiltonian flow, denoted by $(\phi^t)_{t \in R}$, is generated by $H_p$ and satisfies the o.d.e.\
\begin{equation}
\dfrac{\dd}{\dd t} \phi^t
	= H_p \circ \phi^t ,
\end{equation}
which is Hamilton's equations of classical mechanics. Under mild assumptions on $p$, this flow is well-defined for all times. It is known that the Hamiltonian flow is a group of symplectomorphisms, that is to say $(\phi^t)^\pullb \sympf = \sympf$ for all times. As a consequence, it also preserves the symplectic volume (here the Lebesgue measure).

The Hamiltonian flow acts on Schwartz functions $a_0 \in \sch(T^\star \mfd)$ through $t \mapsto a_0 \circ \phi^t$, which solves the p.d.e.\
\begin{equation} \label{eq:pdeclassicaldyn}
\boxed{\partial_t a = H_p a}
\end{equation}
where $H_p$ is viewed as a differential operator of order~$1$. For any function $a$ on phase space, we define
\begin{equation} \label{eq:defcompositionHp}
\e^{t H_p} a
	:= a \circ \phi^t ,
		\qquad t \in \R .
\end{equation}
The notation is justified by~\eqref{eq:pdeclassicaldyn}. Actually, we can give a stronger meaning to $\e^{t H_p}$ for $L^2$~functions on phase space. Modulo some natural assumptions, we shall see $H_p$ as a skew-symmetric operator on $L^2(T^\star \mfd)$ with domain $\sch(T^\star \mfd)$ (here again, the space $L^2(T^\star \mfd)$ involves the Lebesgue measure, or symplectic volume, on $T^\star \mfd$). As a consequence of Stone's theorem~\cite[VIII.4]{RS:V1}, we show in Proposition~\ref{prop:isometrycalHp} that the family of operators $(\e^{t H_p})_{t \in \R}$, viewed as operators acting on $L^2(T^\star \mfd)$, is a strongly continuous one-parameter unitary group, generated by a skew-adjoint extension of $H_p$, that we call the \emph{classical dynamics}. We will use this unitary group to describe the quantum dynamics defined below.

\subsubsection{The quantum dynamics}

We consider a self-adjoint operator $P$ with domain $\dom P$, acting on $L^2(\mfd)$. We denote by $p$ the Weyl symbol of $P$, so that $P = \Opw{p}$, and assume it is a well-behaved symbol (smooth and with temperate growth). Stone's theorem states that the family of operators $(\e^{- \ii t P})_{t \in \R}$ constructed by functional calculus is a strongly-continuous one-parameter unitary group. For any $u \in L^2(\mfd)$, the map $t \mapsto \e^{- \ii t P} u$, belonging to $\cont^0(\R; L^2(\mfd))$, solves in a weak sense the initial value problem
\begin{equation} \label{eq:Scheq}
D_t \psi
	= - P \psi ,
		\qquad
\psi(0)
	= u ,
\end{equation}
with $D_t = \frac{1}{\ii} \partial_t$. For this reason, we will refer to $\e^{- \ii t P}$ as the Schrödinger propagator associated with $P$. The Schrödinger equation can be studied in the so-called Heisenberg picture of quantum mechanics: letting $A_0$ be a bounded operator on $L^2(\mfd)$ (a ``quantum observable"), we have
\begin{equation*}
\inp*{\e^{- \ii t P} u}{A_0 \e^{- \ii t P} u}_{L^2}
	= \inp*{u}{A(t) u}_{L^2} ,
		\qquad \forall t \in \R ,
\end{equation*}
where the map $t \mapsto A(t) := \e^{\ii t P} A_0 \e^{- \ii t P}$ solves the equation
\begin{equation} \label{eq:Heisenbergpicture}
D_t A
	= \comm*{P}{A} ,
		\qquad
A(0)
	= A_0 .
\end{equation}
Expressing~\eqref{eq:Heisenbergpicture} at the level of symbols, namely writing $A(t) = \Opw{a(t)}$ and using~\eqref{eq:Moyalproddef}, we obtain formally
\begin{equation*}
D_t \Opw{a}
	= \Opw{p \moyal a - a \moyal p} ,
\end{equation*}
where $P = \Opw{p}$. Therefore, $t \mapsto a(t)$ is a solution to the evolution equation
\begin{equation} \label{eq:pdequantumdyn}
\boxed{\partial_t a = \cal{H}_p a}
\end{equation}
where $\cal{H}_p$ is the operator
\begin{equation} \label{eq:defcalHp}
\cal{H}_p a
	:= \ii \left(p \moyal a - a \moyal p\right) , \qquad
		a \in \sch(T^\star \mfd) .
\end{equation}

We may now introduce the main object studied in this article.

\begin{definition} \label{def:defsymbconjugatedop}
Let $a \in \sch'(T^\star \mfd)$ and $t \in \R$, and suppose $\e^{\ii t P} \Opw{a} \e^{\ii t P}$ makes sense as a continuous operator $\sch(\mfd) \to \sch'(\mfd)$. Then we let $\e^{t \cal{H}_p} a$ be the Weyl symbol of the operator $\e^{\ii t P} \Opw{a} \e^{\ii t P}$, namely the unique tempered distribution on $T^\star \mfd$ such that
\begin{equation} \label{eq:implicitdef}
\Opw{\e^{t \cal{H}_p} a}
	= \e^{\ii t P} \Opw{a} \e^{- \ii t P} .
\end{equation}
\end{definition}
Observe that this definition applies in particular to any tempered distribution $a$ such that $\Opw{a}$ extends to a bounded operator on $L^2(\mfd)$.
Moreover this definition makes sense in virtue of Proposition~\ref{prop:SchwartzWeylkernel} (a consequence of the Schwartz kernel theorem), and the notation is justified by~\eqref{eq:pdequantumdyn}.

Actually, we may give a stronger meaning to $\e^{t \cal{H}_p}$ for $L^2$~symbols. The operator $\cal{H}_p$ introduced in~\eqref{eq:defcalHp} can be seen as a skew-symmetric operator on $L^2(T^\star \mfd)$ with domain $\sch(T^\star \mfd)$.
We show in Proposition~\ref{prop:isometrycalHp} that the family of operators $(\e^{t \cal{H}_p})_{t \in \R}$, viewed as operators acting on $L^2(T^\star \mfd)$, is in fact a strongly continuous one-parameter unitary group, generated by a skew-adjoint extension of $\cal{H}_p$, that we call the \emph{quantum dynamics}. Our goal in this paper is to study this unitary group.

\subsubsection{The quantum-classical (or wave-particle) correspondence principle}

The generator $H_p$ of the classical dynamics is a fairly simple object, namely a vector field. In contrast, the operator $\cal{H}_p$ looks much more difficult to describe.
The idea of a correspondence between the classical and the quantum dynamics follows from pseudo-differential calculus, as we explain now.
We define the operator $\cal{H}_p^{(3)}$ by
\begin{equation} \label{eq:calHpHpHp3}
\cal{H}_p^{(3)}
	:= \cal{H}_p - H_p .
\end{equation}
If $p$ has temperate growth (which is the case under our assumptions stated in Section~\ref{subsec:mainresult}), \eqref{eq:calHpHpHp3} defines a continuous operator $\sch(\mfd) \to \sch(\mfd)$ (a consequence of Proposition~\ref{prop:pseudocalcconf} below).
One can see~\eqref{eq:calHpHpHp3} as a pseudo-differential calculus identity at order~$3$ (see~\eqref{eq:pdidentity}):
\begin{equation} \label{eq:calcorder3}
\cal{H}_p a
	= \ii (p \moyal a - a \moyal p)
	= \underbrace{\ii (pa - ap)}_{= 0} + \poiss*{p}{a} + \cal{H}_p^{(3)} a ,
		\qquad a \in \sch(T^\star \mfd) ,
\end{equation}
where $\poiss*{p}{a} = H_p a$ is the Poisson bracket.\footnote{With the notation of~\eqref{eq:defhatPj}, we have
\begin{equation} \label{eq:defHp3}
\cal{H}_p^{(3)}
	= \widehat{\cal{P}}_3(p, \bigcdot) - \widehat{\cal{P}}_3(\bigcdot, p) .
\end{equation}
The notation $\cal{H}_p^{(3)}$ is justified by the fact that the {order-$2$} terms in the pseudo-differential calculus~\eqref{eq:calcorder3} cancel for symmetry reasons.}
The remainder operator $\cal{H}_p^{(3)}$ contains all the ``non-locality" of quantum mechanics, and quantifies the deviation of the latter from classical mechanics. It involves derivatives of order $\ge 3$ of $p$ (actually it can be thought as a non-local version of the differential operator $\partial^3p \cdot \partial^3$). Thus we shall make assumptions on $\nabla^3 p$ ensuring that $\cal{H}_p^{(3)} a$ in~\eqref{eq:calcorder3} can indeed be considered as a lower order term (in the usual semiclassical setting, it would be of order $\hslash^3$).

With these definitions as hand, a statement like~\eqref{eq:egorovintro}, which is the usual way Egorov's theorem is presented, may now be reformulated equivalently as
\begin{equation*}
\e^{t \cal{H}_p} a
	= \e^{t H_p} a + \textrm{remainder terms.}
\end{equation*}
This is exactly what we achieve in Theorem~\ref{thm:main}. In fact, we prove that the conjugated operator in the left-hand side of~\eqref{eq:egorovintro} is pseudo-differential, and quantify this property in terms of continuity estimates of the quantum dynamics $(\e^{t \cal{H}_p})_{t \in \R}$ on symbol classes. The difference between the quantum and the classical dynamics will be studied via the Dyson series expansion associated with the decomposition~\eqref{eq:calHpHpHp3} of $\cal{H}_p$. See Section~\ref{subsec:asadysonseries}.

\subsection{Notation and conventions} \label{subsec:notationconventions}

\subsubsection{Symplectic and Riemannian structures on phase space}

We follow the conventions of Lerner \cite[Section 4.4.1]{Lerner:10} here (see also Hörmander~\cite[Chapter XVIII]{Hoermander:V3}). Recall that the symplectic form $\sympf$ on $T^\star \mfd$ is a non-degenerate {$2$-form}. Therefore it induces a bundle isomorphism
\begin{equation} \label{eq:isomorphismsympf}
\sympf : T (T^\star \mfd) \longrightarrow T^\star (T^\star \mfd) ,
\end{equation}
still denoted by $\sympf$, through the formula
\begin{equation*}
\brak*{\sympf X}{Y}_{T^\star (T^\star \mfd), T (T^\star \mfd)}
	:= \sympf(X, Y) ,
\end{equation*}
for all pair of vector fields $X, Y$ on $T^\star \mfd$. The dual map
\begin{equation} \label{eq:sympfstar}
\sympf^\star : T (T^\star \mfd) \longrightarrow T^\star (T^\star \mfd)
\end{equation}
satisfies $\sympf^\star = - \sympf$ given that $\sympf$ is alternating.

Let $g$ be a Riemannian metric on $T^\star \mfd$, that is to say a smooth symmetric {$2$-tensor} field that is positive definite. We denote by $\vol_g$ the Riemannian volume associated with~$g$. We denote by $\abs{\zeta}_g(\rho) = \abs{\zeta}_{g_\rho} = \sqrt{g_\rho(\zeta, \zeta)}$ the norm induced by $g$ on the tangent space $T_\rho (T^\star \mfd) \simeq W \ni \zeta$. We will also write $g_\rho(\zeta) := g_\rho(\zeta, \zeta)$ as shorthand. Furthermore, we introduce the notation $\jap{\zeta}_g := \sqrt{1 + \abs{\zeta}_g^2}$.
Throughout this paper, given a quadratic form $q$ on $T^\star \mfd$, we denote by
\begin{equation*}
\dist_{q}(\rho_1, \rho_2)
	= \abs{\rho_2 - \rho_2}_{q} ,
		\qquad \rho_1, \rho_2 \in T^\star \mfd ,
\end{equation*}
the distance between $\rho_1$ and $\rho_2$ for the constant metric equal to $q$. This definition naturally extends to distances between points and sets or between two sets.

A metric $g$ can be regarded as a bilinear bundle map
\begin{equation*}
g_\rho : T_\rho (T^\star \mfd) \times T_\rho (T^\star \mfd)
	\longrightarrow \R ,
\end{equation*}
assigning to each phase space point $\rho \in T^\star \mfd$ an inner product on $T_\rho (T^\star \mfd)$; or we can see $g$ as a bundle isomorphism
\begin{equation*}
g : T (T^\star \mfd) \longrightarrow T^\star (T^\star \mfd) ,
\end{equation*}
still denoted by $g$, through the formula
\begin{equation*}
\brak*{g X}{Y}_{T^\star (T^\star \mfd), T (T^\star \mfd)}
	:= g(X, Y) ,
		\qquad \forall X, Y \in \Gamma(T^\star \mfd) ,
\end{equation*}
where $\Gamma(T^\star \mfd)$ refers to the space of smooth vector fields on $T^\star \mfd$. We will also use the inverse map
\begin{equation} \label{eq:defg-1}
g^{-1} : T^\star (T^\star \mfd) \longrightarrow T (T^\star \mfd) ,
\end{equation}
which can be seen as a smooth positive definite symmetric {$2$-tensor} field acting on {$1$-forms} instead of vector fields, and reads
\begin{equation*}
g^{-1}(\omega, \eta)
	= \brak*{\omega}{g^{-1} \eta}_{T^\star (T^\star \mfd), T (T^\star \mfd)}
	= g\left(g^{-1} \omega, g^{-1} \eta\right) ,
		\qquad \forall \omega, \eta \in \Lambda^1(T^\star \mfd) ,
\end{equation*}
where $\Lambda^1(T^\star \mfd)$ refers to the space of {$1$-forms} on $T^\star \mfd$. The maps $g$ and $g^{-1}$ are usually called the musical isomorphisms~\cite[Section 7.6]{Lee:mfddiffgeo} (respectively the flatting and sharping operators).

We now give an important definition in the Weyl--Hörmander theory.
\begin{definition}[{$\sympf$-dual} metric \--- {\cite[Definition 4.4.22]{Lerner:10}}] \label{def:sympfdual}
Let $g$ be a Riemannian metric on $T^\star \mfd$. We define the bundle map
\begin{equation} \label{eq:defsympfdual}
g^\sympf := \sympf^\star g^{-1} \sympf : T(T^\star \mfd) \longrightarrow T^\star (T^\star \mfd) .
\end{equation}
It induces a Riemannian metric\footnote{The verification that $g^\sympf$ gives a genuine Riemannian metric goes as follows: for all pair of vector fields $X, Y$ on $T^\star \mfd$, one has
\begin{equation*}
\brak*{g^\sympf X}{Y}_{T^\star (T^\star \mfd), T(T^\star \mfd)}
	= g^{-1}(\sympf X, \sympf Y)
	= g\left(g^{-1} \sympf X, g^{-1} \sympf Y\right) .
\end{equation*}} $g^\sympf$ on $T^\star \mfd$, called the {$\sympf$-dual} metric associated with $g$.
\end{definition}
This definition gives a representation of the metric $g^{-1}$ using the isomorphism~\eqref{eq:isomorphismsympf} given by the symplectic form. The {$\sympf$-duality} $g \mapsto g^\sympf$ is an involution. A metric satisfying $g = g^\sympf$ will be called a \emph{symplectic metric}. An alternative definition consists in introducing the (unique) bundle map $J_g : T (T^\star \mfd) \to T(T^\star \mfd)$ such that
\begin{equation} \label{eq:Jg}
g(X, Y)
	= \sympf(X, J_g Y) ,
\end{equation}
for all vector fields $X, Y$ on $T^\star M$. Then one can check that
\begin{equation} \label{eq:Jg-1}
g^\sympf(X, Y)
	= \sympf({J_g}^{-1} X, Y) .
\end{equation}
Just as the inverse $g \mapsto g^{-1}$, {$\sympf$-duality} reverses the order,\footnote{This is a consequence of $\displaystyle \abs*{\zeta}_{g^\sympf} = \abs*{\sympf \zeta}_{g^{-1}} = \sup_{\zeta' \in W \setminus \{0\}} \dfrac{\abs{\sympf(\zeta, \zeta')}}{\abs{\zeta'}_g}$.} in the sense that
\begin{equation} \label{eq:sympfduality}
g_1 \le g_2
	\qquad \Longleftrightarrow \qquad
g_1^\sympf \ge g_2^\sympf .
\end{equation}
We will often use the property $(cg)^\sympf = c^{-1} g^\sympf$ (where the conformal factor~$c$ is not necessarily constant).
Given a diffeomorphism $\varphi : T^\star \mfd \to T^\star \mfd$, we denote by $\varphi^\pullb g$ the pullback of $g$ by $\varphi$, that is
\begin{equation*}
\forall X, Y \in \Gamma(T^\star \mfd) , \qquad
	(\varphi^\pullb g)_\rho(X, Y)
		= g_{\varphi(\rho)}(\dd \varphi(\rho). X, \dd \varphi(\rho). Y) ,
\end{equation*}
which is still a Riemannian metric. In addition, given a Riemannian metric $g$ and a vector field $X$, we recall that the Lie derivative of $g$ with respect to $X$ is given by
\begin{equation} \label{eq:deflieder}
\lieder_X g
	= {\dfrac{\dd}{\dd t}}_{\vert t = 0} (\varphi_X^t)^\pullb g ,
\end{equation}
where $\varphi_X^t$ is the (locally well-defined) flow generated by the vector field $X$.

\subsubsection{Tensors and norms} \label{subsub:tensorsandnorms}

The affine structure on $M$ provides a connection $\nabla$ on $\mfd$ which is torsion free and has vanishing curvature, and extends naturally to $T^\star \mfd$. It allows to make sense of the {$k$-th} differential of a smooth function $a : T^\star \mfd \to \CC$, that we denote by $\nabla^k a$, regardless of a Euclidean structure on $T^\star \mfd$~\cite[Section 12.8]{Lee:mfddiffgeo}. The differentials $\nabla^k a$ are instances of covariant tensors, namely they are naturally paired with vector fields $X \in \Gamma(T^\star \mfd)$:
\begin{equation*}
\begin{aligned}
\nabla^k a : \Gamma(T^\star \mfd)^k
	&\longrightarrow \cont^\infty(T^\star \mfd) \\
	(X_1, X_2, \ldots, X_k)
		&\longmapsto \nabla^k a (X_1, X_2, \ldots, X_k) = \nabla_{X_1, X_2, \ldots, X_k} a ,
\end{aligned}
\end{equation*}
as opposed to vector fields (contravariant tensors) which are naturally paired with differential {$1$-forms} $\omega \in \Lambda^1(T^\star \mfd)$. Given a general {$k$-covariant}, {$l$-contravariant} tensor $B$, namely a {$\cont^\infty(T^\star \mfd)$-multlinear} map
\begin{equation} \label{eq:deftensor}
\begin{aligned}
T : \Gamma(T^\star \mfd)^k \times \Lambda^1(T^\star \mfd)^l
	&\longrightarrow \cont^\infty(T^\star \mfd) \\
	(X_1, X_2, \ldots, X_k, \omega_1, \omega_2, \ldots, \omega_l)
	&\longmapsto B(X_1, X_2, \ldots, X_k, \omega_1, \omega_2, \ldots, \omega_l) ,
\end{aligned}
\end{equation}
its norm with respect to a Riemannian metric $g$ on $T^\star \mfd$ is the map
\begin{equation*}
\abs{B}_g
	= \abs{B}_g(\rho)
	= \sup_{\substack{\abs{X_j}_g(\rho) \le 1 \\ \abs{\omega_n}_{g^{-1}}(\rho) \le 1}} \abs*{B(X_1, X_2, \ldots, X_k, \omega_1, \omega_2, \ldots, \omega_l)} ,
		\qquad \rho \in T^\star \mfd .
\end{equation*}
Analogously to tensors, if $\kappa : T^\star \mfd \to T^\star \mfd$ is a smooth map, we can also define for all $k \in \N^\ast$ and all $\rho \in T^\star \mfd$ a norm $\abs{\nabla^k \kappa}_g(\rho)$ (beware that $\nabla^k \kappa$ is not really a tensor since it maps a collection of tangent vectors at $\rho$ to a tangent vector at $\kappa(\rho)$). See Section~\ref{subsec:norms} for more details.

Throughout the paper, we shall write $\abs{B}_{g, \infty} = \sup_{T^\star \mfd} \abs{B}_g$ and $\abs{\nabla^k \kappa}_{g, \infty} = \sup_{T^\star \mfd} \abs{\nabla^k \kappa}_g \in [0, + \infty]$.

\begin{remark} \label{rmk:polarintro}
Since the connection $\nabla$ has vanishing torsion and curvature, derivatives of a symmetric tensor are still symmetric tensors. Moreover, for any symmetric {$\ell$-linear} form $B$, one can show that
\begin{equation*}
\sup_{\abs{X} = 1} \abs*{B(X^\ell)}
	= \sup_{\abs{X} = 1} \abs*{B(X, X, \ldots, X)}
	= \sup_{\abs{X_j} = 1} \abs*{B(X_1, X_2, \ldots, X_\ell)} ,
\end{equation*}
provided the norm $\abs{\bigcdot}$ comes from an inner product (see for instance \cite{Harris:96}).
Therefore, to estimate the norm of a tensor $T$ with respect to a metric $g$, it will be sufficient to give bounds for the diagonal elements of the tensor only. See Remark~\ref{rmk:polarization} for more details.
\end{remark}

\subsection{Metrics on the phase space}

We are now ready to introduce the Weyl--Hörmander framework of metrics on the phase space. Here again we follow~\cite{Lerner:10,Hoermander:V3}. Let us give a short account of the roots of this theory. General algebras of pseudo-differential operators where introduced by Kohn and Nirenberg~\cite{KN:65PDO} in the 1960's. Later, Hörmander gave a generalization of the Kohn--Nirenberg symbol classes motivated by the study of hypoelliptic operators~\cite{Hor:66}. In~\cite{BF:73Proc,BF:74,Beals:75}, Beals and Fefferman introduced even more general classes of symbols for the study of local solvability of linear partial differential equations. Finally, a unifying framework using Riemannian metrics on the phase space was designed by Hörmander~\cite{Hor:79} (see also~\cite{Dencker:86}).

Given a Riemannian metric $g$, a point $\rho_0 \in T^\star \mfd$ and $r \ge 0$, we introduce the open ball
\begin{equation} \label{eq:defball}
B_r^g(\rho_0)
	= \set{\rho \in T^\star \mfd}{\abs*{\rho - \rho_0}_{g_{\rho_0}} < r} .
\end{equation}

In our analysis, the most important quantity associated with a phase space metric $g$ is its \emph{gain function}. We recall that $W \simeq T_\rho (T^\star \mfd)$ is the real vector space given by the affine structure~\eqref{eq:affinestructurephasespace} on $T^\star \mfd$.
\begin{definition}[Gain function] \label{def:gaing}
Let $g$ be a Riemannian metric on $T^\star \mfd$. Then we define the gain function of $g$ as
\begin{equation*}
\gain_g(\rho)
	:= \sup_{\zeta \in W \setminus \{0\}} \dfrac{\abs{\zeta}_{g_\rho}}{\abs{\zeta}_{g_\rho^\sympf}} ,
		\qquad \rho \in T^\star \mfd ,
\end{equation*}
where $g^\sympf$ is the {$\sympf$-dual} metric defined in~\eqref{eq:defsympfdual}. We also define the maximal gain to be
\begin{equation*}
\udl{\gain}_g
	:= \sup_{T^\star \mfd} \gain_g .
\end{equation*}
\end{definition}

One way to compute this function is to recall the map $J_g$ introduced in~\eqref{eq:Jg}. One can check that
\begin{equation} \label{eq:hgJg}
\gain_g
	= \sup_{\zeta \in W \setminus \{0\}} \sqrt{\dfrac{g(J_g \zeta)}{g^\sympf(J_g \zeta)}}
	= \sup_{\zeta \in W \setminus \{0\}} \dfrac{\abs{J_g \zeta}_g}{\abs{\zeta}_g}
	= \abs*{J_g}_g .
\end{equation}

In order to study pseudo-differential operators, e.g. their $L^2$ boundedness, it is important that the metric $g$ satisfies several requirements.

\begin{definition}[Admissible metric] \label{def:admissiblemetric}
A Riemannian metric $g$ on $T^\star M$ is said to be admissible if it satisfies the following three properties:
\begin{enumerate}
\item (Slow variation:) there exist $C, r > 0$ such that for all $\rho_0, \rho \in T^\star M$,
\begin{equation*}
\rho \in \bar B_r^g(\rho_0)
	\qquad \Longrightarrow \qquad
C^{-2} g_{\rho_0} \le g_\rho \le C^2 g_{\rho_0} ;
\end{equation*}
\item (Temperance:) there exist $C, N > 0$ such that for all $\rho_0, \rho \in T^\star M$,
\begin{equation*}
g_\rho
	\le C^2 g_{\rho_0} \jap*{\rho - \rho_0}_{g_\rho^\sympf}^{2N} ,
\end{equation*}
\item (Uncertainty principle:) the gain function associated with $g$ introduced in Definition~\ref{def:gaing} satisfies $\gain_g(\rho) \le 1$ for any $\rho \in T^\star M$.
\end{enumerate}
The constants $r$, $C$, $N$ arising in this definition are called \emph{structure constants} of the metric $g$.
\end{definition}

\begin{remark}[Interpretation of the gain function in dimension $1$] \label{rmk:BealsFefferman}
Beals--Fefferman metrics are of the form
\begin{equation*}
g
	= \dfrac{\dd x^2}{\Phi(x, \xi)^2} + \dfrac{\dd \xi^2}{\Psi(x, \xi)^2} ,
\end{equation*}
where $\dd x^2 + \dd \xi^2$ is a fixed Euclidean metric on $T^\star \mfd$. One can check that
\begin{equation*}
g^\sympf
	= \Psi(x, \xi)^2 \dd x^2 + \Phi(x, \xi)^2 \dd \xi^2 ,
\end{equation*}
which leads to
\begin{equation*}
\gain_g
	= \dfrac{1}{\Phi \Psi} .
\end{equation*}
When $\dim \mfd = 1$ (i.e.\ $\dim T^\star \mfd = 2$), the gain $\gain_g(x, \xi)$ is proportional to the area of the unit box of~$g$ centered at $(x, \xi)$, measured with respect to the background metric $\dd x^2 + \dd \xi^2$.
\end{remark}

Now we turn to the definition of admissible weight functions on phase space.

\begin{definition}[$g$-admissible weight] \label{def:admissibleweight}
Let $g$ be a Riemannian metric on $T^\star M$. A smooth function $m : T^\star M \to \R_+^\ast$ is said to be a $g$-admissible weight if it satisfies the following two properties:
\begin{enumerate}
\item\label{it:slowvariationm} ($g$-slow variation) there exist $C, r > 0$ such that for all $\rho_0 \in T^\star M$,
\begin{equation*}
\forall \rho \in \bar B_r^g(\rho_0) , \qquad
	C^{-1} m(\rho_0) \le m(\rho) \le C m(\rho_0) \,;
\end{equation*}
\item ($g$-temperance) there exist $C, N > 0$ such that for all $\rho_0, \rho \in T^\star M$,
\begin{equation*}
m(\rho)
	\le C m(\rho_0) \jap*{\rho - \rho_0}_{g_\rho^\sympf}^N .
\end{equation*}
\end{enumerate}
The constants $r$, $C$, $N$ arising in this definition are called \emph{structure constants} of the weight $m$.
\end{definition}

\begin{remark} \label{eq:gaingadmissibleweight}
It is known that all real powers $\gain_g^s$ of the gain function of $g$ are {$g$-admissible} weights. See~\cite[Remark 2.2.17 and Lemma 2.2.22]{Lerner:10}.
\end{remark}

We are now able to introduce the Weyl--Hörmander symbol classes.

\begin{definition}[Symbol classes] \label{def:symbclasses}
Let $g$ be a Riemannian metric on $T^\star M$ and $m : T^\star \mfd \to \R_+^\ast$ be a positive function. Then the symbol class $S(m, g)$ is the vector space of $\cont^\infty$ functions $a : T^\star M \to \CC$ such that
\begin{equation*}
\forall j \in \N, \exists C_j > 0 : \qquad
	\abs*{\nabla^j a}_g \le C_j m
		\qquad \textrm{on $T^\star \mfd$.}
\end{equation*}
The space $S(m, g)$ becomes a Fréchet space once it is equipped with the countable family of seminorms given by
\begin{equation*}
\abs*{a}_{S(m, g)}^{(\ell)}
	:= \max_{0 \le j \le \ell} \sup_{\rho \in T^\star M} \dfrac{\abs*{\nabla^j a}_g(\rho)}{m(\rho)} ,
		\qquad \ell \in \N .
\end{equation*}
\end{definition}

From the definition of $S(m, g)$, given $m$ and $m'$ such that $m/m'$ is bounded on $T^\star \mfd$ and $g \le C g'$, we have
\begin{equation*}
S(m, g) \subset S(m', g') ,
\end{equation*}
where the embedding is continuous.
The definition of $S(m, g)$ makes sense for general positive weights $m$ and metrics $g$. If $g$ and $m$ are temperate in the sense of Definitions~\ref{def:admissiblemetric} and~\ref{def:admissibleweight}, then one can check that these symbol classes satisfy
\begin{equation*}
\sch(M) \subset S(m, g) \subset \sch'(M) ,
\end{equation*}
where the inclusions are continuous. (One may topologize $\sch(\mfd)$ with the usual Schwartz seminorms by choosing an arbitrary Euclidean structure on $\mfd$.)

\begin{remark}[Extension of symbol classes to tensors] \label{rmk:symbolclasstensors}
Given a tensor $T$ as in~\eqref{eq:deftensor}, we will say that $T \in S(m, g)$ if
\begin{equation*}
\forall j \in \N, \exists C_j > 0 : \qquad
	\abs*{\nabla^j T}_g \le C_j m
		\qquad \textrm{on $T^\star \mfd$.}
\end{equation*}
\end{remark}

We will also need to consider a quantity that is not standard in the classical theory of the Weyl--Hörmander calculus. It quantifies the temperance property of the metric (see Definition~\ref{def:admissiblemetric}).

\begin{definition}[Temperance weight] \label{def:thetag}
Let $g$ be an admissible metric in the sense of Definition~\ref{def:admissiblemetric}. Then we define the temperance weight of $g$ by
\begin{equation} \label{eq:deftheta}
\theta_g(\rho)
	:= \sup_{\zeta \in W \setminus \{0\}} \dfrac{\abs{\zeta}_{g_\rho^\sympf}}{\abs{\zeta}_{{\sf g}}}
	= \sup_{\zeta \in W \setminus \{0\}} \dfrac{\abs{\zeta}_{{\sf g}^\sympf}}{\abs{\zeta}_{g_\rho}} ,
\end{equation}
where ${\sf g}$ is a fixed Euclidean metric such that ${\sf g} = {\sf g}^\sympf$.
\end{definition}

\begin{remark}
General properties of the temperance weight are listed in Proposition~\ref{prop:temperanceweight}. Bear in mind that it satisfies $\theta_g \ge 1$ (Proposition~\ref{prop:thetag}). The justification for the equalities in~\eqref{eq:deftheta} is recalled in Lemma~\ref{lem:justificationequalities}.
\end{remark}

\subsection{Assumptions and main result} \label{subsec:mainresult}

The main statement of this article, Theorem~\ref{thm:main} below, establishes a correspondence principle between quantum and classical dynamics. 
Let us describe the assumptions on $p$ and $g$. The first set of assumptions is mandatory to make sense of the problem.

\begin{customassum}{{\textsf{A}}}[Classical and quantum well-posedness] \label{assum:mandatory}
The classical Hamiltonian $p \in \sch'(T^\star \mfd)$ is a real-valued function of class $\cont^\infty$ and the following holds.
\begin{itemize} [label=\textbullet]
\item (Completeness of the Hamiltonian flow:) The Hamiltonian flow $(\phi^t)_{t \in \R}$ is complete.
\item (Essential self-adjointness:) The operator $\Opw{p}$ with domain $\sch(\mfd)$ is essentially self-adjoint on $L^2(\mfd)$. We denote by $P$ its unique self-adjoint extension.
\end{itemize}
\end{customassum}

The second set of assumptions concerns Egorov's theorem specifically.

\begin{customassum}{{\textsf{B}}}[Compatibility of the Hamiltonian and the metric] \label{assum:p}
The Riemannian metric $g$ on $T^\star \mfd$ is admissible. Moreover, the metric $g$ and the Hamiltonian $p$ satisfy the following conditions.
\begin{enumerate}[label=(\roman*)]
\item\label{it:Lambda} (Flow expansion:) The following Lyapunov exponent is finite:
\begin{equation} \label{eq:defLambda}
\Lambda
	:= \dfrac{1}{2} \sup_{T^\star \mfd} \abs*{\lieder_{H_p} g}_g
		< \infty .
\end{equation}
\item\label{it:strongsubquad} (Strong sub-quadraticity:) There exists $\epsilon \in (0, 1/2]$ such that
\begin{equation} \label{eq:symbolclassesnablap3}
\nabla^3 p
	\in S\left((\theta_g \udl{\gain}_g^{1/2})^{-\epsilon} \left(\tfrac{\gain_g}{\udl{\gain}_g}\right)^{-3}, g\right) \cap S\left( \left(\tfrac{\gain_g}{\udl{\gain}_g}\right)^{-1}, g\right) .
\end{equation}
\item\label{it:metriccontrol} (Metric control along the flow:) There exist $\Upsilon \ge 0$ and $C_\Upsilon > 0$ such that
\begin{equation} \label{eq:constantsmetricalongtheflow}
\forall \rho \in T^\star \mfd, \forall t \in \R, \qquad
	g_{\phi^t(\rho)}
		\le C_\Upsilon^2 \e^{2 \Upsilon \abs{t}} g_\rho .
\end{equation}
\end{enumerate}
\end{customassum}
Here $\lieder_{H_p}$ is the Lie derivative with respect to the Hamiltonian vector field, whose definition is recalled in~\eqref{eq:deflieder}.

We introduce
\begin{equation} \label{eq:defg(t)}
g(t)
	:= \e^{2 (\Lambda + 2 \Upsilon) \abs{t}} g ,
		\qquad
g^\sympf(t)
	:= g(t)^\sympf
	= \e^{-2 (\Lambda + 2 \Upsilon) \abs{t}} g^\sympf
		\qquad t \in \R .
\end{equation}
Remark that the map $t \mapsto g(t)$ is increasing on $\R_+$. We call \emph{Ehrenfest time} the number:
\begin{equation} \label{eq:defTE}
T_E = T_E(p, g)
	:= \dfrac{1/2}{\Lambda + 2 \Upsilon} \log\left(\dfrac{1}{\udl{\gain}_g}\right) ,
		\qquad
\udl{\gain}_g
	= \sup_{T^\star \mfd} \gain_g .
\end{equation}
This time is set so that $g(t)$ complies with the uncertainty principle for all $t \in [-T_E, T_E]$, that is to say $\gain_{g(t)} \le 1$ (see Remark~\ref{rmk:uniformadmissibilityg(t)} for a proof of this statement).

With these definitions at hand, we now state the main result of the article.

\begin{customthm}{I} \label{thm:main}
Fix $T_0 > 0$. Let $g$ be an admissible metric, let $p$ satisfy Assumption~\ref{assum:mandatory}, and assume $(g, p)$ satisfies Assumption~\ref{assum:p}. Let $T \in [0, T_0 + \frac{1}{2} T_E]$. Let $m$ be a {$g$-admissible} weight and write $m(t) := \e^{t H_p} m$. Then for all $a \in S(m, g)$ and all $t \in [-T, T]$, the operator $\e^{\ii t P} \Opw{a} \e^{- \ii t P}$ makes sense as a continuous operator $\sch(\mfd) \to \sch(\mfd)$. Its Weyl symbol $\e^{t \cal{H}_p} a$ defined in Definition~\ref{def:defsymbconjugatedop} belongs to $S(m(t), g(t))$.

We have the following estimate: for all $\ell \in \N$, there exists $k \in \N$ and a constant $C_\ell > 0$ such that
\begin{equation} \label{eq:continuityquantumevolsymb}
\forall a \in S(m, g), \forall t \in [-T, T] , \qquad
	\abs*{\e^{t \cal{H}_p} a}_{S(m(t), g(t))}^{(\ell)}
		\le C_\ell \abs*{a}_{S(m, g)}^{(k)} .
\end{equation}
Moreover, we have an asymptotic expansion
\begin{equation} \label{eq:asymptoticexpansion}
\e^{t \cal{H}_p} a
	\sim \e^{t H_p} a + \sum_{j \ge 1} \cal{E}_j(t) a ,
\end{equation}
which holds in the following sense. For all $\ell \in \N$, there exist integers $(k_{j_0, \ell})_{j_0 \in \N}$ and constants $(C_{j_0, \ell})_{j_0 \in \N}$, $(C_{j_0, \ell}')_{j_0 \in \N}$ such that for all $a \in S(m, g)$ and all $t \in [-T, T]$, the following holds:
\begin{empheq}[left=\empheqlbrace]{align}
\abs*{\e^{t H_p} a}_{S(m(t), g(t))}^{(\ell)}
	&\le C_{0, \ell} \abs*{a}_{S(m, g)}^{(k_{0, \ell})} , \quad & &\label{eq:estimateethpclassical}\\
\abs*{\cal{E}_{j_0}(t) a}_{S(\gain_{g(t)}^{2 j_0} m(t), g(t))}^{(\ell)}
	&\le C_{j_0, \ell} \abs*{\nabla a}_{S(m, g)}^{(k_{j_0, \ell})} , \quad & j_0 &\ge 1 \label{eq:estimatecalEj0}\\
\abs*{\e^{t \cal{H}_p} a - \e^{t H_p} a - \sum_{j = 1}^{j_0} \cal{E}_j(t) a}_{S(\gain_{g(t)}^{2 (j_0+1)} m(t), g(t))}^{(\ell)}
	&\le C_{j_0+1, \ell}' \abs*{\nabla a}_{S(m, g)}^{(k_{j_0, \ell}')} , \quad & j_0 &\ge 0 . \label{eq:estimateremainder}
\end{empheq}
The operators $\cal{E}_j(t)$ are described in Section~\ref{subsec:asadysonseries}. In addition, all the seminorm indices and constants in the above seminorm estimates depend on $g$, $m$ and $p$ only through structure constants of $g$ and $m$, seminorms of $p$ in the symbol classes~\eqref{eq:symbolclassesnablap3}, the constant $C_\Upsilon$ in~\eqref{eq:constantsmetricalongtheflow} and on any constant $c$ such that $\Lambda \ge c \udl{\gain}_g$.\footnote{The dependence on $c$ degenerates as $c \to 0$. See Section~\ref{subsubsec:semiclassicalsetting} for more details.}
\end{customthm}

We shall explain in Section~\ref{subsubsec:semiclassicalsetting} how to apply this result in a semiclassical setting, which is an important particular case contained in the Weyl--Hörmander framework.

\subsection{Comments} \label{subsec:comments}

\subsubsection{Comments on Assumption~\ref{assum:mandatory}}

The smoothness assumption is very classical in microlocal analysis. Although it rules out some interesting cases where the Hamiltonian contains irregular terms (for instance a rough or a Coulomb potential), our assumptions allow for a fairly large class of Hamiltonians as illustrated in Section~\ref{subsec:examples}.

A simple sufficient condition on $p$ ensures that Assumption~\ref{assum:mandatory} is fulfilled.

\begin{proposition}[Quantum and classical well-posedness] \label{prop:classicalwellposedness}
Suppose that $p$ is sub-quadratic with respect to an admissible metric $g_0$, in the sense $\nabla^2 p \in S(1, g_0)$, and further assume that
\begin{equation} \label{eq:assumeessentialselfadjointness}
\exists E \in \R : \qquad
	\sup_{\{p = E\}} \abs{\nabla p}_{g_0} < \infty .
\end{equation}
Then Assumption~\ref{assum:mandatory} is satisfied.
\end{proposition}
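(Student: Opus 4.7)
The proposition comprises two independent assertions (completeness of the Hamiltonian flow and essential self-adjointness of $\Opw{p}$); I would prove them separately.

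\textbf{Flow completeness.} The strategy is to bound $\abs{H_p}$ by a polynomial in a fixed Euclidean scale and conclude by Gr\"onwall. Fix a reference Euclidean self-dual metric $\mathsf{g}$ on $T^\star \mfd$. The temperance of the admissible metric $g_0$ yields a polynomial comparison $g_0 \le C \jap{\rho}_{\mathsf{g}}^{2N} \mathsf{g}$ for some $N$, so $\nabla^2 p \in S(1, g_0)$ translates into the Hessian bound $\abs{\nabla^2 p(\rho)}_{\mathsf{g}} \le C \jap{\rho}_{\mathsf{g}}^{2N}$. A Taylor expansion of $p$ based at a point $\rho_\star \in \{p = E\}$ closest to $\rho$, together with the anchor condition $\sup_{\{p=E\}} \abs{\nabla p}_{g_0} < \infty$, yields $\abs{\nabla p(\rho)}_{\mathsf{g}} \le C \jap{\rho}_{\mathsf{g}}^K$ for some $K \ge 0$, provided one controls the $g_0$-distance from $\rho$ to $\{p=E\}$ polynomially in $\jap{\rho}_{\mathsf{g}}$. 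That latter control is obtained by inverting the quadratic Taylor expansion of $p$ around $\rho_\star$: the bounded Hessian and the anchored gradient imply that $p$ grows at most quadratically in $g_0$-distance to $\{p=E\}$, so this distance is bounded by $C(1 + \sqrt{|p(\rho) - E|})$, while $\abs{p(\rho)}$ itself is polynomially bounded by integrating the preliminary $\nabla p$ estimate. With $\abs{H_p(\rho)}_{\mathsf{g}} \lesssim \jap{\rho}_{\mathsf{g}}^K$, a Gr\"onwall inequality applied to $\abs{\phi^t(\rho)}_{\mathsf{g}}^2$ prevents finite-time blow-up.

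\textbf{Essential self-adjointness.} The plan is to apply Nelson's commutator theorem to $P = \Opw{p}$ with comparison operator $N := \Opw{\jap{\rho}_{\mathsf{g}}^{2K+2}}$. The operator $N$ is elliptic in the class $S(\jap{\rho}_{\mathsf{g}}^{2K+2}, \mathsf{g})$ and hence essentially self-adjoint on $\sch(\mfd)$ with strictly positive self-adjoint closure, by standard parametrix constructions in the Weyl--H\"ormander calculus (cf.\ \cite[Chapter 4]{Lerner:10}). Nelson's criterion then reduces to the quadratic-form bounds $\abs{\inp{Pu}{u}_{L^2}} \lesssim \brak{Nu}{u}_{L^2}$ and $\abs{\inp{[P, N]u}{u}_{L^2}} \lesssim \brak{Nu}{u}_{L^2}$ for $u \in \sch(\mfd)$; both are symbolic. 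The first uses the polynomial bound $\abs{p} \lesssim \jap{\rho}_{\mathsf{g}}^{2K+2}$ obtained in the preceding paragraph; the second uses that the principal Poisson bracket $\{p, \jap{\rho}_{\mathsf{g}}^{2K+2}\}$ is polynomially controlled via the $\nabla p$ estimate, while the lower-order Moyal remainders are tamed by $\nabla^2 p \in S(1, g_0)$ through Weyl--H\"ormander $L^2$-continuity statements.

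\textbf{Main obstacle.} The delicate step is the anchor-to-polynomial argument in the first paragraph: promoting a one-level-set bound on $\abs{\nabla p}_{g_0}$ together with a Weyl--H\"ormander Hessian bound into a global polynomial control of $\nabla p$. This requires quantitative Taylor expansions in the $g_0$-geometry and an inversion argument bounding the $g_0$-distance from a generic point to $\{p=E\}$. Once this control is in place, the remaining pieces---Gr\"onwall for completeness, and Nelson's theorem with its commutator criterion for self-adjointness---are essentially routine, modulo careful bookkeeping of polynomial weights within the Weyl--H\"ormander framework.
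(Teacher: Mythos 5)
Your proposal has genuine gaps in both halves. For flow completeness, the ``anchor-to-polynomial'' step fails twice. First, the hypothesis~\eqref{eq:assumeessentialselfadjointness} is precisely designed so that $\{p=E\}$ may be \emph{empty} (this is how it is verified for semibounded $p$, cf.\ Remark~\ref{rmk:inpracticecheckassum}), so ``the closest point $\rho_\star\in\{p=E\}$'' need not exist. Second, even when the level set is nonempty, the inversion you invoke is backwards: a bounded Hessian and an anchored gradient give an \emph{upper} bound on $\abs{p(\rho)-E}$ in terms of the $g_0$-distance to $\{p=E\}$, not an upper bound on that distance in terms of $\abs{p(\rho)-E}$; the latter would require a lower bound on the growth of $p$ away from its level set, i.e.\ a nondegeneracy of $\nabla p$ that is not assumed (think of $p$ bounded, with $\{p=E\}$ far away or empty). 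The paper's route is different: a pointwise interpolation estimate $\abs{\nabla p}_{g_0}\le c\jap{p}$ (Lemma~\ref{lem:estimatefirstderivativep}), proved by a local Taylor expansion on a $g_0$-ball of radius $r_{g_0}$ and a discriminant/non-negativity argument for a quadratic polynomial, which never needs the level set to be nonempty or nearby. Moreover, even granting a polynomial bound $\abs{H_p}_{\sf g}\lesssim\jap{\rho}_{\sf g}^K$, your Gr\"onwall step is insufficient: for $K\ge 2$ (the generic case after the temperance conversion $g_0\le C\jap{\rho}^{2N}{\sf g}$) polynomial growth of the vector field does not exclude finite-time blow-up ($\dot x=x^2$). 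Completeness really comes from energy conservation: along each trajectory $\abs{\nabla p}_{g_0}\le c\jap{p(\rho_0)}$ is a \emph{fixed} bound, and a covering argument using the slow-variation radius of $g_0$ confines the trajectory to a compact set on bounded time intervals; this is the argument of Section~\ref{subsec:classicalwellposedness}.

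For essential self-adjointness, the Nelson route with $N=\Opw{\jap{\rho}_{\sf g}^{2K+2}}$ also runs into trouble. Nelson's commutator theorem requires the operator bound $\norm{Pu}\lesssim\norm{Nu}$ (not merely the form bound you state) and, crucially, that $[P,N]$ be form-bounded by the \emph{first} power of $N$. At the symbol level the latter forces $\{p,\jap{\rho}^{2K+2}\}=O(\jap{\rho}^{2K+2})$, i.e.\ essentially $\abs{\nabla p}_{\sf g}=O(\jap{\rho}_{\sf g})$, a \emph{linear} growth of the gradient in the Euclidean scale which the hypotheses do not provide (only some polynomial growth is available, since $\nabla^2 p\in S(1,g_0)$ for a general admissible $g_0$ allows e.g.\ $\partial_x^2 p$ of size $\jap{\xi}^{1-\alpha}$); enlarging the weight of $N$ does not help, because the commutator condition is scale-invariant in the weight. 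The paper avoids this entirely: it proves $\ker(P^\ast\mp\ii\lambda)=\{0\}$ by checking, via Fa\`a di Bruno and the interpolation lemma, that $q=(p\pm\ii\lambda)^{-1}\in S(\lambda^{-1},g_0)$ uniformly in $\lambda\ge 1$, then uses pseudo-differential calculus, Calder\'on--Vaillancourt and a Neumann series to build a right parametrix of $P\pm\ii\lambda$, concluding that any deficiency vector is orthogonal to $\sch(\mfd)$. If you want to salvage your plan, the missing ingredient in both halves is precisely the global estimate $\abs{\nabla p}_{g_0}\lesssim\jap{p}$, and the completeness argument must exploit conservation of $p$ along the flow rather than a Euclidean Gr\"onwall bound.
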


Proposition~\ref{prop:classicalwellposedness} is proved in Sections~\ref{subsec:classicalwellposedness} and~\ref{subsec:quantumwellposedness}.

\begin{remark} \label{rmk:inpracticecheckassum}
In practice, if we are given a classical Hamiltonian~$p$ together with a metric~$g$, we can check the assumptions of the above proposition as follows. In general, we take $g_0 = g^\natural$ the symplectic intermediate metric associated with $g$ (see Lemma~\ref{lem:gnatural} or~\cite[Definition 2.2.19]{Lerner:10}). For any vector field $X$ on $T^\star \mfd$, we have
\begin{equation} \label{eq:reasoningforkge2}
\forall k \ge 2, \qquad
	\abs*{\nabla^k p. X^k}
		\le \gain_g^{-1} \abs*{X}_g^k \abs*{\nabla^2 p}_{S(\gain_g^{-1}, g)}^{(k-2)}
		\le \left(\gain_g^{-1/2} \abs*{X}_g\right)^k \abs*{\nabla^2 p}_{S(\gain_g^{-1}, g)}^{(k-2)}
		\le \abs*{X}_{g^\natural}^k \abs*{\nabla^2 p}_{S(\gain_g^{-1}, g)}^{(k-2)} ,
\end{equation}
because $\gain_g^{-1} g \le g^\natural$ by Lemma~\ref{lem:gnatural}. Therefore it suffices to have
\begin{equation*}
\nabla^2 p \in S\left(\gain_g^{-1}, g\right)
\end{equation*}
to conclude that $\nabla^2 p \in S(1, g_0)$.
The same reasoning as in~\eqref{eq:reasoningforkge2} for $k = 1$ shows that it suffices to check that
\begin{equation*}
\exists E \in \R : \qquad
	\sup_{\{p = E\}} \gain_g^{1/2} \abs{\nabla p}_g < \infty
\end{equation*}
in order to secure~\eqref{eq:assumeessentialselfadjointness}.

Incidentally, observe that~\eqref{eq:assumeessentialselfadjointness} is automatically fulfilled if $p$ is semi-bounded, by choosing $E$ sufficiently small or large so that $\{p = E\} = \varnothing$.
\end{remark}

\subsubsection{Comments on Assumption~\ref{assum:p}}

We start with a general observation on Assumption~\ref{assum:p}.

\begin{remark}[Homogeneity] \label{rmk:homogeneity}
Observe that if Assumption~\ref{assum:p} is verified for some metric $g$, then it is also satisfied with the metric $g(t)$ (and with the same Hamiltonian $p$) for any $t \in [0, T_E]$. Indeed, by definition, $\Lambda$ is {$0$-homogeneous} with respect to multiplication of the metric by a constant conformal factor. Likewise, the ratios $\gain_g/\udl{\gain}_g$ and $\theta_g \udl{\gain}_g^{1/2}$ are {$0$-homogeneous}. Finally, seminorms of~$p$ with respect to $g(t)$ decrease as $t$ grows. Item~\ref{it:metriccontrol} of Assumption~\ref{assum:p}, involving the parameter $\Upsilon$, is also invariant under scaling of the metric. As for the Ehrenfest time, one readily checks that it behaves as
\begin{equation*}
T_E\left(g(t)\right)
	= T_E - \abs*{t} .
\end{equation*}
\end{remark}

Next we observe that Assumption~\ref{assum:p} implies temperate growth of $p$, in the sense that
\begin{equation} \label{eq:temperategrowth}
\exists n \ge 0, \exists \rho_0 \in T^\star \mfd : \forall \ell \in \N, \exists C_\ell > 0 : \forall \rho \in T^\star \mfd \qquad
	\abs*{\nabla^\ell p}_{\sf g}(\rho)
		\le C_\ell \jap*{\rho - \rho_0}_{\sf g}^{n (1+\ell)} ,
\end{equation}
for some \emph{Euclidean} metric $\sf g$ on the phase space. This is the content of Lemma~\ref{lem:temperategrowth}.

\medskip
Let us explain each point of Assumption~\ref{assum:p} in more detail.

\begin{itemize}
\item \emph{Item~\ref{it:Lambda}.}
The condition $\Lambda < \infty$ on the Lyapunov exponent implies that
\begin{equation} \label{eq:Lyapunovcontrol}
(\phi^t)^\pullb g
	\le \e^{2 \Lambda \abs{t}} g ,
		\qquad \forall t \in \R ,
\end{equation}
by a Gr{\"o}nwall argument, or equivalently
\begin{equation} \label{eq:expflowexpansion}
\abs*{\dd \phi^t}_g
	\le \e^{\Lambda \abs{t}} ,
		\qquad \forall t \in \R .
\end{equation}
In other words, the exponential expansion rate of the Hamiltonian flow is controlled uniformly on the whole phase space (from the viewpoint of the metric~$g$). 

\begin{remark}[Sub-quadraticity] \label{rmk:subquad}
We shall see in Proposition~\ref{prop:aprioriexpgrowth}, assuming smoothness of $p$ only, that\footnote{Recall that $\lieder_{H_p}$ and $\nabla_{H_p}$ are not the same in general. The Lie derivative involves pullback by the flow $\phi^t$ (see~\eqref{eq:deflieder}), whereas the covariant derivative involves parallel transport according to the affine structure of~$T^\star \mfd$.}
\begin{equation*}
\Lambda
	\le \sup_{T^\star \mfd} \left(\gain_g \abs*{\nabla^2 p}_g + \dfrac{1}{2} \abs*{\nabla_{H_p} g}_g\right) .
\end{equation*}
Therefore in practice, to check that $\Lambda < \infty$, it will be sufficient to prove that
\begin{equation} \label{eq:assumsubquad}
\nabla^2 p
	\in S\left((\gain_g/\udl{\gain}_g)^{-1}, g\right)
		\qquad \rm{and} \qquad
\sup_{T^\star \mfd} \abs*{\nabla_{H_p} g}_g
	< \infty .
\end{equation}
The first condition on $\nabla^2 p$ can be regarded as a ``sub-quadraticity" property of the Hamiltonian with respect to the metric $g$. The sub-quadraticity assumption is quite classical in the context of Egorov's theorem (see \cite[Theorem (IV-10)]{Robert:book} for instance). Moreover, we make no ellipticity assumption (unlike~\cite[Chapter 11]{Zworski:book} for instance). 
\end{remark}

\item \emph{Item~\ref{it:strongsubquad}.}
The (relatively strong) version of sub-quadraticity stated in~\eqref{eq:symbolclassesnablap3}, which concerns all derivatives of $p$ of order larger than~$3$, is intended to control the time evolution both at the classical and quantum levels. It is useful in order to ensure that the non-local effects of quantum mechanics deviating from classical mechanics remain small. Notice that a condition on the third derivative of the form $\nabla^3 p \in S((\gain_g/\udl{\gain}_g)^{-1}, g)$ was already present in the work of Bony. See the section~$3$ of~\cite{Bony:evolfrancais,Bony:generalizedFIO} and the discussion in Section~\ref{subsec:Bony} below.

\item \emph{Item~\ref{it:metriccontrol}.}
The control of the metric along the Hamiltonian flow, involving the parameter $\Upsilon$, is convenient in our analysis, and it is clearly verified in most applications we have (see Section~\ref{subsec:examples}), for which $\nabla_{H_p} g = 0$ (i.e.\ $\Upsilon = 0$). However we will explain further in Remark~\ref{rmk:improvements} why this assumption is not so natural.
\end{itemize}

\subsubsection{Comments on Theorem~\ref{thm:main}} \label{subsec:commentsmainthm}

Before commenting on Theorem~\ref{thm:main}, let us stress the fact that it is a consequence of a more precise result on the propagation of partitions of unity adapted to the metric $g$. See Theorem~\ref{thm:partition} in Section~\ref{subsec:partition}. One of the main aspects of Theorem~\ref{thm:main} is that it provides an Egorov's theorem to any order (the higher order terms are described in Section~\ref{subsec:asadysonseries}), with a pseudo-differential remainder.

\begin{itemize} [label=\textbullet]
\item Notice that the asymptotic expansion~\eqref{eq:asymptoticexpansion} of the symbol consists in powers of $\gain_{g(t)}^2$ and not in powers of $\gain_{g(t)}$; see~\eqref{eq:estimatecalEj0}. Actually this is just a matter of how the terms contained in $\cal{E}_j(t) a$ are arranged. Using pseudo-differential calculus, one can see that $\cal{H}_p^{(3)}$ contains powers of order $\ge 3$ of $\gain_g$ actually (recall this is a ``non-local" operator). Thus, applying pseudo-differential calculus to decompose all the operators $\cal{H}_p^{(3)}$ appearing in~$\cal{E}_j(t)$ (see~\eqref{eq:defEj}) and grouping terms by powers of $\gain_{g(t)}$ would yield an asymptotic expansion in powers of $\gain_{g(t)}^2, \gain_{g(t)}^3, \gain_{g(t)}^4$, etc. The fact that there is no term of order $\gain_{g(t)}$ is due to the use of the Weyl quantization. The operators $\cal{E}_j(t)$ are described in more detail in Section~\ref{subsec:asadysonseries}.
\item For technical reasons we need to take into account both $\Upsilon$ and $\Lambda$ to measure the instability of the Hamiltonian flow. This is why $\Lambda + 2 \Upsilon$ appears in the definition~\eqref{eq:defg(t)} of $g(t)$ and of the Ehrenfest time $T_E$ in~\eqref{eq:defTE}. However, for a good and natural choice of $g$, we will have $\Upsilon = 0$ in many of cases (see the first two examples in Section~\ref{subsec:examples}).

It is not clear whether one can replace the definition of 
$\Lambda$ in~\eqref{eq:defLambda} with a weaker assumption on the maximal expansion rate:
\begin{equation*}
\Lambda_0
	= \limsup_{t \to \infty} \dfrac{1}{\abs{t}} \log \abs*{\dd \phi^t}_{g, \infty}
\end{equation*}
(which is used for instance in~\cite{AnNon,DG:14,DJN}).
The problem lies in the fact that we need a control of the form $\abs*{\dd \phi^t}_g \le C_\Lambda \e^{\Lambda \abs{t}}$ with a constant $C_\Lambda = 1$. It seems that having $C_\Lambda > 1$ poses a problem in Proposition~\ref{prop:continuityflowconf}---see Remark~\ref{rmk:CLambda1}.
\item Recall that the uncertainty principle reads $\udl{\gain}_g \le 1$ (Definition~\ref{def:admissiblemetric}). In the case where $\udl{\gain}_g = 1$, we end up with $T_E = 0$, and we obtain an Egorov theorem on fixed bounded intervals of time. 
\item Egorov's theorem is usually expected to work in time $T_E$ (which already contains a factor $1/2$---see~\eqref{eq:defTE}), whereas we have $\frac{1}{2} T_E$ in our statement. This $\frac{1}{2}$ factor seems to occur for technical reasons in our proof: it is convenient to have the stronger condition $\abs{t} \le \frac{1}{2} T_E$ instead of $\abs{t} \le T_E$ in several places in the paper (see for instance Proposition~\ref{prop:uniformm(t)} or Lemma~\ref{lem:multiaffconf}).
\end{itemize}

\subsubsection{The semiclassical setting} \label{subsubsec:semiclassicalsetting}

In semiclassical analysis~\cite{DS:book,Zworski:book}, one is concerned with operators $P = P_\hslash = \Opw{p_\hslash}$ depending on a small Planck parameter $\hslash \in (0, 1]$ (with suitable estimates). Such symbols enter into the framework of the Weyl--Hörmander calculus with appropriate families of metrics $(g_\hslash)_\hslash$ (for instance $g_\hslash = \dd x^2 + \hslash^2 \dd \xi^2$; see~\cite[p.\ 70 and Examples 2.2.10]{Lerner:10}), for which $\udl{\gain}_{g_\hslash} = \hslash$. Uniformity of structure constants with respect to $\hslash \in (0, 1]$ is very important to ensure that constants depend on $\hslash$ in a controlled way.

Beware that the semiclassical time scale usually considered in semiclassical analysis is different from the one in Theorem~\ref{thm:main}. Usually, one considers the propagator $\e^{- \ii \frac{\tau}{\hslash} P_\hslash}$ associated with $P_\hslash$ with a ``semiclassical time" $\tau = \hslash t$~\cite{DS:book,Zworski:book}.
Here we rather use the variable $t  = \hslash^{-1} \tau$. If the Lyapunov exponent of the semiclassical scaling is of order $\lambda \asymp 1$, then because of the semiclassical time rescaling, the Lyapunov exponent $\Lambda$ defined in~\eqref{eq:defLambda} is typically of order $\hslash \lambda \asymp \hslash$. Therefore the Ehrenfest time $T_E$ defined in~\eqref{eq:defTE} is of order $\hslash^{-1} \log\hslash^{-1}$. Thus, the bound $\abs{t} \lesssim \hslash^{-1} \log \hslash^{-1}$ is consistent with the more usual one $\abs{\tau} \lesssim \log \hslash^{-1}$.

We also mention at the end of Theorem~\ref{thm:main} that continuity estimates may depend on a constant~$c$ such that
\begin{equation} \label{eq:assumLambda}
\Lambda \ge c \udl{\gain}_g .
\end{equation}
In the semiclassical setting, this constant~$c$ should be independent of $\hslash$, in order to have uniform estimates (for instance in Proposition~\ref{prop:continuityEcal}). This is a natural assumption in view of the fact that the semiclassically rescaled Lyapunov exponent $\lambda = \Lambda/\hslash$ is generally independent of~$\hslash$.

To ensure a uniform control in the Egorov asymptotics when $t$ approaches the Ehrenfest time and $\hslash \to 0$, we need take care of the dependence of all the other quantities involved in Assumption~\ref{assum:p} on $\udl{\gain}_g$. Of course all the seminorms of $\nabla^3 p_\hslash$ measured with respect to $g_\hslash$ should be uniform in $\hslash$. We also require that $\Upsilon$ and $C_\Upsilon$ are bounded independently of $\hslash$.

\subsection{Egorov's theorem as a Dyson series} \label{subsec:asadysonseries}

In this section, we explain what are the operators $\cal{E}_j(t)$ appearing in the asymptotic expansion~\eqref{eq:asymptoticexpansion} of Theorem~\ref{thm:main}. The quantum and the classical dynamics, described by $\e^{t \cal{H}_p}$ and $\e^{t H_p}$ respectively and introduced in Section~\ref{subsec:classicalandquantum}, can be related through a so-called Dyson series expansion of $e^{t \cal{H}_p}$. The relevance of the Dyson series expansion comes from the fact that $\cal{H}_p^{(3)}$ introduced in~\eqref{eq:calHpHpHp3} allows to gain smallness in terms of powers of $\gain_g$ under Assumption~\ref{assum:p}. This formula arises naturally in the context of scattering theory or in the \emph{interaction picture} of quantum mechanics (also known as the ``Dirac picture"), where the generator of the dynamics under consideration (here $\cal{H}_p$) is a perturbation of the generator of the free dynamics (here $H_p$).

Throughout the article, we denote by $\Delta_k$ the {$k$-dimensional} simplex
\begin{equation} \label{eq:defsimplex}
\Delta_k
	= \set{(s_1, s_2, \ldots, s_k) \in \R^k}{0 \le s_1 \le s_2 \le \cdots \le s_k \le 1} ,
\end{equation}
and we set
\begin{equation*}
t \Delta_k
	= \set{t {\bf s} \in \R^k}{{\bf s} \in \Delta_k} ,
		\qquad t \in \R .
\end{equation*}
Typical points of the simplex $t \Delta_k$ will be denoted for simplicity by ${\bf s} = (s_1, s_2, \ldots, s_k)$, and the Lebesgue measure will be denoted by $\dd {\bf s} = \dd s_1 \dd s_2 \cdots \dd s_k$. Throughout this article, $\int_{t \Delta_k} f({\bf s}) \dd {\bf s}$ is understood as an ``oriented" integral; e.g.\ for $k = 1$, we have
\begin{equation*}
\int_{t \Delta_1} f(s) \dd s
	:= \int_0^t f(s) \dd s
	= - \int_t^0 f(s) \dd s ,
\end{equation*}
whatever the sign of~$t$ is.

\begin{proposition} \label{prop:Dyson}
Let $p$ and $g$ satisfy Assumptions~\ref{assum:mandatory} and~\ref{assum:p}. For any $j_0 \in \N$ and any time $t \in \R$, the following holds:
\begin{equation} \label{eq:Dyson}
\e^{t \cal{H}_p}
	= \sum_{j = 0}^{j_0} \cal{E}_j(t) + \widehat{\cal{E}}_{j_0+1}(t) ,
\end{equation}
where $\cal{E}_0(t) = \e^{t H_p}$ and for $j \ge 1$:
\begin{align}
\cal{E}_j(t)
	&= \int_{t \Delta_j} \e^{(t - s_j) H_p} \cal{H}_p^{(3)} \e^{(s_j - s_{j-1}) H_p} \cal{H}_p^{(3)} \cdots \cal{H}_p^{(3)} \e^{s_1 H_p} \dd {\bf s} , \label{eq:defEj}\\
\widehat{\cal{E}}_j(t)
	&= \int_{t \Delta_j} \e^{(t - s_j) \cal{H}_p} \cal{H}_p^{(3)} \e^{(s_j - s_{j-1}) H_p} \cal{H}_p^{(3)} \cdots \cal{H}_p^{(3)} \e^{s_1 H_p} \dd {\bf s} .\label{eq:defhatEj}
\end{align}
The equalities in~\eqref{eq:defEj} and~\eqref{eq:defhatEj} hold as operators $\sch(T^\star \mfd) \to L^2(T^\star \mfd)$, and the integrals converge absolutely with respect to the strong topology on $L^2(T^\star \mfd)$.
\end{proposition}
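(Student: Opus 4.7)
The plan is to obtain~\eqref{eq:Dyson} by the standard Duhamel/Dyson iteration, taking care to ensure each manipulation makes sense given that the relevant operators are unbounded. Set $U(t) := e^{t \cal{H}_p}$ and $V(t) := e^{t H_p}$; by Proposition~\ref{prop:isometrycalHp} both are strongly continuous unitary groups on $L^2(T^\star \mfd)$ whose skew-adjoint generators extend the operators $\cal{H}_p$, $H_p$ initially defined on the common core $\sch(T^\star \mfd)$. Two prerequisites must be recorded before the computation: (i) the Hamiltonian flow $\phi^t$ preserves $\sch(T^\star \mfd)$ and $t \mapsto V(t) a$ is continuous $\R \to \sch$ in the Fréchet topology, a standard consequence of the smoothness and temperate growth of $p$ (Lemma~\ref{lem:temperategrowth}); (ii) the ``quantum correction'' $\cal{H}_p^{(3)} = \cal{H}_p - H_p$ acts continuously $\sch \to \sch$, which follows from Assumption~\ref{assum:p} and the pseudo-differential calculus encoded in Proposition~\ref{prop:pseudocalcconf}.

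With these in hand, I would first establish the base Duhamel identity. Fix $a \in \sch(T^\star \mfd)$ and consider $G(s) := U(t-s) V(s) a$. Since $V(s) a \in \sch \subset \dom(\cal{H}_p) \cap \dom(H_p)$ for all $s$, one computes in $L^2$,
\begin{equation*}
G'(s) = -\cal{H}_p U(t-s) V(s) a + U(t-s) H_p V(s) a = -U(t-s) \cal{H}_p^{(3)} V(s) a,
\end{equation*}
using the commutation $\cal{H}_p U(\tau) w = U(\tau) \cal{H}_p w$ for $w \in \dom(\cal{H}_p)$. Since the right-hand side is norm-continuous in $s$ (composition of the strongly continuous group $U$ with a continuous $\sch$-valued map), integration from $0$ to $t$ in the oriented sense yields
\begin{equation*}
U(t) a = V(t) a + \int_{t \Delta_1} U(t - s_1) \cal{H}_p^{(3)} V(s_1) a \, ds_1 = \cal{E}_0(t) a + \widehat{\cal{E}}_1(t) a,
\end{equation*}
which is~\eqref{eq:Dyson} in the case $j_0 = 0$.

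Next I would iterate. Apply the base identity to the factor $U(t - s_j)$ appearing in $\widehat{\cal{E}}_j(t) a$: writing $U(t-s_j) = V(t-s_j) + \int_0^{t-s_j} U(t-s_j-u) \cal{H}_p^{(3)} V(u) \, du$ and changing variables with $s_{j+1} := s_j + u$, the simplex $t \Delta_j \times [0, t-s_j]$ becomes $t \Delta_{j+1}$, producing exactly $\widehat{\cal{E}}_j = \cal{E}_j + \widehat{\cal{E}}_{j+1}$ with the time-ordered structure of~\eqref{eq:defEj}--\eqref{eq:defhatEj}. An induction on $j$ yields~\eqref{eq:Dyson} for arbitrary $j_0$. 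The absolute convergence in the strong $L^2$ topology is checked along the way: inside each integrand of $\cal{E}_j$ or $\widehat{\cal{E}}_j$, the alternation of $V(\cdot)$ and $\cal{H}_p^{(3)}$ produces a continuous $\sch$-valued map in the time variables by (i) and (ii) above, and the outermost factor (either $V(t-s_j)$ preserving $\sch$, or $U(t-s_j)$ which is $L^2$-unitary) produces a continuous $L^2$-valued integrand whose norm is bounded uniformly on $t \Delta_j$, so the Bochner integral converges.

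The main technical obstacle is the rigorous handling of the Duhamel differentiation with unbounded generators, which hinges entirely on having $\sch(T^\star \mfd)$ as a common invariant dense domain for $V(t)$ and for $\cal{H}_p^{(3)}$. Preservation of $\sch$ by the Hamiltonian flow is not completely automatic; it rests on the a priori polynomial growth estimates for $\phi^t$ and $d\phi^t$ derived from~\eqref{eq:temperategrowth} and Item~\ref{it:metriccontrol} of Assumption~\ref{assum:p}, together with the Faà di Bruno formula to control all seminorms. Once this is in place, the rest of the proof is bookkeeping with the simplex variables and the oriented-integral convention, which works identically for $t > 0$ and $t < 0$.
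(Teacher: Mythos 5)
Your proposal is correct and follows essentially the same route as the paper: differentiate $s \mapsto \e^{(t-s)\cal{H}_p}\e^{s H_p} w$ for $w \in \sch(T^\star \mfd)$ (using that $\sch$ is an invariant core for $H_p$, $\cal{H}_p$, $\cal{H}_p^{(3)}$ and $\e^{t H_p}$) to get the order-zero Duhamel identity, then iterate by inserting that identity into the outer factor $\e^{(t-s_j)\cal{H}_p}$ of $\widehat{\cal{E}}_j(t)$ and reassembling the simplex variables, exactly as in the paper's induction. The supporting ingredients you invoke (temperate growth of $p$, pseudo-differential calculus for $\cal{H}_p^{(3)}$ on $\sch$, unitarity of the two groups for the $L^2$ convergence of the integrals) are the same ones the paper uses.
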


Proposition~\ref{prop:Dyson} is proved in Section~\ref{subsec:Egorovspecificclass}. It provides an explicit algebraic description of the higher order terms of the asymptotic expansion~\eqref{eq:asymptoticexpansion}. A key step to prove Theorem~\ref{thm:main} is to establish continuity estimates for the operators $\cal{E}_j(t)$ in symbol classes; see Proposition~\ref{prop:continuityEcal}.
Throughout the paper, we will use the notation
\begin{equation} \label{eq:notationlej}
\cal{E}_{\le j_0}(t)
	:= \sum_{j = 0}^{j_0} \cal{E}_j(t) .
\end{equation}

\begin{remark}[Recurrence relation]
It follows directly from the definition that for all $j \ge 0$:
\begin{equation} \label{eq:recurrencerelation}
\cal{E}_{j+1}(t)
	= \int_0^t \e^{(t - s) H_p} \cal{H}_p^{(3)} \cal{E}_{j}(s) \dd s
		\qquad \rm{and} \qquad
\widehat{\cal{E}}_{j+1}(t)
	= \int_0^t \e^{(t - s) \cal{H}_p} \cal{H}_p^{(3)} \cal{E}_{j}(s) \dd s .
\end{equation}
\end{remark}

\begin{remark}
To obtain estimates uniform in time on these symbols, it is important to keep in mind that the volume of the simplex is $\abs{t \Delta_j} = t^j/j!$.
\end{remark}

\subsection{Examples of application} \label{subsec:examples}

In this section, we present three different settings in which Theorem~\ref{thm:main} applies, namely a Schrödinger equation in the Euclidean space, a (half-)wave equation in a curved space and a transport equation. In each of these cases, we describe appropriate metrics $g$ on the phase space to which we can apply Theorem~\ref{thm:main}.

To ease the understanding of the computations presented in this section, let us discuss briefly the structure of the phase space $T^\star \mfd$. We have a natural splitting of the tangent space to $T^\star \mfd$:
\begin{equation} \label{eq:splitting}
T(T^\star \mfd)
	\simeq T \mfd \oplus T^\star \mfd .
\end{equation}
The first component (the horizontal direction) is parallel to $\mfd$, while the second one (the vertical direction) is perpendicular to $\mfd$ in the sense that it is tangent to the fibers of $T^\star \mfd$. See the beginning of Section~\ref{sec:proofsex} for a more details.

\subsubsection{Semiclassical Schrödinger operator in the (flat) Euclidean space} \label{subsubsec:Schhbar}

We equip $\mfd$ with a fixed Euclidean structure $\upgamma$. Consider a semiclassical Schrödinger operator with flat Laplacian, electric and magnetic potentials $V$ and $\beta$, i.e.\
\begin{equation} \label{eq:Schop}
P
	= \tfrac{1}{2} \abs*{\tfrac{\hslash}{\ii} \partial - \beta}_\upgamma^2 + V
	= - \tfrac{\hslash^2}{2} \Delta - \tfrac{\hslash}{\ii} \beta \cdot \partial - \tfrac{\hslash}{2 \ii} \dvg \beta + \tfrac{1}{2} \abs{\beta}_\upgamma^2 + V ,
\end{equation}
acting on Schwartz functions. The Planck parameter is such that $\hslash \in (0, 1]$. Here, $\partial$ refers to the gradient and $\cdot$ to the scalar product, both with respect to the Euclidean metric $\upgamma$, the divergence is its adjoint with respect to the $\upgamma$ inner product and $\Delta = \Delta_{\upgamma}$ is the flat Laplacian associated with $\upgamma$ on $\mfd$.

\begin{proposition} \label{prop:Schhbar}
Assume that $V$ and $\beta$ are $\cont^\infty$ and have temperate growth, we have
\begin{equation*}
P = \Opw{p}
	\qquad \rm{with} \qquad
p(x, \xi) = \tfrac{\hslash^2}{2} \abs*{\xi}_{\upgamma^{-1}}^2 - \hslash \xi. \beta + \tfrac{1}{2} \abs{\beta}_{\upgamma}^2 + V(x) .
\end{equation*}
Moreover, assume the following:
\begin{itemize} [label=\textbullet]
\item the vector potential $\beta : \mfd \to T \mfd$ is an affine vector field, namely $\nabla^2 \beta = 0$;
\item the electric potential $V : \mfd \to \R$ is sub-quadratic, namely
\begin{equation} \label{eq:assumV2}
\forall k \ge 0 , \qquad
	\sup_{T^\star \mfd} \abs*{\nabla^{2 + k} V}_\upgamma < \infty .
\end{equation}
and bounded from below.
\end{itemize}
Then Assumption~\ref{assum:mandatory} is satisfied, and Assumption~\ref{assum:p} is satisfied, uniformly with respect to $\hslash \in (0, 1]$, with the metric
\begin{equation} \label{eq:gSchhbar}
g = g_\hslash = {\upgamma} \oplus \hslash^2 {\upgamma}^{-1} .
\end{equation}
\end{proposition}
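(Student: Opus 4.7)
The plan is to compute the Weyl symbol of $P$ explicitly, apply Proposition~\ref{prop:classicalwellposedness} for Assumption~\ref{assum:mandatory}, and verify each clause of Assumption~\ref{assum:p} directly, exploiting that $g$ is constant on $T^\star\mfd$ and that $\nabla^2\beta = 0$.

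For the symbol, observe that $L_j := \tfrac{\hslash}{\ii}\partial_j - \beta^j$ has Weyl symbol $\ell_j := \hslash\xi_j - \beta^j(x)$, linear in $\xi$; the Moyal expansion for the product of two symbols linear in $\xi$ truncates at the first-order Poisson bracket term, so $\ell_j \moyal \ell_k = \ell_j\ell_k + \tfrac{1}{2\ii}\{\ell_j, \ell_k\}$. The bracket $\{\ell_j,\ell_k\} = -\hslash(\partial_j\beta^k - \partial_k\beta^j)$ is antisymmetric in $(j, k)$, hence disappears after contraction with the symmetric $\upgamma^{-1,jk}$. Therefore $P - V = \tfrac{1}{2}\sum_{jk}\upgamma^{-1,jk}\Opw{\ell_j}\Opw{\ell_k} = \Opw{\tfrac{1}{2}|\hslash\xi - \beta|^2_\upgamma}$, and adding $V$ yields $P = \Opw{p}$ with the stated $p$. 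For Assumption~\ref{assum:mandatory}, apply Proposition~\ref{prop:classicalwellposedness} with the admissible fixed Euclidean metric $g_0 = \upgamma\oplus\upgamma^{-1}$. The level-set condition is free, since $p = \tfrac{1}{2}|\hslash\xi - \beta|^2_\upgamma + V \ge \inf V > -\infty$, so $\{p=E\} = \varnothing$ for any $E < \inf V$. The bound $\nabla^2 p \in S(1, g_0)$ follows block-by-block from $\partial_\xi^2 p = \hslash^2\upgamma^{-1}$, $\partial_\xi\partial_x p = -\hslash\nabla\beta$, and $\partial_x^2 p = \upgamma(\nabla\beta,\nabla\beta) + \nabla^2 V$; all are uniformly $O(1)$ in the $g_0$-norm by~\eqref{eq:assumV2} and the hypothesis on $\beta$, and higher derivatives reduce to $\nabla^{2+k}V$ thanks to $\nabla^2\beta = 0$.

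For Assumption~\ref{assum:p}, admissibility of $g = \upgamma\oplus\hslash^2\upgamma^{-1}$ is trivial because $g$ does not depend on $\rho$: slow variation and temperance hold with $C = 1$, $N = 0$; a direct computation gives $\gain_g \equiv \hslash = \udl{\gain}_g \le 1$; and item~\ref{it:metriccontrol} holds with $\Upsilon = 0$, $C_\Upsilon = 1$ since $g_{\phi^t(\rho)} = g_\rho$ pointwise. For item~\ref{it:Lambda}, constancy of $g$ gives $\nabla_{H_p}g = 0$, so Remark~\ref{rmk:subquad} yields $\Lambda \le \hslash\sup_{T^\star\mfd}|\nabla^2 p|_g$; re-running the block bounds above against $g$-unit vectors (so that $|v|_\upgamma \le 1$ and $\hslash|\eta|_{\upgamma^{-1}} \le 1$) gives $|\nabla^2 p|_g = O(1)$ uniformly, whence $\Lambda = O(\hslash)$. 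For item~\ref{it:strongsubquad}, the vanishing $\nabla^2\beta = 0$ collapses $\nabla^{3+k} p$ to the purely horizontal tensor $\nabla^{3+k} V$ pulled back from $\mfd$; one then computes $\theta_g \equiv \hslash^{-1}$ and $\gain_g/\udl{\gain}_g \equiv 1$, so that the $\upgamma$-bounds in~\eqref{eq:assumV2} translate into seminorm bounds in the symbol classes of~\eqref{eq:symbolclassesnablap3}, uniformly in $\hslash \in (0,1]$.

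The main difficulty is bookkeeping: none of the individual steps is deep, but one must carefully match the $g$-norms of the horizontal tensors $\nabla^{3+k}V$ against the specific weights $(\theta_g\udl{\gain}_g^{1/2})^{-\epsilon}(\gain_g/\udl{\gain}_g)^{-3}$ and $(\gain_g/\udl{\gain}_g)^{-1}$ in~\eqref{eq:symbolclassesnablap3}, and verify uniformity in $\hslash$ all the way down to $\hslash = 0$. This is the step where the anisotropy of $g$ (which contracts in $\xi$ and leaves $x$ unchanged) and the precise scaling built into the Weyl--Hörmander classes have to be reconciled; every other step is essentially algebraic once the explicit form of $p$ is on the table.
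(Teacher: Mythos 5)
Most of your proposal tracks the paper's own argument: the explicit symbol (your factorization through $\Opw{\hslash\xi_j-\beta^j}$ with the antisymmetric bracket cancelling is a fine equivalent of the paper's direct quantization of $\tfrac{1}{\ii}\beta\cdot\partial$), Assumption~\ref{assum:mandatory} via Proposition~\ref{prop:classicalwellposedness} and semi-boundedness, the Hessian~\eqref{eq:hessp} with $\nabla^2\beta=0$, constancy of $g_\hslash$ giving trivial admissibility, $\Upsilon=0$, $\gain_{g_\hslash}=\hslash$, $\Lambda=O(\hslash)$, and the reduction of $\nabla^{3+k}p$ to the horizontal tensor $\nabla^{3+k}V$ with $\abs{\nabla^{3+k}p}_{g_\hslash}=\abs{\nabla^{3+k}V}_\upgamma$.

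The gap is exactly at the step you yourself flag as the delicate one. You compute $\theta_{g_\hslash}\equiv\hslash^{-1}$, which corresponds to choosing an $\hslash$-independent background metric ${\sf g}=\upgamma\oplus\upgamma^{-1}$ in Definition~\ref{def:thetag}. With that value, $(\theta_g\udl{\gain}_g^{1/2})^{-\epsilon}=(\hslash^{-1/2})^{-\epsilon}=\hslash^{\epsilon/2}$, so the first class in~\eqref{eq:symbolclassesnablap3} is $S(\hslash^{\epsilon/2},g_\hslash)$. Since $\abs{\nabla^{3+k}p}_{g_\hslash}=\abs{\nabla^{3+k}V}_\upgamma$ is a fixed constant (generically nonzero under~\eqref{eq:assumV2}), membership in $S(\hslash^{\epsilon/2},g_\hslash)$ with seminorms uniform in $\hslash\in(0,1]$ would force $\abs{\nabla^{3+k}V}_\upgamma\lesssim\hslash^{\epsilon/2}$, which fails as $\hslash\to 0$ unless $V$ is a quadratic polynomial; so your concluding claim of uniformity does not follow from your computation. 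The paper resolves this by computing the temperance weight with respect to the $\hslash$-dependent symplectic background ${\sf g}_\hslash=\hslash^{-1}\dd x^2+\hslash\,\dd\xi^2$ (essentially $g_\hslash^\natural$), which gives $\theta_{g_\hslash}=\udl{\gain}_{g_\hslash}^{-1/2}=\hslash^{-1/2}$; then $(\theta_g\udl{\gain}_g^{1/2})^{-\epsilon}\equiv 1$ and the uniform bound $\nabla^3p\in S(1,g_\hslash)$ is precisely what is needed for both classes in~\eqref{eq:symbolclassesnablap3}. The $\hslash$-dependence of the background is harmless because it does not affect the structure constants of $\theta_{g_\hslash}$ as a $g_\hslash$-admissible weight (Proposition~\ref{prop:temperanceweight} and the paper's footnote); but with a fixed background, as in your write-up, the verification of Item~\ref{it:strongsubquad} is not uniform in $\hslash$, and your proof of that clause is incomplete as stated.
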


Proposition~\ref{prop:Schhbar} is proved in Section~\ref{subsec:proofsSchhbar}. Theorem~\ref{thm:main} thus applies in this setting and provides with a \emph{global} Egorov theorem, in the sense that it describes the quantum evolution of symbols defined globally on phase space, and not only in the vicinity of an energy shell.

\begin{remark}
The gain function associated with $g_\hslash$ is the semiclassical parameter $\udl{\gain}_{g_\hslash} = \gain_{g_\hslash} = \hslash$. The Lyapunov exponent defined in~\eqref{eq:defLambda} is then of order $\Lambda = \Lambda_\hslash \approx \hslash$ (while $\Upsilon = 0$), so that the Ehrenfest time~\eqref{eq:defTE} behaves like $\hslash^{-1} \log \hslash^{-1}$ (or more classically of order $\log \hslash^{-1}$ in the semiclassical time scale; see Section~\ref{subsubsec:semiclassicalsetting}).
\end{remark}

In view of this remark and Proposition~\ref{prop:Schhbar}, a consequence of Theorem~\ref{thm:main} formulates as follows. For all $a \in S(1, g_\hslash)$, with $g_\hslash$ in~\eqref{eq:gSchhbar} (see~\cite[(1.1.33)]{Lerner:10} or~\cite[(4.4.4)]{Zworski:book} with a different convention), we have
\begin{equation*}
\e^{\ii \frac{\tau}{\hslash} P} \Opw{a} \e^{- \ii \frac{\tau}{\hslash} P}
	= \Opw{a(\tau)}
\end{equation*}
where $a(\tau) \in S(1, g_\hslash(\tau))$ with seminorm estimates in this class uniform with respect to $(\tau, \hslash)$ subject to $\abs{\tau} \le \varepsilon \log \hslash^{-1}$, where $\varepsilon$ is a small enough constant (and where $g_\hslash(\tau)$ is defined in~\eqref{eq:defg(t)}). In particular, the family of symbols $a(\tau)$ belongs to a bounded subset of $S(1, \bar g_\hslash)$ where
\begin{equation*}
\bar g_\hslash
	= \hslash^{-2 \delta} \upgamma \oplus \hslash^{2(1 - \delta)} \upgamma^{-1} ,
\end{equation*}
for some parameter $\delta \in [0, 1/2]$ (which can be quantified in terms of $\varepsilon$ and of an upper bound of the semiclassically rescaled Lyapunov exponent $\Lambda_\hslash/\hslash$), uniformly in $\abs{\tau} \le \varepsilon \log \hslash^{-1}$ and $\hslash \in (0, 1]$.

Let us comment on the ``non-semiclassical" case ($\hslash$ fixed or $\hslash = 1$).

\begin{itemize}[label=\textbullet]
\item The metric $g$ in~\eqref{eq:gSchhbar} is roughly speaking the only relevant one to consider here.
Indeed, Item~\ref{it:Lambda} of Assumption~\ref{assum:p} is related to a sub-quadraticity property of the Hamiltonian in view of Remark~\ref{rmk:subquad}, so we may require~$g$ to be such that $\abs{\nabla^2 p}_g$ is bounded on~$T^\star \mfd$. The second derivative of $p$ with respect to $\xi$, namely $(\nabla^2 p)_{\xi \xi} = {\upgamma}^{-1}$, is constant on the whole phase space. Therefore, in order to ensure the boundedness of $\abs{\nabla^2 p}_g$, one is forced to consider a metric~$g$ whose unit boxes are bounded in the~$\xi$ direction (think of~$g$ as a Beals--Fefferman metric as in Remark~\ref{rmk:BealsFefferman}). As a consequence, to comply with the uncertainty principle ($\gain_g \le 1$), these boxes cannot be squeezed too much in the~$x$ direction. Thus, unit boxes of~$g$ should roughly look like squares. See Figure~\ref{fig:boxesSch} for an illustration.
\item It is very important here that the metric ${\upgamma}$ is perfectly flat. This was already evidenced in our earlier work~\cite{P:23} on the observability of the Schrödinger equation. The fact that $\nabla \upgamma = 0$ ensures that the position and momentum variables are somewhat ``separated". If it is not the case, the second derivative with respect to $x$, namely $(\nabla^2 p)_{x x}$, contains a term of the form $\nabla^2 {\upgamma}^{-1}(\xi, \xi)$, which may blow up like $\jap{\xi}_{{\upgamma}^{-1}}^2$ at fiber infinity. Having boundedness of the second derivative of $p$ with respect to $g$ would force us to chose $g$ of the form
\begin{equation*}
g = \jap*{\xi}_{{\upgamma}^{-1}}^2 {\upgamma} \oplus {\upgamma}^{-1} ,
\end{equation*}
which strongly violates the uncertainty principle since then $\gain_g \approx \jap{\xi}_{{\upgamma}^{-1}}$. This is related to the infinite speed of propagation of singularities for the Schrödinger equation: frequencies $\xi$ propagate at speed of the order of $\jap{\xi}_{{\upgamma}^{-1}}$, so that they see an effective Lyapunov exponent of order $\jap{\xi}_{{\upgamma}^{-1}} \Lambda$ instead of $\Lambda$ while propagating along bicharacteristics. This is a clear obstruction to having a \emph{global} Egorov's theorem. In Proposition~\ref{prop:Schhbar}, we also assume that the vector potential $\beta$ is affine for the same reason. This constraint was already remarked by Bouzouina and Robert~\cite[Remark 1.6]{BouzouinaRobert}. To relax these assumptions on $\upgamma$ and $\beta$, an alternative way to proceed is to truncate the Hamiltonian in the vicinity of a fixed energy shell and reduce our investigation to energy-localized symbols.
\end{itemize}

\begin{figure}
\centering
\includegraphics[scale=1.5]{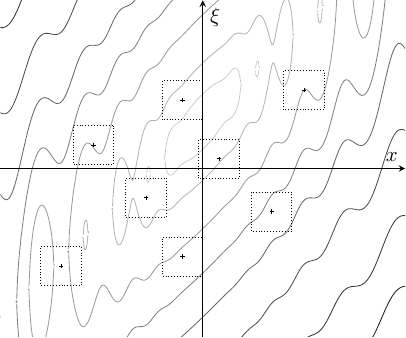}
\caption{Level sets of the classical Hamiltonian $p$ defined in dimension~$1$ by~\eqref{eq:Schop}, with a confining potential $V$ and $\nabla \beta$ constant. Dotted lines represent the unit boxes of the metric introduced in~\eqref{eq:gSchhbar} $g_1 = \upgamma \oplus \upgamma^{-1}$ (i.e.\ the product of unit balls of $\upgamma$ and $\upgamma^{-1}$). These boxes are squares with sidelength $\approx 1$ that saturate the uncertainty principle $\gain_{g_1} \le 1$.}
\label{fig:boxesSch}
\end{figure}

The Egorov theorem that we obtain in Theorem~\ref{thm:main} is consistent with the work of Bouzouina and Robert~\cite{BouzouinaRobert} (see also Robert~\cite[Theorem (IV-10)]{Robert:book}). Notice that the boundedness of derivatives of order larger that $2$~\eqref{eq:assumV2} was already required there; see~\cite[Theorem 1.2 (9)]{BouzouinaRobert} or~\cite[Theorem (IV-10) ii)]{Robert:book}. Theorem~\ref{thm:main} and Proposition~\ref{prop:Schhbar} generalize their work by considering symbol classes with general weight functions $m$, instead of symbols $a$ satisfying $\nabla a \in S(1, g_\hslash)$ as in~\cite[Theorem 1.2 (11)]{BouzouinaRobert}. In addition, we not only provide an asymptotic expansion but also prove that the full symbol of the conjugated operator belongs to the expected symbol class. Lastly, Theorem~\ref{thm:partition} on the quantum evolution of partitions of unity is new even in this context to our knowledge.

\subsubsection{Half-wave operator in a curved space} \label{subsubsec:halfwaves}

As explained in Section~\ref{subsubsec:Schhbar}, if one wants to consider a curved geometry on $\mfd$ instead of a Euclidean structure, it is hopeless to look for a global Egorov's theorem for the Laplace--Beltrami operator (unless we consider only symbols supported near a fixed energy shell). However, considering the wave operator instead of the Schrödinger operator, i.e.\ passing from $-\Delta$ (Schrödinger) to $\sqrt{-\Delta}$ (waves), brings us back in a setting where there exists admissible metrics fulfilling the assumptions of Theorem~\ref{thm:main} in the presence of curvature. The reason for this is that we go from infinite speed of propagation of energy for $\e^{\ii t \Delta}$ to finite speed of propagation for $\e^{- \ii t \sqrt{-\Delta}}$.

So here we let $\gamma$ be a smooth, non-necessarily flat, Riemannian metric on $\mfd = \R^d$. We assume that it satisfies
\begin{equation} \label{eq:assumgamma}
\exists C_\gamma > 0 : \forall x_1, x_2 \in \mfd , \quad
	\gamma_{x_1} \le C_\gamma^2 \gamma_{x_2}
		\qquad \rm{and} \qquad
\forall k \in \N , \quad \sup_\mfd \abs*{\nabla^k \gamma}_\gamma < \infty .
\end{equation}
This implies in particular a uniform control of the form $c^{-2} I \le \gamma \le c^2 I$.
This includes metrics with no specific asymptotic behavior at space infinity.

The Riemannian manifold $(M, \gamma)$ carries a natural volume form $\vol_\gamma$, and a Laplace--Beltrami operator $\Delta_\gamma$ acting on compactly supported smooth functions (or Schwartz functions). Fixing (global) Euclidean coordinates on $M$, it reads
\begin{equation*}
\Delta_\gamma
	= \abs*{\gamma}^{-1/2} \partial_j \gamma^{ij} \abs*{\gamma}^{1/2} \partial_i ,
\end{equation*}
where repeated indices are summed according to the Einstein convention. In this expression, $\abs{\gamma}$ refers to the determinant of $\gamma$ with respect to the chosen Euclidean coordinate system, so that $\dd \vol_\gamma = \abs{\gamma}^{1/2} \dd x$, and $\gamma^{ij}$ are the matrix components of $\gamma^{-1}$. The operator $\Delta_\gamma$ is seen naturally as an unbounded operator on the Hilbert space $L^2(M, \vol_\gamma)$. To fit this in the setting of this paper, we need to construct from $\Delta_\gamma$ an operator that acts on $L^2(\mfd)$. This is done by identifying elements $v \in L^2(\mfd, \vol_\gamma)$ with functions $u \in L^2(\mfd)$ through the correspondence
\begin{align*}
L^2(\mfd, \vol_\gamma) &\longrightarrow L^2(\mfd) \\
v &\longmapsto v \abs{\gamma}^{1/4} = u .
\end{align*}
One can check that this is an isometry (provided the chosen volume element on $L^2(\mfd)$ and the coordinate we consider agree). Through this identification, the Laplace--Beltrami operator is conjugated by $\abs{\gamma}^{1/4}$ and we obtain
\begin{equation} \label{eq:deftildeDeltagamma}
\widetilde{\Delta}_\gamma
	:= \abs*{\gamma}^{1/4} \Delta_\gamma \abs*{\gamma}^{-1/4}
	= \abs*{\gamma}^{-1/4} \partial_j \gamma^{ij} \abs*{\gamma}^{1/2} \partial_i \abs*{\gamma}^{-1/4} .
\end{equation}
The operator $- \widetilde{\Delta}_\gamma$ is indeed symmetric with respect to the scalar product of $L^2(\mfd)$, while $\Delta_\gamma$ is symmetric for the scalar product of $L^2(\mfd, \vol_\gamma)$.
Recalling the fact that
\begin{equation*}
\partial_t^2 - \Delta_\gamma
	= \left( \partial_t - \ii \sqrt{-\Delta_\gamma} \right) \left( \partial_t + \ii \sqrt{-\Delta_\gamma} \right) ,
\end{equation*}
the study of the wave equation in this context reduces\footnote{We refer to~\cite{DL:09,LL:16,Leautaud:23} for more details on this factorization.} to that of the evolution associated with the operator
\begin{equation*}
\abs*{\gamma}^{1/4} \sqrt{- \Delta_\gamma} \abs*{\gamma}^{-1/4}
	= \sqrt{- \widetilde{\Delta}_\gamma} .
\end{equation*}
In Section~\ref{subsec:proofshalfwaves}, we prove that $-\widetilde{\Delta}_\gamma = \Opw{p}$ with
\begin{equation*}
p(x, \xi)
	= \abs*{\xi}_{\gamma_x^{-1}}^2 + R_\gamma(x) ,
\end{equation*}
where $R_\gamma \in \cont_\bdd^\infty(\mfd)$.
See Lemma~\ref{lem:symbolDeltatilde} for the expression of the remainder $R_\gamma$.
We show in Lemma~\ref{lem:sqarerootclosetoP} (Section~\ref{subsec:proofshalfwaves}) that the operator $\Opw{\jap*{\xi}_{\gamma^{-1}}}$ is a good approximation of $\sqrt{- \widetilde{\Delta}_\gamma}$, and $\jap*{\xi}_{\gamma^{-1}}$ is smooth (whereas $\abs{\xi}_{\gamma^{-1}}$ has a singularity at $\xi = 0$). Thus in this section we consider the operator
\begin{equation} \label{eq:defPhalfwaves}
P
	:= \Opw{p}
		\qquad \rm{with} \qquad
			p(x, \xi) = \jap*{\xi}_{\gamma_x^{-1}} .
\end{equation}

The main result of this section is the following. Proofs can be found in Section~\ref{subsec:proofshalfwaves}.

\begin{proposition} \label{prop:halfwaves}
Under the assumptions~\eqref{eq:assumgamma}, the symbol $p$ defined in~\eqref{eq:defPhalfwaves} satisfies Assumptions~\ref{assum:mandatory} and~\ref{assum:p} with the family of metrics
\begin{equation} \label{eq:ghalfwave}
g_{(x, \xi)}
	= \jap*{\xi}_{\gamma_x^{-1}}^{1 - \alpha} \gamma_x \oplus \jap*{\xi}_{\gamma_x^{-1}}^{-(1+\alpha)} \gamma_x^{-1}
	= \dfrac{1}{p^\alpha} \left(p \gamma_x \oplus p^{-1} \gamma_x^{-1}\right) ,
		\qquad \alpha \in [0, 1] .
\end{equation}
\end{proposition}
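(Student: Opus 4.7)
The plan is to verify Assumption~\ref{assum:mandatory} and the three items of Assumption~\ref{assum:p} for the symbol $p = \jap*{\xi}_{\gamma^{-1}_x}$ together with the metric~\eqref{eq:ghalfwave}. The bulk of the work rests on one structural pointwise estimate: writing $p^2 - 1 = \gamma^{ij}(x) \xi_i \xi_j$, the $\xi$-derivatives differentiate a polynomial of degree~$2$ while the $x$-derivatives produce only bounded factors $\nabla^\ell \gamma^{-1}$ thanks to~\eqref{eq:assumgamma}. A direct induction then gives
\begin{equation*}
\abs*{\partial_\xi^j \partial_x^k p}_{\gamma, \gamma^{-1}} \lesssim \jap*{\xi}_{\gamma^{-1}_x}^{1 - j} \lesssim p^{1-j} , \qquad j, k \ge 0 ,
\end{equation*}
with constants depending only on the structure constants in~\eqref{eq:assumgamma}. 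Translated into $g$-norms (each $\partial_\xi$-leg contributes a factor $p^{(1+\alpha)/2}$ and each $\partial_x$-leg a factor $p^{-(1-\alpha)/2}$ upon pairing against $g$-unit vectors), this yields
\begin{equation*}
\abs*{\nabla^\ell p}_g \lesssim p^{1 - \ell(1-\alpha)/2} , \qquad \ell \ge 1 ,
\end{equation*}
which is the sole quantitative input used in the sequel.

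A direct computation based on the splitting $g = A \oplus B$ with $A$ horizontal and $B$ vertical gives $g^\sympf = p^{1+\alpha} \gamma_x \oplus p^{-(1-\alpha)} \gamma_x^{-1}$, hence $\gain_g = p^{-\alpha}$, $\udl{\gain}_g = 1$ and, using a fixed symplectic Euclidean reference $\sf{g} = \gamma_0 \oplus \gamma_0^{-1}$ together with $\gamma_x \asymp \gamma_0$ from~\eqref{eq:assumgamma}, $\theta_g \asymp p^{(1+\alpha)/2}$. The uncertainty principle is automatic since $p \ge 1$ and $\alpha \ge 0$. Slow variation of $g$ follows from $\abs*{\nabla \log p}_g = \abs*{\nabla p}_g/p \lesssim p^{(\alpha-1)/2} \le 1$ together with the observation that a unit $g$-ball has $\gamma$-diameter at most $p^{-(1-\alpha)/2} \le 1$ in the $x$-direction, over which $\gamma_x$ varies by a bounded factor. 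Temperance reduces to controlling $p(\rho)/p(\rho_0)$ by a power of $\jap*{\rho - \rho_0}_{g_\rho^\sympf}$, obtained directly from the explicit form of $g^\sympf$ and a Peetre-type inequality on $\jap*{\xi}_{\gamma_x^{-1}}$.

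Assumption~\ref{assum:mandatory} is then handled by Proposition~\ref{prop:classicalwellposedness}: sub-quadraticity with respect to $g_0 = g^\natural$ follows from $\nabla^2 p \in S(p^\alpha, g) = S(\gain_g^{-1}, g)$ via Remark~\ref{rmk:inpracticecheckassum}, and the level-set condition~\eqref{eq:assumeessentialselfadjointness} is trivially satisfied by any $E < 1$ since $p \ge 1$. For item~\ref{it:Lambda}, Remark~\ref{rmk:subquad} applies: the bound $\abs*{\nabla^2 p}_g \lesssim p^\alpha$ gives $\gain_g \abs*{\nabla^2 p}_g \lesssim 1$, while $\abs*{\nabla_{H_p} g}_g$ is bounded because $p$ is conserved along the flow (so the conformal factors $p^{\pm(1 \pm \alpha)}$ are neutral under $\nabla_{H_p}$) and $\nabla_{H_p} \gamma_x$ involves only the bounded quantities $\partial_\xi p$ and $\nabla \gamma$. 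For item~\ref{it:strongsubquad}, the estimate $\abs*{\nabla^3 p}_g \lesssim p^{(3\alpha-1)/2}$ and the analogous decay of its higher derivatives yield $\nabla^3 p \in S(p^\alpha, g) = S((\gain_g/\udl{\gain}_g)^{-1}, g)$ for $\alpha \le 1$; the second inclusion in~\eqref{eq:symbolclassesnablap3} follows from the inequality $(3\alpha-1)/2 \le 3\alpha - \epsilon(1+\alpha)/2$, which holds for any $\epsilon \le 1/2$ and $\alpha \in [0, 1]$.

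For item~\ref{it:metriccontrol}, conservation of $p$ along the Hamiltonian flow removes all $p$-dependence from the comparison of $g_{\phi^t(\rho)}$ with $g_\rho$, reducing it to a comparison between $\gamma_{x_t}$ and $\gamma_{x_0}$; the bounded-geometry assumption~\eqref{eq:assumgamma} then gives $\gamma_{x_t} \le C_\gamma^2 \gamma_{x_0}$ uniformly in $(t, x_0)$, so~\eqref{eq:constantsmetricalongtheflow} holds with $\Upsilon = 0$ and $C_\Upsilon = C_\gamma$ (this is where finite speed of propagation is used in an essential way, and is the reason why the same strategy fails for the Laplacian). The principal technical obstacle in executing this plan is the tensorial bookkeeping needed to promote the pointwise $\gamma,\gamma^{-1}$-estimates on $\partial_\xi^j \partial_x^k p$ into seminorm estimates of $\nabla^\ell p$ and $\nabla^\ell(\nabla^3 p)$ in the symbol classes $S(\cdot, g)$; this is a direct but lengthy induction on the number of derivatives, using the rule that each additional $\nabla$ either produces a bounded factor $\nabla \gamma$ or an extra $\xi$ compensated by differentiating $p^{-1}$.
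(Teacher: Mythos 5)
Your proposal is correct and, in its overall architecture, coincides with the paper's proof: the single quantitative input $\abs{\nabla^\ell p}_g \lesssim p^{1-\ell(1-\alpha)/2}$ (the paper gets it via Faà di Bruno applied to $p = \sqrt{1+q}$, you via a pointwise induction on $\partial_\xi^j\partial_x^k p \lesssim p^{1-j}$ followed by the same weight bookkeeping — a cosmetic difference, though note you also need bounded derivatives of $\gamma^{-1}$, not just of $\gamma$, which is the easy Lemma~\ref{lem:assumgamma-1}), the identities $\gain_g = p^{-\alpha}$ and $\theta_g \asymp p^{(1+\alpha)/2}$, Item~\ref{it:Lambda} via Remark~\ref{rmk:subquad} with the conformal factors killed by $H_p p = 0$, Item~\ref{it:strongsubquad} from the same derivative estimates, Item~\ref{it:metriccontrol} with $\Upsilon = 0$ from flow-invariance of $p$ plus uniform comparability of $\gamma$, and Assumption~\ref{assum:mandatory} from semi-boundedness via Proposition~\ref{prop:classicalwellposedness} and Remark~\ref{rmk:inpracticecheckassum}. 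The one place you genuinely diverge is admissibility of $g$: the paper compares $g$ with the flat-background metric $\tilde g = \jap{\xi}_I^{1-\alpha}\dd x^2 + \jap{\xi}_I^{-(1+\alpha)}\dd\xi^2$ (using \eqref{eq:assumgamma} for two-sided comparability of both the metrics and their $\sympf$-duals) and invokes Lerner's Lemma 2.2.18, which buys admissibility for free; you verify it directly. Your slow-variation sketch is fine, but the temperance step is not quite as "direct" as stated: the natural chain $p(\rho_0) \lesssim p(\rho)\jap{\xi-\xi_0}_{\gamma_x^{-1}}$ combined with $\abs{\xi-\xi_0}_{\gamma_x^{-1}} \le p(\rho)^{(1-\alpha)/2}\abs{\rho-\rho_0}_{g_\rho^\sympf}$ leaves a stray factor $p(\rho)^{(1-\alpha)/2}$, so a single Peetre bound does not control both ratios $p(\rho)/p(\rho_0)$ and $p(\rho_0)/p(\rho)$ by powers of $\jap{\rho-\rho_0}_{g_\rho^\sympf}$; you must either run a dichotomy (e.g.\ $\abs{\xi-\xi_0}_{\gamma_x^{-1}} \le \tfrac12\abs{\xi}_{\gamma_x^{-1}}$, where the ratios are bounded, versus the opposite case, where $\jap{\rho-\rho_0}_{g_\rho^\sympf} \gtrsim p(\rho)^{(1+\alpha)/2}$ closes both estimates) or simply fall back on the paper's comparison with $\tilde g$. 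This is a fixable detail rather than a flaw in the approach; with that dichotomy spelled out, your direct route is complete and equivalent to the paper's.
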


Theorem~\ref{thm:main} in this context, and for a weight $m = \jap{\xi}^n$ ($n \in \R$), reads as follows: for any $a \in S(\jap{\xi}^n, g)$, the family $a(t)$ such that
\begin{equation*}
\e^{\ii t P} \Opw{a} \e^{- \ii t P}
	= \Opw{a(t)}
\end{equation*}
remains in a bounded subset of $S(\jap{\xi}^n, g)$ for all $t \in [-T_0, T_0]$, for any fixed $T_0 > 0$. We recover a result stated by Taylor~\cite[Section 6, Proposition 6.3.B]{Taylor:91} in the symbol classes
\begin{equation*}
S_{\rho, \delta}^n
	= \set{a \in \cont^\infty(\R^{2d})}{\forall \beta_1, \beta_2 \in \N^d, \exists C_{\beta_1, \beta_2} > 0 : \quad \abs*{\partial_x^{\beta_2} \partial_\xi^{\beta_1} a(x, \xi)} \le C_{\beta_1, \beta_2} \jap{\xi}^{n - \rho \abs{\beta_1} + \delta \abs{\beta_2}}} .
\end{equation*}
(See also~\cite[Chapter 7, Section 8]{Taylor:vol2} for a simpler version.) Our result clarifies the range of admissible indices $\rho = 1 - \delta > 1/2$ (here we have $\alpha = \rho - \delta$). In fact we can reach the boundary case $\rho = 1 - \delta = 1/2$ corresponding to $\alpha = 0$ in Proposition~\ref{prop:halfwaves}). Moreover, Theorem~\ref{thm:main} is valid for more general {$g$-admissible} weight functions and is not restricted to $m(x, \xi) = \jap{\xi}^n$.

Notice that the map $\alpha \mapsto g_\alpha$ is decreasing (the unit boxes are nested). The two boundary cases $\alpha = 1$ and $\alpha = 0$ are of particular interest.
\begin{itemize} [label=\textbullet]
\item The metric $g$ for $\alpha = 1$, namely
\begin{equation*}
g
	= \gamma \oplus \dfrac{1}{\jap{\xi}_{\gamma^{-1}}^2} \gamma^{-1}
\end{equation*}
is the most common one in microlocal analysis (it corresponds to $S_{\rho, \delta}$ with $\rho = 1$ and $\delta = 0$). This is the ``principal microlocal scale", namely the most natural way to put a metric on phase space while studying the wave equation. See Figure~\ref{subfig:alpha1} for an illustration.
\item The case $\alpha = 0$, i.e.\
\begin{equation*}
g
	= \jap{\xi}_{\gamma^{-1}} \gamma \oplus \dfrac{1}{\jap{\xi}_{\gamma^{-1}}} \gamma^{-1}
\end{equation*}
corresponds to a \emph{second-microlocal} scale of propagation of waves. It is related to that used in the articles~\cite{BZ:16,BG:20,Rouveyrol:24} on the (damped) wave equation. From these articles, in a semiclassical setting, one can establish that a {$o(h)$-quasimode} of the Laplacian of typical frequency $\jap{\xi}_{\gamma^{-1}} \asymp h^{-1}$ cannot concentrate (in the space variable $x$) at scales smaller than $\sqrt{h}$. See Figure~\ref{subfig:alpha0} for an illustration. This metric is also relevant for the problem studied by the author in~\cite{P:24}, concerning the uniform stability of the damped wave equation in the Euclidean space. Theorem~\ref{thm:main} was mainly motivated by this paper and we plan to apply it (or rather Theorem~\ref{thm:partition}) to tackle~\cite[Conjecture 1.11]{P:24}. Finally, we refer to~\cite{BonyLerner} for a comprehensive approach to second microlocalization in the framework of the Weyl--Hörmander calculus.
\end{itemize}

\begin{figure}
		\centering
		\begin{subfigure}[t]{0.45\textwidth}
		\includegraphics[scale=1.0]{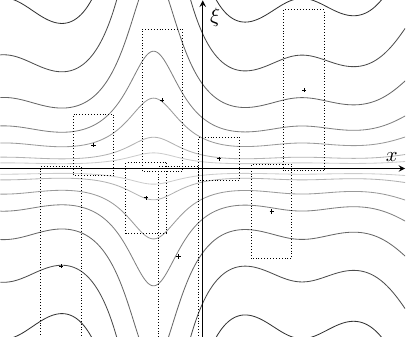}
		\caption{Case $\alpha = 1$. The unit boxes have sidelength $\approx 1$ in the space variable and $\approx \jap{\xi}_{\gamma^{-1}}$ in the momentum variable. Their area increases as their center goes away from the null section $\{\xi = 0\}$. This is consistent with the expression of the gain function $\gain_g(x, \xi) \approx \jap{\xi}_{\gamma^{-1}}^{-1}$ associated with this metric.}
		\label{subfig:alpha1}
		\end{subfigure}
		\hfill
		\begin{subfigure}[t]{0.45\textwidth}
		\includegraphics[scale=1.0]{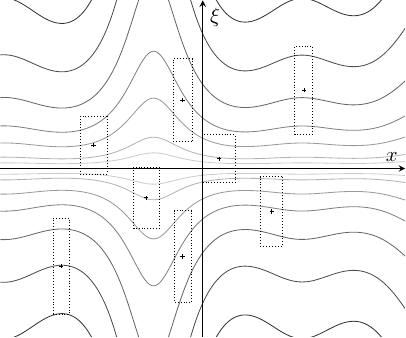}
        \caption{Case $\alpha = 0$. The unit boxes have sidelength $\approx \jap{\xi}_{\gamma^{-1}}^{-1/2}$ in the space variable and $\approx \jap{\xi}_{\gamma^{-1}}^{1/2}$ in the momentum variable. Their area is of order~$1$, which illustrates the fact that the metric saturates the uncertainty principle $\gain_g \le 1$.}
        \label{subfig:alpha0}
        \end{subfigure}
        \caption{Level sets of the classical Hamiltonian $p(x, \xi) = \jap{\xi}_{\gamma^{-1}}$. Dotted lines represent unit boxes of the metric $g$ defined in~\eqref{eq:ghalfwave}. They correspond to products of unit balls of $\gamma_1$ and $\gamma_2$ associated with the decomposition $g = \gamma_1 \oplus \gamma_2$ in~\eqref{eq:ghalfwave}.}
        \label{fig:halfwave}
\end{figure}

\subsubsection{Vector fields} \label{subsubsec:X}

Our last application concerns differential operators of order $1$, namely vector fields. These are quite different from the previous cases for several reasons. The first difference with the examples discussed in Sections~\ref{subsubsec:Schhbar} and~\ref{subsubsec:halfwaves} is that vector fields are neither elliptic nor semibounded. Another interesting feature is that the relevant metric on phase space that we introduce below is not invariant by the Hamiltonian flow, contrary to the first two examples discussed above.

Although the analysis of such operators seems to fall under classical mechanics at first glance, microlocal techniques have proved very powerful and natural in this context, as evidenced e.g.\ by the works of Faure--Sjöstrand~\cite{FJ:11}, Dyatlov--Zworski~\cite{DZ:16} and Faure--Tsujii~\cite{FT:23,FT:arxiv}.

In what follows, we place ourselves in the following setting: a vector field on (the non-compact manifold) $\mfd$, bounded with respect to a fixed Euclidean metric. We first consider the case of a vector field $X$ on $\mfd$ that preserves a smooth density $\mu = \abs{\mu} \dd x$, in such a way that the derivation operator $\frac{1}{\ii} X$ acting on compactly supported functions is symmetric as an operator on $L^2(\mfd, \mu)$. To fit in the framework of this paper, we identify $L^2(\mfd, \mu)$ and $L^2(\mfd)$ via
\begin{align*}
T_\mu : L^2(\mfd, \mu) &\longrightarrow L^2(\mfd) \\
u &\longmapsto u \abs*{\mu}^{1/2} .
\end{align*}
This is an isometric isomorphism provided $\abs*{\mu}$ does not vanish. The operator $\frac{1}{\ii} X$ is then conjugated by $\abs*{\mu}^{1/2}$, so that it acts on $L^2(\mfd)$ as
\begin{equation} \label{eq:conjX}
\tfrac{1}{\ii} T_\mu X T_\mu^{-1}
	= \tfrac{1}{\ii} \abs*{\mu}^{1/2} X \abs*{\mu}^{-1/2}
	= \tfrac{1}{\ii} X - \dfrac{1}{2 \ii} \left(X \log \abs*{\mu}\right) .
\end{equation}
In Lemma~\ref{lem:logderivativemu}, we compute the logarithmic derivative of $\abs{\mu}$, which reads $X \log \abs{\mu} = - \dvg X$ (here $\dvg X = \tr \nabla X$ where $\nabla X(x)$ is viewed as an endomorphism of $T_x \mfd$).
Therefore the operator of interest here is
\begin{equation} \label{eq:defPX}
P
	:= \tfrac{1}{\ii} X + \tfrac{1}{2 \ii} \dvg X ,
\end{equation}
which is symmetric as an operator on $L^2(\mfd)$. In the sequel, we assume that the vector field satisfies
\begin{equation} \label{eq:assumX}
\forall k \in \N , \qquad
	\sup_\mfd \abs*{\nabla^k X}_I < \infty ,
\end{equation}
where $I$ refers to the standard Euclidean metric on $\mfd$. The induced metric on $T^\star \mfd$ is denoted by $\dd x^2 + \dd \xi^2$. The main result of this section is the following.

\begin{proposition} \label{prop:exX}
Assume $X$ satisfies~\eqref{eq:assumX}. With $P$ in~\eqref{eq:defPX}, we have
\begin{equation} \label{eq:HamiltonianPX}
P = \Opw{p}
	\qquad \rm{with} \qquad
p(x, \xi) = \xi. X(x) .
\end{equation}
Then Assumptions~\ref{assum:mandatory} and~\ref{assum:p} are satisfied with the metric
\begin{equation} \label{eq:gvectorfield}
g_{(x, \xi)}
	= \jap*{\xi}_{I}^{2 \alpha_1} \dd x^2 + \jap*{\xi}_{I}^{- 2 \alpha_2} \dd \xi^2 ,
\end{equation}
where the parameters $\alpha_1$ and $\alpha_2$ are subject to
\begin{equation} \label{eq:conditionalphanu}
0 \le \alpha_1 \le \alpha_2 \le 1 ,
	\qquad
\alpha_1 + \alpha_2 \ge 1
	\quad \rm{and} \quad
\alpha_1 < 1 .
\end{equation}
\end{proposition}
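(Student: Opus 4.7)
The plan is to verify one by one the symbol identity, Assumption~\ref{assum:mandatory}, and each of the items of Assumption~\ref{assum:p} for the metric~\eqref{eq:gvectorfield}, using only the uniform bounds~\eqref{eq:assumX} on the derivatives of $X$. First I would establish~\eqref{eq:HamiltonianPX}: since $p(x,\xi) = \xi \cdot X(x)$ is real, its Weyl quantization is formally self-adjoint, and because $p$ is linear in $\xi$ the conversion between the Kohn--Nirenberg left symbol and the Weyl symbol reduces to the single correction $\frac{1}{2 \ii}\partial_x \partial_\xi(\xi \cdot X) = \frac{1}{2 \ii}\dvg X$, giving $\Opw{\xi \cdot X} = \frac{1}{\ii} X + \frac{1}{2 \ii}\dvg X = P$. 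For Assumption~\ref{assum:mandatory}, completeness of the Hamiltonian flow $\dot x = X(x)$, $\dot \xi = -(\nabla X)^\top \xi$ is immediate from~\eqref{eq:assumX}. Essential self-adjointness does not follow directly from Proposition~\ref{prop:classicalwellposedness} since $p$ is neither semibounded nor does $\abs{\nabla p}_g$ stay bounded on any level set; instead I would invoke the classical fact that the pullback by the complete flow of $X$, corrected by the square root of its Jacobian, defines a strongly continuous unitary group on $L^2(\mfd)$ whose generator extends $P$ from $\sch(\mfd)$.

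For the admissibility of $g$, I would compute $g^\sympf = \jap{\xi}_I^{2\alpha_2} \dd x^2 + \jap{\xi}_I^{-2\alpha_1} \dd \xi^2$ and deduce that $\gain_g(x,\xi) = \jap{\xi}_I^{\alpha_1 - \alpha_2}$ with $\udl{\gain}_g = 1$, so the uncertainty principle reduces to $\alpha_1 \le \alpha_2$. Slow variation follows from $\alpha_2 \le 1$ by checking that $\jap{\xi}_I$ is comparable to $\jap{\xi_0}_I$ on $g$-balls of small enough radius, where $\abs{\xi - \xi_0} \lesssim \jap{\xi_0}_I^{\alpha_2}$. Temperance amounts to controlling $\jap{\xi}_I/\jap{\xi_0}_I$ polynomially by $\jap{\rho - \rho_0}_{g_\rho^\sympf}$: rewriting $\abs{\xi - \xi_0} = \jap{\xi}_I^{\alpha_1}\abs{\xi - \xi_0}_{g_\rho^\sympf}$ and solving $\jap{\xi}_I \le \jap{\xi_0}_I + \jap{\xi}_I^{\alpha_1}\abs{\xi - \xi_0}_{g_\rho^\sympf}$ for $\jap{\xi}_I$, this is where the \emph{strict} condition $\alpha_1 < 1$ enters, yielding $\jap{\xi}_I \lesssim (\jap{\xi_0}_I + \abs{\rho - \rho_0}_{g_\rho^\sympf})^{1/(1-\alpha_1)}$.

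For Assumption~\ref{assum:p}, I would proceed item by item. Item~\ref{it:Lambda}: a direct coordinate computation of $\lieder_{H_p} g$ with $H_p = X \partial_x - (\nabla X)^\top \xi \cdot \partial_\xi$ produces four kinds of terms; in $g$-norm they all reduce to constants times $\abs{\nabla X}_I$, except for the crucial cross contribution $\xi \cdot \nabla^2 X$ appearing in $\jap{\xi}_I^{-2\alpha_2}\lieder_{H_p}(\dd \xi^2)$, whose $g$-norm equals $\jap{\xi}_I^{1 - \alpha_1 - \alpha_2}\abs{\nabla^2 X}_I$, bounded precisely under $\alpha_1 + \alpha_2 \ge 1$. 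Item~\ref{it:strongsubquad}: as $p$ is linear in $\xi$, only the components $(\nabla^{3+k} p)_{x^{3+k}} = \xi \cdot \nabla^{3+k} X$ and $(\nabla^{3+k} p)_{x^{2+k}\xi} = \nabla^{2+k} X$ survive, with $g$-norms dominated respectively by $\jap{\xi}_I^{1-(3+k)\alpha_1}\abs{\nabla^{3+k} X}_I$ and $\jap{\xi}_I^{\alpha_2 - (2+k)\alpha_1}\abs{\nabla^{2+k} X}_I$. Using $\theta_g = \jap{\xi}_I^{\alpha_2}$ and $\gain_g/\udl{\gain}_g = \jap{\xi}_I^{\alpha_1 - \alpha_2}$, membership in the second class $S((\gain_g/\udl{\gain}_g)^{-1}, g)$ reduces to $1 \le 2\alpha_1 + \alpha_2$ (which follows from $\alpha_1 \ge 0$ and $\alpha_1 + \alpha_2 \ge 1$) together with $\alpha_1 \ge 0$; membership in the first class reduces to $\alpha_2 \ge 1/(3-\epsilon)$ and a similar inequality, both verified for any $\epsilon \in (0, 1)$ thanks to $\alpha_2 \ge 1/2$. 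Item~\ref{it:metriccontrol} is immediate: the linear ODE $\dot \xi = -(\nabla X)^\top \xi$ yields $\jap{\xi_t}_I \le C \jap{\xi_0}_I \e^{\abs{\nabla X}_{I, \infty}\abs{t}}$ by Gr\"onwall, so $g_{\phi^t(\rho)} \le C^2 \e^{2\alpha_2 \abs{\nabla X}_{I, \infty} \abs{t}} g_\rho$ and $\Upsilon = \alpha_2 \abs{\nabla X}_{I,\infty}$ works.

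The main obstacle I expect is the tensorial book-keeping in item~\ref{it:Lambda} (pinpointing which cross term forces $\alpha_1 + \alpha_2 \ge 1$) and the simultaneous verification of the two symbol inclusions in~\eqref{eq:symbolclassesnablap3} in item~\ref{it:strongsubquad}. That $\alpha_1 + \alpha_2 \ge 1$ surfaces independently in both places, while $\alpha_1 < 1$ enters only in the temperance of $g$ and $\alpha_2 \le 1$ only in slow variation, explains why the set of admissible parameters~\eqref{eq:conditionalphanu} takes precisely this form.
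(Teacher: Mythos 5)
Your proposal is correct and follows essentially the same structure as the paper's proof: compute the Weyl symbol exactly (the calculus is exact at order $1$ since $p$ is affine in $\xi$), identify $\gain_g=\jap{\xi}_I^{\alpha_1-\alpha_2}$ and $\theta_g\asymp\jap{\xi}_I^{\alpha_2}$, and verify Items~\ref{it:Lambda}--\ref{it:metriccontrol} from the bounds $\abs{\nabla^k p}_g\lesssim\jap{\xi}_I^{\alpha_2-\alpha_1-(k-2)\alpha_1}$, where $\alpha_1+\alpha_2\ge1$ is exactly what controls the term $\xi\cdot\nabla^2X$. Your internal choices differ slightly: you check slow variation and temperance of $g$ by hand (locating $\alpha_2\le1$ and $\alpha_1<1$ precisely), where the paper simply cites Lerner's Lemma~2.2.18; and for Item~\ref{it:Lambda} you compute $\lieder_{H_p}g$ directly, where the paper routes through Remark~\ref{rmk:subquad} ($\nabla^2p\in S(\gain_g^{-1},g)$ plus boundedness of $\abs{\nabla_{H_p}g}_g$) --- these are equivalent and your identification of the critical cross term is right.

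The one step you should tighten is essential self-adjointness. As stated, ``the pullback by the flow, corrected by the square root of its Jacobian, defines a unitary group whose generator extends $P$ from $\sch(\mfd)$'' only produces \emph{one} self-adjoint extension of $P$; by itself this does not give uniqueness. You need the additional observation that this group preserves a core (e.g.\ $\cont_\comp^\infty(\mfd)$ or $\sch(\mfd)$, which holds under~\eqref{eq:assumX} since the flow of $X$ has bounded displacement and bounded derivatives and $\dvg X\in\cont_\bdd^\infty$), and then invoke the invariant-core criterion: if a strongly continuous unitary group leaves a dense domain $D\subset\dom$ of its generator invariant and the generator agrees with $\ii P$ on $D$, then $P$ is essentially self-adjoint on $D$. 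This is precisely what the paper's Lemma~\ref{lem:selfadjX} does in explicit form, constructing the transported solution $v(t)$ staying in $\cont_\comp^\infty(\mfd)$ and showing $\ker(P^\ast\pm\ii)=\{0\}$ by differentiating $\inp*{u}{v(t)}_{L^2}$; once you add the core-invariance step your argument is the same one.
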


The proof of this result is presented in Section~\ref{subsec:proofsX}. Notice that the conjugated operator $\e^{\ii t P} \Opw{a} \e^{- \ii t P}$ can be described explicitly in the case where $a = a(x)$ depends only on the position variable. Indeed, we have
\begin{equation*}
\e^{- \ii t P} u
	= \exp\left(- \dfrac{1}{2} \int_0^t \e^{-s X} (\dvg X) \dd s \right) \e^{- t X} u ,
		\qquad \forall u \in \sch(\mfd) ,
\end{equation*}
and one can then check that
\begin{equation*}
\e^{\ii t P} \Opw{a(x)} \e^{- \ii t P}
	= \Opw{\e^{t X} a} .
\end{equation*}
This is an instance of exact Egorov's theorem related to the fact that the Hamiltonian~\eqref{eq:HamiltonianPX} is linear in the momentum variable (derivatives of order larger than~$2$ with respect to $\xi$ vanish). Therefore, the main interest of our result concerns the evolution of observables depending on both variables $x$ and $\xi$. In this context, Theorem~\ref{thm:main} gives the following, say in the symbol class $S(1, g)$: for any $a \in S(1, g)$, we have
\begin{equation*}
\e^{\ii t P} \Opw{a} \e^{- \ii t P}
	= \Opw{a(t)} ,
\end{equation*}
where the family of symbols $a(t)$ remains in a bounded subset of $S(1, g)$ for all $t \in [-T_0, T_0]$ with fixed $T_0 > 0$.

In~\cite{FT:23,FT:arxiv}, Faure and Tsujii introduce a specific family of metrics on the phase space to study flows of Anosov vector fields on compact manifolds. The principal symbol of the Hamiltonian that the authors consider is the same as ours, namely $\xi. X(x)$. To define the relevant metrics adapted to the dynamics, they introduce flow box coordinates: position and momentum variables split into perpendicular and parallel components
\begin{equation*}
x
	= (x_\perp, x_\parallel) \in \R^{d-1} \times \R ,
		\qquad
\xi
	= (\xi_\perp, \xi_\parallel) \in \R^{d - 1} \times \R ,
\end{equation*}
in such a way that the vector field $X$ generating the flow corresponds to $- X = \partial_{x_\parallel}$. Then the family of metrics that they consider is defined as
\begin{equation} \label{eq:gFT}
g_{(x, \xi)}
	= \jap*{\xi}^{2 \alpha^\perp} \dd x_\perp^2 + \dfrac{1}{\jap*{\xi}^{2 \alpha^\perp}} \dd \xi_\perp^2 + \jap*{\xi}^{2 \alpha^\parallel} \dd x_\parallel^2 + \dfrac{1}{\jap*{\xi}^{2 \alpha^\parallel}} \dd \xi_\parallel^2 ,
\end{equation}
in those coordinates. The parameters $\alpha^\perp, \alpha^\parallel$ satisfy
\begin{equation*}
0 \le \alpha^\parallel \le \alpha^\perp < 1
	\qquad \rm{and} \qquad
\dfrac{1}{2} \le \alpha^\perp < 1 .
\end{equation*}
These conditions ensure that changing charts does not affect the metric, up to a global conformal factor (actually they define an equivalence class of metrics invariant under flow box coordinate changes). A relevant choice of these parameters then allows Faure and Tsujii to describe the Ruelle spectrum of the flow and analyze the corresponding resonant states.

Conditions~\eqref{eq:conditionalphanu} imply in particular that $\alpha_2 \ge 1/2$. The metric of Faure and Tsujii~\eqref{eq:gFT} is symplectic, which means that $g = g^\sympf$. In our case, the metric $g$ defined in~\eqref{eq:gvectorfield} is symplectic when $\alpha_1 = \alpha_2$, and then it coincides with Faure and Tsujii's one if we take $\alpha^\parallel = \alpha^\perp$ there. An interesting feature of Faure and Tsujii's metric~\eqref{eq:gFT} is the fact that it is adapted to $X$ in the sense that it allows measurements at different scales in the direction of the flow and in the transverse direction. It is not clear how to reproduce such an anisotropy in our framework. This is certainly due to the subtle construction of Faure and Tsujii using flow box coordinates, while we work in a global Euclidean coordinate system.

\subsection{More on admissibility of phase space metrics} \label{subsec:metricsonphasespace}

In this article, we work with a metric $g$ on the phase space, but we will see that the metric $g(t)$ introduced in~\eqref{eq:defg(t)} will arise naturally while considering the action of the Hamiltonian flow $\e^{t H_p}$ on symbol classes. Indeed, $g(t)$ is a metric conformal to $g$, for which~\eqref{eq:Lyapunovcontrol} holds.
In the Weyl--Hörmander framework, the Ehrenfest time~\eqref{eq:defTE} arises as the time from which admissibility of $g(t)$ breaks down, due to the failure of the uncertainty principle $\gain_{g(t)} \le 1$.
The purpose of the two propositions below is to check that the family of metrics $g(t)$ is uniformly admissible, namely its structure constants are uniform with respect to $t \in [-T_E, T_E]$, and that $m(t) := \e^{t H_p} m$ is a {$g(t)$-admissible} weight, uniformly in $\abs{t} \le \frac{1}{2} T_E$. One of the tools involved in the proofs is the so-called \emph{symplectic intermediate metric} $g^\natural$, which is defined as the geometric mean of $g$ and $g^\sympf$. More precisely, $g^\natural$ is the largest non-negative symmetric map $q : W \to W^\star$ such that the symmetric map
\begin{equation} \label{eq:defgnatural}
\begin{pmatrix}
g & q \\ q & g^\sympf
\end{pmatrix}
	: W \oplus W \longrightarrow W^\star \oplus W^\star
\end{equation}
is non-negative---see~\cite{PW:75,Ando:79} and~\cite[Definition 2.2.19]{Lerner:10}. The main property of $g^\natural$ is that it is symplectic, namely $g^\natural = (g^\natural)^\sympf$, and it satisfies
\begin{equation} \label{eq:chainineq}
g
	\le \gain_g g^\natural
	\le g^\natural
	= (g^\natural)^\sympf
	\le \dfrac{1}{\gain_g} g^\natural
	\le g^\sympf
\end{equation}
(see Lemma~\ref{lem:gnatural}).

\begin{proposition}[Improved admissibility] \label{prop:improvedadmissibility}
Let $g$ be an admissible metric. Then there exist $r_g \in (0, 1], C_g > 0, N_g \ge 0$, depending only on structure constants of~$g$, such that
\begin{equation} \label{eq:unifadmnatural}
\forall t \in \R , \forall \rho_0, \rho \in T^\star \mfd, \qquad
	g_\rho(t)
		\le C_g^2 \jap*{\dist_{(g_{\rho_0}^\natural + g_\rho^\natural)^\sympf}\left( B_{r_g}^g(\rho_0), B_{r_g}^g(\rho) \right)}^{2 N_g} g_{\rho_0}(t) .
\end{equation}
Similarly, for any {$g$-admissible} weight $m$ for which $r_g$ is a slow variation radius, there exist $C > 0, N \ge 0$ such that
\begin{equation} \label{eq:unifadmweight}
\forall \rho_0, \rho \in T^\star \mfd, \qquad
	m(\rho)
		\le C \jap*{\dist_{(g_{\rho_0}^\natural + g_\rho^\natural)^\sympf}\left( B_{r_g}^g(\rho_0), B_{r_g}^g(\rho) \right)}^{N} m(\rho_0) ,
\end{equation}
with constants $C, N$ depending only on structure constants of $g$ and $m$.
\end{proposition}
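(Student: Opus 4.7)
\emph{Step 1 (Conformal invariance of $g^\natural$, reducing~\eqref{eq:unifadmnatural} to $t=0$).} Since $g(t) = c_t g$ with $c_t = e^{2 (\Lambda + 2 \Upsilon) |t|}$ is a constant conformal rescaling of $g$, one has $g(t)^\sympf = c_t^{-1} g^\sympf$. Directly from the block-matrix characterization~\eqref{eq:defgnatural}, the condition
\begin{equation*}
\langle c_t g X, X\rangle + 2 \langle q X, Y\rangle + \langle c_t^{-1} g^\sympf Y, Y\rangle \ge 0 , \quad \forall (X, Y) \in W \oplus W ,
\end{equation*}
is invariant under the rescaling $(X, Y) \mapsto (X/\sqrt{c_t}, \sqrt{c_t}\, Y)$, so it selects the same maximal $q$ as $(g, g^\sympf)$. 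Hence $g(t)^\natural = g^\natural$, the right-hand metric $(g_{\rho_0}^\natural + g_\rho^\natural)^\sympf$ in~\eqref{eq:unifadmnatural} does not depend on $t$, and the factor $c_t$ appears on both $g_\rho(t)$ and $g_{\rho_0}(t)$ and cancels. It therefore suffices to prove~\eqref{eq:unifadmnatural} at $t = 0$.

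\emph{Step 2 (From points to balls via slow variation).} Choose $r_g$ to be a common slow-variation radius of $g$ and, for~\eqref{eq:unifadmweight}, of $m$ as well. Slow variation yields $g_{\rho_1} \asymp g_{\rho_0}$, $g_{\rho_2} \asymp g_\rho$, $m(\rho_1) \asymp m(\rho_0)$, $m(\rho_2) \asymp m(\rho)$ uniformly for $\rho_1 \in B_{r_g}^g(\rho_0)$ and $\rho_2 \in B_{r_g}^g(\rho)$, with constants depending only on structure constants. Consequently, it is enough to prove for some $C, N$ and every $(\rho_0, \rho, \rho_1, \rho_2)$ the pointwise bi-pointed estimates
\begin{equation*}
g_{\rho_2} \le C^2 \jap{|\rho_2 - \rho_1|_{(g_{\rho_0}^\natural + g_\rho^\natural)^\sympf}}^{2N} g_{\rho_1}
\quad \text{and} \quad
m(\rho_2) \le C \jap{|\rho_2 - \rho_1|_{(g_{\rho_0}^\natural + g_\rho^\natural)^\sympf}}^{N} m(\rho_1) ,
\end{equation*}
and then take the infimum over $(\rho_1, \rho_2)$, which converts the point-to-point distance on the right into the distance between the two balls.

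\emph{Step 3 (Bi-pointed temperance; main obstacle).} The pointwise estimates of Step 2 constitute a ``bi-pointed'' refinement of the basic temperance of $g$ (and $m$). I will derive them by a chain-of-balls argument: join $\rho_1$ to $\rho_2$ by a straight segment, cover it with a chain of $g$-slow-variation balls, and iterate slow variation of $g$ (respectively $m$) along the chain. The number of balls is polynomially controlled by the length of the segment measured in $g^\natural$; by~\eqref{eq:chainineq} and the $\sympf$-self-duality $g^\natural = (g^\natural)^\sympf$, this length is in turn polynomially bounded by $\jap{|\rho_2 - \rho_1|_{(g_{\rho_0}^\natural + g_\rho^\natural)^\sympf}}$. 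The required framework is available in~\cite[Section~2.2]{Lerner:10}. The principal difficulty is precisely here: the bi-pointed distance in $(g_{\rho_0}^\natural + g_\rho^\natural)^\sympf$ is strictly smaller than the plain temperance distance $|\rho_2 - \rho_1|_{g_{\rho_2}^\sympf}$ (since $g^\natural \le g^\sympf$ in~\eqref{eq:chainineq}), so a direct application of basic temperance is not enough; careful bookkeeping along the chain, combined with the uncertainty principle $\gain_g \le 1$, is needed to keep the polynomial exponent $N$ and the constant $C$ depending only on structure constants of $g$ and $m$.
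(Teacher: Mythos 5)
Your Steps 1 and 2 are fine and match the paper's reduction: the $t$-dependence in~\eqref{eq:unifadmnatural} cancels trivially (the right-hand metric $(g_{\rho_0}^\natural + g_\rho^\natural)^\sympf$ involves $g^\natural$, not $g(t)^\natural$, so one just multiplies the $t=0$ inequality by $\e^{2(\Lambda+2\Upsilon)\abs{t}}$; your observation $(c g)^\natural = g^\natural$ is correct but not even needed), and slow variation of $g$ (resp.\ $m$) on the two balls reduces~\eqref{eq:unifadmnatural}--\eqref{eq:unifadmweight} to a point-to-point estimate. One point you gloss over already here: the natural point-to-point estimate comes with the metric $(g_{\rho_1}^\natural + g_{\rho_2}^\natural)^\sympf$ attached to the \emph{points}, not $(g_{\rho_0}^\natural + g_\rho^\natural)^\sympf$ attached to the \emph{centers}; exchanging them requires slow variation of $g^\natural$ on $g$-balls, which the paper derives from the block-matrix characterization~\eqref{eq:defgnatural} of the geometric mean together with slow variation of $g$ and $g^\sympf$ (inequality~\eqref{eq:slowvargnatural} in the paper's proof). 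Your Step 2 only invokes slow variation of $g$ and $m$, so this ingredient is missing.

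The serious gap is Step 3. The bi-pointed estimate $g_{\rho_2} \le C^2 \jap{\rho_2 - \rho_1}_{(g_{\rho_1}^\natural + g_{\rho_2}^\natural)^\sympf}^{2N} g_{\rho_1}$ is precisely the strong temperance of~\cite[Proposition 2.2.20]{Lerner:10}, which the paper simply quotes; your plan to re-derive it by chaining slow-variation balls along the segment $[\rho_1,\rho_2]$ cannot work. First, iterating slow variation over a chain of $k$ balls yields a factor $C_g^{2k}$, exponential in $k$, and $k$ is of the order of the $g$-length of the segment, so at best you get a bound exponential in the distance, never a polynomial one with exponent depending only on structure constants. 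Second, your claim that the $g^\natural$-length of the segment is polynomially bounded by $\jap{\rho_2-\rho_1}_{(g_{\rho_1}^\natural + g_{\rho_2}^\natural)^\sympf}$ does not follow from~\eqref{eq:chainineq} and self-duality: those are comparisons at a single point, whereas the length involves $g^\natural$ at intermediate points, and comparing those with the frozen endpoint metrics is exactly a (geodesic-)temperance statement, i.e.\ what you are trying to prove. Third, a chain argument consumes only slow variation (plus $\gain_g \le 1$), and slow variation alone does not imply temperance, so no bookkeeping along the chain can produce the estimate; the temperance hypothesis must enter, and in Lerner's proof it does through a different mechanism. The workable route is the paper's: invoke \cite[Proposition 2.2.20]{Lerner:10} (and \cite[Lemma 2.2.25]{Lerner:10} for the weight), apply it at the points of the two balls realizing the minimal distance, pass to the centers using slow variation of $g$, $m$ and of $g^\natural$, and finally multiply by $\e^{2(\Lambda+2\Upsilon)\abs{t}}$ for general $t$.
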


This improved admissibility property is a consequence of a stronger form of temperance involving the metric $g^\natural$ (see \cite[Proposition 2.2.20]{Lerner:10}). We refer to Appendix~\ref{app:improvedadmissibility} for a proof.

\begin{remark}[Uniform admissibility of the metrics $g(t)$] \label{rmk:uniformadmissibilityg(t)}
By {$\sympf$-duality}~\eqref{eq:sympfduality}, the estimate~\eqref{eq:unifadmnatural} holds replacing $g_\rho(t)$ and $g_{\rho_0}(t)$ with $g_\rho^\sympf(t)$ and $g_{\rho_0}^\sympf(t)$ respectively. We also provide less precise upper bounds for the right-hand side of~\eqref{eq:unifadmnatural} and~\eqref{eq:unifadmweight} that will be useful throughout the paper. From~\eqref{eq:chainineq}, we have $g^\natural \ge \frac{\gain_g}{\udl{\gain}_g} g^\natural \ge \udl{\gain}_g^{-1} g \ge g(t) \ge g$ for all $\abs{t} \le T_E$, so that
\begin{align*}
\dist_{(g_{\rho_1}^\natural + g_{\rho_2}^\natural)^\sympf}\left( B_{r}^g(\rho_1), B_{r}^g(\rho_2) \right)
	&\le \dist_{(g_{\rho_1}(t) + g_{\rho_2}(t))^\sympf}\left( B_{r}^g(\rho_1), B_{r}^g(\rho_2) \right)
	\le \min_{j \in \{1, 2\}} \dist_{g_{\rho_j}^\sympf(t)}\left( B_{r}^g(\rho_1), B_{r}^g(\rho_2) \right) \\
	&\le \min_{j \in \{1, 2\}} \abs{\rho_2 - \rho_1}_{g_{\rho_j}^\sympf(t)} .
\end{align*}
In particular,~\eqref{eq:unifadmnatural} and~\eqref{eq:unifadmweight} can be seen as a compact way of writing both slow variation and temperance properties (Definitions~\ref{def:admissiblemetric} and~\ref{def:admissibleweight}) with a single inequality. Lastly, $g(t)$ satisfies the uncertainty principle for all $\abs{t} \le T_E$, since by definition of the gain function (Definition~\ref{def:gaing}) and of the Ehrenfest time~\eqref{eq:defTE}:
\begin{equation*}
\gain_{g(t)}
	= \sup_{\zeta \in W \setminus \{0\}} \dfrac{\e^{(\Lambda + 2 \Upsilon) \abs{t}} \abs{\zeta}_{g_\rho}}{\e^{-(\Lambda + 2 \Upsilon) \abs{t}} \abs{\zeta}_{g_\rho^\sympf}}
	= \e^{2 (\Lambda + 2 \Upsilon) \abs{t}} \gain_g
	\le \e^{2 (\Lambda + 2 \Upsilon) T_E} \udl{\gain}_g
	= 1 .
\end{equation*}
\end{remark}

From now on, we fix common structure constants of the family of metrics $g(t)$, that is to say constants $r_g \in (0, 1]$, $C_g > 0$ and $N_g \ge 0$ such that Definition~\ref{def:admissiblemetric} is satisfied by $g(t)$ for all $t \in [-T_E, T_E]$. We will often call $r_g$ a slow variation radius and $C_g$ a slow variation constant of $g$. See Appendix~\ref{app:slowvariationradius} for more information on slow variation radii.

Uniform {$g(t)$-admissibility} of $m(t)$ is provided in the proposition below. The proof is given at the end of Section~\ref{sec:classical}.

\begin{proposition} \label{prop:uniformm(t)}
Suppose $p$ and $g$ satisfy Assumptions~\ref{assum:mandatory} and~\ref{assum:p}, and let $m$ be a {$g$-admissible} weight. Then $m(t) := \e^{t H_p} m$ is a {$g(t)$-admissible} weight for all $\abs{t} \le \frac{1}{2} T_E$ with uniform structure constants.
\end{proposition}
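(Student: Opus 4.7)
I verify the two properties of Definition~\ref{def:admissibleweight} for $m(t)=m\circ\phi^t$ against $g(t)=e^{2(\Lambda+2\Upsilon)|t|}g$, with structure constants uniform in $|t|\le T_E/2$. The ingredients I will use are: the flow expansion $|\mathrm{d}\phi^t|_g\le e^{\Lambda|t|}$ from~\eqref{eq:expflowexpansion}; the analogous bound $|\mathrm{d}\phi^t|_{g^\sympf}\le e^{\Lambda|t|}$, which follows immediately because $\phi^t$ preserves $\sympf$ (so $(\phi^t)^\pullb g^\sympf=((\phi^t)^\pullb g)^\sympf$ and the $\sympf$-dual reverses inequalities); the metric control $g_{\phi^t(\rho)}\le C_\Upsilon^2 e^{2\Upsilon|t|} g_\rho$ from Item~\ref{it:metriccontrol} of Assumption~\ref{assum:p}; and the $g$-admissibility of $m$.

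\textbf{Slow variation.} I would fix a small radius $r>0$ depending only on structure constants of $g$, $m$ and $C_\Upsilon$. Given $|\rho-\rho_0|_{g(t)_{\rho_0}}\le r$, i.e.\ $|\rho-\rho_0|_{g_{\rho_0}}\le re^{-(\Lambda+2\Upsilon)|t|}$, the straight-line segment $\rho_\tau:=\rho_0+\tau(\rho-\rho_0)$, $\tau\in[0,1]$, lies inside a slow-variation ball of $g$ at $\rho_0$. Applying the mean value theorem along this segment, the flow expansion bound, the slow variation of $g$ to compare $g_{\rho_\tau}$ with $g_{\rho_0}$, and the metric control~\eqref{eq:constantsmetricalongtheflow} used both forward and backward to compare $g_{\phi^t(\rho_\tau)}$ with $g_{\phi^t(\rho_0)}$, I obtain
\begin{equation*}
|\phi^t(\rho)-\phi^t(\rho_0)|_{g_{\phi^t(\rho_0)}}\le C\,e^{(\Lambda+2\Upsilon)|t|}\,|\rho-\rho_0|_{g_{\rho_0}}\le Cr,
\end{equation*}
with $C$ uniform in $t$. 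Choosing $r$ so that $Cr$ stays below the slow-variation radius of $m$ with respect to $g$, slow variation of $m$ delivers $m(\phi^t(\rho))\asymp m(\phi^t(\rho_0))$ with a uniform constant.

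\textbf{Temperance.} Applying temperance of $m$ at $\phi^t(\rho_0)$ and $\phi^t(\rho)$ reduces matters to a polynomial bound of $\langle\phi^t(\rho)-\phi^t(\rho_0)\rangle_{g^\sympf_{\phi^t(\rho)}}$ in $\langle\rho-\rho_0\rangle_{g(t)^\sympf_\rho}=\langle e^{-(\Lambda+2\Upsilon)|t|}|\rho-\rho_0|_{g^\sympf_\rho}\rangle$. The mean value theorem along $\rho_\tau$, together with $|\mathrm{d}\phi^t|_{g^\sympf}\le e^{\Lambda|t|}$, yields $|\mathrm{d}\phi^t(\rho_\tau)(\rho-\rho_0)|_{g^\sympf_{\phi^t(\rho_\tau)}}\le e^{\Lambda|t|}|\rho-\rho_0|_{g^\sympf_{\rho_\tau}}$ for each $\tau$. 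I then switch the base-points of the norms (from $g^\sympf_{\phi^t(\rho_\tau)}$ to $g^\sympf_{\phi^t(\rho)}$ on the image side, and from $g^\sympf_{\rho_\tau}$ to $g^\sympf_\rho$ on the domain side) by invoking the $\sympf$-dual form of the temperance of $g$, which produces polynomial factors of the corresponding displacements. Assembling everything leads to an estimate of the form
\begin{equation*}
\langle\phi^t(\rho)-\phi^t(\rho_0)\rangle_{g^\sympf_{\phi^t(\rho)}}\le C\,e^{\Lambda|t|}\,\langle\rho-\rho_0\rangle_{g^\sympf_\rho}^{N'}.
\end{equation*}
Using $\langle\rho-\rho_0\rangle_{g^\sympf_\rho}\le e^{(\Lambda+2\Upsilon)|t|}\langle\rho-\rho_0\rangle_{g(t)^\sympf_\rho}$ and observing that for $|t|\le T_E/2$ the exponential prefactor $e^{\Lambda|t|}e^{N'(\Lambda+2\Upsilon)|t|}$ is bounded by a fixed power of $\udl{\gain}_g^{-1/4}$, hence by a constant depending only on structure data of $g$ and $p$, I obtain the desired polynomial temperance of $m(t)$ with respect to $g(t)$ with constants uniform in $t$.

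\textbf{Main obstacle.} The temperance step is the delicate one: each comparison of $g^\sympf$ at two different points via the $\sympf$-dual of $g$-temperance produces polynomial factors involving distances of the same type as the one I am trying to bound, creating a mild feedback loop between the ``image distance'' $|\phi^t(\rho)-\phi^t(\rho_\tau)|_{g^\sympf_{\phi^t(\rho_\tau)}}$ and the ``base-point displacement.'' Closing this loop cleanly requires using the two-sided control $e^{-\Lambda|t|}\le|\mathrm{d}\phi^t|_{g^\sympf}\le e^{\Lambda|t|}$ and a careful accounting of temperance exponents; an alternative, perhaps cleaner, route is to pull back by $\phi^t$ the improved admissibility~\eqref{eq:unifadmweight} of Proposition~\ref{prop:improvedadmissibility} for $m$, using that the intermediate metric $g^\natural$, sandwiched between $g$ and $g^\sympf$, inherits bounds of the form $(\phi^t)^\pullb g^\natural\le e^{2\Lambda|t|}g^\natural$ from the flow expansion in $g$ and $g^\sympf$, which would encode the distances on the right-hand side more favorably.
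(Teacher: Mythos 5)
Your slow-variation step is fine (it differs from the paper only in that you invoke the metric control of Item~\ref{it:metriccontrol} of Assumption~\ref{assum:p}, whereas the paper runs a bootstrap-in-$t$ argument with a maximal time $t_\star$ that avoids it), but your temperance step has a genuine gap — in fact two. First, the intermediate estimate $\jap{\phi^t(\rho)-\phi^t(\rho_0)}_{g^\sympf_{\phi^t(\rho)}}\le C\,\e^{\Lambda\abs{t}}\jap{\rho-\rho_0}_{g^\sympf_\rho}^{N'}$ with $C,N'$ uniform in $t$ is never actually established: the base-point switches you need on the image side (from $g^\sympf_{\phi^t(\rho_\tau)}$ to $g^\sympf_{\phi^t(\rho)}$) produce, via temperance of $g$, polynomial factors in precisely the image displacements you are trying to bound, and the two-sided control $\e^{-\Lambda\abs{t}}\le\abs{\dd\phi^t}_{g^\sympf}\le\e^{\Lambda\abs{t}}$ cannot close this loop, since it compares vectors transported by the flow, not the metrics $g^\sympf$ at two distinct image points. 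The paper's Proposition~\ref{prop:temperancepropertyflow} avoids the loop by a different mechanism: it transfers the image base point back to the domain using Item~\ref{it:metriccontrol} combined with $\sympf$-duality ($g^\sympf_{\phi^t(\rho_0)}\le C_\Upsilon^2\e^{2\Upsilon\abs{t}}g^\sympf_{\rho_0}$), so that all temperance comparisons happen at domain points, where the displacements are the known quantities $\rho_s-\tilde\rho$, and it measures them in the intermediate metric $g^\natural$ through the improved admissibility of Proposition~\ref{prop:improvedadmissibility}; you do not use Item~\ref{it:metriccontrol} at all in your temperance step.

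Second, and more fatally, your final absorption step is wrong: after converting $\jap{\rho-\rho_0}_{g^\sympf_\rho}\le\e^{(\Lambda+2\Upsilon)\abs{t}}\jap{\rho-\rho_0}_{g_\rho^\sympf(t)}$ you declare the prefactor $\e^{\Lambda\abs{t}}\e^{N'(\Lambda+2\Upsilon)\abs{t}}\lesssim\udl{\gain}_g^{-(N'+1)/4}$ to be ``a constant depending only on structure data''; it is not, since $\udl{\gain}_g^{-1}$ is unbounded over the admissible families the paper cares about (in the semiclassical setting of Section~\ref{subsubsec:semiclassicalsetting} it is $\hslash^{-1}$), and the whole point of the proposition — and of its later use in uniform pseudo-differential estimates — is that the structure constants of $m(t)$ do not degenerate in $\udl{\gain}_g$. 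The paper removes the exponential not by declaring it harmless but by gaining a genuine compensating factor: applying the improved ($g^\natural$-based) temperance of $m$ and the estimate~\eqref{eq:Lipgnatural}, the displacement is measured in $\e^{2(\Lambda+2\Upsilon)\abs{t}}g^\natural\le\e^{4(\Lambda+2\Upsilon)\abs{t}}\udl{\gain}_g\,g^\sympf(t)\le g^\sympf(t)$ for $\abs{t}\le\frac{1}{2}T_E$, the factor $\udl{\gain}_g$ coming from the chain inequality~\eqref{eq:chainineq} $g^\natural\le\gain_g g^\sympf$ exactly cancelling the exponential — this is also precisely where the restriction to half the Ehrenfest time is used. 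Your closing remark about pulling back the improved admissibility through $g^\natural$ is indeed the right direction (it is essentially the paper's argument), but as sketched it still lacks the transfer of the image base point via Item~\ref{it:metriccontrol} and the final $g^\natural\to g^\sympf(t)$ conversion that produces the compensating $\udl{\gain}_g$.
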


We proceed with a sanity check on temperance weights.

\begin{proposition}  \label{prop:temperanceweight}
The temperance weight $\theta_g$ defined in Definition~\ref{def:thetag} is a {$g$-admissible} weight. Moreover $r_g$ (introduced above) is a slow variation radius of $\theta_g$. If $\theta_g'$ is the temperance weight defined with a flat metric $\sf{g}'$ instead of $\sf{g}$, one has
\begin{equation} \label{eq:temperanceweightscomparable}
\exists C = C({\sf g}, {\sf g}') > 0 : \qquad
	C^{-1} \theta_g \le \theta_g' \le C \theta_g
		\quad \rm{on} \; T^\star \mfd .
\end{equation}
\end{proposition}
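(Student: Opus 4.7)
The plan is to prove the three claims in order, relying throughout on the $\sympf$-duality relations~\eqref{eq:sympfduality} and $(cg)^\sympf = c^{-1} g^\sympf$ together with the admissibility of $g$ (Definition~\ref{def:admissiblemetric}). Slow variation and comparability will follow by inspection from the definitions; the temperance estimate is the only step that requires some real bookkeeping, and I expect it to be the main obstacle.

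For slow variation, I would take $\rho \in \bar B_{r_g}^g(\rho_0)$, use the slow variation of $g$ to obtain $C_g^{-2} g_{\rho_0} \le g_\rho \le C_g^2 g_{\rho_0}$, pass to $\sympf$-duals (which reverses the order but preserves constant conformal ratios), getting $C_g^{-2} g_{\rho_0}^\sympf \le g_\rho^\sympf \le C_g^2 g_{\rho_0}^\sympf$, and insert this into the first expression of $\theta_g$ in~\eqref{eq:deftheta}. This directly produces $C_g^{-1} \theta_g(\rho_0) \le \theta_g(\rho) \le C_g \theta_g(\rho_0)$, so $r_g$ is a slow variation radius of $\theta_g$ with constant $C_g$.

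For temperance, I would first swap the roles of $\rho$ and $\rho_0$ in the temperance bound of $g$ to get $g_{\rho_0} \le C_g^2 g_\rho \jap{\rho - \rho_0}_{g_{\rho_0}^\sympf}^{2 N_g}$, and then $\sympf$-dualize to reach $g_\rho^\sympf \le C_g^2 g_{\rho_0}^\sympf \jap{\rho - \rho_0}_{g_{\rho_0}^\sympf}^{2 N_g}$. Plugging this into the supremum defining $\theta_g$ yields
\begin{equation*}
\theta_g(\rho) \le C_g \theta_g(\rho_0) \jap{\rho - \rho_0}_{g_{\rho_0}^\sympf}^{N_g} .
\end{equation*}
This is almost temperance of $\theta_g$, but written with the $\sympf$-dual metric evaluated at the wrong base point; Definition~\ref{def:admissibleweight} demands $\jap{\rho - \rho_0}_{g_\rho^\sympf}$ instead. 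To bridge the two, I would apply the temperance of $g$ in the original direction, $\sympf$-dualize it to $g_{\rho_0}^\sympf \le C_g^2 g_\rho^\sympf \jap{\rho - \rho_0}_{g_\rho^\sympf}^{2 N_g}$, and evaluate at the vector $\zeta = \rho - \rho_0$; this bounds $\jap{\rho - \rho_0}_{g_{\rho_0}^\sympf}$ by $C' \jap{\rho - \rho_0}_{g_\rho^\sympf}^{N_g + 1}$ for a constant $C'$ depending only on $C_g$ and $N_g$. Combining the two inequalities gives temperance of $\theta_g$ with exponent of order $N_g(N_g + 1)$, with constants controlled by the structure constants of $g$.

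For the comparability~\eqref{eq:temperanceweightscomparable}, I would use the elementary fact that any two Euclidean metrics $\sf g$ and $\sf g'$ on the finite-dimensional vector space $W$ are equivalent: there exists $C = C(\sf g, \sf g') > 0$ such that $C^{-1} \abs{\zeta}_{\sf g} \le \abs{\zeta}_{\sf g'} \le C \abs{\zeta}_{\sf g}$ for every $\zeta \in W$. Substituting this into the defining suprema of $\theta_g$ and $\theta_g'$ immediately gives $C^{-1} \theta_g \le \theta_g' \le C \theta_g$ pointwise on $T^\star \mfd$.
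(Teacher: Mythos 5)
Your proof is correct and takes essentially the same route as the paper, whose own proof is a one-liner: admissibility of $\theta_g$ "follows directly from the admissibility of $g$" (the paper points to Proposition~\ref{prop:improvedadmissibility} as one possible shortcut), and~\eqref{eq:temperanceweightscomparable} is just norm equivalence in finite dimension — exactly the two points you spell out via $\sympf$-duality applied to the slow variation and temperance of~$g$. Your bridging step converting $\jap{\rho-\rho_0}_{g_{\rho_0}^\sympf}$ into a power of $\jap{\rho-\rho_0}_{g_{\rho}^\sympf}$ is sound and coincides with the observation recorded in Remark~\ref{rmk:otherpointtemperance}.
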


Proposition~\ref{prop:temperanceweight} says that the temperance weight $\theta_g$ is essentially independent of the background Euclidean metric ${\sf g}$ chosen in its definition. The temperance weight and the gain function are somewhat related through the following observation.

\begin{proposition} \label{prop:thetag}
The following holds:
\begin{equation*}
\forall \rho_0, \rho \in T^\star \mfd , \qquad
	g_\rho^\sympf
		\le \theta_g(\rho_0) \theta_g(\rho) g_{\rho_0} .
\end{equation*}
Moreover, one has
\begin{equation*}
\gain_g \times \theta_g^2 \ge 1 .
\end{equation*}
In particular, $\theta_g \ge 1$ provided $g$ satisfies the uncertainty principle $\gain_g \le 1$.
\end{proposition}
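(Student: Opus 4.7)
The plan is to bridge the metrics at $\rho$ and $\rho_0$ through the fixed background Euclidean metric $\sf g$, which is by construction symplectically self-dual (${\sf g} = {\sf g}^\sympf$) and is therefore a natural pivot when comparing a metric with its $\sympf$-dual. Using the first characterization of $\theta_g$ in~\eqref{eq:deftheta} at the point $\rho$ directly gives $\abs{\zeta}_{g_\rho^\sympf} \le \theta_g(\rho) \abs{\zeta}_{\sf g}$ for every $\zeta \in W$, i.e., $g_\rho^\sympf \le \theta_g(\rho)^2 \, {\sf g}$ as bilinear forms on $W$. Using the (equivalent) second characterization at the point $\rho_0$ gives $\abs{\zeta}_{{\sf g}^\sympf} \le \theta_g(\rho_0) \abs{\zeta}_{g_{\rho_0}}$, which after invoking ${\sf g} = {\sf g}^\sympf$ becomes ${\sf g} \le \theta_g(\rho_0)^2 \, g_{\rho_0}$. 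Composing these two bilinear form inequalities yields the first assertion (with the appropriate power of $\theta_g$ at each argument). The equivalence of the two forms of $\theta_g$ is the only prerequisite and is provided by Lemma~\ref{lem:justificationequalities}.

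For the estimate $\gain_g \cdot \theta_g^2 \ge 1$, I would specialize the first inequality to the diagonal case $\rho_0 = \rho$, producing a bound of the form $g_\rho^\sympf \le \theta_g(\rho)^{2k} g_\rho$ (with $k$ dictated by the previous step). Extracting norms, this reads $\abs{\zeta}_{g_\rho}/\abs{\zeta}_{g_\rho^\sympf} \ge \theta_g(\rho)^{-k}$ for every $\zeta \in W \setminus \{0\}$. Taking the supremum over nonzero $\zeta$ and invoking Definition~\ref{def:gaing} then yields $\gain_g(\rho) \ge \theta_g(\rho)^{-2}$, which is the second assertion. The final sentence $\theta_g \ge 1$ under the uncertainty principle $\gain_g \le 1$ is immediate from $\theta_g^2 \ge \gain_g^{-1} \ge 1$.

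I do not anticipate any serious obstacle: the entire content of the proposition reduces to unwinding the two symplectically dual characterizations of $\theta_g$ with $\sf g$ as a pivot. The only subtlety worth mentioning is the bookkeeping of exponents when translating between norm inequalities and bilinear form inequalities (a squaring factor), which is also the reason the second part naturally produces $\theta_g^2$ rather than $\theta_g$.
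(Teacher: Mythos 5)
Your argument is correct and is essentially the paper's own proof: both parts hinge on the two equivalent characterizations of $\theta_g$ from Lemma~\ref{lem:justificationequalities}, pivoted through the self-dual background metric ${\sf g}={\sf g}^\sympf$, and the paper's proof of the second assertion, via the telescoping identity $1=\frac{\abs{\zeta}_g}{\abs{\zeta}_{g^\sympf}}\cdot\frac{\abs{\zeta}_{{\sf g}^\sympf}}{\abs{\zeta}_g}\cdot\frac{\abs{\zeta}_{g^\sympf}}{\abs{\zeta}_{{\sf g}}}\le \gain_g\,\theta_g^2$, is just your diagonal specialization written out. Your caveat about the squaring when passing between norm and quadratic-form inequalities is apt: the proof (and the paper's later uses, e.g.\ Lemma~\ref{lem:seminormaff}) yield the comparison at the level of norms with single factors of $\theta_g$, which is how the displayed inequality between metrics is meant to be read.
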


Proposition~\ref{prop:temperanceweight} and~\ref{prop:thetag} are proved in Appendix~\ref{app:metrics}.
We end this section with a discussion on possible improvements regarding Assumption~\ref{assum:p}, in connection with the use of the temperance weight.

\begin{remark} \label{rmk:improvements}
Let us comment on two assumptions that could perhaps be relaxed.
\begin{itemize} [label=\textbullet]
\item First, the introduction of the temperance weight $\theta_g$ relies on the choice of a background Euclidean metric ${\sf g}$. Hence it is not an intrinsic feature of the metric $g$. However it is crucial in our argument in order to apply Beals' theorem. Roughly speaking, it allows to control derivatives of the propagator $\ad_F \e^{- \ii t P}$ with $F = \Opw{f}$ being the quantization of an affine function $f$ (see the symbols of the form $c_\pi$ in Corollary~\ref{cor:iteratedcommutatorsaff}). It ensures that the long-range effects of the possible blow up of the metric at phase space infinity (i.e.\ $\theta_g$ unbounded) can be balanced by the gain $\theta_g^{-\epsilon}$ appearing at each step of the Egorov expansion~\eqref{eq:asymptoticexpansion}.
\item Second, the use of $\theta_g$ seems to require an estimate of the growth of the metric along the flow, hence the importance of the parameter $\Upsilon$ introduced in Assumption~\ref{assum:p}~\ref{it:metriccontrol}. Lemma~\ref{lem:gain(t)} is quite illuminating in this respect: the gain function $\gain_g$ is defined intrinsically, and understanding $\e^{t H_p} \gain_g$ only requires the very natural control~\eqref{eq:Lyapunovcontrol}, whereas $\theta_g$ requires the choice of a background Euclidean metric, and understanding $\e^{t H_p} \theta_g$ involves the extra Item~\ref{it:metriccontrol} of Assumption~\ref{assum:p} on the growth of $g_{\phi^t}$.
\end{itemize}
\end{remark}

\subsection{Propagation of quantum partitions of unity} \label{subsec:partition}

Theorem~\ref{thm:main} is in fact a consequence of the more general Theorem~\ref{thm:partition} below, namely an Egorov theorem for confined family of symbols. We introduce first partitions of unity adapted to an admissible metric $g$.

\begin{proposition}[Existence of partitions of unity \--- {\cite[Theorem 2.2.7]{Lerner:10}}] \label{prop:existencepartitionofunity}
Let $g$ be an admissible metric. Let $r_g > 0$ be a slow variation radius given in Proposition~\ref{prop:improvedadmissibility}. For any $r \in (0, r_g]$, there exists a family of functions $(\varphi_{\rho_0})_{\rho_0 \in T^\star \mfd}$, bounded in $S(1, g)$, namely
\begin{equation} \label{eq:S(1,g)partitionbounded}
\sup_{\rho_0 \in T^\star \mfd} \abs*{\varphi_{\rho_0}}_{S(1, g)}^{(\ell)} < \infty , \qquad
 \forall \ell \in \N ,
\end{equation}
such that for all $\rho_0 \in T^\star \mfd$, $\supp \varphi_{\rho_0} \subset B_r^g(\rho_0)$ and
\begin{equation} \label{eq:integralequal1}
\forall \rho \in T^\star \mfd , \qquad
	\int_{T^\star \mfd} \varphi_{\rho_0}(\rho) \dd \vol_g(\rho_0)
		= 1 .
\end{equation}
More precisely, there exists a constant $c$ depending only on structure constants of $g$, but not on $r$, such that
\begin{equation} \label{eq:r-dependentestimate}
\forall \ell \in \N, \forall \rho_0 \in T^\star \mfd, \qquad
	\abs*{\nabla^\ell \varphi_{\rho_0}}_{g, \infty}
		\le c r^{- \ell - 2 \dim \mfd} .
\end{equation}
\end{proposition}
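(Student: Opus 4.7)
The plan is to build $\varphi_{\rho_0}$ by smoothly normalizing a family of model bumps indexed by $\rho_0$. First I would fix a cutoff $\chi \in \cont_c^\infty(\R)$ with $0 \le \chi \le 1$, $\chi \equiv 1$ on $[0, 1/4]$ and $\supp \chi \subset [0, 1)$, and consider
\begin{equation*}
\tilde\varphi_{\rho_0}(\rho) := \chi\!\left(\frac{\abs{\rho - \rho_0}_{g_{\rho_0}}^2}{r^2}\right), \qquad \rho, \rho_0 \in T^\star \mfd,
\end{equation*}
which is supported in $B_r^g(\rho_0)$ by construction. Choosing at each $\rho_0$ linear coordinates in which $g_{\rho_0}$ becomes the standard inner product, $\tilde\varphi_{\rho_0}$ is a rescaled model bump whose $\ell$-th $g_{\rho_0}$-derivative is $O(r^{-\ell})$; combined with the slow variation $g_\rho \asymp g_{\rho_0}$ on $B_r^g(\rho_0)$ (valid since $r \le r_g$), this yields the uniform pointwise bound $\abs{\nabla^\ell \tilde\varphi_{\rho_0}}_{g} \le C_\ell r^{-\ell}$, so that $(\tilde\varphi_{\rho_0})_{\rho_0}$ is bounded in $S(1, g)$ uniformly in $\rho_0$ with the quantitative $r$-dependence displayed above. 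The candidate partition is then
\begin{equation*}
\varphi_{\rho_0} := \tilde\varphi_{\rho_0}/\Phi, \qquad \Phi(\rho) := \int_{T^\star \mfd} \tilde\varphi_{\rho_0}(\rho)\, \dd \vol_g(\rho_0),
\end{equation*}
which satisfies \eqref{eq:integralequal1} by construction.

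The heart of the argument is the analysis of $\Phi$. I would first show that $\Phi(\rho) \asymp r^{2 \dim \mfd}$ uniformly in $\rho$: the upper bound uses $\tilde\varphi_{\rho_0} \le 1$ together with the fact that $\{\rho_0 : \rho \in B_r^g(\rho_0)\}$ has $\vol_g$-measure $\lesssim r^{2 \dim \mfd}$ (by slow variation, after the change of variable $\rho_0 = \rho + \eta$), while the lower bound uses the region where $\chi \equiv 1$, of $\vol_g$-measure $\gtrsim r^{2 \dim \mfd}$. I would then prove $\Phi \in S(r^{2 \dim \mfd}, g)$ with uniform seminorms in $r$. A naive estimate, integrating pointwise bounds on $\nabla_\rho \tilde\varphi_{\rho_0}$, gives only $\abs{\nabla \Phi}_g \lesssim r^{2 \dim \mfd - 1}$, which is too weak by a factor $r^{-1}$. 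The sharp bound is obtained by integration by parts in $\rho_0$: since $\tilde\varphi_{\rho_0}(\rho)$ depends on $(\rho, \rho_0)$ almost entirely through the difference $\rho - \rho_0$, one may trade each $\rho$-derivative for a $\rho_0$-derivative modulo commutator terms coming from the $\rho_0$-dependence of the metric $g_{\rho_0}$ inside the argument of $\chi$. These commutator terms, together with the terms produced by differentiating the volume density $\sqrt{\det g_{\rho_0}}$, are controlled by bounds of the form $\abs{\nabla^k g}_g \lesssim 1$, which follow from admissibility of $g$ via a standard Taylor/mean-value argument.

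Once the sharp estimate $\Phi \in S(r^{2 \dim \mfd}, g)$ together with the lower bound $\Phi \gtrsim r^{2 \dim \mfd}$ is established, Faà di Bruno's formula yields $1/\Phi \in S(r^{-2 \dim \mfd}, g)$ with uniform seminorms; multiplying by $\tilde\varphi_{\rho_0}$ (whose $\ell$-th $S(1, g)$ seminorm is $\lesssim r^{-\ell}$) gives $\varphi_{\rho_0} \in S(1, g)$ with $\abs{\nabla^\ell \varphi_{\rho_0}}_{g, \infty} \lesssim r^{-\ell - 2 \dim \mfd}$, which is precisely \eqref{eq:r-dependentestimate}; the qualitative boundedness \eqref{eq:S(1,g)partitionbounded} follows. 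The main obstacle is the integration-by-parts step: one has to keep careful track of the three sources of $\rho_0$-dependence in the integrand---the centre, the metric $g_{\rho_0}$ inside $\chi$, and the volume density---in order to recover the correct power of $r$ uniformly in $\rho \in T^\star \mfd$.
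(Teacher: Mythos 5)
Your construction---the bump $\tilde\varphi_{\rho_0}(\rho)=\chi\bigl(\abs{\rho-\rho_0}_{g_{\rho_0}}^2/r^2\bigr)$ normalized by $\Phi(\rho)=\int\tilde\varphi_{\rho_0}(\rho)\,\dd\vol_g(\rho_0)$---is exactly the one behind the cited Theorem 2.2.7 of Lerner, which is all the paper invokes (the remark following the proposition only explains the $r$-scaling). However, the step you single out as "the heart of the argument" is both flawed and unnecessary. It is flawed because admissibility of $g$ does \emph{not} give bounds of the form $\abs{\nabla^k g}_g\lesssim 1$: slow variation and temperance are pointwise comparability conditions and say nothing about derivatives of $\rho_0\mapsto g_{\rho_0}$. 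For instance $g=(1+\tfrac12\sin(x/\delta^2))(\dd x^2+\dd\xi^2)$ on $T^\star\R$ is admissible with structure constants independent of $\delta$, yet $\abs{\nabla g}_g\sim\delta^{-2}$ is arbitrarily large; so the commutator and volume-density terms in your integration by parts are not controlled by structure constants, and the claimed membership $\Phi\in S(r^{2\dim\mfd},g)$ with seminorms uniform in $r$ cannot be justified this way.

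It is unnecessary because the "naive" estimate you dismiss already yields the stated bound. Differentiating under the integral gives $\abs{\nabla^\ell\Phi}_g\le C_\ell\, r^{2\dim\mfd-\ell}$, and combined with the lower bound $\Phi\gtrsim r^{2\dim\mfd}$, the Faà di Bruno formula gives $\abs{\nabla^\ell(1/\Phi)}_g\le C_\ell'\, r^{-2\dim\mfd-\ell}$; then Leibniz yields $\abs{\nabla^\ell\varphi_{\rho_0}}_g\lesssim\sum_{k\le\ell} r^{-k}\, r^{-2\dim\mfd-(\ell-k)}\lesssim r^{-\ell-2\dim\mfd}$, which is precisely~\eqref{eq:r-dependentestimate}, and~\eqref{eq:S(1,g)partitionbounded} follows for each fixed $r$. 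Note moreover that a sharper bound on $\Phi$ would not improve the final estimate anyway, since the dominant Leibniz term $\abs{\nabla^\ell\tilde\varphi_{\rho_0}}\cdot\Phi^{-1}$ already costs $r^{-\ell-2\dim\mfd}$. Deleting the integration-by-parts step and keeping the straightforward estimate on $\Phi$ makes your argument correct and essentially identical to the proof the paper relies on.
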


\begin{remark}
The {$r$-dependent} estimate~\eqref{eq:r-dependentestimate} is not stated in~\cite[Theorem 2.2.7]{Lerner:10}, but it follows from the proof. To check that this is the good scaling with respect to $r$, one can argue as follows: if we take a smooth function $\chi$ supported in $B_{r_g}^g(\rho_0)$, then the function $\chi_r = \chi(\rho_0 + \tfrac{r_g}{r}(\bigcdot - \rho_0))$ is compactly supported in $B_r^g(\rho_0)$ and its {order-$\ell$} derivatives behave indeed as $r^{-\ell}$. The extra factor $r^{-2 \dim \mfd}$ is due to the fact that we want~\eqref{eq:integralequal1} to be true. It is needed in order to compensate for the fact that the integral of $\chi_r$ is of the same order as the {$g$-volume} of $B_r^g(\rho_0)$, namely $r^{2 \dim \mfd}$.
\end{remark}

We introduce spaces of \emph{confined symbols}.

\begin{definition}[Spaces of confined symbols] \label{def:Conf}
Let $g$ be an admissible metric on $T^\star \mfd$, let $r > 0$ and $\rho_0 \in T^\star \mfd$.
We say a smooth function $\psi$ belongs to the class $\Conf_r^g(\rho_0)$ if it satisfies
\begin{equation*}
\forall \ell \in \N, \forall k \in \N, \exists C_{\ell, k} > 0 : \forall \rho \in T^\star \mfd , \qquad
	\abs*{\nabla^k \psi(\rho)}_g
		\le \dfrac{C_{\ell,k}}{\jap{\dist_{g_{\rho_0}^\sympf}\left(\rho, B_r^{g_{\rho_0}}(\rho_0)\right)}^\ell} .
\end{equation*}
For $\ell \in \N$ fixed, the largest of the optimal constants $C_{\ell, k}$, with $k$ ranging in $\{0, 1, \ldots, \ell\}$, is written $\abs{\psi}_{\Conf_r^g(\rho_0)}^{(\ell)}$. These are seminorms which endow the space $\Conf_r^g(\rho_0)$ with a structure of a Fréchet space.
\end{definition}

\begin{remark} \label{rmk:equivalenceSchConf}
It turns out that the spaces $\Conf_r^g(\rho_0)$ coincide with the Schwartz class $\sch(T^\star \mfd)$ as Fréchet spaces. However, the seminorms are designed in a way that quantifies the confinement of symbols around the ball $B_r^g(\rho_0)$ introduced in~\eqref{eq:defball} with respect to the metric $g$.
\end{remark}

\begin{definition}[Uniformly confined family of symbols \--- {\cite[Definition 2.3.14]{Lerner:10}}] \label{def:unifconfinedfamily}
Let $g$ be an admissible metric on $T^\star \mfd$, and let $r_g > 0$ be a slow variation radius of $g$, introduced in Proposition~\ref{prop:improvedadmissibility}. We say a family of functions $(\psi_{\rho_0})_{\rho_0 \in T^\star \mfd}$ on $T^\star \mfd$ is a {$g$-uniformly} confined family of symbols if there exists $r \in (0, r_g]$ such that
\begin{equation*}
\forall \ell \in \N, \qquad
	\sup_{\rho_0 \in T^\star \mfd} \abs{\psi_{\rho_0}}_{\Conf_r^g(\rho_0)}^{(\ell)} < \infty .
\end{equation*}
The parameter $r$ is called a confinement radius of the family of symbols $(\psi_{\rho_0})_{\rho_0 \in T^\star \mfd}$.
\end{definition}

\begin{example}
Partitions of unity given by Proposition~\ref{prop:existencepartitionofunity} are instances of {$g$-uniformly} confined families of symbols (confinement is clear since each function has compact support in $B_r^g(\rho_0)$, and the uniform seminorm estimates come from~\eqref{eq:S(1,g)partitionbounded}). 
\end{example}

At the level of the classical dynamics, the confinement radius of a confined symbol is expected to grow exponentially in time under the action of the Hamiltonian flow. This is the reason why, given a radius $r_0 > 0$, we introduce
\begin{equation} \label{eq:defr(t)}
r(t)
	:= r_0 \e^{(2 (\Lambda + \Upsilon) + C_g^3 C_p \udl{\gain}_g) \abs{t}} ,
		\quad t \in \R , \qquad
C_p := \abs{\nabla^3 p}_{S((\gain_g/\udl{\gain}_g)^{-1}, g)}^{(0)} ,
\end{equation}
where $C_g$ is a slow variation constant from Proposition~\ref{prop:improvedadmissibility}. This particular definition is motivated by Proposition~\ref{prop:Lipschitzproperty}.
To make the analysis work, we need to ensure that the condition
\begin{equation} \label{eq:conditionr(t)}
r(t)
	\le r_g
		\qquad \Longleftrightarrow \qquad
\abs{t}
	\le \dfrac{\log (r_g/r_0)}{2 (\Lambda + \Upsilon) + C_g^3 C_p \udl{\gain}_g}
\end{equation}
is satisfied.
We shall always assume that $r_0$ is chosen in such a way that~\eqref{eq:conditionr(t)} is fulfilled on the time range under consideration (typically $\abs{t} \le T_E$). With this definition, the inclusion
\begin{equation} \label{eq:monotonicityConf}
\Conf_{r_0}^g(\rho_0)
	\xlongrightarrow{} \Conf_{r(t)}^{g(t)}(\rho_0)
\end{equation}
is ``$1$-Lipschitz" for any $t \in \R$ and $\rho_0 \in T^\star \mfd$, in the sense that
\begin{equation*}
\forall \ell \in \N , \forall \psi \in \Conf_{r_0}^g(\rho_0), \qquad
	\abs*{\psi}_{\Conf_{r(t)}^{g(t)}(\rho_0)}^{(\ell)}
		\le \abs*{\psi}_{\Conf_{r_0}^g(\rho_0)}^{(\ell)} .
\end{equation*}
This follows from Definition~\ref{def:Conf} and~\eqref{eq:conditionr(t)} (use the fact that $g^\sympf \ge g(t)^\sympf$ together with $B_{r_0}^g(\rho_0) \subset B_{r(t)}^{g(t)}(\rho_0)$).

The following result says that the quantum evolution of a {$g$-uniformly} confined family of symbols remains {$g$-uniformly confined} for times not exceeding a fraction of the Ehrenfest time~\eqref{eq:defTE}.

\begin{customthm}{II}[Quantum evolution of {$g$-uniformly} confined families of symbols] \label{thm:partition}
Let $g$ be an admissible metric, let $p$ satisfy Assumption~\ref{assum:mandatory}, and assume $(g, p)$ satisfies Assumption~\ref{assum:p}. Let $T \in [0, \frac{1}{2} T_E]$ and $r_0 > 0$ such that~$r(t)$ defined in~\eqref{eq:defr(t)} satisfies $r(T) \le r_g$.
Then for any $\ell \in \N$, there exist $k \in \N$ and a constant $C_\ell > 0$ such that
\begin{equation} \label{eq:seminormestimateegorovconfined}
\forall t \in [-T, T], \forall \rho_0 \in T^\star \mfd, \forall \psi \in \Conf_{r_0}^g(\rho_0) , \qquad
	\abs*{\e^{t \cal{H}_p} \psi}_{\Conf_{r(t)}^{g(t)}(\phi^{-t}(\rho_0))}^{(\ell)}
		\le C_\ell \abs*{\psi}_{\Conf_{r_0}^g(\rho_0)}^{(k)} .
\end{equation}
Let $(\psi_{\rho_0})_{\rho_0 \in T^\star \mfd}$ be a {$g$-uniformly} confined family of symbols with radius $r_0$ and define
\begin{equation*} \label{eq:quantumevolutionucfs}
\psi_{\rho_0}^t
	:= \e^{t \cal{H}_p} \psi_{\phi^t(\rho_0)} ,
\end{equation*}
in order that
\begin{equation*}
\e^{\ii t P} \Opw{\psi_{\rho_0}} \e^{- \ii t P}
	= \Opw{\e^{t \cal{H}_p} \psi_{\rho_0}}
	= \Opw{\psi_{\phi^{-t}(\rho_0)}^t} ,
		\qquad \forall \rho_0 \in T^\star \mfd, \forall t \in \R .
\end{equation*}
Then for any $t \in [-T, T]$, the family of symbols $(\psi_{\rho_0}^t)_{\rho_0}$ is {$g(t)$-uniformly} confined with radius $r(t)$, and we have
\begin{equation*}
\forall \ell \in \N, \exists k \in \N, \exists C_\ell > 0 : \forall t \in [-T, T] , \qquad
	\sup_{\rho_0 \in T^\star \mfd} \abs*{\psi_{\rho_0}^t}_{\Conf_{r(t)}^{g(t)}(\rho_0)}^{(\ell)}
		\le C_\ell \sup_{\rho_0 \in T^\star \mfd} \abs*{\psi_{\rho_0}}_{\Conf_{r_0}^g(\rho_0)}^{(k)} .
\end{equation*}
All the seminorm indices and constants in the seminorm estimates~\eqref{eq:seminormestimateegorovconfined} depend on $g$, $m$ and $p$ only through structure constants of $g$ and $m$, seminorms of $p$ in the symbol classes~\eqref{eq:symbolclassesnablap3}, the constant $C_\Upsilon$ in~\eqref{eq:constantsmetricalongtheflow} and on any constant $c$ such that $\Lambda \ge c \udl{\gain}_g$.\footnote{The dependence on $c$ degenerates as $c \to 0$. See Section~\ref{subsubsec:semiclassicalsetting} for more details.}
\end{customthm}

\begin{remark}
The asymptotic expansion~\eqref{eq:asymptoticexpansion} is also valid in spaces of confined symbols. In particular, we have
\begin{equation*}
\e^{t \cal{H}_p} \psi_{\rho_0}
	= \e^{t H_p} \psi_{\rho_0} + \tilde \psi_{\rho_0, t}
\end{equation*}
where $\tilde \psi_{\rho_0, t}$ belongs to $\gain_{g(t)}^2 \Conf_{r(t)}^{g(t)}(\phi^{-t}(\rho_0))$. In other words, $\psi_{\phi^{-t}(\rho_0)}^t$ is approximated at leading order by $\psi_{\rho_0} \circ \phi^t$.
\end{remark}

\begin{remark} \label{rmk:positiveTE}
In the case where $T_E(g) = 0$, one could rather consider the metric $\tilde g := \e^{-2T} g$ instead of $g$, for some fixed $T > 0$, in order to have a positive Ehrenfest time. Indeed, one can check that the metric~$\tilde g$ is admissible with the same structure constants as $g$ (except the slow variation which reads $r_{\tilde g} = \e^{-T} r_g$). In addition, Assumption~\ref{assum:p} is satisfied, with the same values of $\Lambda$ and $\Upsilon$ as for~$g$, so that
\begin{equation*}
T_E(\tilde g)
	= T_E(g) + \dfrac{T}{\Lambda + 2 \Upsilon} .
\end{equation*}
All the seminorms built with the metric $\tilde g$ are then equivalent to those defined with~$g$. One could have stated Theorem~\ref{thm:partition} on a time interval $[0, T_0 + \frac{1}{2} T_E]$ instead $[0, \frac{1}{2} T_E]$, as we do for Theorem~\ref{thm:main}, but we chose not to do so to simplify the statement, specifically concerning the growth of the confinement radius~$r(t)$, which may depend on the scaling factor~$\e^{-2T}$. This is not a problem provided we do not seek to let~$T$ go to infinity.
\end{remark}

Several consequences can be drawn from Theorem~\ref{thm:partition}. One is the fact that the quantum (and also the classical) dynamics act continuously on $\sch(T^\star \mfd)$, so that it extends to $\sch'(T^\star \mfd)$.

\begin{corollary} \label{cor:contonshwartz}
Let $p$ satisfy Assumption~\ref{assum:mandatory} and suppose there exists a metric $g$ such that $(p, g)$ satisfies Assumption~\ref{assum:p}. Then the unitary groups $(\e^{t \cal{H}_p})_{t \in \R}$ and $(\e^{t H_p})_{t \in \R}$ act continuously on $\sch(T^\star \mfd)$ and extend continuously to $\sch'(T^\star \mfd)$.
\end{corollary}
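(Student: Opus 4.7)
The plan is to combine Theorem~\ref{thm:partition} with the identification $\Conf_{r_0}^g(\rho_0) = \sch(T^\star \mfd)$ as Fréchet spaces (Remark~\ref{rmk:equivalenceSchConf}) in order to transfer the confined seminorm estimates to Schwartz seminorms; then propagate in time using the group property $\e^{(t+s) \cal{H}_p} = \e^{t \cal{H}_p} \e^{s \cal{H}_p}$; and finally extend to tempered distributions by duality, using the skew-adjointness of $\cal{H}_p$ and $H_p$ on $L^2(T^\star \mfd)$ provided by Proposition~\ref{prop:isometrycalHp}.

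To begin, if $T_E(g) = 0$ (i.e.\ $\udl{\gain}_g = 1$), I rescale $g$ to $\tilde g = \e^{-2 T} g$ for some fixed $T > 0$; by Remark~\ref{rmk:positiveTE}, the pair $(p, \tilde g)$ still satisfies Assumption~\ref{assum:p} with the same $\Lambda, \Upsilon$ but a strictly positive Ehrenfest time, and the associated confined seminorms are equivalent to those of $g$. We may thus assume $T_E > 0$. Fix $T_1 \in (0, \tfrac{1}{2} T_E]$, choose a base point $\rho_0 \in T^\star \mfd$ and a radius $r_0 > 0$ small enough that $r(T_1) \le r_g$ (possible by~\eqref{eq:conditionr(t)}). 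Remark~\ref{rmk:equivalenceSchConf} identifies $\Conf_{r_0}^g(\rho_0)$ with $\sch(T^\star \mfd)$; moreover, the family $(g(t))_{\abs{t} \le T_E}$ is uniformly admissible (Proposition~\ref{prop:improvedadmissibility} together with Remark~\ref{rmk:uniformadmissibilityg(t)}), which ensures that the Fréchet topologies of the confined spaces $\Conf_{r(t)}^{g(t)}(\rho)$ are equivalent to that of $\sch(T^\star \mfd)$, uniformly in $t \in [-T_1, T_1]$ and $\rho \in T^\star \mfd$. Applying Theorem~\ref{thm:partition} to the constant family $\psi_\rho \equiv \psi$ then yields
$$\abs*{\e^{t \cal{H}_p} \psi}_{\Conf_{r(t)}^{g(t)}(\phi^{-t}(\rho_0))}^{(\ell)} \le C_\ell \abs*{\psi}_{\Conf_{r_0}^g(\rho_0)}^{(k)} , \qquad t \in [-T_1, T_1] ,$$
and the equivalences just discussed translate this into continuity of $\e^{t \cal{H}_p} : \sch(T^\star \mfd) \to \sch(T^\star \mfd)$ on $[-T_1, T_1]$. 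For an arbitrary $t \in \R$, write $t = n T_1 + s$ with $n \in \Z$ and $\abs{s} \le T_1$ and conclude by the group property $\e^{t \cal{H}_p} = (\e^{T_1 \cal{H}_p})^n \e^{s \cal{H}_p}$, a finite composition of continuous maps on $\sch(T^\star \mfd)$. The classical dynamics $\e^{t H_p}$ is handled identically (it is the leading term of the expansion~\eqref{eq:asymptoticexpansion}, whose remainders satisfy analogous confined bounds), or more directly by noting that $\phi^t$ and all its derivatives have temperate growth under Assumptions~\ref{assum:mandatory}--\ref{assum:p}, so composition with $\phi^t$ preserves $\sch(T^\star \mfd)$.

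The extension to $\sch'(T^\star \mfd)$ then proceeds by duality. Since $\cal{H}_p$ and $H_p$ are skew-adjoint on $L^2(T^\star \mfd)$ (Proposition~\ref{prop:isometrycalHp}), the $L^2$-adjoint of $\e^{t \cal{H}_p}$ equals $\e^{-t \cal{H}_p}$ (and similarly for $H_p$), which is also continuous on $\sch(T^\star \mfd)$ by the previous step. I therefore set
$$\brak*{\e^{t \cal{H}_p} u}{\psi}_{\sch', \sch} := \brak*{u}{\e^{-t \cal{H}_p} \psi}_{\sch', \sch} , \qquad u \in \sch'(T^\star \mfd),\ \psi \in \sch(T^\star \mfd) ,$$
and analogously for $H_p$; these extensions are continuous on $\sch'$ and coincide with the unitary $L^2$-action on the dense subspace $L^2(T^\star \mfd) \subset \sch'(T^\star \mfd)$, by the very defining relation of the $L^2$-adjoint. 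The main technical point I anticipate is ensuring the uniform equivalence of Schwartz seminorms with the confined seminorms $\abs*{\bigcdot}_{\Conf_{r(t)}^{g(t)}(\phi^{-t}(\rho_0))}^{(\ell)}$ as $t$ and the drifting base point $\phi^{-t}(\rho_0)$ vary on a compact time window; this is exactly what the improved admissibility estimate~\eqref{eq:unifadmnatural} of Proposition~\ref{prop:improvedadmissibility} delivers.
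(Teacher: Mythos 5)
Your argument is essentially the paper's own proof: Corollary~\ref{cor:contonshwartz} is obtained from Theorem~\ref{thm:partition} together with the identification of Remark~\ref{rmk:equivalenceSchConf} for $(\e^{t \cal{H}_p})_t$, and from Proposition~\ref{prop:continuityflowconf} (equivalently your temperate-growth argument for $\phi^t$) together with the same remark for $(\e^{t H_p})_t$, the extension to $\sch'(T^\star \mfd)$ being the standard duality step you spell out. The additional points you raise (rescaling via Remark~\ref{rmk:positiveTE} when $T_E = 0$, iteration by the group property, skew-adjointness from Proposition~\ref{prop:isometrycalHp}) are correct elaborations of details the paper leaves implicit, not a different route.
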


Corollary~\ref{cor:contonshwartz} follows directly from Theorem~\ref{thm:partition} together with Remark~\ref{rmk:equivalenceSchConf} for $(\e^{\cal{H}_p t})_{t \in \R}$, and from Proposition~\ref{prop:continuityflowconf} together with Remark~\ref{rmk:equivalenceSchConf} for $(\e^{H_p t})_{t \in \R}$.
A less evident consequence of Theorem~\ref{thm:partition} is the fact that the Schrödinger propagator itself acts continuously on the Schwartz class, and can thus be extended to a continuous operator on $\sch'(\mfd)$.

\begin{corollary} \label{cor:continuitypropagatorSchwartz}
Let $p$ satisfy Assumption~\ref{assum:mandatory} and suppose there exists a metric $g$ such that $(p, g)$ satisfies Assumption~\ref{assum:p}. Then for any $t \in \R$, the propagator $\e^{- \ii t P}$ maps $\sch(\mfd)$ to itself continuously.
\end{corollary}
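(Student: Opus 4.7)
I will reduce the problem to the continuity of the quantum dynamics $\e^{t \cal{H}_p}$ on $\sch(T^\star \mfd)$, which is handled by Corollary~\ref{cor:contonshwartz}, using the Wigner transform as the bridge. The pivotal observation is that for $u \in L^2(\mfd)$, the rank-one operator $|u\rangle\langle u|$ admits $u \ovee \bar u$ as its Weyl symbol (up to the normalization fixed by~\eqref{eq:defWeylquantization}), so that Schrödinger evolution of states translates directly into the action of $\e^{-t \cal{H}_p}$ on symbols.

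\textbf{Wigner characterization of $\sch(\mfd)$.} The key preliminary step is to establish that for $u \in L^2(\mfd)$,
\begin{equation*}
u \in \sch(\mfd) \quad \Longleftrightarrow \quad u \ovee \bar u \in \sch(T^\star \mfd) .
\end{equation*}
The forward direction is the standard continuity of the Wigner map $(u_1, u_2) \mapsto u_1 \ovee \bar u_2$ from $\sch(\mfd) \times \sch(\mfd)$ to $\sch(T^\star \mfd)$. For the converse, Proposition~\ref{prop:SchwartzWeylkernel} ensures that $\Opw{u \ovee \bar u}$ has Schwartz integral kernel, hence maps $L^2(\mfd)$ continuously into $\sch(\mfd)$; identifying this operator with (a constant multiple of) $|u\rangle\langle u|$ and applying it to $u$ itself yields $u \in \sch(\mfd)$.

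\textbf{Main argument.} Now take $u \in \sch(\mfd)$ and write $u(t) := \e^{-\ii t P} u \in L^2(\mfd)$. The density operator evolves by conjugation,
\begin{equation*}
|u(t)\rangle\langle u(t)| = \e^{-\ii t P} |u\rangle\langle u| \e^{\ii t P} ,
\end{equation*}
and Definition~\ref{def:defsymbconjugatedop} (applied with $t$ replaced by $-t$) then gives, at the level of Weyl symbols, the identity $u(t) \ovee \overline{u(t)} = \e^{-t \cal{H}_p} (u \ovee \bar u)$. By the Wigner characterization, $u \ovee \bar u \in \sch(T^\star \mfd)$; by Corollary~\ref{cor:contonshwartz}, $\e^{-t \cal{H}_p}$ preserves $\sch(T^\star \mfd)$; hence $u(t) \ovee \overline{u(t)} \in \sch(T^\star \mfd)$, and the Wigner characterization in reverse yields $u(t) \in \sch(\mfd)$. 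Continuity of $\e^{-\ii t P} \colon \sch(\mfd) \to \sch(\mfd)$ follows from the closed graph theorem for Fréchet spaces: the graph is closed in $\sch(\mfd) \times \sch(\mfd)$ because $\e^{-\ii t P}$ is unitary on $L^2(\mfd)$ and the inclusion $\sch(\mfd) \hookrightarrow L^2(\mfd)$ is continuous.

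\textbf{Main obstacle.} The one delicate step is the converse direction of the Wigner characterization, which requires correctly matching the normalization between the Wigner transform in~\eqref{eq:defWigner} and the Weyl symbol of $|u\rangle\langle u|$ via~\eqref{eq:defWeylquantization}, together with the fact that quantizations of Schwartz symbols are smoothing operators. Both of these are standard but must be verified against the conventions of the paper; once in hand, everything else is an immediate consequence of Corollary~\ref{cor:contonshwartz}.
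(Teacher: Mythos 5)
Your proof is correct, and it shares the paper's backbone: pass to the Wigner transform, use the symbol identity $u(t)\ovee\overline{u(t)}=\e^{-t\cal{H}_p}\left(u\ovee\bar u\right)$ (legitimate here since $\Opw{u\ovee\bar u}$ is a bounded rank-one operator, so Definition~\ref{def:defsymbconjugatedop} applies), and invoke the Egorov result on phase space — in your case through Corollary~\ref{cor:contonshwartz}, which the paper derives from Theorem~\ref{thm:partition} independently of the present statement, so there is no circularity. The differences lie in the bridge back from the Wigner function to $u$ and in how continuity is obtained. The paper proves a quantitative two-sided comparison of seminorms, $\bigl(\abs{u}_{\sch(\mfd)}^{(k)}\bigr)^2\le C\abs{u\ovee u}_{\sch(T^\star\mfd)}^{(k+k_0)}$ and its converse, via $u\otimes u$ and a partial Fourier transform, and then applies Theorem~\ref{thm:partition} directly to the confined symbols $u_0\ovee u_0$; this yields explicit seminorm bounds for $\e^{-\ii tP}$ on $\sch(\mfd)$ with no appeal to abstract functional analysis. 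You instead prove only the qualitative converse (Schwartz Wigner function implies Schwartz $u$) by the neat observation that $\Opw{u\ovee\bar u}$ is then a smoothing operator equal to $c\,|u\rangle\langle u|$ with $c\neq 0$ (this identification, which you rightly flag, follows from the kernel–symbol correspondence of Proposition~\ref{prop:SchwartzWeylkernel} or from Proposition~\ref{prop:quantizationisometry}), and you recover continuity from the closed graph theorem for Fréchet spaces, using unitarity on $L^2(\mfd)$ to close the graph. Your route is shorter and perfectly adequate for the statement as phrased; what it gives up relative to the paper's argument is any explicit control of the Schwartz seminorms of $\e^{-\ii tP}u$ in terms of those of $u$ (and of their behavior in $t$), which the quantitative proof provides as a by-product.
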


The latter corollary is proved in Section~\ref{subsec:continuityschclass}. Notice that the conclusion relies only on the existence of an admissible metric $g$ which is compatible with $p$ in the sense of Assumption~\ref{assum:p}, whatever the specific properties of this metric are.

Another consequence is the Egorov theorem in general symbols classes $S(m, g)$ stated in Theorem~\ref{thm:main}.
The proof of Theorem~\ref{thm:main} from Theorem~\ref{thm:partition} relies on the fact that any symbol in $S(m, g)$ can be decomposed as a superposition of {$g$-confined} symbols thanks to a partition of unity adapted to $g$. Conversely, a superposition of a {$g$-uniformly} confined family of symbols, weighted according to some admissible weight $m$, gives a symbol in $S(m, g)$. See Proposition~\ref{prop:reconstructsymbol}.
Proofs can be found in Section~\ref{sec:consequences}.

We finish with an important remark concerning the semiclassical regime.

\begin{remark}[Semiclassical regime $\udl{\gain}_g \to 0$]
We draw the reader's attention to the fact that Theorem~\ref{thm:partition} is mostly relevant for fixed bounded times $T > 0$, independent of $\udl{\gain}_g$, while considering a family of metrics $g = (g_\hslash)_{\hslash \in (0, 1]}$ with $\udl{\gain}_{g_\hslash} = \hslash \to 0$. Indeed, pushing $T = T_\hslash$ up to a fraction of $\log \hslash^{-1}$ would force us to consider a very small confinement radius $r_0 = r_0(\hslash)$, of order $\hslash^\beta$ for some $\beta > 0$, so that the requirement $r(T) \le r_g$ is fulfilled.

A {$g_\hslash$-partition} of unity $(\psi_{\rho_0, \hslash})_{\rho_0 \in T^\star \mfd}$ with such a confinement radius would then consist of confined symbols with roughly $\abs{\nabla^\ell \psi_\hslash}_g = O(\hslash^{- \beta \ell -\beta \dim T^\star \mfd})$ (the factor involving the dimension of $T^\star \mfd$ ensures that the partition integrates to~$1$ as in~\eqref{eq:integralequal1}). That means that seminorms in spaces of confined symbols would blow up as negative powers of $\hslash$.

Although this could seem to be troublesome, we will be able to deduce Theorem~\ref{thm:main} from the above Theorem~\ref{thm:partition} up to a fraction of the Ehrenfest time as $\hslash$ goes to zero by considering the expansion~\eqref{eq:asymptoticexpansion} at a sufficiently high order to cancel the negative powers of $\udl{\gain}_g$ that arise from~\eqref{eq:r-dependentestimate}. See Step~$2$ of the proof of Theorem~\ref{thm:main} in Section~\ref{sec:consequences}.
\end{remark}

\subsection{Idea of proof of Theorem~\ref{thm:main}}

Studying the quantum dynamics $(\e^{t \cal{H}_p})_{t \in \R}$ amounts to solving the equation~\eqref{eq:pdequantumdyn}. The latter can be solved in $L^2(T^\star \mfd)$ by classical semi-group arguments as we shall see in Section~\ref{subsec:quantumwellposedness}. However Theorems~\ref{thm:main} and~\ref{thm:partition} boil down to solve~\eqref{eq:pdequantumdyn} in symbol classes and spaces of confined symbols, which turns out to be much more difficult. Indeed, the only a priori information that we have on the quantum dynamics is that it is a unitary group on $L^2(T^\star \mfd)$ (or equivalently $\e^{-\ii t P}$ is a unitary group acting on $L^2(\mfd)$).

The strategy of our proof consists in showing that $\e^{\ii t P} \Opw{a} \e^{-\ii t P} = \Opw{\e^{t \cal{H}_p} a}$ is a pseudo-differential operator through the characterization known as Beals' theorem (see Proposition~\ref{prop:Beals}). Applying this criterion requires quite intricate computations involving iterated commutators with operators of the form $\Opw{f}$ where $f \in \aff(T^\star \mfd)$ is an affine function.

A very delicate point, especially in the proof of Theorem~\ref{thm:partition}, is to care about the dependence on $\rho_0$ in all the computations. The dependence on $t$ is also to be taken into account carefully. In particular, when applying the pseudo-differential calculus, we will always make sure that the metrics $g(t)$ and weight $m(t)$ involved have structure constants bounded independently of $t$. Fortunately, the Weyl--Hörmander theory is well-suited to handle this uniformity question.

A key step of our approach consists in describing the action of $\e^{t H_p}$ and $\cal{H}_p^{(3)}$, arising in the definition of $\cal{E}_j(t)$ and $\widehat{\cal{E}}_j(t)$ (defined in~\eqref{eq:defEj} and~\eqref{eq:defhatEj}), on symbol classes $S(m, g)$. Let us sketch here some important arguments that will be used throughout the proofs. On the one hand, we show that the Hamiltonian flow $\e^{t H_p}$ acts continuously on symbol classes as follows:
\begin{equation} \label{eq:continuityHamflow}
\e^{t H_p} : S(m, g)
	\longrightarrow S\left(m(t), g(t)\right) ,
\end{equation}
where $m(t) = \e^{t H_p} m$ and $g(t)$ is defined in~\eqref{eq:defg(t)}. The corresponding seminorm estimates are uniform in $t \in \R$:
\begin{multline*}
\forall \ell \in \N, \exists C_\ell > 0 : \forall a \in S(m, g), \forall t \in \R, \forall \rho \in T^\star \mfd , \qquad \\
	\abs*{\nabla^\ell \e^{t H_p} a}_{g(t)}(\rho)
		= \abs*{\nabla^\ell \e^{t H_p} a}_g(\rho) \e^{- \ell (\Lambda + 2 \Upsilon) \abs{t}}
		\le C_\ell m(t; \rho) \max_{0 \le j \le \ell} \sup_{T^\star \mfd} \dfrac{\abs{\nabla^j a}_g}{m} .
\end{multline*}
The continuity of the operator $\e^{t H_p}$ in~\eqref{eq:continuityHamflow} is a concise way to describe the exponential growth of derivatives of the flow:
\begin{equation*}
\forall k \in \N^\ast , \exists C_k > 0 : \qquad
	\abs*{\nabla^k \phi^t}_g
		\le C_k \e^{k \Lambda \abs{t}} ,
\end{equation*}
where $\Lambda$ is the Lyapunov exponent defined in Item~\ref{it:Lambda} of Assumption~\ref{assum:p}. On the other hand, we prove that the operator $\cal{H}_p^{(3)}$ acts continuously on symbol classes as follows:
\begin{equation} \label{eq:continuitycalHp3inintrosketch}
\cal{H}_p^{(3)} : S\left(m, g(s)\right) \longrightarrow S\left( \gain_{g(s)}^2 \udl{\gain}_g \e^{- (\Lambda + 2 \Upsilon) \abs{s}} m, g(s) \right) ,
\end{equation}
uniformly in $\abs{s} \le T_E$. The limitation on $s$ is due to the use of pseudo-differential calculus to establish seminorm estimates corresponding to~\eqref{eq:continuitycalHp3inintrosketch}. Let us explain why such a mapping property~\eqref{eq:continuitycalHp3inintrosketch} holds. The operator $\cal{H}_p^{(3)}$ is related to the remainder of order $3$ in the pseudo-differential calculus with the symbol $p$ (we recall that the definition of $\cal{H}_p^{(3)}$ in~\eqref{eq:defHp3} involves $\cal{H}_p$ defined in~\eqref{eq:defcalHp}, hence Moyal products with $p$). If we consider a symbol $a \in S(m, g(s))$, computing $\cal{H}_p a$ amounts to applying pseudo-differential calculus with $a \in S(m, g(s))$ and $\nabla^3 p \in S((\gain_g/\udl{\gain}_g)^{-1}, g)$ (Item~\ref{it:strongsubquad} of Assumption~\ref{assum:p}). The gain of the associated pseudo-differential calculus corresponds to
\begin{equation*}
\sqrt{\gain_g \gain_{g(s)}}
	= \sqrt{\gain_g^2 \e^{2 (\Lambda + 2 \Upsilon) \abs{s}}}
	= \gain_g \e^{(\Lambda + 2 \Upsilon) \abs{s}}
\end{equation*}
(we have used the fact that $\gain_{g(s)} = \e^{2 (\Lambda + 2 \Upsilon) \abs{s}} \gain_g$; see Remark~\ref{rmk:uniformadmissibilityg(t)}). Therefore at order $3$, we ``gain"
\begin{equation*}
\gain_g^3 \e^{3 (\Lambda + 2 \Upsilon) \abs{s}}
	= \gain_{g(s)}^2 \gain_g \e^{- (\Lambda + 2 \Upsilon) \abs{s}} .
\end{equation*}
Mutliplying by the weights $(\gain_g/\udl{\gain}_g)^{-1}$ and $m$ associated with $\nabla^3 p$ and $a$ respectively, pseudo-differential calculus implies that $\cal{H}_p^{(3)} a$ belongs to the class $S(m', g(s))$, where
\begin{equation*}
m'
	= \gain_{g(s)}^2 \gain_g \e^{- (\Lambda + 2 \Upsilon) \abs{s}} \left(\dfrac{\gain_g}{\udl{\gain}_g}\right)^{-1} m
	= \gain_{g(s)}^2 \e^{- (\Lambda + 2 \Upsilon) \abs{s}} \udl{\gain}_g m .
\end{equation*}
Then we prove the estimate~\eqref{eq:estimatecalEj0} on the operator $\cal{E}_j(t)$ defined in~\eqref{eq:defEj} by induction on $j$, taking advantage of the recurrence relation~\eqref{eq:recurrencerelation}. If we assume that~\eqref{eq:estimatecalEj0} is true for $\cal{E}_j(t)$, then composing this estimate with those obtained on $\e^{t H_p}$ in~\eqref{eq:continuityHamflow} and on $\cal{H}_p^{(3)}$ in~\eqref{eq:continuitycalHp3inintrosketch}, we obtain
\begin{equation*}
\e^{(t - s) H_p} \cal{H}_p^{(3)} \cal{E}_{j}(s) : S(m, g) \longrightarrow S\left( m(t) \gain_{g(t)}^{2(j+1)} \udl{\gain}_g \e^{-(\Lambda + 2 \Upsilon) \abs{s}}, g(t) \right) .
\end{equation*}
The factor $\udl{\gain}_g \e^{-(\Lambda + 2 \Upsilon) \abs{s}}$ disappears while integrating over $t \in [0, t]$ (we use $\Lambda \ge c \udl{\gain}_g$ here), and we deduce that the estimate~\eqref{eq:estimatecalEj0} holds for $\cal{E}_{j+1}(t)$.

The most delicate part of the proof of Theorem~\ref{thm:main} consists in showing the estimate~\eqref{eq:estimateremainder} on the remainder of the Dyson expansion, i.e.\ the operator $\widehat{\cal{E}}_j(t)$ defined in~\eqref{eq:defhatEj}. An additional difficulty is due to the fact that the operator $\e^{t \cal{H}_p}$, which appears in the definition of $\widehat{\cal{E}}_j(t)$, is defined only implicitly by~\eqref{eq:implicitdef} (whereas the operator $\e^{t H_p}$, involved in the definition of $\cal{E}_j(t)$, is more tractable since it corresponds to an explicit operation on symbols, namely the composition by the Hamiltonian flow). This step of the proof relies on Beals' characterization of pseudo-differential operators~\cite{Beals:77}. This is where the assumption on $\nabla^3 p$ involving the temperance weight (Item~\ref{it:strongsubquad} of Assumption~\ref{assum:p}), as well as Item~\ref{it:metriccontrol} of Assumption~\ref{assum:p}, come into play.

\subsection{Related works: the contributions of J.-M.\ Bony} \label{subsec:Bony}

Our results are in line with previous investigations conducted by Bony in the late 1990s and the 2000s. His contributions essentially consist in generalizing the Fourier integral operator theory to the framework of the Weyl--Hörmander calculus. In a series of works that we review below, he introduces an efficient algebraic approach to Fourier integral operators.

Let $\kappa$ be a canonical transformation and $g_0, g_1$ be two Weyl--Hörmander symplectic metrics subject to $g_0 = \kappa^\pullb g_1$. In the seminar notes~\cite{Bony:FIOX,Bony:FIOandWeyl}, Bony introduces classes of Fourier integral operators $\rm{FIO}(\kappa, g_0, g_1)$ as superpositions of metaplectic operators (quantizations of the tangent map of $\kappa$ at each phase space point), weighted by {$g_0$-confined} symbols. Later, he gives in~\cite{Bony:FIOandWeyl2} an alternative definition of the classes $\rm{FIO}(\kappa, g_0, g_1)$ for more general metrics. This definition involves ``twisted commutators" and relies on delicate characterizations of pseudo-differential operators in Weyl--Hörmander classes~\cite{Bony:characterization}, that generalize Beals' criterion~\cite{Beals:77,Beals:77correction} to non-Euclidean metrics. With this definition, the fact that Fourier integral operators conjugate operators with symbol in $S(1, g_0)$ to operators with symbol in $S(1, g_1)$ becomes practically tautological. This axiomatic approach allows him to check that these classes of Fourier integral operators obey a natural calculus. However, these characterizations of pseudo-differential operators do not go along with precise estimates and require some additional assumptions on the metrics under consideration (geodesic temperance and above all absence of symplectic eccentricity). In later conference proceedings~\cite{Bony:evol,Bony:evolfrancais,Bony:generalizedFIO}, Bony shows the equivalence between the previous definitions of the class $\rm{FIO}(\kappa, g_0, g_1)$, introduces an abstract notion of principal symbol and discusses boundedness properties of these operators in Sobolev spaces attached to a Weyl--Hörmander metric. He also points out that the case of propagators of the form $\e^{-\ii t P}$ should fit in this framework, taking $\kappa = \kappa_t$ the Hamiltonian flow associated with the generator $P$ and $g_t = (\phi^t)_\pullb g_0$. Nevertheless, checking that a concrete unitary group $\e^{- \ii t P}$, with simple assumptions on $P$, actually belongs to a class $\rm{FIO}(\phi^t, g_0, g_t)$ is a highly non-trivial task. In the present paper, we propose instead in Theorem~\ref{thm:main} a statement with explicit assumptions on the generator $P$ and on the metric $g$, and provide with precise continuity estimates and detailed proofs. We also relax some of the technical assumptions of~\cite{Bony:evol,Bony:evolfrancais,Bony:generalizedFIO}.

\subsection{Plan of the article} \label{subsec:strategy}

The article is organized as follows.
\begin{itemize}
\item Section~\ref{sec:WHcalculus} introduces basic notation and the so-called pseudo-differential Weyl--Hörmander calculus. Our main reference for this is the treatise of Lerner~\cite[Chapter 2]{Lerner:10}, but our presentation is also inspired from Hörmander~\cite[Chapter XVIII]{Hoermander:V3}. Proofs of this section are collected in Appendix~\ref{app:Moyal}. We chose to redo some of the proofs to obtain more precise seminorm estimates needed in view of~Theorem~\ref{thm:main}.
\item Then in Section~\ref{sec:wellposedness} we discuss the well-posedness of the classical and quantum dynamics on $L^2(T^\star \mfd)$. We show that Assumption~\ref{assum:p} is sufficient to make sense of the classical and quantum dynamics globally in time. This part is not related to microlocal analysis but follows from classical evolution equations and spectral theory arguments.
\item Next we study the classical dynamics in Section~\ref{sec:classical}. We essentially prove global estimates on the Hamiltonian flow and its derivatives on the whole phase space. The content of this section is quite classical but we chose to include the proofs with our notation to make the paper self-contained.
\item In Section~\ref{sec:mapping}, we discuss the mapping properties in symbol classes and spaces of confined symbols of the operators $\cal{E}_j(t)$ appearing in the Dyson expansion~\eqref{eq:Dyson}. More precisely, we prove continuity estimates for these operators, taking care of the dependence on time and on the parameter $\rho_0 \in T^\star \mfd$ for confined families of symbols.
\item In Section~\ref{sec:quantum} we deal with the delicate computations involving commutators, in preparation for the application of Beals' theorem. We discuss the boundedness of operators given by iterated commutators with affine symbols. This section contains the key arguments to prove smoothness and decay of the conjugated operator $\e^{\ii t P} \Opw{\psi_{\rho_0}} \e^{- \ii t P}$ in Theorem~\ref{thm:partition}.
\item The proof of Theorem~\ref{thm:partition} on the propagation of partitions of unity is presented in Section~\ref{sec:proofpartition}. Roughly speaking, it consists in putting together the estimates given in Section~\ref{sec:quantum} related to iterated commutators, taking care of the dependence on the parameters $t$ and $\rho_0$. Then the sought seminorm estimates~\eqref{eq:seminormestimateegorovconfined} follow from Beals' theorem (Proposition~\ref{prop:Beals}).
\item Then Section~\ref{sec:consequences} is devoted to the proof of Theorem~\ref{thm:main} and several other consequences of Theorem~\ref{thm:partition}.
\item Proofs of the results of Section~\ref{subsec:examples} concerning applications to Schrödinger, wave and transport equations are collected in the final Section~\ref{sec:proofsex}.
\item The paper ends with four appendices. Appendix~\ref{app:Moyal} is devoted to the proofs of the precise estimates for the pseudo-differential calculus in the Weyl--Hörmander framework given in Section~\ref{sec:WHcalculus}. Then basic results on pseudo-differential operators are recalled in Appendix~\ref{app:operators}, and some technical lemmata on phase space metrics are gathered in Appendix~\ref{app:metrics}. We finally recall the Faà di Bruno formula for ``vector-valued" functions in Appendix~\ref{app:faadibruno}.
\end{itemize}

\subsection*{Acknowledgments} I am grateful to Matthieu L\'{e}autaud for carefully reading an early version of this article and suggesting countless improvements. His advice and encouragement were extremely helpful.
Most of this project has been completed while affiliated with Laboratoire de Math\'{e}matiques d'Orsay, Universit\'{e} Paris-Saclay, France. I thank this institution for the outstanding mathematical environment and working conditions I enjoyed there.

\Large
\section{The Weyl--Hörmander calculus} \label{sec:WHcalculus}
\normalsize

In this section, we introduce the Weyl--Hörmander calculus of pseudo-differential operators, that is to say we describe how the composition of these operators works in the Weyl--Hörmander symbol classes and spaces of confined symbols.

\subsection{Functional framework} \label{subsec:functionalframework}

Recall that, for admissible~$g$ and~$m$, the symbol classes $S(m, g)$ introduced in Definition~\ref{def:symbclasses} are Fréchet spaces contained in $\sch'(T^\star \mfd)$. Also recall that we extended the definition of $S(m, g)$ to tensors in Remark~\ref{rmk:symbolclasstensors}. For any $k \in \N$, we define
\begin{equation*}
\nabla^{-k} S(m, g)
	= \set{a \in \sch'(T^\star \mfd)}{\nabla^k a \in S(m, g)} .
\end{equation*}
Notice that similar spaces were already considered by other authors, such as Bony~\cite[Definition 2.5]{Bony:characterization}.
This is a linear subspace of $\sch'(T^\star \mfd)$. We endow it with the following family of seminorms:
\begin{equation*}
a \longmapsto \abs*{\nabla^k a}_{S(m, g)}^{(\ell)} ,
		\qquad \ell \in \N, a \in \nabla^{-k} S(m, g) .
\end{equation*}
The topology induced by these seminorms coincides with the topology induced by $S(m, g)$ through the linear map
\begin{equation} \label{eq:mapnablak}
\nabla^k : \nabla^{-k} S(m, g) \longrightarrow S(m, g) ,
\end{equation}
which is then continuous. Notice that $\nabla^{-k} S(m, g)$ is not a topological vector space (singletons are not closed), so in particular it is not a Fréchet space. Open sets of $\nabla^{-k} S(m, g)$ are of the form $(\nabla^k)^{-1} U$ where $U$ is an open set of $S(m, g)$. Also notice that the ``fiber" of the map $\nabla^k$ in~\eqref{eq:mapnablak} is given by $\ker \nabla^k$, that is the set of degree $k-1$ polynomial functions, in the sense that if $\nabla^k a_1 = \nabla^k a_2$, then~$a_1$ and~$a_2$ differ by a degree $k - 1$ polynomial function.
Given a linear map $L : \nabla^{-k_1} S(m_1, g_1) \to \nabla^{-k_2} S(m_2, g_2)$,
if we have an estimate of the form
\begin{equation*}
\forall \ell \in \N, \exists k \in \N, \exists C_\ell > 0 : \forall a \in S(m_1, g) , \qquad
	\abs*{\nabla^{k_2} L a}_{S(m_2, g_2)}^{(\ell)}
		\le C_\ell \abs*{\nabla^{k_1} a}_{S(m_1, g_1)}^{(k)} ,
\end{equation*}
then we shall write
\begin{equation*}
\nabla^{-k_1} S(m_1, g_1)
	\xrightarrow{L} \nabla^{-k_2} S(m_2, g_2) .
\end{equation*}
One readily checks that
\begin{equation*}
\forall k_0 \in \{0, 1, 2, \ldots, k\} , \qquad
	\nabla^{-k} S(m, g)
		\xrightarrow{\nabla^{k_0}} \nabla^{-(k - k_0)} S(m, g) .
\end{equation*}
When considering a family of operators $L(t)$ depending on some parameter $t \in \R$ (usually time), it is interesting to keep track of the dependence of the continuity constants $C_\ell$ on $t$. Given an interval $I \subset \R$ and $C \in \cont^0(I; \R_+)$, we will write
\begin{equation*}
\nabla^{-k_1} S(m_1, g_1)
	\xrightarrow{L(t)} C(t) \nabla^{-k_2} S(m_2, g_2) ,
		\qquad t \in I ,
\end{equation*}
to mean
\begin{equation*}
\forall \ell \in \N, \exists k \in \N, \exists C_\ell > 0 : \forall t \in \R, \forall a \in S(m_1, g_1) , \qquad
	\abs*{\nabla^{k_2} L a}_{S(m_2, g_2)}^{(\ell)}
		\le C_\ell C(t) \abs*{\nabla^{k_1} a}_{S(m_1, g_1)} .
\end{equation*}
In particular, we shall write
\begin{equation*}
\nabla^{-k_1} S(m_1, g_1)
	\xrightarrow{L(t)} \nabla^{-k_2} S(m_2, g_2)
\end{equation*}
when seminorm estimates are uniform with respect to $t$. We also extend this notation accordingly to the case where weights $m_1, m_2$ and metrics $g_1, g_2$ depend on the parameter~$t$.

\subsection{Pseudo-differential calculus} \label{subsec:pseudocalcWH}

Quantization of quantum observables follows well-known rules. Let $a_1$ and $a_2$ be tempered distributions. The composition $\Opw{a_1} \Opw{a_2}$ makes sense as a continuous operator $\sch(\R^d) \to \sch'(\R^d)$ as soon as one of the two operators maps $\sch(\R^d)$ to itself continuously. In such a case, the Weyl symbol $b$ of the corresponding operator $\Opw{b} = \Opw{a_1} \Opw{a_2}$, given by the Schwartz kernel theorem (Proposition~\ref{prop:SchwartzWeylkernel}), is called the Moyal product of $a_1$ and $a_2$ and is denoted by $a_1 \moyal a_2$, so that
\begin{equation} \label{eq:pseudodefmoyal}
\Opw{a_1} \Opw{a_2} 
	= \Opw{a_1 \moyal a_2} .
\end{equation}
The Moyal product is a bilinear map. Assume now that $a_1$ and $a_2$ are Schwartz functions. Then one has $a_1 \moyal a_2 \in \sch(T^\star \mfd)$~\cite[Theorem 4.11]{Zworski:book}, and this symbol can be computed according to the formula~\eqref{eq:formulaMoyalproduct}.

The proposition below ensures that the composition of $\Opw{a_1}$ and $\Opw{a_2}$ makes sense as a continuous map $\sch(\mfd) \to \sch'(\mfd)$ as soon as $a_1$ or $a_2$ belongs to some symbol class $S(m, g)$.

\begin{proposition}[{\cite[Theorem 18.6.2]{Hoermander:V3}}] \label{prop:continuityinschpseudodiff}
Let $g$ be an admissible metric and $m$ be a $g$-admissible weight. Then for any $a \in S(m, g)$, the operator $\Opw{a}$ maps $\sch(M) \to \sch(M)$ and $\sch'(M) \to \sch'(M)$ continuously.
\end{proposition}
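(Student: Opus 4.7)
The plan is to reduce the claim to pointwise kernel estimates through a $g$-adapted decomposition of $a$, then obtain the $\sch'$~case by duality. First, I would invoke Proposition~\ref{prop:existencepartitionofunity} to pick a $g$-partition of unity $(\varphi_{\rho_0})_{\rho_0 \in T^\star M}$ with some confinement radius $r \in (0, r_g]$, and decompose
\begin{equation*}
a = \int_{T^\star M} a_{\rho_0} \, \dd \vol_g(\rho_0) , \qquad a_{\rho_0} := a\,\varphi_{\rho_0} .
\end{equation*}
By slow variation of $m$ on $B_r^g(\rho_0)$, the Leibniz rule, and the fact that $(\varphi_{\rho_0})$ is bounded in $S(1, g)$, the family $(a_{\rho_0})$ is $g$-uniformly confined with size $m(\rho_0)$: for every $\ell \in \N$ there exists $k = k(\ell) \in \N$ so that $\abs*{a_{\rho_0}}_{\Conf_r^g(\rho_0)}^{(\ell)} \le C_\ell\, m(\rho_0)\, \abs*{a}_{S(m, g)}^{(k)}$. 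It therefore suffices to prove Schwartz continuity of $\Opw{a_{\rho_0}}$ with quantitative control in $\rho_0$, and then reassemble.

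Second, for each $\rho_0 = (x_0, \xi_0)$, I would estimate the Schwartz kernel of $\Opw{a_{\rho_0}}$, written as an oscillatory integral, by standard integration by parts. Differentiation in the $\xi$ variable produces decay in $\abs{x - y}$, while differentiation in the midpoint variable $(x+y)/2$, combined with multiplication by $\xi - \xi_0$, produces frequency localisation near $\xi_0$. The differential operators are built from the metrics $g_{\rho_0}$ and $g_{\rho_0}^\sympf$, and the uncertainty principle $\gain_g \le 1$ ensures the two kinds of oscillations can be exploited simultaneously. This yields, for every multi-indices $\alpha, \beta$ and every $N \in \N$, a pointwise bound
\begin{equation*}
\abs*{\partial_x^\alpha \partial_y^\beta K_{a_{\rho_0}}(x, y)}
	\le C_{\alpha, \beta, N}\, m(\rho_0)\, \jap*{\dist_{g_{\rho_0}^\sympf}(\rho_0, (x, \xi_0))}^{-N} \jap*{\dist_{g_{\rho_0}^\sympf}(\rho_0, (y, \xi_0))}^{-N} ,
\end{equation*}
where the constants depend only on $N$, $\alpha$, $\beta$ and finitely many confinement seminorms of $a_{\rho_0}$. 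Inserting this into $\Opw{a}u(x) = \int_{T^\star M} \int_M K_{a_{\rho_0}}(x, y)\, u(y)\, \dd y\, \dd \vol_g(\rho_0)$, the temperance of $m$ and $g$ (Definitions~\ref{def:admissiblemetric} and~\ref{def:admissibleweight}) controls $m(\rho_0)$ and the growth of the $g$-volume by polynomial factors in $\rho_0$; together with the rapid decay above this makes the $\rho_0$-integral absolutely convergent and yields a bound of the form $\abs*{\Opw{a}u}_\sch^{(\ell)} \le C\, \abs*{a}_{S(m,g)}^{(k)} \abs*{u}_\sch^{(\ell')}$ for finite $k$ and $\ell'$.

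Finally, continuity $\sch'(M) \to \sch'(M)$ is obtained by duality. The formal adjoint of $\Opw{a}$ with respect to the pairing~\eqref{eq:defWeylquantization} is $\Opw{\bar a}$, and $\bar a \in S(m, g)$ has the same seminorms as $a$; hence the first part applied to $\bar a$ gives $\Opw{\bar a} : \sch(M) \to \sch(M)$ continuously, and transposition yields the required continuous extension of $\Opw{a}$ to $\sch'(M)$. The main technical obstacle is the integration-by-parts step: because $g$ is a general Weyl--Hörmander metric rather than a fixed Euclidean one, the differential operators used must be adapted pointwise to $g_{\rho_0}$ and $g_{\rho_0}^\sympf$, and one must track seminorms carefully so that the resulting loss in $\rho_0$ is dominated by the admissible weight $m(\rho_0)$ times a factor that is integrable against the $g$-temperance weight; the rest is a bookkeeping exercise using the structure constants of $g$ and $m$.
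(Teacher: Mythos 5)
The paper does not actually prove this proposition: it is quoted from H\"ormander's Theorem 18.6.2, so there is no internal proof to compare with, and your argument has to stand on its own. Its first step is fine (it is exactly the second half of Proposition~\ref{prop:reconstructsymbol}, with Proposition~\ref{prop:existencepartitionofunity} supplying the partition), but the reassembly step has a genuine gap. The pointwise kernel bound you claim decays only in the horizontal distances $\dist_{g_{\rho_0}^\sympf}(\rho_0,(x,\xi_0))$ and $\dist_{g_{\rho_0}^\sympf}(\rho_0,(y,\xi_0))$, i.e.\ in $x-x_0$ and $y-x_0$; for fixed $x,y$ it gives no decay at all in the fiber component $\xi_0$ of $\rho_0$ (in the semiclassical model $g=\dd x^2+\hslash^2\dd\xi^2$ the kernels for different $\xi_0$ differ essentially by the oscillation $\e^{\ii \xi_0\cdot(x-y)}$, which absolute values cannot see). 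Temperance of $g$ and $m$ only yields polynomial \emph{growth} in $\rho_0$ of $m(\rho_0)$, of the volume density $\dd\vol_g$, and of the comparison constants with a fixed Euclidean metric, so the double integral $\int\!\!\int \abs{K_{a_{\rho_0}}(x,y)}\abs{u(y)}\,\dd y\,\dd\vol_g(\rho_0)$ you invoke diverges in the $\xi_0$ direction; the asserted absolute convergence, and hence the Schwartz seminorm bound, does not follow from these kernel estimates. The decay in $\xi_0$ must come from the test function: either from the rapid decay of $\widehat u$, or, equivalently, by pairing $a_{\rho_0}$ against the Wigner transform $u\ovee\bar v$, which is itself a confined symbol near a fixed point, and then using a two-sided confinement estimate giving rapid decay in \emph{all} phase-space directions of $\rho_0$. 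This is exactly how the paper treats the analogous superposition later, in Lemma~\ref{lem:absconvintegral}, via the bi-confinement estimate of Proposition~\ref{prop:bi-confinement} combined with the improved temperance of Proposition~\ref{prop:improvedadmissibility}; replacing your pointwise-kernel reassembly by that pairing argument repairs the proof. A secondary inaccuracy in the same step: your kernel constant cannot depend only on $\alpha,\beta,N$ and the confinement seminorms, since already at $x=y=x_0$ the kernel has size $m(\rho_0)$ times the $\xi$-volume of the $g_{\rho_0}$-unit box (a $\rho_0$-dependent factor, only polynomially controlled by temperance), which again cannot be absorbed without true decay in $\rho_0$.

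The duality step for $\sch'(\mfd)\to\sch'(\mfd)$ is fine once the $\sch\to\sch$ continuity is established (with either the adjoint $\Opw{\bar a}$ for the sesquilinear pairing or the transpose $\Opw{a(x,-\xi)}$ for the bilinear one; both symbols lie in admissible classes). Note also that there is a more elementary route, consistent with the paper's toolbox: as in Lemma~\ref{lem:temperategrowth}, admissibility of $g$ and $m$ forces every $a\in S(m,g)$ to have temperate growth, together with all its derivatives, with respect to a fixed Euclidean metric on $T^\star\mfd$; one can then run the classical integration-by-parts argument for Weyl quantizations of polynomially bounded amplitudes, which gives $\Opw{a}:\sch(\mfd)\to\sch(\mfd)$ directly and avoids the $\rho_0$-superposition altogether.
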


Notice that if $a_1 \in S(m_1, g_1)$ and $a_2 \in S(m_2, g_2)$, with admissible metrics and weights, then~\eqref{eq:pseudodefmoyal} and~\eqref{eq:formulaMoyalproduct} hold, and~\eqref{eq:formulaMoyalproduct} can be understood as an oscillatory integral.

The formula~\eqref{eq:formulaMoyalproduct} for the Moyal product can be rewritten
\begin{equation} \label{eq:exprMoyal}
a_1 \moyal a_2
	= \e^{- \frac{\ii}{2} \frak{P}} (a_1 \otimes a_2)_{\vert \diag} .
\end{equation}
In this expression, $a_1 \otimes a_2$ is the map $(\rho_1, \rho_2) \mapsto a_1(\rho_1) a_2(\rho_2)$ defined on $T^\star \mfd \oplus T^\star \mfd$, $\diag$ refers to the restriction to the diagonal $\{\rho_1 = \rho_2\}$ and the operator $\e^{- \frac{\ii}{2} \frak{P}}$ is a Fourier multiplier that one can see as the exponential of $\frak{P} = \poiss*{\nabla^{\rho_1}}{\nabla^{\rho_2}}$, which is a symbolic notation for
\begin{equation*}
\frak{P} (a_1 \otimes a_2)(\rho_1, \rho_2)
	= \sympf\left(H_{a_1}(\rho_1), H_{a_2}(\rho_2)\right) ,
		\qquad \forall \rho_1, \rho_2 \in T^\star \mfd .
\end{equation*}
(See Sections~\ref{subsec:Poissonop} and~\ref{subsec:Moyalop} of Appendix~\ref{app:Moyal} for further details on these definition.) A formal Taylor expansion of the exponential in~\eqref{eq:exprMoyal}, which can be made rigorous, gives
\begin{equation} \label{eq:pdidentity}
a_1 \moyal a_2
	= \sum_{j = 0}^{j_0} \cal{P}_j(a_1, a_2) + \widehat{\cal{P}}_{j_0+1}(a_1, a_2) ,
\end{equation}
where for all $j \ge 0$,
\begin{empheq}[left={\empheqlbrace}]{alignat=2}
\cal{P}_j(a_1, a_2)
	&= \dfrac{(2 \ii)^{-j}}{j!} \frak{P}^j (a_1 \otimes a_2) _{\vert \diag} , \label{eq:defPj}\\
\widehat{\cal{P}}_{j+1}(a_1, a_2)
	&= \int_0^1 (1 - s)^j \dfrac{(2 \ii)^{-(j+1)}}{j!} \e^{- \ii \frac{s}{2} \frak{P}} \frak{P}^{j+1} (a_1 \otimes a_2) _{\vert \diag} \dd s , \label{eq:defhatPj}
\end{empheq}
and $\widehat{\cal{P}}_0(a_1, a_2) = a_1 \moyal a_2$ by convention. Throughout the paper, $\cal{P}_j(a_1, a_2)$ is called the ``$j$-th order term" of the expansion.

The operators $(\cal{P}_j)_{j \in \N}$ and $(\widehat{\cal{P}}_j)_{j \in \N}$ are bilinear maps acting on the space of Schwartz symbols. The most important ones are the {$0$th} order and the {$1$st} order terms in the expansion, namely
\begin{equation*}
\cal{P}_0(a_1, a_2)
	= a_1 a_2 ,
		\qquad
\cal{P}_1(a_1, a_2)
	= \dfrac{1}{2 \ii} \poiss*{a_1}{a_2} .
\end{equation*}
The term $\cal{P}_j$ depends on derivatives of order $j$ of $a_1$ and $a_2$, while $\widehat{\cal{P}}_j$ depends on derivatives of order larger than $j$. When $a_1$ and $a_2$ belong to symbol classes $S(m, g)$, we can make sense of the asymptotic expansion
\begin{equation} \label{eq:expansionpseudocalc}
a_1 \moyal a_2
	\sim \sum_{j \ge 0} \cal{P}_j(a_1, a_2) ,
		\qquad a_1, a_2 \in \sch(T^\star \mfd) ,
\end{equation}
by describing how the bilinear operators $\cal{P}_j$ and $\widehat{\cal{P}}_j$ extend to symbol classes or spaces of confined symbols.
In Proposition~\ref{prop:pseudocalcsymb} below, we describe the mapping properties of these bilinear operators on symbol classes $S(m_1, g_1)$ and $S(m_2, g_2)$, with precise seminorm estimates. Derivatives of $a_1$ and $a_2$ are measured with respect to two possibly different metrics $g_1$ and $g_2$. Beyond admissibility, some compatibility condition is required on these metrics. We introduce this notion of compatibility in Definition~\ref{def:compatiblemetrics}, based on~\cite[Proposition 18.5.3]{Hoermander:V3}.

An important object appearing in the statements below is the \emph{joint gain function} of $g_1$ and $g_2$, defined by\footnote{The justification for the equality in~\eqref{eq:defjointgain} can be found in Lemma~\ref{lem:justificationequalities}.}
\begin{equation} \label{eq:defjointgain}
\forall \rho \in T^\star \mfd , \qquad
	\gain_{g_1, g_2}(\rho)
		:= \sup_{\zeta \in W \setminus \{0\}} \dfrac{\abs{\zeta}_{g_{1, \rho}}}{\abs{\zeta}_{g_{2, \rho}^\sympf}}
		= \sup_{\zeta \in W \setminus \{0\}} \dfrac{\abs{\zeta}_{g_{2, \rho}}}{\abs{\zeta}_{g_{1, \rho}^\sympf}} .
\end{equation}
Notice that when $g_1 = g_2 = g$, one recovers the usual gain function of $g$ of Definition~\ref{def:gaing}.\footnote{Observe that the definition of the temperance weight $\theta_g$ in~\eqref{eq:deftheta} goes somewhat in the other way around compared with the joint gain function $\gain_{g, {\sf g}}$ of $g$ and ${\sf g}$.}

\begin{proposition}[Pseudo-differential calculus in symbol classes] \label{prop:pseudocalcsymb}
Let $g_1$ and $g_2$ be admissible compatible metrics in the sense of Definitions~\ref{def:admissiblemetric} and~\ref{def:compatiblemetrics}. Let $m_1$ and $m_2$ be $(g_1 + g_2)$-admissible weights in the sense of Definition~\ref{def:admissibleweight}. Then we have for all $j \in \N$:
\begin{align}
\nabla^{-j} S\left(m_1, g_1\right) \times \nabla^{-j} S\left(m_2, g_2\right)
	&\xrightarrow{\cal{P}_j} S\left(\gain_{g_1, g_2}^j m_1 m_2, g_1 + g_2\right) , \label{eq:continuitycalPjsymb}\\
\nabla^{-j} S\left(m_1, g_1\right) \times \nabla^{-j} S\left(m_2, g_2\right)
	&\xrightarrow{\widehat{\cal{P}}_j} S\left(\gain_{g_1, g_2}^j m_1 m_2, g_1 + g_2\right) .\label{eq:continuityhatcalPjsymb}
\end{align}
More precisely, for all $j \in \N$ and all $\ell \in \N$, there exist $k = k_\ell \in \N$ and positive constants $(C_\ell, C_\ell')$ such that for all $a_1 \in \nabla^{-j} S(m_1, g_1)$ and $a_2 \in \nabla^{-j} S(m_2, g_2)$:
\begin{align*}
\abs*{\cal{P}_j(a_1, a_2)}_{S(\gain_{g_1, g_2}^j m_1 m_2, g_1 + g_2)}^{(\ell)}
	&\le C_\ell \abs*{\nabla^j a_1}_{S(m_1, g_1)}^{(\ell)} \abs*{\nabla^j a_2}_{S(m_2, g_2)}^{(\ell)} , \\
\abs*{\widehat{\cal{P}}_j(a_1, a_2)}_{S(\gain_{g_1, g_2}^j m_1 m_2, g_1 + g_2)}^{(\ell)}
	&\le C_\ell' \abs*{\nabla^j a_1}_{S(m_1, g_1)}^{(k)} \abs*{\nabla^j a_2}_{S(m_2, g_2)}^{(k)} .
\end{align*}
The estimates depend on $g_1, g_2$ and $m_1, m_2$ only through structure constants.
\end{proposition}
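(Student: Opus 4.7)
The plan is to split the argument into the local contributions $\cal{P}_j$, which are genuine bidifferential operators, and the nonlocal remainders $\widehat{\cal{P}}_j$, which require an oscillatory integral treatment. In both cases the joint gain function $\gain_{g_1, g_2}$ will arise as the norm of a $j$-fold symplectic contraction measured with respect to the two metrics.

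For $\cal{P}_j$, I will expand $\frak{P}^j$ and restrict to the diagonal to express $\cal{P}_j(a_1, a_2)(\rho)$ as the full contraction of $\nabla^j a_1(\rho) \otimes \nabla^j a_2(\rho)$ along $j$ copies of the Poisson bivector $\sigma^{-1}$, up to the scalar $(2\ii)^{-j}/j!$. The cornerstone is the pointwise multilinear inequality
\[
\bigl|\mathrm{tr}_{(\sigma^{-1})^{\otimes j}}(T_1 \otimes T_2)\bigr|(\rho) \le C_j \, \gain_{g_1, g_2}(\rho)^j \, |T_1|_{g_1}(\rho)\, |T_2|_{g_2}(\rho),
\]
valid for any $j$-covariant tensors $T_1, T_2$, which follows from~\eqref{eq:defjointgain} by iterating the $j = 1$ bound $|\sigma^{-1}(\omega_1, \omega_2)| \le \gain_{g_1, g_2} |\omega_1|_{g_1^{-1}} |\omega_2|_{g_2^{-1}}$; this in turn is immediate from the identity $|\sigma^{-1}\omega|_{g_2} = |\omega|_{(g_2^\sympf)^{-1}}$ combined with the definition of the joint gain. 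Applied to $T_i = \nabla^j a_i$ this gives the pointwise bound on $\cal{P}_j(a_1, a_2)$. Higher derivatives are handled via Leibniz, distributing at most $\ell$ additional derivatives between the two factors, each measured with $g_1 + g_2 \ge g_k$, so the $\ell$-th seminorm of $\cal{P}_j(a_1, a_2)$ in $S(\gain_{g_1, g_2}^j m_1 m_2, g_1 + g_2)$ is controlled by the $\ell$-th seminorms of $\nabla^j a_i$ in $S(m_i, g_i)$.

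For $\widehat{\cal{P}}_j$, I start from~\eqref{eq:defhatPj} and view $F_s := \e^{-\ii s \frak{P}/2}$ as a one-parameter family acting on functions of $(\rho_1, \rho_2) \in T^\star \mfd \oplus T^\star \mfd$. A version of the same contraction bound, applied pointwise on the doubled space and using temperance to compare $g_{1, \rho_1}$ with $g_{2, \rho_2}$, places $\frak{P}^j(a_1 \otimes a_2)$ in a symbol class on the doubled space, with metric $g_1 \oplus g_2$ and weight $m_1(\rho_1) m_2(\rho_2) \gain_{g_1, g_2}^j$ suitably extended. Representing $F_s$ as an oscillatory integral with quadratic symplectic phase (cf.~\eqref{eq:formulaMoyalproduct}), I will integrate by parts using nondegeneracy of $\sigma$ to gain decay in the auxiliary variables, exploiting the admissibility of $g_1 \oplus g_2$ on the doubled phase space (which is the substantive content of the compatibility assumption on $g_1, g_2$) and temperance of the weights to close the estimates. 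Restriction to the diagonal embedding $\rho \mapsto (\rho, \rho)$, under which $g_1 \oplus g_2$ pulls back to $g_1 + g_2$, then yields the target class $S(\gain_{g_1, g_2}^j m_1 m_2, g_1 + g_2)$, at the price of a finite seminorm loss $k = k_\ell$.

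The hard part will be the uniform control of $F_s$ on the doubled symbol classes for $s \in [0,1]$. Although this essentially reduces to the classical composition theorem of the Weyl--Hörmander calculus, it must be executed with enough precision to quantify the seminorm loss $k_\ell$ and to ensure that all constants depend only on structure constants of $g_1, g_2, m_1, m_2$. A more subtle point, tacitly used in the contraction estimate on the doubled space, is that $\gain_{g_1, g_2}(\rho_1)$ and $\gain_{g_1, g_2}(\rho_2)$ may differ, and the temperance of $g_1, g_2$ must be invoked to control their discrepancy through the integration by parts. Finally, throughout the argument one must verify that only derivatives of order at least $j$ of each factor enter the final estimate, as reflected by the spaces $\nabla^{-j} S(m_i, g_i)$ on the left-hand sides of~\eqref{eq:continuitycalPjsymb} and~\eqref{eq:continuityhatcalPjsymb}: this is transparent for $\cal{P}_j$ from the explicit formula, but for $\widehat{\cal{P}}_j$ it requires arranging the integrations by parts so that no boundary-type terms involving lower-order derivatives of $a_1, a_2$ survive.
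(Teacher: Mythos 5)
Your treatment of $\cal{P}_j$ is correct and is essentially the paper's: the iterated contraction bound you state is Lemma~\ref{lem:continuityfrakP} (applied to $a_1\otimes a_2$ using $(g_1+g_2)^{\oplus 2}\ge g_1\oplus g_2$), followed by restriction to the diagonal via the chain rule and Leibniz for the extra $\ell$ derivatives.

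For $\widehat{\cal{P}}_j$, however, there is a genuine gap. You propose to act with $F_s=\e^{-\ii s\frak{P}/2}$ directly on a global symbol class on the doubled space and to ``close the estimates'' by non-stationary phase, using temperance to compare metrics and weights at shifted points. This scheme does not close as described. In the oscillatory integral for $F_s$ the phase is $\sympf(\zeta_1,\zeta_2)$, so integrations by parts in $\zeta_2$ produce decay only in $\zeta_1$ (measured in a fixed dual norm), while each such integration costs a $\zeta_2$-derivative of the amplitude, which for a non-confined symbol must be compared back to the base point at the price of a temperance factor $\jap{\zeta_2}^{N}$, and symmetrically with the roles of $\zeta_1,\zeta_2$ exchanged. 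Balancing $l_0$ integrations by parts in one variable against $l_1$ in the other, integrability of the resulting amplitude requires $l_0-Nl_1$ and $l_1-Nl_0$ to both exceed a fixed threshold, which is impossible once the temperance exponent satisfies $N\ge 1$; nothing in the hypotheses lets you assume $N<1$. In other words, for a symbol that is merely in $S(M,g_1\oplus g_2)$ there is no decay available to absorb the compounding temperance losses, and this is precisely the obstruction that the localization machinery exists to remove. Your fallback remark that the uniform control of $F_s$ ``essentially reduces to the classical composition theorem of the Weyl--H\"ormander calculus'' is circular: that control, with quantified seminorm losses and constants depending only on structure constants, is exactly the content of the statement being proved (and the paper points out that this precise form is not available off the shelf). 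The paper's proof resolves the difficulty by localizing: it writes $\frak{P}^j(a_1\otimes a_2)$ against a partition of unity $(\varphi_{\rho_1}\otimes\varphi_{\rho_2})$ adapted to $\hat g=\tfrac12(g_1+g_2)$, bounds the bi-confinement seminorms of each piece using admissibility of $\hat g$, of $m_1,m_2$ and of $\gain_{g_1,g_2}$ (Proposition~\ref{prop:admissibilitycompatiblemetrics}), applies the bi-confinement estimate of Proposition~\ref{prop:bi-confinement} --- where the integration by parts is performed and the confinement decay absorbs the temperance losses --- and then re-sums over the partition and integrates in $s\in[0,1]$ using Lemma~\ref{lem:integr}. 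Some such confinement-and-superposition step (or an equivalent substitute) is missing from your argument and cannot be dispensed with by temperance alone.
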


\begin{remark}[Pseudo-differential calculus with polynomial symbols] \label{rmk:polynomialpseudocalc}
Proposition~\ref{prop:pseudocalcsymb} has the following consequence: if $a_1$ or $a_2$ is polynomial of degree~$n$, that is to say $\nabla^{n+1} a_1 = 0$ or $\nabla^{n+1} a_2 = 0$, then pseudo-differential calculus is ``exact at order $n$", namely
\begin{equation*}
\cal{P}_j(a_1, a_2) = 0
	\quad \rm{and} \quad
\widehat{\cal{P}}_j(a_1, a_2) = 0 ,
\qquad \forall j > n .
\end{equation*}
\end{remark}

Another version of pseudo-differential calculus consists in pairing a symbol in a Weyl--Hörmander symbol class with a confined symbol.

\begin{proposition}[Pseudo-differential calculus with confined symbols] \label{prop:pseudocalcconf}
Let $g_0$ and $g$ be admissible compatible metrics in the sense of Definitions~\ref{def:admissiblemetric} and~\ref{def:compatiblemetrics}. Suppose in addition that $g \le g_0$ and that~$r_0 \in (0, 1]$ is a common slow variation radius of both metrics. Let $m$ be a {$g_0$-admissible} weight such that $r_0$ is a slow variation radius\footnote{Namely Item~\eqref{it:slowvariationm} of Definition~\ref{def:admissibleweight} holds with this radius.} for $m$. Let $r \in (0, r_0]$ and $\rho_0 \in T^\star \mfd$. Then for all $j \in \N$, the following maps are continuous:
\begin{align}
\nabla^{-{j}} S(m, g) \times \Conf_r^{g_0}(\rho_0)
	&\xlongrightarrow{\cal{P}_{j}} \gain_{g, g_0}^{j}(\rho_0) m(\rho_0) \Conf_r^{g_0}(\rho_0) , \label{eq:continuitycalPj}\\
\nabla^{-{j}} S(m, g) \times \Conf_r^{g_0}(\rho_0)
	&\xlongrightarrow{\widehat{\cal{P}}_{j}} \gain_{g, g_0}^{j}(\rho_0) m(\rho_0) \Conf_r^{g_0}(\rho_0) . \label{eq:continuityhatcalPj}
\end{align}
More precisely, for all $j \in \N$ and all $\ell \in \N$, there exist $k = k_\ell \in \N$ and positive constants $(C_\ell, C_\ell')$ such that for all $a \in \nabla^{-j} S(m, g)$ and $\psi \in \Conf_r^{g_0}(\rho_0)$:
\begin{align*}
\abs*{\cal{P}_j(a, \psi)}_{\Conf_r^{g_0}(\rho_0)}^{(\ell)}
	&\le C_\ell \gain_{g, g_0}^{j}(\rho_0) m(\rho_0) \abs*{\nabla^j a}_{S(m, g)}^{(\ell)} \abs*{\nabla^j \psi}_{\Conf_r^{g_0}(\rho_0)}^{(\ell)} , \\
\abs*{\widehat{\cal{P}}_j(a, \psi)}_{\Conf_r^{g_0}(\rho_0)}^{(\ell)}
	&\le C_\ell' \gain_{g, g_0}^{j}(\rho_0) m(\rho_0) \abs*{\nabla^j a}_{S(m, g)}^{(k)} \abs*{\nabla^j \psi}_{\Conf_ r^{g_0}(\rho_0)}^{(k)} .
\end{align*}
The estimates are uniform with respect to $\rho_0 \in T^\star \mfd$ and $r$, and depend on $g_0, g$ and $m$ only through structure constants.
\end{proposition}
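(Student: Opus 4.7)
My plan is to split according to the algebraic nature of $\cal{P}_j$ versus the oscillatory nature of $\widehat{\cal{P}}_j$, in both cases relying on the confinement of $\psi$ to transfer pointwise values of $m$ and of the joint gain from the running point $\rho$ to the basepoint~$\rho_0$.

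First I would handle $\cal{P}_j(a,\psi)$. I rewrite $\frak{P}^j(a \otimes \psi)_{|\diag}$ as a contraction of $\nabla^j a$ with $\nabla^j \psi$ via $j$ copies of $\sympf^{-1}$. Then $\nabla^k$ and Leibniz produce a finite sum of terms of the form $\sympf^{-1,\otimes j}\cdot(\nabla^{j+k_1}a \otimes \nabla^{j+k_2}\psi)$ with $k_1+k_2 = k$. To evaluate the $g_0$-norm of such a $(k_1+k_2)$-covariant tensor at a point $\rho$, the definition~\eqref{eq:defjointgain} of the joint gain identifies $\sympf^{-1}$ as a map $(W^\star,g^{-1}) \to (W,g_0)$ of operator norm $\gain_{g,g_0}(\rho)$, so the $j$ contractions contribute $\gain_{g,g_0}(\rho)^j$. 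The $a$-factor is bounded using $g \le g_0$ by $|\nabla^{j+k_1}a|_g(\rho) \le C\,m(\rho)\,|\nabla^j a|_{S(m,g)}^{(k_1)}$, and the confinement seminorm of $\psi$ gives $|\nabla^{j+k_2}\psi|_{g_0}(\rho) \le |\psi|_{\Conf_r^{g_0}(\rho_0)}^{(\ell)}\,\jap{\dist_{g_{0,\rho_0}^\sympf}(\rho,B_r^{g_0}(\rho_0))}^{-\ell}$ for any~$\ell$. Because $m$ and $\gain_{g,g_0}$ are both $g_0$-admissible weights (this is where the compatibility of $g,g_0$ and the hypothesis $g \le g_0$ enter, together with $r_0$ being a common slow variation radius), slow variation on $B_{r_0}^{g_0}(\rho_0)$ and temperance outside allow me to exchange the product $m(\rho)\gain_{g,g_0}(\rho)^j$ for $m(\rho_0)\gain_{g,g_0}(\rho_0)^j$ up to a polynomial factor in $\jap{\dist_{g_{0,\rho_0}^\sympf}(\rho,\rho_0)}$, which is absorbed by enlarging $\ell$ in the confinement estimate of~$\psi$.

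The hard part is the remainder $\widehat{\cal{P}}_j$, whose integrand $\e^{-\ii \frac{s}{2}\frak{P}}\frak{P}^j(a \otimes \psi)_{|\diag}$ is a genuine Moyal oscillatory integral as in~\eqref{eq:formulaMoyalproduct} and cannot be reduced to a pointwise contraction of derivatives. Here I follow the classical Weyl--Hörmander composition strategy of~\cite[Section~2.3]{Lerner:10}: write the integral on $W \times W$ with the non-degenerate symplectic phase $\sympf(\zeta_1,\zeta_2)$, and integrate by parts against this phase. Each integration trades a factor of $\sympf^{-1}$ (bounded by $\gain_{g,g_0}$ after evaluation against $g_0$-unit vectors, as above) for either a derivative of $a$, still controlled by $m\,\gain_{g,g_0}^j\,|\nabla^j a|_{S(m,g)}^{(\cdot)}$ as in the previous paragraph, or a derivative of $\psi$ in the slow-decaying direction of its confinement, yielding an extra factor $\jap{\dist_{g_{0,\rho_0}^\sympf}(\rho,B_r^{g_0}(\rho_0))}^{-1}$. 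Iterating the integration by parts the number of times dictated by the target seminorm~$\ell$ produces both the claimed prefactor $\gain_{g,g_0}(\rho_0)^j\,m(\rho_0)$ and the confinement decay. Uniformity in $\rho_0 \in T^\star \mfd$ and $r \in (0,r_0]$ is then automatic since every step uses only structure constants of $g,g_0,m$ and a finite number of Leibniz/integration-by-parts indices controlled by~$\ell$.
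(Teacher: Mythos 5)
Your treatment of $\cal{P}_j$ is essentially the paper's argument and is fine: the pointwise bound is Lemma~\ref{lem:continuityfrakP} for the metric $g\oplus g_0$ restricted to the diagonal, and the transfer of $m$ and $\gain_{g,g_0}^{\,j}$ from the running point to $\rho_0$ uses exactly the admissibility statements of Propositions~\ref{prop:improvedadmissibility} and~\ref{prop:admissibilitycompatiblemetrics}, with the temperance powers absorbed into the confinement decay of $\psi$.

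The gap is in $\widehat{\cal{P}}_j$. In the oscillatory integral defining $\e^{-\ii\frac{s}{2}\frak{P}}\frak{P}^j(a\otimes\psi)(\rho,\rho)$ the two factors are evaluated at the \emph{shifted} points $\rho+\sqrt{s/2}\,\zeta_1$ and $\rho+\sqrt{s/2}\,\zeta_2$, not at $\rho$. Since $a$ is only a symbol in $S(m,g)$, its contribution grows polynomially in $\zeta_1$ through $m(\rho+\sqrt{s/2}\,\zeta_1)$ and the temperance constants; decay in $\zeta_1$ has to be manufactured by integrating by parts in $\zeta_2$ (hitting only $\psi$), while decay in $\zeta_2$ --- which you need both for absolute convergence and to convert the confinement of $\psi$ at $\rho+\sqrt{s/2}\,\zeta_2$ into decay in $\dist_{g_{0,\rho_0}^\sympf}(\rho,B_r^{g_0}(\rho_0))$ via the triangle inequality --- has to be manufactured by integrating by parts in $\zeta_1$, which reintroduces derivatives of $a$ evaluated far from $\rho$ and hence new polynomial growth in $\zeta_1$. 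These two requirements interlock, and your sketch never explains how to close this loop, nor how to keep the bounds uniform in $s\in[0,1]$ (the $\sqrt{2/s}$ factors), in $\rho_0$, and in $r$. Moreover, the specific mechanism you invoke is wrong: an integration by parts does not ``yield an extra factor $\jap{\dist_{g_{0,\rho_0}^\sympf}(\rho,B_r^{g_0}(\rho_0))}^{-1}$'', and taking a derivative of a confined symbol does not gain decay --- in $\Conf_r^{g_0}(\rho_0)$ every derivative already has rapid decay, purchased by higher seminorm indices; the decay of the \emph{output} in $\rho$ comes from the confinement of $\psi$ at the shifted point combined with integrability in $\zeta_2$, not from the phase.

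This is precisely why the paper (and Lerner's Section~2.3, which you cite but which in fact proceeds this way) does not integrate by parts directly with one non-confined factor: it first decomposes $a$ with a $g_0$-partition of unity $(\varphi_{\rho_1})_{\rho_1}$ from Proposition~\ref{prop:existencepartitionofunity}, applies the bi-confinement estimate of Proposition~\ref{prop:bi-confinement} to each piece $(\varphi_{\rho_1}\otimes 1)\,\frak{P}^j(a\otimes\psi)$ --- where both factors are confined, so the oscillatory analysis is already packaged --- transfers $m$ and the joint gain to $\rho_0$ by admissibility, and then resums over $\rho_1$ using the integrability Lemma~\ref{lem:integr}, before integrating in $s$ as in~\eqref{eq:defhatPj}. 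Your direct route could in principle be carried out in the Hörmander style, but as written it replaces this mechanism with an assertion; you would need to spell out the alternating integration-by-parts bookkeeping (growth of $a$ at shifted points versus decay demands in $\zeta_1,\zeta_2$) to have a proof.
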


Several subtleties in these statements are not covered by~\cite{Hor:79,Hoermander:V3,Lerner:10}. For this reason we prove Propositions~\ref{prop:pseudocalcsymb} and~\ref{prop:pseudocalcconf} in Appendix~\ref{app:Moyal}, following the strategy of~\cite[Section 2.3]{Lerner:10}.

In practice, $g_1$ and $g_2$ (or $g$ and $g_0$) will always be conformal, so that in particular $g_1^\natural = g_2^\natural$. Compatibility in the sense of Definition~\ref{def:compatiblemetrics} is always true for such metrics as stated in Proposition~\ref{prop:sufficientconditioncompatibility} and Remark~\ref{rmk:sufficientconditioncompatibility}. So we will just ignore this assumption in the sequel. Similarly for weight functions, in the case of conformal metrics, it will be sufficient to check that $m$ is {$g_1$-admissible} or {$g_2$-admissible} in order to ensure that it is {$(g_1 + g_2)$-admissible} (see Proposition~\ref{prop:improvedadmissibility}).

\begin{remark}
We will often apply the results of this section with $(g_1, g_2)$ or $(g, g_0)$ equal to $(g(s), g(t))$ for different times $t$ and $s$ smaller than the Ehrenfest time. Since the structure constants of these metrics are uniform with respect to $t$ (Proposition~\ref{prop:improvedadmissibility} and the subsequent Remark~\ref{rmk:uniformadmissibilityg(t)}), the implicit constants appearing in the continuity estimates will not depend on time.
\end{remark}

\Large
\section{Classical and quantum well-posedness} \label{sec:wellposedness}
\normalsize

In this section, we prove Proposition~\ref{prop:classicalwellposedness} which provides with a sufficient condition for the global existence of the Hamiltonian flow and the essential self-adjointness of $P$.
A first step is the following lemma.

\begin{lemma}[Estimate on the first derivative of $p$] \label{lem:estimatefirstderivativep}
Under the assumptions of Proposition~\ref{prop:classicalwellposedness}, the following holds:
\begin{equation} \label{eq:gradient...}
\exists c > 0 : \forall \rho \in T^\star \mfd , \qquad
	\abs*{\nabla p}_{g_0}(\rho)
		\le c \jap*{p(\rho)} .
\end{equation}
\end{lemma}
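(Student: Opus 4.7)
The plan is to propagate the gradient bound on the level set $\{p = E\}$ to arbitrary points in $T^\star \mfd$ by running the $g_0$-gradient flow of $p$, exploiting the uniform Hessian bound $\abs{\nabla^2 p}_{g_0} \le K := \abs{\nabla^2 p}_{S(1, g_0)}^{(0)}$ coming from the assumption $\nabla^2 p \in S(1, g_0)$.

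Fix $\rho \in T^\star \mfd$ and set $M := \abs{\nabla p}_{g_0}(\rho)$, $P := p(\rho)$; assume $P > E$, the opposite case being symmetric. Consider the integral curve $\gamma$ of the unit $g_0$-descent vector field $Y = -g_0^{-1} \nabla p / \abs{\nabla p}_{g_0}$ starting at $\rho$, which is well-defined as long as $\nabla p \ne 0$ along the trajectory. Along $\gamma$ one has $\abs{\dot \gamma}_{g_0} = 1$ and $\tfrac{d}{ds} p(\gamma(s)) = -\abs{\nabla p}_{g_0}(\gamma(s))$. Differentiating $\abs{\nabla p}_{g_0}^2 = g_0^{-1}(\nabla p, \nabla p)$ along $\gamma$ with the flat affine connection and bounding each term by Cauchy--Schwarz with respect to $g_0$, using $\abs{\nabla^2 p}_{g_0} \le K$ (assumption) and a pointwise bound $\abs{\nabla g_0^{-1}}_{g_0} \le L$ (which follows from smoothness of $g_0$ together with its slow variation), one obtains a differential inequality of the form $\bigl| \tfrac{d}{ds} \abs{\nabla p}_{g_0}^2(\gamma) \bigr| \le L \abs{\nabla p}_{g_0}^2(\gamma) + 2 K \abs{\nabla p}_{g_0}(\gamma)$. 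A standard Gronwall argument then yields a time $s_0 > 0$, depending only on $K$ and $L$ (with $s_0 = \log 2 / L$ suitable for $M$ sufficiently large), on which $\abs{\nabla p}_{g_0}(\gamma(s)) \ge M/2$, in particular guaranteeing that $\gamma$ does not hit a critical point of $p$ on $[0, s_0]$.

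On this interval, $p$ decreases at rate at least $M/2$, so $p(\gamma(s_0)) \le P - M s_0/2$. If $M$ is so large that $M s_0 / 2 \ge P - E$, then $\gamma$ must cross the level set $\{p = E\}$ at some $s^\star \in [0, s_0]$ where $\abs{\nabla p}_{g_0}(\gamma(s^\star)) \ge M/2$; combined with the hypothesis~\eqref{eq:assumeessentialselfadjointness}, this forces $M/2 \le C_E := \sup_{\{p = E\}} \abs{\nabla p}_{g_0} < \infty$, contradicting ``$M$ large''. Hence either $M \le 2 C_E$ or $M s_0 / 2 < P - E$, and in either case $M \lesssim 1 + (P - E) \lesssim \jap{P}$, which yields~\eqref{eq:gradient...}. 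The main technical subtlety is the pointwise estimate $\abs{\nabla g_0^{-1}}_{g_0} \le L$ entering the Gronwall step: it is not literally part of Definition~\ref{def:admissiblemetric} but is a classical consequence of smoothness of $g_0$ together with slow variation on $g_0$-balls, which forces derivative bounds of the appropriate order.
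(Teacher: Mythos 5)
Your overall scheme (steepest-descent flow plus a Gr\"onwall bound forcing the trajectory either to cross $\{p=E\}$ with a large gradient or to certify $\abs*{\nabla p}_{g_0}(\rho)\lesssim p(\rho)-E$) is a genuinely different route from the paper, which never integrates a flow: it Taylor-expands $p$ along a \emph{straight} segment of $g_0$-length $\le r_{g_0}$ in the steepest direction, uses the intermediate value theorem plus the bound near the nodal set to get $p\ge E-c_0$ along the segment, and then reads off the gradient bound from the sign of a degree-two polynomial (discriminant analysis). The crucial difference is that the paper only ever compares $g_0$ at nearby base points via slow variation and never differentiates the metric.

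That is exactly where your argument has a genuine gap. The Gr\"onwall step requires a \emph{global} pointwise bound $\abs*{\nabla g_0^{-1}}_{g_0}\le L$, and your justification (``smoothness of $g_0$ together with slow variation forces derivative bounds'') is false: slow variation only controls the \emph{ratio} of the metric at two points of a $g_0$-ball, not its derivative. For instance, on $T^\star\R$ the metric $g_0=\tfrac12\bigl(1+\tfrac12\sin(x^2)\bigr)(\dd x^2+\dd\xi^2)$ is admissible (slowly varying and temperate with uniform constants, gain $\le 1$), yet $\abs*{\nabla g_0}_{g_0}\sim\abs{x}$ is unbounded, so no uniform $L$, hence no uniform $s_0$, and your dichotomy ``$M\le 2C_E$ or $M s_0/2<P-E$'' no longer yields $M\lesssim\jap*{p(\rho)}$. (A secondary soft spot of the same nature: without pointwise control of $g_0$ against a fixed Euclidean metric, the unit-$g_0$-speed flow is not obviously defined up to time $s_0$.) The repair is to freeze the metric at the base point $\rho$ on the ball $\bar B_{r_{g_0}}^{g_0}(\rho)$, where slow variation transfers the Hessian bound with a factor $C_{g_0}^2$ and the frozen metric is parallel, and to restrict the travel time so the curve stays in that ball --- but at that point the flow is no longer buying you anything and the argument collapses to the paper's straight-segment Taylor estimate.
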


\begin{proof}
Recall $r_{g_0}$ and $C_{g_0}$ the slow variation radius and slow variation constant of $g_0$ from Proposition~\ref{prop:improvedadmissibility}. Also recall the real number $E$ from~\eqref{eq:assumeessentialselfadjointness}. For any $\tilde \rho, \tilde \rho' \in T^\star \mfd$, we have by Taylor's theorem:
\begin{equation} \label{eq:Taylorp2}
p(\tilde \rho')
	= p(\tilde \rho) + \nabla p(\tilde \rho) . (\tilde \rho' - \tilde \rho) + \int_0^1 (1 - t) \nabla^2 p\left((1 - t) \tilde \rho + t \tilde \rho'\right). (\tilde \rho' - \tilde \rho)^2 \dd t .
\end{equation}
In the particular case where $\tilde \rho$ is such that $p(\tilde \rho) = E$ and $\abs{\tilde \rho' - \tilde \rho}_{g_{0, \tilde \rho}} \le r_{g_0}$, we have in addition
\begin{align} \label{eq:controlnearnodalset}
\abs*{p(\tilde \rho') - E}
	&\le \abs*{\nabla p}_{g_0}(\tilde \rho) \abs*{\tilde \rho' - \tilde \rho}_{g_{0, \tilde \rho}} + \tfrac{1}{2} \abs*{\nabla^2 p}_{g_0, \infty} \sup_{t \in [0, 1]} \abs*{\tilde \rho' - \tilde \rho}_{g_{0, (1 - t) \tilde \rho + t \tilde \rho'}}^2 \nonumber\\
	&\le r_{g_0} \sup_{\{p = E\}} \abs*{\nabla p}_{g_0} + \tfrac{1}{2} C_{g_0}^2 r_{g_0}^2 \abs*{\nabla^2 p}_{g_0, \infty}
	=: c_0 .
\end{align}
We used slow variation of $g_0$ in the second inequality. Now we let $\rho \in T^\star \mfd$ and we show~\eqref{eq:gradient...}. We distinguish two cases.

\medskip
\emph{First case: $p(\rho) \ge E$.} For any $|s| \le r_{g_0} C_{g_0}^{-1} =: r_{g_0}' (\le r_{g_0})$, we introduce $\rho_s' := \rho + s \zeta$, where $\zeta \in W$ is chosen in such a way that
\begin{equation*}
\abs*{\zeta}_{g_{0, \rho}} = 1
	\qquad \rm{and} \qquad
\nabla p(\rho). \zeta
	= \abs*{\nabla p}_{g_0}(\rho) .
\end{equation*}
If for some $|s| \le r_{g_0}'$ we have $p(\rho_s') \le E$, then the intermediate value theorem ensures that there exists $s_0$ between~$0$ and~$s$ such that $p(\rho_{s_0}') = E$. Then we can apply~\eqref{eq:controlnearnodalset} with $(\rho_{s_0}', \rho_s')$, satisfying $\abs{\rho_s' - \rho_{s_0}'}_{g_{0, \rho_{s_0}'}} \le C_{g_0} \abs{\zeta}_{g_{0, \rho}} \abs{s_0 - s} \le r_{g_0}$ by slow variation of~$g_0$, to deduce that
\begin{equation*}
E - c_0
	\le p(\rho_s') + \abs*{E - p(\rho_s')} - c_0
	\le p(\rho_s') .
\end{equation*}
This inequality is obviously also true if $p(\rho_s') \ge E$ for all~$s$.

With the estimate $p(\rho_s') \ge E - c_0$ at hand, we now take $\tilde \rho = \rho$ and $\tilde \rho' = \rho_s'$ in~\eqref{eq:Taylorp2} to obtain
\begin{equation*}
\forall |s| \le r_{g_0}' , \qquad
	E - c_0
		\le p(\rho_s')
		\le p(\rho) + s \abs*{\nabla p}_{g_0}(\rho) + \dfrac{s^2}{2} \abs*{\nabla^2 p}_{g_0, \infty} C_{g_0}^2 ,
\end{equation*}
where we used slow variation of $g_0$ again, hence
\begin{equation} \label{eq:polnonneg}
\forall |s| \le r_{g_0}' , \qquad
	0
		\le p(\rho) - (E - c_0) + s \abs*{\nabla p}_{g_0}(\rho) + \dfrac{s^2}{2} \jap*{\abs*{\nabla^2 p}_{g_0, \infty}} C_{g_0}^2 .
\end{equation}
Seeing the above function of $s$ as a degree-two polynomial, either the discriminant $\Delta$ is non-positive, namely
\begin{equation*}
\abs*{\nabla p}_{g_0}^2(\rho)
	\le 2 \jap*{\abs*{\nabla^2 p}_{g_0, \infty}} C_{g_0}^2 \left(p(\rho) - E + c_0\right) ,
\end{equation*}
which implies in particular the desired result $\abs{\nabla p}_{g_0}(\rho) \le c \jap{p(\rho)}^{1/2} \le c \jap{p(\rho)}$, or it is positive and we have two real distinct roots $s_- < s_+$. Combining the fact that $p(\rho) \ge E \ge E - c_0$ with~\eqref{eq:polnonneg}, we deduce that the polynomial is non-negative on $[- r_{g_0}', + \infty)$. Therefore $s_+ \le - r_{g_0}'$, namely
\begin{align*}
\dfrac{- \abs*{\nabla p}_{g_0}(\rho) + \sqrt{\Delta}}{\jap*{\abs*{\nabla^2 p}_{g_0, \infty}} C_{g_0}^2}
	&\le - r_{g_0}' \\
&\Downarrow \\
\Delta
	&\le \left( \abs*{\nabla p}_{g_0}(\rho) - r_{g_0}' \jap*{\abs*{\nabla^2 p}_{g_0, \infty}} C_{g_0}^2 \right)^2 \\
&\Downarrow \\
- 2 \jap*{\abs*{\nabla^2 p}_{g_0, \infty}} C_{g_0}^2 \left(p(\rho) - E + c_0\right)
	&\le - 2 \left(r_{g_0}' \jap*{\abs*{\nabla^2 p}_{g_0, \infty}} C_{g_0}^2\right) \abs*{\nabla p}_{g_0}(\rho) + \left(r_{g_0}' \jap*{\abs*{\nabla^2 p}_{g_0, \infty}} C_{g_0}^2\right)^2 \\
&\Downarrow \\
\abs*{\nabla p}_{g_0}(\rho)
	&\le \dfrac{p(\rho) - E + c_0}{r_{g_0}'} + \dfrac{r_{g_0}'}{2} \jap*{\abs*{\nabla^2 p}_{g_0, \infty}} C_{g_0}^2 ,
\end{align*}
which yields the desired result $\abs*{\nabla p}_{g_0}(\rho) \le c \jap{p(\rho)}$.

\medskip
\emph{Second case: $p(\rho) \le E$.} If we take $\rho$ such that $p(\rho) \le E$, we can reduce to the previous case by considering $-p$ and $-E$ instead of $p$ and $E$. This concludes the proof.
\end{proof}

\subsection{Proof of Proposition~\ref{prop:classicalwellposedness}: global-in-time existence of the Hamiltonian flow} \label{subsec:classicalwellposedness}

Here we prove the ``classical part" of Proposition~\ref{prop:classicalwellposedness}, namely that the Hamiltonian flow is well-defined globally in time.
Let $\rho_0 \in T^\star \mfd$ and consider $t \mapsto \rho(t)$ solution to the Cauchy problem
\begin{equation*}
\dfrac{\dd}{\dd t} \rho(t)
	= H_p\left(\rho(t)\right) , \qquad
\rho(0)
	= \rho_0 .
\end{equation*}
Let $I \subset \R$ be a relatively compact interval on which the solution exists. We show that the range of $I \ni \tau \to \rho(\tau)$ is relatively compact.
From Lemma~\ref{lem:estimatefirstderivativep} (and Lemma~\ref{lem:seminormsHp}), we know that $\abs{H_p}_{g_0} \le \abs{\nabla p}_{g_0} \le c \jap{p(\rho)} = c \jap{p(\rho_0)}$ on the entire energy shell $\{p = p(\rho_0)\}$, which contains $\tau \mapsto \rho(\tau)$. Fix $t \in I$ and let
\begin{equation*}
s = s(t)
	:= \sup \set{s' \ge 0}{\forall \tau \in (t-s', t+s') \cap I, \;\, \rho(\tau) \in \bar B_{r_{g_0}}^{g_0}(\rho(t))} .
\end{equation*}
Then by the mean value inequality, we have for all $\tau \in (t - s, t + s) \cap I$:
\begin{align*}
\abs*{\rho(\tau) - \rho(t)}_{g_{\rho(t)}}
	&\le \abs{s} \sup_{\tau \in (t - s, t + s) \cap I} \abs*{H_p\left(\rho(\tau)\right)}_{g_{0, \rho(t)}}
	\le C_{g_0} \abs{s} \sup_{\tau \in (t - s, t + s)  \cap I} \abs*{H_p\left(\rho(\tau)\right)}_{g_{0, \rho(\tau)}} \\
	&\le c C_{g_0} \abs{s} \jap{p(\rho_0)} .
\end{align*}
Since $\tau \mapsto \rho(\tau)$ is continuous, we deduce that $s(t) \ge r_{g_0}/(c C_{g_0} \jap{p(\rho_0)})$. Therefore, we can cover $I$ with a finite number of compact intervals of size $2 r_{g_0}/(c C_{g_0} \jap{p(\rho_0)})$, on which $\tau \mapsto \rho(\tau)$ takes values in a {$g_0$-ball} of radius $r_{g_0}$. We deduce that the range of $I \ni \tau \mapsto \rho(\tau)$ is relatively compact. Therefore the solution can be extended to the whole real line.

\subsection{Proof of Proposition~\ref{prop:classicalwellposedness}: essential self-adjointness} \label{subsec:quantumwellposedness}

We finish the proof of Proposition~\ref{prop:classicalwellposedness}, showing essential self-adjointness of $P$.
In this proof $P = \Opw{p}$ acts on $\sch(\mfd)$. Since $P - E$ is essentially self-adjoint if and only if $P$ is essentially self-adjoint, we can assume without loss of generality that $E = 0$. The result will follow from the classical characterization of essential self-adjointness~\cite[Theorem X.2]{RS:V2}:
\begin{equation} \label{eq:criterionessentialselfadjointness}
\exists \lambda > 0 : \qquad
	\ker\left(P^\ast + \ii \lambda \right) = \{0\}
		\quad \rm{and} \quad
	\ker\left(P^\ast - \ii \lambda \right) = \{0\} .
\end{equation}
Let $\lambda \ge 1$ to be chosen properly later. We first give an estimate on derivatives of $q = q_\lambda = (p \pm \lambda \ii)^{-1}$ using the Faà di Bruno formula (see Appendix~\ref{app:faadibruno}): for any $\ell \in \N$,
\begin{equation*}
\abs*{\frac{1}{\ell!} \nabla^\ell q}_{g_0}
	\le \sum_{j = 1}^\ell \abs*{q}^{j+1} \sum_{\substack{\bf{n} \in (\N^\ast)^j \\ \abs{\bf{n}} = \ell}} \abs*{\dfrac{1}{\bf{n}!} \nabla^{\bf{n}} p}_{g_0} .
\end{equation*}
We can bound the right-hand side from above by using Lemma~\ref{lem:estimatefirstderivativep} for the indices $\bf{n}_l = 1$ and using the sub-quadraticity assumption $\nabla^2 p \in S(1, g_0)$ for the indices $\bf{n}_l \ge 2$:
\begin{equation*}
\abs*{\nabla^\ell q}_{g_0}
	\le C_\ell(p) \max_{1 \le j \le \ell} \abs{q}^{j+1} \jap*{p}^j ,
\end{equation*}
where the constant $C_\ell(p)$ depends on $\ell$, on seminorms of $\nabla^2 p$ in $S(1, g_0)$ and on the constant~$c$ from Lemma~\ref{lem:estimatefirstderivativep} (which depends also on $p$).
In addition we have $\abs{q} \le \min\{\frac{1}{\jap{p}}, \frac{1}{\lambda}\}$, so that
\begin{equation*}
\abs*{\nabla^\ell q}_{g_0}
	\le \dfrac{C_\ell(p)}{\lambda}
		\qquad \Longleftrightarrow \qquad
q \in S(\lambda^{-1}, g_0) ,
\end{equation*}
uniformly with respect to $\lambda \ge 1$.
Now from pseudo-differential calculus (Proposition~\ref{prop:pseudocalcsymb}), we obtain
\begin{equation*}
\Opw{p \pm \lambda \ii} \Opw{\dfrac{1}{p \pm \lambda \ii}}
	= \id + \Opw{\widehat{\cal{P}}_2(p \pm \lambda \ii, q)}
\end{equation*}
with 
\begin{equation*}
\nabla^{-2} S(1, g_0) \times S(\lambda^{-1}, g_0)
	\xlongrightarrow{\widehat{\cal{P}}_2} S(\lambda^{-1}, g_0) .
\end{equation*}
The Calder\'{o}n--Vaillancourt theorem (Proposition~\ref{prop:CV}) applies in $S(1, g_0)$, so we arrive at
\begin{equation*}
\norm*{\Opw{\widehat{\cal{P}}_2(p \pm \lambda \ii, q)}}_{\Bop(L^2)}
	\le C_0 \abs*{\widehat{\cal{P}}_2(p \pm \lambda\ii, q)}_{S(1, g_0)}^{(k_0)}
	= O\left(\dfrac{1}{\lambda}\right) .
\end{equation*}
Taking $\lambda$ large enough ensures that $R := \Opw{\widehat{\cal{P}}_2(p \pm \lambda \ii, q)}$ satisfies $\norm{R}_{\Bop(L^2)} \le 1/2$. Now writing $A_n = \id - R + R^2 - \cdots + (-1)^n R^n$ for any $n \in \N$, we have
\begin{equation*}
\Opw{p \pm \lambda \ii} \Opw{\dfrac{1}{p \pm \lambda \ii}} A_n
	= \id + (-1)^{n} R^{n+1} ,
\end{equation*}
where $\Opw{q} A_n$ and $R^{n+1}$ map $\sch(\mfd)$ to itself continuously (Proposition~\ref{prop:continuityonSchwartz}) and $\norm*{R^{n+1}} \le 2^{-(n+1)}$. We deduce that for any $u_0 \in \dom P^\ast$ and $u \in \sch(\mfd)$, we have
\begin{align*}
\inp*{(P \pm \lambda \ii)^\ast u_0}{\Opw{q} A_n u}_{L^2}
	&= \inp*{u_0}{(P \pm \lambda \ii) \Opw{q} A_n u}_{L^2} \\
	&= \inp*{u_0}{u}_{L^2} + (-1)^{n} \inp*{u_0}{R^{n+1} u}_{L^2}
	= \inp*{u_0}{u}_{L^2} + O\left(2^{-(n+1)}\right) .
\end{align*}
The first equality is justified by the fact that $\Opw{q} A_n u \in \sch(\mfd)$ (this is the reason why we did not take $n = \infty$ directly). We let $n \to \infty$, observing that $A_n \to (\id + R)^{-1}$ in operator norm by Neumann series, to obtain
\begin{equation*}
\inp*{(P \pm \lambda \ii)^\ast u_0}{\Opw{q} (\id + R)^{-1} u}_{L^2}
	= \inp*{u_0}{u}_{L^2} .
\end{equation*}
Therefore, $u_0 \in \ker(P^\ast \mp \lambda \ii)$ implies that $u_0 \perp \sch(\mfd)$ in $L^2(\mfd)$, that is to say $u_0 = 0$. This concludes the proof of~\eqref{eq:criterionessentialselfadjointness} and thus of Proposition~\ref{prop:classicalwellposedness}.
\qed

\subsection{The quantum and classical dynamics as evolution groups on $L^2$ symbols} \label{subsec:quantumdynamicsL2}

In the proof of the proposition below, we use an important aspect of the Weyl quantization: it is an isometry between $L^2(T^\star \mfd)$ and the space of Hilbert--Schmidt operators (Proposition~\ref{prop:quantizationisometry}):
\begin{equation*}
\quantizationw : L^2(T^\star \mfd) \longrightarrow \cal{L}^2\left(L^2(\mfd)\right) .
\end{equation*}
That is to say for all $a_1, a_2 \in L^2(T^\star \mfd)$, the operators $\Opw{a_1}$ and $\Opw{a_2}$ are Hilbert--Schmidt and
\begin{equation} \label{eq:isomHS}
\inp*{a_2}{a_1}_{L^2(T^\star \mfd)}
	= \tr\left(\Opw{a_2}^\ast \Opw{a_1}\right) .
\end{equation}

\begin{proposition} \label{prop:isometrycalHp}
For all $t \in \R$ let $\cal{U}(t) := \e^{t \cal{H}_p}$ and $U(t) := \e^{t H_p}$ defined in Definition~\ref{def:defsymbconjugatedop} and~\eqref{eq:defcompositionHp} respectively, viewed as operators acting on $L^2(T^\star \mfd)$, in such a way that
\begin{equation*}
\Opw{\cal{U}(t) a}
	= \e^{\ii t P} \Opw{a} \e^{- \ii t P}
		\quad \rm{and} \quad
U(t) a
	= a \circ \phi^t ,
		\qquad \forall a \in L^2(T^\star \mfd) .
\end{equation*}
Then under Assumption~\ref{assum:mandatory}, and if additionally $p$ has temperate growth in the sense of~\eqref{eq:temperategrowth}, then $(\cal{U}(t))_{t \in \R}$ and $(U(t))_{t \in \R}$ are strongly continuous unitary groups on $L^2(T^\star \mfd)$. Their respective generators are self-adjoint extensions of $\ii \cal{H}_p$ and $\ii H_p$ defined in~\eqref{eq:defcalHp} and~\eqref{eq:defHp} respectively.
\end{proposition}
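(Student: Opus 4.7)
The plan is to treat the two groups in parallel, relying on Stone's theorem to identify the generators; for the quantum case the central device is the isometry $\quantizationw : L^2(T^\star \mfd) \to \hsclass(L^2(\mfd))$ recalled in~\eqref{eq:isomHS}, which translates statements about $\cal{U}(t)$ into statements about conjugation of Hilbert--Schmidt operators by the Schrödinger propagator.

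For the classical dynamics, completeness of the Hamiltonian flow (Assumption~\ref{assum:mandatory}) together with preservation of the symplectic volume immediately gives that $U(t) a = a \circ \phi^t$ is an $L^2$-isometry, and the group property is obvious. Strong continuity on $L^2$ reduces, by density of $\cont_c(T^\star \mfd)$ and the uniform bound $\|U(t)\| = 1$, to showing $a \circ \phi^t \to a$ in $L^2$ for $a$ compactly supported and continuous, which follows by pointwise convergence and dominated convergence (the supports of $a \circ \phi^t$ for $|t| \le 1$ remain in a common compact set by continuity of the flow). To identify the generator on $\sch(T^\star \mfd)$, I would invoke the temperate growth~\eqref{eq:temperategrowth} of $p$ to ensure $H_p$ preserves $\sch(T^\star \mfd)$, so that the Bochner identity $a \circ \phi^t - a = \int_0^t (H_p a) \circ \phi^s \, \dd s$ holds in $L^2(T^\star \mfd)$ and yields $\lim_{t \to 0} (U(t) a - a)/t = H_p a$. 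Stone's theorem then shows that the generator is a skew-adjoint extension of $H_p|_\sch$.

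For the quantum dynamics, unitarity and the group law for $\cal{U}(t)$ are immediate transcriptions via~\eqref{eq:isomHS} of the corresponding properties of the conjugation $A \mapsto \e^{\ii t P} A \e^{-\ii t P}$ on $\hsclass(L^2(\mfd))$. Strong continuity follows by density of finite-rank operators in $\hsclass$ and reduction to rank-one operators $A : w \mapsto \brak*{v}{w} u$, for which $\e^{\ii t P} A \e^{-\ii t P} - A$ admits an explicit rank-two decomposition in terms of $\e^{\ii t P} u - u$ and $\e^{\ii t P} v - v$; its Hilbert--Schmidt norm then vanishes as $t \to 0$ by strong continuity of $(\e^{-\ii t P})_{t \in \R}$ on $L^2(\mfd)$.

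The hardest step is identifying the generator of $\cal{U}(t)$ as a skew-adjoint extension of $\cal{H}_p$. For $a \in \sch(T^\star \mfd)$, the pseudo-differential calculus (Proposition~\ref{prop:pseudocalcsymb}) combined with the temperate growth~\eqref{eq:temperategrowth} of $p$ ensures that $\cal{H}_p a \in \sch(T^\star \mfd)$, so $\Opw{\cal{H}_p a} = \ii \comm*{P}{\Opw{a}}$ (on $\sch(\mfd)$) is Hilbert--Schmidt. Moreover the Schwartz kernel of $\Opw{a}$ is Schwartz, so $\Opw{a}$ maps $L^2(\mfd)$ continuously into $\sch(\mfd) \subset \dom P$; this makes it possible to prove a rigorous operator-valued Duhamel formula by differentiating the $L^2$-valued map $s \mapsto \e^{\ii s P} \Opw{a} \e^{-\ii s P} u$ for $u \in \dom P$, yielding
\begin{equation*}
(\e^{\ii t P} \Opw{a} \e^{-\ii t P} - \Opw{a}) u
	= \int_0^t \e^{\ii s P} \Opw{\cal{H}_p a} \e^{-\ii s P} u \, \dd s .
\end{equation*}
Summing $\|\cdot\|_{L^2(\mfd)}^2$ over an orthonormal basis and applying Cauchy--Schwarz and Fubini would then transform this identity into the Hilbert--Schmidt bound
\begin{equation*}
\norm*{\e^{\ii t P} \Opw{a} \e^{-\ii t P} - \Opw{a} - t \Opw{\cal{H}_p a}}_{\hsclass}^2
	\le t \int_0^t \norm*{\e^{\ii s P} \Opw{\cal{H}_p a} \e^{-\ii s P} - \Opw{\cal{H}_p a}}_{\hsclass}^2 \dd s ,
\end{equation*}
whose right-hand side is $o(t^2)$ by the strong continuity already established, applied to the Hilbert--Schmidt operator $\Opw{\cal{H}_p a}$. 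Transporting back via~\eqref{eq:isomHS} gives $\cal{U}(t) a = a + t \cal{H}_p a + o(t)$ in $L^2(T^\star \mfd)$, which shows that $a$ belongs to the domain of the Stone generator of $(\cal{U}(t))_{t \in \R}$ and that this generator acts on it as $\cal{H}_p$, as required.
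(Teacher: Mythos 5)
Your proposal is correct, and its overall skeleton coincides with the paper's: unitarity and the group law via measure preservation (classical) and the Hilbert--Schmidt isometry~\eqref{eq:isomHS} (quantum), strong continuity checked on a dense set, and Stone's theorem with the generator identified on $\sch(T^\star \mfd)$ through a Duhamel formula for the commutator $\ii\comm*{P}{\Opw{a}} = \Opw{\cal{H}_p a}$. The genuine difference is in how strong continuity of $(\cal{U}(t))_{t\in\R}$ is obtained: the paper proves it directly on Schwartz symbols $w$, writing $\Opw{\cal{U}(t)w} - \Opw{w} = \ii\int_0^t \e^{\ii s P}\comm*{P}{\Opw{w}}\e^{-\ii s P}\dd s$ and estimating the Hilbert--Schmidt norm via an orthonormal basis and Jensen's inequality, which gives the quantitative bound $\norm{\cal{U}(t)w - w}_{L^2(T^\star\mfd)} \le \abs{t}\,\norm{\comm*{P}{\Opw{w}}}_{\hsclass}$; you instead use density of finite-rank operators in $\hsclass(L^2(\mfd))$ and the explicit rank-two decomposition for rank-one operators, which is more elementary (it only invokes strong continuity of $\e^{-\ii t P}$ on $L^2(\mfd)$ and requires no pseudo-differential input at that stage), at the cost of losing the explicit rate in $t$. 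Your generator identification then refines the paper's: where the paper simply differentiates the Duhamel formula at $t=0$ on Schwartz symbols, you subtract $t\Opw{\cal{H}_p a}$ and bound the remainder in $\hsclass$ by $o(t)$ using the strong continuity already established, which makes the membership of $\sch(T^\star\mfd)$ in the domain of the Stone generator fully explicit. The auxiliary facts you rely on (that $\cal{H}_p a\in\sch(T^\star\mfd)$ and that $\Opw{a}$ maps $L^2(\mfd)$ into $\sch(\mfd)\subset\dom P$ when $a$ is Schwartz, so that the operator-valued Duhamel identity can be differentiated rigorously on $\dom P$ and extended by boundedness) are exactly the ones the paper also uses, via temperate growth~\eqref{eq:temperategrowth}, Proposition~\ref{prop:pseudocalcsymb} and the smoothing property of operators with Schwartz symbol. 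The classical part differs only cosmetically: you use compactly supported continuous functions and dominated convergence where the paper uses the mean-value inequality on Schwartz symbols; both hinge on $H_p a\in L^2(T^\star\mfd)$, guaranteed by the temperate growth of $p$.
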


\begin{proof}
We first mention that $\cal{U}(t)$ clearly satisfies the group property $\cal{U}(t) \cal{U}(s) = \cal{U}(t + s)$, for all $t, s \in \R$, since $(\e^{- \ii t P})_{t \in \R}$ itself is a group of isometries. That $\cal{U}(t)$ is an isometry follows from the cyclicity of the trace and~\eqref{eq:isomHS}: for all $a \in L^2(T^\star \mfd)$, we have
\begin{equation*}
\norm*{\cal{U}(t) a}_{L^2(T^\star \mfd)}^2
	= \tr\left(\e^{\ii t P} \Opw{a}^\ast \Opw{a} \e^{- \ii t P}\right)
	= \tr\left( \Opw{a}^\ast \Opw{a} \right)
	= \norm*{a}_{L^2(T^\star \mfd)}^2 .
\end{equation*}
The group property of $U(t)$ and the unitarity of $U(t)$ follow from the fact that $(\phi^t)_{t \in \R}$ is a group of symplectomorphisms (Liouville's theorem tells us that it is measure preserving~\cite[Section 16]{Arnold:book}).

It remains to show that the maps $t \mapsto \cal{U}(t)$ and $t \mapsto U(t)$ are weakly continuous. In view of the density of $\sch(T^\star \mfd)$ in $L^2(T^\star \mfd)$, it is sufficient to verify that the maps $t \mapsto \cal{U}(t) w$ and $t \mapsto U(t) w$ are strongly continuous for $w \in \sch(T^\star \mfd)$.
For any $u \in \sch(\mfd) \subset \dom P$ we have
\begin{equation} \label{eq:exprtomakesense}
\Opw{\cal{U}(t) w} u - \Opw{w} u
	= \ii \int_0^t \e^{\ii s P} \comm*{P}{\Opw{w}} \e^{- \ii s P} u \dd s .
\end{equation}
The operator $\comm*{P}{\Opw{w}}$ can be extended to a bounded operator on $L^2(\mfd)$ since $w \in \sch(T^\star \mfd)$ and $p$ has temperate growth by assumption (use the pseudo-differential calculus in Proposition~\ref{prop:pseudocalcsymb} and the Calder\'{o}n--Vailancourt theorem in Proposition~\ref{prop:CV}). Its symbol is even in the Schwartz class, so that $\comm*{P}{\Opw{w}}$ is in fact a Hilbert--Schmidt operator. Hence the expression~\eqref{eq:exprtomakesense} still makes sense for any $u \in L^2(\mfd)$.

We fix $(e_j)_j$ an orthonormal basis of $L^2(\mfd)$. By definition of the Hilbert--Schmidt norm and of the trace, we have
\begin{align*}
\norm*{\Opw{\cal{U}(t) w} u - \Opw{w}}_{\hsclass(L^2(\mfd))}^2
	&= \tr\abs*{\int_0^t \e^{\ii s P} \comm*{P}{\Opw{w}} \e^{- \ii s P} \dd s}^2 \\
	&= \sum_j \norm*{\int_0^t \e^{\ii s P} \comm*{P}{\Opw{w}} \e^{- \ii s P} e_j \dd s}_{L^2(\mfd)}^2 .
\end{align*}
Then by Jensen's inequality, and again the definition of the trace, we obtain
\begin{align*}
\norm*{\Opw{\cal{U}(t) w} u - \Opw{w}}_{\hsclass(L^2(\mfd))}^2
	&\le \sum_j \abs*{t} \int_{[0, t]} \norm*{\comm*{P}{\Opw{w}} \e^{\ii s P} e_j}_{L^2(\mfd)}^2 \dd s \\
	&= \abs*{t} \int_{[0, t]} \tr\left(\abs*{\comm*{P}{\Opw{w}}}^2\right) \dd s .
\end{align*}
This works because $(\e^{\ii s P} e_j)_j$ is an orthonormal basis for all $s \in \R$ if and only if $(e_j)_j$ is a Hilbert basis of $L^2(\mfd)$. Finally, we change the order of the sum and the integral since the integrand is non-negative. We finish with the definition of the Hilbert--Schmidt norm in the right-hand side again, and we use~\eqref{eq:isomHS} in the left-hand side to deduce that
\begin{equation*}
\norm*{\cal{U}(t) w - w}_{L^2(T^\star \mfd)}^2
	\le \abs*{t}^2 \norm*{\comm*{P}{\Opw{w}}}_{\hsclass(L^2(\mfd))}^2 .
\end{equation*}
This means that strong continuity holds at $t = 0$, and the group property of $\cal{U}(t)$ allows to extend continuity to the whole real line. Thus we can apply Stone's theorem~\cite[VIII.4]{RS:V1} to prove that $\cal{U}(t)$ is indeed a unitary group. To see that the generator of $\cal{U}(t)$ is a self-adjoint extension of $\ii \cal{H}_p$, we simply remark that the generator and $\ii \cal{H}_p$ both agree on $\sch(T^\star \mfd)$ since we have
\begin{equation*}
\forall w \in \sch(T^\star \mfd) , \qquad
	\dfrac{\dd}{\dd t} \Opw{\cal{U}(t) w}_{\vert t = 0}
		= \ii \comm*{P}{\Opw{w}}
		= \Opw{\ii \cal{H}_p w} ,
\end{equation*}
where the last equality follows directly from the definition of $\cal{H}_p$ in~\eqref{eq:defcalHp}.

The situation is simpler for $U(t)$. We prove that for any $w \in \sch(T^\star \mfd)$, the map $t \mapsto U(t) w$ is strongly continuous by observing that
\begin{equation*}
\norm*{U(t) w - w}_{L^2}
	\le \int_{[0, t]} \norm*{U(s) H_p w}_{L^2} \dd s
	= \abs*{t} \norm*{H_p w}_{L^2} .
\end{equation*}
The first inequality comes from the mean-value inequality an the equality is due to the fact that $U(s)$ is an isometry. The norm in the right-hand side is finite since $p$ has temperate growth~\eqref{eq:temperategrowth} by assumption.
The conclusion follows from Stone's theorem once again.
\end{proof}

\Large
\section{The classical dynamics} \label{sec:classical}
\normalsize

To goal of this section is to provide general estimates on derivatives of the Hamiltonian flow $(\phi^t)_{t \in \R}$ associated with a Hamiltonian $p$. The derivatives we consider are in the sense of the covariant derivative, using the natural connection $\nabla$ on $T^\star M$, $M = \R^d$. We will set by convention $\nabla^0 \phi^t = \phi^t$ and $\nabla \phi^t = \dd \phi^t$. Similarly to the definition of covariant derivatives of tensors, higher-order derivatives are defined by induction so that the Leibniz formula holds:
\begin{equation*}
\nabla_{X_1, X_2, \ldots, X_{k+1}}^{k+1} \phi^t
	= \nabla_{X_{k+1}} \left(\nabla_{X_1, X_2, \ldots, X_k}^k \phi^t\right) - \sum_{j = 1}^k \nabla_{X_1, X_2, \ldots, \nabla_{X_{k+1}} X_j, \ldots, X_k}^k \phi^t ,
\end{equation*}
for any family of vector fields $X_1, X_2, \ldots, X_{k+1}$ on $T^\star M$. For all $k \ge 1$, the input of $\nabla^k \phi^t$ is a $k$-tuple of vector fields. The result is a vector field on $T^\star M$. Beware that $(\nabla_{X_1, X_2, \ldots, X_k}^k \phi^t)(\rho)$ is a tangent vector at the point $\phi^t(\rho)$, and not at $\rho$. In other words, $\nabla^k \phi^t(\rho)$ sends~$k$ tangent vectors at $\rho$ to a tangent vector at $\phi^t(\rho)$. Thus we can think of these covariant derivatives as tensors of type $(1, k)$. One can check that $\nabla^k \phi^t$ is symmetric with respect to the input vector fields, for any $k \ge 0$, owing to the fact that the connection is torsion free and has vanishing curvature~\cite[Section 12.8]{Lee:mfddiffgeo}.

\subsection{Norms with respect to a metric} \label{subsec:norms}

The size of these derivatives will be measured using a Riemannian metric $g$ on $T^\star M$. We denote by $\abs{\nabla^k \phi^t}_g$ the function on $T^\star M$ that corresponds to the smallest constant $C = C(\rho)$ such that
\begin{equation*}
\abs*{\nabla_{X_1, X_2, \ldots, X_k}^k \phi^t}_{g_{\phi^t(\rho)}}
	\le C \prod_{j = 0}^k \abs*{X_j}_{g_\rho} ,
\end{equation*}
for all vector fields $X_1, X_2, \ldots, X_k$. Throughout Section~\ref{sec:classical}, we shall use the notation
\begin{equation} \label{eq:notationgE}
\abs*{\bullet}_{g, E}
	:= \sup_{\{p = E\}} \abs*{\bullet}_g ,
		\qquad E \in \R .
\end{equation}

\begin{remark}[Norm of a volume-preserving diffeomorphism] \label{rmk:normsymplecto}
For any Riemannian metric $g$, we always have $\abs{\dd \phi^t}_g \ge 1$. This is a consequence of the fact that $\phi^t$ is a symplectomorphism. Indeed, given a real finite-dimensional normed vector space $(F, \norm*{\bigcdot})$, denoting by $\mu$ any multiple of the Lebesgue measure, then for any isomorphism $A$, we have
\begin{equation*}
\abs*{\det A} \mu\left(B_1(0)\right)
	= \mu\left(A B_1(0)\right)
	\le \mu\left(B_{\norm{A}}(0)\right)
	= \norm*{A}^{\dim F} \mu\left(B_1(0)\right) ,
\end{equation*}
where $B_r(0)$ is the ball of radius $r$ centered at the origin. Therefore
\begin{equation*}
\abs*{\det A} \le \norm*{A}^{\dim F} .
\end{equation*}
Applying this with $(F, \norm*{\bigcdot}) = (T_\rho (T^\star M), \abs{\bigcdot}_g)$ and $A = \dd \phi^t(\rho)$, and recalling that $\abs{\det \dd \phi^t(\rho)} = 1$, we obtain $\abs{\dd \phi^t(\rho)}_g \ge 1$.
\end{remark}

We start with a lemma that relates derivatives of $H_a$, defined as in~\eqref{eq:defHp}, to seminorms of $a$.

\begin{lemma}[Seminorms of the Hamiltonian vector field] \label{lem:seminormsHp}
Let $a \in \cont^\infty(T^\star \mfd)$. One has for any $k \in \N$:
\begin{equation*}
\abs*{\nabla^k H_a}_g
	\le \gain_g \abs*{\nabla^{k+1} a}_g
	\quad \rm{and} \quad
\abs*{\nabla^k H_a}_{g^\sympf}
	\le \gain_g^k \abs*{\nabla^{k+1} a}_g ,
		\qquad \forall k \ge 0 .
\end{equation*}
One also has
\begin{equation*}
\abs*{H_a}_g
	= \abs*{\nabla a}_{g^\sympf}
		\quad \rm{and} \quad
\abs*{H_a}_{g^\sympf}
	= \abs*{\nabla a}_g .
\end{equation*}
\end{lemma}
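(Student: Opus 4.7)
The plan is to express $H_a$ via the musical isomorphism $\sympf^{-1}$ and then exploit the defining property $g^\sympf(X,Y) = g^{-1}(\sympf X, \sympf Y)$ of the $\sympf$-dual metric. First I would start from~\eqref{eq:defHp}: since $\sympf(H_a, X) = -\nabla a \cdot X$ for every vector field $X$, one reads off $H_a = -\sympf^{-1}(\nabla a)$. The affine connection $\nabla$ on $T^\star \mfd$ is flat and torsion-free, and $\sympf$ is parallel (being constant in canonical coordinates on~$T^\star \mfd$), hence $\sympf^{-1}$ commutes with covariant differentiation. This yields, for every $k \ge 0$ and every choice of vector fields $X_1, \dots, X_k$,
\begin{equation*}
(\nabla^k H_a)(X_1, \dots, X_k) = -\sympf^{-1}\bigl(\nabla^{k+1} a(X_1, \dots, X_k, \cdot)\bigr),
\end{equation*}
where $\sympf^{-1}$ acts on the last (free) slot of $\nabla^{k+1} a$.

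The core pointwise ingredient is a pair of dual isometries for $\sympf^{-1}$. Substituting $X = \sympf^{-1}\omega$, $Y = \sympf^{-1}\eta$ into $g^\sympf(X,Y) = g^{-1}(\sympf X, \sympf Y)$ gives $g^\sympf(\sympf^{-1}\omega, \sympf^{-1}\eta) = g^{-1}(\omega,\eta)$; applying the same identity to the $\sympf$-dual pair $((g^\sympf)^\sympf, g^\sympf) = (g, g^\sympf)$ yields the companion $g(\sympf^{-1}\omega, \sympf^{-1}\eta) = (g^\sympf)^{-1}(\omega,\eta)$. With the tensor-norm convention of Section~\ref{subsub:tensorsandnorms}, these read
\begin{equation*}
\abs{\sympf^{-1}\omega}_{g^\sympf} = \abs{\omega}_g
\qquad \text{and} \qquad
\abs{\sympf^{-1}\omega}_g = \abs{\omega}_{g^\sympf}
\end{equation*}
for every covector $\omega$. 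Specializing to $\omega = \nabla a$ and $k = 0$ already gives the two equalities $\abs{H_a}_g = \abs{\nabla a}_{g^\sympf}$ and $\abs{H_a}_{g^\sympf} = \abs{\nabla a}_g$.

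For the two inequalities, the plan is to insert the slot identity above into the suprema defining $\abs{\nabla^k H_a}_g$ and $\abs{\nabla^k H_a}_{g^\sympf}$, and then convert norms via Definition~\ref{def:gaing}, which states $\abs{\zeta}_{g^\sympf} \le 1 \Rightarrow \abs{\zeta}_g \le \gain_g$. For the $g$-norm, $X_1, \dots, X_k$ are measured with $g$ while the last slot is, by the identity, measured with $g^\sympf$; converting this last slot to the $g$-norm costs exactly one factor $\gain_g$, producing $\abs{\nabla^k H_a}_g \le \gain_g\, \abs{\nabla^{k+1}a}_g$. For the $g^\sympf$-norm the roles are reversed: the first $k$ slots carry the $g^\sympf$-norm while the last slot carries the $g$-norm, so converting each of the $k$ slots to a $g$-norm costs one $\gain_g$ per slot, yielding the overall factor $\gain_g^k$.

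There is no analytic obstacle: the entire argument is pointwise multilinear algebra with the symplectic musical isomorphism. The only mild care required is to keep straight, slot by slot, whether a tensor input is measured with $g$ or with $g^\sympf$, and to use the convention that a covector's norm with respect to $g$ is $\sqrt{g^{-1}(\omega,\omega)}$ (so that, e.g., $\abs{\nabla a}_g$ implicitly uses $g^{-1}$ on $T^\star(T^\star \mfd)$).
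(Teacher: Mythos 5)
Your proposal is correct and takes essentially the same route as the paper: the paper establishes, by induction from $\nabla\sympf=0$, the identity $-\nabla^{k+1}a(X_0,X_1,\ldots,X_k)=\sympf\bigl(\nabla^k H_a(X_1,\ldots,X_k),X_0\bigr)$ and then converts norms using the map $J_g$ with $\gain_g=\abs{J_g}_g$, which is exactly your slot identity and your $\sympf^{-1}$-isometries in different packaging. The only difference is cosmetic (working with $\sympf^{-1}$ and the involution $(g^\sympf)^\sympf=g$ instead of $J_g$), and your slot-by-slot accounting of where each factor of $\gain_g$ arises matches the paper's argument.
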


\begin{proof}
Let us check by induction that for any $k \ge 0$, the following is true: for all vector fields $X_0, X_1, \ldots, X_k$,
\begin{equation} \label{eq:nablakHp}
- \nabla^{k + 1} a (X_0, X_1, X_2, \ldots, X_k)
	= \sympf\left(\nabla^k H_a \left(X_1, X_2, \ldots, X_k\right), X_0\right) .
\end{equation}
The case $k = 0$ is merely the definition of the Hamiltonian vector field. Now assume~\eqref{eq:nablakHp} holds for some $k \ge 0$. We pick $k + 2$ vector fields $X_0, X_1, \ldots, X_{k + 1}$ and we differentiate~\eqref{eq:nablakHp} with respect to $X_{k+1}$:
\begin{equation*}
- \dd \left(\nabla^{k + 1} a \left(X_0, X_1, \ldots, X_k\right)\right) X_{k+1}
	= \dd \left(\sympf(\nabla^k H_a  (X_1, X_2, \ldots, X_k), X_0)\right) X_{k+1} .
\end{equation*}
Using the Leibniz rule on both sides and using that $\nabla \sympf = 0$, we obtain
\begin{multline*}
- \nabla^{k+2} a \left(X_0, X_1, \ldots, X_{k+1}\right) - \sum_{j = 0}^k \nabla^{k+1} a \left(X_0, X_1, \ldots, \nabla_{X_{k+1}} X_j, \ldots, X_k\right) \\
	= \sympf\left(\nabla^k H_a\left(X_1, X_2, \ldots, X_k\right), \nabla_{X_{k+1}} X_0) + \sympf(\nabla^{k+1} H_a\left(X_1, X_2, \ldots, X_{k+1}\right), X_0\right) \\
		+ \sum_{j = 1}^k \sympf\left(\nabla^k H_a\left(X_1, X_2, \ldots, \nabla_{X_{k+1}} X_j, \ldots, X_k\right), X_0\right) .
\end{multline*}
The induction hypothesis~\eqref{eq:nablakHp} implies that the $j$-th term in the left-hand side sum, for $j \ge 1$, coincides with the {$j$-th} in the right-hand side sum, while the term corresponding to $j = 0$ in the left-hand side is equal to the first term in the right-hand side. We obtain~\eqref{eq:nablakHp} at step $k+1$, hence the induction is complete.

Now recall the bundle map $J_g$ such that
\begin{equation*}
g(X, Y)
	= \sympf(X, J_g Y) ,
\end{equation*}
for all vector fields $X, Y$ on $T^\star M$. Also recall that $g = {J_g}^\pullb g^\sympf$ (see~\eqref{eq:Jg} and~\eqref{eq:Jg-1}), so that in particular $\abs*{\bigcdot}_g = \abs*{J_g \bigcdot}_{g^\sympf}$, or equivalently $\abs*{J_g^{-1} \bigcdot}_g = \abs*{\bigcdot}_{g^\sympf}$.

We use~\eqref{eq:nablakHp} with $X_0 = J_g X$ for some vector field $X$ to obtain
\begin{equation*}
g\left(\nabla^k H_a \left(X_1, X_2, \ldots, X_k\right), X\right)
	= \sympf\left(\nabla^k H_a \left(X_1, X_2, \ldots, X_k\right), X_0\right)
	= - \nabla^{k + 1} a (X_0, X_1, X_2, \ldots, X_k)
\end{equation*}
for any $k \ge 0$, and it follows that
\begin{equation*}
\abs*{g\left(\nabla^k H_a \left(X_1, X_2, \ldots, X_k\right), X\right)}
	\le \abs*{\nabla^{k+1} a}_g \abs*{J_g}_g \abs*{X}_g \times \prod_{j = 1}^k \abs*{X_j}_g .
\end{equation*}
Using~\eqref{eq:hgJg}, we deduce that
\begin{equation*}
\abs*{\nabla^k H_a \left(X_1, X_2, \ldots, X_k\right)}_g
	\le \gain_g \abs*{\nabla^{k+1} a}_g \prod_{j = 1}^k \abs*{X_j}_g ,
\end{equation*}
hence the first claim.

We proceed similarly to obtain the second estimate: we use~\eqref{eq:nablakHp} again to obtain
\begin{align*}
g^\sympf\left(X, \nabla^k H_a(X_1, X_2, \ldots, X_k)\right)
	&= \sympf\left(J_g^{-1} X, \nabla^k H_a(X_1, X_2, \ldots, X_k)\right) \\
	&= \nabla^{k+1} a\left(J_g^{-1} X, X_1, X_2, \ldots, X_k\right) .
\end{align*}
This implies that
\begin{equation*}
\abs*{g^\sympf\left(X, \nabla^k H_a(X_1, X_2, \ldots, X_k)\right)}
	\le \abs*{\nabla^{k+1} a\left(\bigcdot, X_1, X_2, \ldots, X_k\right)}_g \abs*{X}_{g^\sympf} ,
\end{equation*}
hence
\begin{align*}
\abs*{\nabla^k H_a(X_1, X_2, \ldots, X_k)}_{g^\sympf}
	&\le \abs*{\nabla^{k+1} a\left(\bigcdot, X_1, X_2, \ldots, X_k\right)}_g
	\le \abs*{\nabla^{k+1} a}_g \prod_{j = 1}^k \abs*{X_j}_g \\
	&\le \abs*{\nabla^{k+1} a}_g \gain_g^k \prod_{j = 1}^k \abs*{X_j}_{g^\sympf} ,
\end{align*}
which is the desired estimate.

The third claim can be handled similarly: writing $X_0 = J_g X$ again and assuming that this vector field is non-vanishing, we have
\begin{equation*}
\dfrac{\abs{g\left(H_a, X\right)}}{\abs{X}_g}
	= \dfrac{\abs{\sympf\left(H_a, J_g X\right)}}{\abs{X}_g}
	= \dfrac{\abs{\nabla a. X_0}}{\abs{X_0}_{g^\sympf}}
		\qquad \rm{and} \qquad
\dfrac{\abs{g^\sympf\left(H_a, X_0\right)}}{\abs{X_0}_{g^\sympf}}
	= \dfrac{\abs{\sympf\left(H_a, J_g^{-1} X_0\right)}}{\abs{X_0}_{g^\sympf}}
	= \dfrac{\abs{\nabla a. X}}{\abs{X}_g} ,
\end{equation*}
and $J_g$ is an isomorphism, hence the sought equalities.
\end{proof}

The following proposition relates norms with respect to~$g$ to norms with respect to the {$\sympf$-dual} metric~$g^\sympf$ (Definition~\ref{def:sympfdual}) and to the symplectic intermediate metric~$g^\natural$ (introduced in~\eqref{eq:defgnatural}). Applying~\eqref{eq:4norms} below to $\phi = \phi^t$, we deduce that the expansion of the flow in the future with respect to $g^\sympf$ is the same as the expansion of the flow in the past with respect to $g$.

\begin{proposition} \label{prop:normssympfnatural}
Let $g$ be a Riemannian metric on $T^\star \mfd$ and $\phi : T^\star \mfd \to T^\star \mfd$ be a symplectomorphism. Then we have
\begin{equation} \label{eq:2pullbacks}
\left(\phi^\pullb g\right)^\sympf
	= \phi^\pullb (g^\sympf)
		\qquad \rm{and} \qquad
\left(\phi^\pullb g\right)^\natural
	= \phi^\pullb (g^\natural) .
\end{equation}
Moreover, we have:
\begin{equation} \label{eq:4norms}
\abs*{\dd \phi}_{g_\rho}
	= \abs*{(\dd \phi)^{-1}}_{g_{\phi(\rho)}^\sympf}
		\qquad \rm{and} \qquad
\abs*{\dd \phi}_{g_\rho^\natural}
	= \abs*{(\dd \phi)^{-1}}_{g_{\phi(\rho)}^\natural}
	\le \abs*{\dd \phi}_{g_\rho}^{1/2} \abs*{\dd \phi}_{g_\rho^\sympf}^{1/2} , 
		\qquad \forall \rho \in T^\star \mfd .
\end{equation}
\end{proposition}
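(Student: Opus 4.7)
The strategy exploits that $\phi$ is a symplectomorphism, so $\phi^\pullb \sympf = \sympf$, together with the fact that the operations $g \mapsto g^\sympf$ and $g \mapsto g^\natural$ are defined purely from $g$ and $\sympf$. For the first identity, I would unfold both sides using $g^\sympf(X, Y) = g^{-1}(\sympf X, \sympf Y)$ and the musical description $\phi^\pullb g = \dd\phi^T \circ g \circ \dd\phi$, then substitute the symplectic relation $\dd\phi^T \circ \sympf_{\phi(\rho)} \circ \dd\phi = \sympf_\rho$; a short computation shows that both $(\phi^\pullb g)^\sympf$ and $\phi^\pullb(g^\sympf)$ equal the bilinear form $(X, Y) \mapsto g^{-1}_{\phi(\rho)}(\sympf_{\phi(\rho)} \dd\phi\, X, \sympf_{\phi(\rho)} \dd\phi\, Y)$. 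For $g^\natural$, I invoke the Ando-type characterization recalled around \eqref{eq:defgnatural}: $g^\natural$ is the largest non-negative symmetric form $q$ making the block matrix there non-negative. Since $\phi^\pullb$ is a linear, order-preserving bijection of the space of symmetric bilinear forms that commutes with $g \mapsto g^\sympf$ by what precedes, it carries the maximizer $q = g^\natural$ to the maximizer associated with $(\phi^\pullb g, (\phi^\pullb g)^\sympf)$, proving the second identity.

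\textbf{Step 2 (the equalities in \eqref{eq:4norms}).} Setting $A = \dd\phi(\rho)$, the symplectic identity $A^T \sympf_{\phi(\rho)} A = \sympf_\rho$ yields $(A^{-1})^T \sympf_\rho = \sympf_{\phi(\rho)} A$. I would express each operator norm as a top eigenvalue: $\abs*{\dd\phi}_{g_\rho}^2$ is the largest eigenvalue of $g_\rho^{-1} A^T g_{\phi(\rho)} A$, while $\abs*{(\dd\phi)^{-1}}_{g_{\phi(\rho)}^\sympf}^2$ is the largest eigenvalue of $(g_{\phi(\rho)}^\sympf)^{-1} (A^{-1})^T g_\rho^\sympf A^{-1}$. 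Expanding $g^\sympf = \sympf^\star g^{-1} \sympf$ and applying the symplectic identity twice, this second matrix rewrites (up to conjugation by $\sympf_{\phi(\rho)}$) as $g_{\phi(\rho)} A g_\rho^{-1} A^T$, whose nonzero spectrum agrees with that of $g_\rho^{-1} A^T g_{\phi(\rho)} A$ by the standard identity $\spec(BC) \setminus \{0\} = \spec(CB) \setminus \{0\}$. The first equality of \eqref{eq:4norms} follows. The second equality is then immediate by applying the first with $g$ replaced by $g^\natural$, which satisfies $(g^\natural)^\sympf = g^\natural$.

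\textbf{Step 3 (the inequality, and main obstacle).} By simultaneously diagonalizing the two pairs $(g_\rho, g_\rho^\sympf)$ and $(g_{\phi(\rho)}, g_{\phi(\rho)}^\sympf)$ in bases orthonormal for $g_\rho$ and $g_{\phi(\rho)}$ respectively, I reduce to $g_\rho = I$, $g_\rho^\sympf = D^2$ and $g_{\phi(\rho)} = I$, $g_{\phi(\rho)}^\sympf = E^2$ with $D, E$ positive diagonal; the matrix geometric-mean formula for $g^\natural$ then gives $g_\rho^\natural = D$ and $g_{\phi(\rho)}^\natural = E$. Writing $A$ for the matrix of $\dd\phi$ in these bases, the three relevant operator norms become the standard spectral norms $\norm{A}$, $\norm{E^{1/2} A D^{-1/2}}$ and $\norm{E A D^{-1}}$, so the claim reduces to the interpolation bound
\begin{equation*}
\norm{E^{1/2} A D^{-1/2}}^2 \le \norm{A} \cdot \norm{E A D^{-1}} .
\end{equation*}
This is the main analytic obstacle. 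I would derive it from Hadamard's three-lines lemma applied to the entire operator-valued function $F(z) := E^z A D^{-z}$ on the strip $\{0 \le \re z \le 1\}$: since $E^{it}$ and $D^{-it}$ are unitary for $t \in \R$ and $F(1+it) = E^{it}(E A D^{-1}) D^{-it}$, one has $\norm{F(it)} = \norm{A}$ and $\norm{F(1+it)} = \norm{E A D^{-1}}$, and the log-convexity of $s \mapsto \log \norm{F(s)}$ yields $\norm{F(1/2)}^2 \le \norm{F(0)} \norm{F(1)}$, which is precisely the required estimate.
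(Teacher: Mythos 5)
Your proof is correct, but it takes a genuinely different route from the paper at two places. For the norm equalities in~\eqref{eq:4norms}, the paper never touches spectra: it observes that, by~\eqref{eq:2pullbacks} and the order-reversing property of {$\sympf$-duality}, the inequality $\phi^\pullb g \le C_\rho^2\, g$ is \emph{equivalent} to $(\phi^{-1})^\pullb g^\sympf \le C_{\phi^{-1}(\rho)}^2\, g^\sympf$, and plugging in the two optimal constants $C_\rho = \abs*{\dd\phi}_{g_\rho}$ and $C_\rho = \abs*{(\dd\phi)^{-1}}_{g_{\phi(\rho)}^\sympf}$ gives both inequalities at once; your eigenvalue computation with the cyclic identity $\spec(BC)\setminus\{0\} = \spec(CB)\setminus\{0\}$ reaches the same conclusion with a bit more matrix bookkeeping. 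The more substantial divergence is the interpolation inequality: the paper stays entirely within the block-matrix maximality characterization~\eqref{eq:defgnatural} of $g^\natural$ --- starting from $\phi^\pullb g \le C_1^2 g$ and $\phi^\pullb g^\sympf \le C_2^2 g^\sympf$, it rescales the non-negative block form by the map $\Psi(\zeta_1,\zeta_2) = (\sqrt{C_2/C_1}\,\zeta_1, \sqrt{C_1/C_2}\,\zeta_2)$ and reads off $\frac{1}{C_1 C_2}\,\phi^\pullb g^\natural \le g^\natural$ directly from the definition, a two-line algebraic argument. You instead identify $g^\natural$ with the matrix geometric mean after simultaneous diagonalization (legitimate, by the Pusz--Woronowicz/Ando characterization cited in the paper and congruence covariance of the geometric mean) and prove $\norm{E^{1/2} A D^{-1/2}}^2 \le \norm{A}\,\norm{E A D^{-1}}$ by the three-lines theorem applied to $F(z) = E^z A D^{-z}$, which is a valid complex-interpolation argument (and $F$ is indeed bounded analytic on the strip since $E, D$ are positive diagonal). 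What the paper's route buys is brevity and independence from both the explicit geometric-mean formula and interpolation theory; what yours buys is that it makes the interpolation nature of $g^\natural$ between $g$ and $g^\sympf$ explicit. Your treatment of~\eqref{eq:2pullbacks} is essentially the paper's: the same direct computation for the $\sympf$-dual, and the same maximality argument for $g^\natural$ (the paper phrases the transport of the maximizer as two inequalities, applying the forward inequality to $\phi$ and then to $\phi^{-1}$, which is exactly your order-preserving-bijection observation).
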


\begin{proof}
Using the fact that $\dd \phi$ is symplectic, namely
\begin{equation*}
\phi^\pullb \sympf_\rho
	= \left(\dd_\rho \phi\right)^\star \sympf_{\phi(\rho)} \dd_\rho \phi
	= \sympf_\rho ,
		\qquad \forall \rho \in T^\star \mfd ,
\end{equation*}
together with the definition of $g^\sympf$ (Definition~\ref{def:sympfdual}), we have
\begin{align*}
\left(\phi^\pullb g\right)_\rho^\sympf
	&= \sympf_\rho^\star \left( \left(\dd_\rho \phi\right)^\star g_{\phi(\rho)} \dd_\rho \phi \right)^{-1} \sympf_\rho
	= \sympf_\rho^\star \left( \dd_\rho \phi \right)^{-1} g_{\phi(\rho)}^{-1} \left(\left(\dd_\rho \phi \right)^\star\right)^{-1} \sympf_\rho \\
	&= \left(\dd_\rho \phi\right)^\star \sympf_{\phi(\rho)}^\star g_{\phi(\rho)}^{-1} \sympf_{\phi(\rho)} \dd_\rho \phi
	= \left(\phi^\pullb (g^\sympf)\right)_\rho .
\end{align*}
This shows that $\left(\phi^\pullb g\right)^\sympf = \phi^\pullb (g^\sympf)$. Now we check that $\left(\phi^\pullb g\right)^\natural = \phi^\pullb (g^\natural)$. Introducing the map $\phi \oplus \phi$ acting naturally on $T^\star \mfd \oplus T^\star \mfd$, we have for any symmetric map $q : W \to W^\star$:
\begin{equation*}
(\phi \oplus \phi)^\pullb
\begin{pmatrix}
g & q \\ q & g^\sympf
\end{pmatrix}
	=
\begin{pmatrix}
\phi^\pullb g & \phi^\pullb q \\ \phi^\pullb q & \phi^\pullb (g^\sympf)
\end{pmatrix}
	=
\begin{pmatrix}
\phi^\pullb g & \phi^\pullb q \\ \phi^\pullb q & (\phi^\pullb g)^\sympf
\end{pmatrix} ,
\end{equation*}
so that
\begin{equation*}
\begin{pmatrix}
g & q \\ q & g^\sympf
\end{pmatrix}
	\ge 0
		\qquad \Longleftrightarrow \qquad
\begin{pmatrix}
\phi^\pullb g & \phi^\pullb q \\ \phi^\pullb q & (\phi^\pullb g)^\sympf
\end{pmatrix}
	\ge 0 .
\end{equation*}
Taking $q = g^\natural$, so that the left-hand side is true, we infer from the right-hand side that $\phi^\pullb (g^\natural) 
\le (\phi^\pullb g)^\natural$, by definition of the symplectic intermediate metric~\eqref{eq:defgnatural}. Applying this inequality with $\phi^\pullb g$ in place of~$g$ and $\phi^{-1}$ in place of~$\phi$, we deduce that
\begin{equation*}
(\phi^{-1})^\pullb \left((\phi^\pullb g)^\natural\right)
	\le \left((\phi^{-1})^\pullb (\phi^\pullb g)\right)^\natural
	= g^\natural ,
\end{equation*}
hence the converse inequality $(\phi^\pullb g)^\natural \le \phi^\pullb (g^\natural)$. This finishes the proof of~\eqref{eq:2pullbacks}.

It remains to prove~\eqref{eq:4norms}. On the one hand, from~\eqref{eq:2pullbacks} together with {$\sympf$-duality}, we deduce that for any function $C = C_\rho : T^\star \mfd \to \R_+^\ast$:
\begin{equation*}
\phi^\pullb g
	\le C_\rho^2 g
		\qquad \Longleftrightarrow \qquad
g^\sympf
	\le C_\rho^2 \phi^\pullb g^\sympf
		\qquad \Longleftrightarrow \qquad
(\phi^{-1})^\pullb g^\sympf
	\le C_{\phi^{-1}(\rho)}^2 g^\sympf .
\end{equation*}
Taking $C_\rho = \abs{\dd \phi}_{g_\rho}$ (for which the left-hand side inequality is true) and $C_\rho = \abs{(\dd \phi)^{-1}}_{g_{\phi(\rho)}^\sympf}$ (for which the right-hand side inequality is true) yields
\begin{equation*}
\abs*{\dd \phi}_{g_\rho}
	\le \abs*{(\dd \phi)^{-1}}_{g_{\phi(\rho)}^\sympf}
		\qquad \rm{and} \qquad
\abs*{(\dd \phi)^{-1}}_{g_{\phi(\rho)}^\sympf}
	\le \abs*{\dd \phi}_{g_\rho}
\end{equation*}
respectively, hence the equality of norms.
This is valid in particular with $g^\natural = (g^\natural)^\sympf$ instead of $g$. Lastly, if $\phi^\pullb g \le C_1^2 g$ and $\phi^\pullb g^\sympf \le C_2^2 g^\sympf$, then by definition of $g^\natural$:
\begin{equation*}
0
	\le (\phi \oplus \phi)^\pullb \begin{pmatrix}
g & g^\natural \\ g^\natural & g^\sympf
\end{pmatrix}
	= \begin{pmatrix}
\phi^\pullb g & \phi^\pullb g^\natural \\ \phi^\pullb g^\natural & \phi^\pullb g^\sympf
\end{pmatrix}
	\le \begin{pmatrix}
C_1^2 g & \phi^\pullb g^\natural \\ \phi^\pullb g^\natural & C_2^2 g^\sympf
\end{pmatrix} ,
\end{equation*}
which yields
\begin{equation} \label{eq:reasoningsimilarlater}
0
	\le \dfrac{1}{C_1 C_2} \Psi^\pullb \begin{pmatrix}
C_1^2 g & \phi^\pullb g^\natural \\ \phi^\pullb g^\natural & C_2^2 g^\sympf
\end{pmatrix}
	= \Psi^\pullb \begin{pmatrix}
\frac{C_1}{C_2} g & \frac{1}{C_1 C_2} \phi^\pullb g^\natural \\ \frac{1}{C_1 C_2} \phi^\pullb g^\natural & \frac{C_2}{C_1} g^\sympf
\end{pmatrix}
	= \begin{pmatrix}
g & \frac{1}{C_1 C_2} \phi^\pullb g^\natural \\ \frac{1}{C_1 C_2} \phi^\pullb g^\natural & g^\sympf
\end{pmatrix} ,
\end{equation}
where $\Psi(\zeta_1, \zeta_2) = (\sqrt{\frac{C_2}{C_1}} \zeta_1, \sqrt{\frac{C_1}{C_2}} \zeta_2)$. We deduce from the definition of~$g^\natural$ in~\eqref{eq:defgnatural} that
\begin{equation*}
\frac{1}{C_1 C_2} \phi^\pullb g^\natural
	\le g^\natural .
\end{equation*}
Putting $C_1 = \abs{\dd \phi}_g$ and $C_2 = \abs{\dd \phi}_{g^\sympf}$ yields the sought inequality in~\eqref{eq:4norms}.
\end{proof}

\subsection{A general estimate} \label{subsec:generalestimate}

In this section, we provide a general estimate on the derivatives of the Hamiltonian flow. Here $g$ is an arbitrary Riemannian metric on $T^\star \mfd$. Our presentation is inspired by~\cite[Section 2]{BouzouinaRobert} and~\cite[Appendix C.1]{DG:14}.
The rough idea is to differentiate successively the equation on the differential:
\begin{equation*}
\dfrac{\dd}{\dd t} \dd \phi^t(\rho)
	= \nabla H_p\left(\phi^t(\rho)\right). \dd \phi^t(\rho) ,
\end{equation*}
and use Duhamel's formula to express $\nabla^{k+1} \phi^t$ as a function of lower order derivatives of $\phi^t$.

We define several functions of time quantifying the growth of the differential of the Hamiltonian flow: on the one hand, we consider the successive primitives of $\abs{\dd \phi^t(\rho)}_g$ with respect to time:
\begin{equation*}
\forall k \in \N^\ast, \quad \cal{I}_{k+1}(t, \rho)
	:= \int_{[0, t]} \cal{I}_k(s, \rho) \dd s , \qquad \rm{with} \;\; \cal{I}_0(t, \rho) := \abs*{\dd \phi^t(\rho)}_g .
\end{equation*}
On the other hand, we handle products arising from iterating Duhamel's formula by introducing for any $t \in \R$ and $\rho \in T^\star \mfd$:
\begin{equation*}
\cal{N}_0(t, \rho)
	:= \sup_{s \in [0, t]} \abs*{\dd \phi^s(\rho)}_g
		\qquad \rm{and} \qquad
\forall k \in \N^\ast , \quad
	\cal{N}_k(t, \rho)
		:= \sup_{s \in [0, t]} \sup_{\bf{s} \in s \Delta_k} \prod_{j = 0}^k \abs*{\dd \phi^{s_{j+1} - s_j}\left(\phi^{s_j}(\rho)\right)}_g .
\end{equation*}
In the latter definition, $\bf{s} = (s_1, s_2, \ldots, s_k) \in s \Delta_k$ refers to a point in the $k$-dimensional simplex defined in~\eqref{eq:defsimplex}, with the boundary conventions $s_0 = 0$ and $s_{k+1} = s$.
We also set for any energy level $E \in \R$:
\begin{equation*}
\cal{I}_k(t, E)
	:= \sup_{\rho \in \{p = E\}} \cal{I}_k(t, \rho)
		\qquad \rm{and} \qquad
\cal{N}_k(t, E)
	:= \sup_{\rho \in \{p = E\}} \cal{N}_k(t, \rho) .
\end{equation*}
Note that $\cal{I}_k$ and $\cal{N}_k$ are non-decreasing as functions of $t$ by definition.

We set for any smooth function $f$ on $T^\star M$ (or more generally for any tensor over $T^\star M$):
\begin{equation} \label{eq:defbnorm}
\bnorm{f}_n^k
	= \bnorm{f}_n^k(\rho)
	:= \max_{\substack{\bf{n} \in \N^n \\ \abs{\bf{n}} = k}} \prod_{j = 1}^n \dfrac{\abs{\nabla^{n_j} f(\rho)}_g}{n_j!} , \qquad
		\rho \in T^\star M, \; k \in \N, n \in \N^\ast .
\end{equation}
We also set $\bnorm{f}_{n, E}^k := \sup_{\{p = E\}} \bnorm{f}_n^k$. Note that in case $f$ is an analytic function, the quantity $\bnorm{f}_n^k$ behaves roughly as $r^{-k}$, where $r$ is the radius of analyticity of $f$ around $\rho$. Such quantities will appear naturally while applying the Faà di Bruno formula (Appendix~\ref{app:faadibruno}). This interpretation of this quantity justifies why we chose to emphasize the dependence on $k$ by putting it as an exponent.

To simplify notation, the dependence of the quantities $\cal{I}_k, \cal{N}_k$ and $\bnorm{\bullet}_n^k$ on the metric $g$ will not be made explicit.

\begin{proposition}[Derivative estimates of the flow] \label{prop:estimatesflow}
Let $p \in \cont^\infty(T^\star \mfd)$ and assume the associated Hamiltonian flow $(\phi^t)_{t \in \R}$ is complete. Let $g$ be a Riemannian metric on $T^\star \mfd$.
Then for all $k \ge 2$, we have for all $t \in \R$ and $\rho \in T^\star \mfd$:
\begin{equation} \label{eq:estimateflow}
\dfrac{1}{k!} \abs{\nabla^k \phi^t}_g(\rho) 
	\le k^{k-1} \max_{1 \le n \le k - 1} \left( \dfrac{\bnorm*{\nabla^2 H_p}_{n, E}^{k-1 - n}}{2^n} \max_{1 \le j \le k/2} \max_{\substack{\bf{n} \in \N^j \\ \abs{\bf{n}} = n}} \cal{N}_0^{k - 2 j}(t, \rho) \prod_{\ell = 1}^j \cal{N}_{n_\ell}(t, \rho) \cal{I}_{n_\ell}(t, \rho) \right) .
\end{equation}
\end{proposition}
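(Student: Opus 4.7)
The plan is to prove this by induction on $k$, using Duhamel's formula for the linearized flow combined with a careful combinatorial accounting.

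\textbf{Step 1: Variational equation.} Differentiating the ODE $\partial_t \phi^t = H_p \circ \phi^t$ in the phase space variable $\rho$ and applying the Faà di Bruno formula (Appendix~\ref{app:faadibruno}) for the composition $H_p \circ \phi^t$, I would derive the identity
\begin{equation*}
\frac{\dd}{\dd t}\nabla^k \phi^t
	= \left(\nabla H_p \circ \phi^t\right)\cdot \nabla^k \phi^t + R_k(t),
\end{equation*}
where the remainder $R_k(t)$ is a sum over partitions $\pi$ of $\{1,\dots,k\}$ with $|\pi|\ge 2$ of tensors of the form $(\nabla^{|\pi|}H_p)(\phi^t)\bigl(\bigotimes_{B\in\pi}\nabla^{|B|}\phi^t\bigr)$. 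This peels off the term that is linear in $\nabla^k\phi^t$ from the rest, which only involves derivatives of order strictly less than $k$.

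\textbf{Step 2: Duhamel formula.} The fundamental solution of the linear equation $\dot Z=(\nabla H_p\circ\phi^t)Z$ is precisely the linearization of the flow: $\Phi(t,s,\rho):=\dd\phi^{t-s}(\phi^s(\rho))$. Since $\nabla^k\phi^t|_{t=0}=0$ for $k\ge 2$, variation of constants yields
\begin{equation*}
\nabla^k\phi^t(\rho)
	=\int_0^t \dd\phi^{t-s}\bigl(\phi^s(\rho)\bigr)\cdot R_k(s,\rho)\,\dd s.
\end{equation*}
Taking norms, using the definition of $\cal{N}_0$ to handle the $\dd\phi^{t-s}$ factor at the front, and noting that $\nabla H_p$ contributes (through its derivatives of order $\ge 2$) the tensors $\nabla^{|B|+1}H_p=\nabla^{|B|-1}(\nabla^2 H_p)$, one bounds $|\nabla^k\phi^t|_g$ by an integral of products of $|\nabla^{|B|}\phi^s|_g$ along the orbit and seminorms of $\nabla^2 H_p$ evaluated on the energy shell $\{p=E\}$ (the latter because $\phi^t$ preserves $\{p=E\}$, which justifies the $E$-subscripted norms from~\eqref{eq:notationgE}).

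\textbf{Step 3: Induction.} Assuming the estimate~\eqref{eq:estimateflow} holds for all derivative orders $<k$, substitute the bounds for each $|\nabla^{|B|}\phi^s|_g$ with $2\le|B|<k$, while blocks of size $1$ contribute a single factor of $|\dd\phi^s|_g\le\cal{N}_0(s,\rho)$. The time integration over $[0,t]$ converts the innermost $\cal{N}_0$ into a factor of $\cal{I}_1$; iterating the argument via the nested structure of $R_k$ produces the successive primitives $\cal{I}_{n_\ell}$ and the supremum factors $\cal{N}_{n_\ell}$, matching the product $\prod_\ell \cal{N}_{n_\ell}(t,\rho)\cal{I}_{n_\ell}(t,\rho)$ in~\eqref{eq:estimateflow}.

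\textbf{Step 4: Combinatorial bookkeeping.} The main obstacle is tracking the constants produced by the sum over partitions and by iterating the induction. The partitions at each stage organize themselves into rooted labeled trees on $k$ nodes whose internal vertices carry factors of $\nabla^{\bullet+2}H_p$ (counted by the exponent $n$, with the internal structure encoded by the tuple $\bf{n}\in\N^j$ with $|\bf{n}|=n$ and $j\le k/2$ since each nontrivial block has size $\ge 2$) and whose $k-2j$ singleton leaves contribute $\cal{N}_0$ factors. The $1/k!$ in the left-hand side absorbs the multinomial coefficients arising when distributing the $k$ directions among blocks, while the denominator $2^n$ comes from splitting the factorials $n_\ell!$ attached to $\bnorm{\nabla^2 H_p}_n^{k-1-n}$. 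The combinatorial count of such rooted trees (Cayley's formula: $k^{k-1}$ rooted labeled trees on $k$ vertices) then bounds the total number of terms, yielding the prefactor $k^{k-1}$ and closing the induction.
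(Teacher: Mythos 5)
Your outline follows the paper's own route: differentiate $\partial_t\phi^t=H_p\circ\phi^t$ in $\rho$ via the Faà di Bruno formula, isolate the term linear in $\nabla^k\phi^t$, solve by Duhamel with fundamental solution $\dd\phi^{t-s}(\phi^s(\rho))$ (using $\nabla^k\phi^0=0$ for $k\ge 2$), and induct on $k$, letting the time integrations generate the $\cal{I}_{n_\ell}$ and the telescoped suprema the $\cal{N}_{n_\ell}$; the restriction to the energy shell and the bound $j\le k/2$, $\cal{N}_0^{k-2j}$ from singleton blocks are also as in the paper.

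The one place where your argument is not yet a proof is Step 4. The prefactor $k^{k-1}$ is not obtained in the paper (and cannot simply be obtained) by counting the terms of the fully expanded formula against Cayley's formula: in an induction the constant is not a term count, it must be shown to reproduce itself through the recursion, because at step $k+1$ each factor $\frac{1}{n_\ell!}\abs{\nabla^{n_\ell}\phi^s}_g$ brought in by the Faà di Bruno remainder carries its own inductive constant $n_\ell^{n_\ell-1}$. The paper closes this by the two explicit estimates $\prod_{\ell} n_\ell^{n_\ell-1}\le k^{k+1-j}$ (valid since $j\ge 2$ forces $n_\ell\le k$) and then, summing over the $\binom{k}{j-1}$ compositions of $k+1$ into $j$ positive parts, $\sum_{j=2}^{k+1}\binom{k}{j-1}k^{k+1-j}\le (k+1)^k$; the factor $2^{-n}$ is likewise propagated by the elementary bound $\frac{1}{j!}\abs{\nabla^j H_p}\le\frac12\bnorm{\nabla^2H_p}_{1}^{j-2}$ applied once per nested Duhamel integral, which also explains why $n$ counts the nesting depth. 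Your tree-counting heuristic gestures at the right order of magnitude (and $k^{k-1}$ is indeed Cayley's number), but as stated it neither identifies the expanded terms with rooted labeled trees on exactly $k$ vertices nor verifies that each term, with its factorial and $2^{-n}$ weights, is dominated by the displayed maximum; replacing it with the recursion check above is what makes the induction close.
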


The proof of Proposition~\ref{prop:estimatesflow} consists in computing the successive derivatives of $\phi^t$ by iterating Duhamel's formula. The index~$n$ in~\eqref{eq:estimateflow} corresponds to a term, in the resulting expression of $\nabla^k \phi^t$, containing~$n$ nested Duhamel integrals.

\begin{proof}
We proceed by induction. The starting point is the equation defining the Hamiltonian flow:
\begin{equation*}
\dfrac{\dd}{\dd t} \phi^t(\rho)
	= H_p\left(\phi^t(\rho)\right) .
\end{equation*}
We differentiate it $k$ times with respect to the variable $\rho$ using the Faà di Bruno formula (Appendix~\ref{app:faadibruno}): isolating the term of order $j = 1$, one has
\begin{equation} \label{eq:nablakphit}
\dfrac{\dd}{\dd t} \left( \dfrac{1}{k!} \nabla^k \phi^t \right)_\rho
	= (\nabla H_p)_{\phi^t(\rho)}. \left( \dfrac{1}{k!} \nabla^k \phi^t \right)_\rho + \sum_{j = 2}^k \; \sum_{\substack{\bf{n} \in (\N^\ast)^j \\ \abs{\bf{n}} = k}} \left(\dfrac{1}{j!} \nabla^j H_p\right)_{\phi^t(\rho)}. \left( \dfrac{1}{\bf{n}!} \nabla^{\bf{n}} \phi^t \right)_\rho .
\end{equation}
The notation $\frac{1}{\bf{n}!} \nabla^{\bf{n}}$ is defined in~\eqref{eq:compactnotation}.

We observe from~\eqref{eq:nablakphit} that the equation satisfied by $A(t) := \frac{1}{k!} \nabla^k \phi^t$ for $k \ge 2$ is of the form
\begin{equation} \label{eq:equationA(t)}
\dfrac{\dd}{\dd t} A_\rho(t)
	= (\nabla H_p)_{\phi^t(\rho)} . A_\rho(t) + B_\rho(t) ,
\end{equation}
where the source term $B(t)$ depends on derivatives of order less than $k - 1$ of $\phi^t$. We observe that the solution to the homogeneous equation
\begin{equation*}
\dfrac{\dd}{\dd t} a_\rho(t)
	= (\nabla H_p)_{\phi^t(\rho)} . a_\rho(t) ,
\end{equation*}
is given by the formula
\begin{equation*}
a_\rho(t)
	= \dd \phi^t(\rho) a_\rho(0) ,
\end{equation*}
so that Duhamel's formula in~\eqref{eq:equationA(t)} (with initial datum $\nabla^k \phi^0 = \nabla^k \id = 0$ since $k \ge 2$) yields
\begin{equation} \label{eq:solA(t)}
A(t)
	= \int_0^t \dd \phi^{t - s}\left(\phi^s(\rho)\right) B_\rho(s) \dd s .
\end{equation}

With this at hand, we can start the induction. We check the basis step $k = 2$ of~\eqref{eq:estimateflow}: we have
\begin{equation*}
\dfrac{1}{2!} \nabla^2 \phi^t = \int_0^t \dd \phi^{t - s}\left(\phi^s(\rho)\right) \dfrac{1}{2!} \nabla^2 H_p\left(\phi^s(\rho)\right).\left(\nabla \phi^s(\rho)\right)^2 \dd s ,
\end{equation*}
so that
\begin{equation*}
\dfrac{1}{2!} \abs*{\nabla^2 \phi^t(\rho)}_g
	\le \dfrac{1}{2!} \abs*{\nabla^2 H_p}_{g, E} \int_{[0, t]} \abs*{\nabla \phi^{t - s}\left(\phi^s(\rho)\right)}_g \abs*{\nabla \phi^s(\rho)}_g^2 \dd s
	\le \dfrac{1}{2} \bnorm*{\nabla^2 H_p}_{1, E}^0 \cal{N}_1(t, \rho) \cal{I}_1(t, \rho) ,
\end{equation*}
which is the expected bound. Now assume~\eqref{eq:estimateflow} is true for some $k \ge 2$. From the Faà di Bruno formula written in~\eqref{eq:nablakphit}, we know that the source term in~\eqref{eq:equationA(t)} with $A(t) = \frac{1}{(k+1)!} \nabla^{k+1} \phi^t$ is bounded by derivatives of order less than $k$ of~$\phi^t$,
\begin{equation} \label{eq:B(t)FdB}
\abs*{B_\rho(s)}_g
 	\le \sum_{j = 2}^{k+1} \; \sum_{\substack{n_1 + n_2 + \cdots + n_j = k+1 \\ n_\ell \ge 1}}  \dfrac{1}{j!} \abs*{\nabla^j H_p}_{g, E} \; \prod_{\ell = 1}^j \dfrac{1}{n_\ell!} \abs*{\nabla^{n_\ell} \phi^s(\rho)}_g .
\end{equation}
We plug this into Duhamel's formula~\eqref{eq:solA(t)} to compute $A(t)$. We are going to bound each term of the double sum separately, and sum the estimates ultimately. We deal with the term $j = k+1$ in the above sum separately. We bound all but one factors $\abs{\nabla \phi^s}_g$ by the sup norm in time, keeping only one of them in the integral. It yields
\begin{equation*}
\int_{[0, t]} \abs*{\dd \phi^{t - s}\left(\phi^s(\rho)\right)}_g \dfrac{\abs*{\nabla^{k+1} H_p}_{g, E}}{(k+1)!} \abs*{\nabla \phi^s(\rho)}_g^{k+1} \dd s
	\le \dfrac{\bnorm*{\nabla^2 H_p}_{1, E}^{k-1}}{2} \cal{N}_0^{k-2}(t, \rho) \cal{N}_1(t, \rho) \cal{I}_1(t, \rho) ,
\end{equation*}
which matches~\eqref{eq:estimateflow} with $n = 1$ and $j = 1$. Now we use the induction hypothesis to handle the terms of order $j \in \{2, 3, \ldots, k\}$. Fix such a $j$ and a multi-index $\bf{n} = (n_1, n_2, \ldots, n_j)$, with $n_1 + n_2 + \cdots + n_j = k+1$. Denote by $j_0$ the number of indices $\ell$ such that $n_\ell = 1$. Up to relabeling, we assume that those indices are $n_1, n_2, \ldots, n_{j_0}$. Note that we know that $j_0 < j$ since the case $j = k+1$ is excluded. The product of $\frac{1}{n_\ell!} \abs{\nabla^{n_\ell} \phi^s}_g$ for $\ell > j_0$ is bounded from above by the multiplication of the bounds~\eqref{eq:estimateflow}, with $n_\ell$ in place of $k$. For any $\ell > j_0$, we fix $n_\ell^\star$ between $1$ and $n_\ell - 1$ for which the first maximum in~\eqref{eq:estimateflow} (over $1 \le n \le k - 1$) is attained in~\eqref{eq:estimateflow}. Here, $n_\ell^\star$ and $n_\ell$ play the role of $n$ and $k$ in~\eqref{eq:estimateflow} respectively, for the {$\ell$-th} term. The combinatorial factors (namely the factor $k^{k-1}$ in~\eqref{eq:estimateflow}) for each $\ell$ are multiplied, and the product is bounded from above by:
\begin{equation} \label{eq:combifac}
\prod_{\ell = j_0 + 1}^j n_\ell^{n_\ell - 1}
	\le \prod_{\ell = 1}^j k^{n_\ell - 1}
	= k^{k+1 - j} .
\end{equation}
We used $n_\ell \le k$, as a consequence of the fact that $j \ge 2$ (the $n_\ell$'s form a family of at least two positive integers whose sum is equal to $k+1$).

Next we look at the multiplication of factors involving $\nabla H_p$ in~\eqref{eq:estimateflow}: for any $\ell > j_0$, we have
\begin{align*}
\dfrac{1}{j!} \abs*{\nabla^j H_p}_{g, E} \prod_{\ell = j_0 + 1}^j \dfrac{1}{2^{n_\ell^\star}} \bnorm*{\nabla^2 H_p}_{n_\ell^\star, E}^{n_\ell - 1 - n_\ell^\star}
	&\le \dfrac{1}{2} \bnorm*{\nabla^2 H_p}_{1, E}^{j-2} \times \dfrac{1}{2^{n^\star}} \bnorm*{\nabla^2 H_p}_{n^\star, E}^{k+1 - j - n^\star} \\
	&= \dfrac{1}{2^{n^\star+1}} \bnorm*{\nabla^2 H_p}_{n^\star+1, E}^{k - (n^\star+1)} ,
\end{align*}
where $1 \le n^\star := n_{j_0 + 1}^\star + \cdots + n_j^\star \le n_1 + n_2 + \cdots + n_j - j = k+1 - j \le k - 1$. This yields the expected factor in~\eqref{eq:estimateflow}, with $n^\star+1$ between $2$ and $k$ playing the role of $n$.

Lastly we deal with the factors involving $\cal{N}(t, \rho)$ and $\cal{I}(t, \rho)$. For any $\ell > j_0$, we pick $\bf{m}_\ell = (m_{\ell, 1}, m_{\ell, 2}, \ldots, m_{\ell, q_\ell})$ in $\N^{q_\ell}$ with $\abs*{\bf{m}_\ell} \le n_\ell - 1$. We choose them such that the ``second" and ``third" maxima (over~$j$ and~$\bf{n}$) in~\eqref{eq:estimateflow} are achieved. Thus $\bf{m}_\ell$ and $q_\ell$ play the role of~$\bf{n}$ and~$j$ in~\eqref{eq:estimateflow} respectively, for the {$\ell$-th} term. In particular, we have $1 \le q_\ell \le n_\ell/2$. Going back to Duhamel's formula, the multiplication of the bounds of the form~\eqref{eq:estimateflow} gives terms of the form
\begin{equation} \label{eq:termduhameljlargerthan2}
\int_{[0, t]} \abs*{\nabla \phi^{t - s}\left(\phi^s(\rho)\right)}_g \abs*{\nabla \phi^s(\rho)}_g^{j_0} \prod_{\ell = j_0 + 1}^j \cal{N}_0^{n_\ell - 2 q_\ell}(s, \rho) \prod_{i = 1}^{q_\ell} \cal{N}_{m_{\ell, i}}(s, \rho) \cal{I}_{m_{\ell, i}}(s, \rho) \dd s .
\end{equation}
We concatenate the {$\bf{m}_\ell$'s} to get a multi-index $\bf{m} = (\bf{m}_{j_0 + 1}, \ldots, \bf{m}_j)$ such that
\begin{equation*}
\abs*{\bf{m}}
	= \sum_{\ell = j_0 + 1}^j \abs*{\bf{m}_\ell}
	\le \sum_{\ell = 1}^j \left(n_\ell - 1\right)
	= k+1 - j .
\end{equation*}
This multi-index belongs to $(\N^\ast)^m$ with $1 \le m := q_{j_0 + 1} + \cdots + q_j \le \frac{1}{2}(n_1 + n_2 +\cdots + n_j) = \frac{1}{2} (k+1)$. The product of $\cal{N}_0^{n_\ell - 2 q_\ell}(s, \rho)$, for $\ell = j_0 + 1, \ldots, j$, can be bounded by the same product at time $t$, which yields $\cal{N}_0^{k+1 - j_0 - 2 m}(t, \rho)$. As for $\abs{\nabla \phi^s(\rho)}_g^{j_0}$, we can bound it by $\cal{N}_0^{j_0}(t, \rho)$. Now in~\eqref{eq:termduhameljlargerthan2}, we gather $\abs{\nabla \phi^{t - s}(\phi^s(\rho))}_g$ with $\cal{N}_{m_{j, q_j}}(s, \rho)$, and we use that
\begin{equation*}
\abs{\nabla \phi^{t - s}(\phi^s(\rho))} \cal{N}_{m_{j, q_j}}(s, \rho)
	\le \cal{N}_{m_{j, q_j} + 1}(t, \rho) ,
\end{equation*}
for any $t$ and $\rho$. Finally, using monotonicity in time, we bound from above all the factors $\cal{N}_{m_{\ell, i}}(s, \rho)$ and $\cal{I}_{m_{\ell, i}}(s, \rho)$, for $j_0 < \ell < j$ and $1 \le i \le q_\ell$, or $\ell = j$ and $1 \le i < q_j$, by the same factor at time $t$. As for the remaining factor $\cal{I}_{m_{j, q_j}}$, we keep it in the integral over~$s$, and use that by definition:
\begin{equation*}
\int_{[0, t]} \cal{I}_{m_{j, q_j}}(s, \rho) \dd s = \cal{I}_{m_{j, q_j} + 1}(t, \rho) .
\end{equation*}
In the end, the term~\eqref{eq:termduhameljlargerthan2} admits a bound of the form
\begin{equation*}
\cal{N}_0^{j_0}(t, \rho) \cal{N}_0^{k+1 - j_0 - 2m}(t, \rho) \prod_i \cal{N}_{\tilde m_i}(t, \rho) \cal{I}_{\tilde m_i}(t, \rho) ,
\end{equation*}
where the multi-index $\tilde{\bf{m}} = (\tilde m_i)_i$ is built from $\bf{m}$ by modifying solely its last component, changing $m_{j, q_j}$ into $m_{j, q_j} + 1$. We obtain a bound of the form of the right-hand side of~\eqref{eq:estimateflow}, since $m \le (k+1)/2$ and $\abs{\tilde{\bf{m}}} = \abs{\bf{m}} + 1 = m + 1 \le k+1 - j + 1 \le k$, in view of $j \ge 2$.

It remains to handle the sums over $j$ and~$\bf{n}$ in~\eqref{eq:B(t)FdB}. We observe that only the combinatorial factors obtained in~\eqref{eq:combifac} depend on $j$ (the seminorms of $p$ and the quantities $\cal{I}_k, \cal{N}_k$ can be factored outside the sums). We use Remark~\ref{rk:numberoftermsfaadibruno} to arrive at
\begin{equation*}
\sum_{j = 2}^{k+1} \sum_{\substack{\bf{n} \in (\N^\ast)^j \\ \abs{\bf{n}} = k+1}} k^{k+1 - j}
	= \sum_{j = 2}^{k+1} \binom{k}{j-1} k^{k+1 - j}
	\le (k+1)^k .
\end{equation*}
This completes the induction.
\end{proof}

\subsection{Exponentially expanding flows}

Proposition~\ref{prop:estimatesflow} in Section~\ref{subsec:generalestimate} gives a general bound, which can be used to study the growth of derivatives of polynomially expanding flows. This case will not be discussed further in this work since it corresponds to very particular situation (see for instance~\cite{BouzouinaRobert}). From now on, we focus on the generic situation in which the differential of the flow grows like an exponential, which occurs under Assumptions~\ref{assum:mandatory} and~\ref{assum:p} in particular (see~\eqref{eq:Lyapunovcontrol}).

\begin{proposition}[Exponential growth of flow derivatives] \label{prop:estflowexpgrowth}
Let $p \in \cont^\infty(T^\star \mfd)$ and assume the associated Hamiltonian flow $(\phi^t)_{t \in \R}$ is complete. Let $g$ be a Riemannian metric on $T^\star \mfd$.
Let $E \in \R$ and assume that the flow has an exponential growth on the energy layer $\{p = E\}$, i.e.
\begin{equation*}
\exists C = C(E) \ge 1, \exists \Lambda = \Lambda(E) > 0 : \qquad
	\abs*{\dd \phi^t}_{g, E} \le C \e^{\Lambda \abs{t}} , \quad \forall t \in \R .
\end{equation*}
(Recall the notation $\abs{\bullet}_{g, E}$ in~\eqref{eq:notationgE}.)
Then one has
\begin{equation*}
\dfrac{1}{k!} \abs*{\nabla^k \phi^t}_{g, E}
	\le k^{k-1} \left(C \e^{\Lambda \abs{t}}\right)^k \max_{1 \le n \le k - 1} \left( \left(\dfrac{C}{2 \Lambda}\right)^n \bnorm*{\nabla^2 H_p}_{n, E}^{k-1 - n} \right) , \qquad
		\forall t \in \R, \forall k \ge 2 .
\end{equation*}
\end{proposition}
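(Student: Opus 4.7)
The plan is to apply Proposition~\ref{prop:estimatesflow} directly, after bounding the auxiliary quantities $\cal{N}_k(t,E)$ and $\cal{I}_k(t,E)$ under the exponential growth hypothesis. There is essentially no new idea beyond propagating the bound $\abs*{\dd\phi^t}_{g,E} \le C\e^{\Lambda\abs{t}}$ through the definitions; the work is bookkeeping.

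First, I would estimate $\cal{N}_k(t, E)$. Since the energy level $\{p = E\}$ is preserved by the Hamiltonian flow, for any $\rho \in \{p=E\}$ and any $\bf{s} = (s_1, \dots, s_k) \in s\Delta_k$ with $0 \le s \le t$, every intermediate point $\phi^{s_j}(\rho)$ again lies in $\{p = E\}$. Hence each factor in the product defining $\cal{N}_k(t,\rho)$ is controlled by $C\e^{\Lambda\abs{s_{j+1}-s_j}}$, and multiplying them yields
\begin{equation*}
\cal{N}_k(t, E)
\le C^{k+1} \e^{\Lambda \sum_{j=0}^k \abs{s_{j+1}-s_j}}
\le C^{k+1} \e^{\Lambda \abs{t}} ,
\end{equation*}
using the telescoping sum $\sum_j (s_{j+1}-s_j) = s \le \abs{t}$. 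In particular $\cal{N}_0(t,E) \le C\e^{\Lambda\abs{t}}$.

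Next I would control $\cal{I}_k(t,E)$ by induction on~$k$. The base case is $\cal{I}_0(t,E) \le C\e^{\Lambda\abs{t}}$, and assuming $\cal{I}_k(t,E) \le \frac{C}{\Lambda^k}\e^{\Lambda\abs{t}}$ the defining integral gives
\begin{equation*}
\cal{I}_{k+1}(t, E)
\le \int_0^{\abs{t}} \frac{C}{\Lambda^k} \e^{\Lambda s} \dd s
= \frac{C}{\Lambda^{k+1}}(\e^{\Lambda\abs{t}} - 1)
\le \frac{C}{\Lambda^{k+1}} \e^{\Lambda\abs{t}} ,
\end{equation*}
closing the induction.

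Finally I would substitute these bounds into~\eqref{eq:estimateflow}. Fix $k \ge 2$ and $1 \le n \le k-1$, and consider the inner term indexed by $j$ and $\bf{n} = (n_1,\ldots,n_j)$ with $\abs{\bf{n}} = n$. Using the estimates above,
\begin{equation*}
\cal{N}_0^{k-2j}(t,\rho) \prod_{\ell=1}^j \cal{N}_{n_\ell}(t,\rho)\, \cal{I}_{n_\ell}(t,\rho)
\le \bigl(C\e^{\Lambda\abs{t}}\bigr)^{k-2j} \prod_{\ell=1}^j C^{n_\ell + 1} \e^{\Lambda\abs{t}} \cdot \frac{C}{\Lambda^{n_\ell}} \e^{\Lambda\abs{t}}
= \frac{C^{k+n}}{\Lambda^n} \e^{k \Lambda \abs{t}} ,
\end{equation*}
since the exponents of $C$ add to $(k-2j) + (n+2j) = k+n$ and the exponentials to $(k-2j) + 2j = k$. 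This bound is independent of~$j$ and~$\bf{n}$, so the two inner maxima collapse, and factoring $(C\e^{\Lambda\abs{t}})^k$ out of the remaining maximum over~$n$ gives exactly the claimed estimate
\begin{equation*}
\frac{1}{k!}\abs*{\nabla^k\phi^t}_{g,E}
\le k^{k-1} \bigl(C\e^{\Lambda\abs{t}}\bigr)^k \max_{1\le n\le k-1}\left(\left(\frac{C}{2\Lambda}\right)^n \bnorm*{\nabla^2 H_p}_{n,E}^{k-1-n}\right).
\end{equation*}
No genuine obstacle arises; the only subtlety is checking that the $j$-dependence in the general estimate cancels, which follows from the precise exponents in the definitions of $\cal{N}_k$ and $\cal{I}_k$.
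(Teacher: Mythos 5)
Your proposal is correct and follows essentially the same route as the paper: plug the exponential-growth hypothesis into the general estimate of Proposition~\ref{prop:estimatesflow} after bounding $\cal{N}_k(t,E)\le C^{k+1}\e^{\Lambda\abs{t}}$ by telescoping and $\cal{I}_k(t,E)\le \frac{C}{\Lambda^k}\e^{\Lambda\abs{t}}$ by induction, then check the exponents collapse to $\frac{C^{k+n}}{\Lambda^n}\e^{k\Lambda\abs{t}}$. Your explicit remark that the energy layer is flow-invariant (so the intermediate points $\phi^{s_j}(\rho)$ stay where the hypothesis applies) is a point the paper leaves implicit, but otherwise the arguments coincide.
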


\begin{proof}
This is a consequence of Proposition~\ref{prop:estimatesflow}. We estimate each term in the right-hand side of~\eqref{eq:estimateflow}.
The product in the definition of $\cal{N}_k(t, \rho)$ is telescopic:
\begin{equation*}
\cal{N}_k(t, E)
	\le \sup_{s \in [0, t]} \sup_{\bf{s} \in s \Delta_k} \prod_{j = 0}^k C \e^{\Lambda \abs{s_{j+1} - s_j}}
	= C^{k+1} \sup_{s \in [0, t]} \e^{\Lambda \abs{s}}
	= C^{k+1} \e^{\Lambda \abs{t}} .
\end{equation*}
As for $\cal{I}_k(t, E)$, we can proceed by induction and check that each integration yields a factor $\Lambda^{-1}$, so that:
\begin{equation*}
\cal{I}_k(t, E)
	\le \dfrac{C}{\Lambda^k} \e^{\Lambda \abs{t}} .
\end{equation*}
For any $n \in \{1, 2, \ldots, k - 1\}$, it leads to
\begin{multline*}
\max_{1 \le j \le k/2} \max_{\substack{\bf{n} \in \N^j \\ \abs{\bf{n}} = n}} \cal{N}_0^{k - 2 j}(t, E) \prod_{\ell = 1}^j \cal{N}_{n_\ell}(t, E) \cal{I}_{n_\ell}(t, E) \\
	\le \max_{1 \le j \le k/2} \max_{\substack{\bf{n} \in \N^j \\ \abs{\bf{n}} = n}} C^{k - 2j} \e^{(k - 2j) \Lambda \abs{t}} \prod_{\ell = 1}^j C^{n_\ell + 1} \e^{\Lambda \abs{t}} \dfrac{C}{\Lambda^{n_\ell}} \e^{\Lambda \abs{t}}
	= \dfrac{C^{k + n}}{\Lambda^n} \e^{k \Lambda \abs{t}} .
\end{multline*}
Proposition~\ref{prop:estimatesflow} give the sought result.
\end{proof}

Under Assumption~\ref{assum:p}, it is sufficient to pick $\Lambda$ defined in~\eqref{eq:defLambda} and $C = 1$ according to the following a priori estimate.

\begin{proposition}[A priori exponential growth estimate] \label{prop:aprioriexpgrowth}
Suppose $p$ and $g$ satisfy Assumptions~\ref{assum:mandatory} and~\ref{assum:p}. Then the following holds:
\begin{equation} \label{eq:diffflow}
\abs*{\dd \phi^t(\rho)}_g
	\le \exp\left( \dfrac{1}{2} \int_{[0, t]} \abs*{\lieder_{H_p} g}_g\left(\phi^s(\rho)\right) \dd s \right) , \qquad
		\forall \rho \in T^\star \mfd, \forall t \in \R ,
\end{equation}
so in particular
\begin{equation} \label{eq:diffflowexp}
\abs*{\dd \phi^t}_{g, \infty}
	\le \e^{\Lambda \abs{t}} ,
		\qquad \forall t \in \R ,
\end{equation}
where $\Lambda$ is the Lyapunov exponent introduced in~\eqref{eq:defLambda}.
Moreover we have
\begin{equation} \label{eq:upperboundLambda}
\Lambda
	\le \sup_{T^\star \mfd} \left( \gain_g \abs*{\nabla^2 p}_g + \dfrac{1}{2} \abs*{\nabla_{H_p} g}_g \right) .
\end{equation}
\end{proposition}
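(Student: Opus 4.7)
The plan is to derive the differential inequality~\eqref{eq:diffflow} by following the evolution of $t \mapsto (\phi^t)^\pullb g$ along the flow, apply Gr\"onwall, and then deduce both~\eqref{eq:diffflowexp} and~\eqref{eq:upperboundLambda} as corollaries.

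First, fix $\rho \in T^\star \mfd$ and a nonzero tangent vector $X \in T_\rho(T^\star \mfd)$. Define the scalar function
\begin{equation*}
f_X(t) := \big((\phi^t)^\pullb g\big)_\rho(X, X) = g_{\phi^t(\rho)}\big(\dd \phi^t(\rho) X,\; \dd \phi^t(\rho) X\big), \qquad t \in \R .
\end{equation*}
By the very definition of the Lie derivative~\eqref{eq:deflieder} applied along the Hamiltonian flow, together with the group property $\phi^{t+s} = \phi^s \circ \phi^t$, one has
\begin{equation*}
\dfrac{\dd}{\dd t} f_X(t) = \big((\phi^t)^\pullb \lieder_{H_p} g\big)_\rho(X, X) = (\lieder_{H_p} g)_{\phi^t(\rho)}\big(\dd \phi^t(\rho) X,\; \dd \phi^t(\rho) X\big) .
\end{equation*}
Since $\lieder_{H_p} g$ is a symmetric $2$-covariant tensor, Remark~\ref{rmk:polarintro} gives the bound
\begin{equation*}
\abs*{(\lieder_{H_p} g)_{\phi^t(\rho)}(Y, Y)} \le \abs*{\lieder_{H_p} g}_g(\phi^t(\rho)) \, g_{\phi^t(\rho)}(Y, Y)
\end{equation*}
for every tangent vector $Y$ at $\phi^t(\rho)$. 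Applied to $Y = \dd \phi^t(\rho) X$, this yields $\abs{\dot f_X(t)} \le \abs{\lieder_{H_p} g}_g(\phi^t(\rho)) \, f_X(t)$.

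Gr\"onwall's inequality (applied separately on $[0, t]$ and $[t, 0]$ depending on the sign of $t$, with initial value $f_X(0) = g_\rho(X, X)$) gives
\begin{equation*}
f_X(t) \le g_\rho(X, X) \, \exp\left( \int_{[0, t]} \abs*{\lieder_{H_p} g}_g(\phi^s(\rho)) \dd s \right) .
\end{equation*}
Dividing by $g_\rho(X, X)$, taking the supremum over $X \neq 0$, and extracting a square root produces~\eqref{eq:diffflow}. Taking the supremum in $\rho \in T^\star \mfd$ inside the integrand and using the definition~\eqref{eq:defLambda} of $\Lambda$ immediately yields~\eqref{eq:diffflowexp}.

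It remains to establish~\eqref{eq:upperboundLambda}. Since $\nabla$ is torsion free, the Lie derivative admits the splitting
\begin{equation*}
(\lieder_{H_p} g)(X, X) = (\nabla_{H_p} g)(X, X) + 2\, g(\nabla_X H_p, X),
\end{equation*}
obtained by expanding the identity $\lieder_{H_p} g(Y, Z) = (\nabla_{H_p} g)(Y, Z) + g(\nabla_Y H_p, Z) + g(Y, \nabla_Z H_p)$ at $Y = Z = X$. Taking $\abs{X}_g = 1$ and using Cauchy--Schwarz together with Lemma~\ref{lem:seminormsHp} (which gives $\abs{\nabla H_p}_g \le \gain_g \abs{\nabla^2 p}_g$), we estimate
\begin{equation*}
\abs*{(\lieder_{H_p} g)(X, X)} \le \abs*{\nabla_{H_p} g}_g + 2 \gain_g \abs*{\nabla^2 p}_g .
\end{equation*}
Applying Remark~\ref{rmk:polarintro} once more to evaluate $\abs{\lieder_{H_p} g}_g$ as the sup of its diagonal entries on unit vectors, and dividing by~$2$, yields~\eqref{eq:upperboundLambda}. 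The only genuinely delicate point is the very first identification $\frac{\dd}{\dd t}(\phi^t)^\pullb g = (\phi^t)^\pullb \lieder_{H_p} g$, which relies on the group property of the flow; everything else is a direct consequence of Gr\"onwall and Lemma~\ref{lem:seminormsHp}.
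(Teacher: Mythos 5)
Your proof is correct and follows essentially the same route as the paper: differentiate $(\phi^t)^\pullb g$ to get $(\phi^t)^\pullb \lieder_{H_p} g$, bound it by $\abs{\lieder_{H_p} g}_g(\phi^t(\rho))$ times the pulled-back metric, apply Gr\"onwall, and then prove~\eqref{eq:upperboundLambda} via the standard splitting of the Lie derivative into $\nabla_{H_p} g$ plus the two $g(\nabla_\bullet H_p, \bullet)$ terms combined with Lemma~\ref{lem:seminormsHp}. The only difference is that you spell out the pointwise evaluation on a fixed vector $X$ and the polarization step explicitly, which the paper leaves implicit.
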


\begin{proof}
We have for any $\rho \in T^\star \mfd$ and any $t \in \R$:
\begin{equation} \label{eq:derivativepullbmetric}
\dfrac{\dd}{\dd t} \left(\phi^t\right)^\pullb g
	= \left(\phi^t\right)^\pullb \lieder_{H_p} g
	\le \abs*{\lieder_{H_p} g}_g\left(\phi^t(\rho)\right) \left(\phi^t\right)^\pullb g ,
\end{equation}
so Grönwall's inequality yields~\eqref{eq:diffflow} and~\eqref{eq:diffflowexp} follows from the definition of $\Lambda$ in~\eqref{eq:defLambda}.
It remains to prove~\eqref{eq:upperboundLambda}. For any couple of vector fields $X, Y$ on $T^\star \mfd$, we have
\begin{align*}
(\lieder_{H_p} g)_\rho(X, Y)
	&= \dfrac{\dd}{\dd t}_{\vert t =0} g_{\phi^t(\rho)}\left(\dd \phi^t(\rho). X, \dd \phi^t(\rho). Y\right) \\
	&= g_\rho\left(\nabla H_p. X, Y\right) + g_\rho\left(X, \nabla H_p. Y\right) + (\nabla_{H_p} g)_\rho(X, Y) ,
\end{align*}
and combining this with $\abs{\nabla H_p}_g \le \gain_g \abs{\nabla^2 p}_g$ (see Lemma~\ref{lem:seminormsHp}), we obtain inequality~\eqref{eq:upperboundLambda}.
\end{proof}

Now we provide estimates for the composition of a symbol with the flow. We shall study this composition as an operator, acting on various spaces. We recall the notation
\begin{equation*}
\e^{t H_p} a = a \circ \phi^t , \qquad \forall t \in \R, \, \forall a \in \cont^\infty(T^\star M) .
\end{equation*}

\begin{lemma}[Composition by the flow] \label{lem:compositionbytheflow}
Let $p \in \cont^\infty(T^\star \mfd)$ and assume the associated Hamiltonian flow $(\phi^t)_{t \in \R}$ is complete. Let $g$ be a Riemannian metric on $T^\star \mfd$.
Let $E \in \R$ and assume that the flow has an exponential growth on the energy layer $\{p = E\}$, i.e.
\begin{equation*}
\exists C = C(E) \ge 1, \exists \Lambda = \Lambda(E) > 0 : \qquad
	\abs*{\dd \phi^t}_{g, E} \le C \e^{\Lambda \abs{t}} , \quad \forall t \in \R .
\end{equation*}
Then for all $a \in \cont^\infty(T^\star M)$ and all $k \in \N^\ast$, we have for any $\rho \in \{p = E\}$:
\begin{equation} \label{eq:estcomposflow}
\dfrac{1}{k!} \abs*{\nabla^k \e^{t H_p} a}_g(\rho)
	\le \left(C k \e^{\Lambda \abs{t}}\right)^k \max_{1 \le j \le k} \left( \dfrac{1}{j!} \abs*{(\nabla^j a)_{\phi^t(\rho)}}_g \max_{1 \le n \le k - j} \left(\left(\dfrac{C}{2 \Lambda}\right)^n \bnorm*{\nabla^2 H_p}_{n, E}^{k - j - n}\right) \right)  .
\end{equation}
When $j = k$, the maximum over $n \in \{1, 2, \ldots, k-j\}$ is set equal to $1$ by convention.
\end{lemma}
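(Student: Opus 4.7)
The natural route is to apply the Faà di Bruno formula (Appendix~\ref{app:faadibruno}) to the composition $\e^{tH_p}a = a\circ\phi^t$ and then to control every factor $\frac{1}{n_\ell!}|\nabla^{n_\ell}\phi^t|_g$ appearing in the expansion by means of Proposition~\ref{prop:estflowexpgrowth}. Concretely, at $\rho\in\{p=E\}$ one has
\begin{equation*}
\frac{1}{k!}\nabla^k(a\circ\phi^t)_\rho
  = \sum_{j=1}^{k}\;\sum_{\substack{\mathbf{n}\in(\mathbf{N}^\ast)^j\\ |\mathbf{n}|=k}}
     \left(\tfrac{1}{j!}\nabla^j a\right)_{\phi^t(\rho)}\!.\left(\tfrac{1}{\mathbf{n}!}\nabla^{\mathbf{n}}\phi^t\right)_\rho,
\end{equation*}
so that, taking $g$-norms and using $|\nabla\phi^t|_{g,E}\le C\e^{\Lambda|t|}$ for the indices $n_\ell=1$ and Proposition~\ref{prop:estflowexpgrowth} for the indices $n_\ell\ge 2$, every term of the sum is bounded by a product whose structure I analyse below.

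Fix $j\in\{1,\ldots,k\}$ and $\mathbf{n}\in(\mathbf{N}^\ast)^j$ with $|\mathbf{n}|=k$. Let $j_0$ be the number of components equal to $1$. For each $n_\ell\ge 2$ pick an optimiser $n_\ell^\star\in\{1,\ldots,n_\ell-1\}$ in the bound of Proposition~\ref{prop:estflowexpgrowth}, and set $n^\star:=\sum_{n_\ell\ge 2}n_\ell^\star$, which lies in $\{1,\ldots,k-j\}$. Multiplying the individual bounds, the powers of $C\e^{\Lambda|t|}$ telescope into $(C\e^{\Lambda|t|})^k$; the combinatorial factors give $\prod_{n_\ell\ge 2}n_\ell^{n_\ell-1}\le k^{\sum_{n_\ell\ge 2}(n_\ell-1)}=k^{k-j}$; and the seminorm factors combine into $\bnorm{\nabla^2 H_p}_{n^\star,E}^{k-j-n^\star}$ by the concatenation inequality $\bnorm{f}_{n_1}^{k_1}\bnorm{f}_{n_2}^{k_2}\le\bnorm{f}_{n_1+n_2}^{k_1+k_2}$, which is immediate from the definition~\eqref{eq:defbnorm}. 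Thus
\begin{equation*}
\left|\tfrac{1}{\mathbf{n}!}\nabla^{\mathbf{n}}\phi^t\right|_{g,E}
  \le k^{k-j}(C\e^{\Lambda|t|})^k \left(\tfrac{C}{2\Lambda}\right)^{n^\star}\!\bnorm{\nabla^2H_p}_{n^\star\!,\,E}^{k-j-n^\star},
\end{equation*}
which after taking the maximum over $n^\star$ produces the inner maximum of~\eqref{eq:estcomposflow}.

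It remains to sum over the multi-indices $\mathbf{n}$ of length $j$ with $|\mathbf{n}|=k$. Their number is $\binom{k-1}{j-1}$ (Remark~\ref{rk:numberoftermsfaadibruno}), so for each fixed $j$ the sum is bounded by $\binom{k-1}{j-1}k^{k-j}\le k^{j-1}\cdot k^{k-j}=k^{k-1}$. Summing over $j\in\{1,\ldots,k\}$ and pulling out the maximum over $j$ yields the factor $k\cdot k^{k-1}=k^k$, absorbed into $(Ck\e^{\Lambda|t|})^k$ as claimed. The boundary case $j=k$ forces $\mathbf{n}=(1,\ldots,1)$, hence no factor $\nabla^2H_p$ appears, consistent with the convention that the inner maximum equals~$1$.

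The only mildly delicate point is the merging of the $\bnorm{\nabla^2H_p}$ factors coming from different components $n_\ell\ge 2$ into a single $\bnorm{\cdot}_{n^\star,E}^{k-j-n^\star}$ term, but this is exactly what the notation~\eqref{eq:defbnorm} was designed for (the maximum over tuples is compatible with concatenation). Everything else is bookkeeping of combinatorial factors identical in spirit to the induction that established Proposition~\ref{prop:estimatesflow}, and no new analytic estimate on the flow is required beyond Proposition~\ref{prop:estflowexpgrowth} itself.
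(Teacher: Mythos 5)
Your argument is essentially the paper's own proof: apply the Faà di Bruno formula to $a\circ\phi^t$, bound the factors $\tfrac{1}{n_\ell!}\abs*{\nabla^{n_\ell}\phi^t}_g$ with $n_\ell\ge 2$ via Proposition~\ref{prop:estflowexpgrowth} and those with $n_\ell=1$ by $C\e^{\Lambda\abs{t}}$, merge the resulting $\bnorm*{\nabla^2 H_p}_{n,E}$ factors into a single term indexed by $n^\star\in\{1,\ldots,k-j\}$, and control the combinatorial factors by $k^k$. Your explicit count of $\binom{k-1}{j-1}$ multi-indices per $j$ merely makes the paper's terse final summation precise, and the concatenation step you flag as delicate is exactly the step the paper performs when it ``multiplies the maxima'' from Proposition~\ref{prop:estflowexpgrowth}, so the two proofs coincide in substance.
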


\begin{remark} \label{rem:compositionbytheflowsimpler}
This estimate implies the simpler estimate
\begin{equation*}
\forall k \ge 1 , \qquad
	\dfrac{1}{k!} \abs*{\nabla^k \e^{t H_p} a}_g(\rho)
		\le \left(C k \e^{\Lambda \abs{t}}\right)^k \abs*{\nabla a}^{(k-1)}_g\left(\phi^t(\rho)\right) \jap*{\dfrac{C}{2 \Lambda} \abs*{\nabla^2 H_p}_{E}^{(k-2)}}^{k-1} ,
\end{equation*}
where $\abs*{\nabla a}_g^{(k-1)} = \max_{0 \le j \le k-1} \abs{\nabla^{1 + j} a}_g$ and $\abs{\nabla^2 H_p}_E^{(k-2)} = \max_{0 \le j \le k-2} \abs{\nabla^{2+j} H_p}_{g, E}$ (recall the notation $\abs{\bullet}_{g, E}$ in~\eqref{eq:notationgE}). This follows directly from the definition of $\bnorm{\nabla^2 H_p}_n^k$ in~\eqref{eq:defbnorm}.
\end{remark}

\begin{proof}
The statement for $k = 1$ follows from the chain rule. Thus we consider the case $k \ge 2$ in the sequel. We use the Faà di Bruno formula (Lemma~\ref{lem:FaadiBruno}) and Proposition~\ref{prop:estflowexpgrowth}. Lemma~\ref{lem:FaadiBruno} gives
\begin{equation*}
\dfrac{1}{k!} \abs*{\nabla^k (a \circ \phi^t)}_g(\rho)
	\le \sum_{j = 1}^k \dfrac{1}{j!} \abs*{(\nabla^j a)_{\phi^t(\rho)}}_g \sum_{\substack{n_1 + n_2 + \cdots + n_j = k \\ n_\ell \ge 1}} \prod_{\ell = 1}^j \dfrac{1}{n_\ell!} \abs*{\nabla^{n_\ell} \phi^t}_g .
\end{equation*}
We observe that there is a single term corresponding to $j = k$, given by the multi-index $(n_1, n_2, \ldots, n_j) = (1, 1, \ldots, 1)$. It is bounded by $\frac{1}{k!} \abs*{(\nabla^k a)_{\phi^t(\rho)}}_g (C \e^{\Lambda t})^k$. Now we focus on terms corresponding to $j < k$. We fix such a~$j$ and a multi-index $\bf{n} = (n_1, n_2, \ldots, n_j)$ such that $|\bf{n}| = k$. Up to relabeling, we can assume that $n_1 = n_2 = \cdots = n_{j_0} = 1$ and $n_\ell \ge 2$ for all $\ell > j_0$ ($j_0 < j$ due to the fact that $\bf{n} \in (\N^\ast )^j$ with $j < |\bf{n}| = k$).) We apply Proposition~\ref{prop:estflowexpgrowth} to the factors $\dfrac{1}{n_\ell!} \abs*{\nabla^{n_\ell} \phi^t}_g$ for $\ell > j_0$, and the other factors corresponding to $\ell \le j_0$ are bounded from above by $C e^{\Lambda \abs{t}}$. That yields
\begin{equation*}
\prod_{\ell = 1}^j \dfrac{1}{n_\ell!} \abs*{\nabla^{n_\ell} \phi^t}_g
	\le \left(\prod_{\ell = j_0 + 1}^j n_\ell^{n_\ell - 1}\right) \left(C \e^{\Lambda \abs{t}}\right)^k \max_{1 \le n \le k - j} \left(\left(\dfrac{C}{2 \Lambda}\right)^n \bnorm*{\nabla^2 H_p}_{n, E}^{k - j - n}\right) .
\end{equation*}
The maximum over~$j$ above comes from multiplying the maxima over $\{1, 2, \ldots, n_\ell - 1\}$ from Proposition~\ref{prop:estflowexpgrowth}.
The combinatorial factor can be estimated as follows:
\begin{equation*}
\prod_{\ell = j_0 + 1}^j n_\ell^{n_\ell - 1}
	\le \prod_{\ell = 1}^j (k-1)^{n_\ell - 1}
	= (k - 1)^{k - j} .
\end{equation*}
We used that $n_\ell \le k - 1$, owing to the fact that $j < k$, as well as $\abs{\bf{n}} = k$ and $n_\ell = 1$ for all $\ell \le j_0$.
Taking into account the factor depending on the $j$-th derivative of $a$, we obtain that each term in the sum over~$j$ is bounded by
\begin{equation*}
(k - 1)^{k - j} \left(C \e^{\Lambda t}\right)^k \max_{1 \le j \le k} \left( \dfrac{1}{j!} \abs*{(\nabla^j a)_{\phi^t(\rho)}}_g \max_{1 \le n \le k - j} \left(\left(\dfrac{C}{2 \Lambda}\right)^n \bnorm*{\nabla^2 H_p}_{n, E}^{k - j - n}\right) \right) ,
\end{equation*}
with the convention mentioned in the statement in case $j = k$. It remains to sum over $j$ the combinatorial factors $\sum_{j=1}^k (k - 1)^{k - j} \le k^k$. This yields the desired estimate~\eqref{eq:estcomposflow}.
\end{proof}

\subsection{Lipschitz-type properties of the flow}

We wish to prove a Lipschitz-type property of the flow. Here the slow variation of the metric $g$ matters. Recall the slow variation radius $r_g$ and the slow variation constant $C_g$ from Proposition~\ref{prop:improvedadmissibility}, as well as the Lyapunov exponent $\Lambda$ and the number $\Upsilon$ introduced in Assumption~\ref{assum:p}.

\begin{proposition} \label{prop:Lipschitzproperty}
Suppose $p$ and $g$ satisfy Assumptions~\ref{assum:mandatory} and~\ref{assum:p}, and define $C_p := \abs*{\nabla^3 p}_{S((\gain_g/\udl{\gain}_g)^{-1}, g)}^{(0)}$. Then for all $r \in (0, r_g]$ and all $t \in \R$ such that
\begin{equation*}
r
	 \le r_g \e^{- (\Lambda + C_g^3 C_p \udl{\gain}_g) \abs{t}} ,
\end{equation*}
we have
\begin{equation*}
\forall \rho_0 \in T^\star \mfd, \forall \rho \in \bar B_r^g(\rho_0), \qquad
	\abs*{\phi^t(\rho) - \phi^t(\rho_0)}_{g_{\phi^t(\rho_0)}}
		\le \abs*{\rho - \rho_0}_{g_{\rho_0}} \e^{(\Lambda + C_g^3 C_p \udl{\gain}_g) \abs{t}} .
\end{equation*}
\end{proposition}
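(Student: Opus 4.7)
By symmetry (replacing $p$ with $-p$ preserves Assumptions~\ref{assum:mandatory}--\ref{assum:p} with the same constants $\Lambda$, $C_p$, $C_g$), it suffices to treat $t \ge 0$. Fix $\rho_0 \in T^\star \mfd$ and $\rho \in \bar B_r^g(\rho_0)$, and using the affine structure~\eqref{eq:affinestructurephasespace} on $T^\star \mfd$, set
\begin{equation*}
v(s) := \phi^s(\rho) - \phi^s(\rho_0) \in W, \qquad F(s) := g_{\phi^s(\rho_0)}(v(s), v(s)), \qquad G(s) := F(s)^{1/2}.
\end{equation*}
Writing $\lambda := \Lambda + C_g^3 C_p \udl{\gain}_g$, the goal is $G(t) \le G(0) \e^{\lambda t}$. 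The plan is to derive a differential inequality for $G$ on the set where slow variation of $g$ is applicable, then close a bootstrap ensuring $G(s) \le r_g$ throughout $[0, t]$.

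Since $v'(s) = H_p(\phi^s(\rho)) - H_p(\phi^s(\rho_0))$, Taylor's formula along the affine segment through $\phi^s(\rho_0)$ gives $v'(s) = \nabla H_p(\phi^s(\rho_0)) v(s) + E_2(s)$ with remainder $E_2(s) = \int_0^1 (1 - \tau) (\nabla^2 H_p)_{\phi^s(\rho_0) + \tau v(s)}(v(s), v(s)) \dd \tau$. Differentiating $F$ and absorbing the linear-in-$v$ piece together with $\nabla_{H_p} g$ into the Lie derivative via the identity recorded in the proof of Proposition~\ref{prop:aprioriexpgrowth}, I obtain
\begin{equation*}
F'(s) = (\lieder_{H_p} g)_{\phi^s(\rho_0)}(v, v) + 2 g_{\phi^s(\rho_0)}(E_2, v) \le 2 \Lambda F(s) + 2 \abs{E_2}_{g_{\phi^s(\rho_0)}} G(s),
\end{equation*}
where the first inequality uses the definition of $\Lambda$ in Item~\ref{it:Lambda} of Assumption~\ref{assum:p}. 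Lemma~\ref{lem:seminormsHp} yields $\abs{\nabla^2 H_p}_g \le \gain_g \abs{\nabla^3 p}_g$, and combining this with Item~\ref{it:strongsubquad} of Assumption~\ref{assum:p} upgrades it to the uniform bound $\abs{\nabla^2 H_p}_g \le C_p \udl{\gain}_g$ on all of $T^\star \mfd$. Provided $G(s) \le r_g$, slow variation of $g$ along the segment $\{\phi^s(\rho_0) + \tau v(s) : \tau \in [0, 1]\}$ transfers norms between basepoints at the cost of factors of $C_g$, and a short computation gives
\begin{equation*}
\abs{E_2}_{g_{\phi^s(\rho_0)}} \le \tfrac{1}{2} C_g^3 C_p \udl{\gain}_g \, F(s), \qquad \text{hence} \qquad G'(s) \le \Lambda G(s) + \tfrac{1}{2} C_g^3 C_p \udl{\gain}_g \, G(s)^2.
\end{equation*}

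To close the bootstrap, set $T^\star := \sup \{s \in [0, t] : G(s') \le r_g \text{ for all } s' \in [0, s]\}$, which is positive since $G(0) = \abs{\rho - \rho_0}_{g_{\rho_0}} \le r \le r_g$. On $[0, T^\star)$ the bound $G(s) \le r_g \le 1$ absorbs the nonlinear factor to give $G'(s) \le \lambda G(s)$, so Grönwall's inequality yields $G(s) \le G(0) \e^{\lambda s} \le r \e^{\lambda t} \le r_g$ on all of $[0, T^\star]$, the last inequality being precisely the growth assumption on $r$. This strict control rules out $T^\star < t$, so $T^\star = t$ and the stated estimate follows. The only delicate point in the argument is the coupling between Taylor's formula and slow variation of $g$ along the affine segment joining the two trajectories, which is exactly what dictates the bootstrap threshold $G(s) \le r_g$ and, through it, the quantitative condition relating $r$ and $t$.
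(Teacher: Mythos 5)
Your proposal is correct and follows essentially the same route as the paper: a Taylor expansion of $H_p(\phi^s(\rho)) - H_p(\phi^s(\rho_0))$ along the affine segment, the identity turning the linear part plus $\nabla_{H_p} g$ into the Lie derivative (hence the $2\Lambda$ term), the bound $\abs{\nabla^2 H_p}_g \le \gain_g \abs{\nabla^3 p}_g \le C_p \udl{\gain}_g$ from Lemma~\ref{lem:seminormsHp} and Assumption~\ref{assum:p}~\ref{it:strongsubquad}, slow variation giving the $C_g^3$ factor, and a Grönwall-plus-bootstrap closure under the stated smallness of $r$. The only differences are cosmetic (you reduce to $t \ge 0$ and work with $G = F^{1/2}$, while the paper keeps both signs of $t$ and runs Grönwall on the squared norm), so the arguments are essentially identical.
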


\begin{proof}
Let $r \in (0, r_g]$. Given $\rho_0, \rho \in T^\star \mfd$ with $\abs{\rho - \rho_0}_{g_{\rho_0}} \le r$, we differentiate with respect to $t$ the quantity
\begin{equation*}
\cal{N}(t)
	:= \abs*{\phi^t(\rho) - \phi^t(\rho_0)}_{g_{\phi^t(\rho_0)}}^2 .
\end{equation*}
Writing for short $\rho_t = \phi^t(\rho_0)$, $\rho_{t, s} = (1 - s) \phi^t(\rho_0) + s \phi^t(\rho)$ and $\zeta_t = \phi^t(\rho) - \phi^t(\rho_0)$, we obtain
\begin{align} \label{eq:computationgronwall}
\dfrac{\dd}{\dd t} \cal{N}(t)
	&= 2 g_{\rho_t}\left( H_p\left(\phi^t(\rho)\right) - H_p\left(\phi^t(\rho_0)\right), \zeta_t \right) + \nabla_{H_p} g_{\rho_t} \left( \zeta_t, \zeta_t\right) \nonumber\\
	&= \int_0^1 2 g_{\rho_t}\left( \nabla_{\zeta_t} H_p(\rho_{t, s}), \zeta_t \right) \dd s + \nabla_{H_p} g_{\rho_t} \left( \zeta_t, \zeta_t \right) \nonumber\\
	&= (\lieder_{H_p} g)_{\rho_t}\left( \zeta_t, \zeta_t \right) + \int_0^1 g_{\rho_t}\left(\nabla_{\zeta_t} H_p(\rho_{t, s}) - \nabla_{\zeta_t} H_p(\rho_t), \zeta_t \right) \dd s \nonumber\\
	&= (\lieder_{H_p} g)_{\rho_t}\left( \zeta_t, \zeta_t \right) + \int_0^1 \int_0^1 g_{\rho_t}\left((\nabla_{\zeta_t, s \zeta_t}^2 H_p\left((1 - \tau) \rho_t + \tau \rho_{t,s}\right), \zeta_t \right) \dd \tau \dd s \nonumber\\
	&\le 2 \Lambda \cal{N}(t) + \sup_{s, \tau \in [0, 1]} \abs*{\nabla^2 H_p\left( (1 - \tau) \rho_t + \tau \rho_{t,s} \right)}_{g_{\rho_t}} \cal{N}(t)^3 .
\end{align}
The second equality comes from Taylor's theorem, then we make the Lie derivative appear by adding and subtracting $\nabla H_p(\rho_t)$, we use Taylor's theorem again in the fourth equality, and finally we use the definition of $\Lambda$ in Assumption~\ref{assum:p}~\ref{it:Lambda}.
We now set
\begin{equation*}
t_\star = \sup A
	\qquad \rm{where} \qquad
A = \set{T \ge 0}{\forall t \in [-T, T], \;\, \abs*{\phi^t(\rho) - \phi^t(\rho_0)}_{g_{\rho_t}} \le r_g} .
\end{equation*}
Notice that $0 \in A$, and $t_\star > 0$ by a continuity argument. If $t \in (-t_\star, t_\star)$, we have $(1 - \tau) \rho_t + \tau \rho_{t,s} = \rho_t + \tau s \zeta_t \in \bar B_{r_g}^g(\rho_t)$ for all $s, \tau \in [0, 1]$, so that slow variation of the metric $g$ combined with Lemma~\ref{lem:seminormsHp} gives
\begin{equation*}
\abs*{\nabla^2 H_p\left( (1 - \tau) \rho_t + \tau \rho_{t,s} \right)}_{g_{\rho_t}}
	\le C_g^3 \abs*{\nabla^2 H_p\left( \rho_t + \tau s \zeta_t \right)}_{g_{\rho_t + \tau s \zeta_t}}
	\le C_g^3 \sup_{T^\star \mfd} \gain_g \abs*{\nabla^3 p}_g
	\le C_g^3 C_p \udl{\gain}_g .
\end{equation*}
In addition, we have $\cal{N}(t) \le r_g^2 \le 1$, hence from~\eqref{eq:computationgronwall}:
\begin{align*}
\dfrac{\dd}{\dd t} \cal{N}(t)
	&\le \left( 2 \Lambda + C_g^3 C_p \udl{\gain}_g \right) \cal{N}(t)  .
\end{align*}
Therefore Grönwall's inequality yields
\begin{equation*}
\cal{N}(t)
	\le \cal{N}(0) \e^{(2 \Lambda + C_g^3 C_p \udl{\gain}_g) \abs{t}} ,
\end{equation*}
that is to say
\begin{equation*}
\abs*{\phi^t(\rho) - \phi^t(\rho_0)}_{g_{\phi^t(\rho_0)}}
	\le \abs*{\rho - \rho_0}_{g_{\rho_0}} \e^{(\Lambda + C_g^3 C_p \udl{\gain}_g) \abs{t}}
	\le r \e^{(\Lambda + C_g^3 C_p \udl{\gain}_g) t_\star} .
\end{equation*}
By a continuity argument, we conclude that $t_\star$ is such that $r \e^{(\Lambda + C_g^3 C_p \udl{\gain}_g) t_\star} \ge r_g$, which is a condition independent of $\rho_0, \rho$. Given a time $t \in \R$, in order to ensure that $t_\star \ge \abs{t}$, it suffices to require that
\begin{equation*}
\dfrac{r_g}{r}
	\ge \e^{(\Lambda + C_g^3 C_p \udl{\gain}_g) \abs{t}} ,
\end{equation*}
hence the result.
\end{proof}

\begin{proposition} \label{prop:temperancepropertyflow}
Suppose $p$ and $g$ satisfy Assumptions~\ref{assum:mandatory} and~\ref{assum:p}. Then there exist constants $C > 0$ and $N \ge 0$, depending only on structure constants of $g$, such that the following holds: for all $\rho_0 \in T^\star \mfd$ and all $\tilde \rho \in \bar B_{r_g}^g(\rho_0)$, we have
\begin{empheq}[left={\empheqlbrace},right={\qquad \forall \rho \in T^\star \mfd, \forall t \in [- T_E, T_E] ,}]{alignat=2}
    \abs*{\phi^t(\rho) - \phi^t(\tilde \rho)}_{g_{\phi^t(\rho_0)}^\sympf(t)}
		&\le C \abs*{\rho - \tilde \rho}_{g_{\rho_0}^\sympf} \jap*{\rho - \tilde \rho}_{g_{\rho_0}^\natural}^N , \label{eq:Lipgsympf}\\
    \abs*{\phi^t(\rho) - \phi^t(\tilde \rho)}_{g_{\phi^t(\rho_0)}^\natural}
		&\le C \e^{(\Lambda + 2 \Upsilon) \abs{t}} \abs*{\rho - \tilde \rho}_{g_{\rho_0}^\natural} \jap*{\rho - \tilde \rho}_{g_{\rho_0}^\natural}^N , \label{eq:Lipgnatural}
\end{empheq}
where $T_E$ is the Ehrenfest time introduced in~\eqref{eq:defTE}, and $g^\natural$ is the symplectic intermediate metric defined as the geometric mean of $g$ and $g^\sympf$ (see~\eqref{eq:defgnatural}).
\end{proposition}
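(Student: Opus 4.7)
The plan is to derive both estimates by chaining the local Lipschitz bound of Proposition~\ref{prop:Lipschitzproperty} along the affine segment from $\tilde\rho$ to $\rho$, harvesting temperance factors from the improved admissibility of Proposition~\ref{prop:improvedadmissibility} combined with the metric control along the flow from Assumption~\ref{assum:p}~\ref{it:metriccontrol}. I first address~\eqref{eq:Lipgnatural}, since~\eqref{eq:Lipgsympf} will follow by a parallel argument using $\sympf$-duality.

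To prove~\eqref{eq:Lipgnatural}, I choose a scale $r_0 \asymp r_g \, \e^{-(\Lambda + C_g^3 C_p \udl{\gain}_g)\abs{t}}$ at which Proposition~\ref{prop:Lipschitzproperty} applies uniformly for $t \in [-T_E, T_E]$. I partition the affine segment from $\tilde\rho$ to $\rho$ into points $\tilde\rho = \rho^{(0)}, \rho^{(1)}, \ldots, \rho^{(n)} = \rho$, each consecutive pair being separated by at most $r_0$ in the $g_{\rho_0}^\natural$-metric, so that $n \lesssim 1 + \abs{\rho - \tilde\rho}_{g_{\rho_0}^\natural}/r_0$. Since $g \le g^\natural$ by~\eqref{eq:chainineq}, the improved temperance~\eqref{eq:unifadmnatural} (applied at $t = 0$, and using that $g^\natural$ is self-$\sympf$-dual so that the distance in $(g_{\rho_0}^\natural + g_{\rho^{(j)}}^\natural)^\sympf$ is controlled by $\abs{\rho^{(j)} - \rho_0}_{g_{\rho_0}^\natural}$) transfers the $r_0$-smallness from $g_{\rho_0}^\natural$ to $g_{\rho^{(j)}}$ at the cost of a polynomial factor $\jap{\rho^{(j)} - \rho_0}_{g_{\rho_0}^\natural}^{2 N_g}$, and after refining the partition to absorb this factor each pair fits the hypothesis of Proposition~\ref{prop:Lipschitzproperty}. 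Applying the latter yields, for each $j$, a local bound
\begin{equation*}
\abs{\phi^t(\rho^{(j+1)}) - \phi^t(\rho^{(j)})}_{g_{\phi^t(\rho^{(j)})}} \le C\,\e^{(\Lambda + C_g^3 C_p \udl{\gain}_g)\abs{t}}\,\abs{\rho^{(j+1)} - \rho^{(j)}}_{g_{\rho^{(j)}}}.
\end{equation*}
To recast the left-hand side at the fixed reference metric $g_{\phi^t(\rho_0)}^\natural$, I invoke Assumption~\ref{assum:p}~\ref{it:metriccontrol} twice (once to transfer back from $\phi^t(\rho^{(j)})$ to $\rho^{(j)}$, once from $\rho_0$ forward to $\phi^t(\rho_0)$), with the improved temperance of $g^\natural$ bridging $g_{\rho^{(j)}}^\natural$ and $g_{\rho_0}^\natural$; at the level of norms this contributes a factor $\e^{2 \Upsilon \abs{t}}$ together with an additional polynomial term in $\jap{\rho^{(j)} - \rho_0}_{g_{\rho_0}^\natural}$. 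Summing over $j$ by the triangle inequality and using $n r_0 \asymp \abs{\rho - \tilde\rho}_{g_{\rho_0}^\natural}$ then delivers~\eqref{eq:Lipgnatural}.

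For~\eqref{eq:Lipgsympf} the argument is analogous, now run in the $g^\sympf$ metric. Proposition~\ref{prop:normssympfnatural}~\eqref{eq:2pullbacks} gives $(\phi^t)^\pullb(g^\sympf) = ((\phi^t)^\pullb g)^\sympf$, and combined with Proposition~\ref{prop:aprioriexpgrowth} and $\sympf$-duality reversal~\eqref{eq:sympfduality} this yields $(\phi^t)^\pullb g^\sympf \le \e^{2 \Lambda \abs{t}} g^\sympf$. Running the same chaining argument but measuring all displacements in $g^\sympf$, the prefactor $\e^{-(\Lambda + 2\Upsilon)\abs{t}}$ built into $g^\sympf(t)$ precisely absorbs the flow expansion and the two metric drifts. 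The principal technical obstacle is the bookkeeping of the temperance exponents: each base-point transfer contributes a factor $\jap{\cdot}^{2 N_g}$, and these must be consolidated into a single $\jap{\rho - \tilde\rho}_{g_{\rho_0}^\natural}^N$ in the final bound. Moreover, as $\udl{\gain}_g \to 0$ the scale $r_0$ shrinks and $n$ grows polynomially in $\udl{\gain}_g^{-1}$; however, because the per-piece estimate is linear in the local displacement, the sum is essentially telescopic and only $n r_0 \asymp \abs{\rho - \tilde\rho}_{g_{\rho_0}^\natural}$ survives. Uniformity for $t \in [-T_E, T_E]$ follows from the $t$-independent structure constants of the family $g(t)$ (Proposition~\ref{prop:improvedadmissibility} and Remark~\ref{rmk:uniformadmissibilityg(t)}).
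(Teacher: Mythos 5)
There is a genuine gap, and it lies in the metric in which your per-piece estimate lives. Your only quantitative input along each small segment is Proposition~\ref{prop:Lipschitzproperty}, which bounds the flowed displacement in the \emph{$g$-metric}: $\abs{\phi^t(\rho^{(j+1)}) - \phi^t(\rho^{(j)})}_{g_{\phi^t(\rho^{(j)})}} \lesssim \e^{(\Lambda + C_g^3 C_p \udl{\gain}_g)\abs{t}} \abs{\rho^{(j+1)} - \rho^{(j)}}_{g_{\rho^{(j)}}}$. The estimates you must prove are in the $g^\sympf(t)$- and $g^\natural$-metrics, which dominate $g$ by factors that are unbounded on phase space ($g^\sympf \le \theta_g^2\, g$ is the best general comparison, and $g^\natural \le \gain_g^{-1} g$), and the flowed displacement vector can rotate into directions where this discrepancy is saturated. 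Converting your chained $g$-bound into a $g^\natural_{\phi^t(\rho_0)}$-bound therefore costs a factor of order $\gain_g^{-1/2}$ (at best $\udl{\gain}_g^{-1/2}$), and into a $g^\sympf$-bound a factor up to $\theta_g$; neither is admissible, since the constants $C, N$ must depend only on structure constants of $g$ (uniformly as $\udl{\gain}_g \to 0$ in the semiclassical regime), and the compensation you invoke --- the prefactor $\e^{-(\Lambda+2\Upsilon)\abs{t}}$ built into $g^\sympf(t)$ --- only cancels the time-exponential factors, not this directional anisotropy loss. For~\eqref{eq:Lipgsympf} you say you will ``run the chaining measuring all displacements in $g^\sympf$'', but you have no per-piece Lipschitz input in $g^\sympf$: Proposition~\ref{prop:Lipschitzproperty} is a statement about $g$ only.

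The missing idea --- which also makes the chaining unnecessary --- is that the Lyapunov control is available \emph{directly} in $g^\sympf$ and $g^\natural$ because $\phi^t$ is symplectic: Proposition~\ref{prop:normssympfnatural} together with Proposition~\ref{prop:aprioriexpgrowth} gives $\abs{\dd \phi^t}_{g^\sympf} \le \e^{\Lambda \abs{t}}$ and $\abs{\dd \phi^t}_{g^\natural} \le \e^{\Lambda \abs{t}}$ globally. The paper's proof does not use Proposition~\ref{prop:Lipschitzproperty} at all: it writes $\phi^t(\rho) - \phi^t(\tilde\rho) = \int_0^1 \dd\phi^t(\rho_s).(\rho - \tilde\rho)\,\dd s$ along the segment $\rho_s = (1-s)\tilde\rho + s\rho$, applies the $g^\sympf$ (resp.\ $g^\natural$) differential bound at each $\rho_s$, and moves the base point of the metric from $\rho_s$ to $\tilde\rho$ to $\rho_0$ (and between $\phi^t$-points and base points) using the improved admissibility of Proposition~\ref{prop:improvedadmissibility}, slow variation on $\bar B_{r_g}^g(\rho_0)$, and Assumption~\ref{assum:p}~\ref{it:metriccontrol}; this is what produces the single polynomial factor $\jap{\rho - \tilde\rho}_{g_{\rho_0}^\natural}^{N}$ and the total exponential $\e^{(\Lambda + 2\Upsilon)\abs{t}}$, which is then absorbed by the scaling in $g^\sympf(t)$ for~\eqref{eq:Lipgsympf} and left explicit in~\eqref{eq:Lipgnatural}. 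If you wish to keep a chaining structure you would first have to establish a per-piece Lipschitz estimate in $g^\sympf$ and in $g^\natural$; but once you have the differential bounds above, the one-shot integral argument already covers arbitrary $\rho \in T^\star\mfd$ and the partition of the segment buys you nothing.
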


\begin{proof}
We prove first the estimate~\eqref{eq:Lipgsympf} for the {$g^\sympf(t)$-norm}. Let $\tilde \rho, \rho$ be as in the statement. Introduce for any $s \in [0, 1]$ the point $\rho_s = (1 - s) \tilde \rho + s \rho$. Then we have
\begin{equation*}
\abs*{\phi^t(\rho) - \phi^t(\tilde \rho)}_{g_{\phi^t(\rho_0)}^\sympf}
	\le C_\Upsilon \e^{\Upsilon \abs{t}} \int_0^1 \abs*{\dd \phi^t(\rho_s).(\rho - \tilde \rho)}_{g_{\rho_0}^\sympf} \dd s
	\le C_g C_\Upsilon \e^{\Upsilon \abs{t}} \int_0^1 \abs*{\dd \phi^t(\rho_s).(\rho - \tilde \rho)}_{g_{\tilde \rho}^\sympf} \dd s .
\end{equation*}
This first inequality consists in applying the fundamental theorem of analysis, the triangle inequality and the fact that $g_{\rho_0} \le C_\Upsilon^2 \e^{2 \Upsilon \abs{t}} g_{\phi^t(\rho_0)}$ (Assumption~\ref{assum:p}~\ref{it:metriccontrol}) combined with {$\sympf$-duality}~\eqref{eq:sympfduality}. The second inequality comes from slow variation of $g$, which applies since $\tilde \rho \in \bar B_{r_g}^g(\rho_0)$, with {$\sympf$-duality} again. Then we use the improved admissibility of $g$ (Proposition~\ref{prop:improvedadmissibility}) with the points $\tilde \rho$ and $\rho_s$ to obtain
\begin{equation*}
\abs*{\phi^t(\rho) - \phi^t(\tilde \rho)}_{g_{\phi^t(\rho_0)}^\sympf}
	\le C_g^2 C_\Upsilon \e^{\Upsilon \abs{t}} \int_0^1 \abs*{\dd \phi^t(\rho_s).(\rho - \tilde \rho)}_{g_{\rho_s}^\sympf} \jap*{\rho_s - \tilde \rho}_{g_{\tilde \rho}^\natural}^{N_g} \dd s .
\end{equation*}
Next we use the fact that $g_{\rho_s} \le C_\Upsilon^2 \e^{2 \Upsilon \abs{t}} g_{\phi^t(\rho_s)}$ combined with {$\sympf$-duality}, together with the fact\footnote{This ``slow variation" property of $g^\natural$ on $\bar B_{r_g}^g(\rho_0)$ is a consequence of that of $g$ and $g^\sympf$, and the definition of $g^\natural$ as the geometric mean of $g$ and $g^\sympf$~\eqref{eq:defgnatural}. See~\eqref{eq:slowvargnatural} in Appendix~\ref{app:improvedadmissibility} for more details.} that $g_{\tilde \rho}^\natural \le C_g^2 g_{\rho_0}^\natural$:
\begin{equation*}
\abs*{\phi^t(\rho) - \phi^t(\tilde \rho)}_{g_{\phi^t(\rho_0)}^\sympf}
	\le C_g^{2 + N_g} C_\Upsilon^2 \e^{2 \Upsilon \abs{t}} \int_0^1 \abs*{\dd \phi^t(\rho_s).(\rho - \tilde \rho)}_{g_{\phi^t(\rho_s)}^\sympf} \jap*{s (\rho - \tilde \rho)}_{g_{\rho_0}^\natural}^{N_g} \dd s .
\end{equation*}
Then we apply the improved admissibility property again (Proposition~\ref{prop:improvedadmissibility}) for $g^\sympf$ with the points $\rho_s$ and $\tilde \rho$:
\begin{align*}
\abs*{\phi^t(\rho) - \phi^t(\tilde \rho)}_{g_{\phi^t(\rho_0)}^\sympf}
	&\le C_g^{3 + N_g} C_\Upsilon^2 \e^{2 \Upsilon \abs{t}} \int_0^1 \abs*{\dd \phi^t(\rho_s)}_{g_{\rho_s}^\sympf} \abs*{\rho - \tilde \rho}_{g_{\tilde \rho}^\sympf} \jap*{\rho_s - \tilde \rho}_{g_{\tilde \rho}^\natural}^{N_g} \jap*{\rho - \tilde \rho}_{g_{\rho_0}^\natural}^{N_g} \dd s \\
	&\le C_g^{4 + 2 N_g} C_\Upsilon^2 \e^{(2 \Upsilon + \Lambda) \abs{t}} \abs*{\rho - \tilde \rho}_{g_{\rho_0}^\sympf} \jap*{\rho - \tilde \rho}_{g_{\rho_0}^\natural}^{2 N_g} .
\end{align*}
For the last inequality, we used the estimate $\abs{\dd \phi^t}_{g^\sympf} \le e^{\Lambda \abs{t}}$ as a consequence of Propositions~\ref{prop:aprioriexpgrowth} and~\ref{prop:normssympfnatural}. We also used the fact that $g_{\tilde \rho}^\sympf \le C_g^2 g_{\rho_0}^\sympf$, thanks to slow variation of~$g$ and {$\sympf$-duality}. Finally, we obtain~\eqref{eq:Lipgsympf} recalling that $g^\sympf(t) = \e^{- 2 (2 \Upsilon + \Lambda) \abs{t}} g^\sympf$ by definition~\eqref{eq:defg(t)}.

We now prove the estimate~\eqref{eq:Lipgnatural} in the {$g^\natural$-norm}. The previous estimate applies with $g^\natural$ instead of $g$, with the following adaptations. The metric $g^\natural$ is admissible by~\cite[Proposition 2.2.20]{Lerner:10} and varies slowly on {$g$-balls} (namely $g_{\tilde \rho}^\natural \le C_g^2 g_{\rho_0}^\natural$ for $\abs{\tilde \rho - \rho_0}_{g_{\rho_0}} \le r_g$). In addition, Assumption~\ref{assum:p}~\ref{it:metriccontrol} is true for $g^\sympf$ with the same constants $\Upsilon, C_\Upsilon$ than $g$, as a consequence of {$\sympf$-duality}~\eqref{eq:sympfduality}. We deduce that the same is true for $g^\natural$, defined in~\eqref{eq:defgnatural} as the geometric mean of $g$ and $g^\sympf$ (apply a reasoning similar to~\eqref{eq:reasoningsimilarlater} in the proof of Proposition~\ref{prop:normssympfnatural}), namely $g_{\phi^t(\rho)}^\natural \le C_\Upsilon^2 \e^{2 \Upsilon \abs{t}} g_\rho^\natural$. We also have the estimate $\abs{\dd \phi^t}_{g^\natural} \le \e^{\Lambda \abs{t}}$ by Proposition~\ref{prop:normssympfnatural}. This concludes the proof of the proposition.
\end{proof}

\begin{remark}
In the proof of Proposition~\ref{prop:temperancepropertyflow}, we used the assumption that $g_{\phi^t(\rho)} \le C_\Upsilon^2 \e^{2 \Upsilon \abs{t}} g_\rho$ (Assumption~\ref{assum:p}~\ref{it:metriccontrol}). In fact we could bypass the use of this assumption here by slightly strengthening the admissibility assumption on $g$. Instead of assuming that $g$ is temperate, we could assume $g$ to be \emph{geodesically} temperate, namely to verify
\begin{equation*}
\exists C>0, N \ge 0 : \forall \rho_0, \rho \in T^\star \mfd, \qquad
	g_\rho
		\le C^2 \jap*{d_g(\rho_0, \rho)}^{2N} g_{\rho_0} ,
\end{equation*}
where $d_g$ is the geodesic distance. This implies temperance in the sense of Definition~\ref{def:admissiblemetric}, although we are not aware of examples of temperate metrics that are not geodesically temperate. This assumption is much more difficult to check in practice, although it is fulfilled for all the examples discussed in Section~\ref{subsec:examples} (see~\cite[Section 2.6.5]{Lerner:10}).
\end{remark}

An important corollary of Proposition~\ref{prop:temperancepropertyflow} is Proposition~\ref{prop:uniformm(t)} stated in Section~\ref{subsec:metricsonphasespace}, namely the uniform {$g(t)$-admissibility} of $\e^{t H_p} m$ for {$g$-admissible} weights~$m$.

\begin{proof}[Proof of Proposition~\ref{prop:uniformm(t)}]
\emph{Slow variation.}
Let $r_0 > 0$ be a common slow variation radius of $g$ and $m$ (given by Proposition~\ref{prop:improvedadmissibility}), and set $\tilde r_0 := r_0/(1+C_g^2)$. Let $\rho_0 \in T^\star \mfd$ and introduce
\begin{equation*}
t_\star
	= \sup \set{\tau \ge 0}{\forall \abs{t} \le \tau, \; \phi^t\left(\bar B_{\tilde r_0}^{g(t)}(\rho_0)\right) \subset \bar B_{r_0}^g\left(\phi^t(\rho_0)\right)} .
\end{equation*}
By a continuity argument, we have $t_\star > 0$, since $\tilde r_0 < r_0$. So let $t \in (-t_\star, t_\star)$. Using the fundamental theorem of calculus, we have for all $\rho \in \bar B_{\tilde r_0}^{g(t)}(\rho_0)$:
\begin{align*}
\abs*{\phi^t(\rho) - \phi^t(\rho_0)}_{g_{\phi^t(\rho_0)}}
	&\le \int_0^1 \abs*{\dd \phi^t\left((1-s) \rho_0 + s \rho\right). (\rho - \rho_0)}_{g_{\phi^t(\rho_0)}} \dd s \\
	&\le C_g \int_0^1 \abs*{\dd \phi^t\left((1-s) \rho_0 + s \rho\right). (\rho - \rho_0)}_{g_{\phi^t((1-s) \rho_0 + s \rho)}} \dd s \\
	&\le C_g \e^{\Lambda \abs{t}} \int_0^1 \abs*{\rho - \rho_0}_{g_{(1-s) \rho_0 + s \rho}} \dd s \\
	&\le C_g^2 \e^{\Lambda \abs{t}} \abs*{\rho - \rho_0}_{g_{\rho_0}}
	\le C_g^2 \abs*{\rho - \rho_0}_{g_{\rho_0}(t)}
	\le r_0 \dfrac{C_g^2}{1 + C_g^2} .
\end{align*}
We used slow variation of~$g$ in~$\bar B_{r_0}^g(\phi^t(\rho_0))$ in the second inequality, Proposition~\ref{prop:aprioriexpgrowth}~\eqref{eq:diffflowexp} in the third one, and slow variation of~$g$ in~$\bar B_{r_0}^g(\rho_0)$ in the fourth one.
We deduce that
\begin{equation*}
\forall t \in (-t_\star , t_\star), \qquad
	\phi^t\left(\bar B_{\tilde r_0}^{g(t)}(\rho_0)\right) \subset \bar B_{\frac{r_0 C_g^2}{1 + C_g^2}}^g\left(\phi^t(\rho_0)\right) ,
\end{equation*}
therefore by a continuity argument, one concludes that $t_\star = + \infty$. By slow variation of $m$, we deduce that
\begin{equation*}
\forall \rho_0 \in T^\star \mfd, \forall \rho \in \bar B_{\tilde r_0}^{g(t)}(\rho_0) , \qquad
	m\left(\phi^t(\rho)\right)
		\le C m\left(\phi^t(\rho_0)\right) ,
			\qquad \forall t \in \R ,
\end{equation*}
hence uniform {$g(t)$-slow} variation of $\e^{t H_p} m$.

\medskip
\emph{Temperance.} Applying Proposition~\ref{prop:improvedadmissibility} (improved admissibility), we have for any $\rho_0, \rho \in T^\star \mfd$:
\begin{equation*}
m\left(\phi^t(\rho)\right)
	\le C m\left(\phi^t(\rho_0)\right) \jap*{\phi^t(\rho) - \phi^t(\rho_0)}_{g_{\phi^t(\rho_0)}^\natural}^N .
\end{equation*}
The {$g^\natural$-norm} estimate from Proposition~\ref{prop:temperancepropertyflow}~\eqref{eq:Lipgnatural} yields
\begin{equation*}
\jap*{\phi^t(\rho) - \phi^t(\rho_0)}_{g_{\phi^t(\rho_0)}^\natural}
		\le C \jap*{\rho - \rho_0}_{\e^{2 (\Lambda + 2 \Upsilon) \abs{t}} g_{\rho_0}^\natural}^{N'+1} .
\end{equation*}
Now from~\eqref{eq:chainineq}, we have
\begin{equation}
\e^{2 (\Lambda + 2 \Upsilon) \abs{t}} g^\natural
	\le \e^{2 (\Lambda + 2 \Upsilon) \abs{t}} \gain_g g^\sympf
	\le \e^{4 (\Lambda + 2 \Upsilon) \abs{t}} \udl{\gain}_g g^\sympf(t) ,
\end{equation}
which yields $\e^{2 (\Lambda + 2 \Upsilon) \abs{t}} g^\natural \le g^\sympf(t)$ for $\abs{t} \le \frac{1}{2} T_E$. Therefore
\begin{equation*}
m\left(\phi^t(\rho)\right)
	\le C m\left(\phi^t(\rho_0)\right) \jap*{\rho - \rho_0}_{g_{\rho_0}^\sympf(t)}^{N (N'+1)} ,
\end{equation*}
hence uniform {$g(t)$-temperance} of $\e^{t H_p} m$ in the time range $\abs{t} \le \frac{1}{2} T_E$.
\end{proof}

\Large
\section{Mapping properties of the Egorov expansion operators} \label{sec:mapping}
\normalsize

We provide estimates on the operators $\cal{E}_j(t)$ that appear in the Dyson series~\eqref{eq:Dyson}. Throughout, we use the notation of Section~\ref{subsec:functionalframework}. In all this section, we work with a classical Hamiltonian $p$ and a Riemannian metric $g$ on $T^\star \mfd$ that satisfy Assumptions~\ref{assum:mandatory} and~\ref{assum:p}.

Let us first state a technical result explaining how the gain function and the temperance weight (defined in Definitions~\ref{def:gaing} and~\ref{def:thetag}) behave when composed by the Hamiltonian flow.

\begin{lemma} \label{lem:gain(t)}
The following holds:
\begin{equation*}
\e^{t H_p} \gain_g
	\le \gain_{g(t)}
		\quad {\rm and} \quad
\e^{t H_p} \theta_g
	\ge C_\Upsilon^{-1} \theta_{g(t)} ,
		\qquad \forall t \in \R ,
\end{equation*}
where $C_\Upsilon$ is the constant from Assumption~\ref{assum:p}~\ref{it:metriccontrol}.
\end{lemma}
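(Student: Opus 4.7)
The plan is to unwind the definitions of $\gain_g$ and $\theta_g$ and use the two ingredients of Assumption~\ref{assum:p} (the flow expansion bound~\eqref{eq:Lyapunovcontrol} and the metric control~\eqref{eq:constantsmetricalongtheflow}) separately, one for each of the two claimed inequalities.

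First, I would record the elementary rescaling identities. Since $g(t) = \e^{2(\Lambda+2\Upsilon)|t|} g$ and conformal rescaling by~$c$ acts as $c^{-1}$ on the $\sympf$-dual, one has $g^\sympf(t) = \e^{-2(\Lambda+2\Upsilon)|t|} g^\sympf$, hence from the definitions
\begin{equation*}
\gain_{g(t)} = \e^{2(\Lambda+2\Upsilon)|t|} \gain_g,
\qquad
\theta_{g(t)} = \e^{-(\Lambda+2\Upsilon)|t|} \theta_g .
\end{equation*}
The task thus reduces to showing the pointwise bounds $\gain_g(\phi^t(\rho)) \le \e^{2\Lambda|t|} \gain_g(\rho)$ and $\theta_g(\phi^t(\rho)) \ge C_\Upsilon^{-1} \e^{-\Upsilon|t|} \theta_g(\rho)$, because $\Lambda, \Upsilon \ge 0$ implies $\e^{2\Lambda|t|} \le \e^{2(\Lambda+2\Upsilon)|t|}$ and $\e^{-\Upsilon|t|} \ge \e^{-(\Lambda+2\Upsilon)|t|}$.

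For the first pointwise inequality, I would use that $\phi^t$ is a symplectomorphism. By Assumption~\ref{assum:p}~\ref{it:Lambda}, $(\phi^t)^\pullb g \le \e^{2\Lambda|t|} g$; by Proposition~\ref{prop:normssympfnatural} and the order-reversing property of $\sympf$-duality, $(\phi^t)^\pullb g^\sympf \ge \e^{-2\Lambda|t|} g^\sympf$. For any $\zeta \in T_\rho(T^\star \mfd) \setminus \{0\}$ and $\eta = \dd \phi^t(\rho)\, \zeta \in T_{\phi^t(\rho)}(T^\star \mfd)$, these two estimates give
\begin{equation*}
\frac{|\eta|_{g_{\phi^t(\rho)}}}{|\eta|_{g_{\phi^t(\rho)}^\sympf}}
\le \e^{2\Lambda|t|}\, \frac{|\zeta|_{g_\rho}}{|\zeta|_{g_\rho^\sympf}} .
\end{equation*}
Since $\dd \phi^t(\rho)$ is an isomorphism, $\eta$ ranges over $T_{\phi^t(\rho)}(T^\star \mfd) \setminus \{0\}$, so taking the supremum yields $\gain_g(\phi^t(\rho)) \le \e^{2\Lambda|t|} \gain_g(\rho)$, which combined with the rescaling identity gives the first inequality of the lemma.

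For the second pointwise inequality, I would apply Assumption~\ref{assum:p}~\ref{it:metriccontrol} directly: $g_{\phi^t(\rho)} \le C_\Upsilon^2 \e^{2\Upsilon|t|} g_\rho$, which by $\sympf$-duality~\eqref{eq:sympfduality} is equivalent to $g_{\phi^t(\rho)}^\sympf \ge C_\Upsilon^{-2} \e^{-2\Upsilon|t|} g_\rho^\sympf$. Inserting this into the definition of $\theta_g$ in~\eqref{eq:deftheta},
\begin{equation*}
\theta_g(\phi^t(\rho))
= \sup_{\zeta \neq 0} \frac{|\zeta|_{g_{\phi^t(\rho)}^\sympf}}{|\zeta|_{\sf g}}
\ge C_\Upsilon^{-1} \e^{-\Upsilon|t|} \sup_{\zeta \neq 0} \frac{|\zeta|_{g_\rho^\sympf}}{|\zeta|_{\sf g}}
= C_\Upsilon^{-1} \e^{-\Upsilon|t|} \theta_g(\rho) ,
\end{equation*}
and combining with the rescaling identity yields the second inequality. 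There is no real obstacle here: both bounds fall out of the definitions once one observes that the first uses only the symplectic Gr\"onwall bound~\eqref{eq:Lyapunovcontrol} while the second uses only the pointwise metric control~\eqref{eq:constantsmetricalongtheflow}, which is exactly why the two constants $\Lambda$ and $\Upsilon$ play asymmetric roles in the statement.
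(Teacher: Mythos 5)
Your proof is correct and follows essentially the same route as the paper: both arguments parametrize tangent vectors at $\phi^t(\rho)$ via $\dd\phi^t$, bound the gain using $(\phi^t)^\pullb g \le \e^{2\Lambda\abs{t}} g$ together with Proposition~\ref{prop:normssympfnatural} and $\sympf$-duality, and handle $\theta_g$ by dualizing Assumption~\ref{assum:p}~\ref{it:metriccontrol}; your only cosmetic difference is isolating the conformal rescaling identities for $\gain_{g(t)}$ and $\theta_{g(t)}$ up front instead of folding the factors $\e^{\pm(\Lambda+2\Upsilon)\abs{t}}$ directly into the estimates.
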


\begin{proof}
Given $\rho \in T^\star \mfd$, since $\dd \phi^t$ is an isomorphism:
\begin{equation*}
\e^{t H_p} \gain_g(\rho)
	= \sup_{\zeta \in W \setminus \{0\}} \dfrac{\abs{\zeta}_{g_{\phi^t(\rho)}}}{\abs{\zeta}_{g_{\phi^t(\rho)}^\sympf}}
	= \sup_{\zeta \in W \setminus \{0\}} \dfrac{\abs{\dd \phi^t(\rho). \zeta}_{g_{\phi^t(\rho)}}}{\abs{\dd \phi^t(\rho). \zeta}_{g_{\phi^t(\rho)}^\sympf}}
	= \gain_{(\phi^t)^\pullb g}(\rho) .
\end{equation*}
Using Propositions~\ref{prop:aprioriexpgrowth} and~\ref{prop:normssympfnatural}, we have
\begin{equation*}
\left\{
\begin{aligned}
\abs*{\dd \phi^t(\rho). \zeta}_{g_{\phi^t(\rho)}}
	&\le \e^{\Lambda \abs{t}} \abs*{\zeta}_{g_\rho}
	\le \abs*{\zeta}_{g_\rho(t)} \\
\abs*{\zeta}_{g_\rho^\sympf(t)}
	&= 	\e^{- (\Lambda + 2 \Upsilon) \abs{t}} \abs*{\zeta}_{g_\rho^\sympf}
	\le \e^{- (\Lambda + 2 \Upsilon) \abs{t}} \abs*{\dd \phi^{-t}(\phi^t(\rho))}_{g^\sympf} \abs*{\dd \phi^t(\rho). \zeta}_{g_{\phi^t(\rho)}^\sympf}
	\le \abs*{\dd \phi^t(\rho). \zeta}_{g_{\phi^t(\rho)}^\sympf}
\end{aligned}
\right. .
\end{equation*}
Therefore, we deduce that
\begin{equation*}
\e^{t H_p} \gain_g(\rho)
	\le \sup_{\zeta \in W \setminus \{0\}} \dfrac{\abs{\zeta}_{g_\rho(t)}}{\abs{\zeta}_{g_\rho^\sympf(t)}}
	\le \gain_{g(t)} .
\end{equation*}
As for the temperance weight, we use Assumption~\ref{assum:p}~\ref{it:metriccontrol} together with {$\sympf$-duality} to obtain
\begin{equation*}
\forall \zeta \in W \setminus \{0\} , \qquad
	 \dfrac{\abs{\zeta}_{g_{\phi^t(\rho)}^\sympf}}{\abs{\zeta}_{{\sf g}}}
		\ge \dfrac{C_\Upsilon^{-1} \e^{-\Upsilon \abs{t}} \abs{\zeta}_{g_\rho^\sympf}}{\abs{\zeta}_{{\sf g}}}
		\ge C_\Upsilon^{-1} \dfrac{\abs{\zeta}_{g_\rho(t)^\sympf}}{\abs{\zeta}_{{\sf g}}} ,
\end{equation*}
so that taking the supremum over $\zeta$ yields the sought result.
\end{proof}

\subsection{Mapping properties in symbol classes} \label{subsec:contEcalsymb}

We investigate the mapping properties of $\e^{t H_p}$ on symbol classes. We use the notation introduced in Section~\ref{subsec:functionalframework}.

\begin{corollary}[Continuity of the Hamiltonian flow on symbol classes] \label{cor:continuityofflowinsymbolclasses}
Let $p$ and $g$ satisfy Assumptions~\ref{assum:mandatory} and~\ref{assum:p}. Let $m : T^\star \mfd \to \R_+^\ast$. Setting $g(t) = \e^{2 (\Lambda + 2 \Upsilon) \abs{t}} g$ and $m(t) = \e^{t H_p} m$, one has
\begin{equation*}
S(m, g) \xrightarrow{\e^{t H_p}} S\left(m(t), g(t)\right)
	\qquad \rm{and} \qquad
\nabla^{-1} S(m, g) \xrightarrow{\e^{t H_p}} \nabla^{-1} S\left(m(t), g(t)\right)
\end{equation*}
uniformly with respect to $t \in \R$.
\end{corollary}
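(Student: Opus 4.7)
The strategy is to apply the pointwise flow-derivative estimate of Lemma~\ref{lem:compositionbytheflow} (in the simpler form of Remark~\ref{rem:compositionbytheflowsimpler}) and to match the resulting exponential growth against the conformal rescaling $g(t) = \e^{2(\Lambda+2\Upsilon)\abs{t}}g$. The key observation is that for any {$k$-covariant} tensor $T$,
\begin{equation*}
\abs*{T}_{g(t)} = \e^{-k(\Lambda+2\Upsilon)\abs{t}} \abs*{T}_g ,
\end{equation*}
so the factor $\e^{k\Lambda\abs{t}}$ produced by the flow is compensated, up to a benign factor $\e^{-2k\Upsilon\abs{t}} \le 1$ (since $\Upsilon \ge 0$).

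First I would check that the quantities entering Lemma~\ref{lem:compositionbytheflow} are controlled uniformly. Proposition~\ref{prop:aprioriexpgrowth} gives $\abs{\dd\phi^t}_{g,\infty} \le \e^{\Lambda\abs{t}}$ on the whole phase space, so the exponential-growth hypothesis of the lemma holds on every energy shell with $C = 1$. By Lemma~\ref{lem:seminormsHp} combined with Assumption~\ref{assum:p}~\ref{it:strongsubquad},
\begin{equation*}
\abs*{\nabla^{k+2} H_p}_{g,\infty} \le \gain_g \abs*{\nabla^{k+3} p}_{g,\infty} \le C_{k,p} \udl{\gain}_g \le C_{k,p} ,
\end{equation*}
so the quantities $\bnorm{\nabla^2 H_p}_n^{k-j-n}$ defined in~\eqref{eq:defbnorm} are bounded by constants depending only on structure constants of $g$ and seminorms of $p$. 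The remaining factor $C/(2\Lambda)$ appearing in the lemma is controlled via the assumption $\Lambda \ge c\,\udl{\gain}_g$ (cf.\ the footnote of Theorem~\ref{thm:main}).

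Next, I would apply Remark~\ref{rem:compositionbytheflowsimpler} pointwise on each energy shell $\{p = p(\rho)\}$. For $k \ge 1$, the resulting bound involves only $\nabla^j a$ with $j \ge 1$, which is what is needed for the $\nabla^{-1} S(m,g)$ statement. Using $\abs{\nabla^j a}_g \le \abs{\nabla a}_{S(m,g)}^{(j-1)} m$ for $j \ge 1$ gives
\begin{equation*}
\abs*{\nabla^k \e^{tH_p} a}_g(\rho) \le \tilde C_k \, \e^{k\Lambda\abs{t}} \, m(\phi^t(\rho)) \, \abs*{\nabla a}_{S(m,g)}^{(k-1)} , \qquad k \ge 1 .
\end{equation*}
Converting to the $g(t)$-norm via the conformal identity above yields
\begin{equation*}
\abs*{\nabla^k \e^{tH_p} a}_{g(t)}(\rho) \le \tilde C_k \, \e^{-2k\Upsilon\abs{t}} \, m(t)(\rho) \, \abs*{\nabla a}_{S(m,g)}^{(k-1)} \le \tilde C_k \, m(t)(\rho) \, \abs*{\nabla a}_{S(m,g)}^{(k-1)} ,
\end{equation*}
uniformly in $t \in \R$, which is the continuity $\nabla^{-1}S(m,g) \xrightarrow{\e^{tH_p}} \nabla^{-1}S(m(t),g(t))$. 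The stronger statement $S(m,g) \xrightarrow{\e^{tH_p}} S(m(t),g(t))$ then follows by adjoining the tautological $k = 0$ bound $\abs{a\circ\phi^t}(\rho) \le \abs{a}_{S(m,g)}^{(0)} m(t)(\rho)$.

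No real obstacle is expected here, as the result is essentially bookkeeping on top of Lemma~\ref{lem:compositionbytheflow}; the only point requiring care is the precise matching of exponential factors, for which the term $2\Upsilon$ in the definition~\eqref{eq:defg(t)} of $g(t)$ is designed exactly to absorb the potential growth coming from $g_{\phi^t(\rho)} \le C_\Upsilon^2 \e^{2\Upsilon\abs{t}} g_\rho$ in Assumption~\ref{assum:p}~\ref{it:metriccontrol}.
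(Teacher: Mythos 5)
Your proof is correct and follows essentially the same route as the paper: Proposition~\ref{prop:aprioriexpgrowth} (with $C=1$) feeds Remark~\ref{rem:compositionbytheflowsimpler}, the derivatives of $H_p$ are bounded via Lemma~\ref{lem:seminormsHp} and Assumption~\ref{assum:p}~\ref{it:strongsubquad}, and the conformal rescaling defining $g(t)$ absorbs the exponential, with the $k=0$ case added at the end. The only point to state more carefully is that $\Lambda \ge c\,\udl{\gain}_g$ does not by itself bound $1/(2\Lambda)$; as in the paper, you must retain the factor $\udl{\gain}_g$ from $\abs{\nabla^{2+j}H_p}_g \le \udl{\gain}_g\,\abs{\nabla^3 p}_{S((\gain_g/\udl{\gain}_g)^{-1},g)}^{(j)}$ and pair it with $1/(2\Lambda)$ to get the $\udl{\gain}_g$-independent bound $\udl{\gain}_g/(2\Lambda)\le 1/(2c)$, rather than discarding it to a constant.
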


\begin{remark}
We do not discuss admissibility of $g(t)$ or $m(t)$ in Corollary~\ref{cor:continuityofflowinsymbolclasses}, since symbol classes make sense for metrics and weights that are not necessarily admissible. However, we shall take care of this issue while using pseudo-differential calculus.
\end{remark}

\begin{proof}
This is a direct consequence of Proposition~\ref{prop:aprioriexpgrowth} and Lemma~\ref{lem:compositionbytheflow} (or rather the simplified estimate of Remark~\ref{rem:compositionbytheflowsimpler}). Indeed, if $a \in S(m, g)$, we have for all $k \ge 1$ and all $\rho \in \{p = E\}$:
\begin{align} \label{eq:intermediateineqflow}
\abs*{\nabla^k \e^{t H_p} a}_g(\rho)
		&\le C_{k} \e^{k \Lambda \abs{t}} \abs*{\nabla a}^{(k-1)}_g\left(\phi^t(\rho)\right) \jap*{\dfrac{1}{2 \Lambda} \abs*{\nabla^2 H_p}_{E}^{(k-2)}}^{k-1} \nonumber\\
		&\le C_{k} \e^{k (\Lambda + 2 \Upsilon) \abs{t}} \abs*{\nabla a}_{S(m, g)}^{(k-1)} m\left(\phi^t(\rho)\right) \jap*{\dfrac{1}{2 \Lambda} \abs*{\nabla^2 H_p}_{E}^{(k-2)}}^{k-1} .
\end{align}
Using Lemma~\ref{lem:seminormsHp} and the fact that $\nabla^3 p \in S((\gain_g/\udl{\gain}_g)^{-1}, g)$ (Assumption~\ref{assum:p}~\ref{it:strongsubquad}), we have
\begin{equation*}
\forall j \in \N , \qquad
	\abs*{\nabla^{2+j} H_p}_{g, E}
		\le \sup_{\{p = E\}} \gain_g \abs*{\nabla^{3+j} p}_g
		\le \udl{\gain}_g \abs*{\nabla^3 p}_{S((\gain_g/\udl{\gain}_g)^{-1}, g)}^{(j)} .
\end{equation*}
Plugging this into~\eqref{eq:intermediateineqflow}, and using~\eqref{eq:assumLambda}, we deduce that
\begin{equation*}
\abs*{\nabla^k \e^{t H_p} a}_{g(t)}(\rho)
		\le C_{k} \abs*{\nabla a}_{S(m, g)}^{(k-1)} m\left(\phi^t(\rho)\right) \jap*{\dfrac{1}{2 c} \abs*{\nabla^3 p}_{S((\gain_g/\udl{\gain}_g)^{-1}, g)}^{(k-2)}}^{k-1} .
\end{equation*}
This proves that $\e^{t H_p} : \nabla^{-1} S(m, g) \to \nabla^{-1} S(m(t), g(t))$, with continuity constants depending only on seminorms of $p$ (and not on $t$).
For $k = 0$, we also have
\begin{equation*}
\abs{a \circ \phi^t}(\rho)
	\le \abs*{a}_{S(m, g)}^{(0)} m(\phi^t(\rho)) ,
\end{equation*}
so that $\e^{t H_p} : S(m, g) \to S(m(t), g(t))$ too. This finishes the proof of the proposition.
\end{proof}

Recall that from Assumption~\ref{assum:p}~\ref{it:strongsubquad}, we have
\begin{equation*}
\nabla^3 p
	\in S\left( \left(\tfrac{\gain_g}{\udl{\gain}_g}\right)^{-1}, g\right) \cap S\left((\theta_g \udl{\gain}_g^{1/2})^{-\epsilon} \left(\tfrac{\gain_g}{\udl{\gain}_g}\right)^{-3}, g\right) ,
\end{equation*}
with $\epsilon \le 1/2$.
The key ingredient in the proof of the result below is pseudo-differential calculus.

\begin{lemma}[Continuity of $\cal{H}_p^{(3)}$ on symbol classes] \label{lem:continuityHp3}
Let $p$ and $g$ satisfy Assumptions~\ref{assum:mandatory} and~\ref{assum:p}. For any {$g$-admissible} weight $m$, the following holds:
\begin{equation*}
\nabla^{-3} S\left(m, g(t)\right)
	\xrightarrow{\cal{H}_p^{(3)}} S\left(\theta_{g(t)}^{-\epsilon} \udl{\gain}_g^{3/2} m, g(t)\right) \cap S\left(\gain_{g(t)}^2 \udl{\gain}_g \e^{-(\Lambda + 2 \Upsilon) \abs{t}} m, g(t)\right) ,
		\qquad \forall \abs{t} \le T_E .
\end{equation*}
\end{lemma}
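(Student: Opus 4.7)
The plan is to exploit the fact that $\cal{H}_p^{(3)}$ is precisely the order-$3$ remainder of the Moyal-product expansion, so the result should fall out of the Weyl--Hörmander pseudo-differential calculus (Proposition~\ref{prop:pseudocalcsymb}) applied with one factor equal to $p$ and the other equal to $a$.

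First I would recall, starting from $\cal{H}_p a = \ii(p \moyal a - a \moyal p)$ and the identity~\eqref{eq:pdidentity}, that the terms $\cal{P}_0(p,a)=\cal{P}_0(a,p)=pa$ and $\cal{P}_2(p,a)=\cal{P}_2(a,p)$ cancel by symmetry of $\frak{P}^0$ and $\frak{P}^2$, while $\ii[\cal{P}_1(p,a)-\cal{P}_1(a,p)]=H_p a$. Hence $\cal{H}_p^{(3)} a$ is, up to a harmless factor of $\ii$, the difference $\widehat{\cal{P}}_3(p,a)-\widehat{\cal{P}}_3(a,p)$, and it suffices to control each term separately.

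Next I would apply Proposition~\ref{prop:pseudocalcsymb} with $g_1=g$, $g_2=g(t)$, and weights $(m_1,m_2)$ chosen as either $m_1 = (\gain_g/\udl{\gain}_g)^{-1}$ or $m_1 = (\theta_g \udl{\gain}_g^{1/2})^{-\epsilon}(\gain_g/\udl{\gain}_g)^{-3}$, with $m_2 = m$. Admissibility of both metrics is uniform on $|t|\le T_E$ by Proposition~\ref{prop:improvedadmissibility}; compatibility is automatic because $g$ and $g(t)$ are conformal; and the weights $m_1,m$ are $(g+g(t))$-admissible since they are $g$-admissible and $g\le g(t)$. Assumption~\ref{assum:p}~\ref{it:strongsubquad} guarantees $\nabla^3 p \in S(m_1,g)$ in both cases. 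The proposition gives the continuity
$$
\widehat{\cal{P}}_3(p,\cdot)\colon \nabla^{-3} S(m, g(t)) \longrightarrow S\bigl(\gain_{g,g(t)}^3\, m_1 m,\, g+g(t)\bigr) \subset S\bigl(\gain_{g,g(t)}^3\, m_1 m,\, g(t)\bigr),
$$
and similarly for $\widehat{\cal{P}}_3(\cdot,p)$, with seminorm bounds uniform in $t$ since the structure constants are.

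The bulk of the remaining work is then a bookkeeping of gain factors, which I expect to be the main (but only) obstacle: one has to verify that $\gain_{g,g(t)}^3 m_1$ is, in the appropriate sense, at most the target weight. Using $g(t)^\sympf=\e^{-2(\Lambda+2\Upsilon)|t|}g^\sympf$ and the definition~\eqref{eq:defjointgain}, one gets $\gain_{g,g(t)} = \e^{(\Lambda+2\Upsilon)|t|}\gain_g$, hence $\gain_{g,g(t)}^3 = \e^{-(\Lambda+2\Upsilon)|t|}\gain_{g(t)}^2\cdot\udl{\gain}_g/\udl{\gain}_g$ after using $\gain_{g(t)}=\e^{2(\Lambda+2\Upsilon)|t|}\gain_g$. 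With the first weight this gives exactly
$$
\gain_{g,g(t)}^3 m_1 \;=\; \gain_{g(t)}^2\,\udl{\gain}_g\,\e^{-(\Lambda+2\Upsilon)|t|},
$$
yielding the first target class for all $t\in\R$. For the second weight, using also $\theta_{g(t)}=\e^{-(\Lambda+2\Upsilon)|t|}\theta_g$ (immediate from~\eqref{eq:deftheta}), one computes
$$
\frac{\gain_{g,g(t)}^3 m_1}{\theta_{g(t)}^{-\epsilon}\,\udl{\gain}_g^{3/2}} \;=\; \e^{(3-\epsilon)(\Lambda+2\Upsilon)|t|}\,\udl{\gain}_g^{(3-\epsilon)/2},
$$
which is bounded by $1$ precisely when $|t|\le T_E$, by the definition~\eqref{eq:defTE} of the Ehrenfest time. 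This yields the second target class and pins down exactly why the restriction $|t|\le T_E$ is needed (only for the $\theta_{g(t)}$ bound, not for the $\gain_{g(t)}^2$ one). Continuity of the inclusion of symbol classes then finishes the proof.
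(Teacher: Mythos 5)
Your proof is correct and takes essentially the same route as the paper: write $\cal{H}_p^{(3)}$ in terms of $\widehat{\cal{P}}_3(p,\bigcdot)-\widehat{\cal{P}}_3(\bigcdot,p)$, apply Proposition~\ref{prop:pseudocalcsymb} with the pair $(g, g(t))$ and each of the two symbol classes for $\nabla^3 p$ from Assumption~\ref{assum:p}~\ref{it:strongsubquad}, and perform the same weight bookkeeping via $\gain_{g(t), g}=\e^{(\Lambda+2\Upsilon)\abs{t}}\gain_g$, with $\abs{t}\le T_E$ used to absorb $\e^{(3-\epsilon)(\Lambda+2\Upsilon)\abs{t}}\udl{\gain}_g^{(3-\epsilon)/2}\le 1$ in the $\theta_{g(t)}^{-\epsilon}$ estimate. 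One small caveat: the restriction $\abs{t}\le T_E$ is also what gives uniform admissibility and compatibility of $(g,g(t))$ (hence $t$-independent constants in Proposition~\ref{prop:pseudocalcsymb}), so the $\gain_{g(t)}^2$ bound should not be claimed for all $t\in\R$, even though its weight identity holds for every $t$.
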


\begin{proof}
In view of the definition of the operator $\cal{H}_p^{(3)}$ in~\eqref{eq:defHp3}, this is a direct application of pseudo-differential calculus (Proposition~\ref{prop:pseudocalcsymb}), namely on the one hand:
\begin{equation*}
\nabla^{-3} S\left((\theta_g \udl{\gain}_g^{1/2})^{-\epsilon} \left(\tfrac{\gain_g}{\udl{\gain}_g}\right)^{-3}, g\right) \times \nabla^{-3} S\left(m, g(t)\right)
	\xlongrightarrow{\widehat{\cal{P}}_3} S\left(\gain_{g(t), g}^3 m (\theta_g \udl{\gain}_g^{1/2})^{-\epsilon} \left(\tfrac{\gain_g}{\udl{\gain}_g}\right)^{-3}, g(t)\right) .
\end{equation*}
Recalling that we have
\begin{equation*}
\gain_{g(t), g}
	= \e^{(\Lambda + 2 \Upsilon) \abs{t}} \gain_g
\end{equation*}
(see the definition of the joint gain function~\eqref{eq:defjointgain}), we obtain
\begin{equation*}
\gain_{g(t), g}^3 \theta_g^{-\epsilon} \udl{\gain}_g^{-\epsilon/2} \left(\tfrac{\gain_g}{\udl{\gain}_g}\right)^{-3}
	= \theta_{g(t)}^{-\epsilon} \e^{(3-\epsilon) (\Lambda + 2 \Upsilon) \abs{t}} \udl{\gain}_g^{3 - \epsilon/2}
	\le \theta_{g(t)}^{-\epsilon} \udl{\gain}_g^{3/2} ,
		\qquad \forall \abs{t} \le T_E .
\end{equation*}
We used $\abs{t} \le T_E$ in the last inequality. Therefore, we deduce that
\begin{equation*}
\nabla^{-3} S\left(m, g(t)\right)
	\xrightarrow{\cal{H}_p^{(3)}}
		S\left(\theta_{g(t)}^{-\epsilon} \udl{\gain}_g^{3/2} m, g(t)\right) .
\end{equation*}
On the other hand, we have similarly
\begin{equation*}
\nabla^{-3} S\left(\left(\tfrac{\gain_g}{\udl{\gain}_g}\right)^{-1}, g\right) \times \nabla^{-3} S\left(m, g(t)\right)
	\xlongrightarrow{\widehat{\cal{P}}_3} S\left(\gain_{g(t), g}^3 m \left(\tfrac{\gain_g}{\udl{\gain}_g}\right)^{-1}, g(t)\right) ,
\end{equation*}
with
\begin{equation*}
\gain_{g(t), g}^3 \left(\tfrac{\gain_g}{\udl{\gain}_g}\right)^{-1}
	= \gain_{g(t)}^2 \e^{-(\Lambda + 2 \Upsilon) \abs{t}} \udl{\gain}_g .
\end{equation*}
Therefore, we deduce that
\begin{equation*}
\nabla^{-3} S\left(m, g(t)\right)
	\xrightarrow{\cal{H}_p^{(3)}}
		S\left(\gain_{g(t)}^2 \udl{\gain}_g \e^{-(\Lambda + 2 \Upsilon) \abs{t}} m, g(t)\right) ,
\end{equation*}
hence the result.
\end{proof}

We immediately deduce the following result on the mapping properties of the operators~$\cal{E}_j(t)$ introduced in Proposition~\ref{prop:Dyson}. These operators are defined as multiple compositions of~$\e^{t H_p}$ and of~$\cal{H}_p^{(3)}$, which we studied in Corollary~\ref{cor:continuityofflowinsymbolclasses} and Lemma~\ref{lem:continuityHp3} above. Let us highlight the fact that the following holds for $j \ge 1$ only. The case $j=0$ is covered by Corollary~\ref{cor:continuityofflowinsymbolclasses}, since $\cal{E}_0(t) = \e^{t H_p}$.

\begin{proposition}[Continuity estimates for $\cal{E}_j(t)$ on symbol classes, $j \ge 1$] \label{prop:continuityEcal}
Let $p$ and $g$ satisfy Assumptions~\ref{assum:mandatory} and~\ref{assum:p}. Let $m$ be a {$g$-admissible} weight and write $m(t) := \e^{t H_p} m$.
Then for any $j \in \N^\ast$, we have
\begin{equation} \label{eq:continuityEcalsymbolsjneq0}
\nabla^{-1} S(m, g)
	\xrightarrow{\cal{E}_j(t)} S\left(m(t) \theta_{g(t)}^{-j \epsilon}, g(t)\right) \cap S\left( m(t) \gain_{g(t)}^{2j}, g(t) \right) ,
		\qquad \forall \abs{t} \le \tfrac{1}{2} T_E .
\end{equation}
If in addition $m$ satisfies
\begin{equation} \label{eq:growthmalongtheflow}
\e^{t H_p} m
	\le C_\kappa \e^{\kappa \abs{t}} m ,
		\qquad \forall t \in \R ,
\end{equation}
for some $\kappa \ge 0$ and $C_\kappa > 0$, then we have
\begin{equation} \label{eq:continuityEcalsymbolsjneq0bis}
\nabla^{-1} S(m, g)
	\xrightarrow{\cal{E}_j(t)} S\left(\e^{\kappa \abs{t}} m \theta_{g(t)}^{-j \epsilon}, g(t)\right) \cap S\left( \e^{\kappa \abs{t}} m \gain_{g(t)}^{2j}, g(t) \right) ,
		\qquad \forall \abs{t} \le \tfrac{1}{2} T_E ,
\end{equation}
with implicit constants depending only on structure constants of~$m$ and~$g$, the constant $C_\kappa$, as well as seminorms of~$p$ and the constant~$C_\Upsilon$ from Assumption~\ref{assum:p}.
\end{proposition}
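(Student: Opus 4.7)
The strategy is to argue by induction on $j \ge 1$, exploiting the recurrence relation~\eqref{eq:recurrencerelation}
\begin{equation*}
\cal{E}_{j+1}(t) = \int_0^t \e^{(t-s) H_p} \cal{H}_p^{(3)} \cal{E}_j(s) \dd s ,
\qquad \cal{E}_1(t) = \int_0^t \e^{(t-s) H_p} \cal{H}_p^{(3)} \e^{s H_p} \dd s .
\end{equation*}
Each iteration chains three mapping properties: the flow invariance encoded by Corollary~\ref{cor:continuityofflowinsymbolclasses}, the gain provided by Lemma~\ref{lem:continuityHp3} for the non-local remainder $\cal{H}_p^{(3)}$, and the transport estimates for $\theta_g$ and $\gain_g$ given by Lemma~\ref{lem:gain(t)}. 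Throughout the argument one works inside the time window $|t| \le T_E/2$ to ensure that the intermediate times $s \in [0, t]$ remain in the range where Proposition~\ref{prop:uniformm(t)} guarantees uniform $g(s)$-admissibility of $m(s)$, of $m(s) \theta_{g(s)}^{-j\epsilon}$ (using Proposition~\ref{prop:temperanceweight}) and of $m(s) \gain_{g(s)}^{2j}$ (using Remark~\ref{eq:gaingadmissibleweight}).

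The heart of the induction is the following one-step estimate, derived by re-running the proof of Lemma~\ref{lem:continuityHp3} with $\nabla^3 p \in S(\cdot, g)$ paired against a symbol in $\nabla^{-3} S(w, g(s))$ for a $g(s)$-admissible weight $w$: Proposition~\ref{prop:pseudocalcsymb} with $g_1 = g$ and $g_2 = g(s)$ yields the two alternative target classes $S(w \udl{\gain}_g^{3/2} \theta_{g(s)}^{-\epsilon}, g(s))$ and $S(w \gain_{g(s)}^2 \udl{\gain}_g \e^{-(\Lambda+2\Upsilon)|s|}, g(s))$, for $|s| \le T_E$. Applying $\e^{(t-s) H_p}$ and using that $g(s)(t-s) = g(t)$ for $0 \le s \le t$ (together with the obvious adaptation for $t < 0$), Corollary~\ref{cor:continuityofflowinsymbolclasses} lands the result in $S(\cdot, g(t))$. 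The transported weights are then controlled by Lemma~\ref{lem:gain(t)} through
\begin{equation*}
\e^{(t-s) H_p} \theta_{g(s)}^{-\epsilon}
  \le C_\Upsilon^{\epsilon} \theta_{g(t)}^{-\epsilon} ,
\qquad
\e^{(t-s) H_p} \gain_{g(s)}^{2}
  \le \gain_{g(t)}^{2} ,
\end{equation*}
while $\e^{(t-s) H_p} m(s) = m(t)$. Assuming the induction hypothesis $\cal{E}_j(s) a \in S(m(s) \theta_{g(s)}^{-j\epsilon}, g(s)) \cap S(m(s) \gain_{g(s)}^{2j}, g(s))$, this yields, for $0 \le s \le t \le T_E/2$, two integrand estimates
\begin{align*}
\e^{(t-s) H_p} \cal{H}_p^{(3)} \cal{E}_j(s) a
  &\in S\bigl( m(t) \theta_{g(t)}^{-(j+1)\epsilon} \cdot \udl{\gain}_g^{3/2} , g(t) \bigr) , \\
\e^{(t-s) H_p} \cal{H}_p^{(3)} \cal{E}_j(s) a
  &\in S\bigl( m(t) \gain_{g(t)}^{2(j+1)} \cdot \udl{\gain}_g \e^{-(\Lambda + 2\Upsilon) |s|} , g(t) \bigr) ,
\end{align*}
with seminorm estimates uniform in $s$.

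It remains to integrate in $s \in [0, t]$ and check that the prefactors stay bounded on the whole time range $|t| \le T_E/2$. For the second (gain) bound, this is immediate from
\begin{equation*}
\int_0^t \udl{\gain}_g \e^{-(\Lambda + 2\Upsilon) |s|} \dd s
  \le \frac{\udl{\gain}_g}{\Lambda + 2\Upsilon}
  \le \frac{1}{c} ,
\end{equation*}
using the hypothesis $\Lambda \ge c \udl{\gain}_g$ stated in Theorem~\ref{thm:main}. For the first (temperance) bound, the factor $|t| \udl{\gain}_g^{3/2}$ produced by the integration is bounded by $\frac{\udl{\gain}_g^{1/2}}{2c} \log(1/\udl{\gain}_g)$ by the definition~\eqref{eq:defTE} of $T_E$ and the inequality $\Lambda + 2\Upsilon \ge c \udl{\gain}_g$; since $x \mapsto x^{1/2} \log(1/x)$ is bounded on $(0, 1]$, this yields a uniform constant. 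These absorb the combinatorial $\frac{|t|^j}{j!}$ volume of the simplex at each induction step, closing the induction and proving~\eqref{eq:continuityEcalsymbolsjneq0}. The variant~\eqref{eq:continuityEcalsymbolsjneq0bis} follows directly from~\eqref{eq:continuityEcalsymbolsjneq0} by substituting $m(t) \le C_\kappa \e^{\kappa |t|} m$ as granted by~\eqref{eq:growthmalongtheflow}. The main technical obstacle is the careful bookkeeping of the conformal factors exchanging $\theta_g, \gain_g, \udl{\gain}_g$ and the metrics $g, g(s), g(t)$; the $1/2$ factor in the restriction $|t| \le T_E/2$ is exactly what provides the slack needed for all intermediate admissibility constants to remain uniform.
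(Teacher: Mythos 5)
Your proposal is correct and follows essentially the same route as the paper: induction on $j$ via the recurrence relation~\eqref{eq:recurrencerelation}, chaining Corollary~\ref{cor:continuityofflowinsymbolclasses}, Lemma~\ref{lem:continuityHp3} and Lemma~\ref{lem:gain(t)}, then integrating in $s$ using $\Lambda \ge c\,\udl{\gain}_g$ and the boundedness of $\tau \mapsto \tau^{1/2}\log(1/\tau)$ to absorb the factors $\udl{\gain}_g^{3/2}|t|$ and $\udl{\gain}_g \e^{-(\Lambda+2\Upsilon)|s|}$, with the $\kappa$-variant obtained by substituting $C_\kappa \e^{\kappa|t|} m$ for $m(t)$. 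No gaps worth noting.
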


\begin{proof}
We prove~\eqref{eq:continuityEcalsymbolsjneq0} by induction on $j \in \N^\ast$.
The basis step $j = 1$ is as follows: by Corollary~\ref{cor:continuityofflowinsymbolclasses}, we have
\begin{equation} \label{eq:step11}
\nabla^{-1} S(m, g)
	\xlongrightarrow{\e^{s_1 H_p}}
		\nabla^{-1} S\left(m(s_1), g(s_1)\right) .
\end{equation}
For any $\abs{s_1} \le T_E$, we apply Lemma~\ref{lem:continuityHp3} (dependence on structure constants of $m(t)$ does not degenerate since those constants are uniform in $\abs{t} \le \frac{1}{2} T_E$ by Proposition~\ref{prop:uniformm(t)}):
\begin{equation}  \label{eq:step12}
\nabla^{-1} S\left(m(s_1), g(s_1)\right)
	\xlongrightarrow{\cal{H}_p^{(3)}} S\left(m(s_1) \theta_{g(s_1)}^{-\epsilon} \udl{\gain}_g^{3/2}, g(s_1)\right) \cap S\left(\gain_{g(s_1)}^2 \udl{\gain}_g \e^{-(\Lambda + 2 \Upsilon) \abs{s_1}} m(s_1), g(s_1)\right)  .
\end{equation}
Next we apply Corollary~\ref{cor:continuityofflowinsymbolclasses} again to obtain
\begin{equation} \label{eq:step13}
S\left(m(s_1) \theta_{g(s_1)}^{-\epsilon} \udl{\gain}_g^{3/2}, g(s_1)\right)
	\xlongrightarrow{\e^{(t - s_1) H_p}} S\left(\e^{(t - s_1) H_p} (m(s_1) \theta_{g(s_1)}^{-\epsilon}) \udl{\gain}_g^{3/2}, g(t)\right)  .
\end{equation}
From Lemma~\ref{lem:gain(t)}, we also have
\begin{equation} \label{eq:step14}
\e^{(t - s_1) H_p} (m(s_1) \theta_{g(s_1)}^{-\epsilon})
	\le m(t) \theta_{g(t)}^{-\epsilon} C_\Upsilon^\epsilon .
\end{equation}
Similarly, we deduce from Corollary~\ref{cor:continuityofflowinsymbolclasses} and Lemma~\ref{lem:gain(t)} that
\begin{equation} \label{eq:step15}
S\left(\gain_{g(s_1)}^2 \udl{\gain}_g \e^{-(\Lambda + 2 \Upsilon) \abs{s_1}} m(s_1), g(s_1)\right)
	\xlongrightarrow{\e^{(t - s_1) H_p}} S\left(\gain_{g(t)}^2 \udl{\gain}_g \e^{-(\Lambda + 2 \Upsilon) \abs{s_1}} m(t), g(t)\right) .
\end{equation}
Combining the estimates~\eqref{eq:step11}, \eqref{eq:step12}, \eqref{eq:step13}, \eqref{eq:step14} and~\eqref{eq:step15}, we obtain
\begin{equation} \label{eq:firstestcontin}
\nabla^{-1} S(m, g)
	\xlongrightarrow{\e^{(t - s_1) H_p} \cal{H}_p^{(3)} \e^{s_1 H_p}} S\left(\theta_{g(t)}^{-\epsilon} \udl{\gain}_g^{3/2} m(t), g(t)\right) \cap S\left(\gain_{g(t)}^2 \udl{\gain}_g \e^{-(\Lambda + 2 \Upsilon) \abs{s_1}} m(t), g(t)\right) .
\end{equation}
Then it remains to integrate over $s_1$. For all $\abs{t} \le T_E$, we have in view of the definition~\eqref{eq:defTE} of $T_E$ and the assumption~\eqref{eq:assumLambda}:
\begin{equation} \label{eq:getridoffactort}
\int_{[0, t]} \udl{\gain}_g^{3/2} \dd t
	\le T_E \udl{\gain}_g^{3/2}
	= \dfrac{\udl{\gain}_g^{3/2}}{2 (\Lambda + 2 \Upsilon)} \log\left(\dfrac{1}{\udl{\gain}_g}\right)
	\le \dfrac{C_{1/2}}{2 c} ,
\end{equation}
where $C_\delta = \sup_{\tau \in [0, 1]} \tau^\delta \log\abs{\tau}^{-1}$. Likewise, we find
\begin{equation} \label{eq:getrid2}
\int_{t \Delta_1} \udl{\gain}_g \e^{- (\Lambda + 2 \Upsilon) \abs{s_1}} \dd s_1
	\le \dfrac{\udl{\gain}_g}{\Lambda + 2 \Upsilon}
	\le \dfrac{1}{c} .
\end{equation}
Combining~\eqref{eq:firstestcontin}, \eqref{eq:getridoffactort} and~\eqref{eq:getrid2}, we obtain~\eqref{eq:continuityEcalsymbolsjneq0} for $j = 1$.

Now we assume that the assertion~\eqref{eq:continuityEcalsymbolsjneq0} is true at step $j$, and prove it at step $j+1$. Given that we have from~\eqref{eq:recurrencerelation}:
\begin{equation} \label{eq:inductiononDysonterms}
\cal{E}_{j+1}(t)
	= \int_0^t \e^{(t - s_{j+1}) H_p} \cal{H}_p^{(3)} \cal{E}_j(s_{j+1}) \dd s_{j+1} ,
\end{equation}
it suffices to check that
\begin{align} \label{eq:inclsuffices2}
S\left(m(s_{j+1}) \theta_{g(s_{j+1})}^{-j \epsilon}, g(s_{j+1})\right)
	&\xrightarrow{\e^{(t - s_{j+1}) H_p} \cal{H}_p^{(3)}}
		S\left(m(t) \theta_{g(t)}^{-(j+1) \epsilon} \udl{\gain}_g^{3/2}, g(t)\right) \nonumber\\
S\left(m(s_{j+1}) \gain_{g(s_{j+1})}^{2j}, g(s_{j+1})\right)
	&\xrightarrow{\e^{(t - s_{j+1}) H_p} \cal{H}_p^{(3)}}
		S\left(m(t) \gain_{g(t)}^{2(j+1)} \udl{\gain}_g \e^{-(\Lambda + 2 \Upsilon) \abs{s_{j+1}}}, g(t)\right)
\end{align}
($s_{j+1}$ now plays the role of $t$ in the {$j$-th} step).
This follows first from an application of Lemma~\ref{lem:continuityHp3} for $\cal{H}_p^{(3)}$:
\begin{align*}
S\left(m(s_{j+1}) \theta_{g(s_{j+1})}^{-j \epsilon}, g(s_{j+1})\right)
	&\xrightarrow{\cal{H}_p^{(3)}}
		S\left(m(s_{j+1}) \theta_{g(s_{j+1})}^{-(j+1) \epsilon} \udl{\gain}_g^{3/2}, g(s_{j+1})\right) \nonumber\\
S\left(m(s_{j+1}) \gain_{g(s_{j+1})}^{2j}, g(s_{j+1})\right)
	&\xrightarrow{\cal{H}_p^{(3)}}
		S\left(m(s_{j+1}) \gain_{g(s_{j+1})}^{2(j+1)} \udl{\gain}_g \e^{- (\Lambda + 2 \Upsilon) \abs{s_{j+1}}}, g(s_{j+1})\right)
\end{align*}
together with Corollary~\ref{cor:continuityofflowinsymbolclasses} to handle the composition with $\e^{(t - s_{j+1}) H_p}$, and the fact that
\begin{equation} \label{eq:ineqff}
\e^{(t - s_{j+1}) H_p} \left(m(s_{j+1}) \theta_{g(s_{j+1})}^{-(j+1) \epsilon}\right) \udl{\gain}_g^{3/2}
	\le m(t) \theta_{g(t)}^{-(j+1) \epsilon} \udl{\gain}_g^{3/2} C_\Upsilon^\epsilon ,
\end{equation}
as well as
\begin{equation} \label{eq:ineqfff}
\e^{(t - s_{j+1}) H_p} \left(m(s_{j+1}) \gain_{g(s_{j+1})}^{2(j+1)}\right)
	\le m(t) \gain_{g(t)}^{2(j+1)}
\end{equation}
from Lemma~\ref{lem:gain(t)}.
The factor $\udl{\gain}_g^{3/2}$ in~\eqref{eq:ineqff} allows to absorb the factor $t$ that appears while considering the integral over $s_{j+1}$ in~\eqref{eq:inductiononDysonterms}, exactly as in~\eqref{eq:getridoffactort}. Similarly, the factor $\udl{\gain}_g \e^{- (\Lambda + 2 \Upsilon) \abs{s_{j+1}}}$ in the right-hand side of~\eqref{eq:inclsuffices2} allows to get rid of the extra integral over $s_{j+1}$ as in~\eqref{eq:getrid2}.
This finishes the induction.

The case where $m$ satisfies in addition~\eqref{eq:growthmalongtheflow} follows by plugging $C_\kappa \e^{\kappa \abs{t}} m$ in place of $m(t)$.
\end{proof}

\subsection{Mapping properties in spaces of confined symbols}

We conduct exactly the same investigation as in Section~\ref{subsec:contEcalsymb} but on spaces of confined symbols instead of symbol classes.

\begin{proposition}[Continuity of the flow] \label{prop:continuityflowconf}
Let $p$ and $g$ satisfy Assumptions~\ref{assum:mandatory} and~\ref{assum:p}. Let $r_0 \in (0, r_g]$ and $T \in [0, T_E]$, and suppose they satisfy
\begin{equation} \label{eq:conditionr}
r(T)
	\le r_g
		\qquad \rm{with } \;\, r(\tau) := r_0 \e^{(2 (\Lambda + \Upsilon) + C_g^3 C_p \udl{\gain}_g) \abs{\tau}} .
\end{equation}
Then for any $t_1, t_2$ such that $t := \abs{t_1} + \abs{t_2} \le T$ we have
\begin{equation*}
\Conf_{r(t_1)}^{g(t_1)}(\rho_0)
	\xrightarrow{\e^{t_2 H_p}} \Conf_{r(t)}^{g(t)}\left(\phi^{-t_2}(\rho_0)\right) ,
\end{equation*}
uniformly in $\rho_0 \in T^\star \mfd$.
\end{proposition}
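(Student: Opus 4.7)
\textbf{Proof plan for Proposition~\ref{prop:continuityflowconf}.}
The proof splits into a derivative estimate and a confinement estimate, which are then combined. Fix $\psi \in \Conf_{r(t_1)}^{g(t_1)}(\rho_0)$, write $\tilde \rho_0 = \phi^{-t_2}(\rho_0)$, and for any $\rho' \in T^\star \mfd$ set $\rho = \phi^{t_2}(\rho')$. We must bound $|\nabla^k(\psi \circ \phi^{t_2})|_{g(t)}(\rho')$ by an admissible constant times $\langle \dist_{g_{\tilde\rho_0}^\sympf(t)}(\rho', B_{r(t)}^{g(t)}(\tilde\rho_0))\rangle^{-\ell}$ for every $\ell, k$.

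\textbf{Step 1 (derivative bound).} Apply the composition-by-flow estimate of Remark~\ref{rem:compositionbytheflowsimpler} in the background metric $g$, with the global control $|\dd\phi^{t_2}|_g \le \e^{\Lambda|t_2|}$ from Proposition~\ref{prop:aprioriexpgrowth}. This yields
\begin{equation*}
\tfrac{1}{k!}|\nabla^k(\psi \circ \phi^{t_2})|_g(\rho')
 \le (k\e^{\Lambda|t_2|})^k \, |\nabla\psi|_g^{(k-1)}(\rho)\, \Bigl\langle \tfrac{1}{2\Lambda}|\nabla^2 H_p|_\infty^{(k-2)}\Bigr\rangle^{k-1}.
\end{equation*}
Using Lemma~\ref{lem:seminormsHp} together with Assumption~\ref{assum:p}~\ref{it:strongsubquad}, one controls $|\nabla^2 H_p|^{(k-2)}_{g,\infty}$ uniformly by $\udl\gain_g$ times a seminorm of $\nabla^3 p$, and then by~\eqref{eq:assumLambda} the ratio $\udl\gain_g/\Lambda$ is bounded by $1/c$. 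Convert to the rescaled metrics via $|\nabla^k\cdot|_{g(t)} = \e^{-k(\Lambda+2\Upsilon)|t|}|\nabla^k\cdot|_g$ and $|\nabla^j\psi|_{g(t_1)} = \e^{-j(\Lambda+2\Upsilon)|t_1|}|\nabla^j\psi|_g$; the factor $\e^{k\Lambda|t_2|}\e^{-k(\Lambda+2\Upsilon)|t|}\prod_{j\le k-1}\e^{j(\Lambda+2\Upsilon)|t_1|}$ is bounded by a constant (it simplifies since $|t| = |t_1|+|t_2|$ and the resulting exponential has a non-positive exponent $-2k\Upsilon|t_2|$ after collecting the worst term). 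We obtain, for some $C_k$ depending only on seminorms of $p$ and structure constants,
\begin{equation*}
|\nabla^k(\psi \circ \phi^{t_2})|_{g(t)}(\rho') \le C_k\,|\nabla\psi|_{g(t_1)}^{(k-1)}(\rho).
\end{equation*}

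\textbf{Step 2 (confinement comparison).} We prove that, for some structure-constant $C$ and some integer $N$,
\begin{equation*}
\langle \dist_{g_{\tilde\rho_0}^\sympf(t)}(\rho', B_{r(t)}^{g(t)}(\tilde\rho_0))\rangle
 \le C \,\langle \dist_{g_{\rho_0}^\sympf(t_1)}(\rho, B_{r(t_1)}^{g(t_1)}(\rho_0))\rangle^{N+1}.
\end{equation*}
This has two ingredients. First, Proposition~\ref{prop:Lipschitzproperty}, applied to $\phi^{t_2}$ with the pair $(\tilde\rho_0,\tilde\rho)$ for any $\tilde\rho \in \bar B_{r(t)}^{g(t)}(\tilde\rho_0)$, shows that $\phi^{t_2}(\tilde\rho) \in \bar B_{r(t_1)}^{g(t_1)}(\rho_0)$; the precise choice of the exponential growth rate in the definition~\eqref{eq:defr(t)} of $r(\tau)$ together with the hypothesis $r(T)\le r_g$ is exactly what makes the Lipschitz estimate applicable with time $t_2$ and radius $\e^{-(\Lambda+2\Upsilon)|t|}r(t)$. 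Second, Proposition~\ref{prop:temperancepropertyflow}~\eqref{eq:Lipgsympf} applied at time $-t_2$ (from $\rho_0$ to $\tilde\rho_0$) gives a polynomial comparison between $g^\sympf(t_1)$-distances around $\rho_0$ and $g^\sympf(t)$-distances around $\tilde\rho_0$, with a temperance exponent $N$ uniform in $|t_2|\le T_E$. Combining these two ingredients, the distance from $\rho'$ to the ball $B_{r(t)}^{g(t)}(\tilde\rho_0)$ in $g^\sympf(t)$ is dominated, up to a polynomial factor, by the distance from $\phi^{t_2}(\rho')=\rho$ to $B_{r(t_1)}^{g(t_1)}(\rho_0)$ in $g^\sympf(t_1)$.

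\textbf{Step 3 (combine).} Feed the confinement estimate on $\psi$,
\begin{equation*}
|\nabla\psi|_{g(t_1)}^{(k-1)}(\rho) \le \abs{\psi}_{\Conf_{r(t_1)}^{g(t_1)}(\rho_0)}^{(\ell+(N+1)\ell')}\,\langle \dist_{g_{\rho_0}^\sympf(t_1)}(\rho, B_{r(t_1)}^{g(t_1)}(\rho_0))\rangle^{-(\ell+(N+1)\ell')},
\end{equation*}
into the derivative bound from Step 1, then use Step 2 to exchange the distance around $\rho_0$ for the distance around $\tilde\rho_0$, absorbing the $(N+1)$-th power into the (arbitrary) polynomial decay rate. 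Since $\ell$ is arbitrary, this yields the seminorm bounds defining $\Conf_{r(t)}^{g(t)}(\tilde\rho_0)$, uniformly in $\rho_0$.

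\textbf{Main obstacle.} Step~2 is the delicate one: the exponential rate $2(\Lambda+\Upsilon)+C_g^3C_p\udl\gain_g$ chosen in~\eqref{eq:defr(t)} must be exactly sufficient for the two successive applications of Proposition~\ref{prop:Lipschitzproperty} (mapping the $g(t)$-ball into the $g(t_1)$-ball) and Proposition~\ref{prop:temperancepropertyflow} (polynomial comparison of $g^\sympf$-distances outside). The condition $r(T)\le r_g$ is precisely what keeps us within the regime where Proposition~\ref{prop:Lipschitzproperty} applies; bookkeeping of the exponents in $|t_1|,|t_2|,|t|=|t_1|+|t_2|$ and of the temperance exponent $N$ is where the care is needed.
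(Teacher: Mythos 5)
Your Steps~1 and~3 are sound and coincide with the paper's argument (Lemma~\ref{lem:compositionbytheflow} for the derivatives, with the exponentials absorbed by the rescaled metrics, then the confinement seminorms). The genuine gap is in Step~2, first ingredient: you apply Proposition~\ref{prop:Lipschitzproperty} to $\phi^{t_2}$ on $\bar B_{r(t)}^{g(t)}(\tilde\rho_0)$, i.e.\ the $g$-ball of radius $\tilde r(t):=\e^{-(\Lambda+2\Upsilon)\abs{t}}r(t)=r_0\e^{(\Lambda+C_g^3C_p\udl{\gain}_g)\abs{t}}$, and claim the image lies in $\bar B_{r(t_1)}^{g(t_1)}(\rho_0)$. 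The proposition cannot give this: it only yields containment in the $g$-ball of radius $\tilde r(t)\,\e^{(\Lambda+C_g^3C_p\udl{\gain}_g)\abs{t_2}}$ around $\rho_0$, which is strictly larger than $\tilde r(t_1)=\e^{-(\Lambda+2\Upsilon)\abs{t_1}}r(t_1)$ (you would need $\abs{t}+\abs{t_2}\le\abs{t_1}$), and its applicability condition $\tilde r(t)\le r_g\e^{-(\Lambda+C_g^3C_p\udl{\gain}_g)\abs{t_2}}$ is not implied by $r(T)\le r_g$ once $C_g^3C_p\udl{\gain}_g>2\Upsilon$ (e.g.\ $\Upsilon=0$). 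Worse, even if this inclusion held, it points the wrong way for your combination: to bound $\dist_{g^\sympf_{\tilde\rho_0}(t)}\bigl(\rho',B_{r(t)}^{g(t)}(\tilde\rho_0)\bigr)$ from above you must exhibit a point of the \emph{target} ball close to $\rho'$; knowing that the target ball maps forward into $B_{r(t_1)}^{g(t_1)}(\rho_0)$ produces no such point (it only gives a lower bound on that distance).

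The correct inclusion is the opposite one, and it is exactly what the rate $2(\Lambda+\Upsilon)+C_g^3C_p\udl{\gain}_g$ in~\eqref{eq:defr(t)} is designed for: since $B_{r(t_1)}^{g(t_1)}(\rho_0)=B_{\tilde r(t_1)}^{g}(\rho_0)$ and $\tilde r(t_1)\,\e^{(\Lambda+C_g^3C_p\udl{\gain}_g)\abs{t_2}}=\tilde r(t)\le r(T)\le r_g$, Proposition~\ref{prop:Lipschitzproperty} applies to $\phi^{-t_2}$ on this ball and gives $\phi^{-t_2}\bigl(\bar B_{r(t_1)}^{g(t_1)}(\rho_0)\bigr)\subset\bar B_{r(t)}^{g(t)}\bigl(\phi^{-t_2}(\rho_0)\bigr)$. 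With this in hand, for $\rho$ outside $B_1:=B_{r(t_1)}^{g(t_1)}(\rho_0)$ choose $\tilde\rho\in\bar B_1$ realizing $\dist_{g_{\rho_0}^\sympf(t_1)}(\rho,B_1)$; since $\tilde\rho\in\bar B_{r_g}^g(\rho_0)$, estimate~\eqref{eq:Lipgsympf} applied to $\phi^{-t_2}$ and the pair $(\rho,\tilde\rho)$, combined with $g^\natural\le\udl{\gain}_g\,g^\sympf\le g^\sympf(t_1)$ and a rescaling by $\e^{-(\Lambda+2\Upsilon)\abs{t_1}}$, bounds $\dist_{g^\sympf_{\tilde\rho_0}(t)}\bigl(\phi^{-t_2}(\rho),\phi^{-t_2}(\bar B_1)\bigr)$ by a power of $\jap{\dist_{g_{\rho_0}^\sympf(t_1)}(\rho,B_1)}$, and the inclusion lets you replace $\phi^{-t_2}(\bar B_1)$ by the larger set $B_{r(t)}^{g(t)}(\tilde\rho_0)$, which only decreases the left-hand side. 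Note also that your ingredient~(b) needs precisely this nearest-point device: Proposition~\ref{prop:temperancepropertyflow} requires one of the two points to lie in $\bar B_{r_g}^g(\rho_0)$, and using the center $\rho_0$ instead of $\tilde\rho$ would replace the distance to the ball by the (much larger) $g^\sympf$-distance to the center. With Step~2 repaired in this way, your Steps~1 and~3 complete the proof as in the paper.
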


\begin{remark} \label{rmk:CLambda1}
Here it seems that it is important to have a control of the flow of the form $\abs{\dd \phi^t}_g \le C_\Lambda \e^{\Lambda \abs{t}}$ with $C_\Lambda = 1$ (this is the case under Assumption~\ref{assum:p}; see Proposition~\ref{prop:aprioriexpgrowth}). Indeed, if $C_\Lambda > 1$, the confinement radius $r$ could be instantaneously increased by a factor $C_\Lambda$ under the action of $\e^{t H_p}$, $t > 0$. This could prevent us from keeping $r(t)$ under control after several applications of the Hamiltonian flow in the higher order terms of the Dyson expansion~\eqref{eq:Dyson}.
\end{remark}

\begin{proof}
In the proof, we write $\tilde r(\bullet) := r_0 \e^{(\Lambda + C_g^3 C_p \udl{\gain}_g) \abs{\bullet}}$.

Firstly, by definition of~$g(t)$ (see~\eqref{eq:defg(t)}) and of~$\tilde r$ above, we have
\begin{equation} \label{eq:tilderhodef}
B_{r(t_1)}^{g(t_1)}(\rho_0)
	 = B_{\tilde r(t_1)}^g(\rho_0) ,
	 	\qquad \forall \rho_0 \in T^\star \mfd .
\end{equation}
Therefore Proposition~\ref{prop:Lipschitzproperty} (with $\tilde r(t_1)$ in place of~$r$) gives
\begin{equation} \label{eq:insideball}
\phi^{-t_2}\left(\bar B_{r(t_1)}^{g(t_1)}(\rho_0)\right)
	= \phi^{-t_2}\left(\bar B_{\tilde r(t_1)}^g(\rho_0)\right)
	\subset \bar B_{\tilde r(t)}^g \left( \phi^{-t_2}(\rho_0) \right)
	= \bar B_{r(t)}^{g(t)} \left( \phi^{-t_2}(\rho_0) \right) .
\end{equation}
Here, we can apply Proposition~\ref{prop:Lipschitzproperty} since $\tilde r(t_1) \le \tilde r(T) e^{- (\Lambda + C_g^3 C_p \udl{\gain}_g) \abs{t_2}} \le r_g e^{- (\Lambda + C_g^3 C_p \udl{\gain}_g) \abs{t_2}}$ in view of~\eqref{eq:conditionr}.

Secondly, given $\rho$ outside $B_1 := B_{r(t_1)}^{g(t_1)}(\rho_0) = B_{\tilde r(t_1)}^g(\rho_0)$, we estimate the {$g_{\rho_0}^\sympf(t_1)$-distance} from $\phi^{-t_2}(\rho)$ to $\phi^{-t_2}(B_1)$. We pick $\tilde \rho \in \bar B_1$ such that
\begin{equation} \label{eq:choiceoftilderho}
\abs*{\rho - \tilde \rho}_{g_{\rho_0}^\sympf(t_1)}
	= \dist_{g_{\rho_0}^\sympf(t_1)}\left(\rho, B_1\right) .
\end{equation}
Notice that by definition of $\tilde \rho$, we have $\abs{\tilde \rho - \rho_0}_{g_{\rho_0}(t_1)} \le r(t_1)$, so
\begin{equation} \label{eq:slowvariationapplies}
\abs*{\tilde \rho - \rho_0}_{g_{\rho_0}}
	\le \abs*{\tilde \rho - \rho_0}_{g_{\rho_0}(t_1)}
	\le r_g ,
\end{equation}
due to the assumption~\eqref{eq:conditionr}. Applying Proposition~\ref{prop:temperancepropertyflow}, we have
\begin{equation*}
\abs*{\phi^{-t_2}(\rho) - \phi^{-t_2}(\tilde \rho)}_{g_{\phi^{-t_2}(\rho_0)}^\sympf(t_2)}
	\le C \abs*{\rho - \tilde \rho}_{g_{\rho_0}^\sympf} \jap*{\rho - \tilde \rho}_{g_{\rho_0}^\natural}^{N} .
\end{equation*}
Now recall that by~\eqref{eq:chainineq}, we have
\begin{equation*}
g^\natural
	\le \gain_g g^\sympf
	\le \udl{\gain}_g g^\sympf
	= \left(\udl{\gain}_g^{-1} g\right)^\sympf
	= g^\sympf(T_E)
	\le g^\sympf(t) ,
		\qquad \forall t \in [-T_E, T_E]
\end{equation*}
(we used~\eqref{eq:sympfduality} in the last inequality). Combining this with the definition of $\tilde \rho$ in~\eqref{eq:choiceoftilderho}, we obtain
\begin{equation*}
\abs*{\rho - \tilde \rho}_{g_{\rho_0}^\natural}
	\le \abs*{\rho - \tilde \rho}_{g_{\rho_0}^\sympf(t_1)}
	= \dist_{g_{\rho_0}^\sympf(t_1)}\left(\rho, B_1\right) .
\end{equation*}
We deduce that
\begin{equation*}
\dist_{g_{\phi^{-t_2}(\rho_0)}^\sympf(t_2)}\left(\phi^{-t_2}(\rho), \phi^{-t_2}\left(B_1\right)\right)
	\le C(g) \dist_{g_{\rho_0}^\sympf}\left(\rho, B_1\right) \jap*{\dist_{g_{\rho_0}^\sympf(t_1)}\left(\rho, B_1\right)}^{N(g)} ,
\end{equation*}
so that multiplying by $\e^{- (2 \Upsilon + \Lambda) \abs{t_1}}$ we arrive at
\begin{equation} \label{eq:outsideball}
\dist_{g_{\phi^{-t_2}(\rho_0)}^\sympf(t)}\left(\phi^{-t_2}(\rho), \phi^{-t_2}\left(B_1\right)\right)
	\le C(g) \dist_{g_{\rho_0}^\sympf(t_1)}\left(\rho, B_1\right) \jap*{\dist_{g_{\rho_0}^\sympf(t_1)}\left(\rho, B_1\right)}^{N(g)} .
\end{equation}
Now that we have~\eqref{eq:insideball} and~\eqref{eq:outsideball}, we can write for any $\rho \in T^\star \mfd$:
\begin{align} \label{eq:flowweight}
\dist_{g_{\phi^{-t_2}(\rho_0)}^\sympf(t)}\left(\phi^{-t_2}(\rho), B_{r(t)}^{g(t)}(\phi^{-t_2}(\rho_0))\right)
	&\le \dist_{g_{\phi^{-t_2}(\rho_0)}^\sympf(t)}\left(\phi^{-t_2}(\rho), \phi^{-t_2}\left(B_1\right)\right) \nonumber\\
	&\le C(g) \jap*{\dist_{g_{\rho_0}^\sympf(t_1)}\left(\rho, B_{r(t_1)}^{g(t_1)}(\rho_0)\right)}^{N(g)+1} .
\end{align}

To conclude the proof of the proposition, it suffices to have bounds on the {$g(t)$-norm} of $\nabla^\ell \e^{t_2 H_p} \psi_{\rho_0}$ with $\psi_{\rho_0} \in \Conf_{r(t_1)}^{g(t_1)}(\rho_0)$, for any $\ell \in \N$. We have from Lemma~\ref{lem:compositionbytheflow}:
\begin{align*}
\abs*{\nabla^\ell \e^{t_2 H_p} \psi_{\rho_0}(\rho)}_{g(t)}
	&\le C_\ell(p) \max_{0 \le j \le \ell} \abs*{\nabla^j \psi_{\rho_0}}_g\left(\phi^{t_2}(\rho)\right) \\
	&\le C_\ell(p) \abs*{\psi_{\rho_0}}_{\Conf_{r(t_1)}^{g(t_1)}(\rho_0)}^{(\ell k)} \jap*{\dist_{g_{\rho_0}^\sympf(t_1)}\left(\phi^{t_2}(\rho), B_{r(t_1)}^{g(t_1)}(\rho_0)\right)}^{-\ell k} \\
	&\le C_\ell(p) C'(g) \abs*{\psi_{\rho_0}}_{\Conf_{r(t_1)}^{g(t_1)}(\rho_0)}^{(\ell k)} \jap*{\dist_{g_{\rho_0}^\sympf(t)}\left(\rho, B_{r(t)}^{g(t)}(\phi^{-t_2}(\rho_0))\right)}^{-\frac{\ell k}{N(g)+1}} ,
\end{align*}
using~\eqref{eq:flowweight} in the last inequality.
Notice that it was important to consider the {$g(t)$-norm} of $\nabla^\ell \e^{t_2 H_p} \psi_{\rho_0}$, instead of the norm with respect to the constant metric $g_{\rho_0}(t)$, in order to apply Lemma~\ref{lem:compositionbytheflow} (see the discussion in Appendix~\ref{app:Confseminorms} on equivalent seminorms on the spaces $\Conf_r^g(\rho_0)$). Choosing $k = \lceil N(g)+1 \rceil$, we conclude that
\begin{equation*}
\abs*{\e^{t_2 H_p} \psi_{\rho_0}}_{\Conf_{r(t)}^{g(t)}(\phi^{-t_2}(\rho_0))}^{(\ell)}
	\le C \abs*{\psi_{\rho_0}}_{\Conf_{r(t_1)}^{g(t_1)}(\rho_0)}^{(\ell k)} .
\end{equation*}
The constant $C$ is uniform in $\abs{t_1} + \abs{t_2} \le T$ and $\rho_0 \in T^\star \mfd$.
\end{proof}

We now turn to mapping properties of $\cal{H}_p^{(3)}$. Proposition~\ref{prop:mappingremainderConf} below is an instance of a strong version of pseudo-differential calculus in spaces of confined symbols, and follows from Proposition~\ref{prop:pseudocalcconf} (proved in Appendix~\ref{subsec:proofspdeudocalc}).

\begin{proposition} \label{prop:mappingremainderConf}
Let $p$ and $g$ satisfy Assumptions~\ref{assum:mandatory} and~\ref{assum:p}. Let $r_0 \in (0, r_g]$ and $T \in [0, T_E]$, and suppose they satisfy $r(T) \le r_g$ where $r(\bigcdot)$ is defined in~\eqref{eq:defr(t)}. Then the following holds:
\begin{equation*}
\Conf_{r(t)}^{g(t)}(\rho_0)
	\xlongrightarrow{\cal{H}_p^{(3)}} \theta_{g(t)}^{-\epsilon}(\rho_0) \udl{\gain}_g^{3/2} \Conf_{r(t)}^{g(t)}(\rho_0)
\end{equation*}
uniformly with respect to $\rho_0 \in T^\star \mfd$ and $\abs{t} \le T_E$.
\end{proposition}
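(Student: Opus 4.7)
The plan is to apply the pseudo-differential calculus with confined symbols (Proposition~\ref{prop:pseudocalcconf}) to the two terms of $\cal{H}_p^{(3)}$, and then to check that the resulting weight is bounded by $\theta_{g(t)}^{-\epsilon}(\rho_0) \udl{\gain}_g^{3/2}$ thanks to the Ehrenfest time bound. By~\eqref{eq:defHp3} we have $\cal{H}_p^{(3)} \psi = \widehat{\cal{P}}_3(p, \psi) - \widehat{\cal{P}}_3(\psi, p)$, so it suffices to treat each of these terms separately; I will focus on $\widehat{\cal{P}}_3(p, \psi)$, the other being symmetric in the pseudo-differential calculus.

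First I would set $g_1 = g$ and $g_2 = g(t)$. These metrics are conformal, hence compatible, $g \le g(t)$, and by Proposition~\ref{prop:improvedadmissibility} they share a common slow variation radius and structure constants uniform in $\abs{t} \le T_E$. Assumption~\ref{assum:p}~\ref{it:strongsubquad} gives $\nabla^3 p \in S((\theta_g \udl{\gain}_g^{1/2})^{-\epsilon}(\gain_g/\udl{\gain}_g)^{-3}, g)$, which is a $g$-admissible weight for which $r_g$ is a slow variation radius. The hypothesis $r(T) \le r_g$ and monotonicity of $r(\cdot)$ give $r(t) \le r_g$ throughout. Proposition~\ref{prop:pseudocalcconf} applied with $j = 3$ then yields
\begin{equation*}
\nabla^{-3} S\!\left((\theta_g \udl{\gain}_g^{1/2})^{-\epsilon}(\gain_g/\udl{\gain}_g)^{-3}, g\right) \times \Conf_{r(t)}^{g(t)}(\rho_0)
 \xlongrightarrow{\widehat{\cal{P}}_3} W(\rho_0) \Conf_{r(t)}^{g(t)}(\rho_0),
\end{equation*}
with weight $W = \gain_{g, g(t)}^3 \cdot (\theta_g \udl{\gain}_g^{1/2})^{-\epsilon}(\gain_g/\udl{\gain}_g)^{-3}$, the estimate being uniform in $\rho_0 \in T^\star \mfd$.

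Second I would simplify $W(\rho_0)$. Since $g(t) = \e^{2(\Lambda+2\Upsilon)|t|} g$, one has $\gain_{g,g(t)} = \e^{(\Lambda+2\Upsilon)|t|} \gain_g$ and $\theta_{g(t)} = \e^{-(\Lambda+2\Upsilon)|t|} \theta_g$, so
\begin{equation*}
W
 = \e^{3(\Lambda+2\Upsilon)|t|} \gain_g^3 \cdot \theta_g^{-\epsilon} \udl{\gain}_g^{-\epsilon/2} \cdot \udl{\gain}_g^3 \gain_g^{-3}
 = \e^{(3-\epsilon)(\Lambda+2\Upsilon)|t|} \udl{\gain}_g^{3-\epsilon/2} \theta_{g(t)}^{-\epsilon}.
\end{equation*}

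The only genuine check (and the main point of the argument) is that the exponential factor is absorbed by the powers of $\udl{\gain}_g$ within the Ehrenfest window. From the definition~\eqref{eq:defTE} of $T_E$, $\e^{(\Lambda+2\Upsilon)|t|} \le \udl{\gain}_g^{-1/2}$ for all $\abs{t} \le T_E$, whence
\begin{equation*}
\e^{(3-\epsilon)(\Lambda+2\Upsilon)|t|} \udl{\gain}_g^{3-\epsilon/2}
 \le \udl{\gain}_g^{-(3-\epsilon)/2} \udl{\gain}_g^{3-\epsilon/2}
 = \udl{\gain}_g^{3/2},
\end{equation*}
so $W(\rho_0) \le \udl{\gain}_g^{3/2} \theta_{g(t)}^{-\epsilon}(\rho_0)$, as required. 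The term $\widehat{\cal{P}}_3(\psi, p)$ is handled by the analogous statement of Proposition~\ref{prop:pseudocalcconf} in which the roles of the two factors are interchanged (the bilinearity and the antisymmetry of $\mathfrak{P}$ in~\eqref{eq:exprMoyal} make the two versions of the calculus entirely symmetric). Combining the two estimates gives the announced mapping property, and the uniformity of all seminorm bounds in $\rho_0$ and $\abs{t} \le T_E$ is inherited from Proposition~\ref{prop:pseudocalcconf}.
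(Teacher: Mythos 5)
Your proposal is correct and follows essentially the same route as the paper: apply the pseudo-differential calculus with confined symbols (Proposition~\ref{prop:pseudocalcconf}) at order $j=3$ with the pair $(g, g(t))$, use the skew-symmetry of $\widehat{\cal{P}}_3$ to cover both terms of $\cal{H}_p^{(3)}$, and absorb the factor $\e^{(3-\epsilon)(\Lambda+2\Upsilon)\abs{t}}\udl{\gain}_g^{3-\epsilon/2}$ into $\udl{\gain}_g^{3/2}\theta_{g(t)}^{-\epsilon}(\rho_0)$ via $\e^{(\Lambda+2\Upsilon)\abs{t}}\le\udl{\gain}_g^{-1/2}$ on $\abs{t}\le T_E$. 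The only cosmetic difference is that the admissibility check should be phrased for the weight with respect to $g(t)$ (as the paper does via Propositions~\ref{prop:temperanceweight} and~\ref{prop:improvedadmissibility}), which follows from the same facts you cite.
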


\begin{proof}[Proof of Proposition~\ref{prop:mappingremainderConf} from Proposition~\ref{prop:pseudocalcconf}]
Given that $\cal{H}_p^{(3)} = \widehat{\cal{P}}_3(p, \bigcdot) - \widehat{\cal{P}}_3(\bigcdot, p)$ (recall~\eqref{eq:defHp3}) with $p \in \nabla^{-3} S((\theta_g \udl{\gain}_g^{1/2})^{- \epsilon} (\gain_g/ \udl{\gain}_g)^{-3}, g)$, we apply Proposition~\ref{prop:pseudocalcconf} with $j = 3$ and $(g, g(t))$ in place of $(g, g_0)$. Noticing that~$g$ and~$g(t)$ have common slow variation radius~$r_g$ (see Remark~\ref{rmk:uniformadmissibilityg(t)}) and the weight $\theta_g$ is {$g(t)$-admissible} with slow variation radius $r_g$ (Propositions~\ref{prop:temperanceweight} and~\ref{prop:improvedadmissibility}). From~\eqref{eq:continuityhatcalPj},
\begin{equation*}
\Conf_{r(t)}^{g(t)}(\rho_0) \times \nabla^{-3} S\left((\theta_g \udl{\gain}_g^{1/2})^{-\epsilon} (\gain_g/\udl{\gain}_g)^{-3}, g\right) \\
	\xlongrightarrow{\widehat{\cal{P}}_3} \left(\dfrac{\gain_{g(t), g}(\rho_0)}{\gain_g(\rho_0)} \udl{\gain}_g\right)^3 \theta_g^{-\epsilon}(\rho_0) \udl{\gain}_g^{-\epsilon/2} \Conf_{r(t)}^{g(t)}(\rho_0) .
\end{equation*}
(Since $\widehat{\cal{P}}_3$ is skew-symmetric, we can also invert the order of the two factors in the left-hand side.)
In addition, for any $\abs{t} \le T_E$, the definitions of $\gain_{g(t), g}$ in~\eqref{eq:defjointgain} and of $\theta_g$ (Definition~\ref{def:thetag}) yield
\begin{equation*}
\left(\dfrac{\gain_{g(t), g}(\rho_0)}{\gain_g(\rho_0)} \udl{\gain}_g\right)^3 \theta_g^{-\epsilon}(\rho_0) \udl{\gain}_g^{-\epsilon/2}
	= \e^{(3 - \epsilon) (\Lambda + 2 \Upsilon) \abs{t}} \udl{\gain}_g^{3 - \epsilon/2} \theta_{g(t)}^{-\epsilon}(\rho_0)
	\le \theta_{g(t)}^{-\epsilon}(\rho_0) \udl{\gain}_g^{3/2} ,
\end{equation*}
and the result follows.
\end{proof}

Putting together Proposition~\ref{prop:continuityflowconf} and Proposition~\ref{prop:mappingremainderConf}, we obtain the following.

\begin{proposition}[Continuity estimates for $\cal{E}_j(t)$ on spaces of confined symbols] \label{prop:continuityEcalconf}
Let $r_0 \in (0, r_g]$ and $T \in [0, T_E]$, and suppose they satisfy $r(T) \le r_g$ where $r(\bigcdot)$ is defined in~\eqref{eq:defr(t)}.
Then for any $j \in \N$, we have
\begin{equation*}
\Conf_{r_0}^g(\rho_0)
	\xrightarrow{\cal{E}_j(t)} \theta_{g(t)}^{-j \epsilon}\left(\phi^{-t}(\rho_0)\right) \Conf_{r(t)}^{g(t)}\left(\phi^{-t}(\rho_0)\right) ,
\end{equation*}
uniformly with respect to $\rho_0 \in T^\star \mfd$ and $\abs{t} \le T$.
\end{proposition}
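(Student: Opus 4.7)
The strategy mirrors that of Proposition~\ref{prop:continuityEcal}, replacing the pair (Corollary~\ref{cor:continuityofflowinsymbolclasses}, Lemma~\ref{lem:continuityHp3}) on symbol classes by the pair (Proposition~\ref{prop:continuityflowconf}, Proposition~\ref{prop:mappingremainderConf}) on spaces of confined symbols. The main added difficulty is bookkeeping: at each step of the Dyson expansion, the base point of the confined symbol is advected by the flow, and we must relate the temperance weight evaluated at those moving base points to $\theta_{g(t)}(\phi^{-t}(\rho_0))$.

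The plan is to proceed by induction on $j$. The base case $j=0$, where $\cal{E}_0(t) = \e^{tH_p}$, is given directly by Proposition~\ref{prop:continuityflowconf} applied with $t_1 = 0, t_2 = t$. For the inductive step from $j$ to $j+1$, I will use the recurrence relation~\eqref{eq:recurrencerelation}:
\begin{equation*}
\cal{E}_{j+1}(t) = \int_0^t \e^{(t-s)H_p} \cal{H}_p^{(3)} \cal{E}_j(s) \dd s .
\end{equation*}
Starting with $\psi \in \Conf_{r_0}^g(\rho_0)$, the inductive hypothesis gives $\cal{E}_j(s) \psi$ in $\theta_{g(s)}^{-j\epsilon}(\phi^{-s}(\rho_0)) \Conf_{r(s)}^{g(s)}(\phi^{-s}(\rho_0))$. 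Proposition~\ref{prop:mappingremainderConf}, applied at the point $\phi^{-s}(\rho_0)$ and time $s$, then yields $\cal{H}_p^{(3)} \cal{E}_j(s) \psi \in \theta_{g(s)}^{-(j+1)\epsilon}(\phi^{-s}(\rho_0)) \udl{\gain}_g^{3/2} \Conf_{r(s)}^{g(s)}(\phi^{-s}(\rho_0))$. Finally, Proposition~\ref{prop:continuityflowconf} with base point $\phi^{-s}(\rho_0)$ and parameters $(t_1, t_2) = (s, t-s)$ (so that $|t_1| + |t_2| = |t| \le T$ whenever $0 \le s \le t$ or $t \le s \le 0$) sends the result into $\Conf_{r(t)}^{g(t)}(\phi^{-t}(\rho_0))$, the scalar prefactor being preserved.

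The remaining task is to replace $\theta_{g(s)}^{-(j+1)\epsilon}(\phi^{-s}(\rho_0))$ by the sought $\theta_{g(t)}^{-(j+1)\epsilon}(\phi^{-t}(\rho_0))$ up to a universal constant, and then to integrate in $s$. For the first point, Lemma~\ref{lem:gain(t)} gives $\theta_{g(s)}(\phi^{-s}(\rho_0)) \ge C_\Upsilon^{-1} \theta_g(\rho_0)$ and, applied with $t$, $\theta_g(\rho_0) \ge C_\Upsilon^{-1} \theta_{g(t)}(\phi^{-t}(\rho_0))$; chaining these gives a bound of the form $\theta_{g(s)}^{-(j+1)\epsilon}(\phi^{-s}(\rho_0)) \le C_\Upsilon^{2(j+1)\epsilon} \theta_{g(t)}^{-(j+1)\epsilon}(\phi^{-t}(\rho_0))$, uniformly in $|s| \le |t| \le T_E$. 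For the integration, the factor $\udl{\gain}_g^{3/2}$ together with $|t| \le T_E = \frac{1}{2(\Lambda + 2\Upsilon)} \log \udl{\gain}_g^{-1}$ and the assumption $\Lambda \ge c \udl{\gain}_g$ yields $\udl{\gain}_g^{3/2} |t| \le C/c$, exactly as in~\eqref{eq:getridoffactort}, which absorbs the volume of the interval $[0, t]$ without degrading the seminorm estimates.

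The main obstacle is the moving-base-point subtlety: one must verify that when Proposition~\ref{prop:mappingremainderConf} is invoked at the intermediate point $\phi^{-s}(\rho_0)$, its seminorm constants (which according to its statement are uniform in the base point and in $|s| \le T_E$) indeed do not depend on $s$ nor $\rho_0$, and that the confinement radius $r(s) \le r(T) \le r_g$ stays admissible throughout. Once these uniformities are in place, the induction closes exactly as in the symbol-class version, and the final constant in the seminorm estimate depends on $j$ only through powers of $C_\Upsilon$ and the combinatorial factor $1/j!$ coming from $|s \Delta_j|$, hence is bounded uniformly in $t \in [-T, T]$ and $\rho_0 \in T^\star \mfd$.
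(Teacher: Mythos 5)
Your overall architecture --- induction on $j$ via the recurrence~\eqref{eq:recurrencerelation}, Proposition~\ref{prop:mappingremainderConf} applied at the moving base point $\phi^{-s}(\rho_0)$ at time $s$, Proposition~\ref{prop:continuityflowconf} with parameters $(t_1,t_2)=(s,t-s)$, and absorption of the factor $\abs{t}$ by $\udl{\gain}_g^{3/2}$ using $\Lambda \ge c\,\udl{\gain}_g$ --- is exactly the paper's proof. The one step that fails is your conversion of the temperance weight. You claim $\theta_{g(s)}(\phi^{-s}(\rho_0)) \ge C_\Upsilon^{-1}\theta_g(\rho_0)$ from Lemma~\ref{lem:gain(t)}, but the lemma gives the \emph{reverse} inequality: taking $\tau = s$ and $\rho = \phi^{-s}(\rho_0)$ in $\e^{\tau H_p}\theta_g \ge C_\Upsilon^{-1}\theta_{g(\tau)}$ yields $\theta_g(\rho_0) \ge C_\Upsilon^{-1}\theta_{g(s)}(\phi^{-s}(\rho_0))$, i.e.\ an upper bound on $\theta_{g(s)}(\phi^{-s}(\rho_0))$, not a lower one. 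Since $\theta_{g(s)} = \e^{-(\Lambda+2\Upsilon)\abs{s}}\theta_g$ pointwise and Assumption~\ref{assum:p}~\ref{it:metriccontrol} only controls $\theta_g$ along the flow up to a factor $C_\Upsilon \e^{\Upsilon\abs{s}}$, your claimed bound would amount to $\theta_g(\phi^{-s}(\rho_0)) \ge C_\Upsilon^{-1}\e^{(\Lambda+2\Upsilon)\abs{s}}\theta_g(\rho_0)$, which is false in general (take $\theta_g$ constant). The best your route gives is $\theta_{g(s)}(\phi^{-s}(\rho_0)) \ge C_\Upsilon^{-1}\e^{-(\Lambda+3\Upsilon)\abs{s}}\theta_g(\rho_0)$, and on $\abs{s}\le T_E$ the exponential deficit is a positive power of $\udl{\gain}_g$; this loss is not a harmless constant, it breaks the uniformity in $\udl{\gain}_g$ needed downstream (Step~1 of the proof of Theorem~\ref{thm:partition} and the semiclassical regime), and it cannot in general be absorbed by the single factor $\udl{\gain}_g^{3/2}$, which is already spent on killing $\abs{t}$ in the time integration.

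The repair is a one-shot application of the same lemma with the right time increment: since $s \in t\Delta_1$, $s$ and $t$ have the same sign with $\abs{s}\le\abs{t}$, so $\e^{2(\Lambda+2\Upsilon)\abs{t-s}}g(s) = g(t)$ and $\phi^{t-s}(\phi^{-t}(\rho_0)) = \phi^{-s}(\rho_0)$. The metric $g(s)$ satisfies Assumption~\ref{assum:p} with the same $\Lambda$, $\Upsilon$, $C_\Upsilon$ (Remark~\ref{rmk:homogeneity}), so Lemma~\ref{lem:gain(t)} applied to $g(s)$ with time $t-s$ at the point $\phi^{-t}(\rho_0)$ gives directly $\theta_{g(s)}(\phi^{-s}(\rho_0)) \ge C_\Upsilon^{-1}\theta_{g(t)}(\phi^{-t}(\rho_0))$, hence $\theta_{g(s)}^{-\epsilon}(\phi^{-s}(\rho_0)) \le C_\Upsilon^{\epsilon}\theta_{g(t)}^{-\epsilon}(\phi^{-t}(\rho_0))$, and the same inequality raised to the power $j$ handles the factor inherited from the induction hypothesis (the resulting $j$-dependent constant is allowed). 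With this substitution your induction closes and coincides with the paper's argument.
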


\begin{proof}
We prove the statement by induction. For $j = 0$, we have $\cal{E}_0(t) = \e^{t H_p}$, so the result follows from Proposition~\ref{prop:continuityflowconf}. Assume that the statement is true at step $j \ge 0$. We prove that for all $t \in [-T, T]$ and all $s \in t \Delta_1$, the following holds:
\begin{equation*}
\Conf_{r(s)}^{g(s)}\left(\phi^{-s}(\rho_0)\right)
	\xrightarrow{\e^{(t - s) H_p} \cal{H}_p^{(3)}} \theta_{g(t)}^{-\epsilon}\left(\phi^{-t}(\rho_0)\right) \udl{\gain}_g^{3/2} \Conf_{r(t)}^{g(t)}\left(\phi^{-t}(\rho_0)\right) .
\end{equation*}
Applying Proposition~\ref{prop:mappingremainderConf} and Proposition~\ref{prop:continuityflowconf}, it follows that
\begin{multline*}
\Conf_{r(s)}^{g(s)}\left(\phi^{-s}(\rho_0)\right)
	\xrightarrow{\cal{H}_p^{(3)}} \theta_{g(s)}^{-\epsilon}\left(\phi^{-s}(\rho_0)\right) \udl{\gain}_g^{3/2} \Conf_{r(s)}^{g(s)}\left(\phi^{-s}(\rho_0)\right) \\
	\xrightarrow{\e^{(t - s) H_p}} \theta_{g(s)}^{-\epsilon}\left(\phi^{-s}(\rho_0)\right) \udl{\gain}_g^{3/2} \Conf_{r(t)}^{g(t)}\left(\phi^{-t}(\rho_0)\right) .
\end{multline*}
By Lemma~\ref{lem:gain(t)} we have
\begin{equation*}
\theta_{g(s)}^{-\epsilon}\left(\phi^{-s}(\rho_0)\right)
	= \theta_{g(s)}^{-\epsilon}\left(\phi^{t-s}(\phi^{-t}(\rho_0))\right)
	\le \theta_{g(t)}^{-\epsilon}\left(\phi^{-t}(\rho_0)\right) C_\Upsilon^\epsilon .
\end{equation*}
Combining this with the induction hypothesis, in view of~\eqref{eq:recurrencerelation}, we deduce that
\begin{equation*}
\Conf_r^g(\rho_0)
	\xrightarrow{\cal{E}_{j+1}(t)} I(t) \Conf_{r(t)}^{g(t)}\left(\phi^{-t}(\rho_0)\right) ,
\end{equation*}
with
\begin{align*}
I(t)
	= \abs{t} \theta_{g(t)}^{-(j+1) \epsilon}\left(\phi^{-t}(\rho_0)\right) \udl{\gain}_g^{3/2}
	\le \dfrac{C_{1/2}}{2 c} \theta_{g(t)}^{-(j+1) \epsilon}\left(\phi^{-t}(\rho_0)\right)
\end{align*}
as we did in~\eqref{eq:getridoffactort}, where the constants $C_{1/2}, c$ are independent of $\udl{\gain}_g$. We obtain the sought statement, and the induction is finished.
\end{proof}

\Large
\section{The quantum dynamics} \label{sec:quantum}
\normalsize

In this section, we establish Egorov's theorem for a specific class of symbols, whose gradient is controlled by (a power of) the temperance weight $\theta_g$ (Corollary~\ref{cor:Egorovwithnabla-1symbols}). This covers in particular affine symbols. Another important result of this section is Lemma~\ref{lem:iteratedcommutators} in which we investigate quantitatively the action of commutators of symbols of the form $\e^{t \cal{H}_p} a$ with affine symbols, in preparation for the application of Beals' theorem (Proposition~\ref{prop:Beals}). In the same vein, Lemma~\ref{lem:combinatorialmultiplication} deals with one-sided compositions with affine symbols, so as to investigate the decay of $\e^{t \cal{H}_p} a$ when $a$ is a confined symbol.

\subsection{Egorov expansion for a specific class of symbols} \label{subsec:Egorovspecificclass}

We start with the proof of the Dyson series expansion, valid on the Schwartz class.

\begin{proof}[Proof of Proposition~\ref{prop:Dyson}]
First recall that $\sch(T^\star \mfd) \subset \dom \cal{H}_p$ and that $\cal{H}_p$ preserves the Schwartz class (a consequence of Proposition~\ref{prop:pseudocalcconf}), so that $\sch(T^\star \mfd) \subset \dom \cal{H}_p^k$ for all $k \in \N$. The same holds for $H_p$, namely $\sch(T^\star \mfd) \subset \dom H_p^k$ for all $k \in \N$, since $p$ has temperate growth under Assumption~\ref{assum:p} (see Lemma~\ref{lem:temperategrowth}). In addition, $\e^{t H_p}$ preserves the Schwartz class for all times (by Proposition~\ref{prop:continuityflowconf} for instance). We deduce by the chain rule that for any $w \in \sch(T^\star \mfd)$ and $t \in \R$, the map
\begin{equation*}
s \longmapsto \e^{(t - s) \cal{H}_p} \e^{s H_p} w
\end{equation*}
is of class $C^1(\R; L^2(T^\star \mfd))$. We have
\begin{equation*}
\dfrac{\dd}{\dd s} \e^{(t - s) \cal{H}_p} \e^{s H_p} w
	= - \e^{(t - s) \cal{H}_p} \cal{H}_p \e^{s H_p} w + \e^{(t - s) \cal{H}_p} H_p \e^{s H_p} w
	= - \e^{(t - s) \cal{H}_p} \cal{H}_p^{(3)} \e^{s H_p} w .
\end{equation*} 
Integrating over $s \in [0, t]$ yields
\begin{equation*}
\int_0^t \e^{(t - s) \cal{H}_p} \cal{H}_p^{(3)} \e^{s H_p} w \dd s
	= \e^{t \cal{H}_p} w - \e^{t H_p} w .
\end{equation*}
This is exactly the Dyson expansion~\eqref{eq:Dyson} at order $j_0 = 0$. We proceed by induction to prove the higher order Dyson expansion: if~\eqref{eq:Dyson} is valid for $j_0 \ge 0$, then we observe that, given $w \in \sch(T^\star \mfd)$, we have
\begin{equation*}
\tilde w
	:= \cal{H}_p^{(3)} \e^{(s_{j_0+1} - s_{j_0}) H_p} \cal{H}_p^{(3)}  \e^{(s_{j_0} - s_{j_0-1}) H_p}  \cdots \e^{(s_2 - s_1) H_p} \cal{H}_p^{(3)} \e^{s_1 H_p} w
	\in \sch(T^\star \mfd)
\end{equation*}
for any $(s_1, s_2, \ldots, s_{j_0+1}) \in t \Delta_{j_0+1}$. Therefore the Dyson expansion at order $0$ allows to write
\begin{equation} \label{eq:inductionDyson}
\e^{(t - s_{j_0+1}) \cal{H}_p} \tilde w
	= \e^{(t - s_{j_0+1}) H_p} \tilde w
		+ \int_0^{t - s_{j_0+1}} \e^{(t - s_{j_0+1} - s) \cal{H}_p} \tilde w \dd s
\end{equation}
We change variables in the integral by setting $s_{j_0+2} = s + s_{j_0+1}$, which ranges between $s_{j_0+1}$ and $t$. This term becomes
\begin{equation*}
\int_{s_{j_0+1}}^t \e^{(t - s_{j_0+2}) \cal{H}_p} \cal{H}_p^{(3)} \e^{(s_{j_0+1} - s_{j_0}) H_p} \cal{H}_p^{(3)}  \e^{(s_{j_0} - s_{j_0-1}) H_p}  \cdots \e^{(s_2 - s_1) H_p} \cal{H}_p^{(3)} \e^{s_1 H_p} w \dd s_{j_0+2} .
\end{equation*}
Plugging this into~\eqref{eq:inductionDyson} and integrating over $(s_1, s_2, \ldots, s_{j_0+1}) \in t \Delta_{j_0+1}$, we obtain
\begin{equation*}
\widehat{\cal{E}}_{j_0+1}(t)
	= \cal{E}_{j_0+1}(t) + \widehat{\cal{E}}_{j_0+2}(t) ,
\end{equation*}
which concludes the proof. The operators involved map $\sch(T^\star \mfd)$ to $L^2(T^\star \mfd)$ in view of Proposition~\ref{prop:isometrycalHp}.
\end{proof}

By a density argument, one can extend the validity of the Dyson expansion to a wider class of symbols on phase space. The difficulty is that we do not know yet that $\e^{t \cal{H}_p}$ preserves the Schwartz class (it only preserves $\dom \cal{H}_p$ a priori), although its generator $\cal{H}_p$ preserves $\sch(T^\star \mfd)$.

However, we already know that the operator $\e^{t \cal{H}_p}$ is well defined as a map
\begin{equation*}
\e^{t \cal{H}_p} :
	S(1, g) \longrightarrow \sch'(T^\star \mfd) .
\end{equation*}
This follows from the unitarity of the propagator $\e^{- \ii t P}$, the Calder{\'o}n--Vaillancourt theorem (Proposition~\ref{prop:CV}) and Proposition~\ref{prop:SchwartzWeylkernel}.
In the following corollary, we show that it can be extended to classes of the form $S(\theta_g^N, g)$ for any $N \in \R$. This will be used in fact for $N = 1$ to justify Egorov's theorem for affine symbols, in the prospect of applying Beals' theorem. The motivation for considering $S(\theta_g, g)$ comes from the fact that affine symbols, namely those symbols $f$ such that $\nabla^2 f = 0$, belong to $\nabla^{-1}S(\theta_g, g)$ (see Lemma~\ref{lem:seminormaff}).

\begin{corollary} \label{cor:Egorovwithnabla-1symbols}
Assume $p$ and $g$ satisfy Assumptions~\ref{assum:mandatory} and~\ref{assum:p}. Let $f \in \cont^\infty(T^\star \mfd)$ be such that
\begin{equation} \label{eq:assumptionaffine}
\exists N \in \R : \qquad
	f \in \nabla^{-1} S(\theta_g^N, g) .
\end{equation}
Then the operators
\begin{gather*}
\Opw{f} \e^{- \ii t P} :
	\sch(\mfd) \longrightarrow L^2(\mfd) , \\
\Opw{\cal{E}_j(t) f} : \sch(\mfd) \longrightarrow L^2(\mfd) ,
	\quad
\Opw{\widehat{\cal{E}}_j(t) f} : \sch(\mfd) \longrightarrow L^2(\mfd) ,
	\qquad j \in \N
\end{gather*}
are continuous, and we have
\begin{equation*}
\Opw{f} \e^{- \ii t P}
	= \sum_{j = 0}^{j_0} \e^{- \ii t P} \Opw{\cal{E}_j(t) f} + \e^{- \ii t P} \Opw{\widehat{\cal{E}}_{j_0+1}(t) f} ,
		\qquad \forall \abs{t} \le \tfrac{1}{2} T_E , \forall j_0 \in \N ,
\end{equation*}
or equivalently, $\e^{t \cal{H}_p} f$ is well-defined and for any $j_0 \in \N$ we have
\begin{equation} \label{eq:egorovontermediatefaff}
\e^{t \cal{H}_p} f
	= \sum_{j = 0}^{j_0} \cal{E}_j(t) f + \widehat{\cal{E}}_{j_0+1}(t) f ,
		\qquad \forall \abs{t} \le \tfrac{1}{2} T_E .
\end{equation}
\end{corollary}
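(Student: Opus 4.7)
The strategy is threefold: (i) identify the symbol class where each $\cal{E}_j(t) f$ lives using the mapping properties developed in Section~\ref{sec:mapping}; (ii) give a direct definition of $\Opw{\widehat{\cal{E}}_{j_0+1}(t) f}$ as an $L^2$-bounded operator by iterating the Dyson expansion until Calder\'on--Vaillancourt applies; (iii) transfer the identity from $\sch$-symbol approximations $f_R \to f$ via a limiting argument.

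For step~(i), observe first that $\nabla f \in S(\theta_g^N, g)$ together with the polynomial growth of the $g$-admissible weight $\theta_g$ implies that $f$ has polynomial growth; thus $f \in \sch'(T^\star \mfd)$, $\Opw{f}: \sch(\mfd) \to \sch'(\mfd)$ is continuous, and $\e^{t \cal{H}_p} f$ is well-defined as the Weyl symbol of $\e^{\ii t P} \Opw{f} \e^{-\ii t P}$ (Definition~\ref{def:defsymbconjugatedop}). Applying Corollary~\ref{cor:continuityofflowinsymbolclasses} with the $g$-admissible weight $m := \theta_g^N$ yields $\cal{E}_0(t) f = f \circ \phi^t \in \nabla^{-1} S(m(t), g(t))$, where $m(t) := \e^{t H_p} m$ is $g(t)$-admissible by Proposition~\ref{prop:uniformm(t)}; Proposition~\ref{prop:continuityEcal} gives $\cal{E}_j(t) f \in S(m(t) \theta_{g(t)}^{-j\epsilon}, g(t))$ for every $j \geq 1$. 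Hence Proposition~\ref{prop:continuityinschpseudodiff} shows that each $\Opw{\cal{E}_j(t) f}$ maps $\sch(\mfd)$ continuously into $\sch(\mfd)$, hence into $L^2(\mfd)$.

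For step~(ii), fix $j_1 \geq j_0$ large enough that $(j_1+1)\epsilon > N$. Given $\bf{s} = (s_1, \ldots, s_{j_1+1}) \in t\Delta_{j_1+1}$, consider the symbol
\[
\tilde f_{\bf{s}} := \cal{H}_p^{(3)} \e^{(s_{j_1+1}-s_{j_1}) H_p} \cal{H}_p^{(3)} \cdots \cal{H}_p^{(3)} \e^{s_1 H_p} f .
\]
Iterating Lemma~\ref{lem:continuityHp3} and Corollary~\ref{cor:continuityofflowinsymbolclasses} (exactly as in the proof of Proposition~\ref{prop:continuityEcal}) places $\tilde f_{\bf{s}}$ in $S(m(s_{j_1+1}) \theta_{g(s_{j_1+1})}^{-(j_1+1)\epsilon}, g(s_{j_1+1}))$, whose weight is uniformly bounded by the choice of $j_1$. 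Calder\'on--Vaillancourt (Proposition~\ref{prop:CV}) then provides a uniform bound $\|\Opw{\tilde f_{\bf{s}}}\|_{\Bop(L^2)} \leq C$, so the formula
\[
\Opw{\widehat{\cal{E}}_{j_1+1}(t) f} u := \int_{t\Delta_{j_1+1}} \e^{\ii (t-s_{j_1+1}) P} \Opw{\tilde f_{\bf{s}}} \e^{-\ii (t-s_{j_1+1}) P} u \, \dd \bf{s}
\]
converges in $L^2(\mfd)$ and defines a continuous operator $\sch(\mfd) \to L^2(\mfd)$. One then sets $\Opw{\widehat{\cal{E}}_{j_0+1}(t) f} := \sum_{j=j_0+1}^{j_1} \Opw{\cal{E}_j(t) f} + \Opw{\widehat{\cal{E}}_{j_1+1}(t) f}$, which by step~(i) is also continuous from $\sch(\mfd)$ into $L^2(\mfd)$.

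For step~(iii), approximate $f$ by the Schwartz symbols $f_R(\rho) := \chi(\rho/R) f(\rho)$, with $\chi \in \cont_c^\infty(T^\star \mfd)$ equal to $1$ near the origin. By construction $\nabla f_R$ is bounded in $S(\theta_g^N, g)$ uniformly in $R$ and converges to $\nabla f$ pointwise, so a direct inspection of the seminorm estimates established in step~(i)--(ii) gives convergence $\cal{E}_j(t) f_R \to \cal{E}_j(t) f$ in the relevant symbol class for every $j$, and analogously $\tilde f_{\bf{s}}$ constructed from $f_R$ converges uniformly in $\bf{s}$. Applying Proposition~\ref{prop:Dyson} to $f_R \in \sch(T^\star \mfd)$ and passing to the limit in $R \to \infty$ (the left-hand side converges in $\sch'(\mfd)$ and each term on the right in $L^2(\mfd)$ by the continuity properties just established) yields the identity
\[
\Opw{f} \e^{-\ii t P} = \sum_{j=0}^{j_0} \e^{-\ii t P} \Opw{\cal{E}_j(t) f} + \e^{-\ii t P} \Opw{\widehat{\cal{E}}_{j_0+1}(t) f}
\]
for every $j_0$, which is~\eqref{eq:egorovontermediatefaff} after applying $\e^{\ii t P}$. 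The main obstacle is precisely step~(ii): the definition of $\e^{t \cal{H}_p}$ in Proposition~\ref{prop:isometrycalHp} is as an $L^2$-unitary group, so the literal expression $\int \e^{(t-s_{j_0+1}) \cal{H}_p} \tilde f_{\bf{s}} \, \dd \bf{s}$ is not directly meaningful when $\tilde f_{\bf{s}}$ only decays polynomially. The iterated Dyson trick pushes the remainder up to an order where the innermost symbol $\tilde f_{\bf{s}}$ lies in a bounded $S(\cdot, g(t))$-class, at which point Calder\'on--Vaillancourt converts the oscillatory expression into a norm-convergent integral of $L^2$-bounded operators.
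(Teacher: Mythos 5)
Your overall plan (cut off $f$, apply Proposition~\ref{prop:Dyson} to the Schwartz approximants, push the Dyson order past $N/\epsilon$ so that Calder\'on--Vaillancourt controls the innermost symbol, then pass to the limit) is the same as the paper's, and your step~(ii) — re-expanding to order $j_1$ with $(j_1+1)\epsilon>N$ and defining the remainder as a norm-convergent integral of conjugated bounded operators — is an acceptable variant of how the paper treats the remainder (it likewise first assumes $j_0\epsilon\ge N$). But there is a genuine gap at the heart of step~(iii): you assert that the left-hand side $\Opw{f_R}\e^{-\ii t P}u$ converges in $\sch'(\mfd)$. At this point of the paper one only knows $\e^{-\ii t P}u\in L^2(\mfd)$ (Corollary~\ref{cor:continuitypropagatorSchwartz} is proved later, from Theorem~\ref{thm:partition}, which itself relies on this corollary, so invoking Schwartz-preservation of the propagator here would be circular). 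Testing against $v\in\sch(\mfd)$ reduces the limit to $\inp*{\e^{-\ii t P}u}{\Opw{\bar f_R}v}$, and to pass to the limit one needs $\Opw{\bar f_R}v\rightharpoonup\Opw{\bar f}v$ weakly in $L^2$, hence the uniform bound $\sup_R\norm*{\Opw{\bar f_R}v}_{L^2}<\infty$. This is exactly the delicate point; the paper obtains it by factorizing $f_n=\chi_n\moyal f-\widehat{\cal{P}}_1(\chi_n,f)\moyal a\moyal b$ with $a\moyal b=1$, $a\in S(\theta_g^{-N},g)$, $b\in S(\theta_g^{N},g)$, so that all $n$-dependence sits in uniformly $L^2$-bounded factors applied to the fixed Schwartz functions $\Opw{f}v$ and $\Opw{b}v$. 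Your proposal has no substitute for this, and without it the limit on the left is neither convergent nor identified; for the same reason, the opening claim of step~(i) that $\e^{t\cal{H}_p}f$ is already well-defined via Definition~\ref{def:defsymbconjugatedop} begs the question, since the continuity of $\Opw{f}\e^{-\ii t P}$ on $\sch(\mfd)$ is precisely what the corollary establishes.

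A second, more technical gap concerns the cutoff. The claim that $\nabla f_R$ with $f_R=\chi(\cdot/R)f$ is bounded in $S(\theta_g^N,g)$ uniformly in $R$ is unjustified for a general admissible metric: measured in $g$, the derivatives of $\chi(\cdot/R)$ behave like $\theta_g(\rho)^kR^{-k}$ on the region $\abs{\rho}\sim R$, which is unbounded as soon as $\theta_g$ grows faster than linearly, and the term $R^{-1}f\,(\nabla\chi)(\cdot/R)$ need not be dominated by $\theta_g^N$ since only $\nabla f$, not $f$, is controlled. The paper avoids this by building its cutoffs $\chi_n$ from a $g$-partition of unity, which gives bounds uniform in $S(1,g)$, and it never needs convergence of $\cal{E}_j(t)f_n$ in the Fr\'echet symbol topology: pointwise convergence plus uniform bounds only yield weak convergence in $\sch'(T^\star\mfd)$, which is why the paper passes to limits in the weak operator topology (Lemma~\ref{lem:continuityquantumvf}) rather than "in the relevant symbol class" as you claim. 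These last two points are repairable by adopting the paper's cutoffs and weak-limit bookkeeping, but the missing uniform bound on $\Opw{\bar f_R}v$ is a missing idea, not a detail.
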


The proof uses the following technical lemma.

\begin{lemma} \label{lem:continuityquantumvf}
Let $(a_n)_{n \in \N}$ be a sequence in $\sch'(T^\star \mfd)$ converging to some $a \in \sch'(T^\star \mfd)$ as $n \to \infty$. Assume that the operators $\Opw{a_n}$ extend to bounded operators on $L^2(\mfd)$, and that
\begin{equation} \label{eq:assumeuniformboundedness}
\sup_{n \in \N} \; \norm*{\Opw{a_n}}_{\Bop(L^2(\mfd))}
	< \infty .
\end{equation}
Then we have
\begin{equation*}
\Opw{\e^{t \cal{H}_p} a_n}
	\strongto{n \to \infty} \Opw{\e^{t \cal{H}_p} a} ,
		\qquad \forall t \in \R ,
\end{equation*}
for the weak operator topology on $L^2(\mfd)$. In particular, convergence holds as continuous operators $\sch(\mfd) \to \sch'(\mfd)$.
\end{lemma}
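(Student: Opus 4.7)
The plan is to unwind Definition~\ref{def:defsymbconjugatedop} and reduce the problem to weak operator topology (WOT) convergence of $\Opw{a_n}$ to $\Opw{a}$ on $L^2(\mfd)$. Since by that definition $\Opw{e^{t \cal{H}_p} a_n} = e^{i t P} \Opw{a_n} e^{-i t P}$, and conjugation by the unitaries $e^{\pm i t P}$ preserves WOT convergence (the map $(u, v) \mapsto (e^{-i t P} u, e^{-i t P} v)$ is a bijection of $L^2(\mfd)^2$), this reduction handles the $t$ dependence at once---provided that $\Opw{a}$ itself extends to a bounded operator on $L^2(\mfd)$ so that the analogous identity holds for~$a$. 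Both claims will come out of the argument.

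To obtain the WOT convergence at the Schwartz level, I would apply the defining formula~\eqref{eq:defWeylquantization}: for $u, v \in \sch(\mfd)$,
\[
\langle \Opw{a_n} u, v \rangle_{\sch', \sch(\mfd)} = \langle a_n, u \ovee \bar v \rangle_{\sch'(T^\star \mfd), \sch(T^\star \mfd)} \xrightarrow[n \to \infty]{} \langle a, u \ovee \bar v \rangle ,
\]
since $u \ovee \bar v \in \sch(T^\star \mfd)$ and $a_n \to a$ in $\sch'(T^\star \mfd)$. Call the limit $\langle \Opw{a} u, v \rangle_{\sch', \sch(\mfd)}$. Combining this with the uniform bound $M := \sup_n \norm*{\Opw{a_n}}_{\Bop(L^2(\mfd))} < \infty$ from~\eqref{eq:assumeuniformboundedness} gives $|\langle \Opw{a} u, v \rangle| \le M \norm*{u}_{L^2} \norm*{v}_{L^2}$ for all $u, v \in \sch(\mfd)$; density of $\sch(\mfd)$ in $L^2(\mfd)$ then extends $\Opw{a}$ to a bounded operator on $L^2(\mfd)$ of norm at most $M$.

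Finally, I would upgrade this Schwartz-level convergence to the WOT on $L^2(\mfd)$ by a standard $3\epsilon$ argument exploiting density of $\sch(\mfd)$ in $L^2(\mfd)$ together with the uniform norm bound $M$: for $u, v \in L^2(\mfd)$ and $\epsilon > 0$, pick $u', v' \in \sch(\mfd)$ with $\norm*{u - u'}_{L^2}, \norm*{v - v'}_{L^2} \le \epsilon$, then bound
\[
\bigl|\langle (\Opw{a_n} - \Opw{a}) u, v \rangle\bigr| \le \bigl|\langle (\Opw{a_n} - \Opw{a}) u', v' \rangle\bigr| + 2 M \epsilon \bigl(\norm*{u}_{L^2} + \norm*{v}_{L^2} + \epsilon\bigr) ;
\]
letting $n \to \infty$ and then $\epsilon \to 0$ yields WOT convergence on $L^2(\mfd)$. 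Convergence in the space of continuous operators $\sch(\mfd) \to \sch'(\mfd)$ follows a fortiori, since pairings there with Schwartz test functions are a particular case of the $L^2$ pairing.

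The argument is essentially bookkeeping; there is no real technical obstacle. The one point worth being aware of is that one cannot invoke Corollary~\ref{cor:continuitypropagatorSchwartz} (which says that $e^{-i t P}$ stabilizes $\sch(\mfd)$) to keep the whole computation inside $\sch(\mfd)$, since that corollary is a downstream consequence of Theorem~\ref{thm:partition}, for whose proof the present lemma is a tool. The uniform $L^2$-bound~\eqref{eq:assumeuniformboundedness} is precisely what lets one pass from the distributional convergence $a_n \to a$---which a priori only gives pairings with Schwartz test functions---to WOT convergence on $L^2(\mfd)$.
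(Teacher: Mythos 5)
Your proof is correct and follows essentially the same route as the paper's: distributional convergence paired against Wigner transforms of Schwartz functions, then the uniform $L^2$ bound plus density to upgrade to weak operator convergence on $L^2(\mfd)$, and finally the observation that conjugation by the unitaries $\e^{\pm \ii t P}$ preserves this convergence. Your explicit verification that $\Opw{a}$ extends to a bounded operator (which the paper uses implicitly) is a welcome touch, but the argument is the same.
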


\begin{proof}[Proof of Lemma~\ref{lem:continuityquantumvf}]
Since $a_n \weakto{} a$ in $\sch'(T^\star \mfd)$, we know by definition of the Weyl quantization that
\begin{equation*}
\Opw{a_n} u
	\weakto[\sch'(\mfd)]{n \to \infty} \Opw{a} u ,
		\qquad \forall u \in \sch(\mfd) .
\end{equation*}
Using the uniform boundedness assumption~\eqref{eq:assumeuniformboundedness}, weak compactness in $L^2(\mfd)$ results in
\begin{equation} \label{eq:convergenceforsch}
\Opw{a_n} u
	\weakto[L^2(\mfd)]{n \to \infty} \Opw{a} u ,
		\qquad \forall u \in \sch(\mfd) .
\end{equation}
We want to upgrade this to functions $u \in L^2(\mfd)$. Fix $u \in L^2(\mfd)$ and let $u' \in \sch(\mfd)$. We write
\begin{equation*}
\Opw{a_n - a} u
	= \Opw{a_n - a} (u - u') + \Opw{a_n - a} u' ,
\end{equation*}
so that for all $v \in L^2(\mfd)$, we have
\begin{equation*}
\abs*{\inp*{\Opw{a_n - a} u}{v}_{L^2}}
	\le \norm*{\Opw{a_n - a}}_{\Bop(L^2(\mfd))} \norm*{u - u'}_{L^2} \norm*{v}_{L^2} + \abs*{\inp*{\Opw{a_n - a} u'}{v}_{L^2}} .
\end{equation*}
By~\eqref{eq:convergenceforsch} and~\eqref{eq:assumeuniformboundedness}, we obtain
\begin{equation*}
\limsup_{n \to \infty} \abs*{\inp*{\Opw{a_n - a} u}{v}_{L^2}}
	\le \left( \sup_{n \in \N} \norm*{\Opw{a_n}}_{\Bop(L^2(\mfd))} + \norm*{\Opw{a}}_{\Bop(L^2(\mfd))} \right) \norm*{u - u'}_{L^2} \norm*{v}_{L^2} .
\end{equation*}
This is valid for all $v \in L^2(\mfd)$ and all $u' \in \sch(\mfd)$, so by density of $\sch(\mfd)$ in $L^2(\mfd)$, we infer that
\begin{equation*}
\Opw{a_n} u
	\weakto[L^2(\mfd)]{n \to \infty} \Opw{a} u ,
		\qquad \forall u \in L^2(\mfd) ,
\end{equation*}
or in other words, $\Opw{a_n} \to \Opw{a}$ weakly as operators on $L^2(\mfd)$. This property is preserved by conjugation with the propagator $\e^{- \ii t P}$, hence the result. Testing against Schwartz functions instead of $L^2$ functions, we obtain the convergence in the sense of continuous operators $\sch(\mfd) \to \sch'(\mfd)$.
\end{proof}

We now go back to the proof of the corollary.

\begin{proof}[Proof of Corollary~\ref{cor:Egorovwithnabla-1symbols}]
Observe first that throughout the proof, we do not care about the dependence of seminorms with respect to time. Only mapping properties for fixed times $t$ matter.

We fix a {$g$-partition} of unity $(\varphi_{\rho_0})_{\rho_0 \in T^\star \mfd}$ (Proposition~\ref{prop:existencepartitionofunity}). The {$\varphi_{\rho_0}$'s} are compactly supported. Let $(B_n)_{n \in \N}$ be an exhaustive sequence of compact sets that cover the phase space: $\bigcup_n B_n = T ^\star \mfd$. We define
\begin{equation*}
\chi_n(\rho)
	:= \int_{B_n} \varphi_{\rho_0}(\rho) \dd \vol_g(\rho_0) ,
		\qquad n \in \N .
\end{equation*}
This symbol is compactly supported. Applying Proposition~\ref{prop:reconstructsymbol} to the uniformly confined family of symbols $(\one_{B_n}(\rho_0) \varphi_{\rho_0})_{\rho_0 \in T^\star \mfd}$, we know that $\chi_n \in S(1, g)$. In addition, the sequence $(\chi_n)_{n \in \N}$ is bounded in $S(1, g)$ since the seminorms of $\one_{B_n}(\rho_0) \varphi_{\rho_0}$ are bounded uniformly in $n$.

Now by dominated convergence, one checks that $\chi_n \weakto{} 1$ in $\sch'(T^\star \mfd)$ as $n \to \infty$, so that $f_n := \chi_n f \weakto{} f$ in $\sch'(T^\star \mfd)$ as well.

Then Proposition~\ref{prop:Dyson} applies to all functions $f_n$ and yields for any $j_0 \in \N$:
\begin{equation} \label{eq:Dysoncomposedleft}
\Opw{f_n} \e^{- \ii t P}
	= \sum_{j = 0}^{j_0} \e^{- \ii t P} \Opw{\cal{E}_j(t) f_n} + \e^{- \ii t P} \Opw{\widehat{\cal{E}}_{j_0+1}(t) f_n} ,
		\qquad \forall t \in \R .
\end{equation}
We wish to pass to the limit as $n \to \infty$ in both sides of~\eqref{eq:Dysoncomposedleft}.
\begin{itemize}[label=\textbullet]
\item To pass to the limit in the terms $\cal{E}_j(t)$, we simply argue that the operators $\cal{H}_p^{(3)}$ and $\e^{s H_p}$ are continuous on $\sch(T^\star \mfd)$, so that they extend to continuous operators on $\sch'(T^\star \mfd)$. This implies that
\begin{equation*}
\cal{E}_j(t) f_n
	\weakto[\sch'(T^\star \mfd)]{n \to \infty} \cal{E}_j(t) f .
\end{equation*}
We deduce that
\begin{equation*}
\Opw{\cal{E}_j(t) f_n}
	\weakto[\Lop(\sch, \sch'(\mfd))]{n \to \infty} \Opw{\cal{E}_j(t) f} .
\end{equation*}
Let us check that the operator $\Opw{\cal{E}_j(t) f}$ is actually mapping $\sch(\mfd)$ to $L^2(\mfd)$ continuously.
For $j=0$, in view of the assumption~\eqref{eq:assumptionaffine} on $\nabla f$, Corollary~\ref{cor:continuityofflowinsymbolclasses} implies that $\nabla \cal{E}_0(t) f \in S(\e^{t H_p} \theta_g, g(t)) \subset S(C_\Upsilon \e^{\Upsilon \abs{t}} \theta_g, g(t))$ (recall that $\e^{t H_p} \theta_g \le C_\Upsilon \e^{\Upsilon \abs{t}} \theta_g$ from Assumption~\ref{assum:p}~\ref{it:metriccontrol}). Using the admissibility of $g(t)$ (Remark~\ref{rmk:uniformadmissibilityg(t)}) and $\theta_g$ (Proposition~\ref{prop:temperanceweight}), we deduce from the improved temperance property of Proposition~\ref{prop:improvedadmissibility} that $\cal{E}_0(t) f$ belongs for instance to $S(f(\rho_0) + \theta_g(\rho_0) \jap{\rho - \rho_0}_{g_{\rho_0}^\natural}^k, g_{\rho_0})$ for some fixed $\rho_0 \in T^\star \mfd$ and some large $k \ge 0$. (Dependence of seminorms on $t$ does not matter here.) In other words, the symbol $\cal{E}_0(t) f$ has temperate growth with respect to the Euclidean metric~$g_{\rho_0}$. Therefore we have that
\begin{equation*}
\Opw{\cal{E}_0(t) f}
	: \sch(\mfd) \xlongrightarrow{} \sch(\mfd)
\end{equation*}
continuously by Proposition~\ref{prop:continuityonSchwartz}. Similarly for $j \in \N^\ast$, Proposition~\ref{prop:continuityEcal} implies that $\cal{E}_j(t) f$ has temperate growth so that Proposition~\ref{prop:continuityonSchwartz} again yields
\begin{equation*}
\Opw{\cal{E}_j(t) f}
	: \sch(\mfd) \xlongrightarrow{} \sch(\mfd) .
\end{equation*}
In particular, the operators $\Opw{\cal{E}_j(t) f}$ for all $j \in \N$ map $\sch(\mfd)$ to $L^2(\mfd)$ continuously and can be composed on the left by $\e^{- \ii t P}$.
\item To pass to the limit in $\widehat{\cal{E}}_{j_0+1}(t) f_n$, we use the fact that this term involves only derivatives of $f_n$ and not $f_n$ itself. We assume first that $j_0$ is large enough so that $\epsilon j_0 \ge N$, but this is not restrictive for our purpose.
In view of the recurrence relation~\eqref{eq:recurrencerelation}, we first study the action of $\cal{E}_{j_0}(t)$ and $\cal{H}_p^{(3)}$.
From Proposition~\ref{prop:continuityEcal} (applied with $\kappa = \Upsilon$), combined with Assumption~\ref{assum:p}~\ref{it:metriccontrol}, and Proposition~\ref{lem:continuityHp3}, we have
\begin{multline*}
\nabla^{-1} S(\theta_g^N, g)
	\xlongrightarrow{\cal{E}_{j_0}(t-s)} S\left(\e^{N \Upsilon \abs{t-s}} \theta_g^N \theta_{g(t-s)}^{- j_0 \epsilon}, g(t-s)\right) \\
	\xlongrightarrow{\cal{H}_p^{(3)}} S\left(\e^{N \Upsilon \abs{t-s}} \theta_g^N \theta_{g(t-s)}^{- j_0 \epsilon} \gain_{g(t)}^2 \udl{\gain}_g \e^{-(\Lambda + 2 \Upsilon) \abs{t - s}}, g(t-s)\right) .
\end{multline*}
Using that $\theta_{g(t)} = \theta_g \e^{-(\Lambda + 2 \Upsilon) \abs{t}}$, we can crudely bound the weight in the right-hand side symbol class by
\begin{equation} \label{eq:dontcareexpfactor}
\e^{N \Upsilon \abs{t-s}} \theta_g^N \theta_{g(t-s)}^{- j_0 \epsilon} \gain_{g(t)}^2 \udl{\gain}_g \e^{-(\Lambda + 2 \Upsilon) \abs{t - s}}
	\le \e^{N (\Lambda + 3 \Upsilon) \abs{t}} \theta_{g(t)}^{N - j_0 \epsilon}
	= C(t) \theta_{g(t)}^{N - j_0 \epsilon} .
\end{equation}
Recalling that $\epsilon j_0 \ge N$, we deduce that the symbol $\cal{H}_p^{(3)} \cal{E}_{j_0}(t-s) f_n$ belongs to $S(1, g(t))$ with uniform seminorms with respect to $n$ and $s, t$ in a bounded interval, since $f_n = \chi_n f$ belongs to a bounded subset of $\nabla^{-1} S(\theta_g^N, g)$.

Therefore, the Calder\'{o}n--Vaillancourt theorem (Proposition~\ref{prop:CV}) ensures that the operator $\Opw{\cal{H}_p^{(3)} \cal{E}_{j_0}(t-s) f_n}$ is bounded on $L^2(\mfd)$ uniformly with respect to $n \in \N$ and $t, s$ in a compact set. In addition, since the operators $\cal{H}_p^{(3)}$ and $\cal{E}_{j_0}(t-s)$ are continuous on $\sch'(T^\star \mfd)$, we also have
\begin{equation*}
\cal{H}_p^{(3)} \cal{E}_{j_0}(t-s) f_n
	\weakto[\sch'(T^\star \mfd)]{n \to \infty} \cal{H}_p^{(3)} \cal{E}_{j_0}(t-s) f .
\end{equation*}
Therefore Lemma~\ref{lem:continuityquantumvf} below applies and we obtain
\begin{equation} \label{eq:pyramidetointegrate}
\Opw{\e^{(t - s_{j_0+1}) \cal{H}_p} \cal{H}_p^{(3)} \cal{E}_{j_0}(t-s) f_n}
	\strongto{n \to \infty} \Opw{\e^{(t - s_{j_0+1}) \cal{H}_p} \cal{H}_p^{(3)} \cal{E}_{j_0}(t-s) f}
\end{equation}
for the weak operator topology on $L^2(\mfd)$, which finally results in
\begin{equation*}
\Opw{\widehat{\cal{E}}_{j_0+1}(t) f_n}
	\strongto{n \to \infty} \Opw{\widehat{\cal{E}}_{j_0+1}(t) f} ,
\end{equation*}
after integrating~\eqref{eq:pyramidetointegrate} over $s \in t \Delta_1$, again for the weak operator topology on $L^2(\mfd)$. Notice that in particular the right-hand side is a bounded operator.
\item Lastly, we pass to the limit in the left-hand side of~\eqref{eq:Dysoncomposedleft}, namely
\begin{equation} \label{eq:toprovelefthandsidee}
\forall u \in L^2(\mfd), \qquad
	\Opw{f_n} \e^{- \ii t P} u
		\weakto[\sch'(\mfd)]{n \to \infty} \Opw{f} \e^{- \ii t P} u .
\end{equation}
(Notice that contrary to above, Lemma~\ref{lem:continuityquantumvf} does not apply since the operators $\Opw{f_n}$ are not necessarily bounded.) We claim that it suffices to prove that
\begin{equation} \label{eq:toproveuniformbddness}
\forall v \in \sch(\mfd) , \qquad
	\sup_{n \in \N} \norm*{\Opw{\bar f_n} v}_{L^2} < \infty .
\end{equation}
Indeed, if this is true, since $f_n \weakto{} f$ in $\sch'(T^\star \mfd)$, we deduce that $\Opw{\bar f_n} v \weakto{} \Opw{\bar f} v$ weakly in $L^2(\mfd)$,  so that
\begin{equation*}
\inp*{\e^{- \ii t P} u}{\Opw{\bar f_n} v}_{L^2}
	 \strongto{n \to \infty} \inp*{\e^{- \ii t P} u}{\Opw{\bar f} v}_{L^2} ,
	 	\qquad \forall u \in L^2(\mfd) ,
\end{equation*}
which implies~\eqref{eq:toprovelefthandsidee}. So let us prove~\eqref{eq:toproveuniformbddness}. In view of the definition of $f_n = \chi_n f$, we use the pseudo-differential calculus (Proposition~\ref{prop:pseudocalcsymb}) with $\chi_n \in S(1, g)$ and $f \in \nabla^{-1} S(\theta_g^N, g)$ to have
\begin{equation*}
f_n
	= \chi_n f
	= \chi_n \moyal f - \widehat{\cal{P}}_1(\chi_n, f) ,
\end{equation*}
where $\widehat{\cal{P}}_1(\chi_n, f)$ lies in a bounded subset of $S(\theta_g^N, g)$, given that $(\chi_n)_{n \in \N}$ is a bounded sequence in $S(1, g)$. Now by~\cite[Corollary 2.6.16]{Lerner:10}, there exist $a \in S(\theta_g^{-N}, g)$ and $b \in S(\theta_g^N, g)$ such that $a \moyal b = 1$. Thus we can write
\begin{equation} \label{eq:toquant}
f_n
	= \chi_n \moyal f - \widehat{\cal{P}}_1(\chi_n, f)  \moyal a   \moyal b ,
\end{equation}
where $\widehat{\cal{P}}_1(\chi_n, f) \moyal a$ lies in a bounded subset of $S(1, g)$ by pseudo-differential calculus. In particular, its quantization gives rise to a bounded operator on $L^2(\mfd)$, with norm bounded independently of $n$. Quantizing~\eqref{eq:toquant} and applying the resulting operator to a function $v \in \sch(\mfd)$, we obtain
\begin{equation*}
\Opw{f_n} v
	= \Opw{\chi_n} \Opw{f} v - \Opw{\widehat{\cal{P}}_1(\chi_n, f)  \moyal a} \Opw{b} v .
\end{equation*}
As we saw, $\Opw{\chi_n}$ and $\Opw{\widehat{\cal{P}}_1(\chi_n, f)  \moyal a}$ are bounded uniformly with respect to $n$, so that
\begin{equation*}
\norm*{\Opw{f_n} v}_{L^2}
	\le c \left( \norm*{\Opw{f} v}_{L^2} + \norm*{\Opw{b} v}_{L^2} \right) .
\end{equation*}
for some constant $c > 0$ independent of $n$. Notice that $\Opw{f} v$ and $\Opw{b} v$ lie in $\sch(\mfd)$, so in particular in $L^2(\mfd)$, by Proposition~\ref{prop:continuityonSchwartz}.
\end{itemize}
Thus one can pass to the limit $n \to \infty$ in~\eqref{eq:Dysoncomposedleft} to obtain the same equality with $f$ instead of $f_n$. The right-hand side of~\eqref{eq:Dyson} makes sense as an operator $\sch(\mfd) \to L^2(\mfd)$, so that the operator $\Opw{f} \e^{- \ii t P}$ maps $\sch(\mfd) \to L^2(\mfd)$ continuously. Therefore it can be composed on the left by the Schrödinger propagator, so that the distribution $\e^{t \cal{H}_p} f$ is well-defined (recall Definition~\ref{def:defsymbconjugatedop}), and we obtain~\eqref{eq:egorovontermediatefaff}.
\end{proof}

\subsection{Technical lemmata for affine symbols}

In preparation for Section~\ref{subsec:twocommutatorlemmata}, we collect some results concerning affine symbols (whose study is motivated by Beals' theorem) and symbol classes of the form $S(\theta_g^N, g)$.

Given a smooth function $f$, we recall that $H_f$ is the Hamiltonian vector field associated with $f$, defined in~\eqref{eq:defHp}, and that its ``quantum counterpart" $\cal{H}_f$ is defined in~\eqref{eq:defcalHp}. Notice that $H_f$ satisfies the product rule (or ``Leibniz formula") with respect to the pointwise product, while $\cal{H}_f$ satisfies the Leibniz formula with respect to the Moyal product, that is to say:
\begin{equation} \label{eq:productrules}
H_f (a_1 a_2)
	= (H_f a_1) a_2 + a_1 (H_f a_2)
		\qquad \rm{and} \qquad
\cal{H}_f (a_1 \moyal a_2)
	= (\cal{H}_f a_1) \moyal a_2 + a_1 \moyal (\cal{H}_f a_2) .
\end{equation}
In the sequel, we denote by $\aff(T^\star \mfd; \R)$ the set of real-valued affine functions, namely functions $f$ such that $\nabla^2 f = 0$. In the first lemma below, we relate affine symbols to the class $\nabla^{-1} S(\theta_g, g)$, for any admissible metric $g$ on phase space. This justifies the introduction of the temperance weight in Definition~\ref{def:thetag}.

\begin{lemma} \label{lem:seminormaff}
Let $g$ be an admissible Riemannian metric on $T^\star \mfd$. Let $f \in \aff(T^\star \mfd; \R)$. Then we have $\nabla f \in S(\theta_g, g)$ and
\begin{equation*}
\forall \ell \in \N, \forall \rho_0 \in T^\star \mfd, \forall t \in \R , \qquad
	\abs*{\nabla f}_{S(\theta_g, g)}^{(\ell)}
		= \abs*{\nabla f}_{S(\theta_g, g)}^{(0)}
		\le \abs*{H_f}_{g_{\rho_0}(t)} \theta_{g(t)}(\rho_0) .
\end{equation*}
\end{lemma}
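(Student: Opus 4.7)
The first observation is that since $f$ is affine, $\nabla^2 f = 0$, and hence $\nabla^\ell(\nabla f) = \nabla^{\ell+1} f = 0$ for every $\ell \ge 1$. Consequently every seminorm $\abs{\nabla f}_{S(\theta_g,g)}^{(\ell)}$ collapses to the case $\ell = 0$, giving the first equality in the statement. So it only remains to establish the pointwise inequality
\begin{equation*}
\sup_{\rho \in T^\star \mfd} \dfrac{\abs{\nabla f}_g(\rho)}{\theta_g(\rho)} \le \abs{H_f}_{g_{\rho_0}(t)} \theta_{g(t)}(\rho_0)
\end{equation*}
for every $\rho_0$ and every $t \in \R$.

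The key input is Lemma~\ref{lem:seminormsHp}, which in particular states that $\abs{\nabla a}_g = \abs{H_a}_{g^\sympf}$ pointwise. Applied to $f$, this gives $\abs{\nabla f}_g(\rho) = \abs{H_f}_{g_\rho^\sympf}$. I would then invoke Proposition~\ref{prop:thetag}, namely the inequality $g_\rho^\sympf \le \theta_g(\rho_0)\theta_g(\rho) g_{\rho_0}$, which controls the $g^\sympf$-norm at an arbitrary point by the $g$-norm at the base point $\rho_0$ at the cost of two factors of $\theta_g$. This yields
\begin{equation*}
\abs{\nabla f}_g(\rho) = \abs{H_f}_{g_\rho^\sympf} \le \sqrt{\theta_g(\rho_0)\theta_g(\rho)}\, \abs{H_f}_{g_{\rho_0}}.
\end{equation*}
Dividing by $\theta_g(\rho)$ and using $\theta_g \ge 1$ (Proposition~\ref{prop:thetag}) to bound $\sqrt{\theta_g(\rho_0)/\theta_g(\rho)} \le \theta_g(\rho_0)$, this gives the uniform upper bound $\theta_g(\rho_0) \abs{H_f}_{g_{\rho_0}}$.

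To finish, one just checks that the right-hand side is exactly $\abs{H_f}_{g_{\rho_0}(t)} \theta_{g(t)}(\rho_0)$. From the definition~\eqref{eq:defg(t)} of $g(t)$ we have $\abs{H_f}_{g_{\rho_0}(t)} = \e^{(\Lambda+2\Upsilon)\abs{t}} \abs{H_f}_{g_{\rho_0}}$, while from $g^\sympf(t) = \e^{-2(\Lambda+2\Upsilon)\abs{t}} g^\sympf$ and the definition of the temperance weight (Definition~\ref{def:thetag}), $\theta_{g(t)}(\rho_0) = \e^{-(\Lambda+2\Upsilon)\abs{t}} \theta_g(\rho_0)$. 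The two time-dependent factors cancel, and the product equals $\theta_g(\rho_0) \abs{H_f}_{g_{\rho_0}}$, independently of $t$ (which is consistent, since the left-hand side also does not depend on $t$). There is no real obstacle here; the only subtlety is making sure to exploit $\theta_g \ge 1$ to pass from the geometric mean $\sqrt{\theta_g(\rho_0)\theta_g(\rho)}$ coming from Proposition~\ref{prop:thetag} to the product $\theta_g(\rho_0)\theta_g(\rho)$ appearing in the claimed seminorm bound.
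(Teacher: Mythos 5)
Your proposal is correct and follows essentially the same route as the paper: the identity $\abs{\nabla f}_{g_\rho}=\abs{H_f}_{g_\rho^\sympf}$ from Lemma~\ref{lem:seminormsHp}, a temperance-weight comparison through the background metric $\sf g$, and $\nabla^2 f=0$ to collapse the higher seminorms. The only (harmless) difference is that the paper applies Definition~\ref{def:thetag} directly with the metric $g(t)$, so the factor $\abs{H_f}_{g_{\rho_0}(t)}\theta_{g(t)}(\rho_0)$ appears immediately, whereas you work at $t=0$ via Proposition~\ref{prop:thetag} (plus $\theta_g\ge 1$) and then verify the cancellation of the $\e^{\pm(\Lambda+2\Upsilon)\abs{t}}$ factors — a correct conformal-scaling observation that yields the same bound.
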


\begin{proof}
This is a direct consequence of Lemma~\ref{lem:seminormsHp} and of the definition of the temperance weight in Definition~\ref{def:thetag}:
\begin{equation*}
\forall \rho \in T^\star \mfd , \qquad
	\abs*{\nabla f}_{g_\rho}
		= \abs*{H_f}_{g_\rho^\sympf}
		\le \abs*{H_f}_{{\sf g}} \theta_g(\rho)
		= \abs*{H_f}_{{\sf g}^\sympf} \theta_g(\rho)
		\le \abs*{H_f}_{g_{\rho_0}(t)} \theta_g(\rho) \theta_{g(t)}(\rho_0) .
\end{equation*}
Then we recall that $\nabla^2 f = 0$ because $f$ is affine and the result follows.
\end{proof}

For the following lemma, recall the notation $\cal{E}_{\le j_0}(s)$ from~\eqref{eq:notationlej}. Notice that the result below is valid on the smaller time interval $\abs{t} \le \frac{1}{2} T_E$.

\begin{lemma} \label{lem:continuityHcal_f}
Let $g$ and $p$ satisfy Assumptions~\ref{assum:mandatory} and~\ref{assum:p}. Fix $N_0 \le 1$ and $N \in \R$. Then for all $f_0 \in \nabla^{-1} S(\theta_g^{N_0}, g)$, writing $f_0(s) = \cal{E}_{\le j_0}(s) f_0$, we have
\begin{equation*}
\nabla^{-1} S\left(\theta_{g(t)}^N, g(t)\right)
	\xlongrightarrow{\cal{H}_{f_0(s)}} S\left(\theta_{g(t)}^{N + N_0}, g(t)\right) ,
\end{equation*}
uniformly for any $s, t \in \R$ such that $\abs{s} \le \abs{t} \le \frac{1}{2} T_E$, that is to say
\begin{multline*}
\forall \ell \in \N, \exists k \in \N, \exists C > 0 : \forall a \in \nabla^{-1} S\left(\theta_{g(t)}^N, g(t)\right), \forall f_0 \in \nabla^{-1} S(\theta_g^{N_0}, g) , \qquad \\
	\abs*{\cal{H}_{f_0(s)} a}_{S(\theta_{g(t)}^{N + N_0}, g(t))}^{(\ell)}
		\le C \abs*{\nabla f_0}_{S(\theta_g^{N_0}, g)}^{(k)} \times \abs*{\nabla a}_{S(\theta_{g(t)}^N, g(t))}^{(k)} ,
\end{multline*}
uniformly in $\abs{s} \le \abs{t} \le \frac{1}{2} T_E$.
\end{lemma}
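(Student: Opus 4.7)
The plan is to observe that, since the pointwise products $\cal{P}_0(f_0(s), a) = f_0(s)\,a = \cal{P}_0(a, f_0(s))$ cancel in the Moyal bracket, one has the identity
\begin{equation*}
\cal{H}_{f_0(s)} a
	= \ii\bigl[\widehat{\cal{P}}_1(f_0(s), a) - \widehat{\cal{P}}_1(a, f_0(s))\bigr] ,
\end{equation*}
which is precisely the type of expression Proposition~\ref{prop:pseudocalcsymb} handles at order $j=1$, requiring only symbol class control of $\nabla f_0(s)$ and $\nabla a$. The bound on $a$ is given by hypothesis; the work consists in controlling $\nabla f_0(s)$ in a class compatible with the output metric $g(t)$.

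First I would control $\nabla f_0(s) = \nabla \cal{E}_{\le j_0}(s) f_0$. Corollary~\ref{cor:continuityofflowinsymbolclasses} applied to $\cal{E}_0(s) f_0 = f_0 \circ \phi^s$ places $\nabla \cal{E}_0(s) f_0$ in $S(\e^{sH_p} \theta_g^{N_0}, g(s))$, while each higher-order term is handled by Proposition~\ref{prop:continuityEcal}: applied with growth rate $\kappa = |N_0|\Upsilon$ (valid by Assumption~\ref{assum:p}~\ref{it:metriccontrol}), it places $\cal{E}_j(s) f_0$ for $1 \le j \le j_0$ in $S(\e^{|N_0|\Upsilon|s|}\theta_g^{N_0}\theta_{g(s)}^{-j\epsilon}, g(s))$. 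Since $\theta_{g(s)} \ge 1$ (Proposition~\ref{prop:thetag}), the gains $\theta_{g(s)}^{-j\epsilon}$ are harmless, and summing over $j$ these contributions fit into $\nabla^{-1} S(C\,\e^{sH_p}\theta_g^{N_0}, g(s))$, with seminorms uniformly controlled by $|\nabla f_0|_{S(\theta_g^{N_0}, g)}^{(k)}$ for some~$k$.

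Next, since $g(s) \le g(t)$ for $|s| \le |t|$, this class embeds continuously into $S(\,\cdot\,, g(t))$. Comparing $\e^{sH_p}\theta_g$ with $\theta_{g(t)}$ via Assumption~\ref{assum:p}~\ref{it:metriccontrol}, Lemma~\ref{lem:gain(t)} and the identity $\theta_{g(t)} = \e^{-(\Lambda+2\Upsilon)|t|}\theta_g$, one obtains $\e^{sH_p}\theta_g^{N_0} \le C\,\theta_{g(t)}^{N_0}$ uniformly for $|s| \le |t| \le \tfrac{1}{2}T_E$, the constant~$C$ depending only on structure constants of~$g$, on~$C_\Upsilon$, and on any $c$ with $\Lambda \ge c\,\udl{\gain}_g$. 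Invoking Proposition~\ref{prop:pseudocalcsymb} at order $j = 1$ with both input metrics equal to~$g(t)$ then gives
\begin{equation*}
\widehat{\cal{P}}_1(f_0(s), a),\ \widehat{\cal{P}}_1(a, f_0(s))
	\in S\bigl(\gain_{g(t)}\, \theta_{g(t)}^{N+N_0},\, g(t)\bigr)
	\subset S\bigl(\theta_{g(t)}^{N+N_0},\, g(t)\bigr) ,
\end{equation*}
where the last inclusion uses the uncertainty principle $\gain_{g(t)} \le 1$ (Remark~\ref{rmk:uniformadmissibilityg(t)}). Tracking the seminorms through these three steps produces the required estimate with constants of the form claimed in the statement.

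The main technical obstacle is the sharp weight comparison $\e^{sH_p}\theta_g^{N_0} \lesssim \theta_{g(t)}^{N_0}$: for $N_0 \le 0$ it is immediate from the monotonicity $\theta_{g(t)} \le \theta_g$ together with $\theta_g(\phi^s) \le C_\Upsilon\,\e^{\Upsilon|s|}\theta_g$, but for $0 < N_0 \le 1$ the exponential factor $\e^{N_0(\Lambda+3\Upsilon)|t|}$ must be absorbed through the Ehrenfest-time restriction $|t| \le \tfrac{1}{2}T_E$ and the assumption $\Lambda \ge c\,\udl{\gain}_g$. The constraint $N_0 \le 1$ is precisely the threshold that keeps this loss under control, and it is also exactly the power produced by affine symbols through Lemma~\ref{lem:seminormaff}, which is the setting in which the lemma will be applied in the subsequent analysis of iterated commutators.
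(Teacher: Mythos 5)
Your skeleton matches the paper's proof: write $\cal{H}_{f_0(s)} a$ as $\ii[\widehat{\cal{P}}_1(f_0(s),a)-\widehat{\cal{P}}_1(a,f_0(s))]$, control $f_0(s)=\cal{E}_{\le j_0}(s)f_0$ through Corollary~\ref{cor:continuityofflowinsymbolclasses} and Proposition~\ref{prop:continuityEcal} (with $\kappa=|N_0|\Upsilon$, which is legitimate), and then invoke the pseudo-differential calculus of Proposition~\ref{prop:pseudocalcsymb}. The gap is in the final weight bookkeeping, and it is exactly at the obstacle you flag in your last paragraph. You discard the order-one gain via $\gain_{g(t)}\le 1$ and then claim $\e^{sH_p}\theta_g^{N_0}\le C\,\theta_{g(t)}^{N_0}$ uniformly for $|s|\le|t|\le\tfrac12 T_E$. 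For $0<N_0\le 1$ this comparison is false with constants of the admissible type: since $\theta_g=\e^{(\Lambda+2\Upsilon)|t|}\theta_{g(t)}$, the discrepancy is $\e^{N_0(\Lambda+2\Upsilon)|t|}$, which at $|t|=\tfrac12 T_E$ equals $\udl{\gain}_g^{-N_0/4}$ and blows up in the semiclassical regime $\udl{\gain}_g\to 0$; neither the restriction $|t|\le\tfrac12 T_E$ nor the hypothesis $\Lambda\ge c\,\udl{\gain}_g$ produces any smallness that could absorb it. Since the lemma is later used (e.g.\ in Corollary~\ref{cor:iteratedcommutatorsaff} and in the proof of Theorem~\ref{thm:partition}) with constants that must be uniform in $\udl{\gain}_g$, this loss cannot be waved away.

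The missing idea is that the gain must be retained, not sacrificed: $\gain_{g(t)}=\e^{2(\Lambda+2\Upsilon)|t|}\gain_g\le\e^{2(\Lambda+2\Upsilon)|t|}\udl{\gain}_g$, and it is precisely this hidden factor $\udl{\gain}_g$ that pays for the exponential discrepancy. Keeping the weight $\e^{N_0\Upsilon|s|}\,\gain_{g(t)}\,\theta_g^{N_0}\,\theta_{g(t)}^{N}$ produced by the calculus (the paper applies Proposition~\ref{prop:pseudocalcsymb} directly with the pair of conformal metrics $(g(s),g(t))$, whose joint gain is bounded by $\gain_{g(t)}$ for $|s|\le|t|$), one gets at most $\e^{4(\Lambda+2\Upsilon)|t|}\,\udl{\gain}_g\,\theta_{g(t)}^{N+N_0}$, using $N_0\le 1$; and $\e^{4(\Lambda+2\Upsilon)|t|}\udl{\gain}_g\le\e^{2(\Lambda+2\Upsilon)T_E}\udl{\gain}_g=1$ exactly because $|t|\le\tfrac12 T_E$. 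This is where the thresholds $N_0\le 1$ and $\tfrac12 T_E$ actually enter; with your ordering (embed into $g(t)$-classes first, drop the gain, then try to compare weights) the estimate cannot close uniformly, so you should restructure the last step along these lines.
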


\begin{proof}
First of all, from Corollary~\ref{cor:continuityofflowinsymbolclasses} and Proposition~\ref{prop:continuityEcal}, together with Assumption~\ref{assum:p}~\ref{it:metriccontrol}, we know that
\begin{equation*}
f_0(s)
	\in \nabla^{-1} S\left(\e^{N_0 \Upsilon \abs{s}} \theta_g^{N_0}, g(s)\right) ,
		\qquad \forall \abs{s} \le \tfrac{1}{2} T_E ,
\end{equation*}
with
\begin{equation*}
\forall \ell \in \N, \exists k \in \N, \exists C > 0 : \qquad
	\abs*{\nabla f_0(s)}_{S(\e^{N_0 \Upsilon \abs{s}} \theta_g^{N_0}, g(s))}^{(k)}
		\le C \abs*{\nabla f_0}_{S(\theta_g^{N_0}, g)}^{(k)} .
\end{equation*}
We apply the pseudo-differential calculus (Proposition~\ref{prop:pseudocalcsymb}) with $f_0(s)$ in the above class and $a \in \nabla^{-1} S(\theta_{g(t)}^N, g(t))$, using that $\abs{s} \le \abs{t} \le \tfrac{1}{2} T_E$:
\begin{equation} \label{eq:pseudocalcwithoutestimate}
\nabla^{-1} S\left(\e^{N_0 \Upsilon \abs{s}} \theta_g^{N_0}, g(s)\right) \times \nabla^{-1} S\left(\theta_{g(t)}^N, g(t)\right)
	\xlongrightarrow{\widehat{\cal{P}}_1} S\left(\e^{N_0 \Upsilon \abs{s}} \gain_{g(t)} \theta_g^{N_0} \theta_{g(t)}^N, g(t)\right) .
\end{equation}
We have
\begin{equation*}
\e^{N_0 \Upsilon \abs{s}} \gain_{g(t)} \theta_g^{N_0} \theta_{g(t)}^N
	\le \e^{N_0 \Upsilon \abs{t} + N_0 (\Lambda + 2 \Upsilon) + 2 (\Lambda + 2 \Upsilon) \abs{t}} \udl{\gain}_g \theta_{g(t)}^{N+N_0}
	\le \e^{4 (\Lambda + 2 \Upsilon) \abs{t}} \udl{\gain}_g \theta_{g(t)}^{N+N_0}
	\le \theta_{g(t)}^{N+N_0} ,
\end{equation*}
where we used that $\theta_g = \e^{(\Lambda + 2 \Upsilon) \abs{t}} \theta_{g(t)}$ and $\abs{s} \le \abs{t}$ in the first inequality, $N_0 \le 1$ in the second one and $\abs{t} \le \frac{1}{2} T_E$ in the last one.
Thus, the estimate corresponding to~\eqref{eq:pseudocalcwithoutestimate} is
\begin{align*}
\forall \ell \in \N, \exists k \in \N, \exists C > 0 : \qquad
	\abs*{\cal{H}_{f_0(s)} a}_{S(\theta_{g(t)}^{N + N_0}, g(t))}^{(\ell)}
		&\le C \abs*{\nabla f_0(s)}_{S(\e^{N_0 \Upsilon \abs{s}} \theta_g^{N_0}, g(s))}^{(k)} \abs*{\nabla a}_{S(\theta_{g(t)}^N, g(t))}^{(k)} \\
		&\le C' \abs*{\nabla f_0}_{S(\theta_g^{N_0}, g)}^{(k')} \abs*{\nabla a}_{S(\theta_{g(t)}^N, g(t))}^{(k)} ,
\end{align*}
which is the sought result.
\end{proof}

\subsection{Two commutator lemmata} \label{subsec:twocommutatorlemmata}

In this section, we describe the action of commutators $\ad_F A = \comm{F}{A}$ (operator derivatives) and anti-commutators $\ad_F^+ A = F A + A F$ (operator multiplication) on the quantum evolution of an observable.
Recall the definition of the {$k$-dimensional} simplex $t \Delta_k$ in~\eqref{eq:defsimplex}.
We introduce the following notation: given a finite tuple of symbols $(a_1, a_2, \ldots, a_{j_0})$, we denote by $\Moy_{j=1}^{j_0} a_j$ the symbol
\begin{equation*}
\Moy_{j=1}^{j_0} a_j
	:= a_1 \moyal a_2 \moyal \cdots \moyal a_{j_0} .
\end{equation*}
The order of the factors matters, since the Moyal product is not commutative in general. In case we do not care about the ordering of factors, given symbols $\{a_j\}_{j \in J}$, for $J$ a finite set with $j_0 := \abs{J}$, we shall write in a loose way
\begin{equation*}
\Moy_{j \in J}
	:= a_{k_1} \moyal a_{k_2} \moyal \cdots \moyal a_{k_{j_0}} ,
\end{equation*}
where $(k_1, k_2, \ldots, k_{j_0})$ is an enumeration of the set $J$. In case the order matter, we shall be more specific and spell out the precise enumeration that we use. We work with the convention
\begin{equation*}
\Moy_\varnothing a_j
	:= 1
\end{equation*}
(the empty product is equal to $1$). Similarly, we set
\begin{equation*}
\cal{H}_{(a_1, a_2, \ldots, a_{j_0})}
	:= \cal{H}_{a_1}\cal{H}_{a_2} \cdots \cal{H}_{a_{j_0}} ,
\end{equation*}
as well as
\begin{equation*}
\cal{H}_{\{a_j\}_{j \in J}}
	:= \cal{H}_{a_{k_1}}\cal{H}_{a_{k_2}} \cdots \cal{H}_{a_{k_{j_0}}} ,
\end{equation*}
for some enumeration of $J$, provided the order of factors does not matter.
Lastly, we work with the convention
\begin{equation*}
\int_{t \Delta_0} a(s) \dd s
	:= a(t) .
\end{equation*}

\begin{lemma} \label{lem:iteratedcommutators}
Suppose $g$ and $p$ satisfy Assumptions~\ref{assum:mandatory} and~\ref{assum:p}, and recall the number $\epsilon$ from Item~\ref{it:strongsubquad} of Assumption~\ref{assum:p}. Let $\ell \in \N$ and $N \le 1$. Then for any $a \in S(\theta_g^{-\ell N}, g)$, for any integer $j_0 \ge \ell N/ \epsilon$, and for any $\abs{t} \le \frac{1}{2} T_E$, the following holds: for any family of symbols $f_1, f_2, \ldots, f_\ell \in \nabla^{-1} S(\theta_g^N, g)$, the distribution
\begin{equation*}
\cal{H}_{f_\ell} \cdots \cal{H}_{f_2} \cal{H}_{f_1} \e^{t \cal{H}_p} a
\end{equation*}
can be written as a sum of symbols of the form
\begin{equation} \label{eq:commutatorsformterms}
\int_{t \Delta_k} \left(\Moy_{k_1 \in K_1} \e^{s_{k_1} \cal{H}_p} c_{\pi(k_1)}(s_{k_1})\right) \moyal \e^{t \cal{H}_p} a_{\tilde \pi}(t) \moyal \left(\Moy_{k_2 \in K_2} \e^{s_{k_2} \cal{H}_p} c_{\pi(k_2)}(s_{k_2})\right) \dd \bf{s} ,
\end{equation}
for $k \in \{0, \ldots, \ell\}$. The number of terms in the sum depends only on $\ell$, $N$ and $j_0$. The indices split into disjoint families of indices as follows:
\begin{equation} \label{eq:partitionpi}
\{1, 2, \ldots, k\}
	= K_1 \cup K_2
		\qquad \rm{and} \qquad
\{1, 2, \ldots, \ell\}
	= \tilde \pi \cup \bigcup_{k_1 \in K_1} \pi(k_1) \cup \bigcup_{k_2 \in K_2} \pi(k_2) .
\end{equation}
The sets $\pi(k_j)$, $j = 1, 2$, and $\tilde \pi$ are non-empty.
Writing for short
\begin{equation} \label{eq:notationf(s)andhatf(s)}
f_j(s)
	:= \cal{E}_{\le j_0}(- s) f_j
		\qquad \rm{and} \qquad
\hat{f}_j(s)
	:= \cal{H}_p^{(3)} \cal{E}_{j_0}(-s) f_j ,
\end{equation}
the symbols $c_\pi$ and $a_{\tilde \pi}$ in~\eqref{eq:commutatorsformterms} are of the form
\begin{equation} \label{eq:defcpiapi}
\begin{split}
c_\pi(s)
	&= \cal{H}_{\{f_j(s)\}_{j \in \pi \setminus \{j_\ast\}}} \hat{f}_{j_\ast}(s)
			\quad \textrm{for some $j_\ast \in \pi$} , \\
a_{\tilde \pi}(s)
	&= \cal{H}_{\{f_j(s)\}_{j \in \tilde \pi}} a ,
		\qquad s \in \R .
\end{split}
\end{equation}
\end{lemma}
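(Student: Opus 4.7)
The plan is an induction on $\ell$, driven by two structural identities. The first is the Leibniz rule~\eqref{eq:productrules} for $\cal{H}_{f_{\ell+1}}$ with respect to the Moyal product. The second is the $\moyal$-automorphism property of $\e^{s \cal{H}_p}$, namely $\e^{s \cal{H}_p}(b_1 \moyal b_2) = (\e^{s \cal{H}_p} b_1) \moyal (\e^{s \cal{H}_p} b_2)$, which follows from $\Opw{b_1 \moyal b_2} = \Opw{b_1} \Opw{b_2}$ and the group property of $(\e^{-\ii s P})_{s \in \R}$; in particular, it yields the conjugation formula $\cal{H}_f \e^{s \cal{H}_p} b = \e^{s \cal{H}_p} \cal{H}_{\e^{-s \cal{H}_p} f} b$, which is the mechanism that lets us push $\cal{H}_{f_{\ell+1}}$ past any $\e^{s_k \cal{H}_p}$ appearing in a term of the form~\eqref{eq:commutatorsformterms}. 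The base case $\ell = 0$ is the tautology $\e^{t \cal{H}_p} a = \e^{t \cal{H}_p} a_\varnothing(t)$ with $k = 0$, $\tilde\pi = \varnothing$ and the convention $a_\varnothing(t) = a$.

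For the inductive step, I would apply $\cal{H}_{f_{\ell+1}}$ to a generic term of the form~\eqref{eq:commutatorsformterms}: the Leibniz rule distributes it as a finite sum, with one summand per factor of the Moyal product. By the conjugation formula above, the action of $\cal{H}_{f_{\ell+1}}$ on such a factor amounts to $\cal{H}_{\e^{-s_k \cal{H}_p} f_{\ell+1}}$ applied to $c_{\pi(k)}(s_k)$ (respectively $\cal{H}_{\e^{-t \cal{H}_p} f_{\ell+1}}$ applied to $a_{\tilde\pi}(t)$ for the central factor). I would then invoke Corollary~\ref{cor:Egorovwithnabla-1symbols}, which makes sense of $\e^{-s \cal{H}_p} f_{\ell+1}$ for $f_{\ell+1} \in \nabla^{-1} S(\theta_g^N, g)$ with $N \le 1$, together with the Dyson expansion of Proposition~\ref{prop:Dyson} at order $j_0$, to write
\begin{equation*}
\e^{-s \cal{H}_p} f_{\ell+1}
	= f_{\ell+1}(s) - \int_0^s \e^{-(s-u) \cal{H}_p} \hat{f}_{\ell+1}(u) \dd u ,
\end{equation*}
in the notation of~\eqref{eq:notationf(s)andhatf(s)}; the minus sign comes from the change of variable $u = -s'$ in the oriented integral defining $\widehat{\cal{E}}_{j_0+1}(-s)$. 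The polynomial summand merges $\ell+1$ into the ``ordinary'' part of the existing block: $c_{\pi(k)}(s_k)$ becomes $c_{\pi(k) \cup \{\ell+1\}}(s_k)$ with unchanged distinguished index $j_\ast$, and similarly $a_{\tilde\pi}(t)$ becomes $a_{\tilde\pi \cup \{\ell+1\}}(t)$. The integral summand, after one further use of the $\moyal$-automorphism (via $\e^{s_k \cal{H}_p} \e^{-(s_k-u) \cal{H}_p} = \e^{u \cal{H}_p}$), spawns a new factor $\e^{u \cal{H}_p} \hat{f}_{\ell+1}(u)$ inserted on the appropriate side of the original factor in the Moyal product; this corresponds to enlarging $K_1$ or $K_2$ by a new index with singleton $\pi(\bigcdot) = \{\ell+1\}$ and $j_\ast = \ell+1$. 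Summing over which factor the commutator hits and which Dyson branch is chosen produces the desired finite sum of terms of the form~\eqref{eq:commutatorsformterms} at step $\ell+1$.

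I expect two main sources of difficulty. The first is combinatorial: each new integration variable $u$ spawned at step $\ell+1$ satisfies $u \in [0, s_k]$, a nested condition inherited from its parent, so the integration domain obtained after $k$ spawnings is a product of nested intervals encoded by a rooted tree rather than literally the $k$-simplex $t \Delta_k$ of~\eqref{eq:defsimplex}. To recast it in the form stated in the lemma I would split this domain according to all possible total orderings of the $s_k$'s, relabel variables so that they appear in increasing order, and reorganize each Moyal product $\Moy_{k_j \in K_j}$ in the induced order. The second difficulty is that $\e^{t \cal{H}_p} a$ and the factors $\e^{s_k \cal{H}_p} c_{\pi(k)}(s_k)$ are a priori only tempered distributions, so the Moyal manipulations must be justified at the operator level: I would carry all identities through $\Opw{\bigcdot}$ after conjugating by $\e^{\ii t P}$, and use Lemma~\ref{lem:continuityHcal_f} together with Proposition~\ref{prop:continuityEcal} to check that each $c_\pi(s)$, $a_{\tilde\pi}(s)$ and $\hat{f}_j(s)$ lies in a symbol class whose Weyl quantization is continuous on $\sch(\mfd)$. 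The hypothesis $j_0 \ge \ell N/\epsilon$ does not enter the algebraic derivation of~\eqref{eq:commutatorsformterms}; it will be needed only in the forthcoming seminorm estimates, to guarantee that iterating $\cal{H}_p^{(3)}$ up to order $j_0$ yields enough gain in $\theta_g$ to absorb the $\theta_g^{\ell N}$ produced by the $\ell$ successive commutators.
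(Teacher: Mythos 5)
Your decomposition is correct and follows essentially the same route as the paper: induction on $\ell$, the conjugation identity $\cal{H}_f \e^{s \cal{H}_p} b = \e^{s \cal{H}_p} \cal{H}_{\e^{-s \cal{H}_p} f} b$ (which the paper writes at the operator level as $\comm{F}{\Ad_{\e^{\ii t P}} \Opw{a}} = \Ad_{\e^{\ii t P}} \comm{\Ad_{\e^{-\ii t P}} F}{\Opw{a}}$), the Dyson expansion of Corollary~\ref{cor:Egorovwithnabla-1symbols} applied to $\e^{-s \cal{H}_p} f_{\ell+1}$, and the resolution of the nested integration domain into simplices according to the position of the new variable. One point to adjust: the hypothesis $j_0 \ge \ell N/\epsilon$ is not only for the later seminorm estimates — the paper uses it inside this very proof (when $N > 0$) to place $\hat f_j(s) = \cal{H}_p^{(3)} \cal{E}_{j_0}(-s) f_j$ in $S(\e^{c \abs{s}} \theta_{g(s)}^{N - j_0 \epsilon}, g(s)) \subset S(\e^{c \abs{s}}, g(s))$, hence to obtain bounded operators on $L^2(\mfd)$ via Calder\'on--Vaillancourt, so that every intermediate identity can be conjugated by the unitary propagator. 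Justifying the manipulations only through continuity of the quantizations on $\sch(\mfd)$, as you propose, would require knowing that $\e^{-\ii t P}$ preserves $\sch(\mfd)$; but that is Corollary~\ref{cor:continuitypropagatorSchwartz}, deduced later from Theorem~\ref{thm:partition} whose proof relies on this lemma, so that route is circular at this stage and the $L^2$-bounded route (where the constraint on $j_0$ enters) is the one to take.
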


\begin{remark}
All the symbols involved in~\eqref{eq:defcpiapi} and hence in~\eqref{eq:commutatorsformterms} belong to $S(1, g)$. This is a consequence of Lemma~\ref{lem:continuityHcal_f} for symbols of the form $a_{\tilde \pi}(s)$ and of Proposition~\ref{prop:continuityEcal} together with Lemma~\ref{lem:continuityHcal_f} for those of the form $c_\pi(s)$. We need this fact to make sure that the Moyal products in~\eqref{eq:commutatorsformterms} make sense. This is justified in the corollary below.
\end{remark}

In the following corollary, we give precise bounds for the operator norm in the special case where the {$f_j$'s} are affine functions.

\begin{corollary} \label{cor:iteratedcommutatorsaff}
Suppose $g$ and $p$ satisfy Assumptions~\ref{assum:mandatory} and~\ref{assum:p}, and recall the number $\epsilon$ from Item~\ref{it:strongsubquad} of Assumption~\ref{assum:p}. Let $L \in \N$. There exist $k \in \N$ and $C > 0$ such that for any $t, s \in \R$ with $\abs{s} \le \abs{t} \le \frac{1}{2} T_E$, the following holds: for any $f_1, f_2, \ldots, f_\ell \in \aff(T^\star \mfd; \R)$, we have for all $a \in S(\theta_{g(s)}^{-\ell}, g(s))$:
\begin{equation*}
\forall \rho_0 \in T^\star \mfd , \qquad
	\norm*{\Opw{\cal{H}_{f_\ell} \cdots \cal{H}_{f_2} \cal{H}_{f_1} \e^{t \cal{H}_p} a}}_{\Bop(L^2(\mfd))}
		\le C \abs*{a}_{S(\theta_{g(s)}^{-\ell}, g(s))}^{(k)} \theta_{g(t)}^\ell(\rho_0) \prod_{j = 1}^\ell \abs*{H_{f_j}}_{g_{\rho_0}(t)} .
\end{equation*}
\end{corollary}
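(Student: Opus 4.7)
The plan is to invoke Lemma~\ref{lem:iteratedcommutators} with $N = 1$, which is justified because by Lemma~\ref{lem:seminormaff} each affine $f_j$ satisfies $\nabla f_j \in S(\theta_g, g)$. That same lemma furnishes the key seminorm bound
\begin{equation*}
\abs*{\nabla f_j}_{S(\theta_g, g)}^{(m)}
	\le \abs*{H_{f_j}}_{g_{\rho_0}(t)} \theta_{g(t)}(\rho_0) ,
		\qquad \forall m \in \N ,
\end{equation*}
which will produce exactly one factor of $\abs{H_{f_j}}_{g_{\rho_0}(t)} \theta_{g(t)}(\rho_0)$ per index $j$ in the final estimate. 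I choose an integer $j_0$ with $(j_0+1) \epsilon \ge \ell$ (depending only on $\ell$ and $\epsilon$). Since $\abs{s} \le \abs{t}$ entails $g(s) \le g(t)$ and $\theta_{g(s)} \ge \theta_{g(t)}$, the hypothesis $a \in S(\theta_{g(s)}^{-\ell}, g(s))$ embeds into $S(\theta_{g(t)}^{-\ell}, g(t))$ with controlled seminorms, so I will work throughout with the larger class.

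After Lemma~\ref{lem:iteratedcommutators}, the symbol $\cal{H}_{f_\ell} \cdots \cal{H}_{f_1} \e^{t \cal{H}_p} a$ becomes a finite sum (whose cardinality depends only on $\ell, N, j_0$) of integrals of Moyal products of the form~\eqref{eq:commutatorsformterms}. The key simplification is that by~\eqref{eq:pseudodefmoyal} the Weyl quantization of a Moyal product is the composition of quantizations, and each factor $\e^{s \cal{H}_p}(\bullet)$ quantizes to $\e^{\ii s P} \Opw{\bullet} \e^{-\ii s P}$, a unitary conjugation invisible to the $L^2$ operator norm. It therefore suffices to bound uniformly in $\bf{s} \in t \Delta_k$ the product
\begin{equation*}
\Bigl(\prod_{k_1 \in K_1} \norm*{\Opw{c_{\pi(k_1)}(s_{k_1})}}_{\Bop(L^2)}\Bigr) \cdot \norm*{\Opw{a_{\tilde \pi}(t)}}_{\Bop(L^2)} \cdot \Bigl(\prod_{k_2 \in K_2} \norm*{\Opw{c_{\pi(k_2)}(s_{k_2})}}_{\Bop(L^2)}\Bigr)
\end{equation*}
and then integrate.

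For $a_{\tilde \pi}(t) = \cal{H}_{\{f_j(t)\}_{j \in \tilde \pi}} a$, iterating Lemma~\ref{lem:continuityHcal_f} with $N_0 = 1$ exactly $\abs{\tilde \pi}$ times, starting from $a \in S(\theta_{g(t)}^{-\ell}, g(t))$, gives $a_{\tilde \pi}(t) \in S(\theta_{g(t)}^{-\ell + \abs{\tilde \pi}}, g(t)) \subset S(1, g(t))$ (using $\theta_g \ge 1$ from Proposition~\ref{prop:thetag}), with seminorms at most $C \abs{a}_{S(\theta_{g(s)}^{-\ell}, g(s))}^{(k)} \prod_{j \in \tilde \pi} \abs{H_{f_j}}_{g_{\rho_0}(t)} \theta_{g(t)}(\rho_0)$. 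For $c_\pi(s) = \cal{H}_{\{f_j(s)\}_{j \in \pi \setminus \{j_\ast\}}} \hat{f}_{j_\ast}(s)$, I first apply Proposition~\ref{prop:continuityEcal} with $m = \theta_g$ and $\kappa = \Upsilon$ (Assumption~\ref{assum:p}\ref{it:metriccontrol}) to place $\cal{E}_{j_0}(-s) f_{j_\ast}$ into $S(\theta_g \theta_{g(s)}^{-j_0 \epsilon}, g(s))$, then Lemma~\ref{lem:continuityHp3} to pick up the decisive factor $\udl{\gain}_g^{3/2}$, landing $\hat{f}_{j_\ast}(s)$ in $S(\udl{\gain}_g^{3/2} \theta_g \theta_{g(s)}^{-(j_0+1) \epsilon}, g(s))$, and finally apply Lemma~\ref{lem:continuityHcal_f} $\abs{\pi} - 1$ additional times. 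The choice $(j_0+1) \epsilon \ge \ell$ together with the bounded exponential relations between $\theta_g$, $\theta_{g(s)}$, $\theta_{g(t)}$ for $\abs{s} \le \abs{t} \le \frac{1}{2} T_E$ (via Lemma~\ref{lem:gain(t)} and Assumption~\ref{assum:p}\ref{it:metriccontrol}) ensures all resulting weights are uniformly bounded, so $c_\pi(s) \in S(1, g(t))$ with seminorms $\le C \udl{\gain}_g^{3/2} \prod_{j \in \pi} \abs{H_{f_j}}_{g_{\rho_0}(t)} \theta_{g(t)}(\rho_0)$.

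Applying the Calderón--Vaillancourt theorem (Proposition~\ref{prop:CV}) to each factor and multiplying, the partition identity $\abs{\tilde \pi} + \sum_{k_1 \in K_1} \abs{\pi(k_1)} + \sum_{k_2 \in K_2} \abs{\pi(k_2)} = \ell$ from~\eqref{eq:partitionpi} gathers the factors into the target prefactor $\theta_{g(t)}^\ell(\rho_0) \prod_{j=1}^\ell \abs{H_{f_j}}_{g_{\rho_0}(t)}$. The integral over $t\Delta_k$ contributes a volume $\abs{t}^k/k!$, which the $k$ accumulated factors $\udl{\gain}_g^{3/2}$ absorb, since $\udl{\gain}_g^{3 k/2} T_E^k$ remains uniformly bounded thanks to $T_E = O(\log \udl{\gain}_g^{-1})$. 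The main technical difficulty will be the delicate bookkeeping of the three weights $\theta_g, \theta_{g(s)}, \theta_{g(t)}$ and of the exponential prefactors carried by Proposition~\ref{prop:continuityEcal} and Lemma~\ref{lem:continuityHcal_f}, as well as ensuring that the choice of $j_0$ (and hence the number of terms to sum) remains controlled by $\ell$ alone. Summing over the finitely many terms produced by Lemma~\ref{lem:iteratedcommutators} yields the claimed inequality.
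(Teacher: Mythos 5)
Your proposal reproduces the paper's own proof essentially step for step: the reduction from $S(\theta_{g(s)}^{-\ell}, g(s))$ to $S(\theta_{g(t)}^{-\ell}, g(t))$ by monotonicity of $\tau \mapsto g(\tau)$ and $\tau \mapsto \theta_{g(\tau)}^{-1}$, the use of Lemma~\ref{lem:seminormaff} to invoke Lemma~\ref{lem:iteratedcommutators} with $N=1$, the factor-by-factor estimates ($a_{\tilde\pi}$ via iterated Lemma~\ref{lem:continuityHcal_f}, $c_\pi$ via Proposition~\ref{prop:continuityEcal} plus Lemma~\ref{lem:continuityHp3} plus Lemma~\ref{lem:continuityHcal_f}), unitary invariance of the conjugations, Calder\'on--Vaillancourt, and absorption of the simplex volume by the accumulated powers of $\udl{\gain}_g$. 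So this is the same route as the paper, and the outline is correct.

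Two small points to tighten. First, your choice $(j_0+1)\epsilon \ge \ell$ is marginally too weak: Lemma~\ref{lem:iteratedcommutators} itself requires $j_0 \ge \ell N/\epsilon = \ell/\epsilon$, and the paper takes $j_0 \ge (\ell+1)/\epsilon$ so that, after converting the single positive power $\theta_g$ carried by $f_{j_\ast}$ into $\theta_{g(\tau)}$, the symbol $\hat f_{j_\ast}(\tau)$ still lands in $S(\udl{\gain}_g^{1/2}\theta_{g(t)}^{-\ell}, g(t))$; just enlarge $j_0$. Second, your justification of the final absorption via ``$T_E = O(\log \udl{\gain}_g^{-1})$'' is false in general: since only $\Lambda \ge c\,\udl{\gain}_g$ is assumed, $T_E$ can be of size $\udl{\gain}_g^{-1}\log\udl{\gain}_g^{-1}$ (this is exactly the semiclassical regime of Section~\ref{subsubsec:semiclassicalsetting}). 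The correct mechanism, used in~\eqref{eq:getridoffactort}--\eqref{eq:getrid2} and in the paper's proof of the corollary, is that $\udl{\gain}_g^{3/2} T_E \le \udl{\gain}_g^{1/2}\log(\udl{\gain}_g^{-1})/(2c)$ is bounded; but then you cannot also spend the same $\udl{\gain}_g^{3/2}$ on the conversion $\theta_g = \e^{(\Lambda+2\Upsilon)\abs{s}}\theta_{g(s)}$ needed to make the $c_\pi$ weights bounded, so the bookkeeping of which fraction of $\udl{\gain}_g^{3/2}$ pays for the exponential prefactors and which fraction pays for the simplex volume has to be done explicitly (as in~\eqref{eq:arrowshat}--\eqref{eq:arrowshatbis}), rather than asserted.
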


\begin{proof}[Proof of Corollary~\ref{cor:iteratedcommutatorsaff}]
We first observe that it suffices to show the statement for $s = t$, since
\begin{equation*}
\abs*{a}_{S(\theta_{g(t)}^{-\ell}, g(t))}^{(k)}
	\le \abs*{a}_{S(\theta_{g(s)}^{-\ell}, g(s))}^{(k)} ,
\end{equation*}
in view of the fact that the maps $\tau \mapsto g(\tau)$ and $\tau \mapsto \theta_{g(\tau)}^{-1}$ are non-decreasing.
We fix $j_0 \ge \frac{\ell + 1}{\epsilon}$. By Lemma~\ref{lem:seminormaff}, we know that $f_j \in \nabla^{-1} S(\theta_g, g)$. Moreover, the symbol class $S(\theta_{g(s)}^{-\ell}, g(s))$ is contained in $S(\theta_g^{-\ell}, g)$ (although the embedding is not uniform in time), so that Lemma~\ref{lem:iteratedcommutators} applies, with $N = 1$. In view of the Calder\'{o}n--Vaillancourt theorem (Proposition~\ref{prop:CV}), and given that
\begin{equation*}
\norm*{\Opw{\e^{\tau \cal{H}_p} b}}_{\Bop(L^2(\mfd))}
	= \norm*{\e^{\ii \tau P} \Opw{b} \e^{- \ii \tau P}}_{\Bop(L^2(\mfd))}
	= \norm*{\Opw{b}}_{\Bop(L^2(\mfd))} ,
		\qquad \forall \tau \in \R ,
\end{equation*}
it is sufficient to estimate the $S(1, g(t))$ seminorms of the symbols $c_\pi(\tau)$ and $a_{\tilde \pi}(\tau)$ arising in~\eqref{eq:commutatorsformterms} in order to prove the statement. In this proof, we write $\abs{\pi}$ for the cardinality of $\pi$. On the one hand, applying $\abs{\tilde \pi}$ times Lemma~\ref{lem:continuityHcal_f} to $a_{\tilde \pi}(\tau)$ defined in~\eqref{eq:defcpiapi} with $N_0 = 1$ gives
\begin{equation*}
\forall k_0 \in \N, \exists \tilde k \in \N, \exists C > 0 : \qquad
	\abs*{a_{\tilde \pi}(\tau)}_{S(1, g(t))}^{(k_0)}
		\le C \abs*{a}_{S(\theta_{g(t)}^{-\abs{\tilde \pi}}, g(t))}^{(\tilde k)} \prod_{j \in \tilde \pi} \abs*{\nabla f_j}_{S(\theta_g, g)}^{(\tilde k)} ,
			\qquad \abs{\tau} \le \abs{t} \le \tfrac{1}{2} T_E .
\end{equation*}
Now we remark that the seminorm of $a$ in $S(\theta_{g(t)}^{-\abs{\tilde \pi}}, g(t))$ is smaller than the seminorm in $S(\theta_{g(t)}^{-L}, g(t))$ because $\theta_{g(t)} \ge 1$ (Proposition~\ref{prop:thetag}) and $\abs{\tilde \pi} \le \ell$ (see~\eqref{eq:partitionpi}). We use Lemma~\ref{lem:seminormaff} to handle the seminorms of $\nabla f_j$:
\begin{equation} \label{eq:S(1)api}
\abs*{a_{\tilde \pi}(\tau)}_{S(1, g(t))}^{(k_0)}
		\le C \abs*{a}_{S(\theta_{g(t)}^{-\ell}, g(t))}^{(\tilde k)} \theta_{g(t)}^{\abs{\tilde \pi}}(\rho_0) \prod_{j \in \tilde \pi} \abs*{H_{f_j}}_{g_{\rho_0}(t)} .
\end{equation}
Notice that the constant $C$ is independent of $\rho_0$ and $\abs{\tau} \le \abs{t} \le \frac{1}{2} T_E$.

We proceed similarly for the symbols of the form $c_\pi(\tau)$ in~\eqref{eq:commutatorsformterms}. We apply~\eqref{eq:S(1)api} with $\hat f_{j_\ast}$ in place of~$a$: for all $k_0 \in \N$, there exist $\tilde k \in \N$ and $C > 0$ such that
\begin{equation} \label{eq:beforeapplem}
\forall \rho_0 \in T^\star \mfd , \qquad
	\abs*{c_\pi(\tau)}_{S(1, g(t))}^{(k_0)}
		\le C \abs*{\hat f_{j_\ast}(\tau)}_{S(\theta_{g(t)}^{-L}, g(t))}^{(\tilde k)} \theta_{g(t)}^{\abs{\pi} - 1}(\rho_0) \prod_{j \in \pi \setminus \{j_\ast\}} \abs*{H_{f_j}}_{g_{\rho_0}(t)} .
\end{equation}
Now from Lemma~\ref{lem:continuityHp3} and Proposition~\ref{prop:continuityEcal} (with $m = \theta_g$ and $\kappa = \Upsilon$), we have
\begin{equation} \label{eq:arrowshat}
\begin{multlined}
f_{j_\ast}
	\in \nabla^{-1} S\left(\theta_g, g\right)
	\xlongrightarrow{\cal{E}_{j_0}(\tau)} S\left(\e^{\Upsilon \abs{\tau}} \theta_g \theta_{g(\tau)}^{-j_0 \epsilon}, g(\tau)\right) \\
	\qquad\qquad \xlongrightarrow{\cal{H}_p^{(3)}} S\left(\udl{\gain}_g^{3/2} \e^{\Upsilon \abs{\tau}} \theta_g \theta_{g(\tau)}^{-(j_0 + 1) \epsilon}, g(\tau)\right)
	\subset S\left(\udl{\gain}_g^{3/2} \e^{2 (\Lambda + 2 \Upsilon) \abs{\tau}} \theta_{g(\tau)}^{-(j_0 + 1) \epsilon + 1}, g(\tau)\right) .
\end{multlined}
\end{equation}
The last inclusion follows from the fact that $\theta_g = \e^{(\Lambda + 2 \Upsilon) \abs{\tau}} \theta_{g(\tau)}$. Under our assumption on $j_0$, we have $(j_0 + 1) \epsilon - 1 \ge \ell$ and moreover, in the time interval $\abs{\tau} \le T_E$, we have
\begin{equation*}
\udl{\gain}_g^{3/2} \e^{2 (\Lambda + 2 \Upsilon) \abs{\tau}} 
	\le \udl{\gain}_g^{1/2} .
\end{equation*}
In addition, the maps $\tau \mapsto g(\tau)$ and $\tau \mapsto \theta_{g(\tau)}^{-1}$ are non-decreasing, so that~\eqref{eq:arrowshat} yields
\begin{equation} \label{eq:arrowshatbis}
f_{j_\ast}
	\in \nabla^{-1} S\left(\theta_g, g\right)
	\xlongrightarrow{\cal{H}_p^{(3)} \cal{E}_{j_0}(\tau)} S\left(\udl{\gain}_g^{1/2} \theta_{g(t)}^{-\ell}, g(t)\right) .
\end{equation}
Therefore we obtain the estimate
\begin{equation*}
\abs*{\hat f_{j_\ast}(\tau)}_{S(\theta_{g(t)}^{-\ell}, g(t))}^{(\tilde k)}
	\le C \udl{\gain}_g^{1/2} \abs*{\nabla f_{j_\ast}}_{S(\theta_g, g)}^{(\tilde k')}
	\le C \udl{\gain}_g^{1/2} \theta_{g(t)}(\rho_0) \abs*{H_{f_{j_\ast}}}_{g_{\rho_0}(t)} ,
\end{equation*}
in virtue of Lemma~\ref{lem:seminormaff}.
Plugging this into~\eqref{eq:beforeapplem}, we arrive at
\begin{equation} \label{eq:S(1)cpi}
\abs*{c_\pi(\tau)}_{S(1, g(t))}^{(k_0)}
	\le C \udl{\gain}_g^{1/2} \theta_{g(t)}^{\abs{\pi}}(\rho_0) \prod_{j \in \pi} \abs*{H_{f_j}}_{g_{\rho_0}(t)} .
\end{equation}

The estimates~\eqref{eq:S(1)api} and~\eqref{eq:S(1)cpi} apply to each factor in~\eqref{eq:commutatorsformterms}, with $\tau = t$ or $\tau = s_{k_j}$, which verify in any case $\abs{\tau} \le \abs{t}$. We conclude by the Calder\'{o}n--Vaillancourt theorem (Proposition~\ref{prop:CV}) for the metric~$g(t)$ that the operator norm of the quantization of~\eqref{eq:commutatorsformterms} is bounded by
\begin{equation*}
C \udl{\gain}_g^{k/2} \abs*{t \Delta_k} \abs*{a}_{S(\theta_{g(s)}^{-\ell}, g(s))}^{(\tilde k)} \theta_{g(t)}^\ell(\rho_0) \prod_{j = 1}^\ell \abs*{H_{f_j}}_{g_{\rho_0}(t)}
\end{equation*}
(recall also~\eqref{eq:partitionpi} so that each $f_j$ appears exactly once). Here $k \in \{0, 1, \ldots, \ell\}$ is the dimension of the simplex in the term~\eqref{eq:commutatorsformterms}. We obtain using the fact that $\abs{t} \le T_E$:
\begin{equation*}
\udl{\gain}_g^{k/2} \abs*{t \Delta_k}
	\le \dfrac{1}{k!} \left(\dfrac{\udl{\gain}_g^{1 + 1/2}}{\Lambda} \log\left( \dfrac{1}{\udl{\gain}_g} \right)\right)^k
	\le \dfrac{1}{k!} \left(\dfrac{C_{1/2}}{c}\right)^k ,
\end{equation*}
where $c$ is the constant in~\eqref{eq:assumLambda} and $C_{1/2} = \sup_{\tau \in [0, 1]} \tau^{1/2} \abs{\log \tau}$. This gives the desired estimate.
\end{proof}

\begin{proof}[Proof of Lemma~\ref{lem:iteratedcommutators}]
In this proof, we do not care about the time dependence of seminorms. We only need to check that the different symbols arising in the computations give rise to bounded operators, for any fixed time $t$. We use the notation $\Ad_F G = F G F^{-1}$.

We prove the lemma by induction on $\ell$ (at each step of the induction, $N$ and the symbol $a$ are arbitrary). The case $\ell = 0$ is a matter of convention. Let us check the basis step $\ell = 1$. We write $f = f_1$ and $F = \Opw{f}$. In view of Corollary~\ref{cor:Egorovwithnabla-1symbols} together with~\eqref{eq:recurrencerelation}, and using the notation of~\eqref{eq:notationf(s)andhatf(s)}, we have
\begin{align} \label{eq:onecomm}
\comm*{F}{\Ad_{\e^{\ii t P}} \Opw{a}}
	&= \Ad_{\e^{\ii t P}} \comm*{\Ad_{\e^{- \ii t P}} F}{\Opw{a}}
	= \Ad_{\e^{\ii t P}} \comm*{\Opw{\e^{- t \cal{H}_p} f}}{\Opw{a}} \nonumber\\
	&\hspace*{-1cm}= \Ad_{\e^{\ii t P}} \comm*{\Opw{\cal{E}_{\le j_0}(-t) f + \int_0^{-t} \e^{(-t - s) \cal{H}_p} \cal{H}_p^{(3)} \cal{E}_{j_0}(s) f \dd s}}{\Opw{a}} \nonumber\\
	&\hspace*{-1cm}= \Ad_{\e^{\ii t P}} \comm*{\Opw{\cal{E}_{\le j_0}(-t) f} - \int_0^{t} \Ad_{\e^{\ii (- t + s) P}} \Opw{\cal{H}_p^{(3)} \cal{E}_{j_0}(-s) f} \dd s}{\Opw{a}} \nonumber\\
	&\hspace*{-1cm}= \Ad_{\e^{\ii t P}} \Opw{\cal{H}_{f(t)} a} - \int_0^{t} \comm*{\Ad_{\e^{\ii s P}} \Opw{\hat f(s)}}{\Ad_{\e^{\ii t P}} \Opw{a}} \dd s .
\end{align}
Recall here that $j_0 \ge N/\epsilon$ from the statement. The symbol of the left-hand side is $\cal{H}_f \e^{t \cal{H}_p} a$. On the right-hand side, expanding the commutator, we have three terms: the first one has symbol
\begin{equation} \label{eq:1stform}
\e^{t \cal{H}_p} \cal{H}_{f(t)} a ,
\end{equation}
the second is the form
\begin{equation} \label{eq:2ndform}
\int_0^t \left(\e^{s \cal{H}_p} \hat f(s)\right) \moyal \left(\e^{t \cal{H}_p} a\right) \dd s
\end{equation}
and the third one is the same with the Moyal product in the other way around.
Those three terms are of the expected form~\eqref{eq:commutatorsformterms} ($k = 0$ for the first term and $k = 1$ for the latter two). Notice that the expression~\eqref{eq:onecomm} makes sense in the sense of bounded operator on $L^2(\mfd)$. Indeed, $\cal{H}_{f(s)} a \in S(1, g(s))$ by Lemma~\ref{lem:continuityHcal_f} since $a \in S(\theta_g^{- N}, g) \subset S(\theta_{g(t)}^{- N}, g(t))$ by assumption. In addition, given that $\hat f(s) = \cal{H}_p^{(3)} \cal{E}_{j_0}(- s) f$, one sees from Proposition~\ref{prop:continuityEcal} and Lemma~\ref{lem:continuityHp3}, together with $j_0 \ge N/\epsilon$ and $\theta_{g(t)} \ge 1$ (Proposition~\ref{prop:thetag}), that
\begin{equation}
\hat f(s)
	\in S\left(\e^{c \abs{s}} \theta_{g(s)}^{N - j_0 \epsilon}, g(s)\right)
	\subset S\left(\e^{c \abs{s}}, g(s)\right) ,
\end{equation}
for\footnote{The reasoning is the same as in~\eqref{eq:dontcareexpfactor} in the proof of Corollary~\ref{cor:Egorovwithnabla-1symbols}.} some constant $c > 0$. Therefore the symbol $\hat f(s)$ gives rise to a bounded operator by the Calder\'{o}n--Vaillancourt Theorem (Proposition~\ref{prop:CV}). This concludes the proof of the lemma for $\ell = 1$.

Now assume that the lemma holds for $\ell \in \N^\ast$. We fix $N \in \R$ and a family of $\ell + 1$ symbols $f_j \in \nabla^{-1} S(\theta_g^N, g)$, $j \in \{1, \ldots, \ell, \ell+1\}$. We consider $a \in S(\theta_g^{-(\ell+1) N}, g)$. We apply the induction hypothesis to handle the $\ell$ first derivations $\cal{H}_{f_j}$, with $j_0 \ge N (\ell+1) /\epsilon$, and we deduce that $\cal{H}_{f_\ell} \cdots \cal{H}_{f_2} \cal{H}_{f_1} \e^{t \cal{H}_p} a$ is a sum of terms of the form~\eqref{eq:commutatorsformterms}. Then the last derivation $\cal{H}_{f_{\ell+1}}$ can land on any of the factors in~\eqref{eq:commutatorsformterms} in virtue of the product (or Leibniz) rule~\eqref{eq:productrules}.

Thus we are lead to study factors of the form $\cal{H}_{f_{\ell+1}} \e^{s \cal{H}_p} b(s)$, with $b(s) = a_{\tilde \pi}(s)$ and $s = t$, or $b(s) = c_\pi(s)$ and $s$ is one of the variables $s_1, s_2, \ldots s_k$. Then from the calculations of step $\ell = 1$, the resulting symbol can be written as a sum of terms of the form $\e^{s \cal{H}_p} \cal{H}_{f_{\ell+1}(s)} b(s)$ as in~\eqref{eq:1stform}, or
\begin{equation} \label{eq:termto}
\int_0^s \left(\e^{\tau \cal{H}_p} \hat f_{\ell+1}(\tau)\right) \moyal \left(\e^{s \cal{H}_p} b(s)\right) \dd \tau
\end{equation}
as in~\eqref{eq:2ndform}, or the same kind of term with the Moyal product in the reversed order. It remains to check that these terms can be written indeed as a sum of terms of the form~\eqref{eq:commutatorsformterms}.
\begin{itemize}[label=\textbullet]
\item In the first case where the resulting term is of the form $\e^{s \cal{H}_p} \cal{H}_{f_{\ell+1}(s)} b(s)$, using the notation of~\eqref{eq:defcpiapi}, according to whether $b(s) = a_{\tilde \pi}(s)$ or $b(s) = c_\pi(s)$, we can write either $\cal{H}_{f_{\ell+1}(s)} b(s) = a_{\tilde \pi'}$ with $\tilde \pi' = \tilde \pi \cup \{\ell+1\}$, or $\cal{H}_{f_{\ell+1}(s)} b(s) = c_{\pi'}(s)$ with $\pi' = \pi \cup \{\ell+1\}$. Going back to~\eqref{eq:termto}, we obtain indeed a symbol of the form~\eqref{eq:commutatorsformterms}.
\item For the second type of terms, namely those of the form~\eqref{eq:termto}, we can write it as
\begin{equation} \label{eq:thetypeofterm}
\int_0^s \left(\e^{\tau \cal{H}_p} c_{\{\ell+1\}}(\tau)\right) \moyal \left(\e^{s \cal{H}_p} b(s)\right) \dd \tau ,
\end{equation}
with the notation~\eqref{eq:commutatorsformterms} for $c_{\{\ell+1\}}(\tau)$ again.
Depending on which factor of~\eqref{eq:commutatorsformterms} was hit by $\cal{H}_{f_{\ell+1}}$, we should take $s$ in the above~\eqref{eq:thetypeofterm} to be one of the $\bf{s}$ variables in~\eqref{eq:commutatorsformterms} or the $t$ variable (e.g.\ if $\cal{H}_{f_{\ell+1}(s)}$ landed on some $c_{\pi(k_1)}$, we should take $s=s_{k_1}$).
Plugging this into~\eqref{eq:commutatorsformterms} yields symbols of the form
\begin{equation*}
\int_{t \Delta_k} \int_0^{s} \left(\Moy_{k_1' \in K_1'} \e^{s_{k_1'} \cal{H}_p} c_{\pi(k_1')}(s_{k_1'})\right) \moyal \e^{t \cal{H}_p} a_{\tilde \pi}(t) \moyal \left(\Moy_{k_2 \in K_2} \e^{s_{k_2} \cal{H}_p} c_{\pi(k_2)}(s_{k_2})\right) \dd \tau \dd \bf{s} ,
\end{equation*}
where $K_1' = K_1 \cup \{k+1\}$, with $s_{k+1} = \tau$ is the same integration variable as in~\eqref{eq:thetypeofterm}.
It remains to see that it is indeed a symbol of the form~\eqref{eq:commutatorsformterms}. This follows from the fact that the integration domain can be split into a union of simplices:
\begin{multline*}
\set{(\tau, \bf{s}) \in \R^{k+1}}{\bf{s} \in t \Delta_k, \tau \in s \Delta_1} \\
	= \bigcup_{k' = 0}^k \set{(\tau, \bf{s}) \in \R^{k+1}}{0 \le s_1 \le s_2 \le \cdots \le s_{k'} \le \tau \le s_{k'+1} \le \cdots \le s \le \cdots \le s_k \le t}
\end{multline*}
(recall that $s = t$ or $s$ is one of the $t \Delta_k$ simplex variables $s_1, s_2, \ldots, s_k$).
\end{itemize}
This concludes the induction.
\end{proof}

\begin{lemma} \label{lem:combinatorialmultiplication}
Suppose $g$ and $p$ satisfy Assumptions~\ref{assum:mandatory} and~\ref{assum:p}, and recall the number $\epsilon$ from Item~\ref{it:strongsubquad} of Assumption~\ref{assum:p}. Let $a \in \sch(T^\star \mfd)$ and fix $\ell \in \N$ and $N \le 1$.  Then for any $j_0 \ge (\ell + 1) N/ \epsilon$, the following holds. For any family of symbols $f_1, f_2, \ldots, f_\ell \in \nabla^{-1} S(\theta_g^N, g)$ and any $\abs{t} \le \frac{1}{2} T_E$, the distribution
\begin{equation} \label{eq:multiplyoperator}
\left(\e^{t \cal{H}_p} a\right) \moyal f_\ell \moyal \cdots \moyal f_2 \moyal f_1
\end{equation}
can be written as a sum of terms of the form
\begin{equation} \label{eq:formweexpect}
\e^{t \cal{H}_p} \left( a \moyal \left( \Moy_{k_1 \in K_1} f_{k_1}(t) \right) \right) \moyal \left( \Moy_{\pi \in \Pi} c_\pi(t) \right)  .
\end{equation}
The number of terms in the sum depends only on $\ell$ and $N$. The indices split into two disjoint families
\begin{equation*}
\{1, 2, \ldots, \ell\}
	= K_1 \cup K_2
\end{equation*}
(one of them possibly empty), and $\Pi$ is a partition of $K_2$ ($\pi \in \Pi$ are the blocks of the partition).
Writing for short
\begin{equation*}
f_j(s)
	:= \cal{E}_{\le j_0}(- s) f_j
		\qquad \rm{and} \qquad
\hat{f}_j(s)
	:= \cal{H}_p^{(3)} \cal{E}_{j_0}(-s) f_j ,
\end{equation*}
the symbols $c_\pi$ are of the form
\begin{equation*}
c_\pi(s)
	= \cal{H}_{\{f_j\}_{j \in \pi \setminus \{j_\ast\}}} \int_0^s \e^{\tau \cal{H}_p} \hat f_{j_\ast}(\tau) \dd \tau ,
		\quad s \in \R , \qquad \textrm{for some $j_\ast \in \pi$.}
\end{equation*}
\end{lemma}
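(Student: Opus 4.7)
The plan is to prove the lemma by induction on $\ell$, following an approach analogous to Lemma~\ref{lem:iteratedcommutators} but for one-sided Moyal multiplications rather than iterated commutators. The key algebraic ingredient is a single base identity derived from the Dyson expansion of Proposition~\ref{prop:Dyson}.

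To obtain this base identity, I would apply the expansion~\eqref{eq:Dyson} at time $-t$ to each $f_j$; after the change of variables $\tau=-s$ in the remainder integral, this gives $\e^{-t\cal{H}_p}f_j = f_j(t) - \int_0^t \e^{(\tau-t)\cal{H}_p}\hat f_j(\tau)\,d\tau$. Since $\e^{t\cal{H}_p}$ is a Moyal-algebra homomorphism (a direct consequence of the group property of $\e^{-\ii tP}$ applied to $\Opw{a_1\moyal a_2}=\Opw{a_1}\Opw{a_2}$), applying $\e^{t\cal{H}_p}$ to both sides yields $f_j = \e^{t\cal{H}_p}f_j(t) - c_{\{j\}}(t)$ with $c_{\{j\}}(t):=\int_0^t \e^{\tau\cal{H}_p}\hat f_j(\tau)\,d\tau$. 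Moyal-multiplying this identity by $\e^{t\cal{H}_p}a$ on the left and invoking the homomorphism property once more gives
\[
(\e^{t\cal{H}_p}a)\moyal f_j = \e^{t\cal{H}_p}(a\moyal f_j(t)) - (\e^{t\cal{H}_p}a)\moyal c_{\{j\}}(t).
\]
Existence of $\e^{t\cal{H}_p}f_j(t)$ as a symbol and validity of the manipulations on the specific symbols appearing here are guaranteed by Corollary~\ref{cor:Egorovwithnabla-1symbols} together with the group property on $L^2(T^\star\mfd)$.

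The induction on $\ell$ starts trivially at $\ell=0$ (with $K_1=K_2=\emptyset$ and $\Pi=\emptyset$). For the inductive step, I apply the base identity to the innermost factor $f_\ell$, splitting
\[
(\e^{t\cal{H}_p}a)\moyal f_\ell\moyal\cdots\moyal f_1 = \e^{t\cal{H}_p}(a\moyal f_\ell(t))\moyal f_{\ell-1}\moyal\cdots\moyal f_1 - (\e^{t\cal{H}_p}a)\moyal c_{\{\ell\}}(t)\moyal f_{\ell-1}\moyal\cdots\moyal f_1 .
\]
The first summand fits the induction hypothesis with $a'=a\moyal f_\ell(t)\in\sch(T^\star\mfd)$ (which is a Moyal ideal under the assumptions on $f_\ell(t)$) and the $\ell-1$ symbols $f_1,\ldots,f_{\ell-1}$, and the resulting terms acquire $\ell$ into the set $K_1$. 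For the second summand, I would push the trapped factor $c_{\{\ell\}}(t)$ to the right of the remaining $f_j$'s using the Moyal commutation identity $c\moyal f_j = f_j\moyal c + \ii\,\cal{H}_{f_j}c$, which is immediate from~\eqref{eq:defcalHp}. Iteratively applying this starting at $f_{\ell-1}$, each swap branches into a pass-through and an absorbed branch in which $c_{\{\ell\}}$ is replaced by $\cal{H}_{f_j}c_{\{\ell\}}(t)=c_{\{\ell,j\}}(t)$ (in the notation of the lemma, with $j_*=\ell$); the new symbol must then itself be pushed further right. After processing all of $f_{\ell-1},\ldots,f_1$, the second summand is a finite sum of terms
\[
(\e^{t\cal{H}_p}a)\moyal f_{j_1}\moyal\cdots\moyal f_{j_s}\moyal c_{\{\ell\}\cup J}(t),
\]
with $\{j_1,\ldots,j_s\}\cup J=\{1,\ldots,\ell-1\}$ disjoint and $c_{\{\ell\}\cup J}(t)$ already of the required form. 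Applying the induction hypothesis to the prefix $(\e^{t\cal{H}_p}a)\moyal f_{j_1}\moyal\cdots\moyal f_{j_s}$, which has $s<\ell$ factors, and appending $c_{\{\ell\}\cup J}(t)$ as a new block of the partition $\Pi$, produces terms of the desired form with $\ell\in K_2$.

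The main obstacle is purely combinatorial: checking that every term generated by the nested induction calls and the iterative commutation procedure fits the prescribed structure (with $K_1\cup K_2=\{1,\ldots,\ell\}$ disjoint and $\Pi$ a partition of $K_2$) and that the total number of terms is bounded by a constant depending only on $\ell$. All Moyal products that arise are well-defined because at least one factor is Schwartz, or because the relevant symbols belong to classes $S(m,g)$ where the pseudo-differential calculus of Proposition~\ref{prop:pseudocalcsymb} applies. The numerical constants $\ii$ and signs introduced by the Moyal commutator are absorbed into the implicit coefficients of the sum, and do not affect the claimed form of the terms.
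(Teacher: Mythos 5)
Your argument is correct, and it rests on the same ingredients as the paper's proof: the expansion of $\e^{-t\cal{H}_p} f_j$ from Corollary~\ref{cor:Egorovwithnabla-1symbols}, transferred through the homomorphism property $\e^{t\cal{H}_p}(a_1 \moyal a_2) = (\e^{t\cal{H}_p} a_1) \moyal (\e^{t\cal{H}_p} a_2)$, together with the Moyal commutator identity $c \moyal f = f \moyal c + \ii\, \cal{H}_f c$. The difference is purely in how the induction is organized. The paper treats a family $f_0, f_1, \ldots, f_\ell$ by first normalizing $(\e^{t\cal{H}_p} a) \moyal f_\ell \moyal \cdots \moyal f_1$ via the induction hypothesis and only then absorbing the new rightmost factor $f_0$: a single application of the product rule for $\cal{H}_{f_0}$ commutes $f_0$ past the whole block $\Moy_{\pi \in \Pi} c_\pi(t)$, after which the Egorov identity either places $f_0(t)$ next to $a$ (so $0 \in K_1$) or enlarges a block of $\Pi$. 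You instead peel off the factor $f_\ell$ adjacent to $\e^{t\cal{H}_p} a$, which leaves a block $c_{\{\ell\}}(t)$ trapped in the middle; you must then transport it rightward past the untouched $f_{\ell-1}, \ldots, f_1$ one swap at a time, branching into pass-through and absorbed terms, and re-invoke the induction hypothesis on prefixes of arbitrary smaller length (so your induction is, strictly speaking, strong induction, which you should state). Both routes yield the claimed normal form with a number of terms depending only on $\ell$; the paper's version is leaner combinatorially (one commutation per step, the hypothesis invoked once), while yours costs more bookkeeping but uses nothing beyond the same tools, and your use of $a \moyal f_\ell(t) \in \sch(T^\star \mfd)$ (Moyal product of a Schwartz symbol with a temperate-growth symbol) is legitimate at the paper's level of rigor. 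Incidentally, the minus sign in your base identity $(\e^{t\cal{H}_p} a) \moyal f_j = \e^{t\cal{H}_p}(a \moyal f_j(t)) - (\e^{t\cal{H}_p} a) \moyal c_{\{j\}}(t)$ is the one consistent with the oriented-integral convention (the plus sign in the paper's $\ell = 1$ display is a harmless slip); signs and factors of $\ii$ are in any case immaterial for the statement.
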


\begin{remark}
We first notice that the operator whose symbol is~\eqref{eq:multiplyoperator} is well defined as a continuous linear operator $\sch(\mfd) \to \sch'(\mfd)$ and even as a bounded operator on $L^2(\mfd)$ since the operators of the form $\Opw{f_j}$ are continuous on $\sch(\mfd)$. Actually, under the assumption $j_0 \ge (\ell + 1) N/\epsilon$, we have $\hat f_j(\tau) \in S(\theta_{g(\tau)}^{-\ell N}, g(\tau))$ from Proposition~\ref{prop:continuityEcal} and Lemma~\ref{lem:continuityHp3} (see~\eqref{eq:arrowshat} and~\eqref{eq:arrowshatbis}). The latter symbol class is contained in $S(\theta_g^{-\ell N}, g)$ (the embedding is not uniform in time, but we don't care for now). Thus, Lemma~\ref{lem:iteratedcommutators} applies to $a = \hat f_j(\tau)$ and ensures that $c_\pi(t)$ gives rise to a bounded operator on $L^2(\mfd)$. Again, more precise bounds will be established in Section~\ref{subsec:endoftheproof}.
\end{remark}

\begin{proof}
We prove the claim by induction on $\ell \in \N$. The basis step $\ell = 0$ is a matter of convention. The step $\ell = 1$ is a consequence Corollary~\ref{cor:Egorovwithnabla-1symbols}:
\begin{align*}
\left(\e^{t \cal{H}_p} a\right) \moyal f_1
	&= \e^{t \cal{H}_p} \left(a \moyal \e^{-t \cal{H}_p} f_1\right)
	= \e^{t \cal{H}_p} \left(a \moyal \cal{E}_{\le j_0}(-t) f_1 - a \moyal \int_0^t \e^{-(t - s) \cal{H}_p} \hat f_1(s) \dd s\right) \\
	&= \e^{t \cal{H}_p} \left(a \moyal f_1(t)\right) + \left(\e^{t \cal{H}_p} a\right) \moyal \int_0^t \e^{s \cal{H}_p} \hat f_1(s) \dd s .
\end{align*}
These terms are exactly of the expected form~\eqref{eq:formweexpect}.

Now suppose the claim is true at step $\ell \ge 1$. Let $f_0, f_1, \ldots, f_\ell \in \nabla^{-1} S(\theta_g^N, g)$. We apply the induction hypothesis with $f_1, f_2, \ldots, f_\ell$: we obtain that the symbol
\begin{equation*}
\left(\e^{t \cal{H}_p} a\right) \moyal f_\ell \moyal \cdots \moyal f_1 \moyal f_0
\end{equation*}
can be written as a sum of terms of the form
\begin{equation} \label{eq:compositioninduction}
\e^{t \cal{H}_p} \left( a \moyal \left( \Moy_{k_1 \in K_1} f_{k_1}(t) \right) \right) \moyal \left( \Moy_{\pi \in \Pi} c_\pi(t) \right)\moyal f_0
	= A_1 - A_2 ,
\end{equation}
with
\begin{align*}
A_1
	&:= \e^{t \cal{H}_p} \left( a \moyal \left( \Moy_{k_1 \in K_1} f_{k_1}(t) \right) \right) \moyal f_0 \moyal \left( \Moy_{\pi \in \Pi} c_\pi(t) \right) , \\
A_2	
	&:= \e^{t \cal{H}_p} \left( a \moyal \left( \Moy_{k_1 \in K_1} f_{k_1}(t) \right) \right) \moyal \cal{H}_{f_0} \left( \Moy_{\pi \in \Pi} c_\pi(t) \right) .
\end{align*}
Let us deal with $A_1$ first: from Corollary~\ref{cor:Egorovwithnabla-1symbols} we can write
\begin{equation*}
\e^{-t \cal{H}_p} f_0
	= \cal{E}_{\le j_0}(-t) f_0 - \e^{- t \cal{H}_p} \int_0^t \e^{s \cal{H}_p} \hat f_0(s) \dd s ,
\end{equation*}
so that
\begin{multline*}
A_1
	= 
\e^{t \cal{H}_p} \left( a \moyal \left( \Moy_{k_1 \in K_1} f_{k_1}(t) \right) \moyal f_0(t) \right) \moyal \left( \Moy_{\pi \in \Pi} c_\pi(t) \right) \\
	- \e^{t \cal{H}_p} \left( a \moyal \left( \Moy_{k_1 \in K_1} f_{k_1}(t) \right) \right) \moyal \left( \int_0^t \e^{s \cal{H}_p} \hat f_0(s) \dd s \right) \moyal \left( \Moy_{\pi \in \Pi} c_\pi(t) \right)  .
\end{multline*}
This yields two terms of the expected form~\eqref{eq:formweexpect}.

We now deal with the term $A_2$ in~\eqref{eq:compositioninduction}: we use the product rule~\eqref{eq:productrules} to compute the action of $\cal{H}_{f_0}$ on the Moyal product of the $c_\pi(t)$ symbols. We obtain a product of the form
\begin{equation*}
c_{\pi_1}(t) \moyal \cdots \moyal c_{\pi_{l-1}}(t) \moyal \cal{H}_{f_0} c_{\pi_l}(t) \moyal c_{\pi_{l+1}}(t) \moyal \cdots \moyal c_{\pi_k}(t) ,
\end{equation*}
where $\Pi = \{\pi_1, \pi_2, \ldots, \pi_k\}$. This can be written as
\begin{equation*}
\Moy_{\pi' \in \Pi'} c_{\pi'}(t) ,
\end{equation*}
where $\Pi'$ is a partition of $\{0, 1, \ldots, \ell\}$ whose block are given by $\pi_j' = \pi_j$ except for $j = l$ where $\pi_l' = \pi_l \cup \{0\}$.
Therefore we obtain terms of the desired form~\eqref{eq:formweexpect}. This finishes the induction.
\end{proof}

\Large
\section{Quantum evolution of confined symbols: proof of Theorem~\ref{thm:partition} and applications} \label{sec:proofpartition}
\normalsize

This section is organized as follows: we first explain the main ideas of the proof in Section~\ref{subsec:strategysketch}. Then we collect technical confinement estimates in Section~\ref{subsec:confinementremainder} before presenting the core of the proof of Theorem~\ref{thm:partition} in Section~\ref{subsec:endoftheproof}. Lastly, we prove Corollary~\ref{cor:continuitypropagatorSchwartz} as a consequence of Theorem~\ref{thm:partition} in Section~\ref{subsec:continuityschclass}.

\subsection{Strategy of the proof of Theorem~\ref{thm:partition}} \label{subsec:strategysketch}

Before getting into the proof of Theorem~\ref{thm:partition}, let us explain our strategy. If $a$ is {$g$-confined} near $\rho_0 \in T^\star \mfd$, we expect $a_t := \e^{t \cal{H}_p} a$ to be {$g(t)$-confined} near $\phi^{-t}(\rho_0)$. Therefore, our goal is to show that
\begin{equation*}
\rho \longmapsto
	\jap*{\dist_{{\sf g}_t^\sympf}\left(\rho, B_{r(t)}^{g(t)}\left(\phi^{-t}(\rho_0)\right)\right)}^n a_t(\rho) ,
		\qquad {\sf g}_t^\sympf := g_{\phi^{-t}(\rho_0)}^\sympf(t) ,
\end{equation*}
is smooth and bounded, applying Beals' theorem (Proposition~\ref{prop:Beals}). Multiplication of the symbol $a_t$ by powers of the distance function could possibly be achieved at the operator level by composing $\Opw{a_t} = \e^{\ii t P} \Opw{a} \e^{- \ii t P}$ with operators of the form $\Opw{w_{g(t), \phi^{-t}(\rho_0)}}$, where $w_{g, \rho_0}$ would be a smooth version of
\begin{equation*}
\rho \longmapsto
	\jap*{\dist_{g_{\rho_0}^\sympf}\left(\rho, B_r^g(\rho_0)\right)} .
\end{equation*}
However, pseudo-differential calculus would give lower order terms
\begin{equation*}
\Opw{a_t} \Opw{w_{g(t), \phi^{-t}(\rho_0)}}
	= \Opw{a_t w_{g(t), \phi^{-t}(\rho_0)} + \widehat{\cal{P}}_1\left(a_t, w_{g(t), \phi^{-t}(\rho_0)}\right)}
\end{equation*}
making the analysis more complicated. Instead, we compose $\Opw{a_t}$ with operators of the form $\Opw{f_j}$, with
\begin{equation*}
f_j(\rho)
	= {\sf g}_t^\sympf\left(\rho - \phi^{-t}(\rho_0), e_j\right) ,
\end{equation*}
where $(e_j)_j$ is an orthonormal basis of $W$ for the metric ${\sf g}_t^\sympf$. These affine functions satisfy
\begin{equation} \label{eq:sumaffinefunctions}
\sum_j f_j^2
	= \abs{\bigcdot - \phi^{-t}(\rho_0)}_{{\sf g}_t^\sympf}^2 .
\end{equation}
The multiplication of $a_t$ by this weight can be achieved at the operator level using ``anti-commutators":
\begin{equation} \label{eq:formulaanti-comm}
\dfrac{1}{2} \left(\Opw{f_j} \Opw{a_t} + \Opw{a_t} \Opw{f_j}\right)
	= \Opw{f_j a_t} .
\end{equation}
This holds without remainder term because $f_j$ is polynomial of degree $1$, and the terms of order~$1$ in pseudo-differential calculus cancel because $\cal{P}_1$ is skew-symmetric (see Remark~\ref{rmk:polynomialpseudocalc}). By taking appropriate anti-commutators of $\Opw{a_t}$ with quantizations of affine functions, one is able to obtain the operator
\begin{equation} \label{eq:timesconfiningweight}
\Opw{\jap*{\rho - \phi^{-t}(\rho_0)}_{{\sf g}_t^\sympf}^{2k} a_t}
\end{equation}
for any $k$. Then Lemma~\ref{lem:combinatorialmultiplication} gives another way of writing its symbol, which allows to estimate the {$L^2$-norm} of the operator~\eqref{eq:timesconfiningweight}, using that $a$ is {$g$-confined} near $\rho_0$. Similarly, Lemma~\ref{lem:iteratedcommutators} allows to handle derivatives of this symbol. Ultimately, Beals' theorem implies that
\begin{equation*}
\rho \longmapsto
	\jap*{\rho - \phi^{-t}(\rho_0)}_{{\sf g}_t^\sympf}^{2k} a_t
\end{equation*}
belongs to $S(1, {\sf g}_t^\sympf)$.

However, uniformity in $\rho_0$ could fail due to the fact that
\begin{equation*}
\rho \longmapsto
	\jap*{\rho - \rho_0}_{g_{\rho_0}^\sympf}
\end{equation*}
is potentially much larger than
\begin{equation*}
\rho \longmapsto
	\jap*{\dist_{g_{\rho_0}^\sympf}\left(\rho, B_r^g(\rho_0)\right)}
\end{equation*}
in $B_r^g(\rho_0)$, since $g^\sympf \ge g$. In fact the loss that we have at this stage is of order
\begin{equation*}
\sup_{\rho \in T^\star \mfd} \dfrac{\jap*{\rho - \rho_0}_{g_{\rho_0}^\sympf}}{\jap*{\dist_{g_{\rho_0}^\sympf}\left(\rho, B_r^g(\rho_0)\right)}}
	\approx \sup_{\zeta \in W \setminus \{0\}} \dfrac{\abs{\zeta}_{g_{\rho_0}^\sympf}}{\abs{\zeta}_{g_{\rho_0}}} .
\end{equation*}
This quantity turns out to be controlled by $\theta_g^2$ (see Proposition~\ref{prop:thetag}). Thus, applying this strategy to $\widehat{\cal{E}}_j(t) a$ (for which we have some gain in terms of $\theta_g^{-1}$ for $j$ large enough---see Proposition~\ref{prop:continuityEcalconf}) instead of $\e^{t \cal{H}_p} a$ will allow to compensate this loss.

\subsection{Preliminary confinement estimates} \label{subsec:confinementremainder}

We start with a lemma on confined symbols.

\begin{lemma} \label{lem:conftosymbolclass}
Let $g$ be an admissible metric and $m$ a {$g$-admissible} weight, with a common slow variation radius $r_0$. Then any uniformly {$g$-confined} family of symbols $(\psi_{\rho_0})_{\rho_0 \in T^\star \mfd}$ with radius $r \le r_0$ satisfies:
\begin{equation*}
\psi_{\rho_0}
	\in S\left(\dfrac{m}{m(\rho_0)}, g\right) ,
\end{equation*}
uniformly in $\rho_0 \in T^\star \mfd$, or in other words
\begin{equation} \label{eq:snestconfm}
\forall \ell \in \N, \exists C_\ell > 0 : \forall \rho_0 \in T^\star \mfd , \qquad
	\abs*{\psi_{\rho_0}}_{S(m, g)}^{(\ell)}
		\le \dfrac{C_\ell}{m(\rho_0)} .
\end{equation}
\end{lemma}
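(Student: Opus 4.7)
The plan is to unpack the definitions on both sides and combine the confinement estimate for $\psi_{\rho_0}$ with the temperance of the weight $m$. Concretely, I need to show that for each $\ell \in \N$ there exists $C_\ell > 0$ such that
\begin{equation*}
\abs*{\nabla^j \psi_{\rho_0}(\rho)}_g \le C_\ell \frac{m(\rho)}{m(\rho_0)}, \qquad \forall j \le \ell, \; \forall \rho \in T^\star \mfd, \; \forall \rho_0 \in T^\star \mfd,
\end{equation*}
which is exactly~\eqref{eq:snestconfm} after dividing by $m(\rho)$.

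First I would invoke the improved admissibility of $m$ from Proposition~\ref{prop:improvedadmissibility}: since $r \le r_0$ is a slow variation radius of $m$, there exist $C > 0$ and $N \ge 0$ depending only on structure constants of $g$ and $m$ such that
\begin{equation*}
m(\rho) \le C \jap*{\dist_{(g_{\rho_0}^\natural + g_\rho^\natural)^\sympf}\!\left(B_{r}^g(\rho_0), B_{r}^g(\rho)\right)}^{N} m(\rho_0), \qquad \forall \rho_0, \rho \in T^\star \mfd.
\end{equation*}
Using the crude bound from Remark~\ref{rmk:uniformadmissibilityg(t)} at $t = 0$, namely
\begin{equation*}
\dist_{(g_{\rho_0}^\natural + g_\rho^\natural)^\sympf}\!\left(B_{r}^g(\rho_0), B_{r}^g(\rho)\right) \le \abs{\rho - \rho_0}_{g_{\rho_0}^\sympf},
\end{equation*}
I rewrite the bound as
\begin{equation*}
\frac{m(\rho)}{m(\rho_0)} \le C \jap*{\dist_{g_{\rho_0}^\sympf}\!\left(\rho, B_{r}^g(\rho_0)\right)}^{N} \cdot \jap*{1}^{N},
\end{equation*}
(since the distance from $\rho$ to $B_r^g(\rho_0)$ differs from $\abs{\rho - \rho_0}_{g_{\rho_0}^\sympf}$ by at most a quantity controlled by $r$, the two $\jap{\bigcdot}$ factors are comparable up to a constant depending only on $r_0$).

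Second, I would use the uniform confinement of $(\psi_{\rho_0})$ at order $\ell + N$: by Definition~\ref{def:unifconfinedfamily} there exists $C_{\ell+N, k} > 0$ such that for all $\rho_0 \in T^\star \mfd$,
\begin{equation*}
\abs*{\nabla^k \psi_{\rho_0}(\rho)}_g \le \frac{C_{\ell+N, k}}{\jap*{\dist_{g_{\rho_0}^\sympf}\!\left(\rho, B_{r}^g(\rho_0)\right)}^{\ell + N}}, \qquad \forall \rho \in T^\star \mfd, \; \forall k \le \ell.
\end{equation*}
Multiplying these two inequalities yields
\begin{equation*}
\abs*{\nabla^k \psi_{\rho_0}(\rho)}_g \le \frac{C_{\ell+N, k} \cdot C}{\jap*{\dist_{g_{\rho_0}^\sympf}\!\left(\rho, B_{r}^g(\rho_0)\right)}^{\ell}} \cdot \frac{m(\rho)}{m(\rho_0)} \le \frac{C_\ell'}{m(\rho_0)} \, m(\rho),
\end{equation*}
uniformly in $k \le \ell$ and $\rho_0, \rho$. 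Taking the supremum over $\rho$ and the maximum over $k \le \ell$ gives~\eqref{eq:snestconfm}. There is no real obstacle here: the result is essentially a direct translation between the two sets of seminorms, with temperance of $m$ being balanced against the (arbitrarily strong) polynomial decay built into the confinement seminorms.
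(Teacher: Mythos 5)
Your overall strategy is the paper's: combine the improved admissibility (temperance) of $m$ with the arbitrarily strong polynomial decay built into the confinement seminorms. But as written, the key step does not follow, because you invoke the temperance inequality in the wrong direction. You state $m(\rho) \le C\, m(\rho_0)\, \jap*{\dist}^{N}$, i.e.\ an \emph{upper} bound on the ratio $m(\rho)/m(\rho_0)$. What the final estimate requires is a \emph{lower} bound on that ratio: to pass from the confinement bound $\abs*{\nabla^k \psi_{\rho_0}(\rho)}_g \le C \jap*{\dist_{g_{\rho_0}^\sympf}(\rho, B_r^g(\rho_0))}^{-(\ell+N)}$ to a bound by $m(\rho)/m(\rho_0)$, you need $\jap*{\dist_{g_{\rho_0}^\sympf}(\rho, B_r^g(\rho_0))}^{-N} \le C\, m(\rho)/m(\rho_0)$, that is, the temperance in the form $m(\rho_0) \le C\, m(\rho)\, \jap*{\dist_{g_{\rho_0}^\sympf}(\rho, B_r^g(\rho_0))}^{N}$. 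Multiplying your two displayed inequalities gives a bound on the product $\abs*{\nabla^k\psi_{\rho_0}}_g \cdot m(\rho)/m(\rho_0)$, not the claimed inequality. The slip is repairable, since Proposition~\ref{prop:improvedadmissibility} holds for all pairs and the ball-to-ball distance is symmetric in $\rho_0$ and $\rho$, so the reversed inequality is equally available; once you use it, your argument becomes exactly the paper's proof (which quotes precisely $m(\rho_0) \le C_m\, m(\rho)\, \jap*{\dist_{g_{\rho_0}^\sympf}(\rho, B_r^g(\rho_0))}^{N_m}$ and then absorbs the power $N_m$ into the confinement decay by taking $k \ge N_m$).

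A secondary point: your parenthetical claim that $\abs{\rho - \rho_0}_{g_{\rho_0}^\sympf}$ and $\dist_{g_{\rho_0}^\sympf}(\rho, B_r^g(\rho_0))$ differ "by at most a quantity controlled by $r$" is false in general. The $g_{\rho_0}^\sympf$-size of the $g_{\rho_0}$-ball of radius $r$ is of order $r$ times the eccentricity between $g_{\rho_0}$ and $g_{\rho_0}^\sympf$, which is unbounded over $\rho_0$ (in a semiclassical family it is of order $\hslash^{-1}$), so the two Japanese brackets are not comparable with constants depending only on $r_0$. Fortunately this detour is unnecessary: the distance appearing in Proposition~\ref{prop:improvedadmissibility} is between the two balls $B_r^g(\rho_0)$ and $B_r^g(\rho)$, and since $\rho \in B_r^g(\rho)$ it is directly dominated by $\dist_{g_{\rho_0}^\sympf}(\rho, B_r^g(\rho_0))$ (as in Remark~\ref{rmk:uniformadmissibilityg(t)}), so you can skip the comparison with $\abs{\rho-\rho_0}_{g_{\rho_0}^\sympf}$ altogether.
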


\begin{proof}
For any $\rho \in T^\star \mfd$, Proposition~\ref{prop:improvedadmissibility} yields
\begin{equation} \label{eq:temperanceofmhere}
m(\rho_0)
	\le C_m m(\rho) \jap*{\dist_{g_{\rho_0}^\sympf}\left(\rho, B_r^g(\rho_0)\right)}^{N_m} ,
		\qquad \forall \rho \in T^\star \mfd .
\end{equation}
Notice that it is important that $r$ is a slow variation radius of both $g$ and $m$.
That $\psi_{\rho_0}$ is {$g$-confined} near $B_r^g(\rho_0)$ implies that for any family of vector fields $X_1, X_2, \ldots, X_\ell$ on $T^\star \mfd$:
\begin{align*}
\abs*{\nabla^\ell \psi_{\rho_0} (X_1, X_2, \ldots, X_\ell)}
	&\le \abs*{\psi_{\rho_0}}_{\Conf_r^g(\rho_0)}^{(\ell+k)} \jap*{\dist_{g_{\rho_0}^\sympf}\left(\rho, B_r^g(\rho_0)\right)}^{-k} \prod_{j=1}^\ell \abs{X_j}_g \\
	&\le \abs*{\psi_{\rho_0}}_{\Conf_r^g(\rho_0)}^{(\ell+k)} C_m \dfrac{m(\rho)}{m(\rho_0)} \jap*{\dist_{g_{\rho_0}^\sympf}\left(\rho, B_r^g(\rho_0)\right)}^{N_m - k} \prod_{j=1}^\ell \abs{X_j}_g ,
\end{align*}
where we plugged~\eqref{eq:temperanceofmhere} in the last line.
Taking $k \ge N_m$ gives the desired estimate~\eqref{eq:snestconfm} with $C_\ell = \abs*{\psi_{\rho_0}}_{\Conf_r^g(\rho_0)}^{(\ell+k)} C_m$, which is uniform in $\rho_0$ since $(\psi_{\rho_0})_{\rho_0 \in T^\star \mfd}$ is uniformly {$g$-confined}.
\end{proof}

\begin{lemma} \label{lem:multiaffconf}
Let $T \in [0, \frac{1}{2} T_E]$ and $r_0 \in (0, r_g]$ satisfy~\eqref{eq:conditionr}. Let $(\psi_{\rho_0})_{\rho_0 \in T^\star \mfd}$ be a uniformly {$g(t)$-confined} family of symbols with radius $r(t)$. Let $f \in \aff(T^\star \mfd; \R)$. Then for any $j_0 \ge 0$, the following holds:
\begin{equation*}
\psi_{\rho_0} \moyal \cal{E}_{\le j_0}(s) f
	 \in \left(\abs*{f\left(\phi^s(\rho_0)\right)} + \abs*{H_f}_{g_{\phi^s(\rho_0)}(t)} \theta_{g(t)}^4\left(\phi^s(\rho_0)\right)\right) \Conf_{r(t)}^{g(t)}\left(\rho_0\right) ,
\end{equation*}
uniformly in $\rho_0 \in T^\star \mfd$ and $\abs{s} \le \abs{t} \le T$.
\end{lemma}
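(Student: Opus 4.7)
The plan is to decompose $\cal{E}_{\le j_0}(s) f$ around its value at $\rho_0$ and then apply pseudo-differential calculus to the Moyal product. Writing
\begin{equation*}
\cal{E}_{\le j_0}(s) f = f(\phi^s(\rho_0)) + g_s,
	\qquad g_s := \cal{E}_{\le j_0}(s) f - f(\phi^s(\rho_0)) ,
\end{equation*}
one has $\psi_{\rho_0} \moyal \cal{E}_{\le j_0}(s) f = f(\phi^s(\rho_0)) \psi_{\rho_0} + \psi_{\rho_0} \moyal g_s$. The first term is directly in $\abs{f(\phi^s(\rho_0))}\Conf_{r(t)}^{g(t)}(\rho_0)$, so the whole task reduces to showing that $\psi_{\rho_0} \moyal g_s \in \abs{H_f}_{g_{\phi^s(\rho_0)}(t)}\theta_{g(t)}^4(\phi^s(\rho_0))\Conf_{r(t)}^{g(t)}(\rho_0)$.

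Next I would collect symbol class information on $g_s$. Since $f \in \aff(T^\star \mfd;\R)$, Lemma~\ref{lem:seminormaff} gives $\nabla f \in S(\theta_g, g)$ with seminorms uniformly controlled by $\abs{H_f}_{g_{\phi^s(\rho_0)}(t)}\theta_{g(t)}(\phi^s(\rho_0))$. Combining Corollary~\ref{cor:continuityofflowinsymbolclasses} for $\cal{E}_0(s) = \e^{sH_p}$ with Proposition~\ref{prop:continuityEcal} for $\cal{E}_j(s)$, $j\ge 1$, applied with $m = \theta_g$ and $\kappa = \Upsilon$ (using Assumption~\ref{assum:p}~\ref{it:metriccontrol} via Lemma~\ref{lem:gain(t)}), yields
\begin{equation*}
g_s \in \nabla^{-1} S(\theta_{g(s)}, g(s))
	\quad \textrm{with} \quad
\abs{\nabla g_s}_{S(\theta_{g(s)}, g(s))}^{(k)} \le C \abs{H_f}_{g_{\phi^s(\rho_0)}(t)}\theta_{g(t)}(\phi^s(\rho_0)) ,
\end{equation*}
uniformly in $\abs{s}\le T$. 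Moreover, the pointwise value $\abs{g_s(\rho_0)} = \abs*{\sum_{j\ge 1}\cal{E}_j(s)f(\rho_0)}$ is bounded via Proposition~\ref{prop:continuityEcal} by $C\theta_g(\rho_0)\gain_{g(s)}^2(\rho_0)$, which after conversion to $\phi^s(\rho_0)$ through Lemma~\ref{lem:gain(t)} and the relation $\theta_g = \e^{(\Lambda+2\Upsilon)\abs{\cdot}}\theta_{g(t)}$ is again absorbed into $\abs{H_f}_{g_{\phi^s(\rho_0)}(t)}\theta_{g(t)}^{a}(\phi^s(\rho_0))$ for some integer $a\ge 0$.

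The remaining step is to write
\begin{equation*}
\psi_{\rho_0} \moyal g_s = \psi_{\rho_0}\, g_s + \widehat{\cal{P}}_1(\psi_{\rho_0}, g_s) ,
\end{equation*}
and to apply Proposition~\ref{prop:pseudocalcconf} to the $\widehat{\cal{P}}_1$ term with $(g,g_0) = (g(s),g(t))$, $m = \theta_{g(s)}$, $j = 1$; note that $g(s) \le g(t)$ since $\abs{s}\le\abs{t}$, and that $r_g$ is a common slow variation radius for both $g(t)$ and $\theta_{g(t)}$ by Propositions~\ref{prop:improvedadmissibility} and~\ref{prop:temperanceweight}. This places $\widehat{\cal{P}}_1(\psi_{\rho_0},g_s)$ in $\gain_{g(s),g(t)}(\rho_0)\theta_{g(s)}(\rho_0)\Conf_{r(t)}^{g(t)}(\rho_0)$ times the seminorm of $\nabla g_s$ computed above. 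Since $\abs{s}+\abs{t}\le T_E$, one gets $\gain_{g(s),g(t)}(\rho_0) = \e^{(\Lambda+2\Upsilon)(\abs{s}+\abs{t})}\gain_g(\rho_0) \le 1$, and translating the weight $\theta_{g(s)}(\rho_0)$ to $\theta_{g(t)}(\phi^s(\rho_0))$ via Lemma~\ref{lem:gain(t)} (together with Proposition~\ref{prop:thetag} whenever one needs to trade factors of $\gain$ for factors of $\theta^{-2}$) produces the final amplitude $\abs{H_f}_{g_{\phi^s(\rho_0)}(t)}\theta_{g(t)}^4(\phi^s(\rho_0))$. The pointwise product $\psi_{\rho_0}\,g_s$ is handled similarly by Taylor-expanding $g_s$ around $\rho_0$ and using slow variation of $g(t)$ and $\theta_{g(t)}$ on $\bar B_{r(t)}^{g(t)}(\rho_0)$, which is a valid small ball thanks to condition~\eqref{eq:conditionr}.

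The main obstacle will be the precise bookkeeping of powers of the temperance weight: every estimate must ultimately be compared at $\phi^s(\rho_0)$ rather than at $\rho_0$, so one accumulates factors of $C_\Upsilon$ and of $\theta_{g(t)}$ (through Lemma~\ref{lem:gain(t)} and Proposition~\ref{prop:thetag}) coming from the change of base point, the bounds $g(s)\le g(t)$ and $g^\natural \le \gain_g^{-1} g^\sympf$, and the application of pseudo-differential calculus. The exponent $4$ in $\theta_{g(t)}^4$ appears as a robust upper bound that absorbs all these conversions; once this counting is settled, the required confinement estimates follow directly from the seminorm statements provided by Propositions~\ref{prop:pseudocalcconf} and~\ref{prop:continuityEcal}.
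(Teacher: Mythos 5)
Your overall decomposition (split off the value at $\phi^s(\rho_0)$, treat the $\widehat{\cal{P}}_1$ remainder by Proposition~\ref{prop:pseudocalcconf} applied to $\nabla$ of the evolved symbol, and convert $\theta$-factors between base points) is close in spirit to the paper's proof, and the commutator/remainder term is handled correctly. The genuine gap is in the pointwise product $\psi_{\rho_0}\, g_s$. Your $g_s$ contains $\e^{sH_p}f - f(\phi^s(\rho_0)) = \sympf\bigl(H_f,\phi^s(\rho)-\phi^s(\rho_0)\bigr)$ (up to sign), which grows linearly at infinity, and the confinement seminorms of $\Conf_{r(t)}^{g(t)}(\rho_0)$ require an estimate at \emph{every} $\rho \in T^\star\mfd$, not just on $\bar B_{r(t)}^{g(t)}(\rho_0)$: a confined symbol $\psi_{\rho_0}$ is not compactly supported, so its tails must beat this growth with constants uniform in $\rho_0$. ``Taylor-expanding around $\rho_0$ and using slow variation on the ball'' only controls the metric and the weight inside the ball and says nothing about how $\abs*{\phi^s(\rho)-\phi^s(\rho_0)}$, measured in the metric $g_{\phi^s(\rho_0)}^\sympf(t)$, compares with $\dist_{g_{\rho_0}^\sympf(t)}\bigl(\rho, B_{r(t)}^{g(t)}(\rho_0)\bigr)$ far from the ball. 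This comparison is exactly where the paper invokes the flow propagation estimates: one splits through the nearest point $\tilde\rho$ of the ball, controls $\phi^s(\tilde\rho)-\phi^s(\rho_0)$ by the Lipschitz property (Proposition~\ref{prop:Lipschitzproperty}, which is the reason for the hypothesis~\eqref{eq:conditionr}), and controls $\phi^s(\rho)-\phi^s(\tilde\rho)$ by the temperance property of the flow (Proposition~\ref{prop:temperancepropertyflow}), accepting a polynomial loss $\jap{\rho-\tilde\rho}_{g_{\rho_0}^\natural}^{N}$ that is then absorbed by the confinement decay of $\psi_{\rho_0}$. None of these ingredients appears in your argument.

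Moreover, you cannot sidestep this by putting $g_s$ (or $f - f(\phi^s(\rho_0))$ composed with the flow) into a Weyl--H\"ormander symbol class and then applying Proposition~\ref{prop:pseudocalcconf} at order $0$: the natural weight for such a symbol is of the form $\theta_g \jap{\rho-\rho_s}_{g_{\rho_s}^\sympf}$, and this weight is \emph{not} $g$-admissible with structure constants uniform in $\rho_0$ (its slow variation degenerates away from $\rho_s$), so the pseudo-differential calculus estimates would not be uniform in the base point --- which is the whole point of the lemma. This is precisely the obstruction the paper flags in a footnote at the start of its proof. Your treatment of the $\widehat{\cal{P}}_1$ term and of the genuinely symbolic piece $\tilde f(s)=\sum_{j\ge 1}\cal{E}_j(s)f$ (whose weight \emph{is} uniformly admissible) is fine, but the product of $\psi_{\rho_0}$ with the transported affine part requires the bespoke flow-propagation argument, and as written your proposal does not supply it.
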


\begin{proof}
In all the proof, we write $\rho_s = \phi^s(\rho_0)$ for simplicity. Firstly\footnote{At first glance, we could simplify this proof by observing that $f - f(\rho_s) \in S(\theta_g \jap{\rho - \rho_s}_{g_{\rho_s}^\sympf}, g)$. However, the weight $\theta_g \jap{\rho - \rho_s}_{g_{\rho_s}^\sympf}$ does not have uniform structure constants with respect to $\rho_0$: slow variation of $\jap{\bigcdot - \rho_s}_{g_{\rho_s}^\sympf}$ with respect to $g$ degenerates away from $\rho_s$. This is a problem to keep track of the dependence of constants on $\rho_0$ while applying the pseudo-differential calculus.}, let us write
\begin{equation*}
\cal{E}_{\le j_0}(s) f
	= \e^{s H_p} f + \tilde f(s) ,
		\qquad
\tilde f(s)
	:= \sum_{j=1}^{j_0} \cal{E}_j(s) f .
\end{equation*}
Applying pseudo-differential calculus at order~$1$, we have
\begin{equation} \label{eq:threetermstohandle}
\psi_{\rho_0} \moyal \cal{E}_{\le j_0}(s) f
	= \psi_{\rho_0} \e^{s H_p} f + \psi_{\rho_0} \tilde f(s) + \widehat{\cal{P}}_1\left(\psi_{\rho_0}, \cal{E}_{\le j_0}(s) f\right) .
\end{equation}
We handle the three terms in~\eqref{eq:threetermstohandle} separately.
\begin{itemize}[label=\textbullet]
\item We first deal with the third term in the right-hand side of~\eqref{eq:threetermstohandle}. Since $f \in \nabla^{-1} S\left( \theta_g, g\right)$ by Lemma~\ref{lem:seminormaff}, we have by Corollary~\ref{cor:continuityofflowinsymbolclasses} and Proposition~\ref{prop:continuityEcal} (applied with $m = \theta_g$, $\kappa = \Upsilon$):
\begin{align*}
f
	\in \nabla^{-1} S\left( \theta_g, g \right)
	\xlongrightarrow{\cal{E}_{\le j_0}(s)} \nabla^{-1} S\left(\e^{\Upsilon \abs{s}} \theta_g, g(s)\right)
		&\subset \nabla^{-1} S\left(\e^{\frac{3}{2} (\Lambda +  2\Upsilon) \abs{t}} \theta_{g(t)}, g(t)\right) .
\end{align*}
By the pseudo-differential calculus for confined symbols (Proposition~\ref{prop:pseudocalcconf}), we have
\begin{equation*}
\Conf_{r(t)}^{g(t)}\left(\rho_0\right) \times \nabla^{-1} S\left(\e^{\frac{3}{2} (\Lambda +  2\Upsilon) \abs{t}} \theta_{g(t)}, g(t)\right)
		\xlongrightarrow{\widehat{\cal{P}}_1} \gain_{g(t)}(\rho_0) \e^{\frac{3}{2} (\Lambda +  2\Upsilon) \abs{t}} \theta_{g(t)}(\rho_0) \Conf_{r(t)}^{g(t)}\left(\rho_0\right) .
\end{equation*}
Recalling that $\theta_{g(t)} \circ \phi^s \le C_\Upsilon \e^{\Upsilon \abs{s}} \theta_{g(t)}$ from Assumption~\ref{assum:p}~\ref{it:metriccontrol}, then, provided that $\abs{t} \le \frac{1}{2} T_E$, we have
\begin{equation*}
\gain_{g(t)}(\rho_0) \e^{\frac{3}{2} (\Lambda +  2\Upsilon) \abs{t}} \theta_{g(t)}(\rho_0)
	\le \udl{\gain}_g \e^{4 (\Lambda +  2\Upsilon) \abs{t}} \theta_{g(t)}(\rho_s)
	\le \theta_{g(t)}(\rho_s) .
\end{equation*}
Lemma~\ref{lem:seminormaff} implies
\begin{equation*}
\abs*{\nabla f}_{S(\theta_g, g)}^{(0)}
	\le \theta_{g(t)}(\rho_s) \abs{H_f}_{g_{\rho_s}(t)} ,
\end{equation*}
so that we obtain
\begin{equation} \label{eq:mult3rdterm}
\widehat{\cal{P}}_1\left(\psi_{\rho_0}, \cal{E}_{\le j_0}(s) f\right)
	\in \abs{H_f}_{g_{\rho_s}(t)} \theta_{g(t)}^2(\rho_s) \Conf_{r(t)}^{g(t)}\left(\rho_0\right) ,
\end{equation}
which handles the third term in~\eqref{eq:threetermstohandle}.
\item Similarly for the second term of the right-hand side of~\eqref{eq:threetermstohandle}, we have by Proposition~\ref{prop:continuityEcal}:
\begin{equation} \label{eq:ftildesame}
\tilde f(s)
	\in S\left(\theta_{g(t)}(\rho_s) \abs{H_f}_{g_{\rho_s}(t)} \e^{\Upsilon \abs{s}} \theta_g, g(s)\right)
	\subset S\left(\theta_{g(t)}(\rho_s) \abs{H_f}_{g_{\rho_s}(t)} \e^{\frac{3}{2} (\Lambda + 2\Upsilon) \abs{t}} \theta_{g(t)}, g(t)\right) ,
\end{equation}
and the pseudo-differential calculus for confined symbols (Proposition~\ref{prop:pseudocalcconf}) yields
\begin{equation} \label{eq:alsovalidforP0}
\begin{multlined}
\Conf_{r(t)}^{g(t)}\left(\rho_0\right) \times S\left(\theta_{g(t)}(\rho_s) \abs{H_f}_{g_{\rho_s}(t)} \e^{\frac{3}{2} (\Lambda + 2\Upsilon) \abs{t}} \theta_{g(t)}, g(t)\right) \\
		\xlongrightarrow{\cal{P}_0} \theta_{g(t)}(\rho_s) \theta_{g(t)}(\rho_0) \abs{H_f}_{g_{\rho_s}(t)} \e^{\frac{3}{2} (\Lambda + 2\Upsilon) \abs{t}} \Conf_{g(t)}^{r(t)}\left(\rho_0\right) .
\end{multlined}
\end{equation}
 From Proposition~\ref{prop:thetag}:
\begin{equation} \label{eq:getridexpthetag}
\e^{2 (\Lambda + 2 \Upsilon) \abs{t}}
	\le \e^{2 (\Lambda + 2 \Upsilon) \abs{t}} \gain_{g(t)} \theta_{g(t)}^2
	= \e^{4 (\Lambda + 2 \Upsilon) \abs{t}} \udl{\gain}_g \theta_{g(t)}^2
	\le \theta_{g(t)}^2 ,
		\qquad \abs{t} \le \tfrac{1}{2} T_E .
\end{equation}
Therefore
\begin{equation} \label{eq:mult2ndterm}
\psi_{\rho_0} \tilde f(s)
	\in \theta_{g(t)}^4(\rho_s) \abs{H_f}_{g_{\rho_s}(t)} \Conf_{r(t)}^{g(t)}\left(\rho_0\right) ,
		\qquad \abs{t} \le \tfrac{1}{2} T_E .
\end{equation}
\item Lastly, the first term in the right-hand side of~\eqref{eq:threetermstohandle} is treated separately as follows: choosing $\tilde \rho$ such that
\begin{equation} \label{eq:theargumentstarts}
\abs*{\tilde \rho - \rho_0}_{g_{\rho_0}^\sympf}
	= \dist_{g_{\rho_0}^\sympf}\left(\rho, B_{r_0}^g(\rho_0)\right) ,
\end{equation}
we have, using that $f$ is affine and by the triangle inequality:
\begin{equation} \label{eq:sincefisaffine}
\abs*{\e^{s H_p} f(\rho)}
	\le \abs*{f\left(\phi^s(\rho_0)\right)} + \abs*{\sympf\left(H_f, \phi^s(\tilde \rho) - \phi^s(\rho_0)\right)} + \abs*{\sympf\left(H_f, \phi^s(\rho) - \phi^s(\tilde \rho)\right)} .
\end{equation}
We handle the third term in~\eqref{eq:sincefisaffine} thanks to the Cauchy--Schwarz inequality $\abs{\sympf(X, Y)} \le \abs{X}_g \abs{Y}_{g^\sympf}$ and Proposition~\ref{prop:temperancepropertyflow}:
\begin{align} \label{eq:1stpieceest}
\abs*{\sympf\left(H_f, \phi^s(\rho) - \phi^s(\tilde \rho)\right)}
	&\le \abs*{H_f}_{g_{\rho_s}(s)} \abs*{\phi^s(\rho) - \phi^s(\tilde \rho)}_{g_{\rho_s}^\sympf(s)} \nonumber\\
	&\le C \abs*{H_f}_{g_{\rho_s}(t)} \abs*{\rho - \tilde \rho}_{g_{\rho_0}^\sympf} \jap*{\rho - \tilde \rho}_{g_{\rho_0}^\natural}^N \nonumber\\
	&\le C \abs*{H_f}_{g_{\rho_s}(t)} \e^{(\Lambda + 2 \Upsilon) \abs{t}} \abs*{\rho - \tilde \rho}_{g_{\rho_0}^\sympf(t)} \jap*{\rho - \tilde \rho}_{g_{\rho_0}^\natural}^N \nonumber\\
	&\le C \abs*{H_f}_{g_{\rho_s}(t)} \theta_{g(t)}(\rho_s) \jap*{\dist_{g_{\rho_0}^\sympf(t)}\left(\rho, B_{r(t)}^{g(t)}(\rho_0)\right)}^{N+1} ,
\end{align}
where we used~\eqref{eq:getridexpthetag} in the last inequality.
As for the second term in~\eqref{eq:sincefisaffine}, we have by Proposition~\ref{prop:Lipschitzproperty} and by definition of the temperance weight $\theta_g$:
\begin{align} \label{eq:2ndpieceest}
\abs*{\sympf\left(H_f, \phi^s(\tilde \rho) - \phi^s(\rho_0)\right)}
	&\le \abs*{H_f}_{g_{\rho_s}^\sympf(t)} \abs*{\phi^s(\tilde \rho) - \phi^s(\rho_0)}_{g_{\rho_s}(t)} \nonumber\\
	&\le \theta_{g(t)}^2(\rho_s) \abs*{H_f}_{g_{\rho_s}(t)} \times \abs{\tilde \rho - \rho_0}_{g_{\rho_0}(t)} \e^{(\Lambda + C_g^3 C_p \udl{\gain}_g) \abs{t}} \nonumber\\
	&\le \theta_{g(t)}^2(\rho_s) \abs*{H_f}_{g_{\rho_s}(t)} r(t) .
\end{align}
Recalling that from~\eqref{eq:conditionr(t)} we have $r(t) \le r_g \le 1$,~\eqref{eq:1stpieceest} and~\eqref{eq:2ndpieceest} show that
\begin{equation*}
\abs*{\e^{s H_p} f(\rho)}
	\le C \left( \abs*{f\left(\phi^s(\rho_0)\right)} + \theta_{g(t)}^2(\rho_s) \abs*{H_f}_{g_{\rho_s}(t)} \right) \jap*{\dist_{g_{\rho_0}^\sympf(t)}\left(\rho, B_{r(t)}^{g(t)}(\rho_0)\right)}^{N+1} .
\end{equation*}
This implies in turn that $\psi_{\rho_0} \e^{s H_p} f$ decays like a confined symbol in
\begin{equation} \label{eq:multstillinconf}
\left(\abs*{f(\rho_s)} + \theta_{g(t)}^2(\rho_s) \abs*{H_f}_{g_{\rho_s}(t)}\right) \Conf_{g(t)}^{r(t)}(\rho_0) ,
\end{equation}
namely
\begin{equation} \label{eq:decaylikeconfclass}
\abs*{\psi_{\rho_0} \e^{s H_p} f}
	\le \left(\abs*{f(\rho_s)} + \theta_{g(t)}^2(\rho_s) \abs*{H_f}_{g_{\rho_s}(t)}\right) \jap*{\dist_{g_{\rho_0}^\sympf(t)}\left(\rho, B_{r(t)}^{g(t)}(\rho_0)\right)}^{-k} ,
\end{equation}
for all $k \in \N$.
To handle derivatives, we differentiate the product $\psi_{\rho_0} \e^{s H_p} f$ using the Leibniz formula. When derivatives hit $\psi_{\rho_0}$, repeating the argument from~\eqref{eq:theargumentstarts} shows that $(\nabla^\ell \psi_{\rho_0}) \e^{s H_p} f$ still decays like a confined symbol in the class~\eqref{eq:multstillinconf}, namely~\eqref{eq:decaylikeconfclass} is true with $\nabla^\ell \psi_{\rho_0}$ in place of $\psi_{\rho_0}$. When derivatives hit $\e^{s H_p} f$, we proceed as follows: $\nabla (\e^{s H_p} f)$ satisfies the same properties as $\tilde f(s)$ in the previous step, namely~\eqref{eq:ftildesame}. As we did in~\eqref{eq:alsovalidforP0} and~\eqref{eq:getridexpthetag}, we obtain
\begin{equation*}
\psi_{\rho_0} \nabla \left(\e^{s H_p} f\right)
	 \in \abs*{H_f}_{g_{\rho_s}(t)} \theta_{g(t)}^4(\rho_s) \Conf_{g(t)}^{r(t)}\left(\rho_0\right) .
\end{equation*}
So finally we obtain that $\psi_{\rho_0} \e^{s H_p} f$ belongs to the class~\eqref{eq:multstillinconf}.
\end{itemize}
Putting this together with the estimates~\eqref{eq:mult3rdterm} and~\eqref{eq:mult2ndterm} in~\eqref{eq:threetermstohandle}, we finally obtain the sought result.
\end{proof}

\subsection{End of the proof of Theorem~\ref{thm:partition}} \label{subsec:endoftheproof}

We are now prepared for the proof of Theorem~\ref{thm:partition}. The proof is divided into three main steps.
\begin{itemize}[label=\textbullet]
\item The first one consists in describing the action of anti-commutators of the operator $\Opw{\e^{t \cal{H}_p} \psi_{\rho_0}}$ with quantizations of affine symbols, through Lemma~\ref{lem:combinatorialmultiplication}. The goal of this step is to ensure that $\e^{t \cal{H}_p} \psi_{\rho_0}$ decays like a confined symbol.
\item In the second step, we describe the action of commutators of the operator resulting from the first step, with quantizations of affine symbols again. This is to ensure that the resulting symbol is smooth.
\item Finally, we apply Beals' theorem in the last step (Proposition~\ref{prop:Beals}) so as to establish the seminorm estimates of Theorem~\ref{thm:partition}.
\end{itemize}

Let $T \in [0, \tfrac{1}{2} T_E]$ and let $(\psi_{\rho_0})_{\rho_0 \in T^\star \mfd}$ be a {$g$-uniformly} confined family of symbols with radius
\begin{equation*}
r_0
	:= r_g \e^{- (2 (\Lambda + \Upsilon) + C_g^3 C_p \udl{\gain}_g) T} .
\end{equation*}
Fix $\rho_0 \in T^\star \mfd$ and set $\rho_t = \phi^{-t}(\rho_0)$ for all $t \in [-T, T]$.

\medskip
\emph{Step~1: Multiplying the symbol through anti-commutators with affine functions.}

Write $d = \dim \mfd$. Let $(e_j)_{1 \le j \le 2d}$ be an orthonormal basis of $W$, endowed with the scalar product $g_{\rho_t}^\sympf(t)$, and introduce
\begin{equation*}
w_j(\rho)
	:= g_{\rho_t}^\sympf(t)\left(\rho - \rho_t, e_j\right) ,
\end{equation*}
so that
\begin{equation*}
\abs*{\rho - \rho_t}_{g_{\rho_t}^\sympf(t)}^2
	= \sum_{j=1}^{2d} w_j^2(\rho) ,
		\qquad \forall \rho \in T^\star \mfd .
\end{equation*}
Then one has
\begin{equation} \label{eq:conditionsw_n}
\forall j \in \{1, 2, \ldots, 2d\} , \qquad
	w_j(\rho_t) = 0
		\quad \rm{and} \quad
	\abs*{H_{w_j}}_{g_{\rho_t}(t)} = \abs{J_g^{-1} e_j}_{g_{\rho_t}(t)} = \abs{e_j}_{g_{\rho_t}^\sympf(t)} = 1 .
\end{equation}

Fix $K \ge 0$ an integer. Expanding $\jap{\bullet}^{2K}$ below, the symbol
\begin{equation} \label{eq:symbolofinterest}
a : \rho \longmapsto
	\jap*{\rho - \rho_t}_{g_{\rho_t}^\sympf(t)}^{2K} \widehat{\cal{E}}_{j_0+1}(t) \psi_{\rho_0}
\end{equation}
can be written as a sum of symbols of the form
\begin{equation*}
\prod_n w_n \times \widehat{\cal{E}}_{j_0+1}(t) \psi_{\rho_0} ,
\end{equation*}
where the number of factors in the product is less than $2K$. The factors $(w_n)_n$ are relabeled to form a family of $N$ affine functions $(w_1, w_2, \ldots, w_N)$, with $N \le 2 K$.
In view of~\eqref{eq:formulaanti-comm}, this symbol can be rewritten as a sum of anti-commutators with operators of the form $\Opw{w_n}$:
\begin{equation*}
\Opw{\prod_n w_n \times \widehat{\cal{E}}_{j_0+1}(t) \psi_{\rho_0}}
	= 2^{-N} \ad_{\Opw{w_N}}^+ \cdots \ad_{\Opw{w_2}}^+ \ad_{\Opw{w_1}}^+ \Opw{\widehat{\cal{E}}_{j_0+1}(t) \psi_{\rho_0}} ,
\end{equation*}
where $\ad_F^+ A := FA + AF$.
Expanding those anti-commutators, we are left with studying
\begin{equation*}
\left(\Moy_{n_1 \in \cal{N}_1} w_{n_1}\right) \moyal \widehat{\cal{E}}_{j_0+1}(t) \psi_{\rho_0} \moyal \left(\Moy_{n_2 \in \cal{N}_2} w_{n_2}\right)
	= \int_0^t \left(\Moy_{n_1 \in \cal{N}_1} w_{n_1}\right) \moyal \e^{s \cal{H}_p} \cal{H}_p^{(3)} \cal{E}_{j_0}(t - s) \psi_{\rho_0} \moyal \left(\Moy_{n_2 \in \cal{N}_2} w_{n_2}\right) \dd s ,
\end{equation*}
where the right-hand side comes from the recurrence relation~\eqref{eq:recurrencerelation}, and the sets of indices $\cal{N}_1 \cup \cal{N}_2 = \{1, 2, \ldots, N\}$ are disjoint.
Applying Lemma~\ref{lem:combinatorialmultiplication}, we can write this symbol as a sum of symbols of the form
\begin{equation} \label{eq:symbolleftwith}
\tilde a
	:= \int_0^t \left( \Moy_{\pi_1 \in \Pi_1} c_{\pi_1}(s) \right) \moyal \e^{s \cal{H}_p} \tilde \psi(t, s) \moyal \left( \Moy_{\pi_2 \in \Pi_2} c_{\pi_2}(s) \right) \dd s .
\end{equation}
where
\begin{equation} \label{eq:deftildepsits}
\tilde \psi(t, s)
	:= \left( \Moy_{l_1 \in \cal{L}_1} w_{l_1}(s) \right) \moyal \cal{H}_p^{(3)} \cal{E}_{j_0}(t - s) \psi_{\rho_0} \moyal \left( \Moy_{l_2 \in \cal{L}_2} w_{l_2}(s) \right) .
\end{equation}
Here we have the partitions of indices
\begin{equation} \label{eq:partitionkjljnj}
\cal{K}_j \cup \cal{L}_j
	= \cal{N}_j ,
		\qquad j \in \{1, 2\} ,
\end{equation}
the set $\Pi_j$ is a partition of $\cal{K}_j$ and 
\begin{equation*}
w_{l}(s)
	= \cal{E}_{\le j_0}(-s) w_{l}(s) ,
		\qquad
\hat w_{l}(s)
	= \cal{H}_p^{(3)} \cal{E}_{j_0}(-s) w_{l} ;
\end{equation*}
and given $\pi \subset \N$, we have
\begin{equation} \label{eq:recalldefcpi}
c_\pi(s)
	= \cal{H}_{\{w_ j\}_{j \in \pi \setminus \{j_\ast\}}} \int_0^s \e^{\tau \cal{H}_p} \hat w_{j_\ast}(\tau) \dd \tau ,
		\quad \textrm{for some $j_\ast \in \pi$.}
\end{equation}
We claim that
\begin{equation} \label{eq:claimtildepsits}
\tilde \psi(t, s)
	\in \udl{\gain}_g^{3/2} \theta_{g(t)}^{4(L_1 + L_2) - (j_0+1) \epsilon}\left(\rho_{t}\right) \Conf_{r(t)}^{g(t)}\left(\rho_{t - s}\right) ,
\end{equation}
where $L_j := \# \cal{L}_j$.
Indeed, we have
\begin{equation*}
\cal{H}_p^{(3)} \cal{E}_{j_0}(t - s) \psi_{\rho_0}
	\in \udl{\gain}_g^{3/2} \theta_{g(t-s)}^{- (j_0+1) \epsilon}\left(\rho_{t-s}\right) \Conf_{r(t-s)}^{g(t-s)}\left(\rho_{t - s}\right) ,
\end{equation*}
since by Proposition~\ref{prop:continuityEcalconf} and Proposition~\ref{prop:mappingremainderConf}, we have
\begin{equation} \label{eq:confinconfII}
\Conf_{r_0}^g(\rho_0)
	\xlongrightarrow{\cal{E}_{j_0}(t - s)} \theta_{g(t-s)}^{- j_0 \epsilon}\left(\rho_{t-s}\right) \Conf_{r(t-s)}^{g(t-s)}\left(\rho_{t - s}\right)
	\xlongrightarrow{\cal{H}_p^{(3)}} \udl{\gain}_g^{3/2} \theta_{g(t-s)}^{- (j_0+1) \epsilon}\left(\rho_{t-s}\right) \Conf_{r(t-s)}^{g(t-s)}\left(\rho_{t - s}\right) .
\end{equation}
Recalling~\eqref{eq:monotonicityConf}, we have
\begin{equation*}
\Conf_{g(t-s)}^{r(t-s)}(\rho_{t-s})
	\subset \Conf_{g(t)}^{r(t)}(\rho_{t-s}) ,
\end{equation*}
and from the definition of $\theta_{g(t)}$ (Definition~\ref{def:thetag}) together with Assumption~\ref{assum:p}~\ref{it:metriccontrol}, we deduce that
\begin{equation*}
\theta_{g(t-s)}(\rho_{t-s})
	= \e^{(\Lambda + 2 \Upsilon) \abs{s}} \theta_{g(t)}(\rho_{t-s})
	\ge \dfrac{1}{C_\Upsilon} \theta_{g(t)}(\rho_t) .
\end{equation*}
Therefore we have
\begin{equation} \label{eq:confhcale}
\cal{H}_p^{(3)} \cal{E}_{j_0}(t - s) \psi_{\rho_0}
	\in \udl{\gain}_g^{3/2} \theta_{g(t)}^{- (j_0+1) \epsilon}\left(\rho_{t}\right) \Conf_{r(t)}^{g(t)}\left(\rho_{t - s}\right) ,
\end{equation}
uniformly with respect to $\rho_0$ and $\abs{s} \le \abs{t} \le \frac{1}{2} T_E$.

Then we apply Lemma~\ref{lem:multiaffconf} to $\cal{H}_p^{(3)} \cal{E}_{j_0}(t - s) \psi_{\rho_0}$ satisfying~\eqref{eq:confhcale} and $w_l(s)$ satisfying~\eqref{eq:conditionsw_n}, $l \in \cal{L}_1 \cup \cal{L}_2$. Given that $\phi^{-s}(\rho_{t-s}) = \phi^{-t}(\rho_0) = \rho_t$, we have
\begin{equation*}
\abs*{w_n(\rho_t)} + \abs{H_{w_n}}_{g_{\rho_t}(t)} \theta_{g(t)}^4(\rho_t)
	= \theta_{g(t)}^4(\rho_t) ,
\end{equation*}
so that~\eqref{eq:claimtildepsits} holds.

\medskip
\emph{Step~2: Differentiation of the symbol through commutators with affine functions.}

We fix a family $f_1, f_2, \ldots, f_L \in \aff(T^\star \mfd; \R)$. From the previous computations, we know that the symbol $\cal{H}_{f_L} \cdots \cal{H}_{f_2} \cal{H}_{f_1} a$ (where $a$ is defined in~\eqref{eq:symbolofinterest}) can be written as a sum of terms of the form $\cal{H}_{f_L} \cdots \cal{H}_{f_2} \cal{H}_{f_1} \tilde a$ (defined in~\eqref{eq:symbolleftwith}). Applying the product rule~\eqref{eq:productrules}, derivatives distribute on the various Moyal factors in~\eqref{eq:symbolleftwith}. Therefore we have two cases to study: either derivatives hit $\e^{s \cal{H}_p} \tilde \psi(t, s)$, or they land on a symbol of the form $c_\pi(s)$. In the sequel, $\{f_1, f_2, \ldots, f_\ell\}$ is a relabeled sub-family of $\{f_1, f_2, \ldots, f_L\}$.
\begin{itemize} [label=\textbullet]
\item If the derivatives $\cal{H}_{f_1}, \cal{H}_{f_2}, \ldots, \cal{H}_{f_\ell}$ land on $\e^{s \cal{H}_p} \tilde \psi(t, s)$, from~\eqref{eq:claimtildepsits}, we have thanks to Lemma~\ref{lem:conftosymbolclass}
\begin{equation*}
\tilde \psi(t, s)
	\in \udl{\gain}_g^{3/2} \theta_{g(t)}^{4(L_1 + L_2) - (j_0+1) \epsilon}\left(\rho_{t}\right) \theta_{g(t)}^\ell\left(\rho_{t-s}\right) S\left( \theta_{g(t)}^{-\ell}, g(t) \right) .
\end{equation*}
Using Assumption~\ref{assum:p}~\ref{it:metriccontrol} and Proposition~\ref{prop:thetag}, we have
\begin{align*}
\theta_{g(t)}(\rho_{t-s})
	&\le C_\Upsilon \e^{\Upsilon \abs{s}} \theta_{g(t)}(\rho_t)
	\le C_\Upsilon \e^{\Upsilon \abs{t}} \theta_{g(t)}(\rho_t) \left( \gain_{g(t)}(\rho_t) \theta_{g(t)}^2(\rho_t) \right)^{1/4} \\
	&\le C_\Upsilon \theta_{g(t)}(\rho_t) \left(\e^{4 (\Lambda + 2 \Upsilon) \abs{t}} \udl{\gain}_g \theta_{g(t)}^2(\rho_t) \right)^{1/4}
	\le C_\Upsilon \theta_{g(t)}^2(\rho_t) ,
\end{align*}
whenever $\abs{t} \le \frac{1}{2} T_E$.
Therefore Corollary~\ref{cor:iteratedcommutatorsaff} gives
\begin{equation} \label{eq:normoptildepsi}
\norm*{\Opw{\cal{H}_{f_\ell} \cdots \cal{H}_{f_2} \cal{H}_{f_1} \e^{s \cal{H}_p} \tilde \psi(t, s)}}_{L^2 \to L^2}
	\le C \udl{\gain}_g^{3/2} \theta_{g(t)}^{4(L_1 + L_2) + 3 \ell - (j_0+1) \epsilon}\left(\rho_{t}\right) \prod_{j = 1}^\ell \abs*{H_{f_j}}_{g_{\rho_t}(t)} ,
\end{equation}
with a constant $C$ independent of $\rho_0$ and $\abs{s} \le \abs{t} \le \frac{1}{2} T_E$.
\item If the derivatives $\cal{H}_{f_1}, \cal{H}_{f_2}, \ldots, \cal{H}_{f_\ell}$ land on $c_\pi(s)$, recalling~\eqref{eq:recalldefcpi}, we have to consider a symbol of the form
\begin{equation*}
\cal{H}_{f_\ell} \cdots \cal{H}_{f_2} \cal{H}_{f_1} c_\pi(s)
	= \cal{H}_{f_\ell} \cdots \cal{H}_{f_2} \cal{H}_{f_1} \cal{H}_{\{w_j\}_{j \in \pi \setminus \{j_\ast\}}} \int_0^s \e^{\tau \cal{H}_p} \hat w_{j_\ast}(\tau) \dd \tau .
\end{equation*}
From Proposition~\ref{prop:continuityEcal} and Lemma~\ref{lem:continuityHp3}, as we did in~\eqref{eq:arrowshat}:
\begin{multline*}
\nabla^{-1} S(\theta_g, g)
	\xlongrightarrow{\cal{E}_{j_0}(\tau)} S\left( e^{\Upsilon \abs{\tau}} \theta_g \theta_{g(\tau)}^{-j_0 \epsilon}, g(\tau) \right)
	\xlongrightarrow{\cal{H}_p^{(3)}} S\left( \udl{\gain}_g^{3/2} \e^{2 (\Lambda + 2 \Upsilon) \abs{\tau}} \theta_{g(\tau)}^{-(j_0+1) \epsilon + 1}, g(\tau) \right) \\
	\subset S\left( \udl{\gain}_g^{1/2} \theta_{g(\tau)}^{-(j_0+1) \epsilon + 1}, g(\tau) \right) ,
\end{multline*}
where the last inclusion is valid in the time range $\abs{\tau} \le T_E$.
Since we have
\begin{equation*}
w_{j_\ast}
	\in \theta_{g(t)}(\rho_t) \abs{H_{w_{j_\ast}}}_{g_{\rho_t}(t)} \nabla^{-1} S(\theta_g, g)
\end{equation*}
by Lemma~\ref{lem:seminormaff} and~\eqref{eq:conditionsw_n}, we deduce that
\begin{equation*}
\hat w_{j_\ast}(\tau)
	= \cal{H}_p^{(3)} \cal{E}_{j_0}(s) w_{j_\ast}
	\in \theta_{g(t)}(\rho_t) S\left( \udl{\gain}_g^{1/2} \theta_{g(\tau)}^{-L}, g(\tau) \right) ,
\end{equation*}
provided $j_0 \ge (L + 1)/\epsilon$. Then we can apply Corollary~\ref{cor:iteratedcommutatorsaff} to obtain
\begin{equation*}
\norm*{\Opw{\cal{H}_{f_\ell} \cdots \cal{H}_{f_2} \cal{H}_{f_1} c_\pi(s)}}_{L^2 \to L^2}
	\le C \abs{s} \udl{\gain}_g^{1/2} \theta_{g(t)}^{\abs{\pi} + \ell}\left(\rho_{t}\right) \prod_j \abs*{H_{w_{j}}}_{g_{\rho_t}(t)} \times \prod_{j = 1}^\ell \abs*{H_{f_j}}_{g_{\rho_t}(t)} .
\end{equation*}
In the right-hand side, the norm of the vectors $H_{w_{j}}$ is one due to~\eqref{eq:conditionsw_n}, and the factor $\udl{\gain}_g^{1/2} \abs{s}$ is bounded by a constant independent of $\udl{\gain}_g$ since $\abs{s} \le T_E$. Therefore
\begin{equation} \label{eq:normopcpi}
\norm*{\Opw{\cal{H}_{f_\ell} \cdots \cal{H}_{f_2} \cal{H}_{f_1} c_\pi(s)}}_{L^2 \to L^2}
	\le C' \theta_{g(t)}^{\abs{\pi} + \ell}\left(\rho_{t}\right) \prod_{j = 1}^\ell \abs*{H_{f_j}}_{g_{\rho_t}(t)} .
\end{equation}
\end{itemize}

On the whole, plugging the estimates~\eqref{eq:normoptildepsi} and~\eqref{eq:normopcpi} into~\eqref{eq:symbolleftwith}, we deduce that
\begin{equation} \label{eq:intermediateopineq}
\norm*{\Opw{\cal{H}_{f_L} \cdots \cal{H}_{f_2} \cal{H}_{f_1} \tilde a}}_{L^2 \to L^2}
	\le C \abs{t} \udl{\gain}_g^{3/2} \theta_{g(t)}^{4 N + 3 L - (j_0+1) \epsilon}\left(\rho_{t}\right) \prod_{j=1}^L \abs*{H_{f_j}}_{g_{\rho_t}(t)} .
\end{equation}
The exact power of $\theta_{g(t)}$ is due to~\eqref{eq:partitionkjljnj}. Indeed, the power $4N$ comes from Step~1: the indices $\cal{L}_1$ and $\cal{L}_2$ are counted in the estimate~\eqref{eq:normoptildepsi}, while the remaining indices $\cal{K}_1$ and $\cal{K}_2$ are partitioned by $\Pi_1$ and $\Pi_2$ respectively, and the blocks of these partitions appear in~\eqref{eq:normopcpi}. The contribution of each is at most $\theta_{g(t)}^4$. As for the power $3L$, we simply observe in Step 2 that all the {$f_j$'s} appear exactly once in one of the Moyal factors of $\tilde a$ in~\eqref{eq:symbolleftwith} while applying the product rule. The contribution of each is at most~$\theta_{g(t)}^3$.

Now in~\eqref{eq:intermediateopineq}, we recall that $\abs{t} \udl{\gain}_g^{3/2}$ is bounded by a constant for $\abs{t} \le T_E$, hence
\begin{equation} \label{eq:estimateforBeals}
\norm*{\Opw{\cal{H}_{f_\ell} \cdots \cal{H}_{f_2} \cal{H}_{f_1} a}}_{L^2 \to L^2}
	\le C' \theta_{g(t)}^{4 N + 3 L - (j_0+1) \epsilon}\left(\rho_{t}\right) \prod_j \abs*{H_{f_j}}_{g_{\rho_t}(t)} .
\end{equation}

This holds for any $j_0$ large enough, with a constant $C'$ possibly depending on $j_0$, but not on $\abs{t} \le \frac{1}{2} T_E$ nor on $\rho_0$.

\medskip
\emph{Step~3: Applying Beals' theorem.}

We finish by applying Beals' theorem (Proposition~\ref{prop:Beals}). We take $\ell \in \N$ and we consider the integer $k_\ell$ of Proposition~\ref{prop:Beals}. Then the following holds:
\begin{equation*}
\abs*{a}_{S(1, g_{\rho_t}(t))}^{(\ell)}
	\le C_\ell \theta_{g(t)}^{4 N + 3 k_\ell - (j_0+1) \epsilon} .
\end{equation*}
Going back to~\eqref{eq:symbolofinterest}, this implies that if $j_0 \ge 8 (K + k_\ell + \ell)/\epsilon$, then
\begin{equation} \label{eq:firstineqbeforeresult}
\abs*{\nabla^\ell \jap{\rho - \rho_t}_{g_{\rho_t}^\sympf(t)}^{2K} \widehat{\cal{E}}_{j_0+1}(t) \psi_{\rho_0}(\rho)}_{g_{\rho_t}(t)}
	\le C ,
\end{equation}
for some constant $C$ independent of $\rho_0$ and $\abs{t} \le \frac{1}{2} T_E$. We deduce that
\begin{equation} \label{eq:secondestimatebeforeresult}
\abs*{\jap{\rho - \rho_t}_{g_{\rho_t}^\sympf(t)}^{2K} \nabla^\ell \widehat{\cal{E}}_{j_0+1}(t) \psi_{\rho_0}(\rho)}_{g_{\rho_t}(t)}
	\le C ,
\end{equation}
with a possibly different constant. Indeed, for $\ell = 0$, this is exactly~\eqref{eq:firstineqbeforeresult}. Then by induction, if this is true for some $0 \le \ell' \le \ell - 1$, then we can show~\eqref{eq:secondestimatebeforeresult} for $\ell'+1$ by applying the Leibniz formula:
\begin{equation*}
\dfrac{1}{(\ell' + 1)!} \left(\nabla^{\ell'+1} \jap{\rho - \rho_t}_{g_{\rho_t}^\sympf(t)}^{2K} \widehat{\cal{E}}_{j_0+1}(t) \psi_{\rho_0}(\rho)\right)
	= \sum_{j = 0}^{\ell'+1} \dfrac{1}{j!} \nabla^j \jap{\rho - \rho_t}_{g_{\rho_t}^\sympf(t)}^{2K} \dfrac{1}{(\ell' + 1 - j)!} \nabla^{\ell'+1-j} \widehat{\cal{E}}_{j_0+1}(t) \psi_{\rho_0}(\rho) .
\end{equation*}
On the one hand, the left-hand side is bounded by~\eqref{eq:firstineqbeforeresult}. On the other hand, all the terms in the sum, except the term $j = 0$, can be controlled using the induction hypothesis~\eqref{eq:secondestimatebeforeresult} and the fact that for any vector field $X$,
\begin{equation*}
\abs*{\nabla_{X^j}^j \jap{\rho - \rho_t}_{g_{\rho_t}^\sympf(t)}^{2K}}
	\lesssim \jap{\rho - \rho_t}_{g_{\rho_t}^\sympf(t)}^{2 K - j} \abs{X}_{g_{\rho_t}^\sympf(t)}^j
	\le \jap{\rho - \rho_t}_{g_{\rho_t}^\sympf(t)}^{2 K} \abs{X}_{g_{\rho_t}(t)}^j \theta_{g(t)}^{2j}(\rho_t) .
\end{equation*}
This yields
\begin{equation*}
\abs*{\nabla^j \jap{\rho - \rho_t}_{g_{\rho_t}^\sympf(t)}^{2K} \nabla^{\ell'+1-j} \widehat{\cal{E}}_{j_0+1}(t) \psi_{\rho_0}(\rho)}_{g_{\rho_t}(t)}
	\le C \theta_{g(t)}^{4 N + 3 k_\ell + 2 \ell - (j_0+1) \epsilon}
	\le C' ,
\end{equation*}
hence isolating the $j=0$ term in the sum, we obtain~\eqref{eq:secondestimatebeforeresult} for $\ell'+1$.

\medskip
\emph{Final conclusion.}

We conclude that for any $\ell$, there exists $j_0$ sufficiently large and $C > 0$ (uniform with respect to $\abs{t} \le \frac{1}{2} T_E$ and $\rho_0$) such that
\begin{equation*}
\abs*{\nabla^\ell \widehat{\cal{E}}_{j_0+1}(t) \psi_{\rho_0}(\rho)}_{g_{\rho_t}(t)}
	\le C \jap{\rho - \rho_t}_{g_{\rho_t}^\sympf(t)}^{-2 \ell}
	\le C \jap*{\dist_{g_{\rho_t}^\sympf(t)}\left(\rho, B_{r(t)}^{g(t)}(\rho_t)\right)}^{-2 \ell} .
\end{equation*}
(Recall that the constant $C$ involves some seminorm $\abs*{\psi_{\rho_0}}_{\Conf_{r_0}^g(\rho_0)}^{(k)}$ due to~\eqref{eq:confinconfII}.)
By Proposition~\ref{prop:continuityEcalconf}, this estimate is equally true for $\cal{E}_{\le j_0}(t) \psi_{\rho_0}$, for any $j_0 \in \N$. We deduce that
\begin{equation*}
\exists C > 0 : \forall \rho_0 \in T^\star \mfd, \forall \abs{t} \le \tfrac{1}{2} T_E , \forall \ell \in \N, \exists k \in \N : \;\,
	\abs*{\e^{t \cal{H}_p} \psi_{\rho_0}}_{\Conf_{r(t)}^{g(t)}(\rho_0)}^{(\ell)}
		\le C \abs*{\psi_{\rho_0}}_{\Conf_{r_0}^g(\rho_0)}^{(k)} ,
\end{equation*}
which is the sought result.
\qed

\subsection{Continuity on the Schwartz class: proof of Corollary~\ref{cor:continuitypropagatorSchwartz}} \label{subsec:continuityschclass}

In this proof, we fix a family of seminorms on $\sch(\mfd)$. We identify $\mfd$ with $\R^d$, $d = \dim \mfd$, by selecting an arbitrary Euclidean structure on $\mfd$, and we set
\begin{equation*}
\abs*{u}_{\cal{S}(\mfd)}^{(k)}
	:= \max_{\abs{\alpha} \le k} \norm*{\jap*{\bullet}^k \partial^\alpha u}_{\infty} ,
		\qquad k \in \N .
\end{equation*}
We proceed similarly on $T^\star \mfd$.

First of all, we claim the following:
\begin{empheq}[left={\forall k \in \N, \exists C > 0, \exists k_0 \in \N : \forall u \in \cal{S}(\mfd) , \qquad} \empheqlbrace]{align}
	\left( \abs*{u}_{\cal{S}(\mfd)}^{(k)} \right)^2
		&\le C \abs*{u \ovee u}_{\cal{S}(T^\star \mfd)}^{(k+k_0)} , \label{eq:elementary1}\\
	\abs*{u \ovee u}_{\cal{S}(T^\star \mfd)}^{(k)}
		&\le C \left( \abs*{u}_{\cal{S}(\mfd)}^{(k+k_0)} \right)^2 , \label{eq:elementary2}
\end{empheq}
where $u \ovee u$ is the Wigner distribution defined in~\eqref{eq:defWigner}.
We start by showing these estimates with $u \otimes u$ in place of $u \ovee u$. On the one-hand, we have for any $k \in \N$ and $\alpha_1, \alpha_2 \in \N^d$ with $\abs{\alpha_1}, \abs{\alpha_2} \le k$:
\begin{equation*}
\abs*{\left(\jap*{\bullet}^k \partial^{\alpha_1} u\right) \otimes \left(\jap*{\bullet}^k \partial^{\alpha_2} u\right)}
	\le \abs*{\jap*{(x_1, x_2)}^{2k} \partial_{x_1}^{\alpha_1} \partial_{x_2}^{\alpha_2} (u \otimes u)}
	\le \abs*{u \otimes u}_{\cal{S}(\mfd \times \mfd)}^{(2k)} ,
\end{equation*}
which leads to~\eqref{eq:elementary1} for $u \otimes u$ with $C = 1$ and $k_0 = k$.
On the other hand, for any $k \in \N$ and any $\alpha \in \N^{2d}$ with $\abs{\alpha} \le k$, we can split $\alpha$ as $\alpha = (\alpha_1, \alpha_2)$, $\alpha_1, \alpha_2 \in \N^d$, and we obtain:
\begin{equation*}
\abs*{\jap*{(x_1, x_2)}^{k} \partial_{(x_1, x_2)}^{\alpha} (u \otimes u)}
	\le \abs*{\left(\jap*{\bullet}^k \partial^{\alpha_1} u\right) \otimes \left(\jap*{\bullet}^k \partial^{\alpha_2} u\right)}
	\le \left( \abs*{u}_{\cal{S}(\mfd)}^{(k)} \right)^2 ,
\end{equation*}
which leads to~\eqref{eq:elementary2} for $u \otimes u$ with $C = 1$ and $k_0 = 0$. To deduce~\eqref{eq:elementary1} and~\eqref{eq:elementary2} with $u \ovee u$, we use the fact that the latter can be computed from $u \otimes u$ through a rotation in $\mfd \times \mfd$ and a partial Fourier transform:
\begin{equation*}
(u \ovee u)(x, \xi)
	= \ft_{y \to \xi} \left( (u \otimes u)\left(x + \dfrac{y}{2}, x - \dfrac{y}{2}\right) \right)
\end{equation*}
(recall the definition of the Wigner transform in~\eqref{eq:defWigner}).
These two operations are continuous on the Schwartz space, hence
\begin{empheq}[left={\forall k \in \N, \exists C > 0, \exists k_0 \in \N : \forall u \in \cal{S}(\mfd) , \qquad} \empheqlbrace]{align}
	\abs*{u \otimes u}_{\cal{S}(\mfd \times \mfd)}^{(k)}
		&\le C \abs*{u \ovee u}_{\cal{S}(T^\star \mfd)}^{(k+k_0)} \\
	\abs*{u \ovee u}_{\cal{S}(T^\star \mfd)}^{(k)}
		&\le C \abs*{u \otimes u}_{\cal{S}(\mfd \times \mfd)}^{(k+k_0)} .
\end{empheq}
This justifies~\eqref{eq:elementary1} and~\eqref{eq:elementary2}.

Second, for any $u_0 \in \sch(\mfd)$, writing $u(t) = \e^{- \ii t P} u_0$, we know by definition of the Weyl quantization~\eqref{eq:defWeylquantization} and Proposition~\ref{prop:quantizationisometry} that for all $a \in \sch(T^\star \mfd)$ and $t \in \R$:
\begin{align*}
\inp*{a}{u(t) \ovee u(t)}_{L^2(T^\star \mfd)}
	&= \inp*{u(t)}{\Opw{a} u(t)}_{L^2(\mfd)}
	= \inp*{u_0}{\e^{\ii t P} \Opw{a} \e^{- \ii t P} u_0}_{L^2(\mfd)} \\
	&= \inp*{\e^{t \cal{H}_p} a}{u_0 \ovee u_0}_{L^2(T^\star \mfd)}
	= \inp*{a}{\e^{- t \cal{H}_p} (u_0 \ovee u_0)}_{L^2(T^\star \mfd)} ,
\end{align*}
where we used the fact that $\e^{t \cal{H}_p}$ is a group of isometries in the last equality (Proposition~\ref{prop:isometrycalHp}).

Now fix $T_0 > 0$, $r_0 > 0$ such that $r(T_0) \le r_g$ ($r(\bullet)$ defined in~\eqref{eq:defr(t)}) and fix also $\rho_0 \in T^\star \mfd$. Recall that the space of confined symbols $\Conf_{r(t)}^{g(t)}\left(\phi^{t}(\rho_0)\right)$, $\abs{t} \le T_0$, has seminorms equivalent to those of $\sch(T^\star \mfd)$ (see Remark~\ref{rmk:equivalenceSchConf}). We do not care about the time dependence of the constants involved in the corresponding estimates.

We apply Theorem~\ref{thm:partition} to the symbols $u_0 \ovee u_0 \in \Conf_{r_0}^g(\rho_0)$ with $u_0 \in \sch(\mfd)$: for every $\ell \in \N$, there exist $k \in \N$ and $C_\ell > 0$ such that
\begin{equation*}
\forall u_0 \in \sch(\mfd), \forall t \in [-T_0, T_0] , \qquad
	\abs*{u(t) \ovee u(t)}_{\Conf_{r(t)}^{g(t)}(\phi^t(\rho_0))}^{(\ell)}
		\le C_\ell \abs*{u_0 \ovee u_0}_{\Conf_{r_0}^g(\rho_0)}^{(k)} .
\end{equation*}
Then we use the equivalence of seminorms to deduce that for every $t \in [-T_0, T_0]$, we have\footnote{Notice here that we do not claim a uniform control of the constants $C_\ell$ over time. One would need to quantify the equivalence of seminorms of $\Conf_{r(t)}^{g(t)}(\phi^t(\rho_0))$ and $\sch(T^\star \mfd)$ to handle this issue, but this is not needed for Corollary~\ref{cor:continuitypropagatorSchwartz}.}
\begin{equation*}
\forall \ell \in \N, \exists k \in \N, \exists C_\ell > 0 : \forall u_0 \in \sch(\mfd),  , \qquad
	\abs*{u(t) \ovee u(t)}_{\sch(T^\star \mfd)}^{(\ell)}
		\le C_\ell \abs*{u_0 \ovee u_0}_{\sch(T^\star \mfd)}^{(k)}
\end{equation*}
It remains to combine this with~\eqref{eq:elementary1} and~\eqref{eq:elementary2} to conclude the proof.
\qed

\Large
\section{Quantum evolution in symbol classes: proof of Theorem~\ref{thm:main}} \label{sec:consequences}
\normalsize

As explained in the introduction, in order to go from Theorem~\ref{thm:partition} to Theorem~\ref{thm:main}, we will use the following characterization of symbols in $S(m, g)$. Recall that $\vol_g$ refers to the Riemannian volume associated with the metric $g$.

\begin{proposition}[{\cite[Proposition 2.3.16]{Lerner:10}}] \label{prop:reconstructsymbol}
Let $g$ be an admissible metric and $m$ be a {$g$-admissible} weight, with a common slow variation radius $r_0 \in (0, 1]$, and $(\psi_{\rho_0})_{\rho_0 \in T^\star \mfd}$ be a {$g$-uniformly} confined family of symbols with radius $r \le r_0$. Then the symbol $a$ defined by
\begin{equation} \label{eq:reconstructa}
a(\rho)
	:= \int_{T^\star \mfd} m(\rho_0) \psi_{\rho_0}(\rho) \dd \vol_g(\rho_0) ,
		\qquad \forall \rho \in T^\star \mfd ,
\end{equation}
belongs to $S(m, g)$, with the estimate:
\begin{equation*}
\forall \ell \in \N, \exists k \in \N, \exists C_\ell > 0 : \qquad
	\abs*{a}_{S(m, g)}^{(\ell)}
		\le C_\ell \sup_{\rho_0 \in T^\star \mfd} \abs*{\psi_{\rho_0}}_{\Conf_r^g(\rho_0)}^{(k)} .
\end{equation*}
Conversely, if $a \in S(m, g)$, then the family of symbols
\begin{equation*}
\varphi_{\rho_0}
	:= \dfrac{1}{m(\rho_0)} \psi_{\rho_0} a ,
		\qquad \rho_0 \in T^\star \mfd ,
\end{equation*}
is {$g$-uniformly} confined with the estimates:
\begin{equation*}
\forall \ell \in \N, \exists k \in \N, \exists C_\ell > 0 : \qquad
	\sup_{\rho_0 \in T^\star \mfd} \abs*{\varphi_{\rho_0}}_{\Conf_r^g(\rho_0)}^{(\ell)}
		\le C_\ell \abs{a}_{S(m, g)}^{(k)} \sup_{\rho_0 \in T^\star \mfd} \abs*{\psi_{\rho_0}}_{\Conf_r^g(\rho_0)}^{(k)} .
\end{equation*}
The constants $C_\ell$ and integers $k$ depend only on structure constants of $g$ and $m$, and are uniform in $r \le r_0$.
\end{proposition}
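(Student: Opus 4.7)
The plan is to establish the two directions separately by direct pointwise estimation, using improved admissibility of both $g$ and $m$ (Proposition~\ref{prop:improvedadmissibility}) as the key tool to compare values of $m$ at different base points and to turn $B_r^g(\rho_0)$-confinement bounds into quantities controllable independently of $\rho_0$. The analysis direction (from $S(m,g)$ to a confined family) is essentially algebraic, whereas the synthesis direction (from a confined family back to $S(m,g)$) additionally requires a uniform integrability estimate on $T^\star\mfd$, and that will be the main technical step.

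I would first treat the analysis direction. Writing $\varphi_{\rho_0} = m(\rho_0)^{-1} \psi_{\rho_0} a$ and expanding $\nabla^\ell \varphi_{\rho_0}$ via the Leibniz formula as a sum of products of $\nabla^j \psi_{\rho_0}$ and $\nabla^{\ell-j} a$, I would bound each factor separately: confinement of $\psi_{\rho_0}$ gives, for any $K$,
\begin{equation*}
\abs*{\nabla^j \psi_{\rho_0}}_g(\rho) \le \abs*{\psi_{\rho_0}}_{\Conf_r^g(\rho_0)}^{(j+K)} \jap*{\dist_{g_{\rho_0}^\sympf}\bigl(\rho, B_r^g(\rho_0)\bigr)}^{-K};
\end{equation*}
membership $a \in S(m,g)$ gives $\abs{\nabla^{\ell-j} a}_g(\rho) \le \abs{a}_{S(m,g)}^{(\ell)} m(\rho)$; and the temperance bound on $m$ from Proposition~\ref{prop:improvedadmissibility} yields $m(\rho) \le C m(\rho_0) \jap{\dist_{g_{\rho_0}^\sympf}(\rho, B_r^g(\rho_0))}^N$, where $N$ depends only on structure constants. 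Choosing $K \ge N+\ell$ in each term absorbs the temperance loss and yields the required $\jap{\cdot}^{-\ell}$-decay uniformly in $\rho_0$, which is precisely the definition of $g$-uniform confinement of $(\varphi_{\rho_0})_{\rho_0}$ with radius $r$.

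For the synthesis direction, differentiating $a$ under the integral and invoking the uniform confinement of $(\psi_{\rho_0})_{\rho_0}$ reduce the question to uniform control, for $K$ arbitrarily large, of
\begin{equation*}
I_K(\rho) := \int_{T^\star \mfd} \frac{m(\rho_0)}{m(\rho)} \jap*{\dist_{g_{\rho_0}^\sympf}\bigl(\rho, B_r^g(\rho_0)\bigr)}^{-K} \dd \vol_g(\rho_0).
\end{equation*}
Temperance of $m$ replaces $m(\rho_0)/m(\rho)$ by another polynomial factor $\jap{\cdot}^N$ in the same distance, so the issue reduces to uniform integrability of a sufficiently negative polynomial power of the distance against $\vol_g$. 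I would handle this by discretization: select a covering of $T^\star\mfd$ by $g$-balls $B_r^g(\rho_\alpha)$ with bounded overlap (guaranteed by slow variation of $g$), so that $I_K(\rho)$ is comparable to a discrete sum $\sum_\alpha \vol_g(B_r^g(\rho_\alpha)) \jap{\dist_{g_{\rho_\alpha}^\sympf}(\rho, B_r^g(\rho_\alpha))}^{-K+N}$, and then use admissibility of $g$ to count polynomially the number of balls in dyadic annuli around $\rho$. Once $K$ exceeds $N$ plus $2\dim\mfd$ plus the temperance exponent of $g$, the discrete sum converges uniformly in $\rho$.

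The main obstacle is precisely this uniform integrability: it is the only place in the proof where one must exploit the global geometry of the admissible metric rather than purely pointwise estimates. Once it is in place, all constants come out uniform in $r \le r_0$, since the radius $r$ only affects which seminorm $\abs{\psi_{\rho_0}}_{\Conf_r^g(\rho_0)}^{(k)}$ appears on the right-hand side and never enters the structure constants of $g$ or $m$, so Proposition~\ref{prop:improvedadmissibility} can be applied with the fixed radius $r_g$ throughout.
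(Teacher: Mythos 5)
Your proof is correct, and it follows the same route as the proof the paper relies on (the statement is quoted from Lerner, and your two directions — Leibniz plus temperance of $m$ for the analysis part, differentiation under the integral plus a uniform integrability estimate for the synthesis part — are exactly the structure of that argument; the analysis half also mirrors the paper's Lemma~\ref{lem:conftosymbolclass}). The only genuine divergence is that you re-derive the key uniform bound on $\sup_\rho \int \jap*{\dist_{g_{\rho_0}^\sympf}(\rho, B_r^g(\rho_0))}^{-K} \dd\vol_g(\rho_0)$ by a covering/counting argument, whereas the paper takes it off the shelf: it is precisely Item~(1) of Lemma~\ref{lem:integr} (Lerner's Lemma 2.2.24), usable here because $(g_\rho + g_{\rho_0})^\sympf \le g_{\rho_0}^\sympf$ and $B_r^g(\rho_0) \subset B_{r_g}^g(\rho_0)$ make the ball-to-ball distance there a lower bound for your point-to-ball distance. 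Your discretization sketch is plausible but compressed — the counting of balls in dyadic annuli must handle the $\rho_\alpha$-dependence of the metrics $g_{\rho_\alpha}^\sympf$ and the growth of the volume density via temperance, which is in effect reproving that lemma — so citing it would be both shorter and safer. One small point worth making explicit: the temperance inequalities you use in the two directions, $m(\rho) \lesssim m(\rho_0)\jap*{\cdot}^N$ and $m(\rho_0) \lesssim m(\rho)\jap*{\cdot}^N$, both follow from Proposition~\ref{prop:improvedadmissibility} because the distance appearing there, $\dist_{(g_{\rho_0}^\natural + g_\rho^\natural)^\sympf}\bigl(B_{r_g}^g(\rho_0), B_{r_g}^g(\rho)\bigr)$, is symmetric in $(\rho_0,\rho)$ and is dominated by $\dist_{g_{\rho_0}^\sympf}\bigl(\rho, B_r^g(\rho_0)\bigr)$; with that observation, and monotonicity of the integrand in $r$, the claimed uniformity in $r \le r_0$ is indeed automatic.
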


\begin{remark}
The precise estimates are not stated explicitly in Lerner's book~\cite{Lerner:10}, though they follow directly from the proof.
\end{remark}

We start with a technical lemma.

\begin{lemma} \label{lem:absconvintegral}
Let $g$ be an admissible metric and $m$ be a {$g$-admissible} weight, with a common slow variation radius $r_0 \in (0, 1]$, and $(\psi_{\rho_0})_{\rho_0 \in T^\star \mfd}$ be a {$g$-uniformly} confined family of symbols with radius $r \le r_0$. Define the symbol $a$ by
\begin{equation*}
a(\rho)
	:= \int_{T^\star \mfd} m(\rho_0) \psi_{\rho_0}(\rho) \dd \vol_g(\rho_0) ,
		\qquad \forall \rho \in T^\star \mfd .
\end{equation*}
Then for any $u, v \in \sch(\mfd)$ and any $t \in \R$, we have
\begin{align*}
	\inp*{v}{\e^{\ii t P} \Opw{a} \e^{- \ii t P} u}_{L^2}
		&= \inp*{v}{\Opw{\e^{t \cal{H}_p} a} u}_{L^2} \\
		&= \int_{T^\star \mfd} m(\rho_0) \inp*{v}{\Opw{\e^{t \cal{H}_p} \psi_{\rho_0}} u}_{L^2} \dd \vol_g(\rho_0) ,
\end{align*}
where the integral in the right-hand side is absolutely convergent.
\end{lemma}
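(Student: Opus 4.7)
The first equality is immediate from Definition~\ref{def:defsymbconjugatedop}: since $a \in S(m,g)$ by Proposition~\ref{prop:reconstructsymbol}, the operator $\Opw{a}$ maps $\sch(\mfd)$ continuously into itself by Proposition~\ref{prop:continuityinschpseudodiff}, and hence $\e^{\ii t P} \Opw{a} \e^{-\ii t P}$ makes sense as a continuous operator $\sch(\mfd) \to \sch'(\mfd)$, whose Weyl symbol is by construction $\e^{t\cal{H}_p} a$.

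For the second equality, my plan is to reduce everything to a Schwartz duality pairing. By Corollary~\ref{cor:continuitypropagatorSchwartz}, the functions $u_t := \e^{-\ii t P} u$ and $v_t := \e^{-\ii t P} v$ lie in $\sch(\mfd)$. Using unitarity of $\e^{-\ii t P}$ together with the definition~\eqref{eq:defWeylquantization} of the Weyl quantization, I would write
\[
\inp{v}{\e^{\ii t P}\Opw{a}\e^{-\ii t P}u}_{L^2} = \inp{v_t}{\Opw{a}u_t}_{L^2} = \brak{a}{u_t \ovee \bar v_t}_{\sch', \sch(T^\star \mfd)},
\]
and similarly, applying Definition~\ref{def:defsymbconjugatedop} to each $\psi_{\rho_0} \in \sch(T^\star\mfd)$:
\[
\inp{v}{\Opw{\e^{t\cal{H}_p}\psi_{\rho_0}}u}_{L^2} = \brak{\psi_{\rho_0}}{u_t \ovee \bar v_t}_{\sch', \sch(T^\star \mfd)}.
\]
Substituting the integral representation~\eqref{eq:reconstructa} of $a$ into the first display and then swapping integration over $\rho_0$ with the distributional pairing via Fubini would yield the claimed identity, provided the integrand $\rho_0 \mapsto m(\rho_0) \brak{\psi_{\rho_0}}{u_t \ovee \bar v_t}$ is absolutely integrable with respect to $\vol_g$.

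The main technical step is thus the absolute convergence estimate. The Wigner transform $u_t \ovee \bar v_t$ belongs to $\sch(T^\star\mfd)$, so it decays faster than any polynomial with respect to any fixed Euclidean metric ${\sf g}$ on phase space. The $g$-uniform confinement of $(\psi_{\rho_0})$ around $B_r^g(\rho_0)$ provides arbitrarily rapid decay bounds of the form $|\psi_{\rho_0}(\rho)| \le C_N \jap{\dist_{g_{\rho_0}^\sympf}(\rho, B_r^g(\rho_0))}^{-N}$. Temperance of $m$ and of the comparison weight $\theta_g$ (Propositions~\ref{prop:temperanceweight} and~\ref{prop:thetag}) yields polynomial bounds for $m(\rho_0)$ and $\theta_g(\rho_0)$ in terms of $\jap{\rho_0}_{\sf g}$; combined with $g_{\rho_0}^\sympf \le \theta_g(\rho_0)^2 {\sf g}$, this allows bounding $m(\rho_0) |\brak{\psi_{\rho_0}}{u_t \ovee \bar v_t}|$ by a function of $\rho_0$ with arbitrarily fast polynomial decay that is integrable against $\vol_g$. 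Fubini's theorem then applies.

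The main obstacle will be to carefully combine the anisotropic confinement bounds, expressed with respect to $\rho_0$-dependent metrics, with the isotropic Schwartz decay of $u_t \ovee \bar v_t$ expressed with a fixed Euclidean metric. The temperance weight $\theta_g$ plays the role of a bridge between these two viewpoints, and one must select $N$ in the confinement estimate large enough to overcome both the polynomial growth of $m$ and the polynomial distortion of distances introduced by $\theta_g$.
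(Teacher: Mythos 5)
Your proposal is correct in outline and its skeleton coincides with the paper's: reduce to $t=0$ using Corollary~\ref{cor:continuitypropagatorSchwartz}, rewrite both pairings through the Wigner transform via~\eqref{eq:defWeylquantization}, and then justify exchanging the $\rho_0$-integral with the $\rho$-integral by Fubini after an absolute-convergence estimate. Where you genuinely diverge is in how that estimate is produced. The paper stays inside the Weyl--H\"ormander machinery: it regards $u \ovee v$ as a symbol confined near a fixed base point $\rho_\ast$ (using that $\sch(T^\star\mfd)=\Conf_r^g(\rho_\ast)$), applies the bi-confinement estimate of Proposition~\ref{prop:bi-confinement} with $s=0$ to the product $\psi_{\rho_0}\,(u\ovee v)$, controls $m(\rho_0)$ and the volume density $\abs{g}^{1/2}$ by the improved admissibility of Proposition~\ref{prop:improvedadmissibility}, and concludes with the integrability statement of Lemma~\ref{lem:integr}; this yields bounds depending only on structure constants and reuses lemmata already in place. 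You instead work in a fixed Euclidean frame ${\sf g}$, exploiting Schwartz decay of $u_t\ovee\bar v_t$ and using $\theta_g$ as a bridge between the $\rho_0$-dependent confinement distance and Euclidean distance; this is more elementary and also works, since $\jap{\rho-\rho_0}_{\sf g}\lesssim \theta_g(\rho_0)\jap{\dist_{g_{\rho_0}^\sympf}(\rho,B_r^g(\rho_0))}$ lets you trade arbitrarily many confinement powers for Euclidean decay in $\rho_0$.

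Two small points to tighten. First, Fubini requires finiteness of the double integral of $m(\rho_0)\abs{\psi_{\rho_0}(\rho)}\abs{(u_t\ovee\bar v_t)(\rho)}$, not merely integrability of $\rho_0\mapsto m(\rho_0)\brak{\psi_{\rho_0}}{u_t\ovee\bar v_t}$; your final paragraph effectively bounds the inner $\rho$-integral of absolute values, so state it that way. Second, the polynomial bounds on $m(\rho_0)$, $\theta_g(\rho_0)$ (and on $\abs{g_{\rho_0}}^{1/2}$, which you need since $\dd\vol_g=\abs{g}^{1/2}\dd\rho_0$ and which your sketch omits) do not follow directly from the temperance of Definition~\ref{def:admissibleweight}, because there the dual metric is evaluated at the moving point and combining it with $g_{\rho_0}^\sympf\le\theta_g(\rho_0)^2{\sf g}$ is circular when the temperance exponent is $\ge 1$; invoke instead the improved admissibility of Proposition~\ref{prop:improvedadmissibility}, which bounds these weights by powers of $\jap{\rho_0-\rho_\ast}_{g^\natural_{\rho_\ast}}$ for the fixed metric $g^\natural_{\rho_\ast}$, comparable to ${\sf g}$.
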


\begin{proof}
We first reduce to the case where $t = 0$ since the Schr\"{o}dinger propagator preserves the Schwartz class (Corollary~\ref{cor:continuitypropagatorSchwartz}).
According to~\eqref{eq:defWeylquantization} (see also Proposition~\ref{prop:quantizationisometry}), we have
\begin{equation*}
\inp*{v}{\Opw{\varphi_{\rho_0}} u}_{L^2(\mfd)}
	= \inp*{\varphi_{\rho_0}}{u \ovee v}_{L^2(T^\star \mfd)} ,
\end{equation*}
so the result is just a matter of justifying the absolute convergence of the double integral
\begin{equation*}
\int_{T^\star \mfd} \int_{T^\star \mfd} m(\rho_0) \ovl{\psi_{\rho_0}(\rho)} (u \ovee v)(\rho) \dd \rho \dd \vol_g(\rho_0)
\end{equation*}
and applying Fubini's theorem. Write $\dd \vol_g = \abs{g}^{1/2} \dd \rho$, where $\abs{g}$ corresponds to the determinant of $g$ with respect to a fixed Euclidean structure on $\mfd$. Also set $\varphi := \abs{g_\rho}^{1/2} u \ovee v$, in such a way that Since $g$ is admissible, we infer that $\abs{g}$ is a {$g$-admissible} weight, just like $m$. In particular, if we fix a point $\rho_\ast \in T^\star \mfd$ as an origin, the improved admissibility property (Proposition~\ref{prop:improvedadmissibility}) implies
\begin{align}
\abs{g_{\rho_\ast}}^{1/2}
		&\le C \jap*{\dist_{g_{\rho_\ast}^\sympf}\left( \rho, B_r^g(\rho_\ast) \right)}^{N} \abs{g_{\rho}}^{1/2} , \label{eq:1stgmod}\\
m(\rho_0)
		&\le C \jap*{\dist_{(g_{\rho_0} + g_{\rho_\ast})^\sympf}\left( B_{r}^g(\rho_0), B_{r}^g(\rho_\ast) \right)}^{N} m(\rho_\ast) \label{eq:2ndmmod}.
\end{align}
In addition, we apply the bi-confinement estimate of Proposition~\ref{prop:bi-confinement} (with $s = 0$) to obtain
\begin{align*}
\abs*{\psi_{\rho_0}(\rho) (u \ovee v)(\rho)}
	&\le C_k \dfrac{\abs*{\psi_{\rho_0}}_{\Conf_{r}^{g}(\rho_0)}^{(k')} \abs*{u \ovee v}_{\Conf_{r}^{g}(\rho_\ast)}^{(k')}}{\jap*{\dist_{(g_{\rho_0} + g_{\rho_\ast})^\sympf}(\rho, B_r^g(\rho_0)) + \dist_{(g_{\rho_0} + g_{\rho_\ast})^\sympf}(\rho, B_r^g(\rho_\ast))}^k} \\
	&\le C_k' \dfrac{\abs*{\psi_{\rho_0}}_{\Conf_{r}^{g}(\rho_0)}^{(k')} \abs*{u \ovee v}_{\Conf_{r}^{g}(\rho_\ast)}^{(k')}}{\jap*{\dist_{g_{\rho_\ast}^\sympf}(\rho, B_r^g(\rho_\ast))}^{n_0} \jap*{\dist_{(g_{\rho_0} + g_{\rho_\ast})^\sympf}(B_r^g(\rho_0), B_r^g(\rho_\ast))}^{n_0}} ,
\end{align*}
where we used the triangle inequality and Lemma~\ref{lem:integr} Item~\ref{it:minorationdowns} to bound the denominator from below. The integer $n_0$ can be arbitrarily large, provided $k$ is large enough. Putting this together with~\eqref{eq:1stgmod} and~\eqref{eq:2ndmmod}, we obtain
\begin{multline*}
\int_{T^\star \mfd} \int_{T^\star \mfd} m(\rho_0) \abs*{\ovl{\psi_{\rho_0}(\rho)} (u \ovee v)(\rho)} \dd \rho \dd \vol_g(\rho_0) \\
	\le C^2 C_k' \dfrac{m(\rho_\ast)}{\abs{g_{\rho_\ast}}^{1/2}} \int_{T^\star \mfd} \int_{T^\star \mfd} \dfrac{\abs*{\psi_{\rho_0}}_{\Conf_{r}^{g}(\rho_0)}^{(k')} \abs*{u \ovee v}_{\Conf_{r}^{g}(\rho_\ast)}^{(k')} \dd \vol_g(\rho) \dd \vol_g(\rho_0)}{\jap*{\dist_{g_{\rho_\ast}^\sympf}(\rho, B_r^g(\rho_\ast))}^{n_0'} \jap*{\dist_{(g_{\rho_0} + g_{\rho_\ast})^\sympf}(B_r^g(\rho_0), B_r^g(\rho_\ast))}^{n_0'}} ,
\end{multline*}
with an integer $n_0'$ as large as we want. The resulting integral is therefore convergent by Lemma~\ref{lem:integr} Item~\ref{it:integrabilityg} (recall that $g_{\rho_\ast}^\sympf \ge (g_{\rho_\ast} + g_{\rho_\ast})^\sympf$ since the {$\sympf$-duality} is non-increasing~\eqref{eq:sympfduality}). This concludes the proof.
\end{proof}

With this lemma at hand, we can move to the proof of Theorem~\ref{thm:main}.

\begin{proof}[Proof of Theorem~\ref{thm:main}]
\medskip
\emph{Step~0: Reduction to $T_0 = 0$.}

Suppose Theorem~\ref{thm:main} is true for $T_0 = 0$. Let $T' > 0$ be a fixed positive time and let us introduce the metric $\tilde g := \e^{-2T'} g$. As explained in Remark~\ref{rmk:positiveTE}, this new metric~$\tilde g$ is also admissible, with the same structure constants as~$g$ (except the slow variation radius, which reads $r_{\tilde g} = \e^{-T'} r_g$). Moreover, from the definition of the gain function (Definition~\ref{def:gaing}), one can check that $\gain_{\tilde g} = \e^{-2T'} \gain_g$. Assumption~\ref{assum:p} is verified for~$\tilde g$ with the same values of $\Lambda$ and $\Upsilon$, since both quantities are {$0$-homogeneous} with respect to $g$. In particular, we have
\begin{equation*}
T_E(\tilde g)
	= T_E(g) + 2 T_0 , \qquad
		T_0 := \dfrac{T'/2}{\Lambda + 2 \Upsilon} .
\end{equation*}
Scaling the metric results in a multiplication of seminorms of $p$ by powers of $\e^{T'}$:
\begin{equation*}
\abs*{\nabla^\ell p}_{\tilde g} = \e^{\ell T'} \abs*{\nabla^\ell p}_g ,
	\qquad \forall \ell \in \N ,
\end{equation*}
so that the identity maps
\begin{equation} \label{eq:inclusionstilde}
S\left(m, g(t)\right) \longrightarrow S\left(m, \tilde g(t)\right)
	\qquad {\rm and} \qquad
S\left(m, \tilde g(t)\right) \longrightarrow S\left(m, g(t)\right)
\end{equation}
are continuous, and the constants involved are independent of~$m$ and~$t$. In addition, any {$g$-admissible} weight $m$ is also {$\tilde g$-admissible}, so we know from Proposition~\ref{prop:uniformm(t)} that $m(t) = \e^{t H_p} m$ is uniformly {$\tilde g(t)$-admissible} for $\abs{t} \le \frac{1}{2} T_E(\tilde g)$.
Thus, Theorem~\ref{thm:main} with the metric $\tilde g$ in time~$T_E(\tilde g)$ gives the estimates~\eqref{eq:continuityquantumevolsymb}, \eqref{eq:estimateethpclassical}, \eqref{eq:estimatecalEj0} and~\eqref{eq:estimateremainder} in symbol classes attached to~$\tilde g$. These estimates are also valid in the corresponding symbol classes attached to~$g$ in virtue of~\eqref{eq:inclusionstilde}.

From now on we assume that $T_0 = 0$. We proceed in two steps.

\medskip
\emph{Step 1 \--- Crude estimate for $\e^{t \cal{H}_p}$.}
Let $s, \tau \in \R$ be such that $\abs{s} + \abs{\tau} \le T \le \frac{1}{2} T_E$. Notice that we can apply Theorem~\ref{thm:partition} to $g(s)$ in place of $g$ and $\tau$ in place of $t$, since the metrics $g(t)$ are uniformly admissible for $\abs{t} \le T_E$ by Proposition~\ref{prop:improvedadmissibility}, and Assumption~\ref{assum:p} on $p$ is verified for $g(t)$ as well (see Remark~\ref{rmk:homogeneity}). Let $\tilde m$ be a {$g(s)$-admissible} weight (for instance $\tilde m = m(s) = \e^{s H_p} m$), so that $\tilde m(\tau) = \e^{\tau H_p} \tilde m$ is uniformly {$g(\abs{s} + \abs{\tau})$-admissible} for $\abs{\tau} + \abs{s} \le T$ (apply Proposition~\ref{prop:uniformm(t)} to $\tilde m, g(s)$). Let $b \in S(\tilde m, g(s))$ and let us first prove that $\e^{\tau \cal{H}_p} b \in S(\tilde m(\tau), g(t))$, with $t = \abs{s} + \abs{\tau} \le T$, with seminorm estimates controlled by some negative power of $\udl{\gain}_g$.

We consider
\begin{equation*}
r_0
	:= R \e^{-(2 (\Lambda + \Upsilon) + C_g^3 C_p \udl{\gain}_g) T_E} ,
\end{equation*}
so that $r(\bigcdot)$ defined in~\eqref{eq:defr(t)} satisfies $r(T) \le R$, where $R \le r_g \le 1$ is a common slow variation radius of $g(s)$ and $\tilde m$. Notice that, in view of the definition of $T_E$ in~\eqref{eq:defTE}, the dependence of $r_0$ on $\udl{\gain}_g$ is of the form $r_0 = O(\udl{\gain}_g^{\beta})$ for some $\beta > 0$. Now, we fix a partition of unity $(\psi_{\rho_0})_{\rho_0 \in T^\star \mfd}$ adapted to $g(s)$ with radius $r_0$, given by Proposition~\ref{prop:existencepartitionofunity}. Recall that we have
\begin{equation} \label{eq:losspowerofh}
\sup_{\rho_0 \in T^\star \mfd} \abs*{\psi_{\rho_0}}_{\Conf_{r_0}^{g(s)}(\rho_0)}^{(\ell)}
	= O(r_0^{- (\ell+1) (1 + 2 \dim \mfd)})
	= O(\udl{\gain}_g^{-(\ell + 1) \beta}) .
\end{equation}

Since $(\psi_{\rho_0})_{\rho_0 \in T^\star \mfd}$ is a {$g(s)$-partition} of unity, we can write
\begin{equation*}
b
	= \int_{\rho_0 \in T^\star \mfd} \tilde m(\rho_0) \varphi_{\rho_0} \dd \vol_{g(s)}(\rho_0) ,
\end{equation*}
where $\varphi_{\rho_0} = \tilde m^{-1}(\rho_0) \psi_{\rho_0} b$ is a {$g(s)$-uniformly} confined family of symbols with radius $r_0$ by Proposition~\ref{prop:reconstructsymbol}, with estimates:
\begin{equation} \label{eq:varphirho0lossh}
\forall \ell \in \N, \exists C_\ell > 0, \exists k \in \N : \qquad
	\sup_{\rho_0 \in T^\star \mfd} \abs*{\varphi_{\rho_0}}_{\Conf_{r_0}^{g(s)}(\rho_0)}^{(\ell)}
		\le C_\ell \abs*{b}_{S(\tilde m, g)}^{(k)} \udl{\gain}_g^{- \beta (k+1)}
\end{equation}
in view of~\eqref{eq:losspowerofh}.
We apply the quantum evolution to the symbol~$b$:
\begin{equation} \label{eq:commuteintquantumevol}
\e^{\tau \cal{H}_p} b
	= \int_{T^\star \mfd} \tilde m(\rho_0) \e^{\tau \cal{H}_p} \varphi_{\rho_0} \dd \vol_{g(s)}(\rho_0)
	= \int_{T^\star \mfd} \tilde m(\rho_0) \varphi_{\phi^{-\tau}(\rho_0)}^\tau \dd \vol_{g(s)}(\rho_0) ,
\end{equation}
where $\varphi_{\rho_0}^\tau$ is defined by $\varphi_{\rho_0}^\tau := \e^{t \cal{H}_p} \varphi_{\phi^\tau(\rho_0)}$. The fact that $\e^{\tau \cal{H}_p}$ and the integral over $\rho_0$ commute follows from the fact that, given two Schwartz functions $u$ and $v$, the double integral
\begin{equation*}
\int_{T^\star \mfd} \int_{T^\star \mfd} \tilde m(\rho_0) \ovl{\e^{\tau \cal{H}_p} \varphi_{\rho_0}(\rho)} (u \ovee v)(\rho) \dd \rho \dd \vol_{g(s)}(\rho_0)
\end{equation*}
is absolutely convergent by Lemma~\ref{lem:absconvintegral} (recall the definition of the Wigner transform $u \ovee v$ in~\eqref{eq:defWigner}), so that Fubini's theorem gives indeed
\begin{equation*}
\inp*{b}{\e^{- \tau \cal{H}_p} u \ovee v}_{L^2(T^\star \mfd)}
	= \int_{T^\star \mfd} \tilde m(\rho_0) \inp*{\varphi_{\rho_0}}{\e^{-\tau \cal{H}_p} (u \ovee v)}_{L^2(T^\star \mfd)} \dd \vol_{g(s)}(\rho_0) ,
\end{equation*}
which is exactly~\eqref{eq:commuteintquantumevol} by Proposition~\ref{prop:isometrycalHp} and Proposition~\ref{prop:quantizationisometry}.
Now we make a change of variables in~\eqref{eq:commuteintquantumevol} to obtain
\begin{equation*}
\e^{\tau \cal{H}_p} b
	= \int_{T^\star \mfd} (\e^{\tau H_p} \tilde m)(\rho_0) \varphi_{\rho_0}^\tau \dd \vol_{(\phi^\tau)^\pullb g(s)}(\rho_0) ,
\end{equation*}
since $(\phi^{-\tau})_\pushf \vol_{g(s)} = \vol_{(\phi^\tau)^\pullb g(s)}$ (see for instance~\cite[Exercise 2.42]{Lee:18}). Now in view of Proposition~\ref{prop:aprioriexpgrowth}~\eqref{eq:diffflowexp}, we have $(\phi^\tau)^\pullb g(s) \le g(t)$ (recall $t = \abs{s} + \abs{\tau}$) so that the Radon--Nikodym theorem gives a measurable function $f_\tau : T^\star \mfd \to [0, 1]$ such that $\vol_{(\phi^\tau)^\pullb g(s)} = f_\tau \vol_{g(t)}$.
By Theorem~\ref{thm:partition}, the family of symbols $(f_\tau(\rho_0) \varphi_{\rho_0}^\tau)_{\rho_0 \in T^\star \mfd}$ is a {$g(t)$-uniformly} confined family of symbols with radius $r(\tau) \le r(t)$, with
\begin{align*}
\forall \ell \in \N, \exists C_\ell > 0, \exists k \in \N : \qquad
	\sup_{\rho_0 \in T^\star \mfd} \abs*{f_\tau(\rho_0) \varphi_{\rho_0}^\tau}_{\Conf_{r(t)}^{g(t)}(\rho_0)}^{(\ell)}
		\le C_\ell \sup_{\rho_0 \in T^\star \mfd} \abs*{\varphi_{\rho_0}}_{\Conf_{r_0}^{g(s)}(\rho_0)}^{(k)} .
\end{align*}
(We applied Theorem~\ref{thm:partition} in time $\tau$, which satisfies $\abs{\tau} \le T - \abs{s} \le \frac{1}{2} (T_E(g) - \abs{s}) = \frac{1}{2} T_E(g(s))$.)
Combining this with~\eqref{eq:varphirho0lossh}, Proposition~\ref{prop:reconstructsymbol} yields
\begin{equation} \label{eq:estquantumdynwithlossh}
\forall \ell \in \N, \exists C_\ell > 0, \exists k \in \N : \qquad
	\abs*{\e^{\tau \cal{H}_p} b}_{S(\tilde m(\tau), g(t))}^{(\ell)}
		\le C_\ell \udl{\gain}_g^{-\beta (k+1)} \abs*{b}_{S(\tilde m, g(s))}^{(k)} .
\end{equation}
The property of uniform admissibility of $\tilde m$ is important here: it ensures that the constants in the estimates given by Proposition~\ref{prop:reconstructsymbol} are uniform in time.

\medskip
\emph{Step 2 \--- Asymptotic expansion.}
Now we prove the continuity estimates on symbol classes for $\cal{E}_{j_0}(t)$ and for the remainder in the asymptotic expansion~\eqref{eq:asymptoticexpansion}. We will beat the loss of powers of $\udl{\gain}_g$ in~\eqref{eq:estquantumdynwithlossh} by considering the Dyson expansion at a sufficiently high order $j_0$. Notice that for $\cal{E}_0(t)$, the estimate is provided by Corollary~\ref{cor:continuityofflowinsymbolclasses}. For $\cal{E}_{j_0}(t)$ with $j_0 \ge 1$, Proposition~\ref{prop:continuityEcal} gives
\begin{equation*}
\nabla^{-1} S(m, g)
	\xrightarrow{\cal{E}_{j_0}(t)} S\left( \gain_{g(t)}^{2 j_0} m(t), g(t) \right) ,
\end{equation*}
which is the desired estimate~\eqref{eq:estimatecalEj0}.
For the remainder~\eqref{eq:estimateremainder} in the asymptotic expansion~\eqref{eq:asymptoticexpansion}, we use the above estimate for $\cal{E}_{j_0}(t)$ and recall the recurrence relation~\eqref{eq:recurrencerelation}: Lemma~\ref{lem:continuityHp3} yields
\begin{equation*}
\nabla^{-1} S(m, g)
	\xrightarrow{\cal{E}_{j_0}(s)} S\left( \gain_{g(s)}^{2 j_0} m(s), g(s) \right)
	\xrightarrow{\cal{H}_p^{(3)}} S\left( \gain_{g(s)}^{2 (j_0+1)} m(s) \e^{-(\Lambda + 2 \Upsilon) \abs{s}} \udl{\gain}_g, g(s) \right) .
\end{equation*}
Now we apply~\eqref{eq:estquantumdynwithlossh} with $b = \cal{H}_p^{(3)} \cal{E}_{j_0}(s) a$ for some $a \in S(m, g)$, $M = \gain_{g(s)}^{2(j_0+1)} m(s)$ and $\tau = t - s$. Using Lemma~\ref{lem:gain(t)}, we obtain
\begin{multline*}
\forall \ell \in \N, \exists C_\ell > 0, \exists k \in \N : \qquad \\
	\abs*{\e^{(t - s) \cal{H}_p} \cal{H}_p^{(3)} \cal{E}_{j_0}(s) a}_{S(\gain_{g(t)}^{2 (j_0+1)} m(t), g(t))}^{(\ell)}
		\le C_\ell \udl{\gain}_g^{-\beta (k+1)} \udl{\gain}_g \e^{- (\Lambda + 2 \Upsilon) \abs{s}} \abs*{a}_{S(m, g)}^{(k)} .
\end{multline*}
Now we sacrifice half of the gain $\gain_{g(t)}^{2(j_0+1)}$, using that for $\abs{t} \le \frac{1}{2} T_E$, we have $\gain_{g(t)} \le \udl{\gain}_g^{1/2}$. This yields:
\begin{equation*}
\abs*{\e^{(t - s) \cal{H}_p} \cal{H}_p^{(3)} \cal{E}_{j_0}(s) a}_{S(\gain_{g(t)}^{j_0+1} m(t), g(t))}^{(\ell)}
	\le C_\ell \udl{\gain}_g^{-\beta (k+1) + \frac{1}{2} (j_0+1)} \udl{\gain}_g \e^{- (\Lambda + 2 \Upsilon) \abs{s}} \abs*{a}_{S(m, g)}^{(k)} .
\end{equation*}
So for fixed $\ell$, choosing $j_0$ large enough gives
\begin{equation*}
\abs*{\int_{t \Delta_1} \e^{(t - s) \cal{H}_p} \cal{H}_p^{(3)} \cal{E}_{j_0}(s) a \dd s}_{S(\gain_{g(t)}^{j_0+1} m(t), g(t))}^{(\ell)}
	\le C_\ell' \int_{t \Delta_1} \udl{\gain}_g \e^{- (\Lambda + 2 \Upsilon) \abs{s}} \abs*{a}_{S(m, g)}^{(k)} \dd s
	\le \dfrac{C_\ell'}{c} \abs*{a}_{S(m, g)}^{(k)}
\end{equation*}
(recall~\eqref{eq:assumLambda} in the last step). This finishes the proof of the asymptotic expansion. The estimate~\eqref{eq:estimateremainder} for the remainder $\widehat{\cal{E}}_{j_0+1}(t)$, for any $j_0$ and with the correct power $h_{g(t)}^{2 (j_0+1)}$, follows by using the asymptotic expansion at a sufficiently high order $j_1 \ge j_0$.
With the estimates~\eqref{eq:estimatecalEj0} for $\cal{E}_j(t)$ at hand, we also deduce that
\begin{equation*}
\forall \ell \in \N, C_\ell > 0, \exists k \in \N : \qquad
	\abs*{\e^{t \cal{H}_p} a}_{S(m(t), g(t))}^{(\ell)}
		\le C_\ell \abs*{a}_{S(m, g)}^{(k)} .
\end{equation*}
This concludes the proof. 
\end{proof}

\Large
\section{Examples of application: proofs of statements in Section~\ref{subsec:examples}} \label{sec:proofsex}
\normalsize

The splitting~\eqref{eq:splitting} can be understood as follows: introducing
\begin{gather*}
\begin{array}{rcl}
\pi : T^\star \mfd &\longrightarrow& \mfd \\
(x, \xi) &\longmapsto& x
\end{array}
	\qquad \rm{and} \qquad
\begin{array}{rcl}
\iota : \mfd &\longrightarrow& T^\star \mfd \\
x &\longmapsto& (x, 0)
\end{array}
\end{gather*}
the cotangent bundle projection and the null section, we know that $\pi \circ \iota = \id_\mfd$, and
\begin{equation*}
\pi_0 := \iota \circ \pi : T^\star \mfd \longrightarrow T^\star \mfd
\end{equation*}
is the projection onto the null section of $T^\star \mfd$. The kernel and range of the projection have same dimension and we have
\begin{equation*}
T(T^\star \mfd)
	= \ran \dd \pi_0 \oplus \ker \dd \pi_0 .
\end{equation*}
The first component can be naturally identified with $T \mfd$ through the map $\dd \iota$. As for the second component, we identify the fibers $T_x^\star \mfd$ with their tangent space, which turns out to project onto the zero vector under $\dd \pi_0$.

\subsection{Schrödinger operator: proofs of statements in Section~\ref{subsubsec:Schhbar}} \label{subsec:proofsSchhbar}

The goal of this section is to prove Proposition~\ref{prop:Schhbar} concerning semiclassical Schrödinger operators of the form~\eqref{eq:Schop}.

\begin{proof}[Proof of Proposition~\ref{prop:Schhbar}]
Let us compute the Weyl symbol of the operator $P$ in~\eqref{eq:Schop}. The symbol of $- \frac{\hslash^2}{2} \Delta + V$ is $\tfrac{\hslash^2}{2} \abs{\xi}_{\upgamma^{-1}} + V(x)$. As for the rest of the operator, we can take Euclidean coordinates in which $\beta \cdot \partial = \sum_{j=1}^d \beta^j(x) \partial_j$ with $d = \dim \mfd$, and find
\begin{align*}
\tfrac{1}{\ii} \beta \cdot \partial
	&= \sum_{j=1}^d \Opw{\beta^j} \Opw{\xi_j}
	= \sum_{j=1}^d \Opw{\beta^j \xi_j + \tfrac{1}{2 \ii} \poiss*{\beta_j}{\xi_j}}
	= \Opw{\xi. \beta - \tfrac{1}{2 \ii} \sum_{j=1}^d \partial_j \beta^j} \\
	&= \Opw{\xi. \beta - \tfrac{1}{2 \ii} \dvg \beta} .
\end{align*}
This results from the pseudo-differential calculus (Proposition~\ref{prop:pseudocalcsymb}), which is exact at order $1$ (see Remark~\ref{rmk:polynomialpseudocalc}). We deduce that
\begin{equation*}
P = \Opw{p}
	\qquad \rm{with} \qquad
p(x, \xi) = \tfrac{\hslash^2}{2} \abs*{\xi}_{\upgamma^{-1}}^2 - \hslash \xi. \beta + \tfrac{1}{2} \abs{\beta}_{\upgamma}^2 + V(x) .
\end{equation*}
This symbol is semi-bounded, so that Proposition~\ref{prop:classicalwellposedness} applies and ensures that Assumption~\ref{assum:mandatory} is satisfied.

Next we compute the Hessian of $p$:
\begin{equation} \label{eq:hessp}
\nabla^2 p =
	\begin{pmatrix}
	{\upgamma}(\nabla \beta, \nabla \beta) + \nabla^2 V  & - \hslash (\nabla \beta)^\star \\ - \hslash \nabla \beta & \hslash^2 {\upgamma}^{-1}
	\end{pmatrix} .
\end{equation}
In this expression, $\nabla \beta$ is seen as a section of the bundle $\Lop(T \mfd) \to \mfd$ which maps vector fields on~$\mfd$ to vector fields on~$\mfd$. The map $(\nabla \beta)^\star$ refers to the dual map acting on covectors, or differential {$1$-forms}.
We crucially used the fact that $\nabla^2 \beta = 0$ here (this ensures terms of the form $\xi. \nabla^2 \beta$ disappear in the second derivative with respect to $x$).
We observe that under the assumption~\eqref{eq:assumV2} and the fact that $\beta$ is affine (in particular $\abs{\nabla \beta}$ is bounded), we deduce that
\begin{equation*}
\sup_{\hslash \in (0, 1]} \sup_{T^\star \mfd} \abs*{\nabla^2 p}_{g_\hslash} < \infty .
\end{equation*}
Since the metric $\upgamma$ is flat, we clearly have $\nabla_{H_p} g = 0$. It also follows that $g$ is slowly varying and temperate, and its gain function is classically $\gain_{g_\hslash} = \hslash \le 1$, hence admissibility of $g_\hslash$, uniformly with respect to $\hslash$. Regarding the temperance weight $\theta_{g_\hslash}$, we should compute it using a well-chosen\footnote{Dependence of the background Euclidean metric on $\hslash$ is not an issue since it does not affect structure constants of $\theta_{g_\hslash}$ as a {$g_\hslash$-admissible} weight.} background Euclidean metric $\sf{g} = \sf{g}_\hslash$ (for instance the symplectic metric $\hslash^{-1} \dd x^2 + \hslash \dd \xi^2$) in order to find $\theta_{g_\hslash} = \udl{\gain}_{g_\hslash}^{-1/2} = \hslash^{-1/2}$.

Then we estimate higher order derivatives of $p$.
We observe that all mixed derivatives in $x$ and $\xi$ of order $\ge 3$ vanish in view of~\eqref{eq:hessp}. We deduce that we have
\begin{equation*}
\abs*{\nabla^\ell p}_g
	= \abs*{\nabla^\ell V}_\upgamma ,
		\qquad \forall \ell \ge 3 ,
\end{equation*}
that is to say
\begin{equation*}
\nabla^3 p \in S\left(1, g\right) .
\end{equation*}
As a consequence, both sub-quadraticity and strong sub-quadraticity assumptions (see Assumptions~\ref{assum:p}) are verified.
\end{proof}

\subsection{Half-wave operator: proofs of statements in Section~\ref{subsubsec:halfwaves}} \label{subsec:proofshalfwaves}

Here we place ourselves in the setting of Section~\ref{subsubsec:halfwaves} and prove in particular Proposition~\ref{prop:halfwaves}.

\begin{remark} \label{rem:comparisonwithsfg}
The first assumption in~\eqref{eq:assumgamma} is equivalent to saying that there exists a constant $c > 0$ such that
\begin{equation} \label{eq:comparegammasfg}
c^{-2} I \le \gamma \le c^2 I ,
\end{equation}
where $I$ is the standard Euclidean metric on~$\R^d$.
Therefore this is a boundedness/ellipticity assumption on the metric $\gamma$. This can also be seen as a slow variation property with infinite radius. The second requirement in~\eqref{eq:assumgamma} can then be understood as saying that the metric is $\cont_b^\infty$.
\end{remark}

In the following lemma, we first notice that if $\gamma$ satisfies~\eqref{eq:assumgamma}, then so does $\gamma^{-1}$ (which is defined over $\mfd$ similarly to $g^{-1}$ over $T^\star \mfd$ in~\eqref{eq:defg-1}).

\begin{lemma} \label{lem:assumgamma-1}
Under assumption~\eqref{eq:assumgamma}, we have
\begin{equation} \label{eq:assumgamma-1}
\forall x_1, x_2 \in \mfd , \quad
	\gamma_{x_1}^{-1} \le C_\gamma^2 \gamma_{x_2}^{-1}
		\qquad \rm{and} \qquad
\forall k \in \N , \quad
	\sup_\mfd \left(\abs*{\nabla^k \gamma}_{I} + \abs*{\nabla^k \gamma^{-1}}_{I}\right) < \infty .
\end{equation}
\end{lemma}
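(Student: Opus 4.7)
The plan is to deduce the three pieces of~\eqref{eq:assumgamma-1} in order, leveraging the uniform equivalence between $\gamma$ and the background Euclidean metric $I$ noted in Remark~\ref{rem:comparisonwithsfg}. First I would verify~\eqref{eq:comparegammasfg}: fixing an arbitrary base point $x_0 \in \mfd$, the matrix $\gamma_{x_0}$ is positive definite, hence $c_0^{-2} I \le \gamma_{x_0} \le c_0^2 I$ for some $c_0 > 0$; the first hypothesis in~\eqref{eq:assumgamma} then propagates this comparison to every point, giving $c^{-2} I \le \gamma_x \le c^2 I$ with $c = C_\gamma c_0$. In particular, the norms $\abs{\bigcdot}_\gamma$ and $\abs{\bigcdot}_I$ on tensors over $\mfd$ are uniformly equivalent.

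Next I would establish the inequality $\gamma_{x_1}^{-1} \le C_\gamma^2 \gamma_{x_2}^{-1}$. Starting from the symmetric inequality $\gamma_{x_2} \le C_\gamma^2 \gamma_{x_1}$ (apply the assumption with the roles of $x_1$ and $x_2$ swapped) and using that matrix inversion reverses the order on positive definite symmetric matrices, one immediately obtains the desired estimate.

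For the boundedness of derivatives, the claim $\sup_\mfd \abs{\nabla^k \gamma}_I < \infty$ is a direct consequence of the equivalence of $\abs{\bigcdot}_\gamma$ and $\abs{\bigcdot}_I$ combined with the second hypothesis in~\eqref{eq:assumgamma}. To handle $\nabla^k \gamma^{-1}$, I would differentiate the identity $\gamma \, \gamma^{-1} = \id$ and solve for $\nabla \gamma^{-1} = -\gamma^{-1} (\nabla \gamma) \gamma^{-1}$; by induction, $\nabla^k \gamma^{-1}$ can be written as a universal polynomial expression in $\gamma^{-1}$ and the tensors $\nabla^j \gamma$ for $1 \le j \le k$. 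Since all these ingredients are uniformly bounded in the $\abs{\bigcdot}_I$ norm (using~\eqref{eq:comparegammasfg} for $\gamma$ and $\gamma^{-1}$, and the previous step for $\nabla^j \gamma$), the conclusion follows.

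No serious obstacle is anticipated; the only mildly technical point is the inductive polynomial formula for $\nabla^k \gamma^{-1}$, which is routine once the base case $\nabla \gamma^{-1} = -\gamma^{-1}(\nabla \gamma)\gamma^{-1}$ is in hand.
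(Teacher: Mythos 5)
Your proposal is correct and follows essentially the same route as the paper: the first inequality via order-reversal of inversion on positive definite forms (the paper phrases it through the dual-norm formula $\abs{\xi}_{\gamma^{-1}} = \sup_{v \neq 0} \abs{\brak{\xi}{v}}/\abs{v}_\gamma$, which is the same fact), boundedness of $\nabla^k\gamma$ in the $I$-norm from the uniform equivalence $c^{-2} I \le \gamma \le c^2 I$, and boundedness of $\nabla^k\gamma^{-1}$ by induction from differentiating $\gamma\,\gamma^{-1} = \id$. The paper's induction just makes your "universal polynomial expression" explicit by isolating the top-order term in the Leibniz expansion, so there is no substantive difference.
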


\begin{proof}
The first assertion in~\eqref{eq:assumgamma-1} follows directly from the corresponding property in~\eqref{eq:assumgamma} on $\gamma$ and the fact that
\begin{equation*}
\abs*{\xi}_{\gamma^{-1}}
	= \sup_{x \in V \setminus \{0\}} \dfrac{\abs{\brak{\xi}{v}_{V^\star, V}}}{\abs{v}_\gamma} .
\end{equation*}
Boundedness of derivatives of $\gamma$ with respect to $I$ follows from Remark~\ref{rem:comparisonwithsfg}.

We prove the boundedness of derivatives of $\gamma^{-1}$ by induction on $k$.
For $k = 0$, we know that $\abs{\gamma^{-1}}_{I}$ is bounded on the whole $\mfd$ thanks to the first assertion in~\eqref{eq:assumgamma-1}. Now assume $\abs{\nabla^\ell \gamma^{-1}}_{I}$ is uniformly bounded on $\mfd$ for all $\ell \le k$, for some $k \ge 0$. By the Leibniz formula, we have
\begin{equation*}
0
	= \dfrac{1}{(k+1)!} \nabla^{k+1} (\gamma \gamma^{-1})
	= \sum_{\ell = 0}^{k+1} \dfrac{1}{(k+1-\ell) !} \nabla^{k+1-\ell} \gamma \dfrac{1}{\ell !} \nabla^\ell \gamma^{-1} .
\end{equation*}
Extracting the term for $\ell = k+1$, we obtain
\begin{equation*}
\dfrac{1}{(k+1)!} \nabla^{k+1} \gamma^{-1}
	= - \gamma^{-1} \sum_{\ell=0}^k \dfrac{1}{(k+1-\ell) !} \nabla^{k+1-\ell} \gamma \dfrac{1}{\ell !} \nabla^\ell \gamma^{-1} .
\end{equation*}
Each term in the right-hand side is bounded uniformly on $\mfd$ using the induction hypothesis since for all $\ell \le k$:
\begin{equation*}
\abs*{\gamma^{-1} \nabla^{k+1-\ell} \gamma \nabla^\ell \gamma^{-1}}_{I}
	\le \abs*{\gamma^{-1}}_{I} \abs*{\nabla^{k+1-\ell} \gamma}_{I} \abs*{\nabla^\ell \gamma^{-1}}_{I} .
\end{equation*}
We deduce that $\abs{\nabla^{k+1} \gamma^{-1}}_{I}$ is bounded, which concludes the induction.
\end{proof}

We compute the full Weyl symbol of the operator $- \widetilde{\Delta}_\gamma$ defined in~\eqref{eq:deftildeDeltagamma}. In the following lemma, we denote by $\gamma^{ij}$ the components of $\gamma^{-1}$ in a system of Euclidean coordinates, and by $\abs{\gamma}$ the determinant of $\gamma$ with respect to this Euclidean structure. Repeated indices are summed according to Einstein summation convention.

\begin{lemma} \label{lem:symbolDeltatilde}
Under assumption~\eqref{eq:assumgamma}, we have
\begin{equation} \label{eq:symbolDeltatilde}
- \widetilde{\Delta}_\gamma
	= \Opw{\abs*{\xi}_{\gamma^{-1}}^2 + \dfrac{1}{4} \partial_{ij}^2 \gamma^{ij} + \dfrac{1}{4} \gamma^{ij} w_i w_j + \dfrac{1}{2} \partial_i (w_j \gamma^{ij})} ,
\end{equation}
where $w_j = \partial_j \log \sqrt{\abs{\gamma}}$.
\end{lemma}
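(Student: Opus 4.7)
The plan is to compute the Weyl symbol of $-\widetilde{\Delta}_\gamma$ directly by decomposing the operator into a composition of Weyl quantizations of very simple symbols, and then iterating the Moyal product formula. Using $D_j = \frac{1}{\ii}\partial_j$, I would write
\begin{equation*}
-\widetilde{\Delta}_\gamma
	= |\gamma|^{-1/4} D_j \bigl(\gamma^{ij}|\gamma|^{1/2}\bigr) D_i |\gamma|^{-1/4}
	= \quantizationw(f)\,\quantizationw(\xi_j)\,\quantizationw(G_{ij})\,\quantizationw(\xi_i)\,\quantizationw(f),
\end{equation*}
where $f(x) := |\gamma|^{-1/4}$ and $G_{ij}(x) := \gamma^{ij}|\gamma|^{1/2}$ (the sign works out because $(\ii D_j)(\ii D_i) = -D_jD_i$). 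The Weyl symbol of this composition is the five-fold Moyal product $f \moyal \xi_j \moyal G_{ij} \moyal \xi_i \moyal f$, which can be computed exactly thanks to Remark~\ref{rmk:polynomialpseudocalc}: every Moyal product involving a factor that is polynomial of degree~$n$ in a given variable truncates after the $\cal{P}_n$ term.

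The first step is to compute the ``middle'' product $\xi_j \moyal G_{ij} \moyal \xi_i$. Since $\xi_j$ and $\xi_i$ are polynomial of degree~$1$, only $\cal{P}_0$ and $\cal{P}_1$ contribute in each Moyal multiplication by them, and a short computation gives
\begin{equation*}
\xi_j \moyal G_{ij} \moyal \xi_i
	= G_{ij}\xi_i\xi_j - \tfrac{1}{2\ii}\xi_j\partial_i G_{ij} + \tfrac{1}{2\ii}\xi_i\partial_j G_{ij} + \tfrac{1}{4}\partial_{ij}^2 G_{ij}
	= G_{ij}\xi_i\xi_j + \tfrac{1}{4}\partial_{ij}^2 G_{ij},
\end{equation*}
the two cross terms cancelling by the symmetry $G_{ij}=G_{ji}$. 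The next step is to flank this symbol with $f$ on both sides. The order-zero part contributes $\tfrac{1}{4}f^2 \partial_{ij}^2 G_{ij}$. The order-two part requires more care: I would compute $G_{ij}\xi_i\xi_j \moyal f$, which truncates at $\cal{P}_2$ because $G_{ij}\xi_i\xi_j$ has degree~$2$ in $\xi$ and $f$ has no $\xi$-dependence, and then similarly $f \moyal (G_{ij}\xi_i\xi_j \moyal f)$. The linear-in-$\xi$ remainders generated on the two sides cancel each other (by the same $f$-independent symmetry), and gathering the surviving scalar terms yields
\begin{equation*}
f \moyal (G_{ij}\xi_i\xi_j) \moyal f
	= f^2 G_{ij}\xi_i\xi_j + \tfrac{1}{2}G_{kl}\partial_k f\,\partial_l f - \tfrac{1}{2}f\,G_{kl}\partial_{kl}^2 f.
\end{equation*}

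The final step is to substitute $f = |\gamma|^{-1/4}$ and $G_{ij} = \gamma^{ij}|\gamma|^{1/2}$ and translate everything in terms of $w_j = \partial_j \log \sqrt{|\gamma|}$. The principal symbol becomes $f^2 G_{ij}\xi_i\xi_j = \gamma^{ij}\xi_i\xi_j = |\xi|_{\gamma^{-1}}^2$, and the relations $\partial_k f = -\tfrac{1}{2}f w_k$, $\partial_{kl}^2 f = \tfrac{1}{4}f w_k w_l - \tfrac{1}{2}f\partial_l w_k$ feed back into $\tfrac{1}{2}G_{kl}\partial_k f\partial_l f - \tfrac{1}{2}f G_{kl}\partial_{kl}^2 f = \tfrac{1}{4}\gamma^{ij}\partial_i w_j$, while expanding $\tfrac{1}{4}f^2\partial_{ij}^2(\gamma^{ij}|\gamma|^{1/2})$ produces the remaining pieces $\tfrac{1}{4}\partial_{ij}^2\gamma^{ij}$, $\tfrac{1}{4}\gamma^{ij}w_iw_j$, and an additional $\tfrac{1}{4}\gamma^{ij}\partial_i w_j + \tfrac{1}{2}w_j\partial_i\gamma^{ij}$. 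Relabelling and the symmetry of $\gamma^{ij}$ combine the two $\partial w$ contributions with the $w\,\partial\gamma$ contribution into the single term $\tfrac{1}{2}\partial_i(w_j \gamma^{ij})$, yielding exactly~\eqref{eq:symbolDeltatilde}.

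The computation is mechanical once the setup is in place; the main obstacle is purely algebraic book-keeping, namely tracking the signs and combinatorial factors from $\cal{P}_1$ and $\cal{P}_2$ at every application and verifying that all the $\xi$-linear terms cancel (which they must, since $-\widetilde{\Delta}_\gamma$ is symmetric and hence has a real Weyl symbol). Systematically exploiting the symmetry $G_{ij}=G_{ji}$ and freely relabelling summed indices at the right moments keeps the computation short.
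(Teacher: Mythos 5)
Your computation is correct, and like the paper it rests entirely on the exactness of the Weyl calculus for symbols polynomial in $\xi$ (Remark~\ref{rmk:polynomialpseudocalc}); the only real difference is the choice of factorization. The paper writes $-\widetilde{\Delta}_\gamma = A_j^\ast\,\gamma^{ij}\,A_i$ with $A_j = \abs{\gamma}^{1/4}\tfrac{1}{\ii}\partial_j\abs{\gamma}^{-1/4} = \Opw{\xi_j - \tfrac{1}{2\ii}w_j}$, i.e.\ it absorbs the density factors into the first-order operators, so that only a three-fold Moyal product of symbols of degree $\le 1$ in $\xi$ has to be expanded and the $w_j$'s are present from the start. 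Your five-fold product $f \moyal \xi_j \moyal G_{ij} \moyal \xi_i \moyal f$ postpones the appearance of $w_j$ and forces you to sandwich a genuinely quadratic symbol between two multiplication operators, hence the extra $\cal{P}_2$ contributions and the cancellation of the $\xi$-linear pieces that the paper's grouping sidesteps; but all of your intermediate identities check out (the middle product $G_{ij}\xi_i\xi_j + \tfrac14\partial^2_{ij}G_{ij}$ after summation, the sandwich formula $f \moyal (G_{ij}\xi_i\xi_j) \moyal f = f^2G_{ij}\xi_i\xi_j + \tfrac12 G_{kl}\partial_k f\,\partial_l f - \tfrac12 f G_{kl}\partial^2_{kl}f$, and the final regrouping of $\tfrac12 w_j\partial_i\gamma^{ij} + \tfrac12\gamma^{ij}\partial_i w_j$ into $\tfrac12\partial_i(w_j\gamma^{ij})$), so the two routes are equivalent up to bookkeeping. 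Your observation that the $\xi$-linear terms must cancel because $-\widetilde{\Delta}_\gamma$ is symmetric and hence has a real Weyl symbol is a useful sanity check that the paper's shorter computation does not need to invoke.
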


One can compute $w_j$ as derivatives of the metric with the Jacobi formula:
\begin{equation*}
w_j
	= \dfrac{1}{2} \tr\left(\gamma^{-1} \partial_j \gamma\right) .
\end{equation*}
We deduce that the sub-principal terms in~\eqref{eq:symbolDeltatilde} are controlled by first and second order derivatives of $\gamma$. Therefore, under the assumption~\eqref{eq:assumgamma}, Lemma~\ref{lem:assumgamma-1} implies that the symbol of $-\widetilde{\Delta}_\gamma$ is $\abs{\xi}_{\gamma^{-1}}^2$ modulo a $\cont_b^\infty$ symbol.

\begin{proof}[Proof of Lemma~\ref{lem:symbolDeltatilde}]
We factor $- \widetilde{\Delta}_\gamma$ into
\begin{equation} \label{eq:Deltatilde}
- \widetilde{\Delta}_\gamma
	= A_j^\ast \gamma^{ij} A_i ,
\end{equation}
where
\begin{equation*}
A_j
	= \abs*{\gamma}^{1/4} \tfrac{1}{\ii} \partial_j \abs*{\gamma}^{-1/4}
	= \tfrac{1}{\ii} \partial_j + \tfrac{1}{\ii} \abs*{\gamma}^{1/4} \left(\partial_j \abs*{\gamma}^{-1/4}\right)
	= \tfrac{1}{\ii} \partial_j + \tfrac{1}{\ii} \left(\partial_j \log \abs*{\gamma}^{-1/4}\right) .
\end{equation*}
With the notation $w_j$ of the statement, we have
\begin{equation*}
A_j
	= \tfrac{1}{\ii} \partial_j - \tfrac{1}{2 \ii} w_j
	= \Opw{\xi_j - \tfrac{1}{2 \ii} w_j} .
\end{equation*}
Plugging this into~\eqref{eq:Deltatilde} and using pseudo-differential calculus leads to
\begin{align*}
- \widetilde{\Delta}_\gamma
	&= \Opw{\xi_i + \tfrac{1}{2 \ii} w_i} \Opw{\gamma^{ij}} \Opw{\xi_j - \tfrac{1}{2 \ii} w_j} \\
	&= \Opw{\xi_i + \tfrac{1}{2 \ii} w_i} \Opw{\gamma^{ij} \left(\xi_j - \tfrac{1}{2 \ii} w_j\right) + \tfrac{1}{2 \ii} \poiss{\gamma^{ij}}{\xi_j}} \\
	&= \Opw{\left( \xi_i + \tfrac{1}{2 \ii} w_i \right) \left( \gamma^{ij} \xi_j - \tfrac{1}{2 \ii} \gamma^{ij} w_j - \tfrac{1}{2 \ii} \partial_j \gamma^{ij} \right) + \tfrac{1}{2 \ii} \poiss*{\xi_i + \tfrac{1}{2 \ii} w_i }{\gamma^{ij} \xi_j - \tfrac{1}{2 \ii} \gamma^{ij} w_j - \tfrac{1}{2 \ii} \partial_j \gamma^{ij}}} \\
	&= \Opw{\gamma^{ij} \xi_i \xi_j + \dfrac{1}{4} \gamma^{ij} w_i w_j + \dfrac{1}{4} w_i \partial _j \gamma^{ij} + \dfrac{1}{4} \partial_i (w_j \gamma^{ij}) + \dfrac{1}{4} \partial_{ij}^2 \gamma^{ij} + \dfrac{1}{4} \gamma^{ij} \partial_j w_i} ,
\end{align*}
hence the sought result by grouping terms according to the Leibniz rule and suitably permuting indices.
\end{proof}

We now discuss admissibility of $g$.

\begin{lemma} \label{lem:gainthetahalfwaves}
The metric $g$ defined in~\eqref{eq:ghalfwave} is admissible for $\alpha \in [0, 1]$. Its gain function is
\begin{equation*}
\gain_g(x, \xi)
	= \dfrac{1}{\jap*{\xi}_{\gamma_x^{-1}}^\alpha}
	= \dfrac{1}{p^\alpha} ,
		\qquad (x, \xi) \in T^\star \mfd .
\end{equation*}
For all $\alpha \in [0, 1]$, there exists $C > 0$ such that
\begin{equation} \label{eq:controlthetaghalfwaves}
\forall (x, \xi) \in T^\star \mfd , \qquad
	C^{-1} \jap*{\xi}_{\gamma_x^{-1}}^{\frac{1 + \alpha}{2}}
		\le \theta_g(x, \xi)
			\le C \jap*{\xi}_{\gamma_x^{-1}}^{\frac{1 + \alpha}{2}} .
\end{equation}
\end{lemma}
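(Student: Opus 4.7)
The plan is to reduce to a Beals--Fefferman form at each phase-space point, check admissibility, and then compute $\theta_g$ against a fixed Euclidean background. Fixing $\rho_0 = (x_0, \xi_0)$ and working in a $\gamma_{x_0}$-orthonormal basis of $T_{x_0}\mfd$ with its dual basis on $T_{x_0}^\star \mfd$, both $\gamma_{x_0}$ and $\gamma_{x_0}^{-1}$ become the Euclidean identity, so $g_{\rho_0}$ takes the Beals--Fefferman form of Remark~\ref{rmk:BealsFefferman} with $\Phi = p^{(\alpha-1)/2}$ and $\Psi = p^{(1+\alpha)/2}$. The Beals--Fefferman formulas then yield $g^\sympf = p^{1+\alpha}\gamma \oplus p^{\alpha-1}\gamma^{-1}$ and $\gain_g = 1/(\Phi\Psi) = p^{-\alpha}$; these identities are intrinsic and thus hold globally. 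Since $p \ge 1$ and $\alpha \ge 0$, the uncertainty principle $\gain_g \le 1$ is immediate.

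\textbf{Slow variation and temperance.} For slow variation, unpacking the definition of $g$ gives, on a $g$-ball of radius $r$ around $\rho_0$, the bounds $\abs{x - x_0}_{\gamma_{x_0}} \le r\, p(\rho_0)^{(\alpha-1)/2} \le r$ and $\abs{\xi - \xi_0}_{\gamma_{x_0}^{-1}} \le r\, p(\rho_0)^{(1+\alpha)/2}$. Assumption~\eqref{eq:assumgamma} and Lemma~\ref{lem:assumgamma-1} give $\gamma_x \asymp \gamma_{x_0}$ and boundedness of $\nabla \gamma, \nabla \gamma^{-1}$, so a Taylor expansion of $p(\rho)^2 = 1 + \abs{\xi}_{\gamma_x^{-1}}^2$ around $\rho_0$ produces $\abs{p(\rho)^2 - p(\rho_0)^2} \lesssim r\, p(\rho_0)^2 + r\, p(\rho_0)^{(3+\alpha)/2}$; since $\alpha \le 1$ the exponent $(3+\alpha)/2$ is at most $2$, and for $r$ below an absolute constant one gets $p(\rho) \asymp p(\rho_0)$, which combined with $\gamma_x \asymp \gamma_{x_0}$ provides slow variation uniformly in $\rho_0$. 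For temperance, setting $\delta := \abs{\rho - \rho_0}_{g_\rho^\sympf}$, the expression of $g^\sympf$ gives $\abs{x-x_0}_{\gamma_x} \le \delta\, p(\rho)^{-(1+\alpha)/2}$ and $\abs{\xi-\xi_0}_{\gamma_x^{-1}} \le \delta\, p(\rho)^{(1-\alpha)/2}$. Combined with $\gamma_{x_0}^{-1} \asymp \gamma_x^{-1}$ from~\eqref{eq:comparegammasfg} and the triangle inequality applied to $\jap{\xi}_{\gamma^{-1}}$, this yields the symmetric implicit bounds
\begin{equation*}
p(\rho_0) \le C\, p(\rho) + C\, p(\rho)^{(1-\alpha)/2}\, \delta \quad \text{and} \quad p(\rho) \le C\, p(\rho_0) + C\, p(\rho)^{(1-\alpha)/2}\, \delta.
\end{equation*}
Young's inequality with conjugate exponents $(2/(1+\alpha), 2/(1-\alpha))$ (and a direct argument in the boundary case $\alpha = 1$) solves these into $C^{-1} p(\rho_0) \jap{\delta}^{-N} \le p(\rho) \le C\, p(\rho_0) \jap{\delta}^N$ for some $N$ depending on $\alpha$, which combined again with $\gamma_x \asymp \gamma_{x_0}$ gives $g_\rho \le C^2 g_{\rho_0} \jap{\delta}^{2N'}$, i.e.\ temperance.

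\textbf{Temperance weight and main obstacle.} For~\eqref{eq:controlthetaghalfwaves}, take the symplectic Euclidean background ${\sf g} = \dd x^2 + \dd \xi^2$ (with respect to any fixed Euclidean structure on $\mfd$), satisfying ${\sf g}^\sympf = {\sf g}$. Using the formula for $g^\sympf$ from the first paragraph, the definition of $\theta_g$ rewrites as
\begin{equation*}
\theta_g(x, \xi)^2 = \sup_{(u, v) \ne 0} \frac{p^{1+\alpha} \abs{u}_\gamma^2 + p^{\alpha-1} \abs{v}_{\gamma^{-1}}^2}{\abs{u}^2 + \abs{v}^2},
\end{equation*}
and the uniform ellipticity $c^{-2} I \le \gamma \le c^2 I$ from~\eqref{eq:comparegammasfg} gives $\abs{u}_\gamma \asymp \abs{u}$ and $\abs{v}_{\gamma^{-1}} \asymp \abs{v}$, so $\theta_g^2 \asymp \max(p^{1+\alpha}, p^{\alpha-1}) = p^{1+\alpha}$ since $p \ge 1$ and $\alpha \le 1$, yielding~\eqref{eq:controlthetaghalfwaves}. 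The hard part will be the temperance step: the triangle-inequality estimate on $p(\rho)$ is implicit (the right-hand side involves $p(\rho)$ itself through the $g_\rho^\sympf$-distance), and one has to invert it into explicit \emph{two-sided} polynomial bounds of $p(\rho)$ by $p(\rho_0)$ and conversely, both of which are needed to control the two blocks of $g_\rho$ against those of $g_{\rho_0}$. The appropriate Young splitting, or equivalently a dichotomy on whether $p(\rho)^{(1-\alpha)/2} \delta$ is larger or smaller than $p(\rho_0)$, resolves this.
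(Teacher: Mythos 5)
Your proof is correct, but it goes a different way than the paper. The paper introduces the model metric $\tilde g = \jap{\xi}_{I}^{1-\alpha}\dd x^2 + \jap{\xi}_{I}^{-(1+\alpha)}\dd \xi^2$, observes via~\eqref{eq:assumgamma}, Lemma~\ref{lem:assumgamma-1} and $\sympf$-duality that $c^{-2}\tilde g \le g \le c^2 \tilde g$ and $c^{-2}\tilde g^\sympf \le g^\sympf \le c^2\tilde g^\sympf$, and then simply imports admissibility of $\tilde g$ from Lerner's Lemma~2.2.18 (the exponents $a = \tfrac{1-\alpha}{2}$, $b = \tfrac{1+\alpha}{2}$ satisfy $0\le a\le b\le 1$, $a<1$); the gain is obtained by the same explicit computation of $g^\sympf = p^{1+\alpha}\gamma \oplus p^{\alpha-1}\gamma^{-1}$ that you perform (the paper does it by a matrix conjugation $\sympf^\star(\cdot)^{-1}\sympf$, you do it pointwise in a $\gamma_{x_0}$-orthonormal symplectic basis via Remark~\ref{rmk:BealsFefferman} — legitimate, since a basis of $V$ together with its dual basis is symplectic), and $\theta_g$ is read off from $\theta_{\tilde g} = \jap{\xi}_I^{(1+\alpha)/2}$ through the equivalence. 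You instead verify the three admissibility conditions by hand: slow variation from a Taylor expansion of $p^2$ on a $g$-ball (your exponent bookkeeping $(3+\alpha)/2\le 2$ is right), temperance from the implicit two-sided inequalities $p(\rho_0)\le Cp(\rho)+C\delta p(\rho)^{(1-\alpha)/2}$ and its mirror, resolved by Young's inequality with exponents $(\tfrac{2}{1+\alpha},\tfrac{2}{1-\alpha})$ — and you correctly note that both directions $p(\rho)\lesssim p(\rho_0)\jap{\delta}^N$ and $p(\rho_0)\lesssim p(\rho)\jap{\delta}^N$ are needed to handle the two blocks of $g$ — and the uncertainty principle from $p\ge 1$; the temperance weight you compute directly against ${\sf g} = \dd x^2 + \dd\xi^2$ using ellipticity. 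The paper's route is shorter because the admissibility work is outsourced to a cited standard lemma; yours is self-contained and makes the role of $p\asymp\jap{\xi}_I$ and of the exponent constraints explicit, at the cost of the Young/dichotomy step. One small point worth flagging in a write-up: when you invoke the Beals--Fefferman formula for $g^\sympf$ you should say explicitly that the $\gamma_{x_0}$-orthonormal coordinates you use are symplectic, so that the reference Euclidean metric in those coordinates is $\sympf$-self-dual; otherwise the argument is complete.
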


\begin{proof}
We introduce the metric
\begin{equation*}
\tilde g_{(x, \xi)}
	= \jap*{\xi}_{I}^{1-\alpha} \dd x^2 + \jap*{\xi}_{I}^{-(1+\alpha)} \dd \xi^2 ,
\end{equation*}
where $\dd x^2 + \dd \xi^2$ is the standard Euclidean metric on $T^\star \mfd \simeq \R^{2d}$.
From~\eqref{eq:assumgamma} and~\eqref{eq:assumgamma-1}, together with {$\sympf$-duality}~\eqref{eq:sympfduality} as well, we have for some sufficiently large constant $c > 0$:
\begin{equation} \label{eq:ggtilde}
c^{-2} \tilde g
	\le g
	\le c^2 \tilde g
		\qquad \rm{and} \qquad
c^{-2} \tilde g^\sympf
	\le g^\sympf
	\le c^2 \tilde g^\sympf .
\end{equation}
From~\cite[Lemma 2.2.18]{Lerner:10}, the metric $\tilde g$ is admissible, since it can be written as
\begin{equation*}
\tilde g
	= \jap*{\xi}^{2a} \dd x^2 + \jap*{\xi}^{-2b} \dd \xi^2 ,
\end{equation*}
where $a = \frac{1-\alpha}{2}$ and $b = \frac{1+\alpha}{2}$ satisfy
\begin{equation*}
0 \le a \le b \le 1
	\qquad \rm{and} \qquad
a < 1 .
\end{equation*}
Admissibility of $g$ then follows from~\eqref{eq:ggtilde}.
Finally, passing to matrix representation, we have
\begin{align*}
g^\sympf
	&= \sympf^\star \left(\jap*{\xi}_{\gamma^{-1}}^{-2 a} \gamma^{-1} \oplus \jap*{\xi}_{\gamma^{-1}}^{2 b} \gamma\right) \sympf
	= \begin{pmatrix}
	0 & -1 \\ 1 & 0
	\end{pmatrix}
	\begin{pmatrix}
	\jap*{\xi}_{\gamma^{-1}}^{-2 a} \gamma^{-1} & 0 \\ 0 &  \jap*{\xi}_{\gamma^{-1}}^{2 b} \gamma
	\end{pmatrix}
	\begin{pmatrix}
	0 & 1 \\ -1 & 0
	\end{pmatrix} \\
	&=
	\begin{pmatrix}
	 \jap*{\xi}_{\gamma^{-1}}^{2 b} \gamma & 0 \\ 0 & \jap*{\xi}_{\gamma^{-1}}^{-2 a} \gamma^{-1}
	\end{pmatrix} ,
\end{align*}
from which we obtain $\gain_g(x, \xi) = \jap{\xi}_{\gamma^{-1}}^{a - b} = \jap{\xi}_{\gamma^{-1}}^{-\alpha}$.

Now for the temperance weight, we can see that $c^{-1} \theta_{\tilde g} \le \theta_g \le c \theta_{\tilde g}$ as a consequence of~\eqref{eq:ggtilde}, and we can compute $\theta_{\tilde g}$ with respect to the background metric $\dd x^2 + \dd \xi^2$, and we find that $\theta_{\tilde g}(x, \xi) = \jap{\xi}_{I}^{\frac{1 + \alpha}{2}}$. Using~\eqref{eq:comparegammasfg}, we obtain~\eqref{eq:controlthetaghalfwaves}.
\end{proof}

We estimate successive derivatives of $p$ thanks to the Faà di Bruno formula (Appendix~\ref{app:faadibruno}).

\begin{lemma} \label{lem:derphalfwaves}
Assume $\gamma$ satisfies~\eqref{eq:assumgamma} and consider $p$ and $g$ introduced in Proposition~\ref{prop:halfwaves}. Then the following holds:
\begin{equation*}
\abs*{\nabla^k p}_g
	\le C_{\gamma, k} p^{1 - \frac{1-\alpha}{2} k}
\end{equation*}
\end{lemma}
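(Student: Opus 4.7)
The plan is to reduce the estimate on $p = \sqrt{q}$, where $q(x,\xi) := 1 + |\xi|_{\gamma_x^{-1}}^2$, to derivative estimates on $q$ via the Faà di Bruno formula (Appendix~\ref{app:faadibruno}) applied to the scalar function $F(t) = \sqrt{t}$.

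First I would record how the metric $g$ measures tangent vectors. Splitting $\zeta = (X_h, X_v) \in V \oplus V^\star$ according to~\eqref{eq:splitting} and recalling~\eqref{eq:ghalfwave}, a unit $g$-vector satisfies
\begin{equation*}
|X_h|_{\gamma_x} \le p(x,\xi)^{-(1-\alpha)/2} , \qquad |X_v|_{\gamma_x^{-1}} \le p(x,\xi)^{(1+\alpha)/2} .
\end{equation*}
The key intermediate step is then to prove that for every $\ell \ge 0$, there exists $C_\ell > 0$ such that
\begin{equation} \label{eq:keyq}
|\nabla^\ell q|_g \le C_\ell \, p^{2 - \frac{1-\alpha}{2}\ell} .
\end{equation}
This is straightforward because $q$ is a polynomial of degree $2$ in $\xi$ with coefficients depending smoothly on $x$: only the pure-$x$ derivative, the mixed derivative of bidegree $(\ell-1, 1)$, and the mixed derivative of bidegree $(\ell-2,2)$ are nonzero, and each of these is a contraction of some $\nabla^j(\gamma^{-1})$ (uniformly bounded by assumption~\eqref{eq:assumgamma} and Lemma~\ref{lem:assumgamma-1}) with factors of $\xi$. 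Evaluating on unit $g$-vectors and using $|\xi|_{\gamma_x^{-1}} \le p$, a direct case analysis on the number $m \in \{0,1,2\}$ of vertical inputs shows that all three cases contribute to the exponent $2 - \ell(1-\alpha)/2$.

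With~\eqref{eq:keyq} in hand, I would conclude via Faà di Bruno. The derivatives of $F(t) = \sqrt{t}$ satisfy $|F^{(j)}(q)| \le c_j \, p^{1-2j}$, and the formula gives
\begin{equation*}
|\nabla^k p|_g \le C \sum_{j=1}^k |F^{(j)}(q)| \sum_{\substack{n_1 + \cdots + n_j = k \\ n_i \ge 1}} \prod_{i=1}^j |\nabla^{n_i} q|_g .
\end{equation*}
Inserting~\eqref{eq:keyq} and the bound on $F^{(j)}$, each term of the double sum is bounded by $C \, p^{(1-2j) + 2j - \frac{1-\alpha}{2} k} = C\, p^{1 - \frac{1-\alpha}{2} k}$, yielding the claim.

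The only mild obstacle is checking that the three nonzero cases $m=0,1,2$ in the case analysis for~\eqref{eq:keyq} all produce exactly the same exponent of $p$; I expect this to be purely algebraic, combining the weights $p^{\mp(1 \mp \alpha)/2}$ from the unit-vector bounds with the elementary identity $|\xi|_{\gamma^{-1}}^m \le p^m$ for $m \le 2$.
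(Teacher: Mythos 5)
Your proposal is correct and follows essentially the same route as the paper: Faà di Bruno applied to $p$ written as a square root composed with the quadratic symbol $q$, the bound $|F^{(j)}(q)|\le c_j\,p^{1-2j}$, and a case analysis on the number of vertical inputs ($k_2\in\{0,1,2\}$, since $q$ is quadratic in $\xi$) showing each case yields the exponent $2-\frac{1-\alpha}{2}\ell$ after converting $\gamma$- and $\gamma^{-1}$-norms to $g$-norms via the weights $p^{\mp(1\mp\alpha)/2}$. The algebraic cancellation you flag as the "mild obstacle" is exactly the computation $2-k_2-\frac{1-\alpha}{2}k_1+\frac{1+\alpha}{2}k_2 = 2-\frac{1-\alpha}{2}(k_1+k_2)$ carried out in the paper, so no gap remains.
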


\begin{proof}
According to Remark~\ref{rmk:polarintro}, it suffices to pick a vector field $X$ on $T^\star \mfd$ and establish the bound
\begin{equation*}
\abs*{\nabla^k p. X^k}
	\le C_{\gamma, k} p^{1 - (\frac{1 - \alpha}{2}) k} \abs*{X}_g^k .
\end{equation*}
We view $p$ as the composition of $f : s \mapsto \sqrt{1 + s}$ with $q : (x, \xi) \mapsto \gamma^{-1}(\xi, \xi)$. The the Faà di Bruno formula (see Appendix~\ref{app:faadibruno}) yields
\begin{equation} \label{eq:appfdb}
\dfrac{1}{k!} \nabla^k p. X^k
	= \sum_{j = 1}^k \sum_{\substack{{\bf{n}} \in (\N^\ast)^j \\ \abs{{\bf{n}}} = k}} \dfrac{1}{j!} (f^{(j)} \circ q). \left( \dfrac{1}{{\bf{n}}!} \nabla^{\bf{n}} q. X^{\bf{n}} \right) .
\end{equation}
On the one hand, we notice that $\abs{f^{(j)}(s)} \le C_j (1 + s)^{1/2 - j}$, so that
\begin{equation} \label{eq:derivativesfq}
\abs*{f^{(j)} \circ q}
	\le C_j (p^2)^{1/2 - j}
	= C_j p^{1 - 2 j} .
\end{equation}
On the other hand, we shall write the components of $X$ according to the splitting~\eqref{eq:splitting}:
\begin{equation*}
X
	= X_\hor + X_\ver
	\in T \mfd \oplus T^\star \mfd .
\end{equation*}
Expanding $\nabla_{X^k}^k$ according to this decomposition, we are lead to estimate
\begin{equation} \label{eq:derqsfg}
\nabla^k q. \left(X_\hor^{k_1}, X_\ver^{k_2}\right)
	= \nabla_{X_\ver^{k_2}}^{k_2} (\nabla_{X_\hor^{k_1}}^{k_1} \gamma^{-1})(\xi, \xi) ,
		\qquad \rm{with} \;\, k_1 + k_2 = k .
\end{equation}
Indeed, derivatives with $X_\hor$ or $X_\ver$ affect only the $x$ variable or the $\xi$ variable respectively (that is to say they correspond to $\partial_x$ and $\partial_\xi$). Since~$q$ is polynomial of order $2$ in $\xi$, the derivative vanishes as soon as $k_2 \ge 3$, while for $k_2 = 1$ and $k_2 = 2$, we have whenever $k_1 \in \N$:
\begin{equation*}
\nabla^k q. \left(X_\hor^{k_1}, X_\ver\right)
	= 2 (\nabla_{X_\hor^{k_1}}^{k_1} \gamma^{-1})(X_\ver, \xi) ,
		\qquad
\nabla^k q. \left(X_\hor^{k_1}, X_\ver, X_\ver\right)
	= 2 (\nabla_{X_\hor^{k_1}}^{k_1} \gamma^{-1})(X_\ver, X_\ver) .
\end{equation*}
In particular, by~\eqref{eq:assumgamma} and the Cauchy--Schwarz inequality, we have the estimate
\begin{equation*}
\abs*{\nabla^k q. \left(X_\hor^{k_1}, X_\ver^{k_2}\right)}
	\le C_\gamma p^{2 - k_2} \abs*{X_\hor}_{\gamma}^{k_1} \abs*{X_\ver}_{\gamma^{-1}}^{k_2} ,
\end{equation*}
still valid for $k_2 = 0$.
This is an estimate with respect to the metric $\gamma \oplus \gamma^{-1}$. In terms of the metric $g$, we have
\begin{equation*}
\abs*{\nabla^k q. \left(X_\hor^{k_1}, X_\ver^{k_2}\right)}
	\le C_\gamma p^{2 - k_2 - \frac{1-\alpha}{2} k_1 + \frac{1+\alpha}{2} k_2} \abs*{X_\hor}_g^{k_1} \abs*{X_\ver}_g^{k_2}
	\le C_\gamma p^{2 - \frac{1-\alpha}{2} k} \abs*{X}_g^k .
\end{equation*}
Combining this with~\eqref{eq:derivativesfq}, it implies that all the terms arising in the Faà di Bruno formula~\eqref{eq:appfdb} are of order
\begin{equation*}
p^{1 - 2 j} \times p^{2j - \frac{1 - \alpha}{2} k}
	= p^{1 - \frac{1 - \alpha}{2} k} ,
\end{equation*}
hence the sought result.
\end{proof}

We are now in a position to prove the main result of Section~\ref{subsubsec:halfwaves}.

\begin{proof}[Proof of Proposition~\ref{prop:halfwaves}]
The metric $g$ is admissible by Lemma~\ref{lem:gainthetahalfwaves}. Moreover, in view of the assumption~\eqref{eq:assumgamma} on $\gamma$ (or in fact~\eqref{eq:comparegammasfg}), we clearly have $\gamma_{\pi \circ \phi^t(\rho)} \le C_\Upsilon^2 \e^{2 \Upsilon \abs{t}} \gamma_{\pi(\rho)}$ for any $\rho \in T^\star \mfd$ with $\Upsilon = 0$ and $C_\Upsilon = C_\gamma$. Since $p$ is invariant by the Hamiltonian flow, we deduce that the metric $g$ satisfies Item~\ref{it:metriccontrol} of Assumption~\ref{assum:p} with $\Upsilon = 0$.

Let us check the boundedness of the Lie derivative $\lieder_{H_p} g$ in~\eqref{eq:defLambda} (Assumption~\ref{assum:p}). To do that we use Remark~\ref{rmk:subquad}.
On the one hand, we can estimate $\abs{\nabla_{H_p} g}_g$ by remarking that by definition of $g$, the coefficients of the metric depend only on $p$ which is invariant by the flow, so that the derivative $\nabla_{H_p}$ affects only $\gamma_x$ and $\gamma_x^{-1}$. The latter depend only on $x$. The $x$ component of $H_p$ is
\begin{equation*}
\dfrac{\gamma^{-1} \xi}{\jap{\xi}_{\gamma^{-1}}} ,
\end{equation*}
whose $\gamma$ norm remains bounded on the whole phase space, so indeed $\abs*{\nabla_{H_p} g}_g$ is uniformly bounded.

Now we estimate the Hessian and the higher-order derivatives of $p$. Lemma~\ref{lem:derphalfwaves} gives $\nabla^2 p \in S(m, g)$ where
\begin{equation*}
m
	= p^{1 - 2 \frac{1 - \alpha}{2}}
	= p^\alpha
	= \gain_g^{-1} ,
\end{equation*}
so sub-quadraticity holds (namely the first condition in~\eqref{eq:assumsubquad}). So Item~\ref{it:Lambda} of Assumption~\ref{assum:p} is proved. Now for derivatives of order $k \ge 3$, Lemma~\ref{lem:derphalfwaves} gives $\nabla^3 p \in S(m, g)$ with
\begin{equation*}
m
	= p^{1 - 3 \frac{1 - \alpha}{2}}
	= p^{3 \alpha} p^{- \frac{1}{2} - \alpha (3 - \frac{3}{2})}
	\le \gain_g^{-3} p^{-1/2} ,
\end{equation*}
hence strong sub-quadraticity, that is to say Item~\ref{it:strongsubquad} of Assumption~\ref{assum:p}, holds too (recall that the temperance weight is a power of $p$ by Lemma~\ref{lem:gainthetahalfwaves}). This proves that Assumption~\ref{assum:p} is fulfilled. Lastly the symbol $p$ is semi-bounded, which ensures that Assumption~\ref{assum:mandatory} is fulfilled too by Proposition~\ref{prop:classicalwellposedness} (and Remark~\ref{rmk:inpracticecheckassum}), and this finishes the proof.
\end{proof}

We end this section by explaining the link between the operator $P$ defined in~\eqref{eq:defPhalfwaves} and $\sqrt{\widetilde{\Delta}_\gamma}$.

\begin{lemma} \label{lem:sqarerootclosetoP}
The operator $- \widetilde \Delta_\gamma - P^2$ extends to a bounded operator on $L^2(\mfd)$.
\end{lemma}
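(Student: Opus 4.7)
The plan is to compute the full Weyl symbol of $-\widetilde{\Delta}_\gamma - P^2$ explicitly via the Moyal expansion~\eqref{eq:pdidentity}, to check that it belongs to $S(1, g)$, and then to conclude by the Calder\'{o}n--Vaillancourt theorem (Proposition~\ref{prop:CV}). Write $f(x,\xi) = \jap{\xi}_{\gamma_x^{-1}}$ so that $P = \Opw{f}$ and $P^2 = \Opw{f \moyal f}$ by~\eqref{eq:Moyalproddef}. On the other hand, Lemma~\ref{lem:symbolDeltatilde} identifies $-\widetilde{\Delta}_\gamma$ with $\Opw{\abs{\xi}_{\gamma_x^{-1}}^2 + R_\gamma}$ where $R_\gamma \in \cont_b^\infty(\mfd)$. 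Since $f^2 = 1 + \abs{\xi}_{\gamma_x^{-1}}^2$, the Weyl symbol of $-\widetilde{\Delta}_\gamma - P^2$ equals
\begin{equation*}
R_\gamma(x) - 1 - \bigl(f \moyal f - f^2\bigr).
\end{equation*}

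Next, I apply~\eqref{eq:pdidentity} at order $j_0 = 2$ to the Moyal product $f \moyal f$. The order-one term $\cal{P}_1(f,f) = \frac{1}{2\ii}\{f,f\}$ vanishes by skew-symmetry of the Poisson bracket, so
\begin{equation*}
f \moyal f - f^2 = \cal{P}_2(f,f) + \widehat{\cal{P}}_3(f,f) .
\end{equation*}
It remains to control each of the two remainder terms in $S(1, g)$. By Lemma~\ref{lem:derphalfwaves}, $\nabla^k f \in S(p^{1-(1-\alpha)k/2}, g)$; in particular $f \in \nabla^{-2} S(p^\alpha, g) \cap \nabla^{-3} S(p^{(3\alpha - 1)/2}, g)$. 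Combined with $\gain_g = p^{-\alpha}$ (Lemma~\ref{lem:gainthetahalfwaves}), the continuity estimates of Proposition~\ref{prop:pseudocalcsymb} applied with $g_1 = g_2 = g$ yield
\begin{equation*}
\cal{P}_2(f,f) \in S\bigl(\gain_g^2 \cdot p^{2\alpha},\, g\bigr) = S(1, g)
\qquad\textrm{and}\qquad
\widehat{\cal{P}}_3(f,f) \in S\bigl(\gain_g^3 \cdot p^{3\alpha - 1},\, g\bigr) = S(p^{-1}, g) \subset S(1, g) .
\end{equation*}
Together with $R_\gamma - 1 \in \cont_b^\infty(\mfd) \subset S(1, g)$ (Lemma~\ref{lem:assumgamma-1}), this shows the full symbol of $-\widetilde{\Delta}_\gamma - P^2$ lies in $S(1, g)$.

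Finally, the Calder\'{o}n--Vaillancourt theorem (Proposition~\ref{prop:CV}) applied to the admissible metric $g$ (Lemma~\ref{lem:gainthetahalfwaves}) ensures that $\Opw{\bullet}$ maps $S(1, g)$ continuously into $\Bop(L^2(\mfd))$, which gives the desired conclusion. No step is a genuine obstacle; the only real accounting is Step~3 above, where the precise powers of the weight $p$ generated by derivatives of $f$ must exactly cancel the corresponding powers of $\gain_g = p^{-\alpha}$ produced by the Moyal calculus at orders $2$ and $3$.
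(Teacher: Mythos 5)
Your proof is correct and follows essentially the same route as the paper: expand $P^2 = \Opw{p \moyal p}$ via the Moyal calculus, use $\poiss{p}{p}=0$, control the remainder in $S(1,g)$ using Lemma~\ref{lem:derphalfwaves} and $\gain_g = p^{-\alpha}$, combine with the sub-principal terms from Lemma~\ref{lem:symbolDeltatilde}, and conclude by Calder\'on--Vaillancourt. The only (harmless) difference is that you expand to order $3$, whereas the paper stops at order $2$ and directly notes $\widehat{\cal{P}}_2(p,p) \in S(\gain_g^2 \cdot \gain_g^{-1}\cdot \gain_g^{-1}, g) = S(1,g)$ from $p \in \nabla^{-2}S(\gain_g^{-1},g)$, which already suffices.
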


\begin{proof}
We use pseudo-differential calculus with $p$ and itself, using $\poiss*{p}{p} = 0$, to obtain
\begin{equation*}
P^2
	= \Opw{p^2 + \widehat{\cal{P}}_2(p, p)} .
\end{equation*}
Lemma~\ref{lem:derphalfwaves} together with $\gain_g = p^{-\alpha}$ (Lemma~\ref{lem:gainthetahalfwaves}) imply that $p \in \nabla^{-2} S(\gain_g^{-1}, g)$, so that
\begin{equation*}
\widehat{\cal{P}}_2(p, p)
	\in S(1, g) .
\end{equation*}
Setting
\begin{equation*}
a = - 1 + \widehat{\cal{P}}_2(p, p) + \dfrac{1}{4} \partial_{ij}^2 \gamma^{ij} + \dfrac{1}{4} \gamma^{ij} w_i w_j + \dfrac{1}{2} \partial_i (w_j \gamma^{ij}) ,
\end{equation*}
we have $- \widetilde \Delta_\gamma = P^2 + \Opw{a}$ in view of Lemma~\ref{lem:symbolDeltatilde}, and under the assumptions~\eqref{eq:assumgamma}, we have indeed
\begin{equation*}
a \in S(1, g) .
\end{equation*}
(Sub-principal terms involving derivatives of $\gamma$ belong to $S(1, g)$ since they depend only on the $x$ variable.)
The result follows from the Calder\'{o}n--Vaillancourt theorem (Proposition~\ref{prop:CV}).
\end{proof}

\subsection{Vector fields: proofs of statements in Section~\ref{subsubsec:X}} \label{subsec:proofsX}

We start with the computation of the logarithmic derivative in~\eqref{eq:conjX}.

\begin{lemma} \label{lem:logderivativemu}
Let $X$ be a non-vanishing vector field preserving a smooth density $\mu = \abs{\mu} \dd x$, where $\dd x$ is (a fixed normalization of) the Lebesgue measure on $\mfd$. Then we have $X \log \abs{\mu} = - \dvg X$, where\footnote{The $\dvg$ operator coincides with the divergence $\dvg_{\dd x}$ with respect to the Lebesgue measure $\dd x$. One can check that it does not depend on the normalization of the latter.} $\dvg = \tr \nabla$.
\end{lemma}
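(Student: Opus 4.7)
The plan is to exploit the fact that invariance of the density~$\mu$ under the flow of~$X$ is equivalent, in intrinsic geometric terms, to the vanishing of the Lie derivative $\mathcal{L}_X \mu = 0$. Viewing $\mu$ as a top-degree density (or equivalently, working in a fixed global chart, as a top form $|\mu|\,dx$ on $\mfd = \R^d$), the computation reduces to a direct application of the Leibniz rule for Lie derivatives combined with the identity $\mathcal{L}_X(dx) = (\dvg X)\,dx$, where the divergence on the right-hand side is the standard one with respect to the Lebesgue measure $dx$, that is $\dvg X = \tr \nabla X$.

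More concretely, I would first recall that, by definition of the Lie derivative of a density, $X$ preserves $\mu$ if and only if $\mathcal{L}_X \mu = 0$, since $\frac{d}{dt}\big|_{t=0}(\phi_X^t)^\pullb \mu = \mathcal{L}_X \mu$, where $\phi_X^t$ denotes the flow generated by $X$. Next, using the Leibniz rule applied to the product $|\mu|\,dx$,
\begin{equation*}
\mathcal{L}_X \mu
    = \mathcal{L}_X(|\mu|\,dx)
    = (X |\mu|)\,dx + |\mu|\, \mathcal{L}_X(dx) .
\end{equation*}
Using $\mathcal{L}_X(dx) = (\dvg X)\,dx$, which is the standard computation identifying the divergence with respect to the Lebesgue measure as the logarithmic rate of change of the volume form, we obtain
\begin{equation*}
\mathcal{L}_X \mu
    = \bigl(X|\mu| + |\mu|\, \dvg X\bigr) dx .
\end{equation*}
Setting this equal to zero and dividing by $|\mu|$ (which does not vanish by assumption) yields $X \log |\mu| = -\dvg X$, as claimed.

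There is really no serious obstacle here; the statement is essentially a reformulation of the well-known identity relating the divergence with respect to a weighted volume, $\dvg_\mu X$, to the Euclidean divergence, namely $\dvg_\mu X = \dvg X + X \log |\mu|$, together with the observation that invariance of $\mu$ under the flow of~$X$ amounts to $\dvg_\mu X = 0$. The only minor point to be careful about is the verification that $\mathcal{L}_X(dx) = (\dvg X)\,dx$ in Euclidean coordinates, which follows from Cartan's formula $\mathcal{L}_X = d \circ \iota_X + \iota_X \circ d$ applied to the top form $dx$ (the second term vanishes since $dx$ is closed), yielding $\mathcal{L}_X(dx) = d(\iota_X dx) = \sum_j \partial_j X^j\, dx$.
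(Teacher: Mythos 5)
Your proof is correct, but it takes a genuinely different route from the paper's. You work pointwise and geometrically: you identify ``$X$ preserves $\mu$'' with $\lieder_X \mu = 0$ via the flow, then apply the Leibniz rule and the identity $\lieder_X(\dd x) = (\dvg X)\,\dd x$ (Cartan's formula), and divide by $\abs{\mu}$. The paper instead argues distributionally: it tests $\dvg X$ against $\varphi \in \cont_\comp^\infty(\mfd)$, integrates by parts with respect to $\dd x$, and then uses the preservation of $\mu$ in the form of the skew-symmetry of $X$ on $L^2(\mfd, \mu)$ (i.e.\ $\int (X\varphi)\,\dd\mu = 0$ for test functions), before converting back to $\dd x$. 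Your version makes the geometric content explicit and is self-contained once you grant the standard facts about Lie derivatives of densities; the paper's version is a four-line computation that plugs directly into the operator-theoretic way ``preservation of $\mu$'' is actually used in Section~\ref{subsubsec:X} (symmetry of $\frac{1}{\ii}X$ on $L^2(\mfd,\mu)$), so it never needs to invoke the equivalence with $\lieder_X\mu = 0$. The only point worth flagging in your write-up is precisely that identification: since the paper's working definition of ``preserving $\mu$'' is the $L^2(\mu)$-symmetry of $\frac{1}{\ii}X$, you should note (in one line) that this is equivalent to $\lieder_X \mu = 0$, which is standard; with that remark your argument is complete and correct.
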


\begin{proof}
For any test function $\cont_\comp^\infty(\mfd)$, we have by integration by parts
\begin{equation*}
\int_\mfd \varphi \dvg X \dd x
	= - \int_\mfd \nabla_X \varphi \dd x
	= - \int_\mfd (X \varphi) \abs*{\mu}^{-1} \dd \mu
	= \int_\mfd \varphi (X \abs*{\mu}^{-1}) \dd \mu
	= - \int_\mfd \varphi (X \log \abs*{\mu}) \dd x .
\end{equation*}
The two crucial steps consist in an integration by parts with respect to the measure $\dd x$ in the first equality, and using the anti-symmetry of $X$ with respect to the measure $\dd \mu$ in the third equality.
\end{proof}

Now we proceed with the computation of the Weyl symbol of $P$ defined in~\eqref{eq:defPX}. Writing $X = \sum_{j=1}^d X^j \partial_j$ in global Euclidean coordinates, we have
\begin{equation*}
\Opw{\tfrac{1}{\ii} X}
	= \sum_{j=1}^d \Opw{X^j} \Opw{\xi_j}
	= \sum_{j=1}^d \Opw{X_j \xi_j - \dfrac{1}{2 \ii} \partial_j X^j}
	= \Opw{\xi. X - \dfrac{1}{2 \ii} \dvg X} ,
\end{equation*}
according to pseudo-differential calculus. Hence the Weyl symbol of $P$ in~\eqref{eq:defPX} is given by $P = \Opw{\xi.X}$.
As a preparation for the proofs below, let us compute the Hamiltonian vector field and the Hessian of $p$:
\begin{equation} \label{eq:Hpvectorfield}
H_p
	= \begin{pmatrix}
	X \\ - (\nabla X)^\star \xi
	\end{pmatrix} ,
		\qquad\qquad
\nabla^2 p
	= \begin{pmatrix}
	\xi. \nabla^2 X & \nabla X \\ \nabla X & 0
	\end{pmatrix} .
\end{equation}
Here $(\nabla X)^\star \in \Lop(T^\star \mfd)$ refers to the map dual to $\nabla X$, viewed as a linear map acting on $T \mfd$. We observe that derivatives of any order with respect to the $x$ variable can possibly blow up at fiber infinity ($\xi \to \infty$), in a situation similar to the previous case of waves with a non-flat metric, while derivatives of order larger than $2$ with respect to $\xi$ vanish.

We discuss admissibility of the metric~\eqref{eq:gvectorfield}.

\begin{lemma} \label{lem:gainthetaX}
The metric $g$ defined in~\eqref{eq:gvectorfield} subject to~\eqref{eq:conditionalphanu} is admissible. Its gain function and temperance rate are given by
\begin{equation*}
\gain_g(x, \xi)
	= \dfrac{1}{\jap*{\xi}_{I}^{\alpha_2 - \alpha_1}}
		\qquad \rm{and} \qquad
\exists C > 0 : \quad
	C^{-1} \jap*{\xi}_{I}^{\alpha_2}
		\le \theta_g(x, \xi)
		\le C \jap*{\xi}_{I}^{\alpha_2} .
\end{equation*}
\end{lemma}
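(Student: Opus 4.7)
The plan is to mimic the proof of Lemma~\ref{lem:gainthetahalfwaves}, taking advantage of the fact that our metric is already written in the block form $\jap{\xi}_I^{2a}\dd x^2 + \jap{\xi}_I^{-2b}\dd \xi^2$ with $a=\alpha_1$ and $b=\alpha_2$. Conditions~\eqref{eq:conditionalphanu} imply in particular $0 \le a \le b \le 1$ and $a < 1$, so admissibility (slow variation, temperance, uncertainty principle) will follow directly from~\cite[Lemma 2.2.18]{Lerner:10}. The additional constraint $\alpha_1 + \alpha_2 \ge 1$ is not needed at this stage; it will only enter when checking Assumption~\ref{assum:p} on the Hamiltonian $p$.

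Next, I would compute the {$\sympf$-dual} metric explicitly. Using Definition~\ref{def:sympfdual} with $\sympf = \left(\begin{smallmatrix} 0 & -\id \\ \id & 0 \end{smallmatrix}\right)$ in the splitting~\eqref{eq:splitting} and the block-diagonal form of $g$, one finds
\begin{equation*}
g^\sympf_{(x,\xi)}
    = \jap{\xi}_I^{2\alpha_2}\dd x^2 + \jap{\xi}_I^{-2\alpha_1}\dd \xi^2 .
\end{equation*}
Given a tangent vector $\zeta = \zeta_\hor + \zeta_\ver \in T\mfd \oplus T^\star\mfd$, this yields
\begin{equation*}
\abs{\zeta}_{g_{(x,\xi)}}^2 = \jap{\xi}_I^{2\alpha_1}\abs{\zeta_\hor}_I^2 + \jap{\xi}_I^{-2\alpha_2}\abs{\zeta_\ver}_I^2, \qquad
\abs{\zeta}_{g^\sympf_{(x,\xi)}}^2 = \jap{\xi}_I^{2\alpha_2}\abs{\zeta_\hor}_I^2 + \jap{\xi}_I^{-2\alpha_1}\abs{\zeta_\ver}_I^2 .
\end{equation*}
Since the ratio $g/g^\sympf$ is block-diagonal, the supremum defining $\gain_g$ in Definition~\ref{def:gaing} is attained on either a purely horizontal or purely vertical tangent vector; both give the same value $\jap{\xi}_I^{-2(\alpha_2-\alpha_1)}$. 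Hence $\gain_g(x,\xi) = \jap{\xi}_I^{-(\alpha_2-\alpha_1)}$, which is $\le 1$ under~\eqref{eq:conditionalphanu}, confirming the uncertainty principle.

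Finally, the temperance weight is computed from~\eqref{eq:deftheta} by choosing the background symplectic Euclidean metric $\sf{g} = \dd x^2 + \dd\xi^2$. The same block-diagonal argument gives
\begin{equation*}
\theta_g(x,\xi)^2
    = \sup_{\zeta \neq 0}\frac{\jap{\xi}_I^{2\alpha_2}\abs{\zeta_\hor}_I^2 + \jap{\xi}_I^{-2\alpha_1}\abs{\zeta_\ver}_I^2}{\abs{\zeta_\hor}_I^2 + \abs{\zeta_\ver}_I^2}
    = \max\bigl(\jap{\xi}_I^{2\alpha_2}, \jap{\xi}_I^{-2\alpha_1}\bigr)
    = \jap{\xi}_I^{2\alpha_2},
\end{equation*}
the last equality because $\alpha_1, \alpha_2 \ge 0$. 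Thus $\theta_g(x,\xi) = \jap{\xi}_I^{\alpha_2}$ exactly when computed with this particular background. The equivalence~\eqref{eq:controlthetaghalfwaves}-type estimate for an arbitrary flat background metric then follows from Proposition~\ref{prop:temperanceweight}.

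The proof is essentially a direct calculation; no step is genuinely delicate. The only mild point is to recall that in the definition of $\theta_g$ one must use a \emph{symplectic} Euclidean metric $\sf{g}$, which is the case for $\dd x^2 + \dd\xi^2$ with the canonical symplectic form, and that invoking Proposition~\ref{prop:temperanceweight} is what makes the conclusion independent of this auxiliary choice.
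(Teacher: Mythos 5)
Your proposal is correct and follows essentially the same route as the paper: the paper's proof simply cites \cite[Lemma 2.2.18]{Lerner:10} for admissibility under~\eqref{eq:conditionalphanu} and states that the gain function and temperance weight ``follow from the definitions,'' which is exactly the explicit block-diagonal computation of $g^\sympf$, $\gain_g$ and $\theta_g$ (with the symplectic background $\dd x^2 + \dd\xi^2$ and Proposition~\ref{prop:temperanceweight} for background-independence) that you spell out, mirroring the half-wave case of Lemma~\ref{lem:gainthetahalfwaves}.
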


\begin{proof}
Admissibility of $g$ comes from \cite[Lemma 2.2.18]{Lerner:10} under conditions~\eqref{eq:conditionalphanu}.
The expressions of the gain function and of the temperance weight follow from the definitions.
\end{proof}

Essential self-adjointness of $P$ has to be verified separately, since the condition~\eqref{eq:assumeessentialselfadjointness} is not true, except in very degenerate cases (such as a constant vector field $X$), so that Proposition~\ref{prop:classicalwellposedness} does not apply.

\begin{lemma} \label{lem:selfadjX}
Under the assumption~\eqref{eq:assumX} on $X$, the operator $P$ acting on $\cont_\comp^\infty(\mfd)$ is essentially self-adjoint. The Hamiltonian flow of $p$ is globally well-defined.
\end{lemma}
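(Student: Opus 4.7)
The plan is to handle the two claims separately, essential self-adjointness being the more delicate one because Proposition~\ref{prop:classicalwellposedness} does not apply here: condition~\eqref{eq:assumeessentialselfadjointness} fails since $p(x,\xi) = \xi \cdot X(x)$ is unbounded both above and below on every fiber. For the completeness of the Hamiltonian flow, I would use the explicit formula~\eqref{eq:Hpvectorfield}. The horizontal component $X(x)$ is bounded in the standard Euclidean sense by~\eqref{eq:assumX}, so $|x(t)|_I$ grows at most linearly; the vertical component satisfies $\dot\xi = -(\nabla X(x))^\star \xi$, and Grönwall's inequality gives $|\xi(t)|_I \le \exp\bigl(|\nabla X|_{I,\infty}|t|\bigr)|\xi(0)|_I$. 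Since no trajectory can blow up in finite time, $(\phi^t)_{t \in \R}$ is complete.

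For essential self-adjointness, I would construct the unitary group generated by $P$ directly from the flow of $X$ and then recover $P$ as its infinitesimal generator. The same Grönwall argument applied to $X$ itself shows that the flow $\phi_X^t$ of $X$ on $\mfd$ is complete, and~\eqref{eq:assumX} guarantees that each $\phi_X^t$ is a $\cont^\infty$-diffeomorphism of $\mfd$ with derivatives of all orders bounded uniformly. For $u \in L^2(\mfd)$ and $t \in \R$, set
\begin{equation*}
U(t) u(x) := \bigl|\det d\phi_X^{-t}(x)\bigr|^{1/2}\, u\bigl(\phi_X^{-t}(x)\bigr) .
\end{equation*}
A change of variables yields $\|U(t) u\|_{L^2} = \|u\|_{L^2}$, the chain rule gives the group law $U(t+s) = U(t) U(s)$, and strong continuity on $L^2(\mfd)$ follows from continuity of the flow together with a density argument. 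Stone's theorem~\cite[VIII.4]{RS:V1} then produces a self-adjoint operator $A$ with $U(t) = \e^{-\ii t A}$.

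The next step is to identify $A$ on $\cont_\comp^\infty(\mfd)$ via a direct computation of $\frac{\dd}{\dd t}\big|_{t = 0} U(t) u$, using $\frac{\dd}{\dd t}\big|_{t=0} \phi_X^{-t}(x) = -X(x)$ together with Jacobi's formula $\frac{\dd}{\dd t}\big|_{t=0} \det d\phi_X^{-t}(x) = -\dvg X(x)$. This yields $A u = \frac{1}{\ii}(X + \frac{1}{2} \dvg X) u = P u$ for every $u \in \cont_\comp^\infty(\mfd)$. Moreover, $\cont_\comp^\infty(\mfd)$ is invariant under $U(t)$ because $\phi_X^t$ is a smooth diffeomorphism mapping compact sets to compact sets and $|\det d\phi_X^{-t}|^{1/2}$ is smooth. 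By the standard fact that any dense subspace of $\dom A$ invariant under the associated unitary group is automatically a core for~$A$ (see e.g.~\cite[Theorem VIII.11]{RS:V1}), one obtains essential self-adjointness of $P$ on $\cont_\comp^\infty(\mfd)$ with unique self-adjoint extension~$A$.

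The main technical verification is the $U(t)$-invariance of $\cont_\comp^\infty(\mfd)$ combined with the core argument; everything else is either a routine Grönwall bound or a direct application of the spectral theorem and of the assumption~\eqref{eq:assumX} on uniform boundedness of all derivatives of~$X$.
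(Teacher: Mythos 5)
Your proof is correct, and the flow-completeness part is the same Grönwall argument as in the paper; but your treatment of essential self-adjointness takes a genuinely different route. The paper verifies the deficiency criterion $\ker(P^\ast \pm \ii) = \{0\}$ directly: given a putative deficiency vector $u$, it propagates an arbitrary $v_0 \in \cont_\comp^\infty(\mfd)$ by the explicit formula $v(t,x) = \exp\bigl(\int_0^t \tfrac{1}{2}(\dvg X)(\phi_X^s(x))\,\dd s\bigr) v_0(\phi_X^t(x))$, checks $\partial_t v = \ii P v$ with $v(t) \in \cont_\comp^\infty$ and $\norm{v(t)}_{L^2}$ constant, deduces $\inp*{u}{v(t)} = \e^{\mp t}\inp*{u}{v_0}$, and lets $t \to \pm\infty$ to force $u \perp \cont_\comp^\infty(\mfd)$. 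You instead package the same propagator as a unitary group $U(t)u = \abs{\det \dd\phi_X^{-t}}^{1/2}\, u \circ \phi_X^{-t}$ (note that by Liouville's formula $\abs{\det \dd\phi_X^{t}}^{1/2} = \exp\bigl(\tfrac12\int_0^t \dvg X(\phi_X^s)\,\dd s\bigr)$, so it is literally the paper's formula), apply Stone's theorem, identify the generator with $P$ on $\cont_\comp^\infty(\mfd)$ via the chain rule and Jacobi's formula, and conclude with the invariant-core theorem of Reed--Simon (Theorem VIII.11). Both arguments are sound and rest on the same explicit solution of the transport equation; yours is arguably more conceptual in that it exhibits the self-adjoint extension and its unitary group at once, at the cost of invoking Stone plus the invariant-domain core criterion, while the paper's is more elementary, needing only the basic characterization of essential self-adjointness via deficiency spaces and no abstract core theorem. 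One small point to keep in mind in your write-up: the identification $A u = P u$ for $u \in \cont_\comp^\infty(\mfd)$ requires the difference quotient $t^{-1}(U(t)u - u)$ to converge in $L^2$, which holds because the supports stay in a fixed compact set for bounded $t$ and the pointwise derivative is uniform there — worth stating explicitly, but not a gap.
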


\begin{proof}
We first show that the Hamiltonian flow is well-defined for all times. In view of the expression~\eqref{eq:Hpvectorfield} of the Hamiltonian vector field and~\eqref{eq:assumX}, we see that $H_p$ is a locally Lipschitz vector field. Then the Picard--Lindelöf theorem ensures that the Cauchy problem
\begin{equation*}
\dfrac{\dd}{\dd t} \rho(t)
	= H_p\left(\rho(t)\right) ,
		\qquad
\rho(0) = \rho_0 \in T^\star \mfd
\end{equation*}
admits a solution on some time interval $(-t_0, t_0)$, $t_0 > 0$. Then for any $t$ in this interval, the expression~\eqref{eq:Hpvectorfield} of $H_p$ and the Cauchy--Schwarz inequality yield
\begin{align*}
\dfrac{\dd}{\dd t} \abs*{\rho(t) - \rho_0}_{I}^2
	&\le 2  \abs*{H_p\left(\rho(t)\right)}_{I} \abs*{\rho(t) - \rho_0}_{I}
	\le 2 \left(\abs*{X}_{I, \infty} + \abs*{\nabla X}_{I, \infty} \abs*{\rho(t)}_{I}\right) \abs*{\rho(t) - \rho_0}_{I} \\
	&\le 4 \left(\abs*{X}_{I, \infty} + \abs*{\nabla X}_{I, \infty}\right) \jap*{\rho_0}_{I} \jap*{\rho(t) - \rho_0}_{I}^2 .
\end{align*}
Therefore, Grönwall's lemma implies that $\jap*{\rho(t)} \le C \e^{C \abs{t}}$ for some constant $C$, which implies that $t_0 = + \infty$.

Denote by $(\phi_X^t)_{t \in \R}$ the flow generated by $X$, that is to say the restriction of $H_p$ to the null section of $T^\star \mfd$.
It remains to show that $P$ is essentially self-adjoint. To check this, we let $u \in \ker(P^\ast \pm \ii)$, and prove that $u = 0$. Then for any $v_0 \in \cont_\comp^\infty(\mfd)$, set
\begin{equation*}
v(t, x)
	:= \exp\left(\int_0^t \frac{1}{2} (\dvg X)(\phi_X^s(x)) \dd s\right) v_0\left(\phi_X^t(x)\right) ,
		\qquad x \in \mfd, t \in \R .
\end{equation*}
One checks that $v(t ,\bigcdot) \in \cont_\comp^\infty(\mfd)$ for all times and that it solves
\begin{equation*}
\partial_t v(t, x)
	= \ii P v(t, x) .
\end{equation*}
Then we proceed as follows: by dominated convergence,
\begin{equation*}
\dfrac{\dd}{\dd t} \inp*{u}{v(t)}_{L^2}
	= \ii \inp*{u}{P v(t)}_{L^2}
	= \ii \inp*{P^\ast u}{v(t)}_{L^2}
	= \mp \inp*{u}{v(t)}_{L^2} .
\end{equation*}
The second equality relies on the fact that $v(t)$ remains in $\cont_\comp^\infty(\mfd)$ while the last one comes from the choice of $u$. We deduce that
\begin{equation*}
\inp*{u}{v(t)}_{L^2}
	= \e^{\mp t} \inp*{u}{v_0}_{L^2} ,
		\qquad \forall t \in \R.
\end{equation*}
Since $\norm{v(t)}_{L^2(\mfd)} = \norm{v_0}_{L^2(\mfd)}$ for all times, letting $t \to \pm \infty$ leads to $\inp*{u}{v_0}_{L^2} = 0$, for all $v_0 \in \cont_\comp^\infty(\mfd)$, hence $u = 0$.
\end{proof}

Next we estimate the derivatives of $p$.

\begin{lemma} \label{lem:derpX}
Suppose $g$ defined in~\eqref{eq:gvectorfield} is subject to~\eqref{eq:conditionalphanu}. Then we have
\begin{equation*}
\forall k \in \N , \qquad
	\abs*{\nabla^k p}_g
		\le C_k \jap*{\xi}_{I}^{\alpha_2 - \alpha_1 - (k-2) \alpha_1} .
\end{equation*}
\end{lemma}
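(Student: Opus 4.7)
The plan is to exploit heavily the fact that $p(x,\xi) = \xi.X(x)$ is affine (in fact linear) in the fiber variable $\xi$. This kills most terms in $\nabla^k p$ and reduces everything to estimating bounded tensors coming from $X$ and its derivatives.

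I would first decompose tangent vectors $X_j \in T_{(x,\xi)}(T^\star\mfd)$ according to the splitting~\eqref{eq:splitting} as $X_j = X_{j,\hor} + X_{j,\ver}$, where horizontal and vertical components live in $T\mfd$ and $T^\star\mfd$ respectively. With respect to the background metric $\dd x^2 + \dd \xi^2$, the definition~\eqref{eq:gvectorfield} of $g$ directly gives
\begin{equation*}
\abs{X_{j,\hor}}_I \le \jap{\xi}_I^{-\alpha_1} \abs{X_j}_g ,
\qquad
\abs{X_{j,\ver}}_I \le \jap{\xi}_I^{\alpha_2} \abs{X_j}_g .
\end{equation*}
By Remark~\ref{rmk:polarintro} it is enough to bound $\abs{\nabla^k p.X^k}$ for a single vector $X = X_\hor + X_\ver$. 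Expanding multilinearly, I would split $\nabla^k p.X^k$ into contributions $\nabla_x^{k_1}\nabla_\xi^{k_2} p$ with $k_1+k_2 = k$, evaluated on $k_1$ horizontal copies and $k_2$ vertical copies of $X$.

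The linearity of $p$ in $\xi$ then kills every term with $k_2 \ge 2$. Only two types survive:
\begin{equation*}
\nabla_x^k p . X_\hor^k = (\xi. \nabla^k X). X_\hor^k ,
\qquad
k \nabla_\xi \nabla_x^{k-1} p . (X_\hor^{k-1}, X_\ver) = k \, (\nabla^{k-1} X. X_\hor^{k-1}).X_\ver .
\end{equation*}
Using the assumption~\eqref{eq:assumX} that $\sup_\mfd \abs{\nabla^j X}_I < \infty$ for every $j \in \N$ together with the norm bounds on $X_\hor, X_\ver$ above, and the trivial inequality $\abs{\xi. v} \le \jap{\xi}_I \abs{v}_I$, the first term is bounded by $C_k \jap{\xi}_I^{1-k\alpha_1} \abs{X}_g^k$ and the second by $C_k \jap{\xi}_I^{\alpha_2 - (k-1)\alpha_1} \abs{X}_g^k$.

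The final step is a comparison of exponents: since the target exponent in the statement is $\alpha_2 - \alpha_1 - (k-2)\alpha_1 = \alpha_2 - (k-1)\alpha_1$, the mixed-derivative term already matches exactly. The pure-$x$ term is absorbed provided $1 - k\alpha_1 \le \alpha_2 - (k-1)\alpha_1$, i.e.\ $\alpha_1 + \alpha_2 \ge 1$, which is precisely the condition in~\eqref{eq:conditionalphanu}. This is the only place the constraint~\eqref{eq:conditionalphanu} (beyond $\alpha_1 \le \alpha_2$) enters, and there is no real obstacle; the proof is essentially a bookkeeping exercise once the splitting and linearity-in-$\xi$ are exploited. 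The small values $k=0,1,2$ are handled by exactly the same computation and give back the bounds $\jap{\xi}^{\alpha_1+\alpha_2}$, $\jap{\xi}^{\alpha_2}$, $\jap{\xi}^{\alpha_2 - \alpha_1}$ that the statement predicts.
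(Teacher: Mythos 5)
Your proof is correct and follows essentially the same route as the paper: split tangent vectors into horizontal and vertical parts for the metric~\eqref{eq:gvectorfield}, use that $p(x,\xi)=\xi.X(x)$ is linear in $\xi$ to discard all terms with two or more vertical derivatives, bound the surviving pure-$x$ and mixed terms via~\eqref{eq:assumX}, and absorb the pure-$x$ exponent using $\alpha_1+\alpha_2\ge 1$ (equivalently $1-\alpha_2\le\alpha_1$), exactly as in the paper's exponent bookkeeping.
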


\begin{proof}
Similarly to~\eqref{eq:derqsfg}, we have for any vector field $Y = Y_\hor + Y_\ver$:
\begin{align} \label{eq:checkthepower}
\abs*{\nabla^k p. \left(Y_\hor^{k_1}, Y_\ver^{k_2}\right)}
	&\le C \jap{\xi}_I^{1 - k_2} \abs*{Y_\hor}_I^{k_1} \abs*{Y_\ver}_I^{k_2}
	= C \jap{\xi}_I^{1 - k_2 - \alpha_1 k_1 + \alpha_2 k_2} \abs*{Y_\hor}_g^{k_1} \abs*{Y_\ver}_g^{k_2} \nonumber\\
	&= C \jap{\xi}_I^{\alpha_2 - \alpha_1 - \alpha_1 (k_1 - 1) - (1 - \alpha_2) (k_2 - 1)} \abs*{Y}_g^k .
\end{align}
Here $k = k_1 + k_2$, and the constant $C$ depends on norms of derivatives of $X$ in~\eqref{eq:assumX}. Now, we need to check only the cases $k_2 = 0$ and~$1$, since $p$ is polynomial of order $1$ in $\xi$, namely $\nabla^k p = 0$ as soon as $k_2 \ge 2$. For $k_2 = 0$, the power in the right-hand side of~\eqref{eq:checkthepower} reads
\begin{equation*}
\alpha_2 - \alpha_1 - \alpha_1 (k_1 - 1) - (1 - \alpha_2) (k_2 - 1)
	= \alpha_2 - \alpha_1 - \alpha_1 (k - 1) + (1 - \alpha_2)
	\le \alpha_2 - \alpha_1 - \alpha_1 (k - 2) ,
\end{equation*}
using the fact that $1 - \alpha_2 \le \alpha_1$ (see~\eqref{eq:conditionalphanu}). For $k_2 = 1$, the power in the right-hand side of~\eqref{eq:checkthepower} reads
\begin{equation*}
\alpha_2 - \alpha_1 - \alpha_1 (k_1 - 1) - (1 - \alpha_2) (k_2 - 1)
	=  \alpha_2 - \alpha_1 - \alpha_1 (k - 2) ,
\end{equation*}
which concludes the proof of the lemma.
\end{proof}

We finally prove the main result of Section~\ref{subsubsec:X}.

\begin{proof}[Proof of Proposition~\ref{prop:exX}]
Assumption~\ref{assum:mandatory} is satisfied by Lemma~\ref{lem:selfadjX}, and the metric $g$ is admissible by Lemma~\ref{lem:gainthetaX}.
We check sub-quadraticity: from Lemma~\ref{lem:derpX}, we have $\nabla^2 p \in S(m, g)$ with
\begin{equation} \label{eq:showsubqu}
m
	= \jap*{\xi}_{I}^{\alpha_2 - \alpha_1}
	= \gain_g^{-1}
\end{equation}
according to Lemma~\ref{lem:gainthetaX}. As for strong sub-quadraticity, we have $\nabla^3 p \in S(m, g)$ with
\begin{equation*}
m
	= \jap*{\xi}_{I}^{\alpha_2 - \alpha_1 - \alpha_1}
	= \gain_g^{-3} \jap*{\xi}_{I}^{- 2 (\alpha_2 - \alpha_1) - \alpha_1} .
\end{equation*}
Recalling that the temperance weight is a power of $\jap*{\xi}_{I}$ (Lemma~\ref{lem:gainthetaX}), this shows that Item~\ref{it:strongsubquad} of Assumption~\ref{assum:p} is satisfied. Finally, recalling~\eqref{eq:Hpvectorfield}, we have
\begin{equation*}
H_p \jap*{\xi}_{I}
	= O\left(\jap*{\xi}_{I}\right) .
\end{equation*}
Indeed, the action of $H_p$ with respect to the position variable is the differential operator $X$, which does not affect $\jap*{\xi}_{I}$, since the latter quantity does not depend on $x$. As for the momentum variable, we have
\begin{equation*}
\abs*{\left[ (\nabla X)^\star \xi \right] \jap*{\xi}_{I}}
	\lesssim \jap*{\xi}_{I}^{-1} \jap*{I\left(\xi, (\nabla X)^\star \xi\right)}
	\lesssim \jap*{\xi}_{I} .
\end{equation*}
Then one deduces that $\abs{\nabla_{H_p} g}_g \le C$ for some constant $C$ depending on the derivatives of $X$ in~\eqref{eq:assumX}, thereby proving Item~\ref{it:metriccontrol} of Assumption~\ref{assum:p} by a Gr{\"o}nwall argument. It shows also that Item~\ref{it:Lambda} is true in view of~\eqref{eq:showsubqu} and Remark~\ref{rmk:subquad}, and the proof is complete.
\end{proof}

\appendix

\Large
\section{Refined estimates for the Weyl--Hörmander calculus} \label{app:Moyal}
\normalsize

Here we collect proofs of the statements of Section~\ref{sec:WHcalculus}. We start with some definitions.

\subsection{Symplectic Fourier transform}

Recall the vector spaces $V$ and $W = V \oplus V^\star$, that are identified to $T_x \mfd$ and $T_\rho (T^\star \mfd)$ respectively for any $x \in \mfd$ and $\rho \in T^\star \mfd$ through the affine structure of $\mfd$ in~\eqref{eq:affinestructureM} and of $T^\star \mfd$ in~\eqref{eq:affinestructurephasespace}.
Recall the symplectic form $\sympf = \dd \xi \wedge \dd x$ on ~$W$ (the same as on $T^\star \mfd$). We introduce $\ft_\sympf$ to be the symplectic Fourier transform acting on functions on the symplectic vector space~$W$:
\begin{equation*}
\ft_\sympf f(\zeta)
	= \int_W f(\zeta') \e^{- \ii \sympf(\zeta, \zeta')} \dd \zeta' ,
\end{equation*}
where the Lebesgue measure $\dd \zeta$ is normalized\footnote{Usually, one has $\dd \zeta = (2 \pi)^{- \dim \mfd} \dd v \dd \eta$, where $\dd v$ and $\dd \eta$ are normalized with respect to a fixed Euclidean structure on $V$.} in such a way that $\ft_\sympf^2 = \id$, that is to say $\ft_\sympf^{-1} = \ft_\sympf$. The symplectic Fourier transform is continuous on $\sch(W)$ and can be extended as a continuous operator on $\sch'(W)$.

When working on the sum of two copies of $W$, we shall use the symplectic Fourier transform $\ft_{\sympf \oplus \sympf} = \ft_\sympf \otimes \ft_\sympf$, acting on Schwartz functions (or tempered distributions) on $W \oplus W$.

Once equipped with these Fourier transforms, we can build Fourier multipliers: given a tempered distribution $a \in \sch(T^\star \mfd)$, we define the operator $a(D) : \sch(T^\star \mfd) \to \sch'(T^\star \mfd)$ as
\begin{equation*}
a(D) f(\rho)
	:= \int_W a(\zeta) \left( \int_W f(\rho_0 + \zeta') \e^{- \ii \sympf(\zeta, \zeta')} \dd \zeta' \right) \e^{- \ii \sympf(\rho - \rho_0, \zeta)} \dd \zeta ,
\end{equation*}
where $\rho_0 \in T^\star \mfd$ is a fixed origin. The operator $a(D)$ does not depend on the choice of $\rho_0$, essentially because the symplectic form is invariant by translation. Therefore we shall write
\begin{equation*}
a(D)
	= \ft_\sympf^{-1} a \ft_\sympf .
\end{equation*}
Moreover $a(D)$ commutes with $\nabla$ since $\nabla \sympf = 0$. The same applies for Fourier multipliers on $T^\star \mfd \oplus T^\star \mfd = T^\star \mfd^{\oplus 2}$ using $\ft_{\sympf \oplus \sympf}$.

\subsection{Poisson operator} \label{subsec:Poissonop}

Let us define the following Fourier multiplier
\begin{equation*}
\frak{P} = - \ft_{\sympf \oplus \sympf}^{-1} \sympf \ft_{\sympf \oplus \sympf} ,
\end{equation*}
acting on $\sch(T^\star \mfd^{\oplus 2})$, where $\sympf$ is understood as the multiplication by the function $(\zeta_1, \zeta_2) \mapsto \sympf(\zeta_1, \zeta_2)$. More explicitly, this operator acts on functions $f \in \sch(W \oplus W)$ as
\begin{equation} \label{eq:Poissonoperatorintegralformula}
\left(\frak{P} f\right)(\zeta_1, \zeta_2)
	= - \int_{W \oplus W} \sympf(\zeta_1', \zeta_2') \left(\ft_{\sympf \oplus \sympf} f\right)(\zeta_1', \zeta_2') \e^{- \ii \sympf(\zeta_1, \zeta_1') - \ii \sympf(\zeta_2, \zeta_2')} \dd \zeta_1' \dd \zeta_2' .
\end{equation}
One can check that it is continuous on $\sch(T^\star \mfd^{\oplus 2})$, so that it extends to tempered distributions. To sum up, we have that
\begin{equation*}
\frak{P} : \sch(T^\star \mfd^{\oplus 2}) \longrightarrow \sch(T^\star \mfd^{\oplus 2})
	\qquad \rm{and} \qquad
\frak{P} : \sch'(T^\star \mfd^{\oplus 2}) \longrightarrow \sch'(T^\star \mfd^{\oplus 2})
\end{equation*}
are continuous.
Since $\sympf$ is alternating, the map $\sf{s} : (\rho_1, \rho_2) \mapsto (\rho_2, \rho_1)$ on $T^\star \mfd^{\oplus 2}$ (or rather the composition by this map) acts as follows on $\frak{P}$:
\begin{equation} \label{eq:symmetryfrakP}
\frak{P} \sf{s} = - \sf{s} \frak{P} .
\end{equation}
In addition, one can check that the complex conjugation commutes with $\frak{P}$.
Formally, one can think of this operator as
\begin{equation*}
\frak{P}
	= \left\{\nabla^{\rho_1}, \nabla^{\rho_2}\right\}
	= \sympf\left(H_{\bigcdot}^{\rho_1}, H_{\bigcdot}^{\rho_2}\right) .
\end{equation*}
In the right-hand side, given $a = a(\rho_1, \rho_2)$ a smooth function on $T^\star \mfd^{\oplus 2}$, we understand $H_a^{\rho_1}$ as the Hamiltonian vector field associated with $\rho_1 \mapsto a(\rho_1, \rho_2)$, with the $\rho_2$ variable frozen, and similarly for $H_a^{\rho_2}$ with the $\rho_1$ variable frozen (we refer to~\eqref{eq:defHp} for the definition of the Hamiltonian vector field). We provide with an alternative expression of $\frak{P}$.

\begin{lemma} \label{lem:computationPoissonop}
For any $f \in \sch(T^\star \mfd^{\oplus 2})$, the following holds:
\begin{equation*}
\frak{P} f(\rho_1, \rho_2)
	= \dvg_{\rho_2} \left( H_{f(\rho_1, \rho_2)}^{\rho_1} \right)
	= - \dvg_{\rho_1} \left( H_{f(\rho_1, \rho_2)}^{\rho_2} \right) ,
		\qquad \rho_1, \rho_2 \in T^\star \mfd .
\end{equation*}
\end{lemma}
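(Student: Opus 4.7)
The plan is to verify both identities by direct computation from the integral formula~\eqref{eq:Poissonoperatorintegralformula}, the key point being that the Hamiltonian vector fields $H^{\rho_j}_f$ admit a simple Fourier-side expression. Throughout I would use that $\tilde f := \ft_{\sympf \oplus \sympf} f$ is Schwartz, so that all differentiations under the integral sign are fully justified by dominated convergence. I would also invoke the identification $W \simeq T_\rho(T^\star \mfd)$ provided by the affine structure~\eqref{eq:affinestructurephasespace}, under which any $\eta \in W$ becomes a constant, hence divergence-free, vector field on $T^\star \mfd$.

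First I would express $H^{\rho_j}_{f(\rho_1, \rho_2)}$ through Fourier inversion. Differentiating under the integral and using $\nabla_X^{\rho_j} \e^{-\ii \sympf(\rho_j, \eta_j)} = -\ii \sympf(X, \eta_j) \e^{-\ii \sympf(\rho_j, \eta_j)}$, together with the defining relation $\sympf(H^{\rho_j}_f, X) = - d_{\rho_j} f . X$ and the non-degeneracy of $\sympf$, one arrives at
\begin{equation*}
H^{\rho_j}_{f(\rho_1, \rho_2)} = -\ii \int_{W \oplus W} \tilde f(\eta_1, \eta_2)\, \eta_j\, \e^{-\ii \sympf(\rho_1, \eta_1) - \ii \sympf(\rho_2, \eta_2)} \dd \eta_1 \dd \eta_2 , \qquad j \in \{1, 2\} .
\end{equation*}

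Next I would take the divergence in the other variable. Because $\eta_j$ is a constant vector field, Leibniz reduces this to $\dvg_{\rho_k}(\phi\, \eta_j) = \nabla_{\eta_j}^{\rho_k} \phi$ for any smooth scalar $\phi$. Applied to the integrand with $\phi = \e^{-\ii \sympf(\rho_k, \eta_k)}$ (the only factor depending on $\rho_k$), this produces a factor $-\ii \sympf(\eta_j, \eta_k)$. For $(j, k) = (1, 2)$, the resulting expression
\begin{equation*}
\dvg_{\rho_2} H^{\rho_1}_{f(\rho_1, \rho_2)} = - \int_{W \oplus W} \sympf(\eta_1, \eta_2) \tilde f(\eta_1, \eta_2)\, \e^{-\ii \sympf(\rho_1, \eta_1) - \ii \sympf(\rho_2, \eta_2)} \dd \eta_1 \dd \eta_2
\end{equation*}
is exactly $\frak{P} f(\rho_1, \rho_2)$ in view of~\eqref{eq:Poissonoperatorintegralformula}. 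For $(j, k) = (2, 1)$, the antisymmetry $\sympf(\eta_2, \eta_1) = -\sympf(\eta_1, \eta_2)$ flips the sign, giving the companion identity $-\dvg_{\rho_1} H^{\rho_2}_{f(\rho_1, \rho_2)} = \frak{P} f(\rho_1, \rho_2)$.

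The argument is essentially bookkeeping; the only care needed is to consistently track the role of each variable under the identification $W \simeq T_\rho(T^\star \mfd)$, and no serious obstacle is anticipated. Alternatively, and perhaps more conceptually, the second identity can be derived from the first by applying it to $\sf{s} f$ and invoking the symmetry relation~\eqref{eq:symmetryfrakP}.
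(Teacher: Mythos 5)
Your proof is correct and is essentially the paper's argument run in the opposite direction: the paper computes $\ft_{\sympf\oplus\sympf}\frak{P} f$ and identifies the multiplier $\sympf(\zeta_1',\zeta_2')$ with a directional derivative and then a divergence via two integration-by-parts identities, whereas you start from the Fourier inversion formula, read off $H^{\rho_j}_f$ as an oscillatory integral, and take the divergence under the integral sign — the same exchange of symplectic-form multiplication with derivative/divergence, just organized on the physical side. Your sign bookkeeping checks out, and your alternative derivation of the second identity from~\eqref{eq:symmetryfrakP} is exactly how the paper obtains it.
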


\begin{proof}
We deal with functions $f \in \sch(W^{\oplus 2})$. Going to functions on $T^\star \mfd^{\oplus 2}$ follows by choosing an arbitrary origin $\rho_0$. By integration by parts, we have
\begin{equation*}
\ft_\sympf  \left(\tfrac{1}{\ii} \nabla_{\zeta_0} f\right)(\zeta)
	= \sympf(\zeta, \zeta_0) \ft_\sympf f(\zeta) ,
		\qquad \forall f \in \sch(W), \forall \zeta, \zeta_0 \in W
\end{equation*}
(recall the definition of $\nabla_{\zeta_0}$ in Section~\ref{subsub:tensorsandnorms}),
as well as for any compactly supported vector field $X$ on $W$:
\begin{equation*}
\ft_\sympf \left(\dvg X\right)(\zeta)
	= - \int_W \nabla_X^{\zeta'} \e^{- \ii \sympf(\zeta, \zeta')} \dd \zeta'
	= \int_W \ii \sympf\left(\zeta, X(\zeta')\right) \e^{- \ii \sympf(\zeta, \zeta')} \dd \zeta'
	= \ii \ft_\sympf\left( \sympf(\zeta, X(\bigcdot) \right)(\zeta) .
\end{equation*}
From these two facts, together with the definition of the Hamiltonian vector field~\eqref{eq:defHp}, we deduce the following:
\begin{align*}
\ft_{\sympf \oplus \sympf} \frak{P} f(\zeta_1, \zeta_2)
	&= - (\ft_\sympf)_{\zeta_2' \to \zeta_2} \left(\sympf(\zeta_1, \zeta_2) (\ft_\sympf)_{\zeta_1' \to \zeta_1} a(\zeta_1', \zeta_2')\right) \\
	&= - (\ft_\sympf)_{\zeta_2' \to \zeta_2} \left((\ft_\sympf)_{\zeta_1' \to \zeta_1}  \tfrac{1}{\ii} \nabla_{\zeta_2}^{\zeta_1'} a(\zeta_1', \zeta_2')\right) \\
	&= - (\ft_\sympf)_{\zeta_1' \to \zeta_1} (\ft_\sympf)_{\zeta_2' \to \zeta_2} \tfrac{1}{\ii} \sympf\left(\zeta_2, H_{a(\zeta_1', \zeta_2')}^{\zeta_1'}\right) \\
	&= (\ft_\sympf)_{\zeta_1' \to \zeta_1} (\ft_\sympf)_{\zeta_2' \to \zeta_2} \dvg_{\zeta_2'} H_{a(\zeta_1', \zeta_2')}^{\zeta_1'} ,
\end{align*}
which gives the desired result by taking the inverse Fourier transform. To have the expression in the other way around, we simply use~\eqref{eq:symmetryfrakP}. This finishes the proof of the lemma.
\end{proof}

Next we give bounds on the operator $\frak{P}$ when derivatives are measures with respect to a certain metric on $T^\star \mfd^{\oplus 2}$.

\begin{lemma} \label{lem:continuityfrakP}
Let $g_1$ and $g_2$ be two Riemmanian metrics on $T^\star \mfd$. Then the following holds:
\begin{equation*}
\forall j \in \N, \forall \ell \in \N , \qquad
	\abs*{\nabla^\ell \frak{P}^j f}_{g_1 \oplus g_2}
		\le (2 \dim \mfd)^j \gain_{g_1 \oplus g_2}^j \abs*{\nabla^{j + \ell} f}_{g_1 \oplus g_2} ,
\end{equation*}
where $\gain_{g_1 \oplus g_2}(\rho_1, \rho_2)$ is defined by\footnote{Notice that the restriction of $\gain_{g_1 \oplus g_2}$ to the diagonal is the joint gain function $h_{g_1, g_2}$ defined in~\eqref{eq:defjointgain}. See Lemma~\ref{lem:justificationequalities} for a proof of the right-hand side equality in~\eqref{eq:gaing1oplusg2}.}
\begin{equation} \label{eq:gaing1oplusg2}
\gain_{g_1 \oplus g_2}(\rho_1, \rho_2)
	= \sup_{\zeta \in W \setminus \{0\}} \dfrac{\abs*{\zeta}_{(g_1)_{\rho_1}}}{\abs*{\zeta}_{(g_2^\sympf)_{\rho_2}}}
	= \sup_{\zeta \in W \setminus \{0\}} \dfrac{\abs*{\zeta}_{(g_2)_{\rho_2}}}{\abs*{\zeta}_{(g_1^\sympf)_{\rho_1}}} ,
		\qquad \rho_1, \rho_2 \in T^\star \mfd .
\end{equation}
\end{lemma}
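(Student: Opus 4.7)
\emph{Proof plan.}

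The plan is to reduce everything to the elementary case $j=1$, $\ell=0$ via an induction, and then extract the factor $\gain_{g_1 \oplus g_2}$ from the symplectic pairing in that core case. The key structural observation is that $\frak{P}$ is a Fourier multiplier with a constant symbol (the bilinear form $-\sigma$), which in particular means $\frak{P}$ is a constant-coefficient differential operator in any affine coordinate system on $T^\star \mfd \oplus T^\star \mfd$. Because $\sigma$ is parallel ($\nabla \sigma = 0$) and the connection $\nabla$ on $T^\star \mfd$ associated with the affine structure is flat and torsion-free, $\frak{P}$ commutes with $\nabla$ on both factors:
\begin{equation*}
\nabla^\ell \frak{P}^j f = \frak{P}^j \nabla^\ell f.
\end{equation*}
This identity (applied to tensor-valued $f$ componentwise) reduces the statement to the case $\ell = 0$ applied to the tensor $\nabla^\ell f$.

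I then proceed by induction on $j$. The case $j=0$ is trivial. For the inductive step, I write $\frak{P}^{j+1} f = \frak{P}(\frak{P}^j f)$, apply the $j=1$ case to $\frak{P}^j f$, and use the commutation identity to convert $\nabla \frak{P}^j f$ into $\frak{P}^j \nabla f$ before invoking the inductive hypothesis on the latter. This compounds the factors $(2 \dim \mfd) \gain_{g_1 \oplus g_2}$ at each step, yielding the final constant $(2 \dim \mfd)^j \gain_{g_1 \oplus g_2}^j$.

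The core estimate is therefore the case $j=1$, $\ell=0$, namely
\begin{equation*}
\abs{\frak{P} f(\rho_1, \rho_2)} \le 2 \dim \mfd \cdot \gain_{g_1 \oplus g_2}(\rho_1, \rho_2) \cdot \abs{\nabla f}_{g_1 \oplus g_2}(\rho_1, \rho_2).
\end{equation*}
By Lemma~\ref{lem:computationPoissonop}, $\frak{P} f(\rho_1, \rho_2) = \dvg_{\rho_2}(H_f^{\rho_1})$. Fixing a point $(\rho_1, \rho_2)$ and choosing a basis $(e_i)_{i = 1}^{2 \dim \mfd}$ of $W$ orthonormal for $(g_2)_{\rho_2}$ (so that the Lebesgue divergence equals the $g_2$-divergence at that point), I expand the divergence in this frame as a sum of $2 \dim \mfd$ mixed partials. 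Each term couples one $\rho_1$-derivative (carried by the Hamiltonian vector field $H_f^{\rho_1}$) with one $\rho_2$-derivative; Lemma~\ref{lem:seminormsHp} identifies $|H_f^{\rho_1}|_{g_1}$ with $|\nabla^{\rho_1} f|_{g_1^\sigma}$, and the very definition of the joint gain function~\eqref{eq:gaing1oplusg2} allows the conversion between $g_1^\sigma$-norms at $\rho_1$ and $g_2$-norms at $\rho_2$, producing exactly one factor of $\gain_{g_1 \oplus g_2}$.

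The main obstacle will be bookkeeping: establishing the $j=1$ identity intrinsically, making sure the symplectic contraction is bounded sharply by the joint gain (rather than by a looser constant), and verifying that the Lebesgue divergence arising in Lemma~\ref{lem:computationPoissonop} can indeed be evaluated by summing $2 \dim \mfd$ coordinate terms at the chosen orthonormal frame (so that the combinatorial constant is exactly $2 \dim \mfd$ and not larger). Once the $j=1$ inequality is in place, the induction and the commutation with $\nabla$ are purely formal.
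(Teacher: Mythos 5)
Your proposal is correct and follows essentially the same route as the paper: the core case $j=1$, $\ell=0$ is obtained from Lemma~\ref{lem:computationPoissonop} by bounding the trace $\dvg_{\rho_2} H_f^{\rho_1}$ by $2\dim\mfd$ times an operator norm and then converting norms through the joint gain together with the identity relating $H_a$ and $\nabla a$, and the general case follows by commuting $\nabla$ with the Fourier multiplier $\frak{P}$ and iterating in $j$, exactly as in the paper's proof. The only cosmetic differences are that you evaluate the trace in a $(g_2)_{\rho_2}$-orthonormal frame whereas the paper bounds it directly by the operator norm taken with respect to $g_1^\sympf$ (same constant $2\dim\mfd$), and that the identity you actually need from Lemma~\ref{lem:seminormsHp} is $\abs{H_a}_{g^\sympf}=\abs{\nabla a}_{g}$, the twin of the one you quoted.
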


\begin{proof}
We recall that $\dvg X = \tr \nabla X$ for any vector field $X$ on $T^\star \mfd$, and that for any norm $\norm{\bullet}$ on $T_\rho (T^\star \mfd)$, we have $\abs{\tr \nabla X} \le \dim(T^\star \mfd) \norm*{\nabla X}$. We fix two points $\rho_1, \rho_2 \in T^\star \mfd$ and write $g_j = (g_j)_{\rho_j}$, $j = 1, 2$, for simplicity. From Lemma~\ref{lem:computationPoissonop}, we deduce that
\begin{align*}
\abs*{\frak{P} f(\rho_1, \rho_2)}
	&\le \dim(T^\star \mfd) \abs*{\nabla^{\rho_2} H_f^{\rho_1}}_{g_1^\sympf}
	= \dim(T^\star \mfd) \sup_{\zeta_2 \in W \setminus \{0\}} \dfrac{\abs{\nabla_{\zeta_2}^{\rho_2} H_f^{\rho_1}}_{g_1^\sympf}}{\abs{\zeta_2}_{g_1^\sympf}} \\
	&= \dim(T^\star \mfd) \sup_{\zeta_2 \in W \setminus \{0\}} \dfrac{\abs{\zeta_2}_{g_2}}{\abs{\zeta_2}_{g_1^\sympf}} \dfrac{\abs*{H_{\nabla_{\zeta_2}^{\rho_2} f}^{\rho_1}}_{g_1^\sympf}}{\abs{\zeta_2}_{g_2}} \\
	&= \dim(T^\star \mfd) \sup_{\zeta_2 \in W \setminus \{0\}} \sup_{\zeta_ 1\in W \setminus \{0\}} \dfrac{\abs{\zeta_2}_{g_2}}{\abs{\zeta_2}_{g_1^\sympf}} \dfrac{\abs{\nabla_{\zeta_1}^{\rho_1} \nabla_{\zeta_2}^{\rho_2} f}}{\abs{\zeta_1}_{g_1} \abs*{\zeta_2}_{g_2}} \\
	&\le \dim(T^\star \mfd) \gain_g(\rho_1, \rho_2) \abs*{\nabla f(\rho_1, \rho_2)}_{g_1 \oplus g_2} .
\end{align*}
The third equality comes from the fact that
\begin{equation*}
\abs*{H_a}_{g^\sympf}
	= \sup_{\zeta \in W \setminus \{0\}} \dfrac{\abs{\sympf(H_a, g^{-1} \sympf \zeta)}}{\abs{\zeta}_{g^\sympf}}
	= \sup_{\zeta \in W \setminus \{0\}} \dfrac{\abs{\sympf(H_a, \zeta)}}{\abs{\zeta}_g}
	= \sup_{\zeta \in W \setminus \{0\}} \dfrac{\abs{\nabla_\zeta a}}{\abs{\zeta}_g}
	= \abs*{\nabla a}_g .
\end{equation*}
To handle derivatives, we remark that $\nabla$ commutes with $\frak{P}$ since the latter is a Fourier multiplier, hence the result for $j = 1$. Iterating this result, we obtain the statement for any $j \ge 1$ (the result for $j = 0$ is direct).
\end{proof}

\subsection{Moyal operator} \label{subsec:Moyalop}

Let us also introduce for any $s \in \R$ the operator defined by
\begin{equation*}
\e^{- \ii \frac{s}{2} \frak{P}}
	= \ft_{\sympf \oplus \sympf}^{-1} \e^{\ii \frac{s}{2} \sympf} \ft_{\sympf \oplus \sympf} ,
\end{equation*}
where $\e^{\ii \frac{s}{2} \sympf}$ is understood as the multiplication by the function $(\zeta_1, \zeta_2) \mapsto \e^{\ii \frac{s}{2} \sympf(\zeta_1, \zeta_2)}$. Notice that a more explicit formula as~\eqref{eq:Poissonoperatorintegralformula} is available, with $\e^{\ii \frac{s}{2} \sympf}$ in place of $\sympf$.
Similarly to $\frak{P}$, one can show that the operators
\begin{equation*}
\e^{- \ii \frac{s}{2} \frak{P}} : \sch(T^\star \mfd^{\oplus 2}) \longrightarrow \sch(T^\star \mfd^{\oplus 2})
	\qquad \rm{and} \qquad
\e^{- \ii \frac{s}{2} \frak{P}} : \sch'(T^\star \mfd^{\oplus 2}) \longrightarrow \sch'(T^\star \mfd^{\oplus 2})
\end{equation*}
are continuous. One can see from the definition that
\begin{equation} \label{eq:Moyalcomplexconjugation}
\e^{- \ii \frac{s}{2} \frak{P}} f
	= \ovl{\e^{- \ii \frac{s}{2} \frak{P}} (\bar f \circ \sf{s})} \circ \sf{s} ,
\end{equation}
where $\sf{s}$ is the map $(\rho_1, \rho_2) \mapsto (\rho_2, \rho_1)$. Notice that this is consistent with~\eqref{eq:symmetryfrakP}.

\subsection{Bi-confinement estimate}

The operator $\e^{- \frac{\ii}{2} \frak{P}}$ is related to the Moyal product by just taking the restriction to the diagonal; see formula~\eqref{eq:exprMoyal}. We quickly reproduce the so-called bi-confinement estimate. Proofs are mainly based on~\cite{Lerner:10}; see also~\cite{Hoermander:V3}.

Let us start with a technical lemma that will be useful in several places in the sequel. It is partly contained in~\cite[Lemma 2.2.24]{Lerner:10}.

\begin{lemma} \label{lem:integr}
Let $g$ be an admissible metric with structure constants $C_g$, $r_g$ and $N_g$ (see Proposition~\ref{prop:improvedadmissibility}). Then the following holds.
\begin{enumerate}
\item\label{it:integrabilityg} There exists an integer $n$, depending only on structure constants of $g$, such that
\begin{equation*}
\sup_{\rho \in T^\star \mfd} \int_{T^\star \mfd} \dfrac{\dd \vol_g(\rho_0)}{\jap{\dist_{(g_\rho + g_{\rho_0})^\sympf}\left( B_{r_g}^g(\rho), B_{r_g}^g(\rho_0) \right)}^n} < \infty .
\end{equation*}
\item\label{it:minorationdowns} For any $\rho, \rho_1, \rho_2 \in T^\star \mfd$ and $r_1, r_2 \in (0, r_g]$, we have for $j \in \{1, 2\}$:
\begin{equation*}
\jap*{\dist_{g_{\rho_j}^\sympf}\left( \rho, B_{r_j}^g(\rho_j) \right)}
	\le \sqrt{2} C_g \jap*{\dist_{(g_{\rho_1} + g_{\rho_2})^\sympf}\left( \rho, B_{r_1}^g(\rho_1) \right) + \dist_{(g_{\rho_1} + g_{\rho_2})^\sympf}\left( \rho, B_{r_2}^g(\rho_2) \right)}^{1+N_g} .
\end{equation*}
\end{enumerate}
\end{lemma}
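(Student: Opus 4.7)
I would begin with part (2), which is a direct manipulation using improved admissibility and $\sympf$-duality. Fix $\rho, \rho_1, \rho_2 \in T^\star \mfd$ and $r_j \in (0, r_g]$; write $\delta_j := \dist_{(g_{\rho_1}+g_{\rho_2})^\sympf}(\rho, B_{r_j}^g(\rho_j))$ for $j = 1, 2$, and select $\tilde\rho_j \in \bar B_{r_j}^g(\rho_j)$ with $|\rho - \tilde\rho_j|_{(g_{\rho_1}+g_{\rho_2})^\sympf} = \delta_j$. Since $g^\natural \ge g$ by~\eqref{eq:chainineq}, $\sympf$-duality gives $\dist_{(g_{\rho_1}^\natural + g_{\rho_2}^\natural)^\sympf} \le \dist_{(g_{\rho_1}+g_{\rho_2})^\sympf}$; combining this with the inclusion $B_{r_j}^g(\rho_j) \subset B_{r_g}^g(\rho_j)$ and the triangle inequality through $\rho$ yields $\dist_{(g_{\rho_1}^\natural + g_{\rho_2}^\natural)^\sympf}(B_{r_g}^g(\rho_1), B_{r_g}^g(\rho_2)) \le \delta_1 + \delta_2$. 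Feeding this into the improved admissibility estimate~\eqref{eq:unifadmnatural} at $t=0$ produces $g_{\rho_1} \le C_g^2 \jap{\delta_1+\delta_2}^{2N_g} g_{\rho_2}$ and the symmetric inequality. Hence $g_{\rho_1} + g_{\rho_2} \le 2 C_g^2 \jap{\delta_1+\delta_2}^{2N_g} g_{\rho_j}$, and dualizing yields $|v|_{g_{\rho_j}^\sympf} \le \sqrt 2 C_g \jap{\delta_1+\delta_2}^{N_g} |v|_{(g_{\rho_1}+g_{\rho_2})^\sympf}$ for every $v \in W$. Applied to $v = \rho - \tilde\rho_j$, followed by a direct computation with $\jap{\cdot}$ (using $1 + \delta_j^2 \le \jap{\delta_1+\delta_2}^2$ to absorb the final factor of $\delta_j$), the claimed inequality with constant $\sqrt 2 C_g$ and exponent $1+N_g$ drops out.

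For part (1), I first reduce to a fixed-metric integrand via part (2). Specializing (2) to $\rho_1 = \rho$, $\rho_2 = \rho_0$, $r_1 = r_2 = r_g$, applying it at a test point $\rho' \in B_{r_g}^g(\rho_0)$ (for which the second distance vanishes), then taking the infimum over $\rho'$, gives
\[
\jap{\dist_{g_\rho^\sympf}(B_{r_g}^g(\rho), B_{r_g}^g(\rho_0))} \le \sqrt 2 C_g \jap{\dist_{(g_\rho+g_{\rho_0})^\sympf}(B_{r_g}^g(\rho), B_{r_g}^g(\rho_0))}^{1+N_g} ,
\]
so it suffices to bound, uniformly in~$\rho$, the integral
\[
\tilde I(\rho) := \int_{T^\star \mfd} \frac{\dd \vol_g(\rho_0)}{\jap{\dist_{g_\rho^\sympf}(B_{r_g}^g(\rho_0), B_{r_g}^g(\rho))}^{m}}
\]
for $m = n/(1+N_g)$, with $g_\rho^\sympf$ now a \emph{fixed} Euclidean metric. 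Next, cover $T^\star \mfd$ by a countable family of balls $B_{r_g}^g(\rho_k)$ with bounded overlap, obtained by a standard maximal argument on the sub-balls $B_{r_g/(2C_g)}^g(\rho_k)$ (which are pairwise disjoint by slow variation). Slow variation of $g$ also implies $\vol_g(B_{r_g}^g(\rho_k)) \le V$ uniformly in~$k$, so
\[
\tilde I(\rho) \lesssim \sum_k \jap{\dist_{g_\rho^\sympf}(B_{r_g}^g(\rho_k), B_{r_g}^g(\rho))}^{-m} .
\]
Grouping terms dyadically by $2^R \le \jap{\dist_{g_\rho^\sympf}(\ldots)} < 2^{R+1}$ and using that the disjoint sub-balls at distance $\sim 2^R$ fit in a polynomially large set (for a suitable power of $2^R$) renders the sum summable for $m$ large enough, which in turn fixes~$n$.

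The main obstacle is the dyadic counting estimate in the last step: the disjoint sub-balls $B_{r_g/(2C_g)}^g(\rho_k)$ live naturally in their own $g_{\rho_k}^\sympf$-geometry, whereas the distance that organizes the sum is measured with the fixed metric $g_\rho^\sympf$. Comparing these two Euclidean metrics requires transporting the temperance property of $g$ (Definition~\ref{def:admissiblemetric}), together with the chain of inequalities~\eqref{eq:chainineq} relating $g$, $g^\natural$ and $g^\sympf$, to control the $g_\rho^\sympf$-volume occupied by each sub-ball in terms of $\jap{|\rho - \rho_k|_{g_\rho^\sympf}}$ raised to an exponent depending only on structure constants of~$g$. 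Once this polynomial control is secured, the convergence of the geometric series is automatic.
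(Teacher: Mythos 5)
Your argument for Item~\ref{it:minorationdowns} is correct and is essentially the paper's own proof: both rest on the improved admissibility estimate~\eqref{eq:unifadmnatural} at $t=0$, the comparison $g\le g^\natural$ from~\eqref{eq:chainineq} together with $\sympf$-duality~\eqref{eq:sympfduality}, and the triangle inequality through $\rho$; you simply apply the triangle inequality before invoking temperance rather than after, and the constant $\sqrt2\,C_g$ and exponent $1+N_g$ come out the same way.

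For Item~\ref{it:integrabilityg} there is a genuine gap. The paper does not reprove this estimate --- it invokes \cite[Lemma 2.2.24, (2.2.29)]{Lerner:10} --- and your sketch stops exactly where that lemma does the work. Your reduction via Item~\ref{it:minorationdowns} to the integrand $\jap{\dist_{g_\rho^\sympf}(B_{r_g}^g(\rho_0),B_{r_g}^g(\rho))}^{-m}$ with $m=n/(1+N_g)$ is fine, but the two steps after it are not supplied. First, when passing from the integral to the sum over the covering you replace, for $\rho_0\in B_{r_g}^g(\rho_k)$, the distance $\dist_{g_\rho^\sympf}(B_{r_g}^g(\rho_0),B_{r_g}^g(\rho))$ by $\dist_{g_\rho^\sympf}(B_{r_g}^g(\rho_k),B_{r_g}^g(\rho))$. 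This substitution is not automatic: the balls are defined by the slowly varying metric $g$, while the distance is measured in the fixed metric $g_\rho^\sympf$, and the $g_\rho^\sympf$-size of a unit $g$-ball is governed by $\gain_g^{-1}$ (e.g.\ $\hslash^{-1}$ in the semiclassical case), not by structure constants; moving the center within a $g$-ball can therefore change the distance by an amount that is unbounded in additive terms, so one needs a temperance-type comparison with a power loss, which you do not give (alternatively, one can bypass the covering entirely by a distribution-function argument, but that leads straight to the second point). Second --- and you flag this yourself as the ``main obstacle'' --- the dyadic counting is where all the content lies: what is needed is that $\vol_g\bigl(\{\rho_0:\dist_{g_\rho^\sympf}(B_{r_g}^g(\rho_0),B_{r_g}^g(\rho))\le T\}\bigr)$ is bounded by a polynomial in $T$, uniformly in $\rho$, with exponent and constant depending only on structure constants. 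Establishing this requires at least the symmetrized temperance inequality $g_{\rho_0}\le C\,g_\rho\,\jap{\rho_0-\rho}_{g_\rho^\sympf}^{2N}$ (cf.\ Remark~\ref{rmk:otherpointtemperance}) to control $\det g_{\rho_0}$, together with the uncertainty-principle inequality $\det g_\rho\le\det g_\rho^\sympf$ to convert $g_\rho^\sympf$-volume into $\vol_g$-volume; moreover a crude containment of the relevant set in a large $g_\rho^\sympf$-ball does not suffice, since the $g_\rho^\sympf$-radius of $B_{r_g}^g(\rho)$ itself is not uniformly bounded, so one must argue on the Minkowski sum (or simply quote Lerner's lemma, as the paper does). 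As written, your proposal proves Item~\ref{it:minorationdowns} but only reduces Item~\ref{it:integrabilityg} to an unproven polynomial volume bound.
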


\begin{proof}
The first item of the lemma comes from~\cite[Lemma 2.2.24 (2.2.29)]{Lerner:10}. Let us prove the second one. For simplicity, let us write $g_j$ and $B_j$ in place of $g_{\rho_j}$ and $B_{r_j}^g(\rho_j)$ respectively, $j = 1, 2$. Let $\{j, j'\} = \{1, 2\}$. Then from Proposition~\ref{prop:improvedadmissibility} combined with~\eqref{eq:chainineq}, we deduce that
\begin{equation*}
g_{j'}
	\le C_g^2 g_j \jap*{\dist_{(g_1 + g_2)^\sympf}\left( B_1, B_2 \right)}^{2N_g} ,
\end{equation*}
which, by adding $g_j$ on both sides and passing to the {$\sympf$-dual} metrics~\eqref{eq:sympfduality}, leads to
\begin{equation*}
g_j^\sympf
	\le 2 C_g^2 (g_1 + g_2)^\sympf \jap*{\dist_{(g_1 + g_2)^\sympf}\left( B_1, B_2 \right)}^{2N_g} .
\end{equation*}
We infer that
\begin{equation*}
\dist_{g_j^\sympf}\left( \rho, B_j \right)
	\le \sqrt{2} C_g \dist_{(g_1 + g_2)^\sympf}\left( \rho, B_j \right) \jap*{\dist_{(g_1 + g_2)^\sympf}\left( B_1, B_2 \right)}^{N_g} .
\end{equation*}
The last factor in the right-hand side can be bounded from above thanks to the triangle inequality:
\begin{equation*}
\dist_{(g_1 + g_2)^\sympf}\left( B_1, B_2 \right)
	\le \dist_{(g_{\rho_1} + g_{\rho_2})^\sympf}\left( \rho, B_1 \right) + \dist_{(g_{\rho_1} + g_{\rho_2})^\sympf}\left( \rho, B_2 \right) ,
\end{equation*}
and the sought inequality follows.
\end{proof}

The proposition below is the cornerstone of the Weyl--Hörmander pseudo-differential calculus. A classical computation involving Fourier transforms of imaginary Gaussian functions gives for all $f \in \sch(T^\star \mfd^{\oplus 2})$:
\begin{equation*}
\forall s \in \R, \forall \rho_1, \rho_2 \in T^\star \mfd , \quad
	\e^{- \ii \frac{s}{2} \frak{P}} f(\rho_1, \rho_2)
		= \int_{W \oplus W} f\left(\rho_1 + \sqrt{\tfrac{s}{2}} \zeta_1, \rho_2 + \sqrt{\tfrac{s}{2}} \zeta_2\right) \e^{- \ii \sympf(\zeta_1, \zeta_2)} \dd \zeta_1 \dd \zeta_2 .
\end{equation*}

\begin{proposition}[Bi-confinement estimate] \label{prop:bi-confinement}
Let $g$ be an admissible metric on $T^\star \mfd$. There exists an integer $k_0$ depending only on the dimension of $\mfd$ and the structure constants of $g$ such that the following holds. For any pair of points $\rho_1, \rho_2 \in T^\star \mfd$, introducing for $j = 1, 2$ the balls $B_j = B_{r_j}^{g_j}(\rho_j)$, for any $r_j \in (0, r_g]$, we have for all $f \in \sch(T^\star \mfd^{\oplus 2})$ and all $\rho \in T^\star \mfd$:
\begin{equation*}
\forall k, \ell \in \N , \exists C_{k, \ell} > 0 : \qquad
	\abs*{\nabla^\ell \left(\e^{- \ii \frac{s}{2} \frak{P}} f(\rho, \rho)\right)}_g
		\le C_{k, \ell} \dfrac{\cal{N}_{k+\ell}(f; \rho_1, \rho_2)}{\jap*{\dist_{(g_{\rho_1} + g_{\rho_2})^\sympf}(\rho, B_1) + \dist_{(g_{\rho_1} + g_{\rho_2})^\sympf}(\rho, B_2)}^k} ,
\end{equation*}
where
\begin{multline*}
\cal{N}_j(f; \rho_1, \rho_2)
	= \max_{0 \le l \le (1 + j) k_0} \sup_{\rho_1', \rho_2' \in T^\star \mfd} \jap*{\dist_{g_{\rho_1}^\sympf}(\rho_1', B_1)}^{(1 + j) k_0} \\
		\times \jap*{\dist_{g_{\rho_2}^\sympf}(\rho_2', B_2)}^{(1 + j) k_0} \abs*{\nabla^l f(\rho_1', \rho_2')}_{g \oplus g} .
\end{multline*}
The constants $C_{k, \ell}$ depend only on $k, \ell$ and the structure constants of $g$ (but not on~$r_1, r_2$).
\end{proposition}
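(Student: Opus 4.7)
The plan is to work directly from the explicit integral representation
$$e^{-i\tfrac{s}{2}\frak{P}} f(\rho,\rho) = \int_{W \oplus W} f\!\left(\rho + \tfrac{\sqrt{s}}{\sqrt{2}}\zeta_1,\, \rho + \tfrac{\sqrt{s}}{\sqrt{2}}\zeta_2\right) e^{-i\sympf(\zeta_1,\zeta_2)}\, d\zeta_1 d\zeta_2$$
recalled just before the statement, and to integrate by parts to produce the advertised decay in $\rho$. Since $\nabla^\ell$ in $\rho$ commutes with the integral and brings down only $\ell$ extra derivatives of~$f$ evaluated at $(\rho + \sqrt{s/2}\zeta_1, \rho + \sqrt{s/2}\zeta_2)$, it is enough to prove the case $\ell = 0$ with $k$ replaced by $k+\ell$ and to absorb those $\ell$ extra derivatives into $\cal{N}_{k+\ell}$ at the end.

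Write $\rho_j' = \rho + \sqrt{s/2}\,\zeta_j$ and $g_{12}^\sympf := (g_{\rho_1}+g_{\rho_2})^\sympf$ for short. The triangle inequality yields
$$\dist_{g_{12}^\sympf}(\rho, B_j) \le \dist_{g_{12}^\sympf}(\rho_j', B_j) + \tfrac{\sqrt{|s|}}{\sqrt{2}} |\zeta_j|_{g_{12}^\sympf},$$
and Lemma~\ref{lem:integr}\ref{it:minorationdowns} gives $\langle \dist_{g_{\rho_j}^\sympf}(\rho_j', B_j)\rangle \lesssim \langle \dist_{g_{12}^\sympf}(\rho_j', B_j)\rangle^{1+N_g}$. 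Combining these, the weight $\langle \dist_{g_{12}^\sympf}(\rho, B_1) + \dist_{g_{12}^\sympf}(\rho, B_2)\rangle^k$ is dominated by a product of factors of the form $\langle \dist_{g_{\rho_j}^\sympf}(\rho_j', B_j)\rangle^{k'}$ (with $k' \lesssim k(1+N_g)$, which fixes the choice of $k_0$) times a polynomial in $|\zeta_1|_{g_{\rho_1}}, |\zeta_2|_{g_{\rho_2}}$ of total degree $\le k$. The weights on $\rho_j'$ are absorbed directly into the norm $\cal{N}_{k+\ell}(f;\rho_1,\rho_2)$.

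The polynomial in $\zeta_1,\zeta_2$ is traded against the symplectic oscillation by integration by parts. Introducing the elliptic operators $L_j := 1 - \Delta_{\zeta_j}^{g_{\rho_j}}$, a short computation using $\sympf(\zeta_1,\zeta_2) = \langle \sympf \zeta_1, \zeta_2\rangle$ yields
$$L_1\, e^{-i\sympf(\zeta_1,\zeta_2)} = \bigl(1 + |\zeta_2|_{g_{\rho_1}^\sympf}^2\bigr)\, e^{-i\sympf(\zeta_1,\zeta_2)},$$
and symmetrically for $L_2$. Integrating by parts a sufficiently large (but dimension-dependent) number of times in $\zeta_1$ and $\zeta_2$ alternately, one absorbs the polynomial factor together with an additional factor $\langle \zeta_1\rangle^{-N}\langle \zeta_2\rangle^{-N}$ that makes the final $d\zeta_1 d\zeta_2$ integration absolutely convergent, at the cost of moving (a controlled number of) extra derivatives onto $f$; these are again captured by $\cal{N}_{k+\ell}$. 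This choice of $k_0$ has to be made uniformly in $k$ and $\ell$, which is the main bookkeeping point.

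The main obstacle is purely combinatorial: each appeal to the triangle inequality inflates the required power of the weight by a factor $1+N_g$, and each integration by parts inflates the required number of derivatives of $f$ by a dimension-dependent factor. One must fix $k_0$ once and for all so that the definition of $\cal{N}_{k+\ell}$, which already incorporates the prefactor $(1+k+\ell)k_0$, compensates both losses simultaneously. Once $k_0$ has been chosen in this way, the resulting bound is dominated by
$$C\, \cal{N}_{k+\ell}(f;\rho_1,\rho_2)\, \langle \dist_{g_{12}^\sympf}(\rho,B_1) + \dist_{g_{12}^\sympf}(\rho,B_2)\rangle^{-k}\int_{W\oplus W}\frac{d\zeta_1 d\zeta_2}{\langle\zeta_1\rangle^N \langle\zeta_2\rangle^N},$$
which is finite, giving the proposition. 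Uniformity in the radii $r_1, r_2 \in (0, r_g]$ is immediate because $r_g$ is a slow variation radius of~$g$ and all the temperance-type inequalities invoked are written at the scale~$r_g$.
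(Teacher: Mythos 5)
There is a genuine gap, and it is analytic rather than combinatorial. Your scheme gains decay in $\zeta_1$ and in $\zeta_2$ by two rounds of integration by parts with the metric Laplacians $L_j = 1-\Delta^{g_{\rho_j}}_{\zeta_j}$. Whichever order you choose, the second round necessarily differentiates the weight created by the first round: e.g.\ the $\zeta_1$-derivatives of $L_1$ land on $(1+\abs{\zeta_1}^2_{g_{\rho_2}^\sympf})^{-m}$. A derivative in a $g_{\rho_1}$-unit direction $v$ of this weight costs, relative to the weight itself, a factor of order $\abs{v}_{g_{\rho_2}^\sympf}$, and
\begin{equation*}
\sup_{\abs{v}_{g_{\rho_1}}\le 1}\abs{v}_{g_{\rho_2}^\sympf}
\end{equation*}
is \emph{not} controlled by the structure constants of $g$: already for the constant metric $g_\hslash = \dd x^2+\hslash^2\dd\xi^2$ (with $\rho_1=\rho_2$, so no temperance losses) it equals $\hslash^{-1}=\gain_{g}^{-1}$. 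Hence your constants $C_{k,\ell}$ would pick up uncontrolled negative powers of the gain, which destroys precisely the uniformity the proposition must deliver (and which is used downstream for the semiclassical/Ehrenfest-time statements). If instead you drop one round of integration by parts and take the decay in that variable from the confinement of $f$, you hit the other obstruction you did not address: the confinement decay lives at scale $\sqrt{2/s}$ in $\zeta$ (since $f$ is evaluated at $\rho+\sqrt{s/2}\,\zeta_j$) while your unit-scale Laplacian decay does not, and the final $\dd\zeta_1\dd\zeta_2$ integral then blows up like $s^{-\dim\mfd}$ as $s\to 0$, whereas $s$ ranges over $[0,1]$ in the definition of $\widehat{\cal{P}}_j$. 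A related point you gloss over is that the value of the final integral must be shown to be independent of the metric; this requires the cancellation $\abs{\det G}\,\abs{\det G^\sympf}=1$ between a weight in $g$ and a weight in $g^\sympf$, which your two-$g^\sympf$-weight version only survives thanks to the uncertainty principle and which you never invoke.

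The paper's proof is engineered exactly around these two points: it performs a \emph{single} round of integration by parts, in $\zeta_2$ only, with the $s$-scaled first-order operator $1+\tfrac{1}{\ii}\sqrt{2/s}\,\nabla^{\zeta_2}_{Z(\zeta_1)}$ whose coefficient vector $Z$ depends only on $\zeta_1$. Thus no previously created weight is ever re-differentiated, every derivative falling on $f$ carries a compensating factor $\sqrt{s/2}$, and the output factor $(1+\sqrt{2/s}\,\abs{\zeta_1}_{g_{\rho_2}^\sympf})^{-l_0}$ provides the $\zeta_1$-decay; the $\zeta_2$-decay is taken from the confinement of $f$ in its second slot, and the change of variables $(\sqrt{2/s}\,\zeta_1,\sqrt{s/2}\,\zeta_2)$ together with $\abs{\det G_2}\abs{\det G_2^\sympf}=1$ makes the resulting integral uniform in $s\in[0,1]$ and in $g$; the two-sided decay in $B_1$ and $B_2$ is then obtained by the symmetry~\eqref{eq:Moyalcomplexconjugation} and taking a maximum. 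To repair your argument you would have to replace the alternate Laplacian scheme by such a one-sided, $s$-scaled, $\zeta_1$-dependent-coefficient integration by parts (or otherwise quantify the cross-metric losses), which is in substance the paper's proof.
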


\begin{remark} \label{rmk:calNConf}
Notice that when $f = \psi_1 \otimes \psi_2$, we have by definition of the seminorms on spaces of confined symbols (Definition~\ref{def:Conf}):
\begin{equation*}
\cal{N}_k(f; \rho_1, \rho_2)
	\le \abs*{\psi_1}_{\Conf_{r_1}^{g}(\rho_1)}^{((1 + k) k_0)} \abs*{\psi_2}_{\Conf_{r_2}^{g}(\rho_2)}^{((1 + k) k_0)} .
\end{equation*}
\end{remark}

\begin{proof}
The proof follows essentially that of~\cite[Theorem 2.3.2]{Lerner:10}; see also~\cite[Section 18.5]{Hoermander:V3}. Consider the case $\ell = 0$ first. For simplicity, write ${\sf g}_j = g_{\rho_j}$, $j = 1, 2$. Given $\zeta_1 \in W$, we set
\begin{equation*}
Z
	= Z(\zeta_1)
	= \dfrac{\sympf^{-1} {\sf g}_2^\sympf \zeta_1}{\abs{\zeta_1}_{{\sf g}_2^\sympf}}
	= \dfrac{{\sf g}_2^{-1} \sympf \zeta_1}{\abs{\zeta_1}_{{\sf g}_2^\sympf}} ,
\end{equation*}
and we perform integration by parts in the integral over $\zeta_2$ using the fact that
\begin{equation*}
\left(1 + \tfrac{1}{\ii} \sqrt{\tfrac{2}{s}} \nabla_Z^{\zeta_2}\right) \e^{- \ii \sympf(\zeta_1, \zeta_2)}
	= \left(1 - \sqrt{\tfrac{2}{s}} \sympf(\zeta_1, Z)\right) \e^{- \ii \sympf(\zeta_1, \zeta_2)}
	= \left(1 + \sqrt{\tfrac{2}{s}} \abs*{\zeta_1}_{{\sf g}_2^\sympf}\right) \e^{- \ii \sympf(\zeta_1, \zeta_2)} .
\end{equation*}
(The minus sign appearing in the second equality is due to the fact that $g^\sympf = - \sympf g^{-1} \sympf$; recall~\eqref{eq:sympfstar} and Definition~\ref{def:sympfdual}.)
We obtain
\begin{equation} \label{eq:ibpMoyal}
\e^{- \ii \frac{s}{2} \frak{P}} f(\rho, \rho)
	= \int_{W \oplus W} \left( 1 - \tfrac{1}{\ii} \sqrt{\tfrac{2}{s}} \nabla_Z^{\zeta_2} \right)^{l_0} f\left(\rho + \sqrt{\tfrac{s}{2}} \zeta_1, \rho + \sqrt{\tfrac{s}{2}} \zeta_2\right) \times \dfrac{\e^{- \ii \sympf(\zeta_1, \zeta_2)}}{(1 + \sqrt{\tfrac{2}{s}} \abs{\zeta_1}_{{\sf g}_2^\sympf})^{l_0}} \dd \zeta_1 \dd \zeta_2 .
\end{equation}
Write for $k, l_0, n \in \N$:
\begin{equation} \label{eq:defuprightN}
N_{l_0, k, n}(f; \rho_1, \rho_2)
	= N_{l_0, k, n}(f)
	= \max_{0 \le l \le l_0} \sup_{\rho_1', \rho_2' \in T^\star \mfd} \jap*{\dist_{{\sf g}_1^\sympf}\left(\rho_1', B_1\right)}^{k} \jap*{\dist_{{\sf g}_2^\sympf}\left(\rho_2', B_2\right)}^{n} \abs*{\nabla^l f(\rho_1', \rho_2')}_{g \oplus g} .
\end{equation}
Since $g_2$ is admissible (and $r_2 \le r_{g}$), we have
\begin{equation} \label{eq:normofZg2}
\abs*{Z}_{g_2}(\rho_2')
	\le \abs*{Z}_{g_2}(\rho_2) C(g_2) \jap*{\dist_{{\sf g}_2^\sympf}\left(\rho_2', B_2\right)}^{N(g_2)}
	= C(g_2) \jap*{\dist_{{\sf g}_2^\sympf}\left(\rho_2', B_2\right)}^{N(g_2)} ,
\end{equation}
and we obtain
\begin{equation} \label{eq:preestimateN}
\begin{multlined}
\abs*{\e^{- \ii \frac{s}{2} \frak{P}} f(\rho, \rho)}
	\le C(g_2) 2^{l_0} N_{l_0, k, n}(f) \int_{W \oplus W} \jap*{\dist_{{\sf g}_1^\sympf}\left( \rho + \sqrt{\tfrac{s}{2}} \zeta_1, B_1\right)}^{-k} \\
		\times \jap*{\dist_{{\sf g}_2^\sympf}\left(\rho + \sqrt{\tfrac{s}{2}} \zeta_2, B_2\right)}^{l_0 N(g_2) - n} \dfrac{\dd \zeta_1 \dd \zeta_2}{(1 + \sqrt{\tfrac{2}{s}} \abs{\zeta_1}_{{\sf g}_2^\sympf})^{l_0}}
\end{multlined}
\end{equation}
(we used~\eqref{eq:normofZg2} with $\rho_2' = \rho + \sqrt{\frac{s}{2}} \zeta_2$).
Now we make two observations. On the one hand, we have by the triangle inequality:
\begin{align*}
\jap*{\dist_{({\sf g}_1 + {\sf g}_2)^\sympf}(\rho, B_1)}
	&\le 1 + \abs*{\sqrt{\tfrac{s}{2}} \zeta_1}_{{\sf g}_2^\sympf} + \dist_{{\sf g}_1^\sympf}\left( \rho + \sqrt{\tfrac{s}{2}} \zeta_1, B_1\right) \\
	&\le 2 \left(1 + \sqrt{\tfrac{2}{s}} \abs{\zeta_1}_{{\sf g}_2^\sympf}\right) \jap*{\dist_{{\sf g}_1^\sympf}\left( \rho + \sqrt{\tfrac{s}{2}} \zeta_1, B_1\right)} .
\end{align*}
We used the fact that $({\sf g}_1 + {\sf g}_2)^\sympf \le {\sf g}_j^\sympf$ (recall that {$\sympf$-duality} is non-increasing~\eqref{eq:sympfduality}) in the first inequality and $s \in [0, 1]$ in the second one. That yields
\begin{equation} \label{eq:oneoftheterms}
\dfrac{1}{\jap*{\dist_{{\sf g}_1^\sympf}\left( \rho + \sqrt{\tfrac{s}{2}} \zeta_1, B_1\right)}}
	\le 2 \dfrac{1 + \sqrt{\tfrac{2}{s}} \abs{\zeta_1}_{{\sf g}_2^\sympf}}{\jap*{\dist_{({\sf g}_1 + {\sf g}_2)^\sympf}(\rho, B_1)}} .
\end{equation}
On the other hand, we have
\begin{equation*}
1 + \abs*{\rho + \sqrt{\tfrac{s}{2}} \zeta_2 - \rho_2}_{{\sf g}_2}
	\le 1 + r_2 + \dist_{{\sf g}_2}\left( \rho + \sqrt{\tfrac{s}{2}} \zeta_2, B_2 \right)
	\le 4 \jap*{\dist_{{\sf g}_2^\sympf}\left( \rho + \sqrt{\tfrac{s}{2}} \zeta_2, B_2 \right)}
\end{equation*}
(recall that the slow variation radius is assumed to be smaller than~$1$ and~$g$ is admissible so that $g \le g^\sympf$), that is to say
\begin{equation} \label{eq:twooftheterms}
\dfrac{1}{\jap*{\dist_{{\sf g}_2^\sympf}\left( \rho + \sqrt{\tfrac{s}{2}} \zeta_2, B_2 \right)}}
	\le \dfrac{4}{1 + \abs*{\rho + \sqrt{\tfrac{s}{2}} \zeta_2 - \rho_2}_{{\sf g}_2}} .
\end{equation}
If~$n$ is large enough so that $l_0 N(g_2) - n < 0$ in~\eqref{eq:preestimateN}, we deduce from~\eqref{eq:oneoftheterms} and~\eqref{eq:twooftheterms} that
\begin{multline*}
\abs*{\e^{- \ii \frac{s}{2} \frak{P}} f(\rho, \rho)}
	\le N_{l_0, k, n}(f) \dfrac{C(g_2) 2^{l_0 + 2 k + 2 n}}{\jap*{\dist_{({\sf g}_1 + {\sf g}_2)^\sympf}(\rho, B_1)}^{k}} \\
		\times \int_{W \oplus W} \left(1 + \abs*{\sqrt{\tfrac{s}{2}} \zeta_2 + \rho - \rho_2}_{{\sf g}_2}\right)^{l_0 N(g_2) - n} \dfrac{\dd \zeta_1 \dd \zeta_2}{(1 + \sqrt{\tfrac{2}{s}} \abs{\zeta_1}_{{\sf g}_2^\sympf})^{l_0-k}} .
\end{multline*}
Now taking $n_1$ arbitrary, $l_0 = k + 2 \dim \mfd + 1$ and $n = l_0 \lceil N(g_2) \rceil + 2 \dim \mfd + 1$, we observe that the integral in the right-hand side is bounded by
\begin{equation*}
\int_{W \oplus W} \dfrac{\dd \zeta_1 \dd \zeta_2}{\left((1 + \abs{\zeta_2}_{{\sf g}_2}) (1 + \abs{\zeta_1}_{{\sf g}_2^\sympf})\right)^{2 \dim \mfd + 1}}
\end{equation*}
(we changed variables $(\sqrt{\frac{2}{s}} \zeta_1, \sqrt{\frac{s}{2}} \zeta_2) \mapsto (\zeta_1, \zeta_2)$). This is independent of ${\sf g}_2$. Indeed, denoting by $G_2$ and $G_2^\sympf$ the matrices of ${\sf g}_2$ and ${\sf g}_2^\sympf$ in a symplectic basis, we have $G_2^\sympf = - J G_2^{-1} J$ where $J$ is the symplectic matrix, hence $\abs{\det G_2^\sympf} \abs{\det G_2} = 1$. Therefore we have
\begin{equation} \label{eq:inequalityB1}
\abs*{\e^{- \ii \frac{s}{2} \frak{P}} f(\rho, \rho)}
	\le C N_{l_0, k, n}(f) \dfrac{2^{l_0 + 2 k + 2 n}}{\jap*{\dist_{({\sf g}_1 + {\sf g}_2)^\sympf}(\rho, B_1)}^{k}} ,
\end{equation}
where the constant $C$ depends only on the dimension of $\mfd$ and the structure constants of $g$.

Recalling~\eqref{eq:Moyalcomplexconjugation}, we apply the previous estimate, exchanging the roles of $\rho_1$, $\rho_2$ and $g_1$, $g_2$, and we obtain
\begin{equation} \label{eq:inequalityB2}
\abs*{\e^{- \ii \frac{s}{2} \frak{P}} f(\rho, \rho)}
	\le C N_{l_0, n, k}(f) \dfrac{2^{l_0 + 2 k + 2 n}}{\jap*{\dist_{({\sf g}_1 + {\sf g}_2)^\sympf}(\rho, B_2)}^{k}} ,
\end{equation}
with a similar choice of $l_0$, $k$ and $n$. Using that
\begin{equation*}
\max \left\{ \dist_{({\sf g}_1 + {\sf g}_2)^\sympf}(\rho, B_1), \dist_{({\sf g}_1 + {\sf g}_2)^\sympf}(\rho, B_2) \right\}
	\ge \dfrac{1}{2} \dist_{({\sf g}_1 + {\sf g}_2)^\sympf}(\rho, B_1) + \dfrac{1}{2} \dist_{({\sf g}_1 + {\sf g}_2)^\sympf}(\rho, B_2) ,
\end{equation*}
we infer from~\eqref{eq:inequalityB1} and~\eqref{eq:inequalityB2} that
\begin{equation} \label{eq:biconfell=0}
\abs*{\e^{- \ii \frac{s}{2} \frak{P}} f(\rho, \rho)}
	\le C N_{l_0, k+n, k+n}(f; \rho_1, \rho_2) \dfrac{2^{1 + l_0 + 3(k + n)}}{\jap*{\dist_{({\sf g}_1 + {\sf g}_2)^\sympf}(\rho, B_1) + \dist_{({\sf g}_1 + {\sf g}_2)^\sympf}(\rho, B_2)}^{k}} .
\end{equation}
This is the sought result for $\ell = 0$ ($l_0$ and $n$ are affine functions of $k$).

To handle the derivatives, it suffices to estimate $\nabla_{X^\ell}^\ell (\e^{- \ii \frac{s}{2} \frak{P}} f(\rho, \rho))$ for any (constant) vector field~$X$ on $T^\star \mfd$ according to Remark~\ref{rmk:polarintro}. The chain rule gives
\begin{equation*}
\abs*{\nabla_{X^\ell}^\ell \left(\e^{- \ii \frac{s}{2} \frak{P}} f(\rho, \rho)\right)}
	\le C_\ell \max_{\ell_1 + \ell_2 = \ell} \abs*{\left( \nabla_{X^{\ell_1}}^{\ell_1} \otimes \nabla_{X^{\ell2}}^{\ell_2} (\e^{- \ii \frac{s}{2} \frak{P}} f) \right)(\rho, \rho)} .
\end{equation*}
Fix $\ell_1 + \ell_2 = \ell$ and set $\tilde f := \nabla_{X^{\ell_1}}^{\ell_1} \otimes \nabla_{X^{\ell2}}^{\ell_2} (\e^{- \ii \frac{s}{2} \frak{P}} f)$. The bound~\eqref{eq:biconfell=0} applies to $\tilde f$, with $N_{l_0, k', k'}(\tilde f)$ in the right-hand side ($k' := k+n$). For any~$l$, we have
\begin{equation} \label{eq:derivativestwoells}
\abs*{\nabla^l \left(\nabla_{X^{\ell_1}}^{\ell_1} \otimes \nabla_{X^{\ell2}}^{\ell_2}\right) f(\rho_1', \rho_2')}_{g \oplus g}
	\le \abs*{X}_{g_{\rho_1'}}^{\ell_1} \abs*{X}_{g_{\rho_2'}}^{\ell_2} \abs*{\nabla^{\ell + l} f}_{g \oplus g} .
\end{equation}
Using admissibility of~$g$ (Proposition~\ref{prop:improvedadmissibility}) twice, we have for $j = 1, 2$:
\begin{equation*}
\abs*{X}_{g_{\rho_j'}}
	\le C_g \abs*{X}_{g_{\rho_j}} \jap*{\dist_{g_{\rho_j}^\sympf}\left( \rho_j', B_j \right)}^{N_g}
	\le C_g^2 \abs*{X}_{g_\rho} \jap*{\dist_{g_{\rho_j}^\sympf}\left( \rho_j', B_j \right)}^{N_g} \jap*{\dist_{g_{\rho_j}^\sympf}\left( \rho, B_j \right)}^{N_g} .
\end{equation*}
We bound from above the last factor in the right-hand side using Lemma~\ref{lem:integr} Item~\ref{it:minorationdowns}. Plugging the resulting inequality into~\eqref{eq:derivativestwoells} and going back to the definition of $N_{l_0, k, n}(\tilde f)$ in~\eqref{eq:defuprightN}, we obtain
\begin{equation*}
N_{l_0, k', k'}(\tilde f)
	\le C N_{l_0 + \ell, k' + \ell N_g, k' + \ell N_g}(f) \jap*{\dist_{(g_{\rho_1} + g_{\rho_2})^\sympf}\left( \rho, B_1 \right) + \dist_{(g_{\rho_1} + g_{\rho_2})^\sympf}\left( \rho, B_2 \right)}^{\ell N_g (1 + N_g)} ,
\end{equation*}
with a constant $C$ depending on $\ell$ and $g$. We finally conclude that
\begin{equation*}
\abs*{\nabla_{X^\ell}^\ell \left(\e^{- \ii \frac{s}{2} \frak{P}} f(\rho, \rho)\right)}
	\le \dfrac{C N_{l_0 + \ell, k' + \ell N_g, k' + \ell N_g}(f; \rho_1, \rho_2)}{\jap*{\dist_{({\sf g}_1 + {\sf g}_2)^\sympf}(\rho, B_1) + \dist_{({\sf g}_1 + {\sf g}_2)^\sympf}(\rho, B_2)}^{k - \ell N_g (1 + N_g)}} .
\end{equation*}
which yields the sought result after substituting $k + \ell N_g (1 + N_g)$ for $k$.
\end{proof}

\subsection{Pseudo-differential calculus} \label{subsec:proofspdeudocalc}

\subsubsection{Compatible metrics}

To perform pseudo-differential calculus with two different metrics, we need some conditions on these metrics.

\begin{definition} \label{def:compatiblemetrics}
Two admissible metrics $g_1$ and $g_2$ are compatible if there exist $C > 0$ and $N \ge 0$ such that
\begin{equation} \label{eq:jointadmissibility}
\begin{split}
\forall \rho_0, \rho \in T^\star \mfd , \qquad
g_{1, \rho}
	&\le C^2 g_{1, \rho_0} \jap*{\rho - \rho_0}_{g_{2, \rho_0}^\sympf}^{2N} \\
\forall \rho_0, \rho \in T^\star \mfd , \qquad
g_{2, \rho}
	&\le C^2 g_{2, \rho_0} \jap*{\rho - \rho_0}_{g_{1, \rho_0}^\sympf}^{2N}
\end{split}
\end{equation}
and the joint gain function of $g_1$ and $g_2$ defined in~\eqref{eq:defjointgain} is such that $\gain_{g_1, g_2}(\rho) \le 1$ for all $\rho \in T^\star \mfd$.
\end{definition}

\begin{remark} \label{rmk:otherpointtemperance}
The fact that $g_1$ and $g_2$ are admissible together with {$\sympf$-duality}~\eqref{eq:sympfduality} imply that $\abs{\rho_2 - \rho_1}_{g_{j, \rho_1}^\sympf} \le C' \abs{\rho_2 - \rho_1}_{g_{j, \rho_2}^\sympf}^{N'}$ for all $\rho_1, \rho_2$, $j = 1, 2$, hence we can equivalently replace the Japanese brackets $\jap{\bullet}_{g_{j, \rho_0}^\sympf}$ by $\jap{\bullet}_{g_{j, \rho}^\sympf}$ in the right-hand side of~\eqref{eq:jointadmissibility}.
\end{remark}

We give a sufficient condition for two admissible metrics to be compatible. Recall the symplectic intermediate metric $g^\natural$ from Lemma~\ref{lem:gnatural}.

\begin{proposition} \label{prop:sufficientconditioncompatibility}
Let $g_1$ and $g_2$ be admissible metrics. If $g_1^\natural = g_2^\natural$, and $\gain_{g_1, g_2} \le 1$, then $g_1$ and $g_2$ are compatible, and the constants in~\eqref{eq:jointadmissibility} depend only on the structure constants of $g_1$ and $g_2$.
\end{proposition}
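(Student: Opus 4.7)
The plan is to reduce the two temperance conditions in~\eqref{eq:jointadmissibility} to the sharpened form of temperance provided by Proposition~\ref{prop:improvedadmissibility} (evaluated at $t=0$), which controls $g_{j,\rho}$ in terms of $g_{j,\rho_0}$ and a distance measured with the $\sympf$-dual of sums of symplectic intermediate metrics, rather than with the less favorable $g_j^\sympf$. The point is that $g_1^\sympf$ and $g_2^\sympf$ are not comparable in general, but the hypothesis $g_1^\natural = g_2^\natural =: g^\natural$ allows one to route the estimate through the common symplectic intermediate metric~$g^\natural$.

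First I would apply Proposition~\ref{prop:improvedadmissibility} to $g_1$: there exist $C_{g_1}>0$ and $N_{g_1}\ge 0$, depending only on the structure constants of $g_1$, such that for all $\rho_0,\rho\in T^\star\mfd$,
\begin{equation*}
g_{1,\rho}
	\le C_{g_1}^2\, g_{1,\rho_0} \jap*{\dist_{(g_{1,\rho_0}^\natural + g_{1,\rho}^\natural)^\sympf}\bigl(B_{r_{g_1}}^{g_1}(\rho_0),\, B_{r_{g_1}}^{g_1}(\rho)\bigr)}^{2 N_{g_1}}.
\end{equation*}
Since each ball contains its center, the distance on the right-hand side is at most $\abs*{\rho-\rho_0}_{(g_{1,\rho_0}^\natural + g_{1,\rho}^\natural)^\sympf}$.

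Next I would eliminate the dependence on $g_1$ using the two standing assumptions. Because $g_1^\natural = g_2^\natural = g^\natural$ and $g^\natural$ is symplectic (see~\eqref{eq:chainineq}), the monotonicity $g^\natural_{\rho_0}+g^\natural_\rho \ge g^\natural_{\rho_0}$ together with the order-reversing property~\eqref{eq:sympfduality} of $\sympf$-duality gives $(g_{1,\rho_0}^\natural + g_{1,\rho}^\natural)^\sympf \le (g^\natural_{\rho_0})^\sympf = g^\natural_{\rho_0}$. Applying~\eqref{eq:chainineq} now to $g_2$ yields $g^\natural \le g_2^\sympf$, so
\begin{equation*}
\abs*{\rho-\rho_0}_{(g_{1,\rho_0}^\natural + g_{1,\rho}^\natural)^\sympf}
	\le \abs*{\rho-\rho_0}_{g^\natural_{\rho_0}}
	\le \abs*{\rho-\rho_0}_{g_{2,\rho_0}^\sympf}.
\end{equation*}
Combining with the previous display produces the first inequality of~\eqref{eq:jointadmissibility} with $C=C_{g_1}$ and $N=N_{g_1}$ depending only on the structure constants of $g_1$. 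The second inequality in~\eqref{eq:jointadmissibility} follows by interchanging the roles of $g_1$ and $g_2$, using that the hypothesis $g_1^\natural = g_2^\natural$ is symmetric. Finally, $\gain_{g_1,g_2}\le 1$ holds by assumption, so Definition~\ref{def:compatiblemetrics} is verified.

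No serious obstacle is expected. The only nontrivial ingredient is Proposition~\ref{prop:improvedadmissibility}, whose strengthening of temperance from $g_j^\sympf$ to $g^\natural$ is precisely what enables the bridge between the two metrics; once that is invoked, the argument reduces to a short chain of monotonicity considerations for $\sympf$-duality together with the defining inequalities~\eqref{eq:chainineq} of the symplectic intermediate metric.
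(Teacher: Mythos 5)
Your proof is correct and follows essentially the same route as the paper: the paper's (terser) proof also invokes the improved admissibility of Proposition~\ref{prop:improvedadmissibility} and then uses $g_1^\natural = g_2^\natural \le g_2^\sympf$ and $g_2^\natural = g_1^\natural \le g_1^\sympf$ from~\eqref{eq:chainineq} to pass to the required form~\eqref{eq:jointadmissibility}. Your explicit intermediate steps (bounding the distance between balls by the distance between centers, and using the order-reversing property of $\sympf$-duality together with $(g^\natural)^\sympf = g^\natural$) are exactly the details the paper leaves implicit.
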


\begin{remark} \label{rmk:sufficientconditioncompatibility}
The condition $g_1^\natural = g_2^\natural$ is verified when $g_1$ and $g_2$ are conformal.
\end{remark}

\begin{proof}
This is a mere consequence of the improved admissibility property of Proposition~\ref{prop:improvedadmissibility} since $g_1^\natural = g_2^\natural \le g_2^\sympf$ and $g_2^\natural = g_1^\natural \le g_1^\sympf$.
\end{proof}

Recall that the maps $\gain_{g_1 \oplus g_2}$ and $\gain_{g_1, g_2}$ defined in~\eqref{eq:gaing1oplusg2} and~\eqref{eq:defjointgain} are related by
\begin{equation*}
{\gain_{g_1 \oplus g_2}}_{\vert \diag}
	= \gain_{g_1, g_2} .
\end{equation*}

\begin{proposition} \label{prop:admissibilitycompatiblemetrics}
Let $g_1, g_2$ be compatible admissible metrics. Then the metric $\hat g := \frac{1}{2} (g_1 + g_2)$ is admissible with slow variation radius $r_{\hat g} = 2^{-1/2} \min\{r_{g_1}, r_{g_2}\}$ and with structure constants depending only on those of $g_1, g_2$ and the constants in~\eqref{eq:jointadmissibility}. Moreover, the function $\gain_{g_1, g_2}$ is a {$\hat g$-admissible} weight and we have for all $\rho_1, \rho_2, \rho_1', \rho_2' \in T^\star \mfd$:
\begin{equation} \label{eq:temperancejointgain}
\gain_{g_1 \oplus g_2}(\rho_1', \rho_2')
	\le C \gain_{g_1 \oplus g_2}(\rho_1, \rho_2) \jap*{\dist_{(\hat g_{\rho_1} + \hat g_{\rho_1'})^\sympf}\left( B_1, B_1' \right)}^N \jap*{\dist_{(\hat g_{\rho_2} + \hat g_{\rho_2'})^\sympf}\left( B_2, B_2' \right)}^N ,
\end{equation}
where $B_j = B_{r_j}^{g_j}(\rho_j)$, $B_j' = B_{r_j}^{g_j}(\rho_j')$, $j = 1, 2$, for any $r_1 \in (0, r_{g_1}]$ and $r_2 \in (0, r_{g_2}]$ and $C, N$ depending only on the structure constants of~$g_1$ and~$g_2$.
\end{proposition}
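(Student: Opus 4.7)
The plan is to verify the three defining conditions of admissibility for $\hat g$---slow variation, the uncertainty principle, and temperance---and then to deduce the $\hat g$-admissibility of $\gain_{g_1,g_2}$ and the two-point estimate~\eqref{eq:temperancejointgain}. Slow variation with the claimed radius $r_{\hat g} = 2^{-1/2}\min\{r_{g_1}, r_{g_2}\}$ is straightforward: from the pointwise identity $\abs{\rho-\rho_0}_{\hat g_{\rho_0}}^2 = \tfrac12\sum_j \abs{\rho-\rho_0}_{g_{j,\rho_0}}^2$, any $\rho \in \bar B_{r_{\hat g}}^{\hat g}(\rho_0)$ lies simultaneously in each ball $\bar B_{r_{g_j}}^{g_j}(\rho_0)$, so the slow variation of each $g_j$ yields $\hat g_\rho \le (\max_j C_{g_j}^2)\,\hat g_{\rho_0}$. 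For the uncertainty principle, I combine $\gain_{g_j}\le 1$ (equivalently $g_j \le g_j^\sympf$) with $\gain_{g_1,g_2}\le 1$ (equivalently $g_j \le g_{j'}^\sympf$ for $\{j,j'\} = \{1,2\}$): summing gives $2\hat g \le 2 g_{j'}^\sympf$, hence $\hat g \le g_{j'}^\sympf$ for each $j'$; by $\sympf$-duality~\eqref{eq:sympfduality} this is $g_{j'}\le \hat g^\sympf$, and summing once more over $j'$ produces $\hat g \le \hat g^\sympf$, which is exactly $\gain_{\hat g}\le 1$.

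The temperance of $\hat g$ is the step I expect to be the main obstacle. The compatibility condition~\eqref{eq:jointadmissibility} immediately gives $\hat g_\rho \le C^2\, \hat g_{\rho_0}\, \max_j \jap{\rho-\rho_0}_{g_{j,\rho_0}^\sympf}^{2N}$, but a direct bound of the $g_j^\sympf$-bracket by a polynomial in $\jap{\rho-\rho_0}_{\hat g_{\rho_0}^\sympf}$ fails in general, since $g_j^\sympf$ may strictly dominate $\hat g^\sympf$. My plan is to route through the symplectic intermediate metrics $g_j^\natural$ and the improved admissibility of Proposition~\ref{prop:improvedadmissibility}, which provides control of $g_{j,\rho}$ by $g_{j,\rho_0}$ with a polynomial factor in the smaller bracket $\jap{\cdot}_{(g_{j,\rho_0}^\natural + g_{j,\rho}^\natural)^\sympf}$. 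Combining this with the symmetric role of $g_1$ and $g_2$ (Remark~\ref{rmk:otherpointtemperance}) and the chain $g_j \le g_j^\natural \le g_j^\sympf$ in~\eqref{eq:chainineq}, I expect to compare $(g_{j,\rho_0}^\natural + g_{j,\rho}^\natural)^\sympf$ with $(\hat g_{\rho_0}^\natural + \hat g_\rho^\natural)^\sympf$, and hence with $\hat g_{\rho_0}^\sympf$ up to a slow variation factor, producing the temperance of $\hat g$ with structure constants depending only on those of $g_1, g_2$ and on $C, N$ in~\eqref{eq:jointadmissibility}.

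The $\hat g$-admissibility of $\gain_{g_1,g_2}$ as a weight follows from the defining formula $\gain_{g_1,g_2}(\rho) = \sup_\zeta \abs{\zeta}_{g_{1,\rho}}/\abs{\zeta}_{g_{2,\rho}^\sympf}$: the slow variation and temperance of $g_1$ and $g_2^\sympf$ on $\hat g$-balls transfer to $\gain_{g_1,g_2}$ via the ratio structure, using~\eqref{eq:jointadmissibility} applied simultaneously to $g_1$ and (via $\sympf$-duality) to $g_2^\sympf$. Finally, the two-point temperance estimate~\eqref{eq:temperancejointgain} is obtained in the same vein: expressing $\gain_{g_1 \oplus g_2}(\rho_1', \rho_2')$ via $g_{1,\rho_1'}$ and $g_{2,\rho_2'}^\sympf$, and applying the improved admissibility of each $g_j$ separately to the pairs $(\rho_j, \rho_j')$, produces the product of Japanese brackets in the $(\hat g_{\rho_j} + \hat g_{\rho_j'})^\sympf$ metric that appears in~\eqref{eq:temperancejointgain}.
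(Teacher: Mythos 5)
Your slow-variation argument and your derivation of the uncertainty principle (from $\hat g \le g_j^\sympf$ and one $\sympf$-dualization) are correct; the latter is even a bit more direct than the quadratic bound $\gain_{\hat g}^2\le\tfrac14(\gain_{g_1}^2+2\gain_{g_1,g_2}^2+\gain_{g_2}^2)$ invoked in the paper.

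The gap is in the temperance step, and it propagates to the $\hat g$-admissibility of $\gain_{g_1,g_2}$ and to~\eqref{eq:temperancejointgain}. You correctly observe that the $g_j^\sympf$-brackets produced by~\eqref{eq:jointadmissibility} cannot be dominated by $\hat g^\sympf$-brackets, but routing through $g_j^\natural$ and Proposition~\ref{prop:improvedadmissibility} does not repair this: the comparison you ``expect'', namely that distances in $(g_{j,\rho_0}^\natural+g_{j,\rho}^\natural)^\sympf$ are controlled by distances in $(\hat g_{\rho_0}+\hat g_\rho)^\sympf$ (or in $\hat g_{\rho_0}^\sympf$) with constants depending only on structure constants, is false. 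Take the constant metrics $g_1=\dd x^2+\eps^2\,\dd\xi^2$ and $g_2=\eps^2\,\dd x^2+\dd\xi^2$ on $T^\star\R$, $\eps\in(0,1]$: they are admissible and compatible with structure constants independent of $\eps$ (indeed $\gain_{g_1,g_2}=1$ and~\eqref{eq:jointadmissibility} holds with $C=1$, $N=0$), yet $g_1^\natural=\eps^{-1}\dd x^2+\eps\,\dd\xi^2$, while $\hat g^\sympf=\tfrac{2}{1+\eps^2}(\dd x^2+\dd\xi^2)$ and $\hat g^\natural=\dd x^2+\dd\xi^2$. In the $\dd x$ direction $g_1^\natural$ exceeds both $\hat g^\sympf$ and $\hat g^\natural$ by a factor $\asymp\eps^{-1}$; concretely, for $\rho-\rho_0=(s,0)$ with $s\asymp 1$ the $(g_{1,\rho_0}^\natural+g_{1,\rho}^\natural)^\sympf$-distance is $\asymp\eps^{-1/2}$ while any bracket in $(\hat g_{\rho_0}+\hat g_\rho)^\sympf$ stays bounded, so no uniform polynomial bound can convert one into the other. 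The same obstruction blocks your derivation of~\eqref{eq:temperancejointgain}: applying Proposition~\ref{prop:improvedadmissibility} ``to each $g_j$ separately'' only yields brackets in $(g_{j,\rho_j}^\natural+g_{j,\rho_j'}^\natural)^\sympf$, which is a genuinely larger metric (in some directions) than the $(\hat g_{\rho_j}+\hat g_{\rho_j'})^\sympf$ required in the statement; compatibility has to be used at this very point, not only through the single-metric improved admissibility.

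The missing ingredient is the inf-convolution identity $\bigl(\tfrac{g_1+g_2}{2}\bigr)^\sympf(\zeta)=2\inf_{\zeta_1+\zeta_2=\zeta}\bigl\{g_1^\sympf(\zeta_1)+g_2^\sympf(\zeta_2)\bigr\}$ (quoted in the paper from \cite[(4.4.27)]{Lerner:10}). Splitting $\rho-\rho_0=\zeta_1+\zeta_2$ near the optimal decomposition, one applies the temperance of $g_j$ along one piece and the compatibility condition~\eqref{eq:jointadmissibility} along the other (using Remark~\ref{rmk:otherpointtemperance} to move base points); this is Hörmander's argument for \cite[(18.5.16)]{Hoermander:V3} and gives $g_{j,\rho}\le C^2 g_{j,\rho_0}\jap{\rho-\rho_0}_{\hat g_\rho^\sympf}^{2N}$, hence the temperance of $\hat g$. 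Upgrading the bracket to $(\hat g_{\rho_0}+\hat g_\rho)^\sympf$ by the same identity and combining with slow variation of $g_1,g_2$ on the balls $B_j, B_j'$, one bounds $\abs{\zeta}_{g_{1,\rho_1'}}/\abs{\zeta}_{g_{2,\rho_2'}^\sympf}$ by the same ratio at $(\rho_1,\rho_2)$ times the two brackets, and the supremum over $\zeta$ gives~\eqref{eq:temperancejointgain}; the weight admissibility of $\gain_{g_1,g_2}$ then follows as the special case $\rho_1=\rho_2$, $\rho_1'=\rho_2'$.
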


\begin{proof}
Slow variation of~$\hat g$ follows directly from slow variation of~$g_1$ and~$g_2$: if $\zeta \in W$ is such that $\abs{\zeta}_{\hat g}^2 \le \frac{1}{2} \min\{r_{g_1}^2, r_{g_2}^2\}$, then $\abs{\zeta}_{g_j}^2 \le 2 \abs{\zeta}_{\hat g}^2 \le r_{g_j}^2$ for $j = 1, 2$, and one checks that slow variation holds with constant $C_{\hat g} = \max\{C_{g_1}, C_{g_2}\}$.
Temperance of~$\hat g$ is slightly more delicate. Hörmander shows in~\cite[(18.5.16)]{Hoermander:V3}, as a consequence of the temperance and compatibility of~$g_1$ and~$g_2$, that
\begin{equation} \label{eq:admhatg}
\forall \rho_0, \rho \in T^\star \mfd , \qquad
g_{j, \rho}
	\le C^2 g_{j, \rho_0} \jap*{\rho - \rho_0}_{\hat g_\rho^\sympf}^{2N} ,
		\qquad j = 1, 2 .
\end{equation}
Summing the two inequalities for $j = 1$ and $j = 2$ gives the temperance of $\hat g$.
As for the gain function, one can check that $\gain_{\hat g}^2 \le \frac{1}{4} (\gain_{g_1}^2 + 2 \gain_{g_1, g_2}^2 + \gain_{g_2}^2) \le 1$. See the proof of~\cite[Proposition 18.5.3]{Hoermander:V3} for details.

Finally, using the fact that $(\frac{g_1+g_2}{2})^\sympf(\zeta) = 2 \inf_{\zeta_1 + \zeta_2 = \zeta} \{g_1^\sympf(\zeta_1) + g_2^\sympf(\zeta_2)\}$ (see~\cite[(4.4.27)]{Lerner:10}) together with Remark~\ref{rmk:otherpointtemperance}, one can improve~\eqref{eq:admhatg} to
\begin{equation} \label{eq:bettertemperancehatg}
\forall \rho_0, \rho \in T^\star \mfd , \qquad
g_{j, \rho}
	\le C^2 g_{j, \rho_0} \jap*{\rho - \rho_0}_{(\hat g_{\rho_0} + \hat g_\rho)^\sympf}^{2N} ,
		\qquad j = 1, 2 ,
\end{equation}
with possibly different constants $C, N$, still depending on structure constants of $g_1, g_2$ (proceed as in~\cite[Lemma 2.2.14]{Lerner:10}). Then we can combine~\eqref{eq:bettertemperancehatg} with slow variation of~$g_1$ and~$g_2$ to deduce that for any non-zero $\zeta \in W$:
\begin{align*}
\dfrac{\abs{\zeta}_{g_1}(\rho_1')}{\abs{\zeta}_{g_2^\sympf}(\rho_2')}
	&\le \tilde C^2 \dfrac{\abs{\zeta}_{g_1}(\rho_1)}{\abs{\zeta}_{g_2^\sympf}(\rho_2)} \jap*{\dist_{(\hat g_{\rho_1} + \hat g_{\rho_1'})^\sympf}\left( B_1, B_1' \right)}^N \jap*{\dist_{(\hat g_{\rho_2} + \hat g_{\rho_2'})^\sympf}\left( B_2, B_2' \right)}^N ,
\end{align*}
and taking the supremum with respect to $\zeta$ yields the result~\eqref{eq:temperancejointgain}. Admissibility of $\gain_{g_1, g_2}$ with respect to $\hat g$ follows from this estimate with $\rho_1 = \rho_2$ and $\rho_1' = \rho_2'$.
\end{proof}

\subsubsection{Proof of pseudo-differential calculus with confined symbols} \label{subsubsec:proofpseudocalcsymbol}

From the bi-confinement estimate of Proposition~\ref{prop:bi-confinement}, we can deduce several variations of the pseudo-differential calculus. Proofs are inspired from~\cite{Lerner:10}.

\begin{proof}[Proof of Proposition~\ref{prop:pseudocalcconf}]
First of all, notice that $g \le g_0$ implies that we have the (continuous) inclusions
\begin{equation} \label{eq:2continuousinclusions}
S(m, g) \subset S(m, g_0) .
\end{equation}

Let $r > 0$ as in the statement and let $\psi \in \Conf_r^{g_0}(\rho_0)$ and $a \in \nabla^{-j} S(m, g)$. We introduce a partition of unity $(\varphi_{\rho_1})_{\rho_1 \in T^\star \mfd}$, independent of the parameter~$r$, adapted to the metric $g_0$, with radius $r_0$, whose existence is given by Proposition~\ref{prop:existencepartitionofunity}.

We claim that for any $k \ge 0$, the quantity
\begin{equation} \label{eq:Nkdefdef}
\cal{N}_k
	= \cal{N}_k(\rho_1, \rho_0)
	= \max_{0 \le \ell \le k} \sup_{\rho_1', \rho_2' \in T^\star \mfd} \jap*{\dist_{g_{0, \rho_1}^\sympf}(\rho_1', B_1)}^k \jap*{\dist_{g_{0, \rho_2}^\sympf}(\rho_2', B_2)}^k \abs*{\nabla^\ell f(\rho_1', \rho_2')}_{g_0 \oplus g_0}
\end{equation}
is bounded, where
\begin{equation} \label{eq:deff}
f = f_{\rho_1}
	= (\varphi_{\rho_1} \otimes 1) \times \frak{P}^j(a \otimes \psi) ,
\end{equation}
and $B_1 = B_{r_0}^{g_0}(\rho_1)$, $B_2 = B_r^{g_0}(\rho_0)$. We distribute derivatives in $\nabla^\ell f$ on both factors in~\eqref{eq:deff} thanks to the Leibniz formula. Hence it suffices to estimate the following terms: on the one hand,
\begin{equation} \label{eq:inequalityenvelope}
\forall k_1 \in \N , \qquad
	\abs*{\nabla^\ell (\varphi_{\rho_1} \otimes 1)(\rho_1', \rho_2')}_{g_0 \oplus g_0}
		\le \jap*{\dist_{g_{0, \rho_1}^\sympf}(\rho_1', B_1)}^{-k_1} \abs*{\varphi_{\rho_1}}_{\Conf_{r_0}^{g_0}(\rho_1)}^{(k_1)} ;
\end{equation}
on the other hand, using first that $g_0 \oplus g_0 \ge g \oplus g_0$ and then applying Lemma~\ref{lem:continuityfrakP}, we have for any~$l$:
\begin{align*}
\abs*{\nabla^l \frak{P}^j (a \otimes \psi)}_{g_0 \oplus g_0}
	&\le \abs*{\nabla^l \frak{P}^j (a \otimes \psi)}_{g \oplus g_0}
	\le (2 \dim \mfd)^j \gain_{g \oplus g_0}^j \abs*{\nabla^{j + l} (a \otimes \psi)}_{g \oplus g_0} \\
	&\le (2 \dim \mfd)^j \gain_{g \oplus g_0}^j \abs*{\nabla^{j + l} a}_g \otimes \abs*{\nabla^{j + l} \psi}_{g_0}
\end{align*}
Using that $\nabla^j a \in S(m, g)$ and the confinement of $\psi$ (recall~\eqref{eq:2continuousinclusions}), we obtain for any~$l$ and~$k_2 \in \N$:
\begin{equation} \label{eq:inequalitysemiconf}
\abs*{\nabla^l \frak{P}^j (a \otimes \psi)}_{g_0 \oplus g_0}(\rho_1', \rho_2')
	\le 2^{k_2} (2 \dim \mfd)^j \gain_{g \oplus g_0}^j m(\rho_1') \dfrac{\abs*{\nabla^j a}_{S(m, g)}^{(l)} \abs*{\psi}_{\Conf_r^{g_0}(\rho_0)}^{(j + l + k_2)}}{\jap*{ \dist_{g_{0, \rho_0}^\sympf}(\rho_2', B_r^{g_0}(\rho_0))}^{k_2}} .
\end{equation}

Since $m$ is {$g_0$-admissible}, we infer from Proposition~\ref{prop:improvedadmissibility} that
\begin{equation} \label{eq:inequalityonm}
m(\rho_1')
	\le C m(\rho_1) \jap*{\dist_{g_{0, \rho_1}^\sympf}(\rho_1', B_1)}^N
	\le C' m(\rho_0) \jap*{\dist_{g_{0, \rho_1}^\sympf}(\rho_1', B_1)}^N \jap*{\dist_{(g_{0, \rho_1} + g_{0, \rho_0})^\sympf}(B_1, B_2)}^{N'}
\end{equation}
(we use here the assumption that~$r_0$ is a slow variation radius of~$m$ for the metric~$g_0$).
Applying ~\eqref{eq:temperancejointgain} in Proposition~\ref{prop:admissibilitycompatiblemetrics} twice (recall~$g$ and~$g_0$ are compatible and have $r_0$ as a common slow variation radius), we have
\begin{align*}
\gain_{g \oplus g_0}(\rho_1', \rho_2')
	&\le C \gain_{g \oplus g_0}(\rho_1, \rho_0) \jap*{\dist_{g_{0, \rho_1}^\sympf}\left( \rho_1', B_1 \right)}^N \jap*{\dist_{g_{0, \rho_0}^\sympf}\left( \rho_2', B_2 \right)}^N \\
	&\le C' \gain_{g, g_0}(\rho_0) \jap*{\dist_{(g_{0, \rho_1} + g_{0, \rho_0})^\sympf}\left(B_1, B_2\right)}^{N'} \jap*{\dist_{g_{0, \rho_1}^\sympf}\left( \rho_1', B_1 \right)}^N \jap*{\dist_{g_{0, \rho_0}^\sympf}\left( \rho_2', B_2 \right)}^N .
\end{align*}
Here, we used the fact that $g_0 \le g + g_0 \le 2 g_0$ to obtain an estimate in terms of the metric~$g_0$ only.
The constants depend only on structure constants of the metrics involved.
Combining this with the estimate~\eqref{eq:inequalityonm} on $m$, the inequality~\eqref{eq:inequalitysemiconf} becomes
\begin{equation} \label{eq:frakPjcontinuityineq}
\begin{multlined}
\abs*{\nabla^l \frak{P}^j (a \otimes \psi)}_{g_0 \oplus g_0}(\rho_1', \rho_2')
	\le C_{j, g, g_0} 2^{k_2} \gain_{g, g_0}^j(\rho_0) m(\rho_0) \jap*{\dist_{(g_{0, \rho_1} + g_{0, \rho_0})^\sympf}\left(B_1, B_2\right)}^{jN'} \\
		\times \jap*{\dist_{g_{0, \rho_1}^\sympf}(\rho_1', B_1)}^{jN} \jap*{\dist_{g_{0, \rho_2}^\sympf}(\rho_2', B_2)}^{jN-k_2} \abs*{\nabla^j a}_{S(m, g)}^{(l)} \abs*{\psi}_{\Conf_r^{g_0}(\rho_0)}^{(j + l  + k_2)} ,
\end{multlined}
\end{equation}
and together with~\eqref{eq:inequalityenvelope}, we obtain
\begin{equation} \label{eq:nablaellfineq}
\begin{multlined}
\abs*{\nabla^\ell f}_{g_0 \oplus g_0}(\rho_1', \rho_2')
	\le C_{j, g, g_0} 2^{\ell+k_2} \gain_{g, g_0}^j(\rho_0) m(\rho_0) \jap*{\dist_{(g_{0, \rho_1} + g_{0, \rho_0})^\sympf}\left(B_1, B_2\right)}^{jN'} \\
		\times \jap*{\dist_{g_{0, \rho_1}^\sympf}(\rho_1', B_1)}^{jN-k_1} \jap*{ \dist_{g_{0, \rho_0}^\sympf}(\rho_2', B_2)}^{jN-k_2} \abs*{\nabla^j a}_{S(m, g)}^{(\ell)} \abs*{\psi}_{\Conf_r^{g_0}(\rho_0)}^{(j + \ell  + k_2)} \abs*{\varphi_{\rho_1}}_{\Conf_{r_0}^{g_0}(\rho_1)}^{(k_1)} .
\end{multlined}
\end{equation}
At this stage, we remark that taking $\rho_1 = \rho_0$ and $\rho$ in place of $\rho_1'$ and $\rho_2'$ in~\eqref{eq:frakPjcontinuityineq} yields
\begin{equation*}
\abs*{\nabla^l \frak{P}^j (a \otimes \psi)}_{g_0 \oplus g_0}(\rho, \rho)
	\le C_{j, g, g_0} 2^{k_2} \gain_{g, g_0}^j(\rho_0) m(\rho_0) \times \jap*{\dist_{g_{0, \rho_0}^\sympf}(\rho, B_2)}^{2Nj - k_2} \abs*{\nabla^j a}_{S(m, g)}^{(l)} \abs*{\psi}_{\Conf_r^{g_0}(\rho_0)}^{(j + l  + k_2)} .
\end{equation*}
Then we observe that the continuity estimate~\eqref{eq:continuitycalPj} for $\cal{P}_j$, defined in~\eqref{eq:defPj}, follows from restricting this estimate to the diagonal through the chain rule:
\begin{align*}
\abs*{\nabla^\ell \left(\frak{P}^j (a \otimes \psi)(\rho, \rho)\right)}_{g_0}
	&\le C_\ell \max_{0 \le \ell_1, \ell_2 \le \ell} \abs*{\nabla^{\ell_1} \otimes \nabla^{\ell_2} \left(\frak{P}^j (a \otimes \psi)\right)}_{g_0 \oplus g_0}(\rho, \rho) \\
	&\le C_{\ell, k_2, j, g, g_0} \gain_{g, g_0}^j(\rho_0) m(\rho_0) 	\jap*{\dist_{g_{0, \rho_0}^\sympf}(\rho, B_2)}^{2Nj - k_2} \abs*{\nabla^j a}_{S(m, g)}^{(\ell)} \abs*{\psi}_{\Conf_r^{g_0}(\rho_0)}^{(j + \ell  + k_2)} .
\end{align*}
Here, $k_2$ is arbitrary.

Now we focus on the estimate~\eqref{eq:continuityhatcalPj} for~$\widehat{\cal{P}}_j$. Since the integers~$k_1$ and~$k_2$ in~\eqref{eq:nablaellfineq} are arbitrary, we deduce that for some $k'$ sufficiently large, the quantity~$\cal{N}_k$ defined in~\eqref{eq:Nkdefdef} is estimated by:
\begin{equation} \label{eq:ineqoncalNk}
\cal{N}_k
	\le C_{k, j, g, g_0} \gain_{g, g_0}^j(\rho_0) m(\rho_0) \jap*{\dist_{(g_{0, \rho_1} + g_{0, \rho_0})^\sympf}\left(B_1, B_2\right)}^{jN'} \abs*{\nabla^j a}_{S(m, g)}^{(k')} \abs*{\psi}_{\Conf_r^{g_0}(\rho_0)}^{(k')} \abs*{\varphi_{\rho_1}}_{\Conf_{r_0}^{g_0}(\rho_1)}^{(k')} .
\end{equation}
We apply the bi-confinement estimate (Proposition~\ref{prop:bi-confinement}): for any~$k$ and~$\ell$, there exists $k'$ such that for all $\rho \in T^\star \mfd$:
\begin{equation} \label{eq:estdiagmoyal}
\begin{multlined}
\abs*{\nabla^\ell \left(\e^{- \ii \frac{s}{2} \frak{P}} f(\rho, \rho)\right)}_{g_0} \\
	\le C_{k, \ell, j, g, g_0} \gain_{g, g_0}^j(\rho_0) m(\rho_0) \dfrac{\abs*{\nabla^j a}_{S(m, g)}^{(k')} \abs*{\psi}_{\Conf_r^{g_0}(\rho_0)}^{(k')} \abs*{\varphi_{\rho_1}}_{\Conf_{r_0}^{g_0}(\rho_1)}^{(k')}}{\jap*{\dist_{(g_{0, \rho_1} + g_{0, \rho_0})^\sympf}(\rho, B_1) + \dist_{(g_{0, \rho_1} + g_{0, \rho_0})^\sympf}(\rho, B_2)}^{k - jN'}} .
\end{multlined}
\end{equation}
We absorbed the factor to the power $jN'$ in~\eqref{eq:ineqoncalNk} into the denominator of~\eqref{eq:estdiagmoyal} thanks to the triangle inequality.
Taking $k$ large enough, we observe that for fixed $\rho$, the right-hand side of~\eqref{eq:estdiagmoyal} is integrable with respect to $\rho_1$ (see Lemma~\ref{lem:integr} Item~\ref{it:integrabilityg}). Thus the following integral is absolutely convergent, and by Fubini's theorem, we deduce
\begin{equation*}
\int_{T^\star \mfd} \nabla^\ell \left(\e^{- \ii \frac{s}{2} \frak{P}} f\right) \dd \vol_{g_0}(\rho_1)
	= \nabla^\ell \e^{- \ii \frac{s}{2} \frak{P}} \int_{T^\star \mfd} (\varphi_{\rho_1} \otimes 1) \frak{P}^j(a \otimes \psi) \dd \vol_{g_0}(\rho_1)
	= \nabla^\ell \left(\e^{- \ii \frac{s}{2} \frak{P}} \frak{P}^j(a \otimes \psi)\right) ,
\end{equation*}
with the estimate
\begin{multline*}
\abs*{\nabla^\ell \left( \e^{- \ii \frac{s}{2} \frak{P}} \frak{P}^j(a \otimes \psi)(\rho, \rho) \right)}_{g_0}
	\le C_{k, \ell, j, g, g_0} \gain_{g, g_0}^j(\rho_0) m(\rho_0) \\
		\times \int_{T^\star \mfd} \dfrac{\abs*{\nabla^j a}_{S(m, g)}^{(k')} \abs*{\psi}_{\Conf_r^{g_0}(\rho_0)}^{(k')} \sup_{\rho_1 \in T^\star \mfd} \abs*{\varphi_{\rho_1}}_{\Conf_{r_0}^{g_0}(\rho_1)}^{(k')}}{\jap*{\dist_{(g_{0, \rho_1} + g_{0, \rho_0})^\sympf}(\rho, B_1) + \dist_{(g_{0, \rho_1} + g_{0, \rho_0})^\sympf}(\rho, B_2)}^{k - jN'}} \dd \vol_{g_0}(\rho_1) .
\end{multline*}
Now we check the decay as $\rho \to \infty$ as follows: the metric $g_0$ is admissible, so that choosing $k$ large enough, one can apply Item~\ref{it:minorationdowns} of Lemma~\ref{lem:integr} to obtain
\begin{multline*}
\int_{T^\star \mfd} \jap*{\dist_{(g_{0, \rho_1} + g_{0, \rho_0})^\sympf}(\rho, B_1) + \dist_{(g_{0, \rho_1} + g_{0, \rho_0})^\sympf}(\rho, B_2)}^{-(k - jN')} \dd \vol_{g_0}(\rho_1) \\
	\le \tilde C \jap*{\dist_{g_{0, \rho_0}^\sympf}(\rho, B_2)}^{-n_0} \int_{T^\star \mfd} \jap*{\dist_{g_{0, \rho_1}^\sympf}(\rho, B_1)}^{-n_0} \dd \vol_{g_0}(\rho_1) .
\end{multline*}
Notice that we can make $n_0$ as large as we wish, up to enlarging~$k$. Finally, we observe that the integral in the right-hand side is bounded independently of $\rho$ (and $\rho_0$ and $r$) by Item~\ref{it:integrabilityg} of Lemma~\ref{lem:integr}, since $g_{0, \rho_1}^\sympf \ge (g_{0, \rho} + g_{0, \rho_1})^\sympf$ (recall that {$\sympf$-duality} in non-increasing~\eqref{eq:sympfduality}).

It remains to integrate the estimate with respect to $s \in [0, 1]$ to obtain, in view of the definition of $\widehat{\cal{P}}_j$ in~\eqref{eq:defhatPj}:
\begin{equation*}
\abs*{\nabla^\ell\left(\widehat{\cal{P}}_j(a, \psi)(\rho, \rho)\right)}_{g_0}
	\le \tilde C_{k, \ell, j, g, g_0} \dfrac{\gain_{g, g_0}^j(\rho_0) m(\rho_0)}{\jap*{\dist_{g_{0, \rho_0}^\sympf}(\rho, B_2)}^{n_0}} \abs*{\nabla^j a}_{S(m, g)}^{(k')} \abs*{\psi}_{\Conf_r^{g_0}(\rho_0)}^{(k')} \sup_{\rho_1 \in T^\star \mfd} \abs*{\varphi_{\rho_1}}_{\Conf_{r_0}^{g_0}(\rho_1)}^{(k')} ,
\end{equation*}
which is the desired estimate to establish~\eqref{eq:continuityhatcalPj}. Notice that uniformity in~$r$ is due to the fact that the partition of unity~$(\varphi_{\rho_1})_{\rho_1 \in T^\star \mfd}$ does not depend on~$r$.
\end{proof}

\subsubsection{Proof of pseudo-differential calculus in symbol classes}

So far, we have discussed pseudo-differential calculus with one confined symbol and one symbol in a Weyl--Hörmander class. Here we go one step further and prove the pseudo-differential calculus for two symbols in Weyl--Hörmander classes, using partitions of unity from Proposition~\ref{prop:existencepartitionofunity}.

\begin{proof}[Proof of Proposition~\ref{prop:pseudocalcsymb}]
We indicate how to modify the above proof of Proposition~\ref{prop:pseudocalcconf} in order to obtain the desired result. Let $a_1 \in \nabla^{-j} S(m_1, g_1)$, $a_2 \in \nabla^{-j} S(m_2, g_2)$. As a first step, we use the fact that $(g_1 + g_2)^{\oplus 2} \ge g_1 \oplus g_2$ and we apply Lemma~\ref{lem:continuityfrakP} to have
\begin{align} \label{eq:ineqfrakPj0general}
\abs*{\nabla^\ell \frak{P}^j (a_1 \otimes a_2)}_{(g_1 + g_2) \oplus (g_1 + g_2)}
	&\le C_j \gain_{g_1 \oplus g_2}^j \abs*{\nabla^{j + \ell} (a_1 \otimes a_2)}_{g_1 \oplus g_2} \nonumber\\
	&\le C_j \gain_{g_1 \oplus g_2}^j m_1 \otimes m_2 \abs*{\nabla^j a_1}_{S(m_1, g_1)}^{(\ell)} \abs*{\nabla^j a_2}_{S(m_2, g_2)}^{(\ell)} .
\end{align}
We restrict this estimate to the diagonal thanks to the chain rule to prove the continuity estimate~\eqref{eq:continuitycalPjsymb} for~$\cal{P}_j$:
\begin{align*}
\abs*{\nabla^\ell \left(\frak{P}^j (a_1 \otimes a_2)(\rho, \rho)\right)}_{g_1 + g_2}
	&\le C_\ell \max_{0 \le \ell_1, \ell_2 \le \ell} \abs*{\nabla^{\ell_1} \otimes \nabla^{\ell_2} \left(\frak{P}^j (a_1 \otimes a_2)\right)}_{(g_1 + g_2) \oplus (g_1 + g_2)}(\rho, \rho) \\
	&\le C_{j, \ell} \gain_{g_1, g_2}^j(\rho) (m_1 m_2)(\rho) \abs{\nabla^j a_1}_{S(m_1, g_1)}^{(\ell)} \abs{\nabla^j a_2}_{S(m_2, g_2)}^{(\ell)} .
\end{align*}

We now turn to the proof of the continuity estimate~\eqref{eq:continuityhatcalPjsymb} for $\widehat{\cal{P}}_j$. Write for simplicity $\hat g = \frac{1}{2} (g_1 + g_2)$ and fix a partition of unity $(\varphi_{\rho_1})_{\rho_1 \in T^\star \mfd}$ adapted to the metric $\hat g$ given by Proposition~\ref{prop:existencepartitionofunity}, with radius $r$ small enough so that it is a slow variation radius of~$\hat g$, $m_1$ and $m_2$. Similarly to~\eqref{eq:deff}, we define
\begin{equation} \label{eq:deffpseudocalcsymbols}
f = f_{\rho_1, \rho_2}
	= (\varphi_{\rho_1} \otimes \varphi_{\rho_2}) \times \frak{P}^j(a_1 \otimes a_2) , 
\end{equation}
and we show that for any $k \in \N$, the quantity
\begin{equation*}
\cal{N}_k
	= \cal{N}_k(\rho_1, \rho_2)
	= \max_{0 \le \ell \le k} \sup_{\rho_1', \rho_2' \in T^\star \mfd} \jap*{\dist_{\hat g_{\rho_1}^\sympf}(\rho_1', B_1)}^k \jap*{\dist_{\hat g_{\rho_2}^\sympf}(\rho_2', B_2)}^k \abs*{\nabla^\ell f(\rho_1', \rho_2')}_{\hat g \oplus \hat g}
\end{equation*}
is bounded, where $B_1 = B_r^{\hat g}(\rho_1)$ and $B_2 = B_r^{\hat g}(\rho_2)$. Now in~\eqref{eq:ineqfrakPj0general}, we apply Proposition~\ref{prop:improvedadmissibility} with the {$\hat g$-admissible} weights~$m_1$ and~$m_2$, together with~\eqref{eq:temperancejointgain} in Proposition~\ref{prop:admissibilitycompatiblemetrics}, to obtain
\begin{align*}
\gain_{g_1 \oplus g_2}^j(\rho_1', \rho_2') m_1(\rho_1') m_2(\rho_2')
	&\le C \gain_{g_1 \oplus g_2}^j(\rho_1, \rho_2) m_1(\rho_1) m_2(\rho_2) \\
	&\qquad\qquad \times \jap*{\dist_{(\hat g_{\rho_1} + \hat g_{\rho_1'})^\sympf}\left(\rho_1', B_1\right)}^{N'} \jap*{\dist_{(\hat g_{\rho_2} + \hat g_{\rho_2'})^\sympf}\left(\rho_2', B_2\right)}^{N'} \\
	&\le C' \gain_{g_1, g_2}^j(\rho) (m_1 m_2)(\rho) \jap*{\dist_{\hat g_{\rho_1}^\sympf}\left(\rho_1', B_1\right)}^{N'} \jap*{\dist_{\hat g_{\rho_2}^\sympf}\left(\rho_2', B_2\right)}^{N'} \\
	&\qquad\qquad\qquad \times	\jap*{\dist_{(\hat g_{\rho_1} + \hat g_\rho)^\sympf}\left(\rho, B_1\right)}^{N'} \jap*{\dist_{(\hat g_{\rho_2} + \hat g_\rho)^\sympf}\left(\rho, B_2\right)}^{N'} .
\end{align*}
Plugging this into~\eqref{eq:ineqfrakPj0general}, then in view of~\eqref{eq:deffpseudocalcsymbols}, we know that for any $k$, taking advantage of the confinement of $\varphi_{\rho_1}, \varphi_{\rho_2}$, one has
\begin{equation*}
\cal{N}_k
	\le C_{j, k}' (\gain_{g_1, g_2}^j m_1 m_2)(\rho) \jap*{\dist_{\hat g_{\rho_1}^\sympf}\left(\rho, B_1\right)}^{N'} \jap*{\dist_{\hat g_{\rho_2}^\sympf}\left(\rho, B_2\right)}^{N'} \abs{\nabla^j a_1}_{S(m_1, g_1)}^{(k)} \abs{\nabla^j a_2}_{S(m_2, g_2)}^{(k)} .
\end{equation*}
Here the constant~$C_{j, k}'$ contains seminorms of~$\varphi_{\rho_1}, \varphi_{\rho_2}$, in the corresponding classes~$\Conf_r^{\hat g}$, which are bounded uniformly in $\rho_1, \rho_2 \in T^\star \mfd$.
Therefore the bi-confinement estimate (Proposition~\ref{prop:bi-confinement}) applied to~$f$ implies that for any~$k$ and~$\ell$, there exists~$k'$ such that for all $\rho \in T^\star \mfd$:
\begin{align*}
\abs*{\nabla^\ell\left(\e^{- \ii \frac{s}{2} \frak{P}} f(\rho, \rho)\right)}_{\hat g}
	&\le C_{j, k, \ell}'' (\gain_{g_1, g_2}^j m_1 m_2)(\rho) \dfrac{\abs{\nabla^j a_1}_{S(m_1, g_1)}^{(k')} \abs{\nabla^j a_2}_{S(m_2, g_2)}^{(k')}}{\jap*{\dist_{(\hat g_{\rho_1} + \hat g_{\rho_2})^\sympf}(\rho, B_1) + \dist_{(\hat g_{\rho_1} + \hat g_{\rho_2})^\sympf}(\rho, B_2)}^k} \\
	&\qquad\qquad\qquad \times \jap*{\dist_{\hat g_{\rho_1}^\sympf}\left(\rho, B_1\right)}^{N'} \jap*{\dist_{\hat g_{\rho_2}^\sympf}\left(\rho, B_2\right)}^{N'} .
\end{align*}
Since $\hat g$ is admissible (Proposition~\ref{prop:admissibilitycompatiblemetrics}), we can bound the denominator from below using Lemma~\ref{lem:integr} Item~\ref{it:minorationdowns} and we obtain
\begin{equation*}
\abs*{\nabla^\ell \left(\e^{- \ii \frac{s}{2} \frak{P}} f(\rho, \rho)\right)}_{\hat g}
		\le \tilde C_{j, n_0, \ell} (\gain_{g_1, g_2}^j m_1 m_2)(\rho) \dfrac{\abs{\nabla^j a_1}_{S(m_1, g_1)}^{(k')} \abs{\nabla^j a_2}_{S(m_2, g_2)}^{(k')}}{\jap*{\dist_{\hat g_{\rho_1}^\sympf}(\rho, B_1)}^{n_0} \jap*{\dist_{\hat g_{\rho_2}^\sympf}(\rho, B_2)}^{n_0}} .
\end{equation*}
Here, $n_0$ can be taken arbitrarily large, up to enlarging~$k'$. All constants depend only on structure constants of $g_1, g_2, m_1, m_2$. For $n_0$ large enough, we deduce from Lemma~\ref{lem:integr} Item~\ref{it:integrabilityg} that the right-hand side is integrable with respect to $\rho_1, \rho_2$, independently of $\rho$ (recall that $\hat g_{\rho_j}^\sympf \ge (\hat g_{\rho_1} + \hat g_{\rho_2})^\sympf$, $j = 1, 2$). Further integrating in $s \in [0, 1]$ provides the sought estimate.
\end{proof}

\Large
\section{Continuity properties of pseudo-differential operators} \label{app:operators}
\normalsize

In this appendix, we recall classical properties and estimates about pseudo-differential operators.

\subsection{Distributional continuity}

We first recall a reformulation of the Schwartz kernel theorem in the context of the Weyl quantization.

\begin{proposition} \label{prop:SchwartzWeylkernel}
Let $A : \sch(\mfd) \to \sch'(\mfd)$ be a continuous linear operator. Then there exists a unique tempered distribution $a \in \sch'(T^\star \mfd)$ such that $A = \Opw{a}$.
\end{proposition}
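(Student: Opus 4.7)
The plan is to invoke the Schwartz kernel theorem and then track how it translates, through the explicit integral formula for the Weyl quantization, into the existence and uniqueness of a full symbol.

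First, by the Schwartz kernel theorem, the continuous linear operator $A:\sch(\mfd)\to\sch'(\mfd)$ is represented by a unique Schwartz kernel $K_A\in\sch'(\mfd\times\mfd)$ in the sense that
\begin{equation*}
\brak*{Au}{v}_{\sch',\sch(\mfd)}
    = \brak*{K_A}{v\otimes u}_{\sch',\sch(\mfd\times\mfd)},
        \qquad \forall u,v\in\sch(\mfd).
\end{equation*}
The assignment $A\mapsto K_A$ is a linear bijection between the space of continuous linear operators $\sch(\mfd)\to\sch'(\mfd)$ and $\sch'(\mfd\times\mfd)$.

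Second, I would relate Schwartz kernels and Weyl symbols by the explicit formula coming from the definition of the Weyl quantization: reading off the kernel from
\begin{equation*}
[\Opw{a}u](x)
    = \int_{T^\star\mfd} \e^{\ii\xi\cdot(x-y)}\,a\!\left(\tfrac{x+y}{2},\xi\right) u(y)\,\dd y\,\dd\xi ,
\end{equation*}
one sees that the kernel of $\Opw{a}$ is $K(x,y)=\Phi(a)(x,y)$, where $\Phi$ is the composition of the inverse partial Fourier transform in the cotangent variable with the affine change of variables $(x,y)=(q+v/2,q-v/2)$ on $\mfd\times\mfd$ (with $(q,v)$ coming from the position and the Fourier-dual of the momentum). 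Both the affine change of variables (a linear automorphism of $\mfd\times\mfd\simeq V\times V$ with determinant $\pm1$) and the partial Fourier transform in one factor are continuous linear isomorphisms of $\sch(\mfd\times\mfd)$ onto itself, and by duality also of $\sch'(\mfd\times\mfd)$ onto itself. Hence $\Phi$ is a continuous linear isomorphism
\begin{equation*}
\Phi:\sch'(T^\star\mfd)\longrightarrow\sch'(\mfd\times\mfd).
\end{equation*}

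Third, combining these two facts, I define $a:=\Phi^{-1}(K_A)\in\sch'(T^\star\mfd)$. Then the kernel of $\Opw{a}$ is $K_A$, so $A$ and $\Opw{a}$ have the same Schwartz kernel, which forces $A=\Opw{a}$ (both act on $u\in\sch(\mfd)$ by pairing their common kernel against suitable test functions, via the defining identity~\eqref{eq:defWeylquantization} and~\eqref{eq:defWigner}, which is readily checked to be compatible with the formula above). Uniqueness is equally immediate: if $\Opw{a_1}=\Opw{a_2}$ then their kernels coincide, i.e.\ $\Phi(a_1)=\Phi(a_2)$, and since $\Phi$ is injective, $a_1=a_2$ in $\sch'(T^\star\mfd)$.

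There is no real obstacle here; the only point to be pedantic about is checking that the pairing~\eqref{eq:defWeylquantization} used in the paper to define $\Opw{a}$ on $\sch'(T^\star\mfd)$ agrees with the pairing via the kernel $\Phi(a)$, which amounts to verifying that the Wigner transform $u\ovee v$ in~\eqref{eq:defWigner} is precisely $\Phi^\star(v\otimes\bar u)$ where $\Phi^\star$ is the adjoint of $\Phi$ acting on Schwartz functions. This is a direct unfolding of definitions and a change of variables.
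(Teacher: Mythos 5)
Your proof is correct and follows essentially the same route as the paper: both rest on the Schwartz kernel theorem combined with the observation that the kernel-to-symbol map (an affine change of variables composed with a partial Fourier transform, i.e.\ the extended Wigner transform) is a continuous linear isomorphism of tempered distributions, the only cosmetic difference being that you phrase the isomorphism in the direction symbol $\mapsto$ kernel while the paper writes it kernel $\mapsto$ symbol. No gaps.
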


\begin{proof}
Suppose $A = \Opw{a}$ with $a \in \sch(T^\star \mfd)$. From~\cite[(2.4) {p.\ 80}]{FollandPhaseSpace}, the Schwartz kernel $K_A$ and the Weyl symbol $a$ of $A$ are related through the formula
\begin{equation} \label{eq:SchwartzWeylsymbol}
a(x, \xi)
	= \int_V K_A\left(x - \dfrac{v}{2}, x + \dfrac{v}{2}\right) \e^{\ii \xi. v} \dd v .
\end{equation}
In other words, $a$ is the (extended) Wigner transform of $K_A$. Since the Wigner transform consists in an affine change of variables and a partial Fourier transform, we deduce that the map $K_A \mapsto a$ acts continuously on $\sch(\mfd \times \mfd) \to \sch(T^\star \mfd)$, and extends to a continuous isomorphism $\sch'(\mfd \times \mfd) \to \sch'(T^\star \mfd)$ (\cite[Proposition (1.92)]{FollandPhaseSpace}). In particular, the above formula~\eqref{eq:SchwartzWeylsymbol} still makes sense for general tempered distributions $K_A$. Given a general continuous linear operator $A : \sch(\mfd) \to \sch'(\mfd)$, the existence and uniqueness of $K_A \in \sch'(\mfd \times \mfd)$ is given by the Schwartz kernel theorem (see for instance~\cite[Corollary of Theorem 51.7]{Treves:67}), which concludes the proof.
\end{proof}

Let us now recall basic continuity properties on the Schwartz class $\sch(\mfd)$ and on the space of tempered distributions $\sch'(\mfd)$.

\begin{proposition}[Continuity of pseudo-differential operators \--- {\cite[Theorem 18.6.2]{Hoermander:V3}}] \label{prop:continuityonSchwartz}
Let $g$ be an admissible metric and $m$ be a {$g$-admissible} weight. Then for any $a \in S(m, g)$, the operator $\Opw{a}$ maps $\sch(\mfd)$ to itself continuously, and can be extended as a continuous operator on $\sch'(\mfd)$.
\end{proposition}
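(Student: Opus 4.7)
The strategy is to reduce continuity on $\sch(\mfd)$ to $L^2$-estimates for auxiliary pseudo-differential operators via commutator identities, and then to obtain continuity on $\sch'(\mfd)$ by duality. Fixing an arbitrary Euclidean coordinate system on $\mfd$, I will use the Schwartz seminorms $u \mapsto \norm{x^\alpha D^\beta u}_{L^2}$, $\alpha,\beta \in \N^d$, and it suffices to bound these applied to $\Opw{a} u$ by finitely many Schwartz seminorms of $u$ and finitely many $S(m, g)$-seminorms of $a$.

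The key observation is that $x_j = \Opw{x_j}$ and $D_j = \Opw{\xi_j}$ are Weyl quantizations of affine symbols, and by Remark~\ref{rmk:polynomialpseudocalc} the pseudo-differential calculus is exact at order one in their composition with $\Opw{a}$: with no remainder,
\[
\xi_j \moyal a - a \moyal \xi_j = \tfrac{1}{i}\partial_{x_j} a , \qquad x_j \moyal a - a \moyal x_j = -\tfrac{1}{i}\partial_{\xi_j} a .
\]
Iterating these commutator identities and rearranging (Leibniz-style) gives, as an identity of continuous operators $\sch(\mfd) \to \sch'(\mfd)$,
\[
x^\alpha D^\beta \Opw{a} = \sum_{\substack{\alpha' \le \alpha \\ \beta' \le \beta}} C_{\alpha,\alpha',\beta,\beta'}\, \Opw{\partial_\xi^{\alpha-\alpha'} \partial_x^{\beta-\beta'} a}\, x^{\alpha'} D^{\beta'} ,
\]
so the problem reduces to bounding $\norm{\Opw{\tilde a} v}_{L^2}$ with $\tilde a := \partial_\xi^\gamma \partial_x^\delta a$ a partial derivative of $a$ and $v := x^{\alpha'} D^{\beta'} u \in \sch(\mfd)$.

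Each $\tilde a$ lies in the symbol class $S(m \theta_g^{|\gamma|+|\delta|}, g)$ (compare Lemma~\ref{lem:seminormaff}; the temperance weight $\theta_g$ of Definition~\ref{def:thetag} quantifies the conversion from intrinsic to Euclidean derivatives), and both the weight and the metric are tempered, so $\tilde a$ together with all its Euclidean derivatives grows at most polynomially in $(x, \xi)$. To bound $\Opw{\tilde a} v$ in $L^2$ I will work directly from the Schwartz-kernel representation
\[
[\Opw{\tilde a} v](x) = (2\pi)^{-d} \iint \tilde a\!\left(\tfrac{x+y}{2}, \xi\right) e^{i\xi\cdot(x-y)} v(y)\, dy\, d\xi ,
\]
inserting powers of $(1 - \Delta_\xi)^N / \jap{x-y}^{2N}$ to integrate by parts in $\xi$ and produce arbitrary decay in $|x-y|$. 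The polynomial bounds on $\tilde a$ and its derivatives combined with the rapid decay of $v$ give absolute convergence of the resulting integral and pointwise estimates $|[\Opw{\tilde a} v](x)| \le C_N \jap{x}^{-N}$ for every $N$, with constants continuous in finitely many Schwartz seminorms of $v$ and $S(m, g)$-seminorms of $a$. This delivers $\Opw{a}: \sch(\mfd) \to \sch(\mfd)$ continuously.

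For continuity on $\sch'(\mfd)$, the adjoint $\Opw{a}^\ast = \Opw{\bar a}$ is the Weyl quantization of $\bar a \in S(m, g)$ (same class), so by the previous step it too is continuous on $\sch(\mfd)$; standard duality then gives continuity of $\Opw{a}$ on $\sch'(\mfd)$. The main obstacle is the bookkeeping: organizing the commutator expansions, relating Euclidean partial derivatives to the intrinsic $\nabla^k$ of Definition~\ref{def:symbclasses} through the temperance weight, and confirming that the final constants depend on only finitely many seminorms of $a$. No single step is conceptually subtle, but the uniform dependence of the estimates requires careful accounting.
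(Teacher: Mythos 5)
The paper itself does not prove this proposition: it is imported verbatim from H\"ormander \cite[Theorem 18.6.2]{Hoermander:V3} (it sits in Appendix~\ref{app:operators} as a recalled fact), so the only comparison available is with the classical argument, and your route --- commuting $x^\alpha D^\beta$ through $\Opw{a}$ using exactness of the Weyl calculus at order one, converting intrinsic $\nabla^k$-bounds into Euclidean polynomial bounds via the temperance of $g$ and $m$ (your use of $\theta_g$ here is correct, since $g\le g^\sympf\le \theta_g\,{\sf g}$ under the uncertainty principle), and then estimating the kernel --- is essentially that classical proof, with the adjoint/duality step for $\sch'(\mfd)$ handled correctly via $\Opw{a}^\ast=\Opw{\bar a}$.

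There is, however, one step that fails as written. After inserting $(1-\Delta_\xi)^N\jap{x-y}^{-2N}$ and integrating by parts in $\xi$, the integrand is
\[
\jap{x-y}^{-2N}\,\bigl[(1-\Delta_\xi)^N \tilde a\bigr]\!\left(\tfrac{x+y}{2},\xi\right) v(y) ,
\]
and this is \emph{not} absolutely integrable in $\xi$: in a general Weyl--H\"ormander class, $\xi$-derivatives of $\tilde a$ produce no decay in $\xi$ (take $g=\dd x^2+\dd\xi^2$, $m=1$, where every derivative is merely bounded), so the $\xi$-integral still diverges and the rapid decay of $v$ only controls the $y$-integral. The standard repair is a second regularization: integrate by parts in $y$ as well, using $(1-\Delta_y)^M e^{\ii\xi\cdot(x-y)}=\jap{\xi}^{2M}e^{\ii\xi\cdot(x-y)}$, which trades the Schwartz decay of $v$ and its derivatives (together with the fixed-degree polynomial bounds on $x$-derivatives of $\tilde a$ coming from temperance) for a factor $\jap{\xi}^{-2M}$; only with both integrations by parts do you get absolute convergence and the pointwise bounds $\abs{[\Opw{\tilde a}v](x)}\le C_N\jap{x}^{-N}$ you claim. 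This is a routine fix rather than a wrong approach, but as stated the convergence assertion is a genuine gap. A smaller cosmetic point: your displayed commutator expansion is not literally of the stated form, since after moving $x^{\alpha'}$ past $D^{\beta'}$ additional terms with lower powers (and no extra derivatives on $a$) appear; the conclusion is unaffected, but the bookkeeping you defer is exactly there.
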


\begin{proposition}[Continuity of operator with Schwartz symbol \--- {\cite[Theorem 4.1]{Zworski:book}}] \label{prop:continuityofSchkerneloperator}
Let $k \in \sch(T^\star \mfd)$. Then
\begin{equation*}
\Opw{k} : \sch'(T^\star \mfd) \longrightarrow \sch(T^\star \mfd)
\end{equation*}
is continuous.
\end{proposition}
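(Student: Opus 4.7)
The plan is to reduce the statement to the classical smoothing property of operators with Schwartz integral kernels. (Note that as written, the target of the operator should be $\sch(\mfd)$ and its domain $\sch'(\mfd)$, as in Zworski's Theorem 4.1; I proceed accordingly.) By the proof of Proposition~\ref{prop:SchwartzWeylkernel}, the map sending the Schwartz kernel $K$ of $\Opw{k}$ to its Weyl symbol $k$ via \eqref{eq:SchwartzWeylsymbol} is a topological isomorphism $\sch(\mfd \times \mfd) \to \sch(T^\star \mfd)$, being the composition of a linear change of variables with a partial Fourier transform in the $v$ variable. Inverting, the hypothesis $k \in \sch(T^\star \mfd)$ is equivalent to $K \in \sch(\mfd \times \mfd)$, with continuous dependence.

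It then suffices to prove: for any $K \in \sch(\mfd \times \mfd)$, the operator $T_K$ formally defined by $T_K u(x) = \int_\mfd K(x, y) u(y) \dd y$ extends continuously to a map $\sch'(\mfd) \to \sch(\mfd)$. Given $u \in \sch'(\mfd)$, set
\begin{equation*}
T_K u(x) := \brak*{K(x, \bigcdot)}{u}_{\sch(\mfd), \sch'(\mfd)},
\end{equation*}
which is well-defined since $K(x, \bigcdot) \in \sch(\mfd)$ for every $x$. Smoothness of $x \mapsto T_K u(x)$ follows by differentiating under the pairing, legitimate because $x \mapsto \partial_x^\alpha K(x, \bigcdot)$ is continuous as a map $\mfd \to \sch(\mfd)$. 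To obtain Schwartz decay, use the identity $x^\alpha \partial_x^\beta T_K u(x) = \brak{x^\alpha \partial_x^\beta K(x, \bigcdot)}{u}$ together with the continuity of the pairing: there exist $N \in \N$ and a constant $C_u > 0$, controlled by a continuous seminorm of $u$ on $\sch'(\mfd)$, such that
\begin{equation*}
\abs*{x^\alpha \partial_x^\beta T_K u(x)} \le C_u \abs{x^\alpha \partial_x^\beta K(x, \bigcdot)}_{\sch(\mfd)}^{(N)} .
\end{equation*}
Since $K \in \sch(\mfd \times \mfd)$, the right-hand side is $O(\jap{x}^{-M})$ for every $M \in \N$, so $T_K u \in \sch(\mfd)$.

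The remaining point is to upgrade this pointwise bound to a continuity statement $\sch'(\mfd) \to \sch(\mfd)$, which requires that the constant $C_u$ can be taken uniform on equicontinuous subsets of $\sch'(\mfd)$ and that it depends on a \emph{fixed} continuous seminorm of $u$. This is a standard consequence of $\sch(\mfd)$ being a Fréchet space and $\sch'(\mfd)$ its strong dual, together with the joint continuity of the pairing, and involves no microlocal ingredient. The only mild obstacle is this bookkeeping between the order $N$, the seminorm indices on $T_K u$, and the seminorm of $u$; all of it is routine duality theory, with no interaction with the Weyl--Hörmander framework beyond the initial translation between $k$ and $K$.
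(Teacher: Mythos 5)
Your proof is correct, and in fact the paper does not prove this proposition at all: it is stated with a citation to Zworski's Theorem 4.1, so your argument is precisely the standard one underlying that citation. You correctly identify the typo in the statement (the operator acts $\sch'(\mfd) \to \sch(\mfd)$), and the reduction via the kernel--symbol correspondence is legitimate: the paper's proof of Proposition~\ref{prop:SchwartzWeylkernel} already records that the map $K \mapsto k$ in~\eqref{eq:SchwartzWeylsymbol} is a topological isomorphism $\sch(\mfd \times \mfd) \to \sch(T^\star \mfd)$, so $k \in \sch(T^\star \mfd)$ is indeed equivalent to $K \in \sch(\mfd \times \mfd)$. The only place where your write-up is slightly imprecise is the final continuity step: the phrase ``uniform on equicontinuous subsets of $\sch'(\mfd)$'' is not quite the right mechanism. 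What one actually uses is that, for each fixed $M$, $\alpha$, $\beta$ and each Schwartz seminorm $p_N$ in the $y$ variable, the family $B = \{\jap{x}^{M} \partial_x^\beta K(x, \bigcdot) : x \in \mfd\}$ is a \emph{bounded} subset of $\sch(\mfd)$ (by joint rapid decay of $K$), so that
\begin{equation*}
\sup_{x \in \mfd} \jap*{x}^{M} \abs*{\partial_x^\beta T_K u(x)}
	\le \sup_{\varphi \in B} \abs*{\brak*{\varphi}{u}} ,
\end{equation*}
and the right-hand side is by definition a continuous seminorm for the strong dual topology on $\sch'(\mfd)$; no appeal to equicontinuity or to a single fixed order $N$ valid for all $u$ is needed. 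With that cosmetic repair, your argument is a complete and elementary proof of the cited result.
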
 

\subsection{$L^2$ boundedness}

We also have the $L^2$ boundedness of operators whose symbol lies in $S(1, g)$. We refer to~\cite[Theorem 18.6.3]{Hoermander:V3} and ~\cite[Theorem 2.5.1]{Lerner:10}.

\begin{proposition}[Calder\'{o}n--Vaillancourt] \label{prop:CV}
Let $g$ be an admissible metric. Then there exists a constant $C_0$ and an integer $\ell_0$, depending only on $\dim \mfd$ and structure constants of $g$, such that for all $a \in S(1, g)$, the operator $\Opw{a}$ extends to a bounded operator on $L^2(M)$ and
\begin{equation*}
\norm*{\Opw{a}}_{\Bop(L^2)}
	\le C_0 \abs*{a}_{S(1, g)}^{(\ell_0)} .
\end{equation*}
\end{proposition}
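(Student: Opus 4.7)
The plan is to follow the strategy of Lerner \cite[Theorem 2.5.1]{Lerner:10}: decompose $a$ as a superposition of uniformly confined symbols via a $g$-partition of unity, and reduce the $L^2$ bound on $\Opw{a}$ to an almost-orthogonality estimate through the Cotlar--Stein lemma, with decay coming from the bi-confinement estimate of Proposition~\ref{prop:bi-confinement}. Concretely, I would fix a partition of unity $(\varphi_{\rho_0})_{\rho_0 \in T^\star \mfd}$ adapted to $g$ with some radius $r \in (0, r_g]$ given by Proposition~\ref{prop:existencepartitionofunity}, and set $\psi_{\rho_0} := \varphi_{\rho_0} a$. By the Leibniz formula and the uniform $S(1,g)$ bounds on $(\varphi_{\rho_0})$, the family $(\psi_{\rho_0})$ is $g$-uniformly confined with radius $r$, and its seminorms in $\Conf_r^g(\rho_0)$ are controlled uniformly in $\rho_0$ by $\abs*{a}_{S(1,g)}^{(\ell)}$ for some $\ell$ depending only on $k$. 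Writing $A_{\rho_0} := \Opw{\psi_{\rho_0}}$, we have formally $\Opw{a} = \int_{T^\star \mfd} A_{\rho_0}\, \dd \vol_g(\rho_0)$.

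The heart of the argument is to estimate the operator norms of the products $A_{\rho_0}^\ast A_{\rho_1}$ and $A_{\rho_0} A_{\rho_1}^\ast$ with quantitative decay in the distance between $\rho_0$ and $\rho_1$. Since $\Opw{\psi_{\rho_0}}^\ast = \Opw{\overline{\psi_{\rho_0}}}$ and conjugation preserves confinement, the symbol of $A_{\rho_0}^\ast A_{\rho_1}$ is the Moyal product $\overline{\psi_{\rho_0}} \moyal \psi_{\rho_1}$, and Proposition~\ref{prop:bi-confinement} yields, for any $k, \ell \in \N$,
\begin{equation*}
\abs*{\nabla^\ell \left(\overline{\psi_{\rho_0}} \moyal \psi_{\rho_1}\right)(\rho)}_g
	\le \dfrac{C_{k,\ell}\,\abs*{a}_{S(1,g)}^{(\ell_1)}}{\jap*{\dist_{(g_{\rho_0} + g_{\rho_1})^\sympf}(\rho, B_r^g(\rho_0)) + \dist_{(g_{\rho_0} + g_{\rho_1})^\sympf}(\rho, B_r^g(\rho_1))}^{k}}
\end{equation*}
for a constant $C_{k,\ell}$ and integer $\ell_1$ depending only on $\dim \mfd$ and structure constants of $g$. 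Using Lemma~\ref{lem:integr} Item~\ref{it:minorationdowns} to extract a factor $\jap{\dist_{(g_{\rho_0}+g_{\rho_1})^\sympf}(B_r^g(\rho_0), B_r^g(\rho_1))}^{-k/2}$, I would then upgrade this pointwise bound to an $L^2 \to L^2$ bound on $A_{\rho_0}^\ast A_{\rho_1}$ via Schur's test: the Schwartz kernel of $\Opw{b}$ is a partial symplectic Fourier transform of $b$, and confinement of $b$ translates into uniform integrability of rows and columns of the kernel. This produces
\begin{equation*}
\norm*{A_{\rho_0}^\ast A_{\rho_1}}_{\Bop(L^2)} + \norm*{A_{\rho_0} A_{\rho_1}^\ast}_{\Bop(L^2)}
	\le \dfrac{C_k\,\left(\abs*{a}_{S(1,g)}^{(\ell_0)}\right)^2}{\jap*{\dist_{(g_{\rho_0}+g_{\rho_1})^\sympf}(B_r^g(\rho_0), B_r^g(\rho_1))}^{k}}
\end{equation*}
for any $k$, with $\ell_0$ depending on $k$ and on structure constants of $g$.

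Finally, choosing $k$ large enough that Lemma~\ref{lem:integr} Item~\ref{it:integrabilityg} guarantees integrability of the right-hand side with respect to $\rho_1 \in T^\star \mfd$ (uniformly in $\rho_0$), the Cotlar--Stein lemma delivers the desired bound on $\norm{\Opw{a}}_{\Bop(L^2)}$ in terms of $\abs*{a}_{S(1,g)}^{(\ell_0)}$. The main obstacle will be the passage from the symbol-level bi-confinement estimate to the operator-norm control of $A_{\rho_0}^\ast A_{\rho_1}$: a naive appeal to boundedness would be circular (it is itself a Calder\'on--Vaillancourt statement), and the resolution is to work at the level of the Schwartz kernel, where confinement of the Moyal product translates directly into the integrability hypotheses of Schur's test. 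Once this step is set up carefully, the remaining combinatorics are routine applications of Proposition~\ref{prop:bi-confinement} and Lemma~\ref{lem:integr}.
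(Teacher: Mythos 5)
The paper does not actually prove Proposition~\ref{prop:CV}: it is recalled in Appendix~\ref{app:operators} with references to \cite[Theorem 18.6.3]{Hoermander:V3} and \cite[Theorem 2.5.1]{Lerner:10}, and your sketch --- decomposition into $g$-confined pieces via Proposition~\ref{prop:existencepartitionofunity}, bi-confinement decay for the Moyal products $\overline{\psi_{\rho_0}}\moyal\psi_{\rho_1}$, and a (continuous-parameter) Cotlar--Stein argument --- is precisely the strategy of the cited Lerner proof, so it is essentially the same approach as the paper's intended one and is correct in outline. As a minor simplification, the operator-norm bound on $A_{\rho_0}^\ast A_{\rho_1}$ can be obtained without Schur's test (and without the metaplectic normalization that a kernel argument needs for non-split metrics) by dominating the operator norm by the Hilbert--Schmidt norm, which by Proposition~\ref{prop:quantizationisometry} equals the $L^2(T^\star\mfd)$ norm of the bi-confined Moyal product and inherits the same decay in $\dist_{(g_{\rho_0}+g_{\rho_1})^\sympf}(B_r^g(\rho_0),B_r^g(\rho_1))$.
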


We will need a characterization of pseudo-differential operators, namely, we want to know when a continuous linear operator $A : \sch(\mfd) \to \sch'(\mfd)$ can be written $A = \Opw{a}$ with a symbol $a \in S(m, g)$. We refer to Lerner's book~\cite{Lerner:10} for proofs of the results below. Recall $\aff(T^\star \mfd)$ is the set of affine maps $f : T^\star \mfd \to \R$.

\begin{proposition}[Beals' theorem \--- {\cite[Theorem 2.2.6]{Lerner:10}}] \label{prop:Beals}
For all $\ell \in \N$, there exist $C_\ell > 0$ and $k_\ell$ such that for all continuous linear operator $A : \sch(\mfd) \to \sch'(\mfd)$ with $A = \Opw{a}$, we have
\begin{equation*}
\abs*{a}_{S(1, {\sf g})}^{(\ell)}
		\le C_\ell \max_{0 \le j \le k_\ell} \sup_{f_1, f_2, \ldots, f_j \in \aff(T^\star \mfd, \R) \setminus \{0\}} \dfrac{\norm*{\ad_{\Opw{f_1}} \ad_{\Opw{f_2}} \cdots \ad_{\Opw{f_j}} A}_{\Bop(L^2)}}{\abs*{H_{f_1}}_{{\sf g}} \abs*{H_{f_2}}_{{\sf g}} \cdots \abs*{H_{f_j}}_{{\sf g}}} ,
\end{equation*}
where ${\sf g}$ is any flat admissible metric on $T^\star \mfd$ (that is to say ${\sf g}$ is constant and ${\sf g} \le {\sf g}^\sympf$).
\end{proposition}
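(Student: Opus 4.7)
The plan is to prove Beals' theorem via the classical coherent-state strategy, combining the exact Moyal-bracket identity for affine symbols with a Gaussian-smoothing argument. First I would reduce to the case where $\sf g$ is the standard symplectic Euclidean metric on $T^\star\mfd \simeq \R^{2d}$: by a linear symplectic change of variables (and conjugation by the corresponding metaplectic operator, which is unitary on $L^2(\mfd)$), seminorms of $a$ in $S(1,\sf g)$ and $L^2$-operator norms are preserved up to constants depending only on $\sf g$, while affine functions map to affine functions. In this normalized setting, given $f \in \aff(T^\star\mfd;\R)$ the relation $\nabla^2 f = 0$ together with Proposition~\ref{prop:pseudocalcsymb} and Remark~\ref{rmk:polynomialpseudocalc} (exactness of the calculus at order~$1$) yields
\begin{equation*}
\ad_{\Opw{f}}\Opw{a} \;=\; \Opw{f\moyal a - a\moyal f} \;=\; \tfrac{1}{i}\Opw{H_f a},
\end{equation*}
with \emph{no remainder}. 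Iterating, since each $H_{f_k}$ is a constant vector field, I obtain
\begin{equation*}
\ad_{\Opw{f_1}}\cdots\ad_{\Opw{f_j}} A \;=\; i^{-j}\,\Opw{\nabla^j a(H_{f_1},\ldots,H_{f_j})}.
\end{equation*}
Letting the $H_{f_k}$ range over $\sf g$-unit vectors, the hypothesis thus provides an $L^2(\mfd)$-operator-norm bound on $\Opw{\nabla^j a(X_1,\ldots,X_j)}$ for all $j \le k_\ell$ and all unit $X_i$.

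Next I would probe $a$ by coherent states. Fix a normalized real Gaussian $\phi \in \sch(\mfd)$ and, for each $\rho \in T^\star\mfd$, let $\phi_\rho$ be its Weyl translate; a short computation using~\eqref{eq:defWigner} gives $\phi_\rho \ovee \phi_\rho(\rho') = G(\rho' - \rho)$ for a fixed Gaussian $G \in \sch(T^\star\mfd)$ with $\int G = 1$. By~\eqref{eq:defWeylquantization}, for any tempered $b$,
\begin{equation*}
\inp*{\Opw{b}\phi_\rho}{\phi_\rho}_{L^2(\mfd)} \;=\; (b\ast G)(\rho),
\end{equation*}
whence $|(b\ast G)(\rho)| \le \norm{\Opw{b}}_{L^2\to L^2}$. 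Applied to $b = \nabla^j a(X_1,\ldots,X_j)$ for unit $X_i$ and $j \le k_\ell$, this gives uniform pointwise control on the Gaussian-smoothed derivatives of $a$ by the right-hand side of Beals' inequality.

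The final step is to invert the Gaussian smoothing, passing from bounds on $\nabla^j a \ast G$ for $j \le k_\ell$ to bounds on $\nabla^j a$ itself for $j \le \ell$. Using Taylor's formula with integral remainder, for each $N$ one writes
\begin{equation*}
b \;=\; b\ast G + \sum_{2 \le |\alpha| \le N-1} c_\alpha\,(\partial^\alpha b) \ast G_\alpha + R_N[b],
\end{equation*}
where the $G_\alpha(y) = y^\alpha G(y)$ are Schwartz functions (the linear moments of a centered Gaussian vanish), and $R_N[b]$ is an integral operator applied to $\partial^N b$ with a Schwartz kernel. Applying the convolution identity of the previous step to each term on the right (with $G_\alpha/\int G_\alpha$ in place of $G$, after expressing multiplication by a Schwartz weight via a further quantization identity), every term is controlled by $\norm{\Opw{\partial^\beta b}}_{L^2 \to L^2}$ for some $|\beta| \le N$. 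Taking $b = \nabla^j a(X_1,\ldots,X_j)$ with $j \le \ell$ and choosing $N$ so that $\ell + N \le k_\ell$ closes the estimate and fixes the integer $k_\ell$. The main obstacle is precisely this inversion: the Fourier multiplier $\widehat G$ is a nowhere-vanishing Gaussian, so naive deconvolution blows up at high frequencies, and one must instead trade pointwise information on $b$ against higher derivatives in a quantitative way, bookkeeping how many derivatives are consumed so that $k_\ell$ depends only on $\ell$ and $\dim\mfd$.
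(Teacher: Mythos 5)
The paper does not actually prove this proposition: it is quoted directly from Lerner's book (the citation in the statement), with only a remark on uniformity in ${\sf g}$, so your attempt has to be judged on its own merits. Your first three steps are sound: the symplectic normalization of ${\sf g}$ via a metaplectic conjugation, the exact identity $\ad_{\Opw{f}}\Opw{a} = \tfrac{1}{\ii}\Opw{H_f a}$ for affine $f$ (exactness of the Weyl calculus at order $1$, Remark~\ref{rmk:polynomialpseudocalc}), and the coherent-state bound $\abs{(b\ast G)(\rho)} = \abs*{\inp{\Opw{b}\phi_\rho}{\phi_\rho}} \le \norm{\Opw{b}}_{\Bop(L^2)}$ correctly turn the hypothesis into sup-norm bounds on $(\partial^\beta a)\ast G$ for $\abs{\beta}\le k_\ell$.

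The gap is the final "inversion of the Gaussian smoothing", and it is not a bookkeeping issue but a structural one. First, the displayed identity is not what Taylor's formula gives: expanding $b(\rho-y)$ about $\rho$ produces pointwise corrections $c_\alpha\,\partial^\alpha b(\rho)$ weighted by Gaussian moments (so nothing is gained), while expanding the other way the $\abs{\alpha}=1$ convolution terms do not vanish (moment cancellation is irrelevant there) and the integral remainder involves convolutions against the rescaled kernels $t^{-2d}G_\alpha(\cdot/t)$ with $t\to 0$, which are not controlled by the fixed-window coherent-state estimate (their Weyl quantizations are not uniformly trace class). More fundamentally, the only information you retain after the probing step --- bounds on finitely many quantities $(\partial^\beta a)\ast K$ with \emph{fixed} Schwartz windows $K$ --- can never imply the conclusion: for the plane waves $a_\lambda(\rho)=\e^{\ii\lambda\cdot\rho}$ one has $\norm{(\partial^\beta a_\lambda)\ast G}_{L^\infty} = \abs{\lambda^\beta}\,\abs{\widehat G(\lambda)} \to 0$ rapidly as $\abs{\lambda}\to\infty$ for each fixed $\beta$, while $\norm{a_\lambda}_{L^\infty}=1$; fixed-window smoothing kills high frequencies exponentially, whereas the commutator data only grows polynomially, so no finite amount of derivative bookkeeping ($k_\ell$ depending on $\ell$ and $\dim\mfd$) can close the estimate. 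The known proofs circumvent this by keeping off-diagonal wave-packet information: the commutator bounds yield $\abs*{\inp{A\phi_\rho}{\phi_\mu}} \le C_N\jap{\rho-\mu}^{-N}$ for all $N$ (equivalently, control of the Bargmann/FBI transform of $a$ at all frequencies, not just its Husimi smoothing), and the symbol estimates are then reconstructed through a resolution of the identity and an almost-orthogonality argument, as in Beals' original article and in the result of Lerner cited in the statement. Without that off-diagonal ingredient your scheme cannot be completed.
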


\begin{remark}
In the statement of Proposition~\ref{prop:Beals}, it is possible to have $k_\ell$ and $C_\ell$ independent of ${\sf g}$ provided ${\sf g}$ is admissible. See the proof of~\cite[Theorem 2.6.6]{Lerner:10}, relying on a suitable choice of symplectic Euclidean coordinates.
\end{remark}

\subsection{Trace class and Hilbert--Schmidt operators}

Denote by $\hsclass\left(L^2(\mfd)\right)$ the Hilbert space of Hilbert--Schmidt operators, endowed with the inner product $(A, B) \mapsto \tr(A^\ast B)$.

\begin{proposition}[Weyl quantization as an isometry \---  {\cite{Pool} and \cite[Theorem V]{Berezin:HS}}] \label{prop:quantizationisometry}
The Weyl quantization is an isometry between Hilbert spaces:
\begin{equation*}
\quantizationw : L^2(T^\star \mfd) \longrightarrow \hsclass\left(L^2(\mfd)\right) .
\end{equation*}
\end{proposition}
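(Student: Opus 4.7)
My plan is to reduce this to the standard isometry between Hilbert--Schmidt operators on $L^2(\mfd)$ and $L^2$-kernels on $\mfd \times \mfd$, going through the Weyl symbol--kernel correspondence recalled in Proposition~\ref{prop:SchwartzWeylkernel}. Recall that the Hilbert--Schmidt norm is given by $\norm{A}_{\hsclass}^2 = \int_{\mfd \times \mfd} \abs{K_A(x_1, x_2)}^2 \dd x_1 \dd x_2$, so that the map $A \mapsto K_A$ is a unitary isomorphism $\hsclass(L^2(\mfd)) \to L^2(\mfd \times \mfd)$. It therefore suffices to show that the map $K_A \mapsto a$ given by the formula~\eqref{eq:SchwartzWeylsymbol}, namely
\begin{equation*}
a(x, \xi)
    = \int_V K_A\left(x - \tfrac{v}{2}, x + \tfrac{v}{2}\right) \e^{\ii \xi \cdot v} \dd v ,
\end{equation*}
extends to a unitary isomorphism $L^2(\mfd \times \mfd) \to L^2(T^\star \mfd)$.

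The key observation is that this map factors as the composition of two elementary operations. The first is the affine change of variables $\Phi : (x, v) \mapsto (x - v/2, x + v/2)$ on $\mfd \times V \simeq \mfd \times \mfd$, whose Jacobian determinant is constant. The second is the partial Fourier transform in the variable $v$, which maps $L^2(\mfd \times V)$ to $L^2(\mfd \times V^\star) = L^2(T^\star \mfd)$. Each operation is an isometry up to an explicit normalization constant depending on the choice of Lebesgue measures.

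The whole argument then reduces to verifying that the normalization convention chosen in the introduction, namely the one making $\Opw{1} = \id$ (the symplectic volume on $T^\star \mfd$), is exactly the one that makes the composition above an isometry. I would verify this by a direct computation: pick a rank-one test operator $A = u_1 \otimes \bar u_2$ for Schwartz functions $u_1, u_2$, so $K_A(x_1, x_2) = u_1(x_1) \bar u_2(x_2)$; compute both sides of the polarized identity
\begin{equation*}
\tr(\Opw{a_2}^\ast \Opw{a_1})
    = \inp*{a_1}{a_2}_{L^2(T^\star \mfd)} ,
\end{equation*}
recognizing the right-hand side via~\eqref{eq:defWeylquantization} and~\eqref{eq:defWigner} as a pairing involving the Wigner transform, and check that the two normalization constants cancel. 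By density of finite linear combinations of rank-one operators $u_1 \otimes \bar u_2$ with $u_1, u_2 \in \sch(\mfd)$ in $\hsclass(L^2(\mfd))$, together with density of $\sch(T^\star \mfd)$ in $L^2(T^\star \mfd)$, the isometry extends to the full spaces.

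The main (mild) obstacle here is bookkeeping of constants: the formula~\eqref{eq:SchwartzWeylsymbol} as written involves an unnormalized Fourier transform in $v$, while the Lebesgue measure $\dd y \dd \xi$ on $T^\star \mfd$ is set so that $\Opw{1} = \id$. Tracking these $2\pi$-factors carefully is the only delicate point; everything else is a matter of quoting the Schwartz kernel theorem, Fubini, and the Plancherel identity.
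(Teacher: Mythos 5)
Your proposal is correct, but note that the paper itself gives no proof of this proposition: it is recalled in Appendix~\ref{app:operators} purely by citation to Pool and to Berezin, so there is no internal argument to compare against. Your route is exactly the standard one underlying those references: identify $\hsclass(L^2(\mfd))$ with $L^2(\mfd \times \mfd)$ via the Schwartz kernel, observe (as the paper itself does in the proof of Proposition~\ref{prop:SchwartzWeylkernel}) that the kernel-to-Weyl-symbol map~\eqref{eq:SchwartzWeylsymbol} is an affine change of variables of constant Jacobian followed by a partial Fourier transform, and invoke Plancherel, with the only delicate point being that the normalization of $\dd y\, \dd\xi$ forced by $\Opw{1}=\id$ is precisely the one making the composite an exact isometry rather than an isometry up to a power of $2\pi$. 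Your plan to pin the constant down on rank-one operators $u_1 \otimes \bar u_2$, whose Weyl symbols are (up to normalization) the Wigner transforms~\eqref{eq:defWigner}, and then conclude by density is sound; the one step worth spelling out is that the isometry obtained by density on finite-rank operators with Schwartz legs agrees with the distributionally defined $\quantizationw$ on all of $L^2(T^\star \mfd)$, which follows by testing~\eqref{eq:defWeylquantization} against Schwartz functions. With that remark added, the argument is complete.
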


\Large
\section{Technical lemmata on phase space metrics} \label{app:metrics}
\normalsize

This section collects miscellaneous technical results related to Weyl--Hörmander metrics.

\subsection{Temperate growth of the Hamiltonian}

We discuss a technical result on the polynomial growth (or temperate growth) of the classical Hamiltonian.

\begin{lemma}[Temperate growth] \label{lem:temperategrowth}
Under Assumption~\ref{assum:p}, the classical Hamiltonian $p$ satisfies~\eqref{eq:temperategrowth}.
\end{lemma}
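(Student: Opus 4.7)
The plan is to reduce the estimate on $\nabla^\ell p$ to the information provided by Item~\ref{it:strongsubquad} of Assumption~\ref{assum:p}, which controls $\nabla^3 p$ (and all its higher derivatives) in a Weyl--H\"ormander symbol class, and then to convert this $g$-measured control into a polynomial bound with respect to a flat Euclidean metric ${\sf g}$.

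First, fix an auxiliary Euclidean metric ${\sf g}$ on $T^\star \mfd$ with ${\sf g}={\sf g}^\sympf$, and fix an arbitrary origin $\rho_0\in T^\star\mfd$. Item~\ref{it:strongsubquad} of Assumption~\ref{assum:p} gives $\nabla^3 p\in S((\gain_g/\udl{\gain}_g)^{-1},g)$, so that for every $j\ge 0$ there exists $C_j$ with $|\nabla^{3+j}p|_g\le C_j\,\gain_g^{-1}\udl{\gain}_g$. To convert a $g$-norm of a covariant $\ell$-tensor into its ${\sf g}$-norm I will use the elementary inequality $g_\rho\le g_\rho^\sympf\le \theta_g(\rho)^2\,{\sf g}$ coming directly from the definition of $\theta_g$ (Definition~\ref{def:thetag}) combined with the uncertainty principle $\gain_g\le 1$. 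This yields $|\nabla^\ell p|_{\sf g}(\rho)\le \theta_g(\rho)^{\ell}\,|\nabla^\ell p|_g(\rho)$ for $\ell\ge 3$, and hence $|\nabla^\ell p|_{\sf g}(\rho)\le C_\ell\,\theta_g(\rho)^{\ell}\,\gain_g(\rho)^{-1}$.

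Second, I will translate this into polynomial growth in $\jap{\rho-\rho_0}_{\sf g}$. The weights $\theta_g$ and $\gain_g^{-1}$ are $g$-admissible (see Remark~\ref{eq:gaingadmissibleweight} and Proposition~\ref{prop:temperanceweight}), so with the structure constants $C_g,N_g$ of Proposition~\ref{prop:improvedadmissibility} one obtains $\theta_g(\rho)\le C\,\theta_g(\rho_0)\,\jap{\rho-\rho_0}_{g_\rho^\sympf}^{N_g}$ and $\gain_g(\rho)^{-1}\le C\,\gain_g(\rho_0)^{-1}\jap{\rho-\rho_0}_{g_\rho^\sympf}^{N_g}$. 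The key subsequent step is to control $\jap{\rho-\rho_0}_{g_\rho^\sympf}$ by $\jap{\rho-\rho_0}_{\sf g}$. To avoid the wrong-direction bootstrap $\theta_g(\rho)\le C\,\theta_g(\rho)^{N_g}\cdots$, I first swap the two points by applying temperance of $g$ in the form $g_{\rho_0}\le C^2 g_\rho\jap{\rho-\rho_0}_{g_{\rho_0}^\sympf}^{2N_g}$, which after $\sympf$-duality yields $|\rho-\rho_0|_{g_\rho^\sympf}\le C\,|\rho-\rho_0|_{g_{\rho_0}^\sympf}^{N_g+1}$. Since $g_{\rho_0}^\sympf\le\theta_g(\rho_0)^2{\sf g}$, the right-hand side is bounded by a power of $\jap{\rho-\rho_0}_{\sf g}$ times a constant depending on $\rho_0$ only. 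Plugging this back produces, for $\ell\ge 3$, an estimate of the form
\begin{equation*}
|\nabla^\ell p|_{\sf g}(\rho)\ \le\ C'_\ell\,\jap{\rho-\rho_0}_{\sf g}^{(\ell N_g+N_g)(N_g+1)}\ \le\ C'_\ell\,\jap{\rho-\rho_0}_{\sf g}^{n(1+\ell)},
\end{equation*}
with $n$ depending only on structure constants of $g$.

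Third, I will handle the remaining low-order cases $\ell\in\{0,1,2\}$ by Taylor expansion at $\rho_0$ applied to $\nabla^2 p$, $\nabla p$ and $p$ successively. For instance $\nabla^2 p(\rho)=\nabla^2 p(\rho_0)+\int_0^1\nabla^3 p((1-t)\rho_0+t\rho)\cdot(\rho-\rho_0)\,\dd t$, whose ${\sf g}$-norm is bounded by $|\nabla^2 p|_{\sf g}(\rho_0)+|\rho-\rho_0|_{\sf g}\sup_{t\in[0,1]}|\nabla^3p|_{\sf g}((1-t)\rho_0+t\rho)$; since each intermediate point lies on the ${\sf g}$-segment from $\rho_0$ to $\rho$ and therefore satisfies $\jap{(1-t)\rho_0+t\rho-\rho_0}_{\sf g}\le\jap{\rho-\rho_0}_{\sf g}$, the polynomial estimate of the previous paragraph transfers with its exponent increased by one. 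Iterating this procedure two more times concludes the proof, enlarging $n$ if necessary so that the single exponent $n(1+\ell)$ accommodates the three low-order cases as well.

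The main obstacle is the translation in the second step: arranging a polynomial control of $\jap{\rho-\rho_0}_{g_\rho^\sympf}$ by $\jap{\rho-\rho_0}_{\sf g}$ without falling into a circular bootstrap on $\theta_g$. The swap trick through $g_{\rho_0}$ circumvents this difficulty cleanly.
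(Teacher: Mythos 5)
Your proof is correct. It reaches the same conclusion as the paper but is organized around a different mechanism: the paper fixes $\rho_0$, Taylor-expands $\nabla^\ell p$ at $\rho_0$ to third order for \emph{every} $\ell$, bounds the integral remainder using slow variation of $g$ along the segment, the membership $\nabla^3 p\in S(\gain_g^{-1},g)$ and temperance of the weight $\gain_g^{-1}$, and then simply takes ${\sf g}=g_{\rho_0}^\sympf$, using $\abs{\cdot}_{g_{\rho_0}^\sympf}\le\abs{\cdot}_{g_{\rho_0}}$ at the end; the temperance weight $\theta_g$ never enters. You instead fix an arbitrary symplectic Euclidean ${\sf g}$, obtain the cases $\ell\ge 3$ directly from the symbol class by converting $g$-norms into ${\sf g}$-norms through $\theta_g$, use admissibility of the weights $\theta_g$ and $\gain_g^{-1}$ together with the swapped temperance inequality of $g$ to pass from $\jap{\rho-\rho_0}_{g_\rho^\sympf}$ to $\jap{\rho-\rho_0}_{g_{\rho_0}^\sympf}\le\theta_g(\rho_0)\jap{\rho-\rho_0}_{\sf g}$, and reserve the Taylor argument for $\ell\le 2$ only. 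Both routes use nothing beyond admissibility of $g$ and Item~\ref{it:strongsubquad} of Assumption~\ref{assum:p}, and both yield exponents affine in $\ell$, hence of the form $n(1+\ell)$; yours buys an arbitrary choice of ${\sf g}={\sf g}^\sympf$ (the paper's frozen $g_{\rho_0}^\sympf$ is of course also admissible for the existential statement), at the price of invoking $\theta_g$, while the paper's is slightly more self-contained. One cosmetic point: the swapped inequality should be stated with a Japanese bracket, $\jap{\rho-\rho_0}_{g_\rho^\sympf}\le C\,\jap{\rho-\rho_0}_{g_{\rho_0}^\sympf}^{N_g+1}$, since the bare power $\abs{\rho-\rho_0}_{g_{\rho_0}^\sympf}^{N_g+1}$ fails for small separations; this bracketed form is what you actually use afterwards, so nothing breaks.
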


\begin{proof}
We let $\rho_0 \in T^\star \mfd$ and we do a Taylor expansion of $p$ at $\rho_0$:
\begin{multline*}
\forall \rho \in T^\star \mfd ,	\qquad
	\nabla^\ell p(\rho)
		= \nabla^\ell p (\rho_0) + \nabla^{\ell+1} p(\rho_0). (\rho - \rho_0) + \dfrac{1}{2} \nabla^{\ell+2} p(\rho_0). (\rho - \rho_0)^2 \\
			+ \int_0^1 \dfrac{(1 - s)^2}{2} \nabla^{\ell+3} p\left((1 - s)\rho_0 + s \rho\right). (\rho - \rho_0)^3 \dd s .
\end{multline*}
Now using the admissibility of the metric $g$, the fact that $\nabla^3 p \in S(\gain_g^{-1}, g)$ and the temperance of the weight $\gain_g^{-1}$, we obtain
\begin{align*}
\abs*{\nabla^{\ell+3} p\left((1 - s)\rho_0 + s \rho\right). (\rho - \rho_0)^3}_{g_{\rho_0}}
	&\le C_g^{\ell} \abs*{\nabla^{\ell+3} p\left((1 - s)\rho_0 + s \rho\right). (\rho - \rho_0)^3}_{g_{(1 - s)\rho_0 + s \rho}} \jap*{s (\rho - \rho_0)}_{g_{\rho_0}^\sympf}^{\ell N_g} \\
	&\le C_g^{\ell} \abs*{\rho - \rho_0}_{g_{(1 - s)\rho_0 + s \rho}}^3 \abs*{\nabla^{\ell+3} p}_{g_{(1 - s)\rho_0 + s \rho}} \jap*{\rho - \rho_0}_{g_{\rho_0}^\sympf}^{\ell N_g} \\
	&\le C_g^{\ell+3} \abs*{\rho - \rho_0}_{g_{\rho_0}}^3 \abs*{\nabla^{\ell+3} p}_{g_{(1 - s)\rho_0 + s \rho}} \jap*{\rho - \rho_0}_{g_{\rho_0}^\sympf}^{(\ell+3) N_g} \\
	&\le C_g^{\ell+3} \jap*{\rho - \rho_0}_{g_{\rho_0}^\sympf}^{(\ell+3) N_g + 3} \abs*{\nabla^{\ell+3} p}_{S(\gain_g^{-1}, g)}^{(0)} \gain_g^{-1}\left( (1 - s)\rho_0 + s \rho \right) \\
	&\le C_g^{\ell+3} C \jap*{\rho - \rho_0}_{g_{\rho_0}^\sympf}^{(\ell+3) N_g + 3 + N} \abs*{\nabla^{\ell+3} p}_{S(\gain_g^{-1}, g)}^{(0)} \gain_g^{-1}\left( \rho_0 \right) .
\end{align*}
The terms of order~$0$, $1$ and~$2$ of the Taylor expansion satisfy the same type of estimate. So in the end we obtain
\begin{equation*}
\abs*{\nabla^\ell p(\rho)}_{g_{\rho_0}^\sympf}
	\le \abs*{\nabla^\ell p(\rho)}_{g_{\rho_0}}
	\le C' \jap*{\rho - \rho_0}_{g_{\rho_0}^\sympf}^{N'} .
\end{equation*}
The constants $C'$ and $N'$ depend on $\ell$ and $\rho_0$. This gives the sought result with ${\sf g} = g_{\rho_0}^\sympf$.
\end{proof}

\subsection{Properties of the temperance weight} \label{subsec:justificationequalities}

Let us now provide a justification of the equalities in~\eqref{eq:deftheta}. The proof is taken from~\cite[(2.3.11)]{Lerner:10}.

\begin{lemma} \label{lem:justificationequalities}
If $g_1$ and $g_2$ are two definite positive quadratic forms on $W$, we have:
\begin{equation*}
\inf_{\zeta \in W \setminus \{0\}} \dfrac{\abs{\zeta}_{g_1^\sympf}}{\abs{\zeta}_{g_2}}
	= \inf_{\zeta \in W \setminus \{0\}} \dfrac{\abs{\zeta}_{g_2^\sympf}}{\abs{\zeta}_{g_1}}
		\qquad \rm{and} \qquad
\sup_{\zeta \in W \setminus \{0\}} \dfrac{\abs{\zeta}_{g_1^\sympf}}{\abs{\zeta}_{g_2}}
	= \sup_{\zeta \in W \setminus \{0\}} \dfrac{\abs{\zeta}_{g_2^\sympf}}{\abs{\zeta}_{g_1}} .
\end{equation*}
\end{lemma}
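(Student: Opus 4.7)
The plan is to reduce both the supremum and the infimum identities to a single symmetry observation about the symplectic form, via the formula
\begin{equation*}
\abs{\zeta}_{g^\sympf}
    = \abs{\sympf \zeta}_{g^{-1}}
    = \sup_{X \in W \setminus \{0\}} \frac{\abs{\sympf(\zeta, X)}}{\abs{X}_g},
\end{equation*}
which follows directly from Definition~\ref{def:sympfdual} together with the standard duality characterization of $g^{-1}$ as a norm on $W^\star$. This is the only ``computational'' input; everything after it is formal manipulation.

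First I would handle the supremum. Substituting the formula above into the left-hand side gives
\begin{equation*}
\sup_{\zeta \in W \setminus \{0\}} \frac{\abs{\zeta}_{g_1^\sympf}}{\abs{\zeta}_{g_2}}
    = \sup_{\zeta, X \in W \setminus \{0\}} \frac{\abs{\sympf(\zeta, X)}}{\abs{X}_{g_1} \abs{\zeta}_{g_2}}.
\end{equation*}
Since $\abs{\sympf(\zeta, X)} = \abs{\sympf(X, \zeta)}$, the right-hand side is manifestly invariant under the exchange $(\zeta, g_2) \longleftrightarrow (X, g_1)$, giving the claimed equality.

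Next I would address the infimum. The trick is to take reciprocals:
\begin{equation*}
\left( \inf_{\zeta \in W \setminus \{0\}} \frac{\abs{\zeta}_{g_1^\sympf}}{\abs{\zeta}_{g_2}} \right)^{-1}
    = \sup_{\zeta \in W \setminus \{0\}} \frac{\abs{\zeta}_{g_2}}{\abs{\zeta}_{g_1^\sympf}}.
\end{equation*}
Using that $\sympf$-duality is an involution, $g_2 = (g_2^\sympf)^\sympf$, I can apply the supremum identity just proved, with $g_2^\sympf$ in place of $g_1$ and $g_1^\sympf$ in place of $g_2$, to obtain
\begin{equation*}
\sup_{\zeta \in W \setminus \{0\}} \frac{\abs{\zeta}_{(g_2^\sympf)^\sympf}}{\abs{\zeta}_{g_1^\sympf}}
    = \sup_{\zeta \in W \setminus \{0\}} \frac{\abs{\zeta}_{(g_1^\sympf)^\sympf}}{\abs{\zeta}_{g_2^\sympf}}
    = \sup_{\zeta \in W \setminus \{0\}} \frac{\abs{\zeta}_{g_1}}{\abs{\zeta}_{g_2^\sympf}}.
\end{equation*}
Taking reciprocals once more yields the desired equality of infima.

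There is no serious obstacle here; the only thing to be careful about is the dual-norm identity for $\abs{\zeta}_{g^\sympf}$ in terms of $\sympf$, and the use of the involutive property of $\sympf$-duality to convert the infimum case into the supremum case. Positive-definiteness of $g_1$ and $g_2$ ensures all quantities are finite and nonzero, so the passage to reciprocals is legitimate.
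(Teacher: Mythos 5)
Your proof is correct, but it takes a different route from the paper's. The paper works pointwise: using Cauchy--Schwarz in the form $\abs{\sympf(\zeta_1,\zeta_2)} \le \abs{\zeta_1}_{g^\sympf}\abs{\zeta_2}_g$, it shows that for every nonzero $\zeta$ the ratio $\abs{\zeta}_{g_1^\sympf}/\abs{\zeta}_{g_2}$ is bounded by the ratio $\abs{\zeta'}_{g_2^\sympf}/\abs{\zeta'}_{g_1}$ evaluated at the specific point $\zeta' = g_1^{-1}\sympf\zeta$; since $\zeta \mapsto g_1^{-1}\sympf\zeta$ is an isomorphism and $g_1, g_2$ play symmetric roles, taking $\inf$ and $\sup$ handles both identities simultaneously from the same inequality. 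You instead write $\abs{\zeta}_{g^\sympf}$ via its variational (dual-norm) characterization — which the paper itself records in a footnote — so that the supremum becomes a joint supremum $\sup_{\zeta,X}\abs{\sympf(\zeta,X)}/(\abs{X}_{g_1}\abs{\zeta}_{g_2})$, manifestly symmetric in $(\zeta,g_2)\leftrightarrow(X,g_1)$; then you get the infimum identity from the supremum one by taking reciprocals and invoking involutivity of $\sympf$-duality (also stated in the paper), applied to the positive definite forms $g_1^\sympf, g_2^\sympf$. Your treatment of the supremum is arguably cleaner since no specific test vector needs to be produced, at the cost of needing the finiteness/positivity of the extrema (immediate by compactness of the unit sphere, as you note) and the extra involution step for the infimum, whereas the paper's single pointwise estimate yields both statements at once.
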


\begin{proof}
From the Cauchy--Schwarz inequality, we have for any positive definite quadratic form~$g$ on~$W$:
\begin{equation*}
\abs*{\sympf(\zeta_1, \zeta_2)}
	= \abs*{g(g^{-1} \sympf \zeta_1, \zeta_2)}
	\le \abs*{g^{-1} \sympf \zeta_1}_g \abs*{\zeta_2}_g
	= \abs*{\zeta_1}_{g^\sympf} \abs*{\zeta_2}_g ,
		\qquad \forall \zeta_1, \zeta_2 \in W .
\end{equation*}
Therefore, we deduce that for any non-zero $\zeta \in W$:
\begin{equation*}
\dfrac{\abs{\zeta}_{g_1^\sympf}}{\abs{\zeta}_{g_2}}
	= \dfrac{\abs{\zeta}_{g_1^\sympf}^2}{\abs{g_1^{-1} \sympf \zeta}_{g_1} \abs{\zeta}_{g_2}}
	= \dfrac{\abs{\sympf(\zeta, g_1^{-1} \sympf \zeta)}}{\abs{g_1^{-1} \sympf \zeta}_{g_1} \abs{\zeta}_{g_2}}
	\le \dfrac{\abs*{\zeta}_{g_2} \abs*{g_1^{-1} \sympf \zeta}_{g_2^\sympf}}{\abs{g_1^{-1} \sympf \zeta}_{g_1} \abs{\zeta}_{g_2}}
	= \dfrac{\abs*{g_1^{-1} \sympf \zeta}_{g_2^\sympf}}{\abs{g_1^{-1} \sympf \zeta}_{g_1}} .
\end{equation*}
Since the map $\zeta \mapsto g_1^{-1} \sympf \zeta$ is an isomorphism and $g_1, g_2$ play a symmetric role, taking the infimum and the supremum over $\zeta \neq 0$ yields the sought result.
\end{proof}

We now give proofs of technical results introduced in Section~\ref{subsec:metricsonphasespace}.
 
\begin{proof}[Proof of Proposition~\ref{prop:temperanceweight}]
Admissibility of $\theta_g$ follows directly from the admissibility of $g$ (via Proposition~\ref{prop:improvedadmissibility} for instance), with the same slow variation radius.
The fact that $\theta_g$ is comparable to $\theta_g'$~\eqref{eq:temperanceweightscomparable} simply follows from norm equivalence in finite dimension.
\end{proof}

\begin{proof}[Proof of Proposition~\ref{prop:thetag}]
It directly follows from the definition of $\theta_g$ (Definition~\ref{def:thetag}) that for any $\zeta \in W$,
\begin{equation*}
\abs{\zeta}_{g_\rho^\sympf}
	\le \theta_g(\rho) \abs*{\zeta}_{{\sf g}}
	\le \theta_g(\rho) \abs*{\zeta}_{{\sf g}^\sympf}
	\le \theta_g(\rho) \theta_g(\rho_0) \abs*{\zeta}_{g_{\rho_0}} .
\end{equation*}
The second assertion is proved as follows. For any non-zero $\zeta \in W$:
\begin{equation*}
1
	= \dfrac{\abs{\zeta}_g}{\abs{\zeta}_{g^\sympf}} \times \dfrac{\abs{\zeta}_{{\sf g}^\sympf}}{\abs{\zeta}_g} \times \dfrac{\abs{\zeta}_{g^\sympf}}{\abs{\zeta}_{{\sf g}}}
	\le \gain_g \theta_g^2 ,
\end{equation*}
where we took advantage of the fact that ${\sf g} = {\sf g}^\sympf$ in Definition~\ref{def:thetag}.
\end{proof}

\subsection{Slow variation radius} \label{app:slowvariationradius}

To any slowly varying metric $g$, we can associate a number:
\begin{equation} \label{eq:slowvarradius}
R_g
	:= \sup \set{r \in \R_+}{\exists C > 0 : \forall \rho_0 \in T^\star \mfd, \forall \rho \in B_r^g(\rho_0) , \; C^{-2} g_{\rho_0} \le g_\rho \le C^2 g_{\rho_0}} \in (0, + \infty] .
\end{equation}
Positivity of this quantity is ensured by the slow variation property (see Definition~\ref{def:admissiblemetric}). The lemma below is not essential in the present article but it clarifies the importance of the notion of slow variation radius.

\begin{lemma}
If for any $r < R_g$ we set
\begin{equation*}
C_g(r)
	:= \inf \set{C \in \R_+}{\forall \rho_0, \rho \in T^\star \mfd , \; \left(\abs{\rho - \rho_0}_{g_{\rho_0}} \le r \; \Longrightarrow \; C^{-2} g_{\rho_0} \le g_\rho \le C^2 g_{\rho_0} \right)} \in [1, + \infty) ,
\end{equation*}
we have
\begin{equation*}
R_g = + \infty
	\qquad \rm{or} \qquad
C_g(r) \strongto{r \to R_g^-} + \infty .
\end{equation*}
\end{lemma}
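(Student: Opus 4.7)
My plan is to argue by contradiction. I would assume $R_g < \infty$ and that $C_g(r)$ does not tend to $+\infty$ as $r \to R_g^-$. Since $r \mapsto C_g(r)$ is non-decreasing (enlarging $B_r^g(\rho_0)$ strengthens the required inequality), this amounts to $M := \sup_{r < R_g} C_g(r) < \infty$. The target is then to produce a uniform slow variation estimate on balls of some radius strictly larger than $R_g$, contradicting the maximality built into the definition~\eqref{eq:slowvarradius} of $R_g$.

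The key step will be a one-point interpolation in the affine structure of $T^\star \mfd$. Fix $r < R_g$ and set $r' := r(1 + 1/M)$. Given $\rho_0, \rho \in T^\star \mfd$ with $\abs{\rho - \rho_0}_{g_{\rho_0}} \le r'$, I introduce the intermediate point
\begin{equation*}
\tilde \rho := \rho_0 + \tfrac{r}{r'}(\rho - \rho_0) .
\end{equation*}
Then $\abs{\tilde \rho - \rho_0}_{g_{\rho_0}} = \tfrac{r}{r'}\abs{\rho - \rho_0}_{g_{\rho_0}} \le r < R_g$, so the hypothesis $C_g(r) \le M$ applied at the base point $\rho_0$ yields $M^{-2} g_{\rho_0} \le g_{\tilde \rho} \le M^2 g_{\rho_0}$. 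Passing from the $g_{\rho_0}$-norm to the $g_{\tilde \rho}$-norm through this comparison, one gets $\abs{\rho - \tilde \rho}_{g_{\tilde \rho}} \le M \abs{\rho - \tilde \rho}_{g_{\rho_0}} \le M(r' - r) = r < R_g$. A second application of the hypothesis, now at the base point $\tilde \rho$, gives $M^{-2} g_{\tilde \rho} \le g_\rho \le M^2 g_{\tilde \rho}$. Chaining the two sandwiches produces $M^{-4} g_{\rho_0} \le g_\rho \le M^4 g_{\rho_0}$, uniformly in $(\rho_0, \rho)$ subject to $\abs{\rho - \rho_0}_{g_{\rho_0}} \le r'$.

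This will show that $C_g(r') \le M^2$ for every $r < R_g$, so in particular for $r$ close enough to $R_g$ one has $r' = r(1+1/M) > R_g$ with $C_g(r') \le M^2 < \infty$. This contradicts the definition of $R_g$ as the supremum of radii on which uniform slow variation holds, so the initial assumption must fail. The argument is essentially a chaining of two applications of the slow variation hypothesis; the only (minor) point to get right is the calibration $r' = r(1 + 1/M)$, chosen exactly so that $M(r' - r) \le r$, which is what makes the second application of the hypothesis at $\tilde \rho$ legitimate.
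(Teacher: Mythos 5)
Your proof is correct and rests on the same chaining idea as the paper's: insert an intermediate point $\tilde\rho$ at $g_{\rho_0}$-distance at most $r$ from $\rho_0$, calibrated (factor $1+1/M$) so that the second hop has $g_{\tilde\rho}$-length at most $r$, and apply the slow variation hypothesis twice. The paper runs the very same computation directly rather than by contradiction, taking the radial point at distance exactly $r$ and any $C > C_g(r)$ in place of $M$, which yields the explicit lower bound $C_g(r) \ge r/(R_g - r)$ instead of mere divergence.
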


\begin{proof}
Assume $R_g < \infty$. Then for any $r \in(0, R_g)$ and $C > C_g(r)$, we pick $\rho_0, \rho \in T^\star \mfd$ such that $r \le \abs*{\rho - \rho_0}_{g_{\rho_0}} \le r (1 + C^{-1})$. Introducing
\begin{equation*}
\rho_1
	= \rho_0 + r \dfrac{\rho - \rho_0}{\abs{\rho - \rho_0}_{g_{\rho_0}}} ,
\end{equation*}
we have $\abs{\rho_1 - \rho_0}_{g_{\rho_0}} = r$ so that
\begin{equation} \label{eq:comparisonrho0}
C^{-2} g_{\rho_0}
	\le g_{\rho_1}
	\le C^2 g_{\rho_0} ,
\end{equation}
which yields in particular
\begin{equation*}
\abs*{\rho - \rho_1}_{g_{\rho_1}}
	\le C \abs*{\rho - \rho_1}_{g_{\rho_0}}
	= C \abs*{\abs*{\rho - \rho_0}_{g_{\rho_0}} - r}
	\le C \left( \dfrac{r}{C} \right)
	= r .
\end{equation*}
Slow variation again implies
\begin{equation*}
C^{-2} g_{\rho_1}	
	\le g_\rho
	\le C^2 g_{\rho_1} ,
\end{equation*}
and therefore, combined with~\eqref{eq:comparisonrho0}, we obtain
\begin{equation*}
C^{-4} g_{\rho_0}
	\le g_\rho
	\le C^4 g_{\rho_0} .
\end{equation*}
This means that $r (1 + C^{-1}) \le R_g$, which can be rewritten as $C \ge (\frac{R_g}{r} - 1)^{-1}$. Taking the infimum in $C$, that leads to 
\begin{equation*}
C_g(r)
	\ge \dfrac{r}{R_g - r} ,
\end{equation*}
which goes to infinity as $r \to R_g^-$.
\end{proof}

\subsection{Improved admissibility} \label{app:improvedadmissibility}

The temperance property introduced in Definition~\ref{def:admissiblemetric} is not homogeneous under multiplication by a conformal factor. This indicates that temperance as introduced in Definition~\ref{def:admissiblemetric} is not sharp. Indeed, it turns out that admissible metrics satisfy a stronger temperance property than the one we stated in Definition~\ref{def:admissiblemetric}. It involves the so-called symplectic intermediate metric.

\begin{lemma}[Symplectic intermediate metric \--- {\cite[Proposition 2.2.20]{Lerner:10}}] \label{lem:gnatural}
Let $g$ be an admissible metric. Then the symplectic intermediate metric $g^\natural$ defined as the harmonic mean of $g$ and $g^\sympf$ (see~\cite[Definition 2.2.19]{Lerner:10}) is admissible and satisfies
\begin{equation*}
g
	\le \gain_g g^\natural
	\le g^\natural
	= \left(g^\natural\right)^\sympf
	\le \gain_g^{-1} g^\natural
	\le g^\sympf .
\end{equation*}
Moreover, any {$g$-admissible} weight is also {$g^\natural$-admissible}.
\end{lemma}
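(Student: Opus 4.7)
The plan is to prove the chain of inequalities by pointwise Williamson diagonalization, slow variation of $g^\natural$ by exploiting the maximality in the defining variational characterization~\eqref{eq:defgnatural}, and temperance---the main obstacle---by following the bootstrapping argument of Lerner.

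Fix $\rho \in T^\star \mfd$. Williamson's theorem provides a symplectic basis of $W$ (depending on $\rho$) and positive numbers $\lambda_1(\rho) \le \cdots \le \lambda_n(\rho)$ in which $g_\rho$ and $g_\rho^\sympf$ are both block-diagonal by $2\times 2$ scalar blocks. The problem of maximizing $q$ in~\eqref{eq:defgnatural} then decouples into $n$ independent scalar Schur-complement computations, each of which produces the block $I_2$. Thus in this basis the Gram matrices of $g_\rho$, $g_\rho^\natural$, $g_\rho^\sympf$ are respectively $\diag(\lambda_i)$, $\diag(1)$, $\diag(\lambda_i^{-1})$ (each entry repeated twice); in particular $(g_\rho^\natural)^\sympf = g_\rho^\natural$ and $\gain_{g_\rho} = \lambda_n(\rho)$. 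The chain
\[
g \le \gain_g g^\natural \le g^\natural \le \gain_g^{-1} g^\natural \le g^\sympf
\]
then reduces pointwise to the scalar inequalities $\lambda_i \le \lambda_n \le 1 \le \lambda_n^{-1} \le \lambda_i^{-1}$, which hold by admissibility ($\gain_g = \lambda_n \le 1$).

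For slow variation of $g^\natural$ I would exploit the maximality in~\eqref{eq:defgnatural}. Slow variation of $g$ on balls $B_{r_g}^g(\rho_0)$, together with $\sympf$-duality~\eqref{eq:sympfduality}, yields $C_g^{-2} g_{\rho_0} \le g_\rho \le C_g^2 g_{\rho_0}$ and $C_g^{-2} g_{\rho_0}^\sympf \le g_\rho^\sympf \le C_g^2 g_{\rho_0}^\sympf$ on $B_{r_g}^g(\rho_0) \supset B_{r_g}^{g^\natural}(\rho_0)$ (the inclusion follows from $g \le g^\natural$). For any symmetric $q$ such that $\bigl(\begin{smallmatrix} g_\rho & q \\ q & g_\rho^\sympf \end{smallmatrix}\bigr) \ge 0$, adding the non-negative diagonal block matrix with entries $C_g^2 g_{\rho_0}-g_\rho$ and $C_g^2 g_{\rho_0}^\sympf - g_\rho^\sympf$ yields $\bigl(\begin{smallmatrix} C_g^2 g_{\rho_0} & q \\ q & C_g^2 g_{\rho_0}^\sympf \end{smallmatrix}\bigr) \ge 0$, whence $q \le (C_g^2 g_{\rho_0})^\natural = C_g^2 g_{\rho_0}^\natural$ by scale homogeneity and maximality. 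Taking $q = g_\rho^\natural$ gives $g_\rho^\natural \le C_g^2 g_{\rho_0}^\natural$; the symmetric argument gives $g_\rho^\natural \ge C_g^{-2} g_{\rho_0}^\natural$. This proves slow variation of $g^\natural$ with the same radius and constant as $g$.

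The temperance of $g^\natural$ and the $g^\natural$-admissibility of any $g$-admissible weight $m$ are the main technical difficulty. Plain $g$-temperance controls metric ratios by powers of the \emph{large} Japanese bracket $\jap{\rho - \rho_0}_{g_\rho^\sympf}$, whereas $g^\natural$-temperance requires the \emph{smaller} bracket $\jap{\rho - \rho_0}_{g_\rho^\natural}$ (since $g^\natural \le g^\sympf$); the estimates furnished by the chain of inequalities are therefore \emph{not} enough. The remedy is Lerner's bootstrapping argument: given $\rho_0, \rho$, partition the straight segment joining them into pieces of unit $g^\natural$-length. On each piece the slow variation of $g$ on small $g^\natural$-balls (itself a consequence of $g \le g^\natural$ combined with slow variation of $g$ on $g$-balls) yields a uniform multiplicative control on $g$, and iterating through all pieces produces the strengthened estimate $g_\rho \le C' g_{\rho_0} \jap{\rho - \rho_0}_{g_\rho^\natural}^{2N'}$. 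By self-duality $(g^\natural)^\sympf = g^\natural$ the same bound then holds for $g^\sympf$, and the maximality argument of the previous paragraph combines these into $g_\rho^\natural \le C' g_{\rho_0}^\natural \jap{\rho - \rho_0}_{g_\rho^\natural}^{2N'}$. The identical chaining argument, applied with $m$ in place of the metric components, delivers the $g^\natural$-temperance of $m$. The detailed execution of this chaining step---controlling the number of segments and the accumulation of constants---is the hard part, and I would carry it out following \cite[Proposition 2.2.20 and Lemma 2.2.22]{Lerner:10}.
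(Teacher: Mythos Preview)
The paper does not give its own proof of this lemma: it is stated as a citation from Lerner's book \cite[Proposition 2.2.20]{Lerner:10}, with no proof environment following it. Your proposal is therefore not being compared against anything in the paper, but rather against Lerner's original argument, which you explicitly invoke.

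Your approach is correct and is essentially that of Lerner. The pointwise Williamson diagonalization cleanly gives the chain of inequalities and the self-duality $(g^\natural)^\sympf = g^\natural$; the maximality argument for slow variation is the same one the paper itself uses elsewhere (see the proof of Proposition~\ref{prop:normssympfnatural} and of Proposition~\ref{prop:improvedadmissibility}, especially~\eqref{eq:slowvargnatural}). You correctly identify the upgrade from $g^\sympf$-temperance to $g^\natural$-temperance as the crux, and your outline of the chaining argument is in the right spirit, though the actual proof in Lerner proceeds somewhat differently (it does not literally partition a segment into unit-length pieces; rather it exploits an algebraic inequality relating $g$, $g^\sympf$ and $g^\natural$ together with the original temperance to bootstrap the bracket from $g^\sympf$ to $g^\natural$). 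Since you defer the details to \cite[Proposition 2.2.20]{Lerner:10} anyway, this discrepancy is harmless.

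One small remark: the lemma statement in the paper says ``harmonic mean'', but elsewhere (around~\eqref{eq:defgnatural} and~\eqref{eq:chainineq}) the paper correctly describes $g^\natural$ as the geometric mean of $g$ and $g^\sympf$. Your Williamson computation implicitly uses the geometric-mean characterization, which is the right one.
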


Let us now give the proof of the crucial improved admissibility statement.

\begin{proof}[Proof of Proposition~\ref{prop:improvedadmissibility}]
By~\cite[Proposition 2.2.20]{Lerner:10}, there exist $C > 0$ and $N \ge 0$ such that
\begin{equation*}
\forall \rho_1, \rho_2 \in T^\star \mfd, \qquad
	g_{\rho_2}
		\le C^2 g_{\rho_1} \jap*{\rho_2 - \rho_1}_{(g_{\rho_1}^\natural + g_{\rho_2}^\natural)^\sympf}^{2N} .
\end{equation*}
Let $r = \min\{1, \frac{R_g}{2}, \frac{R_{g^\natural}}{2}\}$, with the notation~\eqref{eq:slowvarradius}. Given $\rho_0, \rho \in T^\star \mfd$, we apply the above inequality to the points $\rho_0' \in \bar B_r^g(\rho_0)$ and $\rho' \in \bar B_r^g(\rho)$ that achieve the minimal distance between the two balls, namely $\abs{\rho' - \rho_0'}_{{\sf g}} = \dist_{{\sf g}}(B_r^g(\rho_0), B_r^g(\rho))$, whatever the flat metric ${\sf g}$ is. Combining this with slow variation of $g$ in each ball yields
\begin{equation} \label{eq:tempimprovedadm}
g_\rho
	\le c_g^2 g_{\rho'}
	\le c_g^2 C^2 \jap*{\rho_0' - \rho'}_{(g_{\rho_0'}^\natural + g_{\rho'}^\natural)^\sympf}^{2N} g_{\rho_0'}
	\le c_g^4 C^2 \jap*{\rho_0' - \rho'}_{(g_{\rho_0'}^\natural + g_{\rho'}^\natural)^\sympf}^{2N} g_{\rho_0} .
\end{equation}
To finish the proof, it suffices to justify that
\begin{equation} \label{eq:slowvargnatural}
\forall \rho_1 \in T^\star \mfd , \rho_2 \in \bar B_r^g(\rho_1) , \qquad
	c_g^{-2} g_{\rho_1}^\natural \le g_{\rho_2}^\natural \le c_g^{2} g_{\rho_1}^\natural ,
\end{equation}
and apply it to $(\rho_1, \rho_2) = (\rho_0, \rho_0')$ and $(\rho, \rho')$.

To establish this, we go back to the definition of $g^\natural$ as the geometric mean of~$g$ and~$g^\sympf$~\eqref{eq:defgnatural}. From slow variation of~$g$ and {$\sympf$-duality}, we have
\begin{equation*}
\forall \rho_1 \in T^\star \mfd , \rho_2 \in \bar B_r^g(\rho_1) , \qquad
\left\{
\begin{aligned}
	c_g^{-2} g_{\rho_1} &\le g_{\rho_2} \le c_g^{2} g_{\rho_1} , \\
	c_g^{-2} g_{\rho_1}^\sympf &\le g_{\rho_2}^\sympf \le c_g^{2} g_{\rho_1}^\sympf ,
\end{aligned}
\right.
\end{equation*}
which yields
\begin{align*}
0
	\le \begin{pmatrix}
g_{\rho_1} & g_{\rho_1}^\natural \\ g_{\rho_1}^\natural & g_{\rho_1}^\sympf
\end{pmatrix}
	&\le c_g^2 \begin{pmatrix}
g_{\rho_2} & \frac{1}{c_g^2} g_{\rho_1}^\natural \\ \frac{1}{c_g^2} g_{\rho_1}^\natural & g_{\rho_2}^\sympf
\end{pmatrix} , \\
0
	\le \begin{pmatrix}
g_{\rho_2} & g_{\rho_2}^\natural \\ g_{\rho_2}^\natural & g_{\rho_2}^\sympf
\end{pmatrix}
	&\le c_g^2 \begin{pmatrix}
g_{\rho_1} & \frac{1}{c_g^2} g_{\rho_2}^\natural \\ \frac{1}{c_g^2} g_{\rho_2}^\natural & g_{\rho_1}^\sympf
\end{pmatrix} .
\end{align*}
By definition of~$g^\natural$, we deduce that
\begin{equation*}
\frac{1}{c_g^2} g_{\rho_1}^\natural
	\le g_{\rho_2}^\natural
		\qquad \textrm{and} \qquad
\frac{1}{c_g^2} g_{\rho_2}^\natural
	\le g_{\rho_1}^\natural ,
\end{equation*}
hence~\eqref{eq:slowvargnatural}. We plug this into~\eqref{eq:tempimprovedadm} to obtain
\begin{equation*}
g_\rho
	\le c_g^4 C^2 c_g^{2N} \jap*{\rho_0' - \rho'}_{(g_{\rho_0}^\natural + g_{\rho}^\natural)^\sympf}^{2N} g_{\rho_0} .
\end{equation*}
Therefore we have the desired result~\eqref{eq:unifadmnatural} for $t = 0$ with $r_g = r$, $C_g = c_g^{2+N} C$ and $N_g = N$, and the conclusion for general $t$ follows by multiplying on both sides by $\e^{2 (\Lambda + 2 \Upsilon) \abs{t}}$.

We proceed similarly for the weight function; see~\cite[Proposition 2.2.20 and Lemma 2.2.25]{Lerner:10}.
\end{proof}

\subsection{Equivalent seminorms on spaces of confined symbols} \label{app:Confseminorms}

The space $\Conf_g^r(\rho_0)$ can be equipped with two distinct families of seminorms giving rise to the same topology, by measuring the size of derivatives either with respect to $g$ or to the constant metric $g_{\rho_0}$.
For a Riemannian metric $\tilde g$, we set
\begin{equation} \label{eq:alternativeseminormsConf}
\abs*{\psi}_{\Conf_g^r(\rho_0), \tilde g}^{(\ell)}
	= \max_{0 \le j \le \ell} \; \sup_{\rho \in T^\star \mfd} \; \sup_{X_1, X_2, \ldots, X_j \in \Gamma(T^\star \mfd)} \dfrac{\abs{\nabla^j \psi (X_1, X_2, \ldots, X_j)}}{\abs{X_1}_{\tilde g} \abs{X_2}_{\tilde g} \cdots \abs{X_j}_{\tilde g}}(\rho) \jap*{\dist_{g_{\rho_0}^\sympf}\left(\rho, B_r^g(\rho_0)\right)}^\ell .
\end{equation}
The difference with the seminorms introduced in Definition~\ref{def:Conf} is that derivatives are measured with respect to the metric $\tilde g$, whereas decay away from $B_r^g(\rho_0)$ is measured with respect to $g$.

\begin{lemma}[Equivalent seminorms on $\Conf_g^r(\rho_0)$] \label{lem:equivalenceseminormsConf}
Let~$g$ be an admissible metric. Then there exists and integer $k$ and a constant $C > 0$ such that for any $r \le r_g$, we have
\begin{equation*}
\forall \ell \in \N, \forall \psi \in \Conf_g^r(\rho_0) , \qquad
\left\{
\begin{aligned}
	\abs*{\psi}_{\Conf_g^r(\rho_0), g}^{(\ell)}
		&\le C^\ell \abs*{\psi}_{\Conf_g^r(\rho_0), g_{\rho_0}}^{(\ell (1 + k))} \\
	\abs*{\psi}_{\Conf_g^r(\rho_0), g_{\rho_0}}^{(\ell)}
		&\le C^\ell \abs*{\psi}_{\Conf_g^r(\rho_0), g}^{(\ell (1 + k))}
\end{aligned}
\right. .
\end{equation*}
\end{lemma}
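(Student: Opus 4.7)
The proof will rest on the improved admissibility property of Proposition~\ref{prop:improvedadmissibility}, which is strong enough to give a pointwise comparison between $g_\rho$ and $g_{\rho_0}$ whose defect is controlled uniformly in $\rho_0$ by a power of the confinement weight $\jap{\dist_{g_{\rho_0}^\sympf}(\rho, B_r^g(\rho_0))}$. Since the roles of $g$ and $g_{\rho_0}$ can be swapped throughout, it will be enough to establish one of the two inequalities in the lemma; the other follows by the same argument.

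The key step is to prove the pointwise bound
\begin{equation*}
\forall \rho, \rho_0 \in T^\star \mfd , \qquad
	g_{\rho_0}
		\le C_g^2 \jap*{\dist_{g_{\rho_0}^\sympf}\left(\rho, B_r^g(\rho_0)\right)}^{2 N_g} g_\rho ,
\end{equation*}
where $C_g, N_g$ are the structure constants of Proposition~\ref{prop:improvedadmissibility}. Applying that proposition (with the roles of $\rho$ and $\rho_0$ swapped) gives $g_{\rho_0} \le C_g^2 \jap{\dist_{(g_{\rho_0}^\natural + g_\rho^\natural)^\sympf}(B_{r_g}^g(\rho_0), B_{r_g}^g(\rho))}^{2 N_g} g_\rho$. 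Since {$\sympf$-duality} reverses order and $g_{\rho_0}^\natural + g_\rho^\natural \ge g_{\rho_0}^\natural$, we have $(g_{\rho_0}^\natural + g_\rho^\natural)^\sympf \le (g_{\rho_0}^\natural)^\sympf = g_{\rho_0}^\natural \le g_{\rho_0}^\sympf$ by~\eqref{eq:chainineq}. Moreover $B_r^g(\rho_0) \subset B_{r_g}^g(\rho_0)$ since $r \le r_g$, and $\rho \in B_{r_g}^g(\rho)$, so the distance between the two larger balls is at most $\dist_{g_{\rho_0}^\sympf}(\rho, B_r^g(\rho_0))$. Chaining these observations yields the claimed pointwise bound, and crucially the constants $C_g, N_g$ depend only on structure constants of $g$.

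From this pointwise inequality, passing to norms of tangent vectors gives
\begin{equation*}
\sup_{\abs{X}_{g} \le 1} \abs{X}_{g_{\rho_0}}(\rho)
	\le C_g \jap*{\dist_{g_{\rho_0}^\sympf}\left(\rho, B_r^g(\rho_0)\right)}^{N_g} .
\end{equation*}
Applying this estimate to each of the $j$ vector arguments of $\nabla^j \psi$ gives
\begin{equation*}
\abs*{\nabla^j \psi}_g(\rho)
	\le C_g^j \jap*{\dist_{g_{\rho_0}^\sympf}\left(\rho, B_r^g(\rho_0)\right)}^{N_g j} \abs*{\nabla^j \psi}_{g_{\rho_0}}(\rho) .
\end{equation*}
Multiplying by the confinement weight $\jap{\dist_{g_{\rho_0}^\sympf}(\rho, B_r^g(\rho_0))}^\ell$, taking the supremum over $\rho$ and the maximum over $0 \le j \le \ell$, and absorbing the extra $N_g j \le N_g \ell$ powers by inflating the seminorm index, one obtains $\abs{\psi}_{\Conf_g^r(\rho_0), g}^{(\ell)} \le C^\ell \abs{\psi}_{\Conf_g^r(\rho_0), g_{\rho_0}}^{(\ell(1 + k))}$ with $k = N_g$. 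The reverse inequality follows by exactly the same argument, swapping the roles of $g$ and $g_{\rho_0}$ in the pointwise comparison.

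\textbf{Main obstacle.} The only subtle point is to secure \emph{uniformity in} $\rho_0$: a naive approach using temperance at the two points $\rho$ and $\rho_0$ yields a factor $\jap{\rho - \rho_0}_{g_{\rho_0}^\sympf}$ rather than $\jap{\dist_{g_{\rho_0}^\sympf}(\rho, B_r^g(\rho_0))}$, and the gap between these two quantities inside $B_r^g(\rho_0)$ cannot be bounded uniformly in $\rho_0$ (it involves $\theta_g(\rho_0)$). This is precisely where the improved admissibility of Proposition~\ref{prop:improvedadmissibility}—which replaces point-to-point temperance by a ball-to-ball estimate in the $(g^\natural + g^\natural)^\sympf$-distance—saves the day, because on balls the polynomial defect is controlled by the distance to the confinement region.
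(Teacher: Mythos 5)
Your proof is correct and follows essentially the same route as the paper: both rest on the improved admissibility of Proposition~\ref{prop:improvedadmissibility}, reduced to the pointwise comparison $g_{\rho_0}\le C_g^2\jap{\dist_{g_{\rho_0}^\sympf}(\rho,B_r^g(\rho_0))}^{2N_g}g_\rho$ (and its symmetric counterpart), which is then plugged into the seminorm definitions, the only cosmetic difference being that you spell out the reduction from the $(g_{\rho_0}^\natural+g_\rho^\natural)^\sympf$ ball-to-ball distance to the $g_{\rho_0}^\sympf$ point-to-ball distance that the paper merely asserts. The only micro-adjustment is to take $k=\lceil N_g\rceil$ rather than $k=N_g$ so that the seminorm index $\ell(1+k)$ is an integer.
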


\begin{proof}
The proof relies on the improved admissibility property of Proposition~\ref{prop:improvedadmissibility}, which yields in particular:
\begin{equation*}
g_\rho
	\le C_g^2 \jap*{\dist_{g_{\rho_0}^\sympf}\left( \rho, B_{r_g}^g(\rho_0) \right)}^{2N_g} g_{\rho_0} ,
\end{equation*}
and the same exchanging $g_\rho$ and $g_{\rho_0}$. Plugging this in the denominator of~\eqref{eq:alternativeseminormsConf} gives the desired estimate with $C = C_g$ and $k = \lceil N \rceil$.
\end{proof}

\Large
\section{Faà di Bruno formula} \label{app:faadibruno}
\normalsize

We recall and comment the classical Faà di Bruno formula, that allows to compute derivatives of a composition of two functions. Our presentation is similar to \cite[Section 4.3.1]{Lerner:10}. We redo the proofs as we consider vector-valued functions, and not only scalar function. We use boldface letters for multi-indices, for instance $\bf{n} = (n_1, n_2, \ldots, n_j) \in \N^j$. The length of such a multi-index is
\begin{equation*}
\abs{\bf{n}}
	:= n_1 + n_2 + \cdots + n_j .
\end{equation*}
For any smooth function $f$, we also use the shorthand
\begin{equation} \label{eq:compactnotation}
\nabla^{\bf{n}} f. (X^{\bf{n}})
	:= \left(\nabla^{n_1} f.(X^{n_1}), \nabla^{n_2} f.(X^{n_2}), \ldots, \nabla^{n_j} f.(X^{n_j})\right) ,
\end{equation}
as well as
\begin{equation*}
\bf{n} !
	:= n_1! \, n_2! \, \cdots \, n_j! .
\end{equation*}

\begin{lemma}[Faà di Bruno formula] \label{lem:FaadiBruno}
Let $H : T^\star M \to T(T^\star M)$ be a vector field and $\phi : T^\star M \to T^\star M$ a smooth function. Then for any vector field $X$ on $T^\star M$ and any integer $\ell \ge 1$, we have for all $\rho \in T^\star M$:
\begin{equation} \label{eq:statementfdb}
\dfrac{1}{\ell!} \nabla^\ell (H \circ \phi)_\rho. (X^\ell)
		= \sum_{j = 1}^\ell \; \sum_{\substack{\bf{n} \in (\N^\ast)^j \\ \abs{\bf{n}} = \ell}} \dfrac{1}{j!} (\nabla^j H)_{\phi(\rho)}.\left( \dfrac{1}{\bf{n}!} \nabla^{\bf{n}} \phi.(X^{\bf{n}}) \right) .
\end{equation}
\end{lemma}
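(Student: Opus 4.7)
My plan is to reduce the statement to the case where $X$ is a constant vector field, then read off both sides as Taylor coefficients of a single real variable. By Remark~\ref{rmk:polarintro}, to identify the symmetric tensors $\nabla^\ell (H \circ \phi)_\rho$ and $\nabla^{\bf{n}} \phi_\rho$ it suffices to compare their diagonal values $(\ldots).X^\ell$ and $(\ldots).X^{n_l}$ for arbitrary $X$; moreover, since the connection $\nabla$ on $T^\star M$ is the flat connection induced by the affine structure of $T^\star M$, for any constant vector field $X$ and smooth $f$ one has $\nabla^\ell f_\rho. X^\ell = \frac{\dd^\ell}{\dd t^\ell}_{|t=0} f(\rho + tX)$. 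Thus I fix $\rho$ and a constant $X$ and study $F(t) := (H \circ \phi)(\rho + tX)$, whose $\ell$-th Taylor coefficient at $0$ is precisely the left-hand side of~\eqref{eq:statementfdb}.

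The first step is to expand the ``inside'': by Taylor's formula applied to $\phi$, for $t$ small,
\begin{equation*}
\phi(\rho + tX)
	= \phi(\rho) + u_\ell(t) + O(t^{\ell+1}) ,
		\qquad u_\ell(t) := \sum_{k=1}^\ell \frac{t^k}{k!} \nabla^k \phi_\rho . X^k \in T_{\phi(\rho)}(T^\star M) .
\end{equation*}
The second step is to expand $H$ around $\phi(\rho)$: by Taylor's formula again,
\begin{equation*}
H\bigl(\phi(\rho) + v\bigr)
	= \sum_{j=0}^\ell \frac{1}{j!} (\nabla^j H)_{\phi(\rho)} . v^j + O(|v|^{\ell+1}) .
\end{equation*}
Substituting $v = u_\ell(t) + O(t^{\ell+1})$ (note $|u_\ell(t)| = O(t)$) and keeping only terms up to order~$t^\ell$ yields $F(t) = \sum_{j=0}^\ell \frac{1}{j!} (\nabla^j H)_{\phi(\rho)}. u_\ell(t)^j + O(t^{\ell+1})$.

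The combinatorial core is then to expand $u_\ell(t)^j$ by multilinearity: for each $j \ge 1$,
\begin{equation*}
(\nabla^j H)_{\phi(\rho)}. u_\ell(t)^j
	= \sum_{\bf{n} \in \{1, \ldots, \ell\}^j} t^{|\bf{n}|} (\nabla^j H)_{\phi(\rho)} . \left( \tfrac{1}{\bf{n}!} \nabla^{\bf{n}} \phi_\rho . X^{\bf{n}} \right) ,
\end{equation*}
using the symmetry of $\nabla^j H$ in its $j$ arguments (here the fact that the flat connection is torsion-free and curvature-free is what makes the higher covariant derivatives symmetric tensors). Collecting the coefficient of $t^\ell$ in $F(t)$ and equating it with $\frac{1}{\ell!} F^{(\ell)}(0)$ delivers exactly~\eqref{eq:statementfdb}; the constraint $|\bf{n}| = \ell$ with $\bf{n} \in (\N^\ast)^j$ forces $j \in \{1, \ldots, \ell\}$, matching the summation range.

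The only real obstacle is bookkeeping: one must verify that the remainders $O(t^{\ell+1})$ at each substitution step indeed compose to give a remainder that does not pollute the coefficient of $t^\ell$, which is standard, and that the multinomial expansion of $u_\ell(t)^j$ correctly produces the weights $1/\bf{n}!$ when the symmetric tensor $\nabla^j H$ is evaluated. Once these two points are settled, the identity is simply the equality of two polynomials of degree $\ell$ in~$t$, read off from their coefficients.
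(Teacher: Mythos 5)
Your argument is correct, but it is a genuinely different proof from the one in the paper. The paper proceeds by induction on $\ell$: it differentiates the formula once more, applies the chain and Leibniz rules, symmetrizes the chain-rule term (using that the flat, torsion-free connection makes higher covariant derivatives symmetric), and then performs a multiplicity count to check that each partition $\mathbf{n}'$ of $\ell+1$ is produced exactly $\ell+1$ times. You instead exploit the affine structure more directly: since both sides of~\eqref{eq:statementfdb} are tensorial in $X$ (they depend only on $X(\rho)$), you may take $X$ constant, in which case $\nabla^\ell f_\rho.X^\ell = \frac{\dd^\ell}{\dd t^\ell}\big\vert_{t=0} f(\rho+tX)$, and the identity becomes a matching of Taylor coefficients of $t\mapsto H(\phi(\rho+tX))$; the composition of the two Taylor expansions and the multilinear expansion of $u_\ell(t)^j$ over \emph{ordered} tuples $\mathbf{n}$ then yield the formula with the correct weights $1/\mathbf{n}!$, with no multiplicity count needed (indeed, since you sum over ordered tuples, even the symmetry of $\nabla^j H$ that you invoke is not strictly required at that step). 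What each approach buys: yours is shorter and replaces the combinatorial bookkeeping of the induction by a formal coefficient comparison, at the price of using the flatness of the connection in an essential way (constant vector fields, identification of covariant derivatives with iterated directional derivatives), whereas the paper's inductive Leibniz-rule computation stays closer to the intrinsic covariant-derivative formalism it uses elsewhere (e.g.\ in the flow estimates of Section~\ref{sec:classical}); in the present flat setting both are fully rigorous, and your remainder bookkeeping ($O(t^{\ell+1})$ errors and the polynomial substitution) is indeed routine.
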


\begin{remark}[Polarization] \label{rmk:polarization}
Notice that this lemma allows to compute only diagonal elements of the symmetric tensor $\nabla^\ell(H \circ \phi)$. This is not a problem since any tuple $(X_1, X_2, \ldots, X_\ell)$ can be written as a sum of tuples having all the same components (this is nothing but polarization of multilinear forms).
\end{remark}

\begin{remark}[Combinatorics of the Faà di Bruno formula] \label{rk:numberoftermsfaadibruno}
The number of terms in the right-hand side of~\eqref{eq:statementfdb} is $2^{\ell - 1}$. To check this, one can count how many tuples $(n_1, n_2, \ldots, n_j)$ of positive integers have length $\ell$. Considering such a tuple is equivalent to the data of the $j-1$ partial sums $n_1 + n_2 + \cdots + n_k$, $k \in \{ 1, 2, \ldots, j - 1 \}$, namely an increasing sequence of~$j-1$ numbers ranging from $1$ to $\ell - 1$. There are $\binom{\ell-1}{j-1}$ such partitions. Summing over $j = 1, 2, \ldots, \ell$, one recovers that there are indeed $2^{\ell-1}$ terms in the formula.
\end{remark}

\begin{proof}
We establish the equality by induction. For $\ell = 1$, this is nothing but the chain rule. Suppose the formula is true for some $\ell \ge 1$. We differentiate once again, using the Leibniz formula:
\begin{multline} \label{eq:FdBtemp}
\dfrac{1}{\ell!} \nabla^{\ell+1} (H \circ \phi)_\rho. (X^{\ell+1})
	= \sum_{j = 1}^\ell \; \sum_{n_1 + n_2 + \cdots + n_j = \ell} \\
		\dfrac{1}{j!} (\nabla^{j+1} H)_{\phi(\rho)}. \dfrac{1}{{\bf n}!}\left(  \nabla^{n_1} \phi.(X^{n_1}), \nabla^{n_2} \phi.(X^{n_2}), \cdots, \nabla^{n_j} \phi.(X^{n_j}), \nabla \phi. X \right) \\
		+ \dfrac{1}{j!} (\nabla^j H)_{\phi(\rho)}.\left( \dfrac{1}{n_1!} \nabla^{n_1+1} \phi.(X^{n_1+1}), \dfrac{1}{n_2!} \nabla^{n_2} \phi.(X^{n_2}), \cdots, \dfrac{1}{n_j!} \nabla^{n_j} \phi.(X^{n_j}) \right) \\
		+ \dfrac{1}{j!} (\nabla^j H)_{\phi(\rho)}.\left( \dfrac{1}{n_1!} \nabla^{n_1} \phi.(X^{n_1}), \dfrac{1}{n_2!} \nabla^{n_2+1} \phi.(X^{n_2+1}), \cdots, \dfrac{1}{n_j!} \nabla^{n_j} \phi.(X^{n_j}) \right) \\
		+ \cdots
		+ \dfrac{1}{j!} (\nabla^j H)_{\phi(\rho)}.\left( \dfrac{1}{n_1!} \nabla^{n_1} \phi.(X^{n_1}), \dfrac{1}{n_2!} \nabla^{n_2} \phi.(X^{n_2}), \cdots, \dfrac{1}{n_j!} \nabla^{n_j+1} \phi.(X^{n_j+1}) \right) .
\end{multline}
The first term appears when the derivative hits $(\nabla^j H)_{\phi(\rho)}$, thus leading to an extra $\nabla \phi.X$ as input vector of the derivative of $H$ by the chain rule, while the other terms correspond to the derivative landing on each derivative of $\phi$, according to the Leibniz rule. We symmetrize the term coming from the chain rule:
\begin{multline*}
\dfrac{1}{j!} (\nabla^{j+1} H)_{\phi(\rho)}. \dfrac{1}{{\bf n}!}\left( \nabla^{n_1} \phi.(X^{n_1}), \nabla^{n_2} \phi.(X^{n_2}), \cdots, \nabla^{n_j} \phi.(X^{n_j}), \nabla \phi. X \right) \\
	= \sum_{k = 1}^{j+1} \dfrac{1}{(j+1)!} (\nabla^{j+1} H)_{\phi(\rho)}. \dfrac{1}{{\bf n}!}\left( \nabla^{n_1} \phi.(X^{n_1}), \nabla^{n_2} \phi.(X^{n_2}), \cdots, \nabla \phi.X, \cdots, \nabla^{n_j} \phi.(X^{n_j})\right) ,
\end{multline*}
where $\nabla \phi.X$ is inserted in the $k$-th slot. Here we used crucially that successive derivatives are symmetric tensors, due to the fact that the connection~$\nabla$ has vanishing torsion and curvature.
It remains to group the terms that can be represented by the same partition $n_1' + n_2' + \cdots + n_j' = \ell + 1$, $j \in \{1, 2, \ldots, \ell+1\}$.
We have two types of terms in~\eqref{eq:FdBtemp} that can be represented by the partition $n_1' + n_2' + \cdots + n_j' = \ell + 1$:
\begin{itemize}
\item those obtained from the Leibniz rule, corresponding to a partition $n_1 + n_2 + \cdots + n_j = \ell$ at step~$\ell$, so that
\begin{equation*}
n_{k_0}' = n_{k_0} + 1 ,
	\qquad
n_k' = n_k , \; \forall k \neq k_0
\end{equation*}
for some~$k_0$;
\item those obtained from the chain rule, corresponding to a partition $n_1 + n_2 + \cdots + n_{j-1} = \ell$ at step~$\ell$, so that
\begin{equation*}
n_{k_0}' = 1 ,
	\qquad
n_k' = n_k , \; \forall 1 \le k < k_0 ,
	\qquad
n_k' = n_{k-1} , \; \forall k_0 < k \le j
\end{equation*}
for some~$k_0$;
\end{itemize}
Thus we must count how many partitions of the form $n_1 + n_2 + \cdots + n_j = \ell$ and $n_1 + n_2 + \cdots + n_{j-1} = \ell$ in~\eqref{eq:FdBtemp} lead to a fixed partition $n_1' + n_2' + \cdots + n_j' = \ell + 1$ through this process. For any $k_0 \in \{1, 2, \ldots, j\}$, we look at how one can produce a new partition at step $\ell + 1$ from a partition at step $\ell$, by changing the {$k_0$-th} slot. Either $n_{k_0}' = 1$ and we can build our partition at step $\ell + 1$ from the same partition with $j-1$ slots, without $n_{k_0}'$; or $n_{k_0}' \ge 2$ and we can build our new partition from the same partition with $j$ slots, with $n_{k_0} = n_{k_0}' - 1 \ge 1$ in place of $n_{k_0}'$. In the latter case, we have to take into account an extra factor $n_{k_0}'$ in order to have the correct factorial in front of $\nabla^{n_{k_0}'} \phi$ (we rewrite $\frac{1}{n_{k_0}!} \nabla^{n_{k_0} + 1} \phi = n_{k_0}' \frac{1}{n_{k_0}'!} \nabla^{n_{k_0}'} \phi$ in~\eqref{eq:FdBtemp}). Denoting by $m$ the number of indices $k_0$ such that $n_{k_0}' = 1$, the number of terms in~\eqref{eq:FdBtemp} that can be represented by the partition $\mathbf{n}' = (n_1', n_2', \ldots, n_j')$ is
\begin{equation*}
m + \sum_{n_k' \ge 2} n_k'
	= n_1' + n_2' + \cdots + n_j'
	= \ell + 1 .
\end{equation*}
Therefore, after grouping terms in~\eqref{eq:FdBtemp} according to partitions $\mathbf{n}' \in (\N^\ast)^j$ of length $\ell+1$, the factor in front of $(\nabla^j H)_{\phi(\rho)}.\left( \dfrac{1}{\bf{n}'!} \nabla^{\bf{n}'} \phi.(X^{\bf{n}}) \right)$ is $\frac{\ell+1}{j!}$. Dividing by~$\ell + 1$ in~\eqref{eq:FdBtemp} then yields the desired result.
\end{proof}


\bibliographystyle{alpha}
{\small
\bibliography{biblio}}

\normalsize

\end{document}